\documentclass[a4paper,10pt]{article}
\usepackage[utf8]{inputenc}
\usepackage{amsmath, amsfonts, amsthm,amssymb,  bbm, dsfont, lmodern,color, mathrsfs}
\usepackage{math}
\usepackage{hyperref,mathtools} 
\usepackage[left=2cm,right=2cm,top=3cm,bottom=3cm]{geometry}

\usepackage[font=small,labelfont=bf]{caption}
\usepackage{mathrsfs}
\usepackage{verbatim}
\usepackage{amsthm}
\usepackage{thmtools}

\newtheorem{lemma}{Lemma}[section]
\newtheorem{theorem}[lemma]{Theorem}
\newtheorem{proposition}[lemma]{Proposition}

\newtheorem{definition}[lemma]{Definition}

\theoremstyle{remark}
\newtheorem{remark}[lemma]{Remark}

\usepackage{pst-node}
\usepackage{pst-plot}
\usepackage{tikz-cd}
\usetikzlibrary{arrows.meta, positioning}
\usepackage{scalerel}
\usepackage{mathabx}
\usepackage{float}

\usepackage{cleveref}
\usepackage{mathrsfs}

\usepackage{subcaption}
\usepackage[titles,subfigure]{tocloft}  

\def\be{\begin{equation}}   \def\ee{\end{equation}}
\def\bea{\begin{eqnarray}}  \def\eea{\end{eqnarray}}

\newcommand\restr[2]{{
  \left.\kern-\nulldelimiterspace 
  #1 
  \vphantom{\big|} 
  \right|_{#2} 
  }}

\newcommand{\de}{{\rm d}}

\newcommand{\rvline}{\hspace*{-\arraycolsep}\vline\hspace*{-\arraycolsep}}

\makeatletter
\newcommand{\subalign}[1]{%
  \vcenter{%
    \Let@ \restore@math@cr \default@tag
    \baselineskip\fontdimen10 \scriptfont\tw@
    \advance\baselineskip\fontdimen12 \scriptfont\tw@
    \lineskip\thr@@\fontdimen8 \scriptfont\thr@@
    \lineskiplimit\lineskip
    \ialign{\hfil$\m@th\scriptstyle##$&$\m@th\scriptstyle{}##$\hfil\crcr
      #1\crcr
    }%
  }%
}
\makeatother

  \newcommand{\BVe}[4]{\left(\sB(\al,\mu,\e)\, \tf^{#1}_{{#2}} \ , \tf^{#3}_{#4} \right) }

\newcommand{\sL}{\mathscr{L}}
\newcommand{\sB}{\mathscr{B}}
\newcommand{\molt}[2]{\left(#1\,,\,#2\right)}

\newcommand{\kB}{\mathfrak{B}}
\renewcommand{\white}[1]{{\textcolor{white}{#1}}}
\newcommand{\ent}[6]{\begingroup 
\setlength\arraycolsep{-2pt}
\begin{matrix}{\tB_{#1}^{[#2]}} &\white{{|}^{|}}_{#3,#4}^{#5,#6}\end{matrix}
\endgroup}

\usepackage{todonotes}

\title{\bf 
McLean resonances and \\
$3d$ spectral instability of Stokes waves }

\numberwithin{equation}{section}

 \author{Massimiliano Berti, Alberto Maspero, Antonio Milosh Radakovic\footnote{
International School for Advanced Studies (SISSA), Via Bonomea 265, 34136, Trieste, Italy.
 \textit{Emails: }
 \texttt{berti@sissa.it}, \texttt{amaspero@sissa.it}, \texttt{aradakov@sissa.it}
 }}

\date{}

\begin{document}
\maketitle

\noindent 
{\bf Abstract.}  
The spectral instability of  traveling periodic water waves has been investigated for more than sixty years, since the seminal discovery of  Benjamin and Feir. Despite an extensive literature, no rigorous theory has been available for arbitrary three-dimensional 
--longitudinal and transverse-- perturbations. 
We establish the first rigorous description of the $3d $ unstable spectrum
of small-amplitude gravity  Stokes waves in deep water in a full neighborhood of the McLean resonant curves.
Our results reveal that the Benjamin–Feir instability and the first longitudinal high-frequency isola originate from the same resonant interaction, hidden in the purely longitudinal setting.
The dominant instabilities emerge for Fourier-Bloch  parameters 
  near the origin, corresponding to the $3d $ 
  Benjamin–Feir modulational instability. 
Our approach provides 
quantitative  bounds for the real parts of the unstable eigenvalues and establishes a computable necessary and sufficient criterion for the onset of instability near arbitrary high-frequency McLean curves.
These results are enabled by three key 
innovations:  ($i$) a Kato perturbative analysis allowing  Lipschitz-type singularities of the linearized operator with respect to the Fourier–Bloch parameters; ($ii$) a polar-analytic KAM-type decoupling  isolating the unstable eigenvalue pairs near the origin; and ($iii$) an analytic continuation argument in full neighborhoods of the 
McLean  curves.
A primary 
challenge  is to establish fine regularity  properties
for  the Dirichlet-Neumann operator conjugated via the Fourier-Bloch transform.

 {\footnotesize
\tableofcontents }

\section{Introduction}

Discovered in the pioneering work \cite{stokes} in 1847, Stokes waves are spatially periodic traveling solutions, constant along the $y $-direction, propagating along the  $ x $-axis at constant speed. 
The 
rigorous existence 
of small-amplitude Stokes waves was proved only several decades later in \cite{LC,Struik,Nek}.

A question  of fundamental physical importance concerns their stability properties. 
In the 1960s, Benjamin and Feir \cite{BF,Benjamin}, see also \cite{Whitham,Li,Zak1},  made the unexpected discovery --through experiments and formal analysis-- that small-amplitude Stokes waves in sufficiently deep water are unstable under long-wave  longitudinal perturbations.
This  phenomenon
was first established in rigorous mathematical terms in \cite{BrM,NS,ChenSu}.   
The complete description of the  
unstable  eigenvalues 
near zero --which form a ``figure-8"-- 
has been obtained only recently in  \cite{BMV1,BMV3,BMV_ed}.
In addition to the Benjamin–Feir mechanism, a distinct class of longitudinal ``high-frequency" unstable eigenvalues forming ``isolas" away from the origin 
was numerically detected by \cite{DO}. 
The unstable isola closest 
to zero has been rigorously established in \cite{HY} in finite depth  and   \cite{BMV4} in deep water. The existence of arbitrarily many such isolas is a recent result 
\cite{BMV5} in finite depth.

 \smallskip

From a physical perspective, however, 
perturbations in real water-wave environments are never purely one-dimensional
(experimental evidence of transverse instability is reported in \cite{Melville,TSD}) and 
a comprehensive stability theory must account 
for waves that depend 
on both spatial variables $ (x,y)$.
Formal results are  given e.g. in \cite{BR,D,TD, Ak}.

The rigorous mathematical analysis of such $ 3d$-instabilities 
for
small-amplitude gravity Stokes waves in deep water, 
is  the main challenge 
of this paper. 
We provide a  mathematical description that unifies and extend all the previous results restricted to longitudinal perturbations  with the recent progresses
\cite{CNS,CNS1,JTSY} on the local formation of isolas under transverse disturbances.   
Surprisingly, we show that the Benjamin–Feir figure 8 and the first longitudinal high-frequency isola are not independent phenomena, but originate from the same pair of resonant eigenvalue collisions within the full three-dimensional Bloch-Floquet parameter space.


\smallskip

The problem can be mathematically formulated as follows.  Let $\cL_\e$ denote the linearized 
operator at a $ 2 \pi $-periodic 
pure gravity Stokes wave of small amplitude $\e$, written in the reference frame moving alongside the wave (the operator $\cL_\e$ in \eqref{cLvero} is actually obtained after applying the 
``good unknown of Alinhac"
and a Levi-Civita transformation).  Since the coefficients of 
$\cL_\e$ depend only on the  $2\pi$-periodic 
variable $x$, it is natural to 
seek Fourier-Bloch wave
solutions of 
the  
linear system 
$\pa_t h = \cL_\e h$
of the form   
\begin{equation}\label{h.intro}
h(t,x,y) = \mathrm{Re} \big( e^{\lambda t} \, e^{\im (\alpha y + \mu x )} \, v(x) \big) \, ,
\end{equation}
where $v(x)$ is a $2\pi$-periodic function, $\mu \in \R$ is the \emph{longitudinal} Bloch-Floquet exponent, $\alpha \in \R$ is the \emph{transverse} Fourier wave number, and $\lambda \in \C$. If  
$\alpha = 0$, resp.
$\mu = 0 $, we refer to $ h $ as a  purely longitudinal,
resp. transverse, wave.
Equivalently,   $ v(x) $ 
in \eqref{h.intro} is an eigenvector 
of the 
operator  
\be\label{calLmua}
\cL(\al, \mu, \e) := e^{- \im (\al y + \mu x)} \circ  \cL_\e \circ e^{\im (\al y + \mu x)} \ , 
\ee
acting on $2\pi$-periodic functions of $x$,  
with eigenvalue $ \lambda $.
The Stokes wave is  \emph{spectrally unstable} if there exist  
$(\alpha,\mu)$ such that $\cL(\alpha,\mu,\e)$ has an eigenvalue $\lambda$ with $\Re \lambda \neq 0$.

Eigenvalues with non zero real part  may only bifurcate, for $\e \neq 0$,  from multiple purely imaginary eigenvalues of the unperturbed Hamiltonian operator
\begin{equation}\label{cLvero0}
    \cL (\alpha,\mu, 0)    
   = \begin{pmatrix}
        \pa_x +\im\mu 
        &  |D|_{\al,\mu} \\
        -1
         & \pa_x +\im\mu
    \end{pmatrix}  
   \, , \quad 
   |D|_{\al,\mu}:= ((D+\mu)^2+\al^2)^\frac12  \, , \quad D := -\im\pa_x \, , 
 \end{equation}  
whose spectrum is purely imaginary.
McLean 
\cite{Mc1,Mc2}
classified such {\it resonances}: the set of $(\alpha,\mu)$ for which $\cL(\alpha,\mu,0)$ admits multiple eigenvalues forms  a sequence of closed 
analytic varieties  --the \emph{McLean curves} $\cM^{(\tp)}$ in  \Cref{fig:Mc16} up to integer  translations in $\mu $. 
It turns out that for any  $ (\alpha,\mu) \in \cM^{(\tp)}$, two purely imaginary eigenvalues of $\cL(\alpha,\mu,0)$ collide, 
giving rise to a  ``McLean-resonance". The intersection  
points  $ \cM^{(\tp)} \cap  \{ \al = 0 \} $ correspond to 
the unbounded sequence of  purely longitudinal isolas in \cite{DO}.
At   
$(\alpha,\mu) = (0,0)$,  
the operator $\cL(0,0,0)$ possesses  the zero eigenvalue 
with  algebraic multiplicity four
(this happens because Stokes waves are not isolated, but form a four-dimensional family). 
The limit 
$ (\al,\mu) \to (0,0) $ corresponds to the 
longitudinal-transverse 
long waves  regime
where Benjamin–Feir instability is expected to  occur.   

\begin{figure}[h!]
    \centering
    \includegraphics[width=0.40\linewidth]{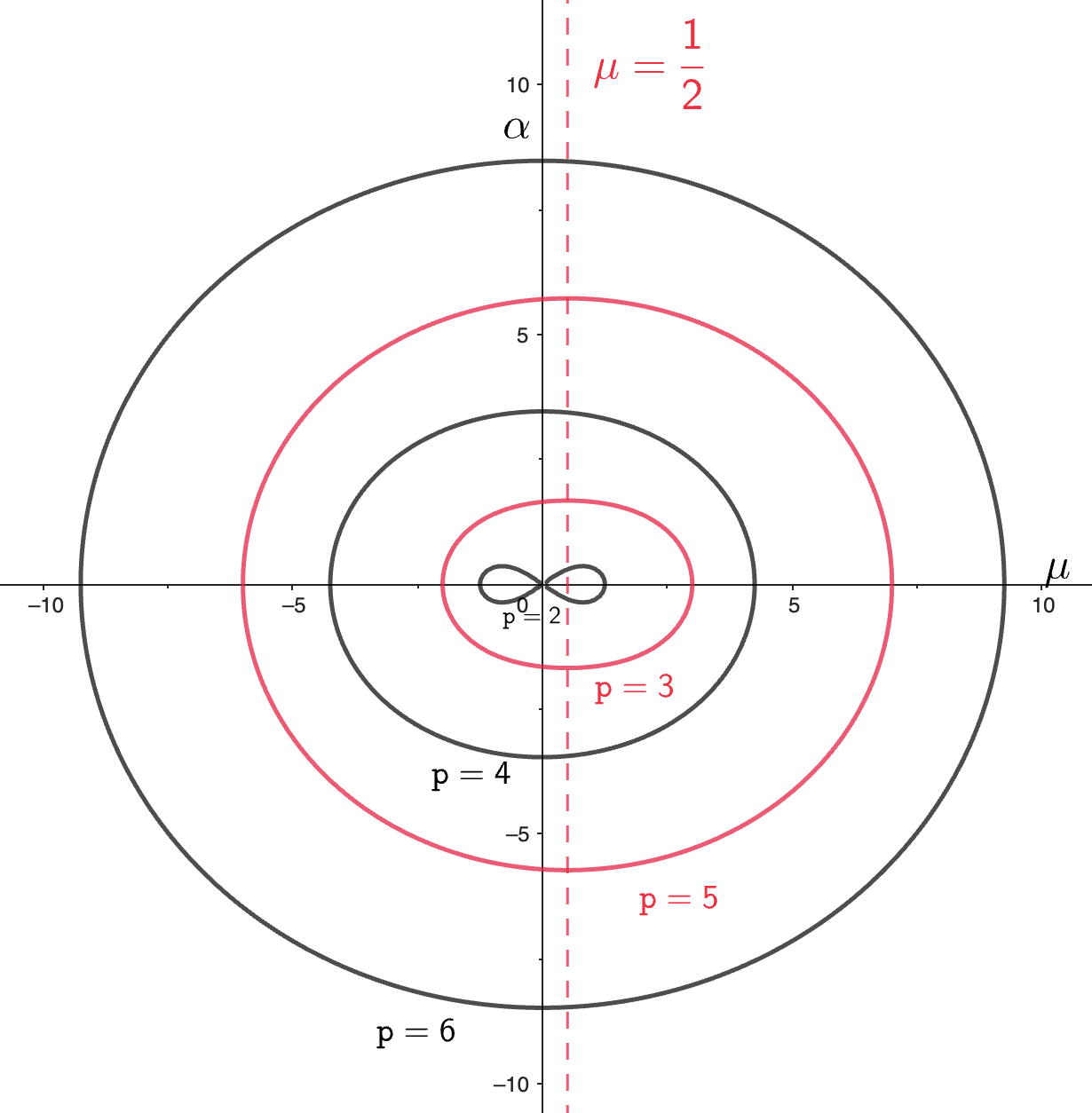}
    \hfill \begin{tikzpicture}

\node[anchor=south west, inner sep=0] (img) at (0,0)
    {\includegraphics[width=0.52\textwidth]{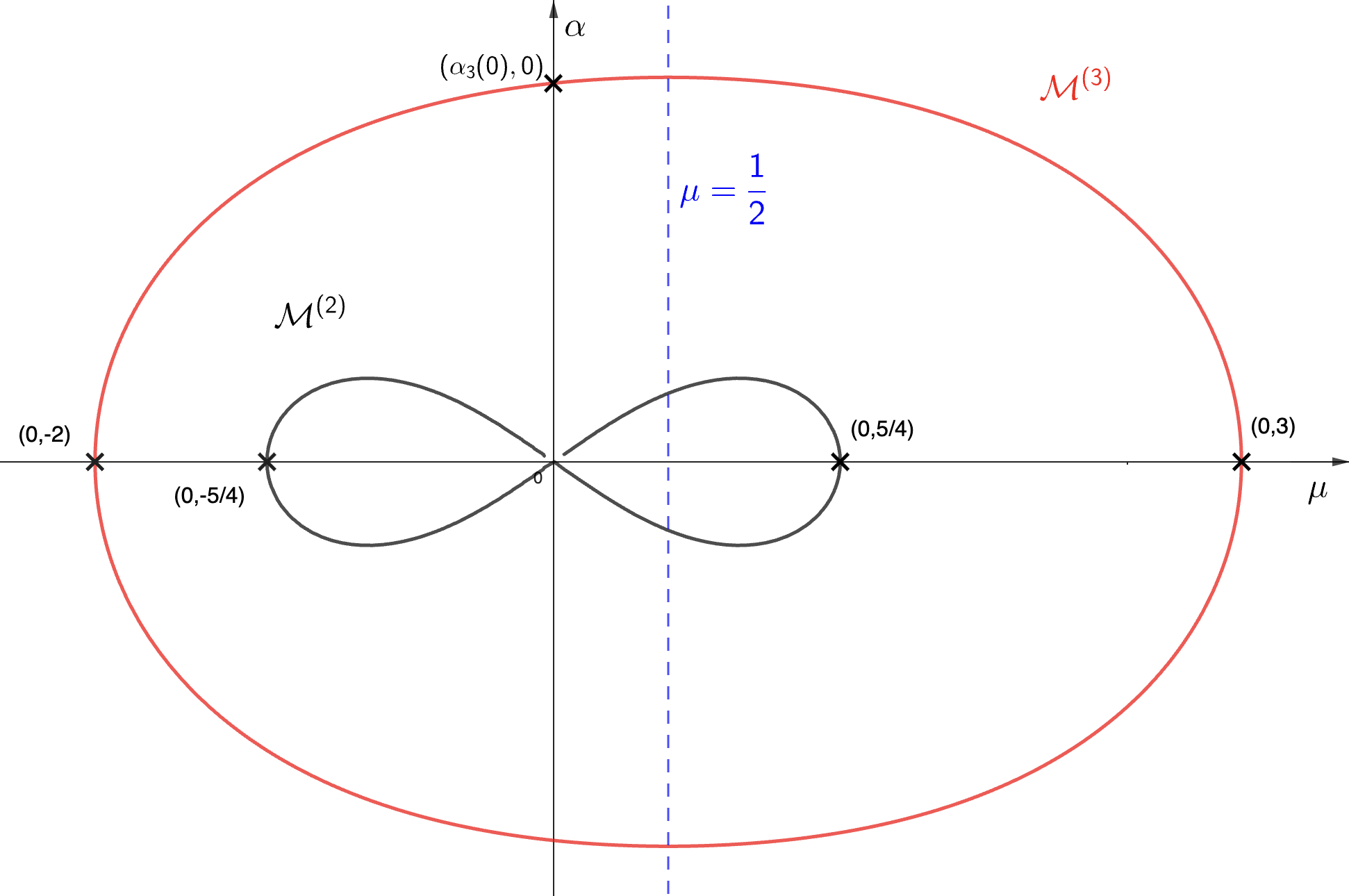}};

\begin{scope}[x={(img.south east)},y={(img.north west)}]

\node[green] at (0.44,0.58) {\cite{JTSY}};
\node[blue] at (0.44,0.85) { \cite{CNS}};
\node[violet] at (0.65,0.45) {\cite{BMV3}};
\node[orange] at (0.35,0.485) {\cite{NS,BMV1}};

\draw[green, thick]
(0.41,0.485) .. controls (0.44,0.51) .. (0.47,0.53);
\draw[blue, thick]
(0.411,0.85) -- (0.411,0.95);
\draw[violet, thick]
(0.58,0.485) -- (0.65,0.485);
\draw[orange, thick]
(0.38,0.485) -- (0.46,0.485);
\end{scope}

\end{tikzpicture}
    \caption{At the left, the unperturbed McLean  curves $ {\cal M}^{(\tp)}$, $ \tp \geq 2 $. On the right a synthesis of previous results in deep water: the longitudinal Benjamin–Feir  instability \cite{NS, BMV1} occurs 
for  $  \alpha = 0 $
near the origin 
(orange line), while the  first purely longitudinal unstable isola 
investigated in \cite{BMV4} appears near 
the intersection points 
$(0, \pm 5/4) \in \cM^{(2)}$
(purple line).
The purely transverse instability result  \cite{CNS} is located  
near  the intersection $\cM^{(3)} \cap \{ \mu = 0 \}$ (blue line). \Cref{TeoremoneIntro,TeoremoneFinale} extend  
the results of  \cite{JTSY}
(given in finite depth) for $(\al, \mu)$ near $ (0,0)$ (green curve),   as commented in \Cref{fig:spectrum}.
    }
    \label{fig:Mc16}
\end{figure}

\begin{itemize}
\item {\sc Questions: } 
{\it What happens for 
$\e \neq 0$ small? 
Do multiple eigenvalues 
of 
$ \cL (\alpha, \mu, 0) $  leave the imaginary axis
under perturbation? 
For which Fourier-Bloch  parameters $(\alpha, \mu) $ ? How large is their real part ?  } 
\end{itemize}


Numerical results
\cite{Mc0,Mc1,Mc2,DO2} suggest 
that unstable eigenvalues appear all around the McLean curve 
$ \cM^{(2)} $.
Achieving rigorous mathematical results, however, presents several major difficulties:

\begin{itemize}
\item{i)} 
{\sc  Multiple eigenvalues}:
For 
$(\alpha,\mu) \sim (0,0) $ 
there are $ 4 $ interacting eigenvalues near zero, not only $ 2 $, and 
the eigenvalue zero
of  $ \cL (0,0,0)$ 
is defective. 
\item{ii)} 
{\sc  Lipschitz  Regularity}:
the operator $\cL(\alpha,\mu,\e)$
is expected 
to be merely Lipschitz continuous
with respect to  
$(\alpha,\mu)$, as the unperturbed operator   
$\cL(\alpha,\mu,0) $ in \eqref{cLvero0} contains the Lipschitz symbol  $ \sqrt{\alpha^2+\mu^2} $.  
This contrasts sharply with previous works where the dependence on parameters was analytic. 
\item{iii)} 
{\sc  Global  Parametrization:}
the parameters $(\alpha,\mu)$ vary globally in a neighborhood of the entire unperturbed McLean curves
$ \cM^{(\tp)}$, not only locally near specific resonant points, and thus we have to determine the possible emergence of unstable spectrum all around these curves.   
\end{itemize}

Rigorous  results emerged only very recently, cf. \Cref{fig:Mc16}.  The works \cite{CNS,CNS1} address  purely transverse disturbances  
locally near the 2 points
of 
intersection of
the McLean curve $ \cM^{(3)} $ with the axis $ \{ \mu = 0 \} $,  while  
\cite{JTSY} 
proves (in finite depth) the branching of a pair of unstable eigenvalues near zero
 for small $(\alpha,\mu) \in \cM^{(2)}$.
However, the latter result requires the Stokes wave amplitude  $ \epsilon (\alpha, \mu) $ to vanish  
as $ (\alpha, \mu) \to (0,0) $.
In these papers the difficulties 
i)-iii) do not arise because only $ 2 $
eigenvalues interact,  
and $ (\al,\mu)$ vary locally around resonant points rather than curves,  without approaching zero (ensuring that the symbol
$ \sqrt{\al^2+\mu^2}$ remains analytic). 
We also mention 
the transverse instability result 
\cite{HTW} in presence of surface tension.

\smallskip

The following result details the most dominant instabilities occuring near the McLean curve $\cM^{(2)}$,  
  as well as  the subsequent prominent instabilities emerging near 
  $\cM^{(3)}$. 
  
\begin{theorem}\label{TeoremoneIntro}
   For any $ 0 < |\e| \leq \e_0 $ small enough, for any $ \tp = 2, 3 $,  
   the unstable regions 
\begin{equation}\label{instaregionintrointro}
        \begin{aligned}
  \cU^{(\tp)}_\e :=  &       \big\{ (\al,\mu)\text{ near }\cM^{(\tp)}\, 
         :  \   \cL(\al,\mu,\e) \text{ possesses unstable eigenvalues }  \lambda_\pm^{(\tp)}(\al,\mu,\e)  \\  
        &   \text{satisfying} \quad   
\Re \, 
\lambda_\pm^{(\tp)}(\al,\mu,\e) \neq 0 
\quad  {\rm and} \quad        
  \lambda_+^{(\tp)}(\al,\mu,\e) = 
         - \overline{\lambda_-^{(\tp)} (\al,\mu,\e)}  \big\}  \neq \emptyset 
         \end{aligned}
    \end{equation}   
 are {\rm not} empty, shrink to $\cM^{(\tp)}$ as $\e\to 0$,
and its boundary 
$$ 
\pa \cU^{(\tp)}_\e = 
\cM^{(\tp)}_+ (\e)
\cup \cM^{(\tp)}_- (\e)
$$
is formed 
by two   closed analytic varieties $\cM^{(\tp)}_\pm (\e)$, referred to as  perturbed McLean curves:
\\[1mm]
    $\bullet$ The curve $\cM_+^{(2)}(\e)$ is a connected    variety, with a cross-type singularity at the origin. It encloses two bounded regions 
    containing  two simple closed curves that comprise $\cM^{(2)}_-(\e)$,  as shown in  \Cref{fig:pertmctotal} (left). Furthermore    
\begin{equation}\label{intersectionmcleans}
    \cM^{(2)}_+(\e)\cap \cM^{(2)}_-(\e) 
    = \emptyset  
   \, . 
\end{equation}    
    $\bullet$ The curves $\cM^{(3)}_\pm(\e)$ are closed and simple, and 
    their intersection $\cM^{(3)}_+(\e) \cap \cM^{(3)}_-(\e) $ consists of at most finitely many points,
as shown in \Cref{fig:pertmctotal} (right). 
    \\
    $\bullet$  
  There are  closed analytic  curves 
  $\cT^{(\tp)}(\e)$
  near $\cM^{(\tp)} $       
   along which the real part of the unstable eigenvalues satisfy 
   $ \Re\lambda_+^{(\tp)}(\al,\mu,\e) \simeq \e^\tp  
   $.
\end{theorem}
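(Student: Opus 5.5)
The plan is to reduce, for $(\al,\mu)$ in a tubular neighborhood of $\cM^{(\tp)}$, the spectral problem for $\cL(\al,\mu,\e)$ to a $2\times 2$ Hamiltonian matrix whose eigenvalues are $\lambda^{(\tp)}_\pm(\al,\mu,\e)$, and then read instability off the sign of its discriminant. At a point of $\cM^{(\tp)}$ the operator $\cL(\al,\mu,0)$ in \eqref{cLvero0} has exactly two colliding purely imaginary eigenvalues, which are simple away from $(0,0)$ and have opposite Krein signature; for $(\al,\mu)$ near $\cM^{(\tp)}$ they stay close while the rest of the spectrum is uniformly separated. I would use a Kato similarity transformation to build a symplectic basis $\{\tf^{\pm}\}$ of the two-dimensional spectral subspace, depending analytically on $(\al,\mu,\e)$ away from the origin, and compute the restricted matrix $\sB(\al,\mu,\e)$ by expanding the entries $\BVe{\sigma}{}{\sigma'}{}$ in $\e$ up to order $\e^\tp$. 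By reversibility and the Hamiltonian structure this matrix equals $\im\,\mathrm{Id}\cdot(\text{real})$ plus a traceless Hamiltonian part, so $\lambda_\pm=\im a\pm\sqrt{\Delta(\al,\mu,\e)}$ with $\Delta$ real-analytic. This gives the symmetry $\lambda_+=-\overline{\lambda_-}$ automatically, and $\cU^{(\tp)}_\e=\{\Delta>0\}$.

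Writing $\Delta=\e^{2\tp}\beta_\tp(\al,\mu)^2-d(\al,\mu,\e)^2+\dots$, where $d$ is the (perturbed) distance between the unperturbed eigenvalues (vanishing to first order on $\cM^{(\tp)}$) and $\beta_\tp$ is the $\e^\tp$ coupling coefficient, I would factor $\Delta=(\e^\tp\beta_\tp-d)(\e^\tp\beta_\tp+d)(1+r)$ and apply the implicit function theorem transversally to $\cM^{(\tp)}$, using tubular coordinates $(s,\tau)$. This gives the two curves $\cM^{(\tp)}_\pm(\e)=\{d=\pm\e^\tp\beta_\tp+\dots\}$, which are analytic and shrink to $\cM^{(\tp)}$. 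The maximal real part is attained on the middle curve $\cT^{(\tp)}(\e)=\{d=0+\dots\}$, where $\Re\lambda_+\simeq\e^\tp|\beta_\tp|$. So the core computation is of $\beta_\tp(\al,\mu)$ along the whole curve: it is explicit via Stokes wave expansions to order $\e^2$ resp.\ $\e^3$. For $\tp=3$ I would show $\beta_3$ has only finitely many zeros on $\cM^{(3)}$, by analyticity and since it is not identically zero, checked at the transverse point of \cite{CNS}. This yields finitely many intersection points of $\cM^{(3)}_\pm(\e)$. For $\tp=2$ I would show $\beta_2\neq0$ on the compact part of $\cM^{(2)}$ away from the origin, which gives \eqref{intersectionmcleans} there. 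The global statement (closed varieties, not just local arcs) follows by covering $\cM^{(\tp)}$ with finitely many charts and using analytic continuation of $\Delta$ along the curve.

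The main obstacle is the origin for $\tp=2$, where four eigenvalues collide at $0$, the eigenvalue is defective, and $\sqrt{\al^2+\mu^2}$ makes everything only Lipschitz. There I would work in polar coordinates $(\al,\mu)=\rho(\cos\theta,\sin\theta)$, in which the operator becomes analytic in $(\rho,\theta,\e)$. I would apply Kato theory to the four-dimensional generalized kernel and then perform a KAM-type block decoupling of the resulting $4\times4$ Hamiltonian matrix, as in the Benjamin–Feir analysis of \cite{BMV1}. This splits off the $2\times2$ block carrying the unstable pair, whose discriminant, after rescaling $\rho$ by $\e$, takes a homogeneous-plus-remainder form. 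The quadratic leading part has a nondegenerate saddle at the origin, producing the cross-type singularity of $\cM^{(2)}_+(\e)$. The inner closed curves $\cM^{(2)}_-(\e)$ arise where the $-$ factor vanishes away from the origin, and I would show they are disjoint from $\cM^{(2)}_+(\e)$ by the sign of $\beta_2$. The delicate step is matching the polar-analytic description near $0$ with the Cartesian-analytic one on the rest of the curve; I would handle it by checking that the two expansions of $\Delta$ coincide on an overlap annulus $\rho\sim\delta$.
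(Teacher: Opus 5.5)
Your overall architecture agrees with the paper's. You use a global $2\times2$ Kato reduction near $\cM^{(\tp)}$, factor the discriminant as $\mathsf d_+\mathsf d_-$, and obtain $\cM^{(\tp)}_\pm(\e)$ as graphs over the zero-trace curve $\cT^{(\tp)}(\e)$. Near the origin you use a polar-analytic $4\times4$ block decoupling glued to the global picture, and for $\tp=3$ you check the $\e^3$ coupling at the transverse point of \cite{CNS}. The gap is in the step you rely on for \eqref{intersectionmcleans}. It is \emph{false} that the $\e^2$ coupling coefficient (your $\beta_2$, the paper's $\fb_2$) is nonzero on $\cM^{(2)}$ away from the origin. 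It vanishes at the two points $(0,\pm\tfrac54)$ where $\cM^{(2)}$ crosses $\{\al=0\}$. These are exactly the first longitudinal high-frequency isola, whose growth rate in deep water is only $\cO(\e^4)$. The paper proves (Lemma~\ref{lem:coeff.b2}) that $\fb_2<0$ on $\cM^{(2)}\setminus\{(0,\pm\tfrac54)\}$, $\fb_2(0,\tfrac54)=0$ and $\pa_\mu\fb_2(0,\tfrac54)=\tfrac{1}{2\sqrt3}$. Even the negativity elsewhere needs two ingredients your sketch does not supply. One is an explicit evaluation of $\fb_2$ along the whole curve (entanglement coefficients plus a resultant argument). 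The other is the value $\fb_2(0,0)=-\tfrac12$, obtained by comparing the global $2\times 2$ matrix with the decoupled block $\mathtt{U}$.

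Consequently, near $(0,\pm\tfrac54)$ your order-$\e^2$ argument gives no information. Disjointness of $\cM^{(2)}_\pm(\e)$ there (equivalently $\fb^{(2)}\neq0$ on $\cT^{(2)}(\e)$, by \eqref{newoffice}) requires an order-$\e^4$ analysis. Parametrizing $\cT^{(2)}(\e)$ as in \eqref{eq:expparamtr}, one gets
\[
\fb^{(2)}(\nu_{\cT^{(2)}}(\tau,\e),\e)=\e^2\fb_2(\nu^{(2)}(\tau))+\e^4\Big(\beta_2(0,\tfrac54)-\pa_\mu\fb_2(0,\tfrac54)\,\tfrac{T_2}{T_1}+\cO(\tau,\e)\Big).
\]
Here $\beta_2$ is the $\e^4$ coefficient of $\fb^{(2)}$, imported from \cite{BMV4} via the covariance $\mu\mapsto\mu+1$. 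The second term comes from the $\e^2$ displacement of $\cT^{(2)}(\e)$ off $\cM^{(2)}$, with $T_1=\tfrac43$ and $T_2=\fa_2+\fc_2=\tfrac{19}{16}$ at $(0,\tfrac54)$. This shift enters at the same order as $\beta_2$, so you cannot simply evaluate on the unperturbed curve. The combination equals $-\tfrac{37\sqrt3}{512}<0$, which together with $\fb_2\le0$ on $\cM^{(2)}$ gives $\fb^{(2)}<0$ on all of $\cT^{(2)}(\e)$. For the same reason, your claim $\Re\lambda_+\simeq\e^2|\beta_2|$ along $\cT^{(2)}(\e)$ holds only outside balls around $(0,\pm\tfrac54)$, as in \eqref{ordinstaintro2}.

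A smaller gap concerns $\tp=3$: the reduced matrix is not real-analytic on a full neighborhood of $\cM^{(3)}$. The curve meets $\{0\}\times\Z$ at $(0,-2)$ and $(0,3)$, where $\cG(\al,\mu,\e)$ carries the Lipschitz singularity $(\al^2+(\mu-j)^2)^{1/2}$, so $T^{(3)}$ and $\mathsf d^{(3)}_\pm$ are only Lipschitz there. Your analytic implicit function argument must be replaced by a Lipschitz one at these points. Analyticity of $\cM^{(3)}_\pm(\e)$ is then recovered only by showing the perturbed curves avoid them. The paper does this by computing $\fa_3,\fc_3$ there, which gives $T^{(3)}(0,\mu_*^\pm(3),\e)=\tfrac32\e^2+\cO(\e^4)\neq0$.
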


\begin{figure}[h!]
    \centering
    \includegraphics[width=0.54\linewidth]{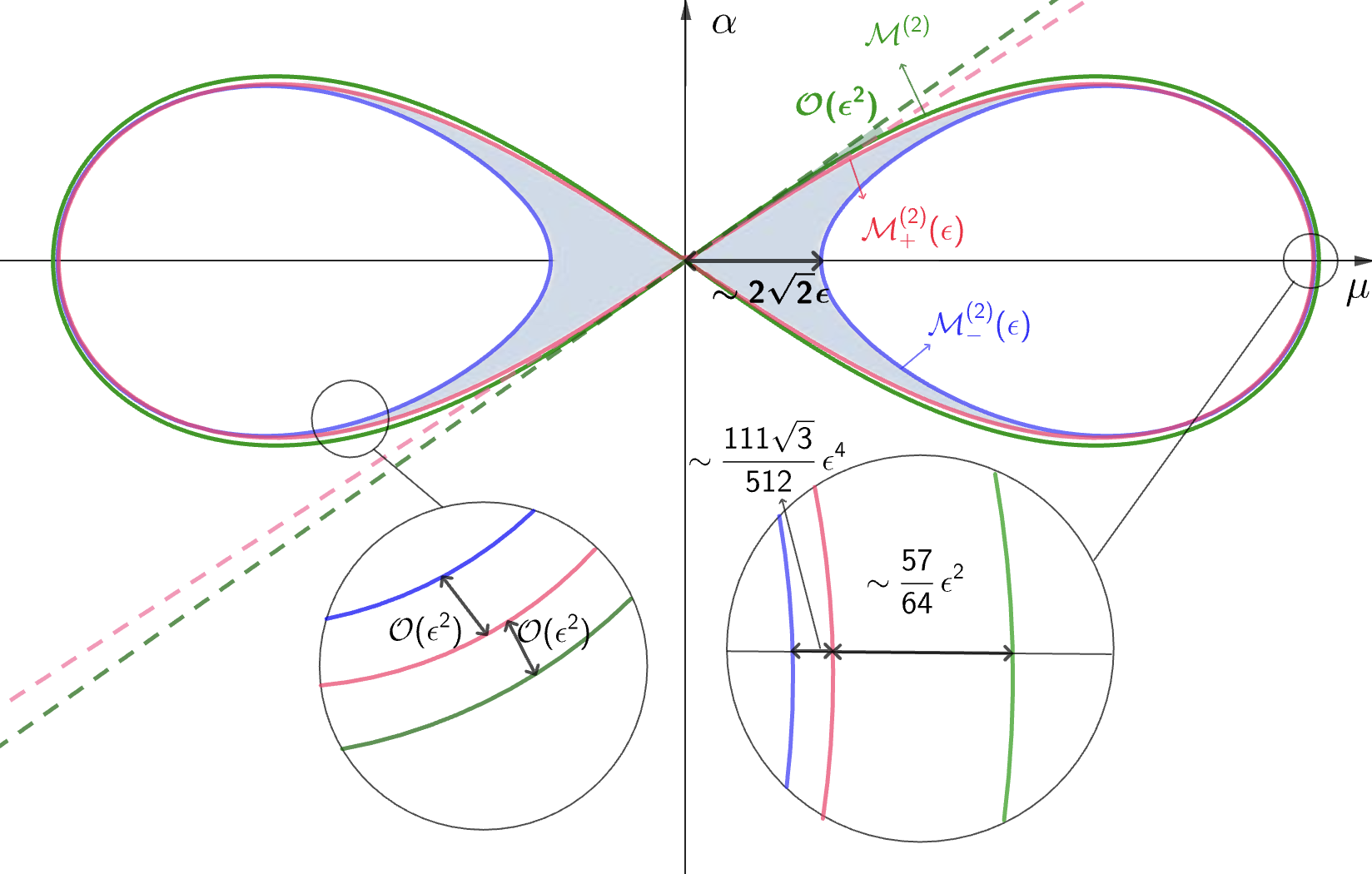}
    \hfill \includegraphics[width=0.42\linewidth]{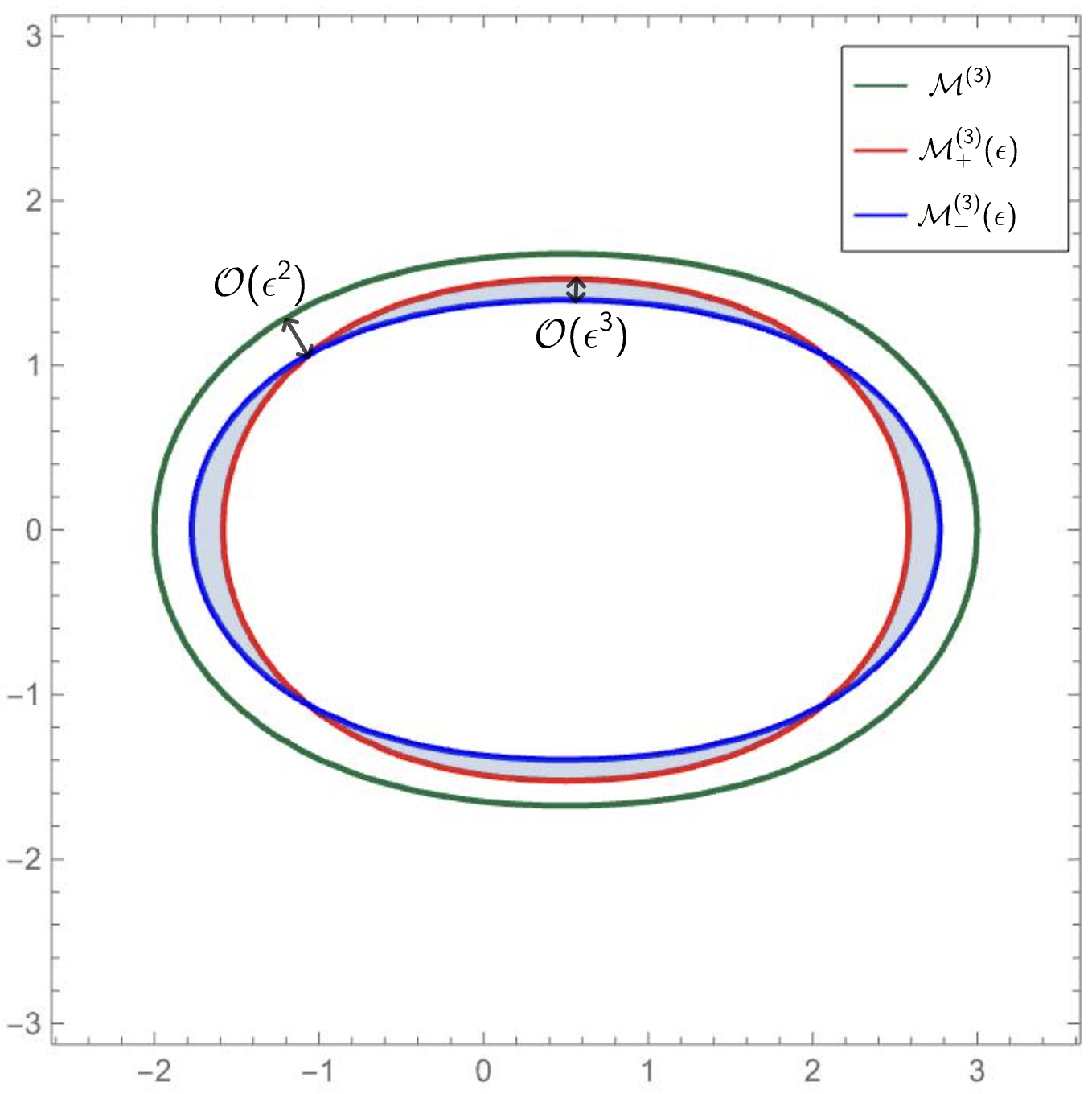}
    \caption{On the left, the  unperturbed McLean curve
    $ \cM^{(2)} $ (in  green)  bifurcates for $ \epsilon \neq 0 $  into the perturbed McLean curves $\cM_+^{(2)}(\e)$ (in red) and $\cM_-^{(2)}(\e)$  (in blue) which delimit the shaded instability region $\cU^{(2)}_\epsilon $. By \eqref{intersectionmcleans}  the  curves $\cM^{(2)}_\pm(\e)$ do not   intersect. 
    Near zero $\cM_-^{(2)}(\e)$ is approximated by the hyperbola $\mu^2 - 2\al^2 = 8\e^2$.
    On the right, a cartoon of  $\cM^{(3)}_+ (\e)$
    (in red) and $\cM^{(3)}_-(\e)$ (in blue),   that  could intersect  finitely many times.
    }
    \label{fig:pertmctotal}
\end{figure}

Complete  results
are given  in \Cref{TeoremoneFinale,TeoremoneMcLean}. 
Let us make some comments.
\\[1mm]
\noindent{\sc 1. Unification of longitudinal Benjamin-Feir and first high-frequency instability.}
\Cref{TeoremoneIntro,TeoremoneFinale,TeoremoneMcLean}  establish a unified origin for 
two instability phenomena previously viewed as distinct: the longitudinal Benjamin–Feir spectrum and the first unstable isola.
Although they appear to emerge from unrelated unperturbed eigenvalue pairs in a purely longitudinal setting, introducing the transverse parameter $ \alpha $   reveals their common genesis:  in the   extended  $(\alpha,\mu)$ space, 
both instabilities stem from the same pair of colliding unperturbed eigenvalues evaluated at different points of  $\cM^{(2)} \cap \{\alpha=0\}$.
Consequently, the two longitudinal unstable bands are merely   one-dimensional traces of the single, connected instability region 
$\cU^{(2)}_\e$.
\\[1mm]
\noindent{\sc 2. The global second spectral band.}
 \Cref{fig:spectrum} 
illustrates 
the curves traced by the eigenvalues 
$\lambda^{(2)}_\pm(\alpha,\mu,\e)$ in the complex plane as
$\mu$ varies,  for different  fixed values of 
 the transverse Fourier parameter $\alpha $. 
It shows how the  Benjamin–Feir figure 8 is deformed into the first longitudinal high-frequency isola as  
$ \alpha $ varies,  appearing  
as different slices of a single connected instability surface in the full 
Fourier-Bloch  parameter space.
\\[1mm]
{\sc 3. Splitting criterion.}
In view of 
\Cref{TeoremoneMcLean} 
the emergence of  unstable eigenvalues  is due to the {\em splitting} of the unperturbed McLean curve $\cM^{(\tp)}$  into separate curves $\cM^{(\tp)}_\pm(\e)$ as  $\e \neq 0$. 
The occurrence of this phenomenon is characterized by the non-vanishing of  an analytic function  
defined in a whole neighborhood of  $ \cM^{(\tp) }$. 
A sufficient condition is that one of its Taylor coefficient -- algorithmically computable --  is non-zero. 
This allows to prove \eqref{intersectionmcleans} 
providing a rigorous confirmation of McLean's numerical observations from the 1980s.

\begin{figure}[h!]
    \centering
    \begin{minipage}{0.25\linewidth}
        \centering
        \includegraphics[height=4cm, keepaspectratio]{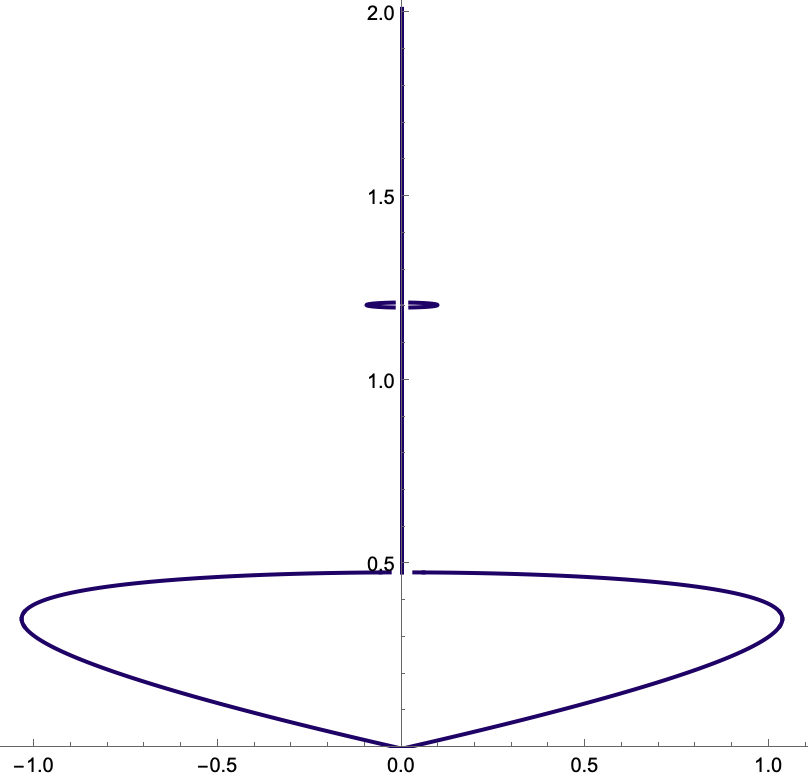}
        \captionof*{figure}{(A)}
    \end{minipage}
\hfill
    \begin{minipage}{0.25\linewidth}
        \centering
        \includegraphics[height=4cm, keepaspectratio]{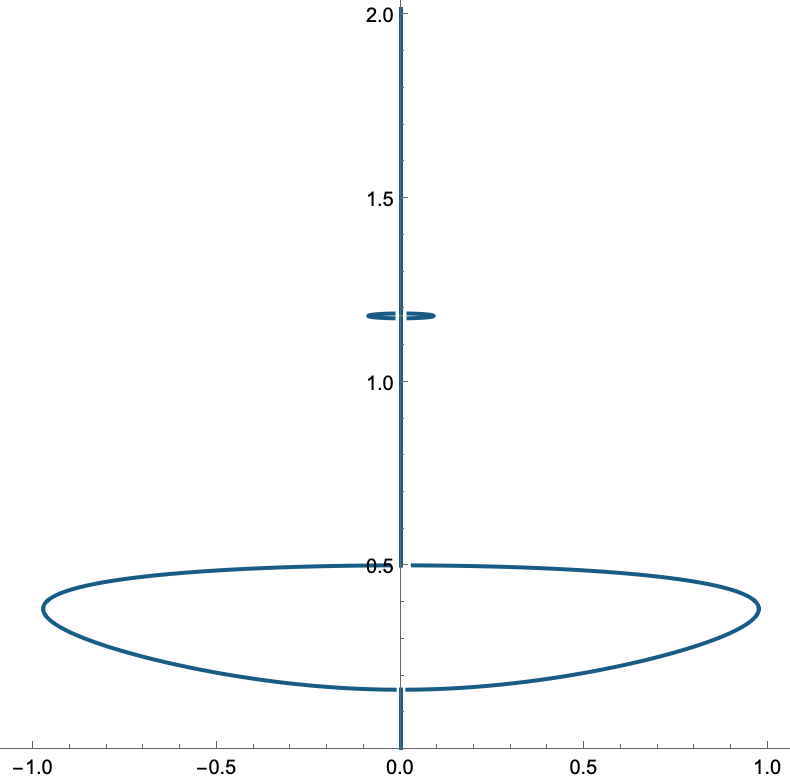}
        \captionof*{figure}{(B)}
    \end{minipage}
\hfill
    \begin{minipage}{0.11\linewidth}
        \centering
        \includegraphics[height=4cm, keepaspectratio]{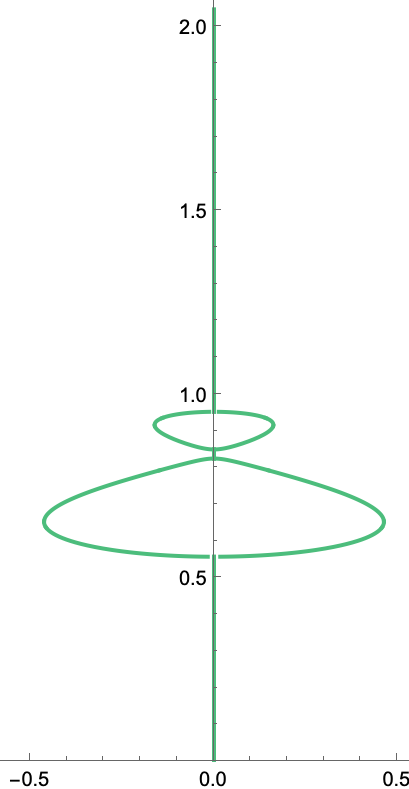}
        \captionof*{figure}{(C)}
    \end{minipage}
\hfill
    \begin{minipage}{0.11\linewidth}
        \centering
        \includegraphics[height=4cm, keepaspectratio]{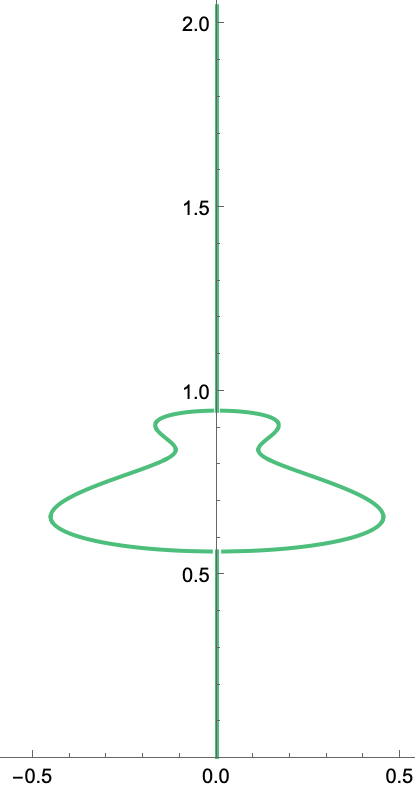}
        \captionof*{figure}{(D)}
    \end{minipage}
\hfill
    \begin{minipage}{0.11\linewidth}
        \centering
        \includegraphics[height=4cm, keepaspectratio]{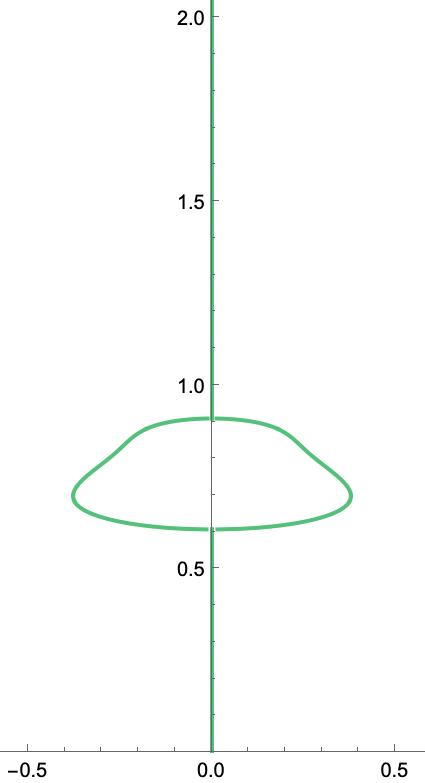}
        \captionof*{figure}{(E)}
    \end{minipage}
\hfill
    \begin{minipage}{0.11\linewidth}
        \centering
        \includegraphics[height=4cm, keepaspectratio]{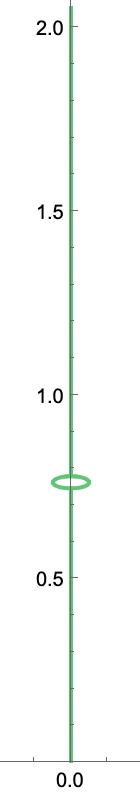}
        \captionof*{figure}{(F)}
    \end{minipage}
    \caption{
    In (A),  at $\alpha=0$, the eigenvalues trace the celebrated Benjamin–Feir ``figure-eight'' described in  \cite{BMV1} and the first high-frequency isola in \cite{BMV4}. 
As soon as $\alpha>0$, the ``figure-eight''
splits into two unstable isolas as shown in panel (B). These correspond  to the unstable spectral bands observed in finite depth in \cite{JTSY}, whose existence was established  only for amplitudes $\epsilon (\alpha, \mu) \to 0 $ 
as 
$(\alpha, \mu) \to 0 $.
As $\alpha$ increases to the value where the horizontal line in the $(\al, \mu)$ plane is tangent to $\cM^{(2)}_-(\e)$, see   \Cref{fig:pertmctotal},
 these components undergo a reconnection as shown in panel (C), 
 and subsequently merge into two  unstable  isolas as illustrated in panel (D).
 For even larger values of $\alpha$, these isolas shrink in size as shown in panel (E), until they 
 collapse onto the imaginary axis 
 when $\alpha$ reaches the 
maximal height of   $\cM^{(2)}_+(\epsilon)$. 
In  this  case the instability disappears, panel (F).
The topological transitions depicted in these graphs are rigorously proved by \eqref{lambda2.ae}.   
   }
    \label{fig:spectrum}
\end{figure}

\smallskip 

To conclude the introduction we describe the structure of the paper and some ideas of proof. 
In \Cref{part:I} we provide a comprehensive description of the  four eigenvalues of
$ \cL(\al, \mu, \epsilon)$
near zero, for  $(\alpha,\mu)$ small,  focusing on  the two unstable ones. The results of \Cref{part:I} are presented in \Cref{sec:22}. 
\\[1mm]
$ \bullet $
{\sc Fiber-Dirichlet-Neumann operator}.
A primary challenge is to establish 
the analytic properties
of the 
perturbed 
fiber Dirichlet-Neumann operator $ \cG (\alpha, \mu, \epsilon) $
obtained by conjugating the Dirichlet-Neumann operator via Bloch-Floquet transform as in \eqref{calLmua} (see \eqref{ellprobtransf}-\eqref{def:Galmu} for its precise definition).
\Cref{DNProp1} proves that  
$$
   \cG({\alpha, \mu, \e})    = |D|_{\al,\mu}  
   + \cG^\sharp({\alpha,\mu, \e}) \, , 
$$
where $ |D|_{\al,\mu} $
is the Fourier multiplier 
in \eqref{cLvero0} and 
$ \cG^\sharp({\alpha,\mu, \e})  $
is  a $ 1 $-smoothing operator of order $ \cO(\al^2 \epsilon )$.
Note that, in the purely longitudinal case, it simplifies to the Fourier multiplier  $ \cG (0,\mu, \epsilon ) = |D + \mu| $. 

Instead, in the $3d $ general case the operator 
$ {\cal L}(\alpha, \mu, \epsilon)$ 
exhibits  
Lipschitz singularities $(\al^2+\mu^2)^\frac12 $
as the unperturbed operator 
$ {\cal L}(\alpha, \mu, 0)$
in \eqref{cLvero0}.  
A key result 
is to recognize the decomposition 
\begin{equation}\label{cGame}
\cG(\al,\mu,\e) = \cG^{[\mathrm{I}]}(\al^2,\mu,\e) + (\al^2+\mu^2)^\frac12 \cG^{[\mathrm{II}]}(\al^2,\mu,\e)
\end{equation}
where the operators $\cG^{[\mathrm{I}]},\cG^{[\mathrm{II}]}$ depend analytically on their entries.

The  structure \eqref{cGame} is closed under composition, functional calculus and Cauchy integrals, and thus it is 
inherited by all the operators, matrices,  vectors, etc... throughout the Kato's reduction scheme.
This requires the analysis in appendices \ref{ap_analiticity}, \ref{ap_cAF}, \ref{ApC}.
The key step 
involves solving the  elliptic boundary value problem \eqref{ellprobtransf}, which depends on  $ (\alpha,\mu)$.  A major difficulty is that, for  $ (\alpha, \mu ) $ near $ (0,0)$, the solution of \eqref{ellprobtransf} lacks
fast decay 
on the zero Fourier mode,  
 as 
$ z \to  - \infty $ (cf.  \eqref{propg}). We overcome this obstacle,  by implementing a contraction mapping argument within a function space that permits a slightly divergent zero Fourier mode, cf. \Cref{rem:decay}.

\smallskip 

Then we perform a Kato reduction along a 
 $ 4  $ dimensional generalized space for any $ (\al,\mu,\epsilon)$ small enough. 
\\[1mm]
$ \bullet $ {\sc Block-decoupling.} A next key step is to perform a Taylor expansion 
of the $ 4 \times 4 $ 
reduced Kato matrix in the long wave approximation regime $(\alpha,\mu) \to (0,0) $, for any $ \epsilon $ fixed. In order to analyze the interaction of the four small eigenvalues, 
 we perform, following
\cite{BMV1},   a 
KAM-inspired block-diagonalization,  in the limit $(\alpha,\mu) \to (0,0) $, of  the reduced Kato matrix along its
$ 2 \times 2 $ stable and unstable subspaces. In the $ 3d $ case 
this procedure is very singular and 
breaks the structure \eqref{cGame}.
However we recognize that it  preserves 
the larger class of polar-analytic functions, cf. \Cref{def:tildeM}, namely,  which are analytic with respect to 
$$ 
\rho = \sqrt{\alpha^2+\mu^2} \, , 
\quad \theta = \arctan ( \alpha / \mu ) \, . 
$$
(note that  $ \cG({\alpha, \mu, \e})  $ in \eqref{cGame} is  polar-analytic near $(0,0)$).  
This is compatible with the cross at $(0,0) $ of $ \cM^{(2)}_+ (\e) $ in the $ (\alpha,\mu)$ plane, cf.  \Cref{fig:pertmctotal}.

\smallskip

This allows us to resolve difficulties $i$) and $ii$) mentioned above.

\smallskip

In \Cref{part:II}  we focus on $(\al, \mu)$   
near the unperturbed McLean curves $ \cM^{(\tp)}$,  away from the origin.  
\\[1mm]
$ \bullet $ {\sc Global Kato reduction.}
After establishing that the pair of 
colliding eigenvalues on 
the McLean curves  $ \cM^{(\tp)} $ remains uniformly separated from the rest of the spectrum in a neighborhood of
$ \cM^{(\tp)} $ (\Cref{lem:description,separation_eigenvalues}), we perform a Kato reduction. This reduction is  
perturbative in $ \e $, not in
$(\alpha, \mu) $,  and holds in a full neighborhood of the McLean curves. 
A key difficulty is that the spectral projectors associated with the resonant eigenvalues,
$$
            P^{(\tp)}_{\al,\mu,\e} := \frac{1}{2\pi\im}\oint_{\Gamma^{(\tp)}
(\al,\mu)} (\lambda - 
\cL(\al,\mu,\e))^{-1} \de\lambda 
$$
are, \emph{a priori}, defined only locally, as Cauchy integrals over contours $ \Gamma^{(\tp)}
(\al,\mu) $ enclosing the resonant eigenvalues, which vary 
along the  McLean curve.
However, by  analytic continuation,  the  spectral projector $ P^{(\tp)}_{\al,\mu,\e} $ extends analytically all around $ \cM^{(\tp)} $.
For $ \tp = 2 $, as $(\al,\mu)
\to (0,0) $, 
the two-dimensional Kato projector becomes ill-defined, due to interaction with the other two Benjamin-Feir  eigenvalues, see \Cref{rem:4aut}. However, by exploiting the block-decoupling procedure in \Cref{part:I} and the uniqueness of skew-Hamiltonian projectors, we  manage to analytically extend the projector  to a full neighborhood of the origin, for uniformly small $|\e|$.

\smallskip

In this way, we resolve difficulty $iii$) mentioned above. 
This global construction has  important consequences. 
\\[1mm]
$ \bullet $ 
{\sc Global splitting criterion. }
The entries of the 
 reduced $2\times2$ Kato matrix, which  represents the action of $\cL(\alpha,\mu,\e)$ on the range of 
$ P^{(\tp)}_{\al,\mu,\e} $, 
are analytic functions globally defined in a neighborhood of $\cM^{(\tp)}$, in contrast to   previous works 
where they were constructed only near specific  points.
Consequently, the splitting of the perturbed McLean curves is characterized by the non-vanishing of an analytic function $\fb^{(\tp)}(\alpha,\mu, \e)$,   defined on this entire neighborhood.
A sufficient condition for  the splitting is the non-vanishing of its first Taylor coefficient $ \fb_\tp(\al,\mu) $  
(cf. \Cref{lem:inst}). 
Crucially, by analyticity, verifying the global non-vanishing   
of $ \fb_\tp(\al,\mu) $ along $ \cM^{(\tp)}$ reduces to a local computation at a {\it single} point of the curve, implying splitting everywhere except possibly at finitely many points. 
In this paper we implement this novel conceptual approach  
for $ \tp = 2, 3 $.  
Using the expansions of Appendix \ref{sec:5} we verify that this  condition holds for $ \tp = 2 $  
(\Cref{lem:coeff.b2}) along the entire McLean curve 
$ \cM^{(2)} $,  while for $ \tp = 3 $ we rely on \cite{CNS}.

The complete results  of \Cref{part:II} are presented in \Cref{sec:23}. 


\section{Main results}\label{sec:intro}

The water waves equations describe the 
evolution of a 
$ 3 $-dimensional  
incompressible, inviscid and irrotational fluid
under the action of gravity, occupying the time dependent region 
with infinite depth 
$$
    \cD_\eta := \{(x,y,z)\in \R^3 \ :  \ 
z
\leq \eta(t,x,y)\}  \, .
$$
The irrotational velocity field is the gradient  
of a harmonic scalar potential $\Phi := \Phi(t,x,y,z) $  
determined by its trace $ \psi(t,x,y) =\Phi(t,x,y,\eta(t,x,y)) $ at the free surface
$ z = \eta (t, x,y ) $.
Actually $\Phi$ solves 
(see e.g. \cite[corollaries 2.46,2.49]{Lan}) 
the elliptic equation  
\begin{equation}\label{Phi.ell}
    \Delta_{x,y,z} \Phi = 0 \  \text{ in }\, {\mathcal D}_\eta, \quad
     \Phi(t, x,y,\eta(t,x,y)) = \psi(t,x,y) \, ,  \quad
\pa_z\Phi(t,x,y,z) \to 0 \ \text{ as }
   z \to - \infty \, . 
\end{equation}
\noindent 
{\bf Water waves equations.} 
The time evolution of the fluid is determined by two boundary conditions at the free surface. 
The first is that the fluid particles  remain, along the evolution, on the free surface   (kinematic
boundary condition), and the second one is that the pressure of the fluid  
is equal, at the free surface, to the constant atmospheric pressure  (dynamic boundary condition). 

As shown by Zakharov \cite{Zak1} and Craig-Sulem \cite{CS}, 
the time evolution of the fluid is determined by the 
following equations for the unknowns $ (\eta (t,x,y), \psi (t,x,y)) $,  
\begin{equation}\label{WWeq}
 \eta_t  = G(\eta)\psi \, , \quad 
  \psi_t  =  
- g \eta - \dfrac{|\grad \psi |^2}{2} + \dfrac{\big( G(\eta) \psi + \grad \eta \cdot \grad  \psi \big)^2}{2(1+|\grad \eta|^2)}  \, , 
\end{equation}
where  $G(\eta)$ is 
the Dirichlet-Neumann operator 
\begin{equation}
\label{DN1}
[G(\eta) \psi](x,y)  := \sqrt{1+|\nabla \eta|^2} \,  \pa_{\vec{n}} \Phi_{|z = \eta (x,y) }
= (\pa_z \Phi) (x,y, \eta(x,y)) - \nabla \eta(x,y) \cdot (\nabla \Phi)(x, y,  \eta(x,y))
\end{equation}
and  $\Phi(x,y,z)$ is the unique solution to \eqref{Phi.ell}. We denote by
$ \grad:= (\pa_x, \pa_y) $ 
and by
 $ \vec n $ 
 the exterior normal
 to $ \cD_\eta $.
\\[1mm]
{\bf Hamiltonian 
structure.} 
Equations \eqref{WWeq} admit the Hamiltonian formulation 
\begin{equation}\label{PoissonTensor}
 \pa_t \vet{\eta}{\psi} = \cJ \vet{\nabla_\eta \mathcal{H}}{\nabla_\psi \mathcal{H}}, \quad \quad \cJ:=\begin{bmatrix} 0 & \uno \\ -\uno & 0 \end{bmatrix},
 \end{equation}
where $ \nabla_{\eta}, \nabla_{\psi} $ 
 denote the variational gradients of the Hamiltonian
$ \mathcal{H}(\eta,\psi) := 
 \frac12 \int_{\R^2} \left( \psi \,G(\eta)\psi +g \eta^2 \right) \de x \de y $, 
which  is the sum of the kinetic  energy 
 and potential gravitational energy  of the fluid.  A simple scaling allows us to set the gravity field $ g = 1 $. 
The associated symplectic $ 2$-form is 
$$
{\cal W}  
\left( \begin{pmatrix}
\eta_1 \\
\psi_1
\end{pmatrix}, 
\begin{pmatrix}
\eta_2 \\
\psi_2
\end{pmatrix}
\right) 
= (  -  \psi_1 , \eta_2 )_{L^2} + (\eta_1  , \psi_2 )_{L^2}  \, . 
$$
{\bf Reversible  and translation invariance symmetries.} 
In addition to being Hamiltonian, the water waves 
system \eqref{WWeq} is reversible with respect to the involution 
$$
\rho\vet{\eta(x,y)}{\psi(x,y)} := \vet{\eta(-x,-y)}{-\psi(-x,-y)}, \quad \text{i.e. }
\mathcal{H} \circ \rho = \mathcal{H} \, ,
$$
or equivalently the water waves vector field $ X(\eta, \psi) $ 
anticommutes with $ \rho $,  i.e. $ X \circ \rho = - \rho \circ X $.
This property follows noting that the Dirichlet-Neumann operator satisfies 
(see e.g. \cite{BMV2}) 
$ G(  \eta^\vee ) [ \psi^\vee ] = \left( G(\eta) [\psi ] \right)^\vee  $ 
where 
$ f^\vee (x,y) := f (-  x,-y)  $. 
Furthermore the Dirichlet-to-Neumann operator is invariant under space translations
$$
 \tau_{ \theta} G(\eta)\psi = G( \tau_{\theta} \eta)[ \tau_{ \theta} \psi] \, , \quad 
 \tau_{ \theta} u (x,y) := u 
 ((x,y) +  \theta ) \, , \ \  \forall  \theta \in \R^2 \, ,
$$ 
and therefore system  \eqref{WWeq} is invariant 
under space translations. 
\\[1mm]
{\bf Stokes waves.} 
Noteworthy solutions of \eqref{WWeq} are  Stokes traveling  waves 
solutions of the form   
$$
\eta(t,x,y)=\breve \eta(x-ct) \, ,  \quad \psi(t,x,y)=\breve \psi(x-ct) \, , 
$$
for some real  $c$   and  $2\pi$-periodic functions  $(\breve \eta (x), \breve \psi (x)) $ depending on the space variable $ x $ only.
In a reference frame in translational motion with constant speed $\vec c 
=(c, 0) $,  the water waves equations \eqref{WWeq}  become
\begin{equation}\label{travelingWW}
\eta_t  = \vec{c} \cdot \grad \eta+G(\eta)\psi \, , \quad 
 \psi_t  = \vec{c} \cdot \grad \psi - \eta - \dfrac{|\grad \psi|^2}{2} + \dfrac{1}{2(1+|\grad \eta|^2)} \big( G(\eta) \psi + \grad \eta \cdot \grad \psi \big)^2   
\end{equation}
and Stokes waves  are  equilibrium 
steady solutions 
of \eqref{travelingWW}. 

Rigorous bifurcation of Stokes waves goes back to 
\cite{LC,Nek,Struik}.
 The following statement 
 is taken from  \cite{BMV2}. 
 
 \begin{theorem}\label{PTW}
{\bf (Stokes waves)} There exist $\e_0 >0$ and a unique family  of real analytic 
 solutions $(\eta_\e(x), \psi_\e(x), c_\e)$, parameterized by the amplitude $|\e| \leq \e_0$, of 
$$
c \, \eta_x+G(\eta)\psi = 0 \, , \quad 
c \, \psi_x -  \eta - \dfrac{\psi_x^2}{2} + 
\dfrac{1}{2(1+\eta_x^2)} \big( G(\eta) \psi + \eta_x \psi_x \big)^2  = 0 \, , 
$$
  such that
 $ \eta_\e (x), \psi_\e (x) $ are $2\pi$-periodic;  $\eta_\e (x) $ is even
and $\psi_\e (x) $ is odd. They have  the expansion 
$$
\eta_\e (x) =  \e \cos (x) + \tfrac{\e^2}{2} \cos (2x) + 
  \cO(\e^3)\, , \ 
  \psi_\e (x) = \e \sin (x) + \tfrac{\e^2}{2} \sin (2x)   
  +\cO(\e^3) \, ,  \ c_\e = 1 + \tfrac{\e^2}{2} +\cO(\e^3) \,.
$$
More precisely for any  $ \sigma \geq  0 $ and $ s > \frac52 $, there exists $ \e_0>0 $ such that
the map $\e \mapsto (\eta_\e, \psi_\e, c_\e)$ is analytic from $B_{\e_0}(0) \to H^{\sigma,s} (\T)\times H^{\sigma,s}(\T)\times \R$, where 
$ H^{\sigma,s}(\T) $ is the space of $ 2 \pi $-periodic analytic functions 
$$ 
u(x) = \sum_{k \in \mathbb{Z}} u_k e^{\im k x} \qquad \text{with} \qquad
\| u \|_{\sigma,s}^2 := \sum_{k \in \mathbb{Z}} |u_k|^2 \langle k \rangle^{2s} 
e^{2 \sigma |k|} < + \infty \, . 
$$
\end{theorem}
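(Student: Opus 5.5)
The existence of the Stokes wave family follows from a Crandall--Rabinowitz bifurcation argument in spaces of analytic functions; the same Lyapunov--Schmidt reduction also yields analyticity in $\e$ and the expansion.

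\textbf{Setup.} We restrict to $x$-dependent functions, with $\eta$ even, $\psi$ odd and $\int_\T \eta\,\de x = 0$. On this subspace we write the system as $F(\eta,\psi,c)=0$. The map $F : H^{\sigma,s}_{\rm even}\times H^{\sigma,s}_{\rm odd}\times\R \to H^{\sigma,s-1}_{\rm odd}\times H^{\sigma,s-1}_{\rm even}$ is analytic. This relies on the classical fact that $\eta\mapsto G(\eta)$ is analytic near $0$ as a map into bounded operators $H^{\sigma,s}\to H^{\sigma,s-1}$, for $s>5/2$ (say) and $\sigma\ge0$; it can be proved via the Craig--Sulem series expansion or a flattening change of variables together with analytic estimates. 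The parity restrictions are compatible with $F$ because of the reversibility $G(\eta^\vee)[\psi^\vee]=(G(\eta)\psi)^\vee$.

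\textbf{Linearization.} At $(0,0,c)$ the linearized operator is
\[
(\hat\eta,\hat\psi)\mapsto\big(c\,\hat\eta_x+|D|\hat\psi,\;c\,\hat\psi_x-\hat\eta\big).
$$
On the Fourier mode $k$ its determinant is $-c^2k^2+|k|$. For $c=1$ the kernel in the symmetric class is one-dimensional, spanned by $(\cos x,\sin x)$; the zero mode is excluded by the mean-zero normalization. The range has codimension one. The transversality condition $\partial_c d_{(\eta,\psi)}F\,[\cos x,\sin x]\notin \mathrm{Range}$ holds, since on mode $1$ the derivative of the determinant with respect to $c$ is $-2\neq0$.

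\textbf{Bifurcation and expansion.} Crandall--Rabinowitz, in its analytic version via the analytic implicit function theorem applied to the Lyapunov--Schmidt reduced equation, gives a unique local analytic curve $\e\mapsto(\eta_\e,\psi_\e,c_\e)$ with $(\eta_\e,\psi_\e)=\e(\cos x,\sin x)+\cO(\e^2)$. Inserting a power series ansatz and solving order by order gives the stated coefficients. At order $\e^2$, using $G(\eta)\psi = |D|\psi - |D|(\eta|D|\psi) - \partial_x(\eta\partial_x\psi)+\dots$, one obtains the $\cos 2x,\sin 2x$ terms with coefficient $1/2$. At order $\e^3$, the solvability condition on mode $1$ gives $c_2=1/2$, and the symmetry $\e\to-\e$ (a half-period translation) forces $c_1=0$.

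\textbf{Main obstacle.} The main technical step is analyticity of the Dirichlet--Neumann operator in the analytic scale $H^{\sigma,s}$, with tame bounds that close the implicit function theorem. I would take this from known results, for instance Craig--Sulem or Lannes \cite{Lan}, or alternatively from the construction in \cite{BMV2} from which the statement is quoted.
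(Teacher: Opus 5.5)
Your route is the standard one: analyticity of $\eta\mapsto G(\eta)$ in $H^{\sigma,s}$, followed by an analytic Crandall--Rabinowitz/Lyapunov--Schmidt argument. The paper does not prove this theorem; it quotes it from \cite{BMV2}, which supplies exactly the analyticity input you defer. However, the functional setup you wrote down does not satisfy the Crandall--Rabinowitz hypotheses, and the failure is at the zero Fourier mode.

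On constants the linearization acts as $\hat\eta_0\mapsto(0,-\hat\eta_0)$ and $\hat\psi_0\mapsto(0,0)$. So the only zero-mode kernel vector is $(0,1)$, and it is removed by the oddness of $\psi$, not by $\int_{\mathbb T}\eta\,dx=0$. The constant direction $\hat\eta_0$ is not in the kernel; it is precisely what the linearization maps onto the constants of the second component. If you impose $\int\eta=0$ but keep $H^{\sigma,s-1}_{\mathrm{even}}$, which contains constants, as the target of the second equation, the range misses both the constants and the mode-one direction. The range then has codimension two and the operator has Fredholm index $-1$. So your claim that the range has codimension one is false, and Crandall--Rabinowitz does not apply as set up.

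The simplest repair is to drop the normalization and take $\eta\in H^{\sigma,s}_{\mathrm{even}}$ including constants. Then the zero mode is an isomorphism. For $k\ge2$ the blocks $\begin{pmatrix}-k&k\\-1&k\end{pmatrix}$ have determinant $k-k^2\neq0$ and inverses of size $O(1/k)$, so they gain one derivative. Kernel and cokernel are then one-dimensional, and your transversality check ($\partial_c$ of the mode-one determinant equals $-2$) goes through unchanged.

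Alternatively, keep $\int\eta=0$, but project the second equation onto mean-zero functions and prove separately that its mean vanishes along solutions. That is true in deep water, but it needs an argument you do not give. The second equation is Bernoulli, $\tfrac12|W-c|^2+\eta=\tfrac{c^2}{2}$ on the surface, with $W=\Phi_x-i\Phi_z$. The kinematic condition makes $(W-c)\,d\zeta$ real on the surface. Cauchy's theorem on the period strip then gives $\frac1{2\pi}\int|W-c|^2\,dx=c^2$, and averaging Bernoulli forces $\frac1{2\pi}\int\eta\,dx=0$.

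The rest of your sketch is correct: the parity compatibility via reversibility, $c_1=0$, and the coefficients $\tfrac12$ at order $\varepsilon^2$.
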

{\small {\noindent\bf Notation:} For any $ s \in \R $ we denote 
$H^s(\T) $ the Sobolev space $ H^{0,s}(\T)$ with norm $\norm{\cdot}_s:= \norm{\cdot}_{0,s} = \| \ \|_{H^s(\T)}$. As above we denote  
closed balls in $\R$ and $\R^2$ as
$    B_r(x_0):= \{ x \in \R\, : \ |x-x_0|\leq r\} $ and $  B_r(x_0,y_0):= \{ (x,y)\in\R^2 \, : \ |(x,y) - (x_0,y_0)| \leq r\} $. 
We may use the same notation for balls respectively in $\C$ and $\C^2$. 
}
 \normalfont

\smallskip

\noindent{\bf Linearization at Stokes wave.}
 In order to determine the stability/instability of a Stokes wave, we linearize  \eqref{travelingWW}  with $ \vec c= (c_\e,0) $, at the stationary solution $(\eta_\e(x),\psi_\e(x))$. 
Introducing  the celebrated  “good unknown” of Alinhac 
see, for instance \cite{NS,BMV1,BBHM}, 
the linearized system  
reads
\begin{equation}\label{OpL2}
   \pa_t \begin{bmatrix}
        u \\
        v
    \end{bmatrix} = \wt \cL_\e
    \begin{bmatrix}
        u \\
        v
    \end{bmatrix} \ , \quad 
    \wt\cL_\e = 
    \begin{pmatrix}
        - \pa_x\circ(V_\e -c_\e) &\vline & G(\eta_\e) \\
           \cline{2-2}
        -1 -(V_\e -c_\e) (B_\e)_x &\vline &  -(V_\e -c_\e)\pa_x \end{pmatrix}
\end{equation}
where $ (V_\e, B_\e) $ are respectively the horizontal and vertical velocities at the free surface.
The operator $\wt \cL_\e$ has the Hamiltonian and reversible form
\begin{equation*}
    \wt\cL_\e = \cJ \wt \cB_\e\ , \qquad \wt\cB_\e = \begin{pmatrix}
        1 +(V_\e-c_\e)(B_\e)_x  &\vline & (V_\e-c_\e)\pa_x \\
           \cline{2-2}
        - \pa_x\circ(V_\e-c_\e) &\vline & G(\eta_\e) \\
        \end{pmatrix} \, , \qquad \rho\circ \wt\cB_\e =\wt\cB_\e \circ \rho\, , \qquad \cJ\text{ in }\eqref{PoissonTensor}.
\end{equation*}
{\noindent\bf Conformal flattening of the domain.} 
In order to study  the action of the Dirichlet-Neumann operator under longitudinal 
and transverse   perturbations 
it  is convenient to use a classical  conformal flattening 
diffeomorphism  (see  Lemma \ref{LeviCivita})  to map  the fluid domain
\be\label{flatdom}
\cD_{\eta_\e} =  \{(x,y,z)\in \R^3\, :\ z \leq  \eta_\e(x)\} \quad  
\text{to the half space}
\quad  
\{(X,Y,Z)\in \R^3\, :\ Z \leq  0 \}  \, .
\ee
Such   diffeomorphism 
transforms  the free surface 
$ \{ z = \eta_\e (x) \} $ to 
$ \{ Z =  0\} $
and, 
restricted 
to the horizontal variables $x,y$,  undergoes 
a transformation of the form 
\begin{equation}\label{cocatsurf}
    x = X + \fp(X) 
\quad 
\, , \qquad y = Y \, , 
\end{equation}
where 
 $\fp(X)$   is a  small $2\pi$-periodic odd  function. We define the associated composition operator
\begin{equation}\label{diffeo}
(\mathfrak{P} u)(X,Y) := u(X+\fp(X),Y) \, . 
\end{equation}
The Levi-Civita map transforms the elliptic problem \eqref{Phi.ell} into 
\begin{equation}\label{transf_laplace}
    \begin{cases}
        \Delta_{X,Z} \vartheta (X,Y,Z) + (1+d_\e(X,Z))  \pa_Y^2 \vartheta (X,Y,Z) = 0 \, ,\qquad Z< 0\, ,\\
        \vartheta  (X,Y,Z)|_{Z=0} = \varsigma (X,Y) \, , \quad 
        \lim\limits_{Z\to -\infty}\pa_Z \vartheta  (X,Y, Z) = 0 \, , 
    \end{cases}
\end{equation}
where $d_\e(X,Z)$ is  the analytic function defined in \eqref{def:d} and $ \varsigma := \mathfrak{P} \psi $ is the new Dirichlet datum. 
Denoting by $ \vartheta := \vartheta_\varsigma $ the unique solution to \eqref{transf_laplace} in an appropriate sobolev space --see again \cite[Corollary 2.49]{Lan}-- we define the modified
 Dirichlet-to-Neumann  operator
\begin{equation}\label{def:cG}
    \cG_{\e} [\varsigma ](X,Y) := \pa_Z\vartheta_\varsigma  (X,Y,Z)_{|Z=0} \, .
\end{equation} 
The operator $ \cG_{\e}$ is related to the Dirichlet-Neumann operator by 
\begin{equation}\label{G.id}
            G(\eta_\e) =  \mathfrak{P}^{-1}\circ \frac{1}{1+\fp'}  \circ \cG_{ \e}\circ\mathfrak{P} 
            =  \mathfrak{P}^{\top}  \circ \cG_{ \e}\circ\mathfrak{P} 
        \end{equation}
where $\mathfrak{P}^{\top}$ denotes the  real transposed operator, as we report in Appendix \ref{ApC}.

Hereafter we drop for convenience the capital letters $X,Y,Z$ and use $x,y,z$ also for the new set of coordinates. 
Conjugating the operator $\wt \cL_\e$ 
in \eqref{OpL2} 
via the real, 
symplectic and reversibility preserving change of variables 
   $$
\cP :=
\begin{pmatrix}
    (1+\fp')\mathfrak{P} & 0 \\
    0 & \mathfrak{P} 
    \end{pmatrix} 
$$
we get the  real Hamiltonian and reversible operator
\begin{equation}\label{cLvero}
    \cL_{\e} 
    := 
   \cP \, \wt \cL_\e \, \cP^{-1}   
   = \begin{pmatrix}
        \pa_x \circ (1+p_\e(x))  
        &  \cG_{\e} \\
        -(1+a_\e (x))
         & (1+p_\e (x))\pa_x 
    \end{pmatrix} = 
    \cJ \cB_{\e}
 \end{equation}
where  
\begin{equation}\label{calL.ham}
 \cB_{\e} 
     = \begin{pmatrix}
        1+a_\e (x) & -(1+p_\e (x))\pa_x \\
        \pa_x \circ (1+p_\e(x)) & \cG_{\e}
    \end{pmatrix}\, ,
\end{equation}
is symmetric on $L^2(\R^2;\C^2)$,
 and  
$ p_\e(x), a_\e(x) $
in \eqref{calL.ham} are the  $ 2 \pi $-periodic, real, even functions 
\begin{equation}\label{def:pa}
 p_\e(x) :=  \displaystyle{\frac{ c_\e-V_\e (x+\mathfrak{p}(x))}{ 1+\mathfrak{p}_x(x)}} - 1 \, , \quad a_\e(x):=   \displaystyle{\frac{1+ (V_\e (x + \mathfrak{p}(x)) - c_\e)
 (B_\e)_x(x + \mathfrak{p}(x))  }{1+\mathfrak{p}_x(x)}} - 1 \, .
\end{equation}
These functions are  real analytic in $ \e $ and admit a Taylor expansion, cf. \cite{NS,BMV1},  
\be\label{apexp}
\begin{aligned}
p_\e (x) & = 
\sum_{\ell \geq 1} \e^\ell p_\ell (x) = 
-2\e\cos(x) + \e^2 \Big( \frac32-2\cos(2x) \Big)   
+\cO(\e^3) \, ,  \\
a_\e (x) &  =  \sum_{\ell \geq 1} \e^\ell a_\ell (x) 
= -2\e\cos(x)+2\e^2\big(1-\cos(2x)\big)  
+\cO(\e^3) \, , 
\end{aligned}  
\ee
where  each $ a_\ell (x), p_\ell (x) $ has the form, cf. \cite[equation (5.22)]{BMV5}, 
\be\label{pafparity}
    f(x) = \sum_{k=0}^\ell f^{[k]} \cos (kx)\,  \quad \text{with } \ f^{[k]} = 0 \ \ \forall k \not\equiv \ell \ \textup{mod }2\, .
\ee

\paragraph{Transverse and longitudinal perturbations.}\label{FB.trans}
Since the operator $\cL_\e$ in 
\eqref{cLvero}  
is  $2\pi $-periodic in the $x$ variable, and independent of the $y$ variable,  the natural framework for analyzing its linear instability is provided by the {\it Fourier-Floquet-Bloch theory}. 
The operator 
$ \cL(\al,\mu,\e) $ 
in  \eqref{calLmua}
turns out to be 
    \begin{equation}
    \label{cLame1}
\cL(\al,\mu,\e) :=\begin{pmatrix}
        (\pa_x+\im\mu) \circ (1 +p_\e(x)) & \cG (\al,\mu, \e)  \\[2mm]
        -(1+a_\e (x)) & (1+p_\e (x))\circ (\pa_x + \im \mu)
        \end{pmatrix}  \, 
\end{equation}
where   $\cG (\al,\mu, \e) $  is 
the {\em fiber Dirichlet-to-Neumann operator}, obtained by choosing in  \eqref{transf_laplace} the Dirichlet  datum  
$$ 
\varsigma(x,y) = e^{\im(\al y + \mu x)} g (x) \, , \quad g (x) \in H^1(\T;\C) \, , \quad (\alpha, \mu) \in \R^2 \, , 
$$
 and looking for an ansatz solution of 
 the  elliptic problem 
\eqref{transf_laplace} of the form 
$$
\vartheta_\varsigma (x,y,z)= e^{\im(\al y + \mu x)} \Theta_{g}(x,z)\, . 
$$
Then $ \Theta (x,z) :=\Theta_{g} (x, z) $ solves the following elliptic problem 
on the half cylinder $(x,z)\in \T\times\R_- $,  
\begin{equation}
\label{ellprobtransf}
    \begin{cases}
    \pa_z^2 \Theta (x,z) + (\pa_x+\im\mu)^2 \Theta (x,z) - \al^2 (1+d_\e(x,z)) \Theta (x,z) = 0\\
    \Theta(x,z)_{|z=0} =  g(x) \, , \quad 
    \lim_{z \to-\infty} \pa_z \Theta(x,z) = 0  \,  ,
\end{cases}
\end{equation}
where $d_\e(x,z)$ is the function in \eqref{def:d}. In
\Cref{sub:ell.f} we prove the existence of a unique solution of 
\eqref{ellprobtransf}.
Then, the  fiber Dirichlet-to-Neumann operator  is 
\begin{equation}\label{def:Galmu}
\cG(\al,\mu,\e)[g] (x):= \pa_z \Theta_{g} (\al,\mu,\e;x,z)\vert_{z=0} \, . 
\end{equation}
An eigenvalue $ \lambda $ of $ \cL (\alpha, \mu, \epsilon )$ with eigenvector $ v(x) $ provides a Bloch wave as \eqref{h.intro}.

\begin{remark}
The spectrum of $\cL(\al,\mu,\e)$ 
satisfies, arguing as  in  
\cite[equation (2.21)]{JTSY},   
$$
\sigma_{L^2(\R^2)\times H^{1/2}(\R^2)}(\cL_\e)  \supseteq \bigcup_{(\al, \mu) \in \R \times [-\frac12, \frac12)} 
\sigma_{L^2(\T)\times H^{1/2}(\T)}(\cL(\al, \mu, \e)) \, .
$$
\end{remark}

We regard
$\cL(\al,\mu,\e)$ as an operator $H^1(\T,\C^2)\to L^2(\T,\C^2)$  
equipped with the 
complex scalar product
\begin{equation}\label{scalarcomplex}
(f,g) := \frac{1}{2\pi} \int_{\T} \left( f_1 \overline{g_1} + f_2 \overline{g_2} \right) \, \text{d} x  \, , 
\quad
\forall f= \vet{f_1}{f_2}, \ \  g= \vet{g_1}{g_2} \in  L^2(\T, \C^2) \, .
\end{equation}

\begin{proposition}\label{DNonR3}
  There exists $\e_0 >0$ such that for any $|\e| \leq \e_0$, any $(\alpha, \mu) \in \R^2$, the operators 
  \be\label{dominiGL}
  \cG(\alpha, \mu, \e) :
  H^1(\T) \subset L^2 (\T) \to L^2 (\T) \, , 
  \quad 
  \cL(\al, \mu,\e) : 
  H^1(\T;\C^2) \subset L^2 (\T^2, \C^2) \to L^2 (\T;\C^2) \, , 
 \ee
 are well defined 
and satisfy  the following properties:
  \begin{itemize}
    \item[(i)]{\sc Self-adjointness and Hamiltonianity:} 
    $\cG(\al,\mu,\e)$ is self-adjoint on $L^2(\T)$, and $\cL(\al, \mu, \e)$ is  complex Hamiltonian, i.e.
    \be\label{cLame0}
    \cL(\al,\mu,\e)  = 
\cJ \cB (\al,\mu,\e)  \ , \quad \cJ \mbox{  in } \eqref{PoissonTensor}  \ , 
\ee
  where   
    \begin{equation}\label{operator Bmualep}
\cB(\al,\mu,\e)  := \begin{pmatrix}
        1+a_\e (x) & -(1+p_\e (x))\circ (\pa_x + \im \mu)\\
        (\pa_x+\im\mu) \circ (1 +p_\e(x)) & \cG (\al,\mu, \e) 
        \end{pmatrix}\,  
\end{equation}
 is self-adjoint  
with respect to the 
  $L^2(\T;\C^2)$  
complex 
scalar product \eqref{scalarcomplex}. 
\item[(ii)] {\sc Reversibility:}  For any $g \in H^1(\T)$, 
\begin{equation}\label{DN-rev2}
\cG(\al,\mu,  \e)  [ \bar{g}^\vee ] = \bar{\left(\cG(\al,\mu,  \e) [g] \right)}^\vee  \quad
\text{where} \quad  g^\vee (x) := g (-  x)  \, .
\end{equation} 
Consequently
$ \cL(\al,\mu,\e)$ 
is reversible, respectively $\cB(\al,\mu,\e)$ is reversibility-preserving, with respect to  the complex involution
\begin{equation}
\label{def:cinvolution}
   \varrho_c \vet{u(x)}{v(x)} := \vet{\overline{u(-x)}}{-\overline{v(-x)}} \, ,
\end{equation} 
namely 
\begin{equation}\label{cLr:rev}
    \cL(\al,\mu,\e) \circ \varrho_c = -\varrho_c \circ \cL(\al,\mu,\e) \ , \qquad  \cB(\al,\mu,\e) \circ \varrho_c = \varrho_c \circ \cB(\al,\mu,\e)\, .
\end{equation}
\item[(iii)] {\sc Gauge covariance:} for any $k \in \Z$, 
\begin{equation}
    \label{Gmu+k}
\cG(\al,\mu+k,\e) = e^{-\im k x}\cG(\al,\mu,\e)e^{\im k x}\,  , 
\end{equation}
and consequently
\begin{equation}\label{cBLmu+k}
 \cL(\al,\mu+k,\e) = e^{-\im k x}\cL(\al,\mu,\e)e^{\im k x}\,  , 
 \qquad 
    \cB(\al,\mu+k,\e) = e^{-\im k x}\cB(\al,\mu,\e)e^{\im k x}\, .
\end{equation}
 \item[(iv)] {\sc unperturbed operators:}  
 $ \cG(\alpha, \mu, 0) $ is 
 the Fourier multiplier
 $ \cG(\alpha, \mu, 0) = |D|_{\al,\mu} = ((D+\mu)^2+\al^2)^\frac12 
 $ 
     and $\cL(\al, \mu, 0)$ is the Fourier multiplier operator \eqref{cLvero0}.
     \item[(v)]{\sc Symmetry:} 
     It results 
\begin{equation}\label{cGcLmenomu}
    \begin{aligned}
        &\overline{\cG(\al, \mu,\e)} = \cG(\al,-\mu,\e)   \, , \qquad   \overline{\cL(\al, \mu,\e)} = \cL(\al,-\mu,\e)\, , \qquad \overline{\cB(\al, \mu,\e)} = \cB(\al,-\mu,\e) \, ,\\
        &\cG(-\al, \mu,\e) = \cG(\al,\mu,\e)   \, , \qquad   \cL(-\al, \mu,\e) = \cL(\al,\mu,\e)\, , \qquad {\cB(-\al, \mu,\e)} = \cB(\al,\mu,\e) \, .
    \end{aligned}
\end{equation}
In particular  
$ \cG(\al,0,\e) $, 
$ \cL(\al,0,\e) $ 
and $ \cB (\al,0,\e) $ are real operators. 
\end{itemize}
\end{proposition}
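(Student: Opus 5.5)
The plan is to derive every property of $\cG(\al,\mu,\e)$ from the weak formulation of the elliptic problem \eqref{ellprobtransf}, and then read off the statements for $\cL$ and $\cB$ by algebra.

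\textbf{Well-posedness.} Write $\Theta = \chi(z) g + \Theta_0$, where $\chi$ is a cutoff with $\chi(0)=1$ and $\Theta_0$ vanishes at $z=0$. The quadratic form attached to \eqref{ellprobtransf} is
\[
Q(\Theta,\Phi)=\int_{\T\times\R_-}\Big(\pa_z\Theta\,\overline{\pa_z\Phi}+(\pa_x+\im\mu)\Theta\,\overline{(\pa_x+\im\mu)\Phi}+\al^2(1+d_\e)\Theta\overline{\Phi}\Big).
\]
Since $|d_\e|\lesssim|\e|$ is small, $1+d_\e\geq \tfrac12$, and $Q$ is coercive on a homogeneous $\dot H^1$-type space. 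The main obstacle is the zero Fourier mode when $\al^2+\mu^2$ is small. There the Poincaré-type control is lost and the solution does not decay as $z\to-\infty$; only $\pa_z\Theta\to0$ survives. I would therefore work, as in \Cref{sub:ell.f}, with the energy space of $\Theta$ for which $\nabla\Theta\in L^2$ and $\al\Theta\in L^2$, allowing a bounded, slightly divergent zero mode. I would then apply Lax--Milgram in this space, or equivalently a contraction around the explicit unperturbed solution
\[
\Theta=\sum_j g_j\, e^{\im jx}\, e^{|j+\mu|_\al z}, \qquad |j+\mu|_\al:=((j+\mu)^2+\al^2)^{1/2}.
\]
Elliptic regularity then gives that $\cG(\al,\mu,\e)g=\pa_z\Theta|_{z=0}$ maps $H^1$ to $L^2$. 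Setting $\e=0$ in this explicit formula gives (iv), namely $\cG(\al,\mu,0)=|D|_{\al,\mu}$, and inserting it into \eqref{cLame1} with $p_0=a_0=0$ gives \eqref{cLvero0}. The operator $\cL$ is well defined as a map $H^1\to L^2$ because every entry of \eqref{cLame1} is.

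\textbf{Self-adjointness and Hamiltonianity (i).} Take $\Theta_g$ and $\Theta_h$ and apply Green's identity on $\T\times(-R,0)$. The lateral boundary terms cancel by periodicity, and the terms at $z=-R$ vanish as $R\to\infty$ because $\pa_z\Theta\to0$ with enough integrability in the chosen space. This gives $(\cG g,h)=Q(\Theta_g,\Theta_h)$. Since $d_\e$ is real, $Q$ is Hermitian, so $(\cG g,h)=\overline{(\cG h,g)}$.

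For $\cB$, the diagonal entries $1+a_\e$ and $\cG$ are self-adjoint. The off-diagonal entries are mutually adjoint because $(\pa_x+\im\mu)^*=-(\pa_x+\im\mu)$ and $p_\e$ is real. Checking $\cJ\cB=\cL$ against \eqref{cLame1} is then a direct multiplication.

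\textbf{Reversibility (ii), gauge covariance (iii), symmetry (v).} These follow from uniqueness of solutions to \eqref{ellprobtransf}.

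For (ii), use that $d_\e(x,z)$ is even in $x$, which follows from the evenness of $\eta_\e$ and the oddness of $\fp$. Then $\overline{\Theta_g(-x,z)}$ solves \eqref{ellprobtransf} with datum $\bar g^\vee$ and the same $(\al,\mu)$: conjugation and reflection together map $\pa_x+\im\mu$ to $\pa_x+\im\mu$. This gives \eqref{DN-rev2}. Since $a_\e,p_\e$ are even and real, \eqref{cLr:rev} follows entrywise.

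For (iii), observe that $e^{\im kx}\Theta$ solves the problem at $\mu$ exactly when $\Theta$ solves it at $\mu+k$, which gives \eqref{Gmu+k} and hence \eqref{cBLmu+k}.

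For (v), conjugating the equation maps $\mu\mapsto-\mu$, and $\al$ enters only through $\al^2$; this gives \eqref{cGcLmenomu}. Taking $\mu=0$ in the first line of \eqref{cGcLmenomu} shows that $\cG(\al,0,\e)$, $\cL(\al,0,\e)$ and $\cB(\al,0,\e)$ are real operators.
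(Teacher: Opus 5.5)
Your arguments for (ii)--(v), and the Green's-identity computation in (i), are the paper's. It too obtains reversibility, gauge covariance and the symmetries from uniqueness applied to $\overline{\Theta_g(-x,z)}$, $e^{-\im kx}\Theta_g$ and $\overline{\Theta_g}$. It proves symmetry of $\cG$ by the divergence theorem, with the contribution at $z\to-\infty$ vanishing for $(\al,\mu)\neq(0,0)$.

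The genuine difference is well-posedness. The paper does not argue variationally; your alternative, a contraction around the explicit solution, is what it actually does. It writes $\Theta_g=e^{z|D|_{\al,\mu}}g+\Theta^\sharp_g$ and obtains $\Theta^\sharp_g$ by the Neumann series $(\uno-L_{\al,\mu}\circ d_\e)^{-1}$ in the weighted spaces $H^{s,b}_{-a_0,a}$ (\Cref{lem:Thetasharp}). The growing zero mode is needed there so that the contraction is uniform as $(\al,\mu)\to(0,0)$ and analytic in a complex neighbourhood.

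Your Lax--Milgram route in the energy space is more elementary. It is uniform over all $(\al,\mu)\in\R^2$ at once. The energy class is visibly invariant under the three transformations above, so (ii), (iii), (v) hold on all of $\R^2$ directly; the paper instead constructs $\cG$ for $|\mu|<\frac23$ and extends by covariance. What your route does not give is what the paper's construction is built for. That is the splitting $\cG=|D|_{\al,\mu}+\cG^\sharp$, with $\cG^\sharp$ one-smoothing of size $\cO(\al^2\e)$ uniformly near the origin, and the regularity $\cG\in\cA\cap\tF$ of \Cref{DNProp1} used throughout the rest of the paper.

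There is one real gap, in (i). Green's identity shows only that $\cG$ is \emph{symmetric} on $H^1$, i.e.\ $(\cG g,h)=(g,\cG h)$. Self-adjointness also requires $D(\cG^*)=H^1$, and symmetry together with the mapping property $H^1\to L^2$ does not give that.

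The paper closes this with its decomposition. By \eqref{est:thetasharpnonzero}, $\cG^\sharp=\cG-|D|_{\al,\mu}$ is bounded on $L^2$, and it is symmetric by the same identity. So $\cG$ is a bounded symmetric perturbation of the self-adjoint multiplier $|D|_{\al,\mu}$ with domain $H^1$, hence self-adjoint by Kato--Rellich.

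In your framework you need the analogous quantitative bound $\|\cG g-|D|_{\al,\mu}g\|_{L^2}\lesssim\|g\|_{L^2}$ for each fixed $(\al,\mu)$. You can get it by applying the energy estimate and elliptic regularity to $\Theta_g-e^{z|D|_{\al,\mu}}g$. This function solves the flat problem with zero Dirichlet datum and forcing $\al^2 d_\e\Theta_g$, so its normal derivative at $z=0$ lies in $L^2$ (indeed in $H^{1/2}$) with norm $\lesssim\|g\|_{L^2}$. Your unquantified ``elliptic regularity'' step should be upgraded to exactly this estimate.
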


\begin{proof}
The proof is  in  \Cref{sec:proofs.FDN}. 
\end{proof}

The regularity  properties of $(\al,\mu,\e) \mapsto \cG(\al,\mu,\e)$  and $(\al, \mu, \e) \mapsto \cL(\al, \mu, \e)$ are established  in  \Cref{DNProp1}.

\paragraph{ Complex Symplectic structure.} 
We equip the space  $ L^2 (\T, \C^2) $
with  the {\it complex symplectic} form
\begin{equation}\label{ses}
{\cal W}_c  \, \colon L^2 (\T, \C^2) \times L^2 (\T, \C^2) \to \C \, , \quad  {\cal W}_c(f, g) := (\cJ f,g) \, ,
\end{equation}
where $\cJ $ is the symplectic matrix defined in \eqref{PoissonTensor}.  
Since $ \cJ $ is skew self-adjoint,
i.e.  $ \cJ^* = - \cJ $, and invertible,   
the map $ {\cal W}_c $ is a 
{\it complex symplectic form}, 
  namely, cf. \cite[Def.  1]{EM},
\begin{itemize}
\item {\sc Sesquilinear:}  
$ {\cal W}_c (f, g) $
is linear in  $ f $ and anti-linear in $ g $; 
\item {\sc Skew-Hermitian:}
$ {\cal W}_c (f, g) = - 
\overline{{\cal W}_c (g, f)} $; 
\item {\sc Non-degenerate:}
$ {\cal W}_c (f, g) = 0 $, for any $ f, g \in L^2 (\T, \C^2)  $, 
implies that $ f = 0 $. 
\end{itemize} 
We also remind 
the following basic definition: 

\begin{definition}\label{def:sympsub}
  {\bf (Symplectic subspace)}   A subspace $\cV $ of 
     $ L^2(\T,\C^2)$ is  symplectic  if $\cW_c\vert_{\cV}$ is non-degenerate. 
 \end{definition}

The  
spectrum of $ \cL (\alpha, \mu, \epsilon )$ has the following properties, which are 
inherited by 
the Hamiltonian and reversible structure 
of $ \cL (\alpha, \mu, \epsilon )$:
\begin{itemize}
\label{symspectrum} 
 \item 
$ \sigma (\cL(\al,\mu,\e)) $ is symmetric with respect to the imaginary axis: 
if $ \lambda $ is an eigenvalue of
$ \cL (\al,\mu,\e)  $ so is  $ - \bar \lambda $. 
\item 
In view of  \eqref{cGcLmenomu} 
the spectrum  
$ \sigma (\cL(\al, -\mu,\e)) = \overline{  \sigma (\cL(\al, \mu,\e)) } $. In particular $\cL(\al,0,\e)$ is real and $\sigma(\cL(\al,0,\e))$ is symmetric also with respect to the real axis. 
\item  In view of the covariance property \eqref{cBLmu+k} 
the spectrum $  \sigma (\cL(\al,\mu,\e)) $ is a 1-periodic set with respect to $\mu$.
 \end{itemize}
We first describe the unperturbed spectrum  at 
$ \e = 0 $. 

\subsection{Unperturbed spectrum and McLean curves}\label{sec:McLeanunp}

Since the operator $\cL (\al,\mu,\e) $
in \eqref{cLame1} is complex Hamiltonian according to \eqref{cLame0}, 
if $ \lambda $ is an eigenvalue of
$ \cL (\al,\mu,\e)  $ so is  $ - \bar \lambda $,  
and therefore eigenvalues of $ \cL(\alpha,\mu,\e)   $ with non zero real part may only arise from multiple 
eigenvalues of
the  
Fourier multiplier operator  
$ \cL(\alpha,\mu, 0) $ in \eqref{cLvero0}, with actually opposite Krein signature, see e.g. \cite{HK,KP}.
Its  spectrum is formed by  
the  purely imaginary eigenvalues, for any $ k \in \Z $, $ \sigma \in \{\pm 1 \} $, 
\begin{equation}
\label{eig.0}
\lambda_k^\sigma (\alpha,\mu) = \im \omega^\sigma_k (\alpha,\mu)
 = \im \big(   \sigma k + \mu  - \sigma \Omega_{\al} (\sigma k 
 + \mu ) \big) 
 \qquad \text{where} 
 \qquad  
 \Omega_\al(\varphi) := (\varphi^2+\alpha^2)^{\frac14} \, .
\end{equation}
Note that there is freedom in parametrizing the spectrum since the  eigenvalues satisfy the 
{\it covariance}-property
\be\label{labeling.eig}
\lambda_k^\sigma (\al, \mu + \ell) = \lambda^\sigma_{k + \sigma 
\ell}(\al, \mu) \, ,
\quad \forall k, \ell \in \Z \, , \ \alpha \, , \mu \in \R \, , \sigma = \pm \, , 
\ee
(this explains the variety of different conventions encountered in the literature).  
If $\Omega_{\al}(\sigma k + \mu) \neq 0$ the 
eigenvector associated to 
the eigenvalue 
$\lambda_k^\sigma (\alpha,\mu)$ in 
\eqref{eig.0} is
\begin{equation}\label{unperturbed.eigv}
  v^\sigma_k :=   v^\sigma_k (\alpha,\mu)
= 
\frac{1}{\sqrt{2 \Omega_{\al}(\sigma k +\mu)}}
\vet{\im \sigma \, \Omega_{\al}(\sigma k + \mu)  }{1} e^{\im \sigma  k x}\,  
 \, ,
\quad
k \in \Z \, ,  \ 
\sigma = \pm \, ,
\end{equation}
note that  $ v_k^\sigma (\alpha,\mu)$
is not defined at $ (\alpha,\mu) = (0, - \sigma k )$.
These  eigenvectors satisfy  
\begin{equation}\label{symp.nonzero}
    \cW_c \big( v_k^\sigma(\al,\mu) , v_{k'}^{\sigma'}(\al,\mu) \big) = \begin{cases}
        - \im & \mbox{ if } k=k' \mbox{ and } \sigma = \sigma' = + \\
         \im & \mbox{ if } k=k' \mbox{ and } \sigma = \sigma' = -\\
         0 & \mbox{ otherwise } ,
    \end{cases} 
\end{equation}
where 
$ \cW_c $ is the symplectic form in \eqref{ses}, 
and 
\begin{equation}\label{rev.nonzero}
        \varrho_c v_k^\sigma (\al,\mu) = - v_k^\sigma (\al,\mu) \, , \qquad \quad \text{with}\ 
        \varrho_c  \mbox{ in } \eqref{def:cinvolution} \, . 
\end{equation}
The multiple eigenvalues of  $\cL(\al,\mu,0)$ may be  
only double or have 
algebraic multiplicity $ 4 $, 
as we prove in  \Cref{lem:description} and \Cref{separation_eigenvalues} below. 
Actually the only eigenvalue with  algebraic multiplicity four is  $ 0 $,  
and this happens when    
$ (\alpha, \mu) = (0,0) $, 
\begin{equation}\label{4collision}
\lambda^+_0(0,0) = \lambda^-_0(0,0) = 
\lambda^+_{1}  (0,0) = 
  \lambda^-_{1}  (0,0)   = 0 \, .
\end{equation}
Non-zero eigenvalues of $ {\cal L}(\alpha,\mu, 0)  $ 
are either simple or double.
\\[1mm]
{\bf McLean curves.}
As noted by McLean \cite{Mc2}, 
the 
 $(\al, \mu)$ for which  {\em at least} two  eigenvalues among $\{ \lambda_k^\sigma(\al, \mu)\}_{k, \sigma}$ in  \eqref{eig.0} 
 are equal,  are classified in two distinct classes, parametrized by an integer $\tm \in \N_0 := \{0 \} \cup \N $, 
\begin{subequations}\label{def:McleanI_II_m}
\begin{align}
    & \label{def:McleanI}\text{Class (I)}_\tm\colon \ \quad 
\lambda_\tm^+ (\alpha,\mu) = \lambda_\tm^- (\alpha,\mu) &&\iff \qquad 2 \tm = \Big[ (\mu+\tm)^2 + \alpha^2 \Big]^{1/4} +
\Big[ (\mu-\tm)^2 + \alpha^2 \Big]^{1/4} \, ,  
\\
&\label{def:McleanII} \text{Class (II)}_\tm \colon \ \quad 
\lambda_\tm^+ (\alpha,\mu) = \lambda_{\tm+1}^- (\alpha,\mu) &&\iff  2 \tm + 1 = \Big[ (\mu + \tm)^2 + \alpha^2 \Big]^{1/4} +
\Big[ (\mu-\tm-1)^2 + \alpha^2 \Big]^{1/4} \, .
\end{align}
\end{subequations}
Equal eigenvalues $ \lambda^{\sigma}_{k_1} (\alpha, \mu) =
 \lambda^{\sigma}_{k_2} (\alpha, \mu)$ with the same sign $ \sigma $ occur only 
at  zero, cf. \eqref{4collision} and   \eqref{samesignint}.
Any  
possible eigenvalue collision is classified by 
 \eqref{def:McleanI_II_m} up to shifting $\mu$ by an integer because, in view of 
the covariance property 
\eqref{labeling.eig}, 
\be\label{covag}
\lambda^+_{k_1}(\al, \mu) =  \lambda^-_{k_2}(\al, \mu) \quad
\Leftrightarrow
\quad
\begin{cases}
    (\al, \mu + \frac{k_1-k_2}{2})  \in (\text{I})_{\frac{k_1+k_2}{2}} & \mbox{ if } k_1 \equiv_2  k_2  \\
    (\al, \mu + \frac{k_1-k_2+1}{2})  \in (\text{II})_{\frac{k_1+k_2-1}{2}} & \mbox{ if } k_1 \not\equiv_2 k_2 \, . 
\end{cases}
\ee
It is also convenient to parametrize the classes 
 \eqref{def:McleanI_II_m}  by a single integer  
$ \tp \in \Z  $, that  physically describes the order of ``plane waves interactions",  defining the  
unperturbed McLean  curves  \begin{equation}\label{mcleanmanifoldsp0}
 {\cM}^{(\tp)} := \Big\{ (\alpha, \mu) \in \R^2 \ | \ 
 m_\tp (\alpha, \mu )  = 0 
 \Big\}  \quad
 \text{where} \quad 
 m_\tp (\alpha, \mu ) :=
 \begin{cases}
     \omega_{\frac{\tp}{2}}^+(\al, \mu) -\omega_{\frac{\tp}{2}}^-(\al, \mu)  &  \text{if} \ \tp \ \text{is even,} \\
     \omega_{\frac{\tp-1}{2}}^+ (\alpha, \mu) - \omega_{\frac{\tp+1}{2}}^- (\alpha, \mu) &  \text{if} \ \tp \ \text{is odd} \,  .
 \end{cases}
 \end{equation}
The first two McLean curves are
\begin{equation}\label{McLean1}
\Big[ (\mu+1)^2 + \alpha^2 \Big]^\frac14+
\Big[ (\mu-1)^2 + \alpha^2 \Big]^\frac14 = 2 \, ,
\quad
\Big[ (\mu-2)^2 + \alpha^2 \Big]^\frac14+
\Big[ (\mu+1)^2 + \alpha^2 \Big]^\frac14 = 3 \, .
\end{equation}
In view of the periodicity of the spectrum 
$ \sigma (\cL (\alpha,\mu,0) )$ one could regard the McLean curves 
$ \cM^{(\tp)}$ on the cylinder 
$ \R \times (\R/\Z) $.
Equivalently the set of Fourier-Bloch parameters $ (\alpha, \mu) \in \R^2 $ where two unperturbed  eigenvalues $ \lambda^{\sigma}_{k_1} (\alpha, \mu) =
 \lambda^{\sigma'}_{k_2} (\alpha, \mu)$ coincide  is 
$$
\bigcup_{\tp \in \N_0, k \in \Z}
\Big[ (0,k) + {\cM}^{(\tp)} \Big]    
$$
since, in view of \eqref{covag}, 
\begin{equation}\label{mcleanmanifoldsp0k}
(\alpha, \mu) \in (0,k) +
{\cM}^{(\tp)} 
\quad 
\iff \quad 
 \begin{cases}
     \omega_{\frac{\tp}{2}-k}^+(\al, \mu) =\omega_{\frac{\tp}{2}+k}^-(\al, \mu)  &  \text{if} \ \tp \ \text{is even,} \\
     \omega_{\frac{\tp-1}{2}-k}^+ (\alpha, \mu) = \omega_{\frac{\tp+1}{2}+k}^- (\alpha, \mu) &  \text{if} \ \tp \ \text{is odd} \,  .
 \end{cases}
  \end{equation}
The graphs of $ \cM^{(2)}$ and 
$ \cM^{(3)}$  are plotted in \Cref{fig:Mc16}.  
The next result describes
all the McLean curves. 

\begin{proposition}[{\bf Spectral collisions and McLean curves}]\label{lem:description}\hspace{0.1cm}\\
{$\bullet$ \sc Same sign wave interactions.} 
Let $ k \neq m $. For any $\sigma=\pm $,  
\begin{equation}\label{samesignint}
    \omega_k^\sigma(\al,\mu) = \omega_{m}^\sigma (\al,\mu) \quad \iff \quad 
   \begin{aligned}
       &k-m  = 1 \ \text{ and }
   \ (\al,\mu)=(0, \sigma (1-k))\, , \text{ or}\\
   &k-m = -1 \ \text{ and }
   \ (\al,\mu)=(0,-\sigma k) \, ,
   \end{aligned}
\end{equation}
and
$ \omega_k^\sigma(0,\mu) = \omega_{m}^\sigma (0,\mu)  = 0 $, 
corresponding to the quadruple 
Benjamin-Feir collision at the origin.
\\[1mm]
{$\bullet$ \sc Opposite sign wave interactions.}  The unperturbed McLean curves
    $ \cM^{(\tp) }$   are
\begin{equation}\label{oppsigninteractions}
    \cM^{(\tp)} =
\begin{cases}
\emptyset &   \text{if} \  \tp\leq -1 \\
(0,0)  &   \text{if}  \ \tp = 0   \\ 
(0,0),(0,1) &   \text{if}  \ \tp = 1   \\ 
\text{a non-trivial  compact analytic variety  with  
 a cross singularity at $(0,0)$}&   \text{if}  \ \tp = 2   \\
\text{a real  analytic 1-dimensional, connected, compact manifold} &  \text{if}  \ \tp \geq 3 \, .  
 \end{cases}
\end{equation}
All the  $ \cM^{(\tp)}$ are compact.  For any $ \tp \geq 3 $ each $ \cM^{(\tp)}$ does not intersect $ (0,0) $.
For any $\tp\geq 2$  each $ {\cM}^{(\tp)}$ is symmetric under the reflection 
    $ (\alpha, \mu) \mapsto ( -\alpha, \mu ) $.
    In addition, if $\tp$ is even,  $\cM^{(\tp)}$ is symmetric also under $(\al,\mu)\mapsto(\al,-\mu)$, resp. if $\tp$ is odd,
    $ {\cM}^{(\tp)}$ is symmetric with respect to $(\al,\mu)\mapsto (\al,1-\mu)$. Each McLean curve
    $ \cM^{(\tp) }$  intersects the axis $\{\al=0\}$ at
\begin{equation}\label{sing.ML}
        \begin{aligned}
            &\mu_*^+ (\tp) = \frac14 (1+\tp^2)\quad &&\mu_*^- (\tp) = -\frac14 (1+\tp^2)&&\text{if }\tp \text{ is even},\\
            &\mu_*^-(\tp):= -\frac{\tp-1}{2} - \Big( \frac{\tp-1}{2}\Big)^2\, , \qquad  &&\mu_*^+(\tp):=  1+\frac{\tp-1}{2} + \Big( \frac{\tp-1}{2}\Big)^2  \quad &&\text{if }\tp \text{ is odd}  \, .
        \end{aligned}
    \end{equation}
    and, on $\{ \alpha >0 \} $, is the graph of an analytic function $\al_\tp : (\mu_*^-(\tp),\mu_*^+(\tp)) \to \R_+$  ($\al_2$ is not analytic at $\mu=0$). 
    The McLean curves do not intersect each other and are nested as illustrated in \Cref{fig:Mc16}, 
    namely    \begin{equation}\label{outinunp}
        \cM^{(\tp)}\subset \cU_{\tp+1}^+:=\{m_{\tp+1}(\al,\mu)>0 \} =  \text{bounded region enclosed by }\cM^{(\tp+1)} \, , 
    \end{equation}
where each  function
$ m_\tp (\alpha, \mu ) $ is  
defined in \eqref{mcleanmanifoldsp0}.
\end{proposition}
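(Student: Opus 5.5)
The whole statement reduces to explicit properties of the dispersion functions \eqref{eig.0}. With $\varphi=\sigma k+\mu$ one has $\omega^\sigma_k=\sigma(\varphi-\Omega_\al(\varphi))$. The basic tool is the elementary estimate
$$
|\Omega_\al(a)-\Omega_\al(b)|\le \big||a|-|b|\big|^{1/2}\le |a-b|^{1/2},\qquad a,b\in\R .
$$
It comes from $|x^{1/4}-y^{1/4}|\le|x-y|^{1/4}$ applied to $x=a^2+\al^2$, $y=b^2+\al^2$, together with $|a^2-b^2|\le (|a|+|b|)\,||a|-|b||$. We also record when equality holds: it requires $\al=0$ and $\min(|a|,|b|)=0$.

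\emph{Same sign interactions.} Suppose $\omega^\sigma_k=\omega^\sigma_m$ with $k\neq m$. Setting $a=\sigma k+\mu$ and $b=\sigma m+\mu$, this reads $a-b=\Omega_\al(a)-\Omega_\al(b)$, where $a-b=\sigma(k-m)$ is a nonzero integer. The estimate then gives $1\le|a-b|\le|a-b|^{1/2}$. Hence $|k-m|=1$ and the estimate is an equality, which forces $\al=0$ and one of $a,b$ to vanish. The other one is then $\pm1$, and $\Omega=1$ there, so the common eigenvalue is $0$. Solving $a=0$ or $b=0$ for $\mu$ gives the two cases in \eqref{samesignint}. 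The converse is a direct check.

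\emph{Opposite sign interactions.} By \eqref{covag} it suffices to treat $m_\tp=0$. Write $a=\lfloor \tp/2\rfloor$ and $b=\lceil \tp/2\rceil$, so $a+b=\tp$. From \eqref{eig.0}, $m_\tp=0$ is equivalent to
$$
F_\tp(\al,\mu):=\Omega_\al(\mu+a)+\Omega_\al(\mu-b)=\tp .
$$
Since $F_\tp\ge0$, the curve is empty for $\tp\le-1$. For $\tp=0,1$ the estimate forces $\al=0$ and lists the points directly. For $\tp\ge2$ we argue in three steps.

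First, $\al\mapsto F_\tp(\al,\mu)$ is even, strictly increasing for $\al>0$ and tends to $+\infty$. Hence for each $\mu$ there is a positive solution $\al$ if and only if $F_\tp(0,\mu)<\tp$, and it is unique.

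Second, we study $F_\tp(0,\mu)=|\mu+a|^{1/2}+|\mu-b|^{1/2}$. On $[-a,b]$ it is concave, with maximum $\sqrt{2\tp}$ at the midpoint. Outside $[-a,b]$ it is strictly monotone. Solving $\sqrt{s+\tp}+\sqrt{s}=\tp$ gives $s=((\tp-1)/2)^2$, which yields exactly the two endpoints \eqref{sing.ML}. Since $\sqrt{2\tp}<\tp$ for $\tp\ge3$, the set $\{F_\tp(0,\cdot)<\tp\}$ is the interval $(\mu_*^-,\mu_*^+)$. For $\tp=2$ it is the same interval minus $\{0\}$, because $F_2(0,0)=2$.

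Third, the implicit function theorem applies because $\pa_\al F_\tp>0$ for $\al>0$. This gives the analytic graph $\al_\tp$ on $\{\al>0\}$. At the endpoints $(0,\mu_*^\pm)$ the function $F_\tp$ is analytic, since $\mu+a$ and $\mu-b$ do not vanish there, and $\pa_\mu F_\tp\neq0$. So the two branches $\pm\al_\tp$ glue into a compact connected analytic $1$-manifold.

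For $\tp=2$ the origin needs separate treatment. There $F_2=\Omega_\al(\mu+1)+\Omega_\al(\mu-1)$ is analytic, and we Taylor expand $F_2-2$ at $(0,0)$. The first-order terms vanish, and the quadratic part should be a nondegenerate indefinite form $c(\mu^2-2\al^2)$, up to a constant. The Morse lemma then gives the cross singularity. The symmetries follow directly: $\al\mapsto-\al$ is obvious, and swapping the roles of $\mu+a$ and $\mu-b$ gives $\mu\mapsto-\mu$ when $a=b$, and $\mu\mapsto1-\mu$ when $b=a+1$.

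\emph{Nesting.} The curve $\cM^{(\tp+1)}$ corresponds to the pair $(a',b')$ obtained from $(a,b)$ by raising one entry by $1$. The estimate gives $\Omega_\al(x\pm1)\le\Omega_\al(x)+1$, with equality only in the degenerate case $\al=0$, $x=0$ or $x\pm1=0$. These degenerate points are excluded on $\cM^{(\tp)}$ for $\tp\ge2$, with the origin checked separately. Consequently $F_{\tp+1}<F_\tp+1=\tp+1$ on $\cM^{(\tp)}$, so $\cM^{(\tp)}$ lies in the bounded region $\{F_{\tp+1}<\tp+1\}$ enclosed by $\cM^{(\tp+1)}$. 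This gives \eqref{outinunp}, after matching the sign convention of $m_{\tp+1}$, and disjointness of the curves follows. In particular no $\cM^{(\tp)}$ with $\tp\ge3$ meets the origin.

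I expect the main obstacles to be the careful equality cases in the Hölder estimate, which are needed for strictness in the nesting, and the local structure at $\tp=2$ near the origin. The latter also requires checking that the quadratic part is indefinite and that the two local branches match the global graphs $\pm\al_2$.
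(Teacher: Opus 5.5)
Your outline is correct. For the same-sign bullet and for the description of each single curve it follows the paper's argument: the triangle inequality in $\mathbb{R}^2$ plus $|\sqrt u-\sqrt v|\le\sqrt{|u-v|}$ with its equality case, monotonicity of $\al\mapsto m_{\tp}(\al,\mu)$, the explicit values on $\{\al=0\}$, and the indefinite quadratic part (tangents $\mu=\pm\sqrt2\,\al$) at the origin for $\tp=2$.

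Two steps are wrong as written.

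\textbf{The basic estimate.} Your derivation does not work. The bound $|x^{1/4}-y^{1/4}|\le|x-y|^{1/4}$ together with $|a^2-b^2|=(|a|+|b|)\,\big||a|-|b|\big|$ only gives $|\Omega_\al(a)-\Omega_\al(b)|\le\big((|a|+|b|)\,\big||a|-|b|\big|\big)^{1/4}$. This is weaker than $\big||a|-|b|\big|^{1/2}$ unless $\min(|a|,|b|)=0$; for $a=5$, $b=4$ it gives $\sqrt3$, which excludes nothing. The estimate itself is true, but you must take the two square roots separately. First use $|\sqrt{a^2+\al^2}-\sqrt{b^2+\al^2}|\le\big||a|-|b|\big|$, which is \eqref{triangineqB}. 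Then apply $|\sqrt u-\sqrt v|\le\sqrt{|u-v|}$, whose equality case with a positive value is $\min(u,v)=0$. Only this route gives $\al=0$ and $\min(|a|,|b|)=0$.

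\textbf{The sign of $\omega^\sigma_k$.} In fact $\omega^\sigma_k=\varphi-\sigma\Omega_\al(\varphi)$, not $\sigma(\varphi-\Omega_\al(\varphi))$. The collision equation is therefore $a-b=\sigma(\Omega_\al(a)-\Omega_\al(b))$, and the nonzero one among $a,b$ equals $\sigma$. This sign is exactly what fixes the pairing in \eqref{samesignint}. With your formula and $\sigma=-$ you would attach $\mu=-\sigma k$ to $k-m=+1$, which is false.

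For $\tp=1$ the difference estimate is also not the right tool. You need, as the paper does, the lower bound $\sqrt{r_0}+\sqrt{r_1}\ge\sqrt{r_0+r_1}\ge1$, where $r_0,r_1$ are the distances of $(\al,\mu)$ to $(0,0)$ and $(0,1)$.

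\textbf{Nesting.} Here you genuinely depart from the paper in proving \eqref{outinunp}. The paper shows $\cM^{(\tp)}\cap\cM^{(\tp+1)}=\emptyset$ by noting that a common point would give a same-sign collision (for $\tp$ even, $\omega^-_{\tp/2}=\omega^-_{\tp/2+1}$). By \eqref{samesignint} this is a point of $\{0\}\times\mathbb{Z}$ lying on neither curve. It then concludes using connectedness of $\cM^{(\tp)}$ and the sign of $m_{\tp+1}$ at $(0,\mu_*^+(\tp))$.

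Your pointwise bound $F_{\tp+1}\le F_{\tp}+1$ on all of $\mathbb{R}^2$ is more direct. It needs no connectedness, and it gives $\{m_{\tp}>0\}\subset\{m_{\tp+1}>0\}$, hence disjointness of all pairs of curves, not only consecutive ones. The paper's route is shorter because it recycles the first bullet.

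You must, however, use the sharp equality case: $\Omega_\al(x\pm1)=\Omega_\al(x)+1$ if and only if $\al=0$ and $x=0$. Your larger set ($x=0$ or $x\pm1=0$) is not excluded on $\cM^{(\tp)}$. For $\tp=3$, the point $(0,\mu_*^-(3))=(0,-2)$ has $x=\mu+1=-1$, so $x+1=0$. The inequality is still strict there, since $\Omega_0(0)=0<2$, so the conclusion survives once the equality case is stated correctly.
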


A direct computation using \eqref{eig.0} proves that for any $ \tp \geq 2 $ the double eigenvalue on the McLean curve
$ \cM^{(\tp)} $ vanishes only at
$ 
\mu = 0 $ if $ \tp $ is even, resp. 
$ 
\mu = \tfrac12 $ if $ \tp $ is odd, 
namely 
\begin{equation} \label{diffaut}
\omega_{\frac{\tp}{2}}^+(\pm \al_*^{(\tp)}, 0) =\omega_{\frac{\tp}{2}}^-(\pm \al_*^{(\tp)},0) = 0  \quad   \text{if} \ \tp \ \text{is even} \, , \quad 
     \omega_{\frac{\tp-1}{2}}^+ (\pm \alpha_*^{(\tp)}, \tfrac12) = \omega_{\frac{\tp+1}{2}}^- (\pm \alpha_*^{(\tp)}, \tfrac12) 
     = 0 \quad 
     \text{if} \ \tp \ \text{is odd} \, ,
\end{equation}
where 
$ (\pm \alpha_*^{(\tp)},0) \in \cM^{(\tp)} $
for $ \tp $ even and 
$ (\pm \alpha_*^{(\tp)},\tfrac12) \in \cM^{(\tp)} $
for $ \tp $ odd.

\Cref{lem:description} is  proved in Appendix \ref{app:descmclean}, 
jointly with 
the next lemma which describes the separation properties of the colliding eigenvalues of $\cL(\al,\mu,0)$  when  $(\al,\mu) \in  \cM^{(\tp)}$, 
from  the remaining part of the spectrum.

\begin{lemma}[{\bf Spectral separation near McLean curves}]\label{separation_eigenvalues}  
For any $\tp \geq 2$,  there exist  a neighborhood $\cN^{(\tp)}$ of 
the McLean curves $ {\cM}^{(\tp)}$  defined in  \eqref{mcleanmanifoldsp0}, 
positive constants  $ (\tc_\tp)_{\tp \geq 3} $ and,  
 for any $\delta>0$  small, a  constant 
 $  \tc_2(\delta) > 0 $, satisfying $ \tc_2(\delta) \to 0 $
 as
$ \delta\to 0^+ $,  
 such that:  
\begin{itemize}
    \item[(a)] {\bf (Benjamin-Feir separation)} setting 
    $\Lambda := 
\{(0,+), \, (0,-),\,(1,+),\,(1,-) \} $,  \begin{equation}\label{0905:1852}
\inf_{
\substack{ (k,\sigma)\in\Lambda, (q,\sigma')\not\in \Lambda \\ (\al, \mu) \in B_{\delta}(0,0)} }
 |\omega_k^\sigma(\al,\mu)-\omega_q^{\sigma'}(\al,\mu)| \geq \tfrac14  \, .
\end{equation}
\item[(b)] {\bf (McLean separation)} 
\begin{equation}\label{0905:1850}
\begin{aligned}
&\mbox{if } \tp =2: \qquad \quad \inf_{\substack{
       (q,\sigma) \neq (1,\pm)  \\ (\al,\mu)\in\cN^{(2)}\setminus B_\delta(0,0)}}\abs{\omega_1^+ (\al,\mu)-\omega_q^{\sigma} (\al,\mu)}+ \abs{\omega_1^- (\al,\mu)-\omega_q^{\sigma} (\al,\mu)}
       \geq \tc_2(\delta) > 0 \, , \\
&\mbox{if } \tp\geq 3 \mbox{ odd: } \quad 
        \inf_{\substack{(q,\sigma)
    \notin \{ (\frac{\tp-1}{2},+) , \, (\frac{\tp+1}{2},-)\} \\ (\al,\mu)\in\cN^{(\tp)}}}\abs{\omega_{\frac{\tp-1}{2}}^+(\al,\mu)-\omega_q^{\sigma} (\al,\mu)}+
    \abs{\omega_{\frac{\tp+1}{2}}^-(\al,\mu)-\omega_q^{\sigma} (\al,\mu)}
       \geq \tc_\tp > 0   \, , \\
&\mbox{if } \tp\geq 4 \mbox{ even: } \quad \inf_{\substack{
       (q,\sigma) \neq (\frac{\tp}{2} ,\pm) \\ (\al,\mu)\in\cN^{(\tp)}}}
       \abs{\omega_{\frac{\tp}{2}}^+ (\al,\mu)-\omega_q^{\sigma} (\al,\mu)} +
       \abs{\omega_{\frac{\tp}{2}}^- (\al,\mu)-\omega_q^{\sigma} (\al,\mu)} 
       \geq \tc_\tp > 0   \, .
\end{aligned}
\end{equation}
\end{itemize}
\end{lemma}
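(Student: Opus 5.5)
The proof rests on three ingredients: the explicit formula \eqref{eig.0} for the frequencies $\omega_k^\sigma(\al,\mu)=\sigma k+\mu-\sigma\Omega_\al(\sigma k+\mu)$, the classification of coincidences in \Cref{lem:description}, and a compactness argument. Since $\Omega_\al(\varphi)=(\varphi^2+\al^2)^{1/4}$ grows like $|\varphi|^{1/2}$, we have $\omega_k^\sigma\sim \sigma k$ as $|k|\to\infty$, uniformly for $(\al,\mu)$ in a bounded set. Hence, on any compact set $K\subset\R^2$, only finitely many indices $(q,\sigma')$ can come within distance $1$ of a fixed finite family $\{\omega_k^\sigma\}$. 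Each infimum in (a) and (b) therefore reduces to a minimum over finitely many continuous functions on a compact set. Such a minimum is positive if and only if no coincidence occurs there.

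\emph{Part (a).} At $(\al,\mu)=(0,0)$ the four values $\omega_0^\pm,\omega_1^\pm$ vanish, by \eqref{4collision}. The other frequencies are
\[
\omega_q^\sigma(0,0)=\sigma q-\sigma|q|^{1/2}, \qquad (q,\sigma)\notin\Lambda .
\]
They are nonzero. The smallest in absolute value is $|\pm(2-\sqrt2)|\approx 0.586$, attained for $q=2$, while $q=-1$ gives $|{-1}-1|=2$. Thus the four eigenvalues in $\Lambda$ are separated from the rest by at least $0.58$ at the origin. Each $\omega_k^\sigma$ is uniformly continuous near the origin: it is the sum of an affine term and the $\frac14$-Hölder function $\Omega_\al$. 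Choosing $\delta$ small enough keeps each perturbation below $0.08$, which gives the bound $\tfrac14$. A uniform bound on the tail $|q|$ large follows from the growth remark above.

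\emph{Part (b).} I fix $\tp\ge3$ and the colliding pair, for instance $\omega^+_{(\tp-1)/2}=\omega^-_{(\tp+1)/2}$ for odd $\tp$. Arguing by contradiction, suppose the sum of distances vanishes at a point of $\cM^{(\tp)}$ for some third index $(q,\sigma)$. Then that point carries a triple coincidence among the unperturbed eigenvalues. If $\sigma$ coincides with the sign of one member of the pair, \eqref{samesignint} forces $\al=0$ and the common value to be $0$ at an integer $\mu$. This case is excluded, since $\cM^{(\tp)}\cap\{\al=0\}=\{\mu_*^\pm(\tp)\}$ in \eqref{sing.ML} are not integers; more generally, \Cref{lem:description} gives $(0,0)\notin\cM^{(\tp)}$. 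If instead $\sigma$ has the opposite sign, then by \eqref{covag} the point lies on two different McLean curves $(0,k)+\cM^{(\tp')}$, one of them shifted. The nesting and non-intersection property \eqref{outinunp}, combined with the $\mu$-covariance, rules this out. What actually needs checking is that a shifted curve $(0,k)+\cM^{(\tp')}$ cannot meet $\cM^{(\tp)}$ at a point where the pair already collides. The direct way is to use that the three frequencies would lie on the single dispersion relation $\omega=\varphi-\sigma\Omega_\al(\varphi)$ with $\varphi$ in $\mu+\Z$. The curves $\varphi\mapsto\varphi\mp\Omega_\al(\varphi)$ are respectively convex and concave branches, so a horizontal line meets them in at most two points each. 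I would then show that a triple intersection forces two points on the same branch, reducing to the same-sign case already treated.

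Once the absence of triple points is established, compactness of $\cM^{(\tp)}$ together with continuity shows that the infimum is positive on a small closed neighbourhood $\cN^{(\tp)}$; this gives $\tc_\tp$. For $\tp=2$ the only bad point is $(0,0)$, where four eigenvalues meet. Removing $B_\delta(0,0)$ and applying the same compactness argument gives $\tc_2(\delta)>0$. Finally, $\tc_2(\delta)\to0$ as $\delta\to0$ because $\omega_0^\pm\to\omega_1^\pm$ near the origin.

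The main obstacle is the exclusion of triple collisions in the opposite-sign case. It requires a careful convexity analysis of $\varphi\pm(\varphi^2+\al^2)^{1/4}$ uniformly in $\al$, together with the bookkeeping of the integer shifts.
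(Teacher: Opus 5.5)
Your overall strategy is sound, and part (a) is fine: a sum of two distances can vanish only at a triple coincidence, distant indices are uniformly separated, and compactness of $\cM^{(\tp)}$ (minus $B_\delta(0,0)$ when $\tp=2$) gives a positive bound. The problem lies in the step of (b) that actually carries the argument.

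\textbf{Odd $\tp$: the non-integrality claim is false.} You exclude same-sign collisions on $\cM^{(\tp)}$ by saying that the axis points $\mu_*^\pm(\tp)$ in \eqref{sing.ML} are not integers. That is true only for even $\tp$. For odd $\tp$, with $\tm=\frac{\tp-1}{2}$, one has $\mu_*^-(\tp)=-\tm-\tm^2$ and $\mu_*^+(\tp)=1+\tm+\tm^2$, which are integers. The paper relies on exactly this later, when it isolates the Lipschitz singular points of $T^{(\tp)}$. The fact $(0,0)\notin\cM^{(\tp)}$ does not help, since \eqref{samesignint} produces same-sign collisions at every integer $\mu$ on the axis. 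So for odd $\tp$ your argument rules nothing out. You must instead locate the relevant collisions. By \eqref{samesignint}:
\begin{itemize}
\item $\omega^+_\tm=\omega^+_{\tm-1}$ only at $(0,1-\tm)$, and $\omega^+_\tm=\omega^+_{\tm+1}$ only at $(0,-\tm)$;
\item $\omega^-_{\tm+1}=\omega^-_{\tm}$ only at $(0,\tm)$, and $\omega^-_{\tm+1}=\omega^-_{\tm+2}$ only at $(0,\tm+1)$.
\end{itemize}
For $\tm\ge1$ none of these points equals $(0,\mu_*^\pm(\tp))$. Hence they lie off $\cM^{(\tp)}$, and a small enough $\cN^{(\tp)}$ avoids them. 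This point-by-point check is what the paper does; for even $\tp$ it verifies $(0,\sigma(1-\frac{\tp}{2})),(0,\sigma\frac{\tp}{2})\notin\cM^{(\tp)}$. For $|q-k|\ge2$ the paper uses the explicit bound $|\omega^\sigma_k-\omega^\sigma_q|\ge|k-q|-\sqrt{|k-q|}\ge2-\sqrt2$.

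\textbf{The ``main obstacle'' does not exist.} In every case the colliding pair consists of one $+$ and one $-$ frequency. So any third index $(q,\sigma)$ has the same sign as one member of the pair, and the sum in \eqref{0905:1850} is bounded below by that single same-sign distance. Your \emph{opposite-sign} case is therefore empty, and the convexity analysis you propose for it is unnecessary. It would also fail: $\Omega_\al''$ changes sign at $\varphi^2=2\al^2$. Already for $\al=0$, the graph of $\varphi-|\varphi|^{1/2}$ meets each horizontal line at a level in $(-\tfrac14,0)$ three times, not at most twice.

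The paper in fact proves the stronger statement that each individual distance is bounded below. Having the same-sign separation $2\tc_\tp$, it shrinks $\cN^{(\tp)}$ so that the two colliding frequencies differ by less than $\tc_\tp$ there. Then, for instance for even $\tp$,
\[
|\omega^{\sigma}_{\tp/2}-\omega^{-\sigma}_q|\ge|\omega^{-\sigma}_{\tp/2}-\omega^{-\sigma}_q|-|\omega^{\sigma}_{\tp/2}-\omega^{-\sigma}_{\tp/2}|\ge\tc_\tp ,
\]
with no triple-collision analysis at all.
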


\subsection{Perturbed 
$3d $ Benjamin-Feir spectrum}\label{sec:22}

\Cref{TeoremoneFinale} below
fully describes  the perturbed 
Benjamin-Feir spectrum of the Stokes waves under 
 $3d$-longitudinal and transverse wave
perturbations, namely the four perturbed spectral bands 
of the operator $ {\cal L}(\alpha, \mu, \epsilon) $ in  
\eqref{cLame1} near zero.

The unperturbed operator 
$\cL(0,0,0) $ has the degenerate eigenvalue  $ 0 $, with algebraic multiplicity $ 4 $, 
cf. \eqref{4collision}, and 
geometric multiplicity 3 
with three real eigenvectors
\begin{equation}\label{base3e}
f_{1}^+ := \vet{ \cos x}{\sin x} \, , \ 
f_{1}^- := \vet{-\sin x}{ \cos x} \, ,
\ \ f_0^-  := \vet{0}{1} \, , 
\quad   \text{and the generalized eigenvector} \ 
 f_0^+ := \vet{1}{0} \, , 
\end{equation}
where $ \cL(0,0,0) f_0^+ = -f_0^- $. 
The basis $\{f_1^\pm,f_0^\pm\}$ is  {\it symplectic} and {\it reversible}  according to the following definition.

\begin{definition}\label{def:symprevbasis} 
{\sc (symplectic and reversible basis)}
    A basis $\{f_k^\pm\}_{k=1,\dots, n}$ of 
   a  symplectic subspace 
    $\cV$ is 
{\it symplectic}
        if 
    \begin{equation}\label{def:symp}
        {\cal W}_c (f_k^-, f_k^+) = 1 \ , \quad {\cal W}_c ( f_k^\sigma, f_{k'}^{\sigma'}) = 0 \quad \forall \, k\neq k'\, ,\ \sigma,\sigma' =\pm \, \text{ or }\ k = k',\ \sigma = \sigma' \ , 
        \quad 
        {\cal W}_c \mbox{ in } \eqref{ses}\, ; 
    \end{equation}
and  {\em reversible}, if 
    \begin{equation}\label{def:rev}
        \varrho_c (f_k^\sigma ) =\sigma f_k^\sigma \, , \quad 
    \forall k=1, \ldots, n \, , \  
    \sigma = \pm \, , 
    \quad 
    \varrho_c  \mbox{ in } \eqref{def:cinvolution} \, . 
    \end{equation}
\end{definition}
The complex eigenvectors $v_1^{\pm}(0,0)$ of $ \cL (0,0,0) $ in \eqref{unperturbed.eigv} are related to $f_1^\pm$ in \eqref{base3e} as
\begin{align}\label{v1pv1-}
&    v_1^+(0,0) = \frac{1}{\sqrt{2}} (f_1^- + \im f_1^+) = \frac{1}{\sqrt{2}} \vet{\im}{1} e^{\im x }  \,  , 
    \quad
    v_1^-(0,0) = \frac{1}{\sqrt{2}}  (f_1^- - \im f_1^+) = \frac{1}{\sqrt{2}} \vet{-\im}{1} e^{\im x }  \, .  
\end{align}
The 
spectrum of $\cL (0,0,0)  $ decomposes as the disjoint union 
\begin{equation}\label{spettrodiviso0}
\begin{aligned}
&   \qquad \qquad \qquad \qquad \qquad  \qquad  \sigma(\cL(0,0,0)) = \sigma'(\cL(0,0,0)) \cup \sigma''(\cL(0,0,0)) \, , \\
& \sigma'(\cL(0,0,0)) = \{ 0\} =
\big\{
\lambda_k^\sigma (0,0)\, , \, k =0,1, \, \sigma = \pm \big\}
\, , 
\quad  \sigma''(\cL(0,0,0)) = \big\{
\lambda_k^\sigma (0,0)\ , \ k\in \Z\setminus \{0,1\}; \ \sigma = \pm \big\}\, .
    \end{aligned}
\end{equation}
By Kato's bifurcation theory (see  \Cref{lem:Kato1}) the perturbed spectrum $\sigma(\cL
(\al,\mu,\e))$  admits, for any $(\al,\mu,\e)$ sufficiently small, a disjoint decomposition 
$$
   \sigma(\cL(\al,\mu,\e)) = \sigma'(\cL(\al,\mu,\e)) \cup \sigma''(\cL(\al,\mu,\e))  
$$
where $\sigma'(\cL(\al,\mu,\e))$ is composed by  four eigenvalues 
$ \lambda_k^\pm(\alpha, \mu,\e) $, $ k = 0,1 $,  close to zero.
We denote by 
\begin{equation}\label{def.cV}
    \cV_{\al,\mu,\e}
    \mbox{ the spectral subspace of } \cL(\al,\mu,\e)\mbox{ associated to } \sigma'(\cL(\al,\mu,\e)) \, , 
\end{equation}
which is invariant under $\cL(\al,\mu,\e)$, has dimension four, and satisfies
$\sigma'(\cL(\al,\mu,\e)) = \sigma(\cL(\al,\mu,\e)\vert_{\cV_{\al,\mu,\e}})$.

\smallskip

We need  to introduce the class 
of ``polar-analytic" functions $ f (\alpha, \mu )$, namely functions 
which are analytic once expressed in the polar coordinates 
\begin{equation}\label{polar}
    \mu = \rho \cos \theta\, , \qquad \al = \rho \sin \theta\, , 
    \qquad \rho:= (\al^2+\mu^2)^\frac12 \, . 
\end{equation}
Given a closed set $S \subset\R^n$ and a Banach space $X$, we say that a function $f\colon S \to X$ is analytic if it is real analytic on an open set containing $S$. 
\begin{definition}{\bf (Polar-analytic function)}
\label{def:tildeM}
    Let $ X $ be a Banach space,  
    $r >0 $ and $ \e_0 \in (0,+\infty] $.
     A function 
     $$
     f: B_{r}(0,0)\setminus \{(0,0)\} \times B_{\e_0}(0) \to X \, ,
     \ (\alpha, \mu, \epsilon) \mapsto 
     f (\alpha, \mu, \epsilon) \, , 
     $$
      is  {\em polar-analytic}
      in $ \cA_P(B_{r}(0,0),\e_0;X)$,    if 
$$
F(\rho, \theta, \epsilon) := f(\rho\sin\theta ,\rho\cos \theta, \e) \, , \quad 
\forall  0 < \rho \leq r \, , 
$$  
admits an  
analytic 
extension on 
    $ \cD_{r,\epsilon_0} := \{  |\rho| \leq r \} \times \T 
     \times B_{\e_0}(0) \to X $. 
     A polar-analytic function has the expansion
$$
F(\rho, \theta, \epsilon) =
\sum_{m,n \geq 0} F_{m,n}(\theta)
\rho^m \epsilon^n \, ,
\quad \sup_{\theta \in \T} 
|F_{m,n}(\theta)| \leq 
\| F \|_{L^\infty(\cD_{r,\epsilon_0})}
r^{-(m+n)}  \, . 
$$
    {\sc Notation for remainders.} Given integers  
    $ j_1,k_1, \ldots, j_n, k_n \in \N_0  $,   
    we denote 
    by 
    $\cO_X(\rho^{j_1} \e^{k_1},\dots , \rho^{j_n} \e^{k_n}) $ a polar-analytic function 
    in $ \cA_P(B_{r}(0,0),\e_0;X)$  such that
    \be\label{remainpoa}
    F(\rho, \theta, \e) = \sum_{\ell=1}^n \rho^{j_\ell} \e^{k_\ell} g_\ell(\rho,\theta,\e)\, ,  \qquad g_\ell : \cD_{r,\epsilon_0}  \to X\ \text{ analytic} \, . 
    \ee
    If $ X = \R $ we simply denote $r (\rho^{j_1} \e^{k_1},\dots , \rho^{j_n} \e^{k_n})
    = \cO_\R (\rho^{j_1} \e^{k_1},\dots , \rho^{j_n} \e^{k_n})$. 
\\[1mm]
   For  a compact neighborhood $K^{(2)}$ of $\cM^{(2)}$ we denote by $\cA_P(K^{(2)},\e_0;X)$ the class of  functions $f:K^{(2)}\times B_{\e_0}(0) \to X$ which are polar-analytic in $\cA_P(B_r(0,0), \e_0; X)$ 
for some $r >0$ and 
 analytic 
in $\bar{[K^{(2)}\setminus B_r(0,0)]} \times B_{\e_0}(0)$.
\end{definition}

\noindent 
{\sc Remark:}
A polar-analytic function
$ f $ in $ \cA_P(B_r(0,0), \e_0; X)$ has well defined directional limits
\begin{equation}\label{dirlimits}
    \exists \lim_{\rho \to 0 } f(\rho \sin \theta, \rho \cos \theta,\e) \ ,
\quad 
\forall \theta \in \T\, , \  \e\in B_{\e_0}(0) \, ,
\end{equation}
but the limit can depend on $\theta$. 
The functions 
$$
f_1(\al, \mu) = \sqrt{\al^2+\mu^2}\, , \quad f_2(\al,\mu) =  \frac{\al^2}{\sqrt{\al^2+\mu^2}}  \, ,\quad 
f_3(\al,\mu) = \frac{\al}{\sqrt{\al^2+\mu^2}}\, ,
$$ 
are polar-analytic, but only $f_1$ and $f_2$ have a limit as $(\al, \mu) \to 0$, and actually extend as  Lipschitz  functions near  $(0,0)$.  
An analytic function $ F(\rho,\theta, \epsilon)$, once expressed in the euclidean  coordinates \eqref{polar},  does not define a function of $ (\alpha, \mu) $ at the origin $ (0,0) $. 

Instead, a polar-analytic function  $\cO_X(\rho)$ 
\text{has}  a unique continuous extension at $ (\alpha, \mu) = (0,0) $, with value $0$, which 
is Lipschitz in a whole neighborhood of 
$  (0,0)$, cf. \Cref{lem:prodintM}-($v$). 
A polar-analytic function whose directional limits \eqref{dirlimits} all coincide (i.e., are independent of $\theta$) admits a continuous (and actually Lipschitz) extension at $(\al,\mu)=(0,0)$.

\begin{theorem}\label{TeoremoneFinale}
{\bf (Benjamin-Feir spectrum for 3d water waves)}
\noindent
There exist $ \e_1, \rho_1>0 $  such that: 
\\[1mm]
$ \bullet $  {\bf Symplectic and reversible basis:}
there exists 
a  symplectic, reversible 
and polar-analytic basis 
$  \cH := \{ h_k^\sigma (\al,\mu,\e)\, : \ k=0,1,\, \sigma = \pm\}$ of $\cV_{\al,\mu,\e}$, of the form
\begin{equation}
\label{proph}
h_k^\sigma(\al,\mu,\e) = f_k^\sigma + \cO_{H^1}(\rho,\e)  \in\cA_P(B_{\rho_1}(0,0),\e_1;H^1(\T,\C^2)) 
\end{equation}
where $ f^\sigma_k $ is the unperturbed basis  \eqref{base3e}. At $ \e = 0$   the vectors $h_1^\pm (\alpha, \mu, 0) $ 
are related to  $v_1^\pm (\alpha, \mu ) $ in \eqref{unperturbed.eigv} as 
\begin{equation}
\label{basiepzero}
v_1^+(\al,\mu) = \tfrac{1}{\sqrt 2}\left(h_1^-(\al,\mu,0) + \im  h_1^+(\al,\mu,0)\right)\, , \qquad v_1^-(\al,\mu) = \tfrac{1}{\sqrt 2}\left(h_1^-(\al,\mu,0) - \im  h_1^+(\al,\mu,0)\right) \, .
\end{equation}
$ \bullet $ {\bf Matrix representation: }
 the operator $ \cL(\al,\mu,\e) : \mathcal{V}_{\al, \mu, \e} \to  \mathcal{V}_{\al, \mu, \e} $ 
 is represented on the basis $\cH $   by a $4\times 4$ matrix of the form 
 \begin{equation} \label{matricefinae}
  \tL(\al,\mu,\e)=\begin{pmatrix} \mathtt{U} & \vline & 0 \\ \hline 0 & \vline & \mathtt{S} \end{pmatrix},
 \end{equation}
 where 
 \begin{align}\label{UU}
&  \mathtt{U} := \mathtt{U}(\alpha,\mu, \e) = 
  \im \mathsf{a}(\al, \mu, \e) + 
\begin{pmatrix} 
0 & \mathsf{b}^+(\al, \mu, \e)\\
  \mathsf{b}^-(\al, \mu, \e) & 0
 \end{pmatrix}  \\
 &\label{S}  \mathtt{S} := 
  \mathtt{S}(\alpha,\mu, \e) := 
  \im  \mu \, (1+  r_9(\e^2,\rho^2))  +
 \begin{pmatrix} 
  0 & \rho (1+r_{10}(\e^2,\rho^2))\\
-1+ r_8(\e^4,\rho\e^2,\rho^3) &   0
 \end{pmatrix} ,
\end{align}
are  polar-analytic 
functions in $ \cA_P(B_{\rho_1}(0,0),\e_1;\C^{2\times 2})$, even in $\e$, with entries
\begin{align}
\label{exp:a}
&    \mathsf{a}(\al, \mu, \e) :=  \frac{\mu}{2} \big(  1 + r_2(\e^2,\rho^2) \big) \, , \\
\label{exp:b+-}
 &   \mathsf{b}^+(\al, \mu, \e):= \frac{\al^2}{4}(1+r_5'(\e^2,\rho))-\frac{\mu^2}{8} (1+r_5(\e^2,\rho)) \ , \\
  &    \mathsf{b}^-(\al, \mu, \e):= -\e^2(1+r_1(\e^2,\rho)) 
  - \frac{\al^2}{4}(1+r_1'(\e^2,\rho)) + \frac{\mu^2}{8}(1+r_1''(\e^2,\rho)) \, .  
  \label{exp:b-}
  \end{align}
The function  $ \mathsf{b}^-(\al, \mu, \e)$ admits a Lipschitz extension in a neighborhood of $(\al, \mu) = (0,0)$.
Furthermore at $ \e = 0 $
\be\label{siannu}
\mathsf{a}(\al, \mu, 0) =
\frac12 (\omega_1^+(\alpha, \mu)+
 \omega_1^-(\alpha, \mu)) \, , 
\quad 
- \mathsf{b}^+(\al, \mu, 0)   =  \mathsf{b}^-(\al, \mu, 0) = \tfrac12 (\omega_1^+(\al, \mu)- \omega_1^-(\al, \mu)) \, . 
\ee
$ \bullet $  {\bf Unstable Eigenvalues:} the eigenvalues of the matrix $ \mathtt{U} $ in \eqref{UU},
\begin{equation}\label{eignear0}
 \lambda_\pm^{(2)}(\alpha, \mu,\e):=     \lambda_1^\pm(\alpha, \mu,\e) = \im 
    \mathsf{a}(\al, \mu, \e)  \pm \sqrt{
\mathsf{b}^+(\al, \mu, \e) \, \mathsf{b}^-(\al, \mu, \e)
} \, , 
\end{equation}
are even in $ \e $,
and have a nontrivial real part
if and only if 
$ (\al,\mu)$ belong to the local {\sc instability region} 
\begin{equation}
    \label{cDu}
\cU^{(2)}_{\e,\rm{loc}} : = \big\{ (\al,\mu)\in B_{\rho_1}(0,0) \, : \ \mathsf{b}^+(\al,\mu,\e)\mathsf{b}^-(\al,\mu,\e) > 0 \big\}\, .
\end{equation}
At $ \e = 0$ the spectrum
$ 
\sigma (\tU(\al,\mu,0)) = \{ \im \omega_1^\pm 
(\alpha, \mu) \} $ 
according to 
\eqref{eig.0}.  
\\[1mm]
$ \bullet $  {\bf Local McLean perturbed curves:} 
 The boundary of $ \cU^{(2)}_{\e,\rm{loc}}  
 $ is formed by 
the local branches of perturbed McLean curves 
\begin{equation}\label{M2pm}
\cM^{(2)}_{\pm,\rm{loc}} (\e):= \big\{(\al,\mu)\in B_{\rho_1}(0,0)\, : \ \tb^\pm (\al,\mu,\e) = 0 \big\} 
\end{equation}
which are locally, in the half-plane $\{\mu\geq 0\}$,  graphs of functions 
$ \mu^+ : \{ |\alpha|
\leq \alpha_0 \} \times B_{\e_1^2} (0) \to \R\, , \ \mu^-:\{ |\alpha|
\leq \alpha_0 \} \times B_{\e_1} (0) \to \R\, $, respectively of the form 
\begin{equation}\label{graphsmclean2}
 \ \mu^+(\al, \e^2)=\sqrt{2} |\alpha| ( 1+ \tf(\al,\e^2))  \ , \quad \mu^-(\al, \e) = \sqrt{8 \e^2(1+\ell_1(\al,\e)) + 2 \al^2(1+\ell_2(\al,\e))} \, ,  
\end{equation}
where, for some $C>0$,

$ \bullet $ $\tf(\al,\e^2)$ is an analytic function of $(\al,\e^2) \in B_{\al_0}(0)\times B_{\e_1^2}(0)$ satisfying 
$\abs{\tf(\al,\e^2)} \leq C (\e^2 + |\al|)$;

$ \bullet $ the functions  
$\ell_i ( \alpha, \e )$,
$i=1,2 $ 
are real analytic in $ \alpha $ for any 
$ 0 < |\e | < \e_1  $, satisfy   $|\ell_i(\al,\e)| \leq C(|\al|+|\e|)$, and 
are Lipschitz on $ B_{\al_0}(0)\times B_{\e_1}(0)$. 
\end{theorem}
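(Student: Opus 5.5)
The proof follows the scheme of \cite{BMV1}, adapted to the polar-analytic setting. First, I use the decomposition \eqref{cGame} of the fiber Dirichlet--Neumann operator from \Cref{DNProp1}: $\cG = \cG^{[\mathrm I]}(\al^2,\mu,\e) + \rho\, \cG^{[\mathrm{II}]}(\al^2,\mu,\e)$. This shows that $\cL(\al,\mu,\e)$ is polar-analytic as an operator $H^1\to L^2$. Next, \eqref{0905:1852} separates the four eigenvalues near zero from $\sigma''$, uniformly for $(\al,\mu)\in B_\delta(0,0)$ and $|\e|$ small. Hence the Kato projector $P_{\al,\mu,\e}=\frac{1}{2\pi\im}\oint_\Gamma(\lambda-\cL(\al,\mu,\e))^{-1}\de\lambda$ is well defined on a fixed circle $\Gamma$. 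It is polar-analytic, because the class is closed under composition and Cauchy integrals. The Kato transformation operator $U_{\al,\mu,\e}$ built from $P_{\al,\mu,\e}$ and $P_{0,0,0}$ is then polar-analytic and symplectic, and it commutes with $\varrho_c$ by \eqref{cLr:rev} and the uniqueness of the spectral projector. Setting $f_k^\sigma(\al,\mu,\e):=U_{\al,\mu,\e}f_k^\sigma$ gives a symplectic, reversible basis of $\cV_{\al,\mu,\e}$, close to \eqref{base3e}.

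In this basis, $\cL$ is represented by a $4\times4$ Hamiltonian and reversible matrix $\tB=\cJ_4\tB$-type. Its entries are polar-analytic, and by reversibility they have definite real or imaginary character. I will Taylor expand these entries in $(\rho,\e)$ up to order $\rho^2$, $\e^2\rho$, $\e^4$, using the expansions \eqref{apexp} of $p_\e,a_\e$ together with the expansion of $\cG^\sharp=\cO(\al^2\e)$. At $\e=0$ the matrix is explicit in terms of $\omega_k^\pm$, which yields \eqref{siannu} after a $\theta$-dependent rotation producing $h_1^\pm(\al,\mu,0)$ as in \eqref{basiepzero}. The $\e^2$ coefficients reproduce the longitudinal ones of \cite{BMV1}, since $\cG(0,\mu,\e)=|D+\mu|$. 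The $\al^2$ coefficients come from $\cG^\sharp$ and from the $\rho$-dependence of $|D|_{\al,\mu}$ on low modes. Evenness in $\e$ follows from the symmetry $x\mapsto x+\pi$, which maps $\e\to-\e$.

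The main obstacle is the block decoupling, in which one must remove the $2\times2$ off-diagonal coupling between the $(h_1^\pm)$ and $(h_0^\pm)$ blocks. Following \cite{BMV1}, I apply a symplectic, reversible change of basis $\exp(Y)$, where $Y$ solves the homological equation $\tS Y - Y \tU = -\text{(coupling)}$. This equation is invertible because $\tU$ and $\tS$ have disjoint spectra only in a degenerate sense: $\tS$ contains the entry $\rho$ and $\tU$ contains small entries. Solving it therefore introduces divisions by $\rho$, which destroy the structure \eqref{cGame}, although the result stays polar-analytic since the coupling terms are $\cO(\rho\e,\rho^2)$. I would do this iteratively, with a second step to remove the remaining $\cO(\e^3\rho,\ldots)$ terms, and check that the size of each iteration improves. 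I would then track that $\mathsf b^-$ keeps the form \eqref{exp:b-}. Its Lipschitz extension comes from its $\theta$-dependence entering only through $\al^2=\rho^2\sin^2\theta$ and $\mu^2$, with remainders $\cO(\rho)$ that vanish at zero.

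Finally, the eigenvalues of $\tU$ are given by \eqref{eignear0}, and they have nonzero real part iff $\mathsf b^+\mathsf b^->0$, which gives \eqref{cDu}. For the curves, $\mathsf b^+=0$ reads $\al^2(1+r_5')=\frac{\mu^2}{2}(1+r_5)$. Written in polar coordinates, the implicit function theorem in $\theta$ near $\tan\theta=1/\sqrt2$ gives the cross with $\mu^+=\sqrt2|\al|(1+\tf)$; analyticity in $\e^2$ follows from evenness. $\mathsf b^-=0$ is solved for $\mu^2$ as $8\e^2(1+\ell_1)+2\al^2(1+\ell_2)$ via the implicit function theorem in Lipschitz/analytic form, using that $\rho\gtrsim|\e|$ on this curve away from $\e=0$.
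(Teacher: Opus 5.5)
There is a genuine gap, at two places where your outline replaces exact structure by size bookkeeping.

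First, your treatment of $\cM^{(2)}_{+,\mathrm{loc}}(\e)$ presupposes \eqref{exp:b+-}. That is, $\mathsf{b}^+$ must vanish to second order in $\rho$ for \emph{every} $\e$, with no terms $\e^k$ or $\rho\,\e^k$; this is what makes $\{\mathsf{b}^+=0\}$ a cross at the origin uniformly in $\e$. A Taylor expansion to order $\rho^2,\rho\e^2,\e^4$ cannot give this, because one needs identities that are exact in $\e$. In the paper there are two. (a) $\sB(0,0,\e)$ annihilates the $(1,-)$ and $(0,-)$ basis vectors at $(\al,\mu)=(0,0)$, by the generalized-kernel structure of $\cL(0,0,\e)$; this gives the exact zeros in \eqref{Bsoloeps}. (b) The coefficient of $\rho$, $\sB^{[1,0]}(0,0,\e)=\mathrm{diag}(0,\Pi_0)$, annihilates the $(1,-)$ vector; this gives the exact zeros in \eqref{matB10}. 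Property (b) fails for the basis $U_{\al,\mu,\e}f_k^\sigma$ you use. The second component of $f_1^-(0,0,\e)$ has mean $n(0,0,\e)$, cf.\ \eqref{namep}, so the $(2,2)$ entry contains $\rho\, n(0,0,\e)^2$ and $F_{22}$ contains $\rho\, n(0,0,\e)$. This is exactly why the paper switches to the basis \eqref{basechange}, in which the $(1,-)$ vector has zero mean at $(\al,\mu)=(0,0)$ (\Cref{rem:newbasis}). In your basis that $\cO(\rho)$ term could only be removed by tracking an exact cancellation through the decoupling, or by an invariance argument for $\mathsf{b}^+\mathsf{b}^-$ that you do not give. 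Estimating sizes leaves it in $\mathsf{b}^+$ and destroys the cross.

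Second, the premise of your decoupling, that the coupling is $\cO(\rho\e,\rho^2)$, is false. $F_{11}=r_3(\e^3,\rho^2\e)$ in \eqref{BinG3} has a $\rho$-independent part of order $\e^3$, since the matrix at $(\al,\mu)=(0,0)$ is not block diagonal (see \eqref{Bsoloeps}). The Sylvester operator $\cA$ in \eqref{sylveq} has $\det\cA=\rho^2(1+r(\e^2,\rho))$, so it degenerates at $\rho=0$. The paper's final nonlinear step closes uniformly as $\rho\to0$ precisely because the residual coupling is $\cO(\rho^2\e^3)$ by \eqref{Fexp2}. That factor compensates $\cA^{-1}=\rho^{-1}\cT$ and is why the ansatz $\exp(\rho\e S^{(2)})$ in \Cref{completedec} works; ``iterate and check that sizes improve'' does not address this.

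The missing step is \Cref{decoupling1}. First remove $F_{11}$ exactly with the symplectic, reversibility preserving map $Y$ in \eqref{Ychange}, $m=-F_{11}/G_{11}$, which divides only by $G_{11}=1+\ldots$ and stays in the class of \Cref{def:tM}. The coupling is then $\cO(\rho\e)$, one linear homological step (with the division by $\rho$) gives \eqref{Fexp2}, and the implicit function theorem concludes. The same step also gives the Lipschitz extension of $\mathsf{b}^-$: $E^{(1)}$ is in the class of \Cref{def:tM} and all later corrections are $\cO(\rho\e^2)$, so the value of $\mathsf{b}^-$ at $\rho=0$ does not depend on $\theta$.

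Smaller points. Your evenness argument via $x\mapsto x+\pi$, which maps the Stokes wave of amplitude $\e$ to that of amplitude $-\e$, is a valid alternative to the $\mathtt{F}$-class argument. The leading $\al^2/4$ comes from $\tfrac{\al^2}{2}|D|^{-1}\Pi_0^\perp$ on the harmonics $\pm1$, not from $\cG^\sharp=\cO(\al^2\e)$. No $\theta$-dependent rotation is needed at $\e=0$: all changes of basis reduce to the identity there, and $U_{\al,\mu,0}v_1^\pm(0,0)=v_1^\pm(\al,\mu)$.
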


Let us make  some comments. 

\begin{enumerate}
\item 
 {\sc Stable Eigenvalues.} 
The  matrix $ \mathtt{S} $ 
in \eqref{S} has the 
purely imaginary eigenvalues
\begin{equation}\label{eignear0s}
 \lambda_0^\pm (\al, \mu, \e) = \im\mu \big(1+r_9(\e^2,\rho^2)\big)  \mp  \im\sqrt{\rho}\big(1+ r'(\e^2,\rho^2 )\big)\, . 
\end{equation}
\item 
{\sc Local perturbed McLean curves.}
The function
$$
\mathsf{b}^+(\al, \mu, \e) \, \mathsf{b}^-(\al, \mu, \e) = 
\tfrac14 \big(\text{Tr} ( \mathtt{U}(\alpha, \mu,\epsilon)) \big)^2 -
\det   \mathtt{U}(\alpha, \mu, \epsilon) 
$$
is a {\it spectral invariant}, namely it is independent of  the basis used to represent $\cL(\al,\mu,\e)_{|\mathcal{V}_{\al, \mu, \e}}$, 
as well as the instability region $ \cU^{(2)}_{\e,\rm{loc}} $
in  \eqref{cDu} and  the perturbed McLean curves in \eqref{M2pm}. 
The local McLean curve $ \cM^{(2)}_{-,\rm{loc}} (\e) $   
approaches, 
as $ \e \to 0 $,   to the corner of  
the unperturbed McLean curve $\cM^{(2)}$, as in   Figure \ref{fig:pertmctotal}-left. 
\item {\sc Analytic continuation.} \label{item:anco}
Theorem \ref{TeoremoneFinale} provides,
for any  $|(\al , \mu)| < \rho_1$ and 
$ |\e| <  \e_1 $, 
the  splitting 
of the $4$-dimensional symplectic subspace $\cV_{\al, \mu, \e}$ 
in \eqref{def.cV} 
(see Definition \ref{def:sympsub}) 
\begin{equation}\label{decosim}
\begin{aligned}
&  \qquad \qquad  \ 
\cV_{\al, \mu, \e}  = \cV^{(u)}_{\al, \mu, \e} \oplus^{\perp_{\cW_c}} \cV^{(s)}_{\al, \mu, \e}  \qquad \text{where} 
\\
& 
  \cV^{(u)}_{\al, \mu, \e}:= \textup{span}\langle h_1^\sigma(\al, \mu, \e) \rangle_{\sigma = \pm} \ , 
    \quad 
     \cV^{(s)}_{\al, \mu, \e}:= \textup{span}\langle h_0^\sigma(\al, \mu, \e) \rangle_{\sigma = \pm} \, ,
     \end{aligned}
\end{equation}
are $2$-dimensional symplectic  subspaces  
pairwise symplectic orthogonal. It results  
\be\label{biscot2}
    \sigma\big(\cL({\al, \mu, \e}) \vert_{\cV^{(u)}_{\al, \mu, \e}}\big) = \sigma(\mathtt{U}) = \{\lambda^\pm_1(\al, \mu, \e)\} \ , \quad 
     \sigma\big(\cL({\al, \mu, \e}) \vert_{\cV^{(s)}_{\al, \mu, \e}}\big) = \sigma(\mathtt{S}) = \{\lambda^\pm_0(\al, \mu, \e)\} \, , 
\ee
where $\lambda^\pm_1(\al, \mu, \e)$ are the eigenvalues  in \eqref{eignear0},  and $\lambda^\pm_0(\al, \mu, \e)$ are those  in \eqref{eignear0s}.
In  \Cref{part:II} we  shall exploit \eqref{decosim} to continue 
analytically  the eigenvalues $\lambda_1^\pm(\alpha,\mu,\e)$
in a full neighborhood of the  McLean
curve $\cM^{(2)}$, where  double collisions occur, 
while keeping $\e$ uniformly small.
\end{enumerate}

\begin{remark}\label{rem:4aut}
The  four unperturbed eigenvalues
$ \lambda^{\pm }_1 (\alpha, \mu) $
and
$ \lambda^{\pm }_0 (\alpha, \mu) $
strongly interact among them  for 
$ (\alpha, \mu) \sim (0,0) $ as their  spectral gap 
$ 
\text{dist} \big( \lambda^{\pm }_1 (\alpha, \mu), 
\lambda^{\pm }_0 (\alpha, \mu) \big) \to 0 
$ as $ (\alpha, \mu )
\to (0,0) $.  
Based on  
this information 
alone, one could obtain a decoupling
as \eqref{matricefinae} 
 taking $ \e (\alpha,\mu)
 \to 0 $  as
 $ (\alpha,\mu) \to 0 $.
 In contrast, Theorem \ref{TeoremoneFinale}
 holds for any $ |\e| \leq \e_0 $ small uniformly for 
 $ |(\alpha,\mu)| \leq \rho_1 $, thanks to the block-decoupling procedure 
 of  
\Cref{sec:block}. 
\end{remark}


\noindent 
{\sc Scheme of proof
of  \Cref{TeoremoneFinale}.} 
By a symplectic Kato reduction,
in \Cref{sec:katoBF}
we reduce to 
study the eigenvalues of a symplectic and reversible  $ 4 \times 4 $ matrix in the  
 long-wave regime 
$ (\alpha, \mu) \to (0,0) $.
By a Taylor expansion up to linear terms in $ (\alpha, \mu )$  of the Kato basis and the fiber Dirichlet-Neumann operator
$ \cG(\al,\mu,\e) $,
valid for any  $ \epsilon $, 
we obtain the expansion in \Cref{BexpG}.  Then we recognize that the top $ 2 \times 2 $ matrix $ \tJ_2 E $ 
in \eqref{BinG1} possesses a pair of Benjamin-Feir eigenvalues which 
accurately describe  \Cref{fig:pertmctotal} near the origin. In order to  prove that the real eigenvalues of 
$ \cL (\alpha,\mu,\epsilon)$ 
near zero have actually this form  we  eliminate the coupling term $ F $ in \eqref{BinG3}. The term of order $ \epsilon^3 $ in the top-left entry of $F$ is removed in \Cref{decoupling1}, in agreement 
with the fact that $ {\cal L} (0,0,\epsilon)$
has $0 $ as eigenvalues with geometric multiplicity $ 2 $. Then in  \Cref{lem:secondstep} we remove the linear terms of order $ \rho $ from each entry of $F$. This procedure  
preserves the 
polar analytic regularity, namely the block-diagonalizing transformation
leading to  $ \tL(\al,\mu,\e)$ in
\eqref{matricefinae} 
is polar analytic.

\subsection{Perturbed $3d$ McLean instabilities}
\label{sec:23}

\Cref{TeoremoneMcLean} below   describes the unstable spectrum 
of the operator
$\cL
(\al,\mu,\e) $ in  
\eqref{cLame1} also away from the origin. 
Its statement  also includes the Benjamin-Feir  \Cref{TeoremoneFinale}, thanks to the analytic continuation argument outlined in item \ref{item:anco}.

For any $\tp\geq 2 $ and  $(\alpha, \mu)$ 
near the $\tp$-McLean curve $\cM^{(\tp)}$ in   \eqref{mcleanmanifoldsp0}, 
the pair of eigenvalues of 
$ \cL(\alpha, \mu, 0) $, 
\begin{equation}\label{McL.d} 
\lambda^{(\tp)}_\pm (\al,\mu):= \im \omega^{(\tp)}
_\pm (\al,\mu) := \begin{cases}
    \im\omega_{\frac{\tp}{2}}^{\pm}(\alpha, \mu) \, \quad &\mbox{for }\tp \mbox{ even,} \\
    \im\omega_{\frac{\tp-1}{2}}^{+}(\alpha, \mu)\, ,
    \im\omega^-_{\frac{\tp+1}{2}}(\alpha, \mu) 
\quad 
&\mbox{for }\tp \mbox{ odd} \, , 
\end{cases} 
\end{equation}
are very close to each other, and actually coincide on $ \cM^{(\tp)} $. The
corresponding eigenvectors are
\begin{equation}\label{ivMC}
    v^{(\tp)}_\pm (\alpha,\mu) := \begin{cases}
        v_{\frac{\tp}{2}}^{\pm}(\alpha, \mu) \, \quad &\mbox{for }\tp \mbox{ even,} \\
    v_{\frac{\tp-1}{2}}^{+}(\alpha, \mu)\, ,
    v^-_{\frac{\tp+1}{2}}(\alpha, \mu) 
\quad 
&\mbox{for }\tp \mbox{ odd}  \, , 
    \end{cases}
\end{equation}
where $ v^\sigma_k (\alpha,\mu) $ are defined in \eqref{unperturbed.eigv}.
For any $\tp \geq 3$ 
and any $ (\alpha,\mu)$ near the McLean curves $ \cM^{(\tp)} $ the 
 eigenvalues 
$ \lambda^{(\tp)}_\pm (\alpha, \mu) $
are well separated from 
the other eigenvalues 
of $ \cL(\alpha, \mu, 0) $ 
in \eqref{eig.0}, by \Cref{separation_eigenvalues}.  
Instead 
$ \lambda^{(2)}_\pm (\alpha, \mu) $ collide
 when $(\al, \mu)\to 0 $ with $\lambda_0^\pm (0,0)  = 0$, cf. \eqref{4collision}.
Therefore we proceed as follows. By Kato perturbation theory  we first construct in \Cref{KatonearMcLean}  
the projector $ \wt P^{(2)}_{\al, \mu, \e}$ near the McLean curve $\cM^{(2)}$ away from the origin, and  then  we extend it analytically near the origin
into the projector on the subspace
$ \cV^{(u)}_{\al, \mu, \e}$ in \eqref{decosim} 
constructed via \Cref{TeoremoneFinale}.
 In any case, for any $(\al,\mu) $ in a
 sufficiently small compact neighborhood 
 of each  McLean curve $ \cM^{(\tp)}$, $ \tp \geq 2 $,  the perturbed spectrum $\sigma(\cL
(\al,\mu,\e))$ admits,  for $ \e$ enough,  a disjoint decomposition 
$$
   \sigma(\cL(\al,\mu,\e)) = \sigma'_\tp(\cL(\al,\mu,\e)) \cup \sigma''_\tp(\cL(\al,\mu,\e))   
$$
 where $\sigma'_\tp(\cL(\al,\mu,\e))$
 is composed by  two eigenvalues $ \lambda^{(\tp)}_\pm 
 (\alpha,\mu,\epsilon) $
  close to $\lambda^{(\tp)}_\pm 
 (\alpha,\mu) $. 
We denote  
\begin{equation}
\label{def.cVp}
    \cV^{(\tp)}_{\al,\mu,\e}
    \mbox{ the } 2 \text{ dimensional spectral subspace associated to } \sigma'_\tp(\cL(\al,\mu,\e)) \, , 
\end{equation}
which is invariant under $\cL(\al,\mu,\e)$ and 
$\sigma'_\tp(\cL(\al,\mu,\e)) = \sigma(\cL(\al,\mu,\e)\vert_{\cV^{(\tp)}_{\al,\mu,\e}})$. 
\\[1mm]
In order to state the main result, 
 we introduce the following class of functions.

\begin{definition}\label{def:tM}
 {\bf (Space $\cA $)}
     Let X be a Banach space, $\Omega\subset \R^2$ be a compact set and $\e_0 \in (0,+\infty] $. A function 
     $$
     A : \Omega \times B_{\e_0}(0) \to X \, , \ 
     (\al,\mu,\e) \mapsto A(\al,\mu,\e) \, , 
     $$ 
     belongs to $ \cA (\Omega,\e_0 ; X)$ if:\\[1mm]
     {\noindent\sc 1) (Regularity in $\e $)} for any $(\al,\mu) \in \Omega$ the map 
         $
          \e\to A(\al,\mu,\e) \in X
         $
         is analytic in $ B_{\e_0}(0)$. \\
         {\noindent\sc 2) (Regularity in $\al,\mu$)} \begin{itemize}
             \item for any $\e\in B_{\e_0}(0)$ the map 
             $
             A(\cdot, \cdot ,\e) :
             \Omega\setminus (\{0\}\times \Z) \to X $, 
             $
             (\al,\mu) \mapsto A(\al,\mu,\e) $,  
             is analytic;
         \item there exists $r>0$ such that, for any $j\in \Z$, for any $(\al,\mu)\in B_r(0,j)\cap \Omega$ and $\e\in B_{\e_0}(0)$, the operator $ A (\al,\mu,\e)$ decomposes as
    \begin{equation}\label{dectM}
             \begin{aligned}
             A (\al,\mu,\e) &= A^{[\mathrm{I}]}(\al^2,\mu,\e)+ (\al^2+(\mu-j)^2)^\frac12  A^{[\mathrm{II}]}(\al^2,\mu,\e)  
         \end{aligned}
         \end{equation}
 where 
 $$ 
 A^{[\mathrm{I}]}, 
 A^{[\mathrm{II}]} : 
 B_{r^2}(0)\times B_{r}(j)\times B_{\e_0} (0)
 \to X \, , \quad
 (\beta,\mu,\e) \mapsto 
 A^{[\mathrm{I}]}, 
 A^{[\mathrm{II}]} (\beta,\mu,\e) \, , 
 $$
 are  analytic functions. 
         \end{itemize}
If $X=\cL(Y,Z)$ is the space of bounded linear operators between Banach spaces $Y, Z$, we simply denote 
\be\label{notaLYZ}
\cA(\Omega,\e_0;Y,Z):= \cA(\Omega,\e_0;\cL(Y,Z))\, .
\ee
  \end{definition}
  
\noindent 
{\sc Notation for remainders.}
Let, for every $ \tp \geq 2 $, $K^{(\tp)}$  be a compact neighborhood of $\cM^{(\tp)}$ and $\e^{(\tp)}>0$.  
We denote 
$ r^{(2)}(\e^n) $ a polar-analytic function in $  \cA_P(K^{(2)},\e^{(2)};\R) $, cf. \Cref{def:tildeM},
and $ r^{(\tp)}(\e^n) $ a function in $ \cA(K^{(\tp)},\e^{(\tp)};\R) $,
$ \tp \geq 3 $, 
satisfying $|r^{(\tp)}(\e^n)| \leq C |\e|^n$ for some uniform constant $C>0$. 
\begin{remark}\label{rem:regtM_rhotheta}
     If  $A  \in  \cA (\Omega,\e_0 ; X)$
     and the open set  
     $ \Omega $ intersects 
     $ \{0\}\times \Z $ only at the origin, namely 
     $ \Omega\cap (\{0\}\times \Z) = \{(0,0)\}  \, , $      then\ the function $A(\rho\sin \theta,\rho\cos\theta,\e)$ is analytic in a neighborhood of  $\{\rho=0\}$, thus $A$ is a polar-analytic function in $\cA_P (\Omega,\e_0 ; X)$, according to \Cref{def:tildeM}. 
 \end{remark}
 
The class  $ \cA $ is closed under  composition, functional calculus and Cauchy integrals, cf.
\Cref{ap_cAF}.  

\begin{theorem}\label{TeoremoneMcLean}
{\bf (3d unstable spectral
bands)}
For any $ \tp \geq 2 $ 
there exists a compact neighborhood $K^{(\tp)} $ of each 
McLean curve
$ \cM^{(\tp)} $ in \eqref{mcleanmanifoldsp0} 
and $ \e^{(\tp)}  >0 $   
 such that, 
for any  $  (\al, \mu ) \in K^{(\tp)}  $,  any $ 0\leq |\e| < \e^{(\tp)}  $, 
 the operator $ \cL (\al, \mu,\e)  : \mathcal{V}_{\al, \mu, \e}^{(\tp)} \to  \mathcal{V}_{\al, \mu, \e}^{(\tp)} $ 
is represented by a $2\times 2$ 
matrix 
\begin{equation} \label{tocomputematrixb}
\tL^{(\tp)}(\al, \mu,\e) =\tJ\tB^{(\tp)}(\al, \mu,\e), 
\quad 
\tJ:=\begin{pmatrix} -\im & 0 \\ 0 & \im \end{pmatrix} \, ,  \quad 
\tB^{(\tp)}(\al, \mu,\e)=
\begin{pmatrix} \fa^{(\tp)}(\al, \mu,\e) & \fb^{(\tp)}(\al, \mu,\e) \\ \fb^{(\tp)}(\al, \mu,\e) & \fc^{(\tp)}(\al, \mu,\e) \end{pmatrix} 
 \, , 
\end{equation}
where 
\be\label{reguabcp}
\fa^{(\tp)}(\al, \mu , \e),  \fb^{(\tp)} (\al, \mu,\e),  \fc^{(\tp)} (\al, \mu,\e) 
\in \begin{cases}
\cA_P (K^{(2)}, \e^{(2)}; \R) 
\quad \ {\rm if} \ \tp  = 2 \, 
\\ \cA(K^{(\tp)},\e^{(\tp)};\R) \, , \quad  \ \forall \, 
\tp\geq 3 \, , 
\end{cases}
\ee
have the  form 
\begin{subequations}\label{matrixentries}
\begin{align}\label{expa}
 \fa^{(\tp)}(\al, \mu , \e) & = - \omega^{(\tp)}_+(\al, \mu) 
 + \fa_\tp (\al, \mu)  \e^2   +  r_\fa^{(\tp)} (\e^4)\, , \\
\label{expb}
  \fb^{(\tp)} (\al, \mu,\e) & = \fb_\tp (\al, \mu)  \e^\tp + \beta_\tp (\al,\mu)\e^{\tp+2} + 
  r_\fb^{(\tp)}  (\e^{\tp+4}) \, ,  \\
    \label{expc}
  \fc^{(\tp)}(\al, \mu, \e) & = \omega^{(\tp)}_-(\al, \mu) 
  + \fc_\tp(\al, \mu)  \e^2   + r_\fc^{(\tp)} (\e^4) \, .
\end{align}
\end{subequations}
The functions $ \fa^{(\tp)}(\al, \mu , \e)$, $ \fc^{(\tp)}(\al, \mu , \e) $
are even in $ \e $, while 
$ \fb^{(\tp)}(\al, \mu , \e)$ is odd in $ \e $
if $ \tp $ is odd;  $ \fb^{(\tp)}(\al, \mu , \e)$ is even in $ \e $ if $ \tp $ is even. They satisfy the symmetry properties  \eqref{symmetryapcpbp}. 
\\[1mm]
$ \bullet $ {\bf Eigenvalues:}
the matrix $ \tL^{(\tp)}(\al, \mu,\e)  $ has eigenvalues 
\begin{equation}\label{eigenvalues}
\lambda^{(\tp)}_{\pm} (\al, \mu,\e) = \tfrac\im2 
\big( \fc^{(\tp)}(\al, \mu,\e) - \fa^{(\tp)}(\al, \mu,\e)\big) 
\pm 
\tfrac12\sqrt{
D^{(\tp)}(\al, \mu,\e) 
}
\end{equation}
where
 \begin{align}
&  D^{(\tp)}(\al, \mu,\e)   = 4 
 \big( \fb^{(\tp)} (\al, \mu,\e) \big)^2 -
 \big( T^{(\tp)} (\al, \mu,\e) \big)^2  =\mathsf{d}^{(\tp)}_+(\al, \mu, \e)\mathsf{d}^{(\tp)}_-(\al, \mu, \e)\, ,\label{traccianulla} \\
\label{tracciaBep}
&  T^{(\tp)}(\al, \mu,\e) = 
\textup{Tr}\,  \tB^{(\tp)}(\al, \mu,\e)   
 :=  \fa^{(\tp)}(\al, \mu,\e) + \fc^{(\tp)}(\al, \mu,\e)  \, ,   \\
&  \mathsf{d}^{(\tp)}_\pm(\al, \mu, \e) := 2\fb^{(\tp)} (\al, \mu,\e) \pm T^{(\tp)} (\al, \mu,\e) \, .
\label{def:dppm}
 \end{align}
The  eigenvalues $ \lambda^{(\tp)}_{\pm}   (\al, \mu,\e)  $ 
have nonzero real part if and only if $(\alpha, \mu) $ belong to 
the global 
{\sc instability region} 
\begin{equation}\label{def:instaintro}
    \cU_\e^{(\tp)}:= \big\{
    (\al,\mu)\in K^{(\tp)}\ | \ D^{(\tp)}(\al, \mu,\e)>0 \big\} \, .  
\end{equation}
At $\e=0$ the eigenvalues $\lambda_\pm^{(\tp)}(\al,\mu,0)$ coincide with those in \eqref{McL.d}. \\
$ \bullet $  {\bf Global perturbed McLean curves:}  
The boundary of the instability region $\cU^{(\tp)}_\e$  is 
\be\label{boundinst}
\pa \cU^{(\tp)}_\e = \{(\al,\mu)\in K^{(\tp)}\, : \ D^{(\tp)}(\al,\mu,\e) = 0 \} = \cM^{(\tp)}_+(\e)\cup \cM^{(\tp)}_-(\e)
\ee
where $ \cM^{(\tp)}_\pm (\e )  $ are the 
$\tp$-th perturbed McLean curves
\be\label{defMepm}
\cM^{(\tp)}_\pm (\e) := 
\big\{ (\al,\mu)\in K^{(\tp)} \, : \ \mathsf{d}^{(\tp)}_\pm (\al,\mu,\e) = 0 \big\}  
\ee
which are, for any $\tp \geq 3$, 
  connected real-analytic closed curves
 satisfying, for some $C_\tp >0$, 
\begin{equation}\label{deformationmcleantpintro}
       \textup{d}_{\rm H}(\cM^{(\tp)}_+(\e), \cM^{(\tp)}_-(\e) ) \leq C_\tp  |\e|^{\tp} \ , \qquad
       \textup{d}_{\rm H}(\cM^{(\tp)}_\pm(\e), \cM^{(\tp)} ) \leq C_\tp  \e^2 \, . 
    \end{equation}
The set 
$\cM^{(2)}_+ (\e)$ is a connected closed curve,  analytic away from the origin
where it has a cross-singularity  described in \eqref{graphsmclean2} (see \Cref{fig:pertmctotal}), while  $\cM^{(2)}_-(\e)$ is the  union of two real-analytic closed curves,   satisfying, for some $C_2 >0$, 
\begin{equation}\label{asympmclean2intro}
    \textup{d}_{\rm H}(\cM^{(2)}_+(\e), \cM^{(2)} ) \leq C_2  \e^2 \ , 
    \qquad 
    \textup{d}_{\rm H}(\cM^{(2)}_-(\e), \cM^{(2)} ) \leq C_2  |\e| \, . 
\end{equation} 
Denoting by $\mathring \cM^{(\tp)}_\pm (\e)$   the interior regions enclosed by the  perturbed McLean curves, the instability region $\cU_\e^{(\tp)} $ in \eqref{def:instaintro} is
\be\label{symdifmclean}
\cU_\e^{(\tp)} = \mathring \cM^{(\tp)}_+ (\e) \, \triangle  \, \mathring \cM^{(\tp)}_-  (\e) \ .
\ee 
The perturbed McLean curves $\cM^{(\tp)}_\pm(\e)$  intersect at 
\be\label{newoffice}
\cM^{(\tp)}_+(\e) \cap \cM^{(\tp)}_-(\e) = \Big\{(\al, \mu, \e) \in K^{(\tp)} \times B_{\e^{(\tp)}}(0) \ \colon \ 
T^{(\tp)}(\al, \mu, \e) = \fb^{(\tp)}(\al, \mu, \e) = 0 
\Big\} 
\ee
where the eigenvalues $ \lambda^{(\tp)}_{\pm} (\al, \mu,\e) $ are equal and purely imaginary.

\smallskip
 \noindent{\bf $ \bullet $   Instability,  upper bounds:} for any $ \tp \geq 2 $ there is $C_\tp >0$ such that    
 the real part of the eigenvalues in
    \eqref{eigenvalues} satisfies 
    \be\label{upboundeigv}
\big| \Re \,  \lambda^{(\tp)}_\pm(\al, \mu, \e) 
\big| \leq C_\tp |\e|^{\tp} \,,
\quad \forall 
 |\e|\leq \e^{(\tp)} \, , \
 (\al, \mu) \in \cU_\e^{(\tp)} \, . 
    \ee
\noindent{$ \bullet $  \bf  Splitting of the McLean curves for $\tp = 2, 3 $:} for $\tp =2,3$,  
for any small 
$  \epsilon  \neq 0 $ 
the instability region
$\cU^{(\tp)}_\e \neq \emptyset$, and 
there are  closed analytic  curves $\cT^{(\tp)}(\e)$
  near $\cM^{(\tp)} $       and  $ C_\tp>0$, such that
\begin{quote}
    \underline{$\tp=2$:}
for any $r_2>0$,
there is $ c_2 (r_2) >0 $ such that
\begin{equation}\label{ordinstaintro2}
c_2 (r_2) \e^2 \leq 
\Re\lambda_+^{(2)}(\al,\mu,\e) \leq 
C_2 \e^2  
 \, ,
         \quad 
\forall (\al,\mu) \in \cT^{(2)}(\e)\setminus B_{r_2}(0, \pm \tfrac54) \, ,  
     \end{equation} 
     where  $ c_2 (r_2) \to 0 $
     as $ r_2 \to 0 $. 
     Near the  
points $(0,\pm \tfrac{5}{4})$, the size of $\Re\lambda^{(2)}_+
(\alpha,\mu,\epsilon) = \cO(\e^4) $.
     \\[1mm]
     \underline{$\tp=3$:} there are
     at most finitely many points 
     $(\al_j(\e),\mu_j(\e))_{j=1,\dots,n} \in \cT^{(3)}(\e)$, 
     and  for any $r_3>0$ there is $c_3(r_3)>0$
     (satisfying $c_3(r_3)\to 0$ as $r_3\to 0$) such that 
\begin{equation}\label{ordinstaintro3} 
c_3(r_3)|\e|^3  \leq \Re\lambda_+^{(3)}(\al,\mu,\e)
        \leq C_3|\e|^3 
          \, ,
         \quad
         \forall (\al,\mu) \in \cT^{(3)}(\e)\setminus \bigcup_{j=1}^n B_{r_3}(\al_j(\e),\mu_j(\e)) \, . 
     \end{equation}    
   \end{quote}
\end{theorem}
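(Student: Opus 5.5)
The plan is to construct, for each $\tp\geq 2$, a globally defined symplectic and reversible Kato basis of the two-dimensional spectral subspace $\cV^{(\tp)}_{\al,\mu,\e}$, to read off the structure \eqref{tocomputematrixb} from the symmetries, and then to analyze the spectral invariant $D^{(\tp)}$.

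\textbf{Step 1 (projector and basis).} For $\tp\geq3$, \Cref{separation_eigenvalues}(b) gives a uniform gap $\tc_\tp$ on $\cN^{(\tp)}$. I define
\[
P^{(\tp)}_{\al,\mu,\e}=\frac{1}{2\pi\im}\oint_{\Gamma^{(\tp)}(\al,\mu)}(\lambda-\cL(\al,\mu,\e))^{-1}\,\de\lambda
\]
with contours of radius $\sim\tc_\tp/2$ around $\im\omega^{(\tp)}_\pm$, where $\cL(\al,\mu,\e)-\cL(\al,\mu,0)=\cO(\e)$ by \Cref{DNProp1}. Since the contours can be deformed continuously, local projectors agree on overlaps, so $P^{(\tp)}$ is defined on a whole neighborhood $K^{(\tp)}$. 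Closure of $\cA$ under Cauchy integrals (\Cref{ap_cAF}) gives $P^{(\tp)}\in\cA$. The gauge covariance \eqref{cBLmu+k} handles the points $(0,j)$.

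I then take the Kato transformation $U=(\uno-(P-P_0)^2)^{-1/2}[PP_0+(\uno-P)(\uno-P_0)]$ and apply it to $v^{(\tp)}_\pm(\al,\mu)$. Because $P$ is skew-Hamiltonian and commutes with $\varrho_c$, $U$ is symplectic and reversibility-preserving, so the basis satisfies \eqref{symp.nonzero} and \eqref{rev.nonzero}. The vectors $v^{(\tp)}_\pm$ are analytic near $\cM^{(\tp)}$, since $\cM^{(\tp)}$ stays away from $(0,-\sigma k)$ for $\tp\geq3$.

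For $\tp=2$ I run the same construction on $K^{(2)}\setminus B_\delta(0,0)$. Near the origin I glue it to $\cV^{(u)}_{\al,\mu,\e}$ from \eqref{decosim}. On the overlap annulus both spaces are $\cL$-invariant and carry the same two eigenvalues near $\im\omega_1^\pm$, which are separated from $\lambda_0^\pm$ there. Hence they coincide, by uniqueness of spectral projectors. I also need a compatible (polar-analytic) choice of basis; symplectic and reversible bases of a fixed 2D space differ by a real diagonal-type symmetry, so a constant normalization matches $h_1^\pm$ via \eqref{basiepzero}.

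\textbf{Step 2 (form of the matrix).} The matrix is $\tL=\tJ\tB$ with $\tB_{ij}=(\cB f_j,f_i)$. Self-adjointness of $\cB$ makes $\tB$ Hermitian, and reversibility (via $\varrho_c f=-f$ and $\cB\varrho_c=\varrho_c\cB$) makes the entries real. So $\tB$ is real symmetric, giving \eqref{tocomputematrixb}.

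The expansions \eqref{expa}–\eqref{expc} come from expanding in $\e$. For the parity in $\e$, I use the translation $x\mapsto x+\pi$, which maps $\e\mapsto-\e$ in the Stokes wave (by \eqref{pafparity}) and multiplies $e^{\im kx}$ by $(-1)^k$. This gives $\fb(-\e)=(-1)^{\tp}\fb(\e)$ and evenness of $\fa$ and $\fc$. The vanishing of the coefficients of $\e^j$ in $\fb$ for $j<\tp$ follows because $\cL_j$ couples Fourier modes differing by at most $j$ with the parity \eqref{pafparity}, while the two resonant modes differ by $\tp$.

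\textbf{Step 3 (spectral consequences).} Formula \eqref{eigenvalues} is direct linear algebra, since $D=4\fb^2-T^2=\mathsf d_+\mathsf d_-$. The sign analysis gives \eqref{def:instaintro}, \eqref{symdifmclean} and \eqref{newoffice}. Since $T=\fa+\fc=-m_\tp+\cO(\e^2)$ and $\nabla m_\tp\neq0$ on $\cM^{(\tp)}$ for $\tp\geq3$, the implicit function theorem shows that $\mathsf d_\pm=0$ are analytic curves $\cO(\e^2)$-close to $\cM^{(\tp)}$ and $\cO(\e^\tp)$-close to each other. This gives \eqref{deformationmcleantpintro}. The upper bound \eqref{upboundeigv} follows from $|\Re\lambda|\leq|\fb|\leq C|\e|^\tp$.

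For $\tp=2$, near the origin I compare with \eqref{graphsmclean2}. I identify $\mathsf d_\pm$ there with $\tb^\pm$ up to nonvanishing factors, using that $\tb^+\tb^-$ is a spectral invariant; this produces the cross and the two closed curves.

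For the splitting, I take $\cT^{(\tp)}(\e)=\{T=0\}$, where $\Re\lambda=|\fb|$. It then suffices that $\fb_\tp$ does not vanish identically on $\cM^{(\tp)}$. By analyticity along the connected curve, a single nonzero evaluation gives nonvanishing up to finitely many points. For $\tp=3$ I import this from \cite{CNS}. For $\tp=2$ I would compute $\fb_2$ explicitly from second-order expansions and show it vanishes only at $(0,\pm\tfrac54)$, where the $\e^4$ term takes over.

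\textbf{Main obstacle.} I expect the hardest part to be the $\tp=2$ gluing at the origin, where the Lipschitz singularity forces polar-analytic control. This means proving that the extended projector and basis lie in $\cA_P(K^{(2)})$ and that the entries remain analytic through the block-decoupling. The explicit computation of $\fb_2$ is the second delicate point.
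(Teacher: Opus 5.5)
Your architecture matches the paper's: a global Riesz projector, its identification with $\cV^{(u)}_{\al,\mu,\e}$ near the origin, Kato transport of $v^{(\tp)}_\pm$, reality of $\tB^{(\tp)}$ from reversibility, an implicit-function description of $\{\mathsf d^{(\tp)}_\pm=0\}$, and nonvanishing of $\fb_\tp$. Your conjugation by $\tau_\pi u(x)=u(x+\pi)$ gives $\cL(\al,\mu,-\e)=\tau_\pi\cL(\al,\mu,\e)\tau_\pi^{-1}$ and $\tau_\pi v_k^\sigma=(-1)^k v_k^\sigma$, which is a valid substitute for the class $\tF$ in proving the $\e$-parities.

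\textbf{Main gap: odd $\tp$ in Step 3.} For odd $\tp$ the points $(0,\mu_*^\pm(\tp))$ of \eqref{sing.ML} are integers, so $\cM^{(\tp)}$ meets $\{0\}\times\Z$ there. At those points $\fa^{(\tp)},\fb^{(\tp)},\fc^{(\tp)}$ are only known to be in $\cA$; gauge covariance does not remove this, it only shifts $\mu$ by integers. Although $m_\tp$ is analytic at these points, the $\e^2$-coefficient $t_\tp=\fa_\tp+\fc_\tp$ of $T^{(\tp)}$ in general picks up a term proportional to $(\al^2+(\mu-j)^2)^{1/2}$. It comes from the modes $v^+_{-j},v^-_j$, whose frequency vanishes at $\mu=j$. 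Hence $T^{(\tp)}$ and $\mathsf d^{(\tp)}_\pm$ are only Lipschitz there, and the analytic implicit function theorem is not available. A Lipschitz implicit function theorem gives Lipschitz curves, which could pass through $(0,\mu_*^\pm(\tp))$ and fail to be analytic there. The missing ingredient is the computation $t_\tp(0,\mu_*^\pm(\tp))=\tfrac12(1+\tm)^2-\tfrac12\tm^2=\tfrac\tp2\neq0$ (\Cref{prop:values}). It yields $T^{(\tp)}(0,\mu_*^\pm(\tp),\e)=\tfrac\tp2\e^2+\cO(\e^4)\neq0$, so $\cT^{(\tp)}(\e)$ stays at distance of order $\e^2$ from the singular points. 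The curves $\cM^{(\tp)}_\pm(\e)$, being $\cO(|\e|^\tp)$-close to $\cT^{(\tp)}(\e)$, avoid them as well. Only then are all three curves real-analytic.

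\textbf{Two repairs for $\tp=2$.}

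First, symplectic reversible bases of a fixed two-dimensional subspace do not differ by a diagonal normalization. With $\cW_c(v_\pm,v_\pm)=\mp\im$, $\cW_c(v_+,v_-)=0$ and $\varrho_c v_\pm=-v_\pm$, the transition matrix is real and preserves $\mathrm{diag}(-1,1)$. Near the identity it is therefore a hyperbolic rotation $\bigl(\begin{smallmatrix}\cosh s&\sinh s\\ \sinh s&\cosh s\end{smallmatrix}\bigr)$ with non-constant $s=s(\al,\mu,\e)$; the matrix $\mathrm{T}=\uno+\cO(\e)$ in the proof of \Cref{b00} is of this type. So no constant normalization glues $\{h_1^\pm\}$ to the Kato basis. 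You do not need to glue bases at all. Once the projector is glued into a polar-analytic $P^{(2)}$, transport $v_1^\pm(\al,\mu)$ with the Kato operator built from $P^{(2)}_{\al,\mu,\e}$ and the $(\al,\mu)$-dependent reference $P^{(2)}_{\al,\mu,0}$, as in \eqref{Umclean}. The vectors $v_1^\pm(\al,\mu)$ are analytic near all of $\cM^{(2)}$, origin included. This same rotation structure is what actually justifies identifying $\mathsf d^{(2)}_\pm$ with $2\tb^\pm$ near the origin: such a rotation rescales $\mathsf d_+$ and $\mathsf d_-$ by the reciprocal positive factors $e^{\mp2s}$. Invariance of the product $\tb^+\tb^-$ alone cannot separate the two factors.

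Second, the explicit second-order formula for $\fb_2$ has denominators $\omega_0^\pm-\omega_1^\pm$ that vanish at the origin. Evaluating it along $\cM^{(2)}$ only gives the limits along the two branches of the cross. The lower bound \eqref{ordinstaintro2} must also hold on the part of $\cT^{(2)}(\e)$ near the origin, which approaches it from every direction with $|\mu|>\sqrt2|\al|$. There you need all directional limits of the polar-analytic $\fb_2$. The paper obtains $\fb_2\to-\tfrac12$ uniformly in the angle by comparing $\tL^{(2)}$ with $\tU$ of \Cref{TeoremoneFinale} (\Cref{b00}).

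Finally, you never derive the symmetries \eqref{symmetryapcpbp}. They follow from \eqref{cGcLmenomu}, \eqref{cBLmu+k} and the corresponding relations for $v^\sigma_j$.
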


Let us make some comments. 
\begin{enumerate}
\item 
{\sc Global perturbed McLean curves.}
For any $ \tp \geq 2 $,  the discriminant  
$$
D^{(\tp)}(\al, \mu,\e)  = 
 \big( \textup{Tr}\,  \tL^{(\tp)}(\al, \mu,\e) \big)^2 
 - 4  \det\tL^{(\tp)}(\al, \mu,\e) 
 =
 \mathsf{d}^{(\tp)}_+(\al, \mu, \e)\mathsf{d}^{(\tp)}_-(\al, \mu, \e) 
$$
of $\tL^{(\tp)}(\al, \mu,\e) $ 
is a {\it spectral invariant}, as well as 
the perturbed McLean curves $\cM^{(\tp)}_\pm(\e)$
in  \eqref{defMepm}.
Actually 
since the subspace $\cV^{(u)}_{\al, \mu, \e}$ 
in \eqref{decosim} coincides with 
$\cV^{(2)}_{\al, \mu, \e}$ for any $(\al, \mu)$ small, 
the matrices $\tU$ in \eqref{UU} and 
$\tL^{(2)}(\al, \mu, \e)$ in \eqref{tocomputematrixb} are similar, 
 in particular 
\be\label{dpmbpm}
    \td^{(2)}_+(\al,\mu,\e)\td^{(2)}_-(\al,\mu,\e) = 4 \tb^+(\al,\mu,\e) \tb^-(\al,\mu,\e)\, , 
\ee
    where $\tb^\pm (\al,\mu,\e)  $ are in \eqref{exp:b+-}-\eqref{exp:b-}.
\item  {\sc Global expression of the eigenvalues.}
The  eigenvalues $\lambda^{(2)}_\pm(\al, \mu, \e)  $ of $ \cL (\alpha, \mu, \epsilon) $ near $ \cM^{(2)}$ are 
\begin{align}
    \label{lambda2.ae}
    \lambda^{(2)}_\pm(\al, \mu, \e)  = 
    \frac{\im}{2}&  \big(\omega_1^-(\al, \mu) + \omega_1^+(\al, \mu) + \e^2(\fc_2(\al, \mu) - \fa_2(\al, \mu) ) + r(\e^4) \big) \\
    \notag
    & \pm 
    \sqrt{
    4( \fb_2(\al, \mu) \e^2 + r(\e^4))^2 - \big(\omega_1^-(\al, \mu)  - \omega_1^+(\al, \mu) + (\fc_2(\al, \mu) + \fa_2(\al, \mu) )\e^2  + r(\e^4) \big)^2
    )}
    \end{align}
with $\omega_1^\pm(\al, \mu)$ in \eqref{eig.0}, and $\fa_2(\al, \mu)$, $\fb_2(\al, \mu)$, $\fc_2(\al, \mu)$ are computed explicitly in \eqref{fapappendix}, \eqref{fcpappendix} (for $\tp =2$ and $\tm = 1$) and \eqref{fb2appendix}, with \eqref{entexp}.
These formulas rigorously  imply the  spectrum of \Cref{fig:spectrum}. 
The eigenvalues in \eqref{lambda2.ae} are even in $ \alpha $ and $ \overline{\lambda_\pm^{(2)}(\alpha, \mu, \e)} = \lambda_\pm^{(2)}(\alpha, - \mu, \e) $.
\item {\sc Perturbed eigenvectors.} For any $ (\alpha,\mu)  \not \in \cM^{(\tp)}_+(\e)\cup \cM^{(\tp)}_-(\e)$  the eigenvalues $\lambda^{(\tp)}_\pm(\alpha,\mu,\epsilon) $ in  \eqref{eigenvalues} are simple, thus $ \tL^{(\tp)}(\al,\mu,\e) $ has two independent eigenvectors. If $(\al,\mu) $ belongs to one and only one perturbed McLean curve $ \cM_+^{(\tp)} (\e)$ or $ \cM_-^{(\tp)} (\e)$, the double eigenvalue 
$\lambda^{(\tp)}_+(\al,\mu,\e) = \lambda^{(\tp)}_-(\al,\mu,\e)$ is defective,  
namely $\tL^{(\tp)}(\al,\mu,\e)$ has a Jordan block. If $(\al,\mu) \in \cM^{(\tp)}_+(\e)\cap \cM^{(\tp)}_-(\e)$, the eigenvalue $\lambda^{(\tp)}_+(\al,\mu,\e) = \lambda^{(\tp)}_-(\al,\mu,\e)$ is  semi-simple and the matrix $\tL^{(\tp)}(\al,\mu,\e)$ is diagonal. This follows since  $\tL^{(\tp)}(\al,\mu,\e)$  is similar to 
\begin{equation}
    \tC \, \tL^{(\tp)}(\al,\mu,\e) 
    \, \tC^{-1} = \tfrac{\im}{2} (\fc^{(\tp)}-\fa^{(\tp)})(\al,\mu,\e) + \tfrac{1}{2}\begin{pmatrix}
        0 & \mathsf{d}^{(\tp)}_+(\al,\mu,\e)\\
         \mathsf{d}^{(\tp)}_-(\al,\mu,\e) & 0
    \end{pmatrix}
\end{equation}
where $\tC$ is the matrix in \eqref{Ccomplex}.
\item 
{\sc Size of the unstable regions and real part of  eigenvalues.} 
The  Hausdorff distance 
between the perturbed McLean curves 
$ \cM^{(2)}_\pm (\e) $ satisfy 
$$
\textup{d}_{\rm H}(\cM^{(2)}_+(\e),\, \cM^{(2)}_-(\e)) \sim 2\sqrt 2 |\e| \, , \qquad
\textup{d}_{\rm H}(\cM^{(3)}_+(\e), \, \cM^{(3)}_-(\e)) = \cO(\e^3) \, .
$$
The maximal, resp. minimal,  separation between $\cM^{(2)}_+(\e)$ and $\cM^{(2)}_-(\e)$ 
is attained along the $\mu$-axis near $ (0,0) $, 
corresponding to the Benjamin-Feir  Floquet band, resp.  near the points   
$(0,\pm \tfrac{5}{4})$ corresponding to the first longitudinal isola. 
The real part of the eigenvalues $ \Re\lambda_+^{(2)}(0,\pm \tfrac54,\e) = \cO( \e^4 ) $ and 
$ \Re\lambda_+^{(3)}(\al,\mu,\e) = 0  $
at the (finitely many) points of $\cM^{(3)}_+(\e) \cap \cM^{(3)}_-(\e)  $. 
\end{enumerate}

By  \eqref{traccianulla} we readily 
deduce a criterion  
for the emergence of an instability region. Let 
\be \label{tracciazero}
\cT^{(\tp)}(\e):= \{ (\al, \mu) \in K^{(\tp)} \colon T^{(\tp)}(\al, \mu, \e)  = 0 \ \}
\ee
where 
$ T^{(\tp)} (\al, \mu,\e) $
is the function defined in \eqref{tracciaBep}.  

\begin{proposition}\label{lem:inst} {\bf (Instability criterion)} 
For any $ \tp \geq 2 $,  for any 
$|\e|\leq \e^{(\tp)} $  the 
instability region 
$ \cU^{(\tp)}_\e $  in \eqref{def:instaintro} is not empty
if and only if the off-diagonal entry 
$  \fb^{(\tp)} (\cdot, \cdot,\e) $ in \eqref{tocomputematrixb}, 
restricted to the curve 
$ \cT^{(\tp)}(\e)$ defined in \eqref{tracciazero}, 
is not identically zero, namely  
\be\label{b.not.vanishing}
\cU^{(\tp)}_\e\neq \emptyset 
 \quad \mbox{ if and only if } 
 \quad   \fb^{(\tp)} (\cdot,\cdot,\e)\vert_{\cT^{(\tp)}(\e)} \not \equiv 0  \, . 
\ee
If there exists  $(\und \al,\und \mu) $ 
on the unperturbed McLean curve  $\cM^{(\tp)}$  in \eqref{mcleanmanifoldsp0} such that 
      \be\label{bp}
      \fb_\tp(\und\al,\und\mu) := \frac{1}{\tp!}\pa_\e^\tp \fb^{(\tp)}(\und \al,\und \mu,0) \neq 0 \ ,
      \ee
then \eqref{b.not.vanishing} occurs. 
\end{proposition}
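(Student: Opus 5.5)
The plan is to deduce both assertions from the factorization \eqref{traccianulla}, $D^{(\tp)} = 4(\fb^{(\tp)})^2 - (T^{(\tp)})^2$, together with the analyticity given in \eqref{reguabcp} of $\fb^{(\tp)}$ and $T^{(\tp)}$ on $K^{(\tp)}$. For $\tp=2$ these functions are polar-analytic near the origin and analytic away from it.

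\emph{The direction ``$\Rightarrow$''.} Suppose $\fb^{(\tp)}(\cdot,\cdot,\e)\vert_{\cT^{(\tp)}(\e)}\equiv 0$. We show that $D^{(\tp)}\le 0$ everywhere, so that $\cU^{(\tp)}_\e=\emptyset$ by \eqref{def:instaintro}.

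First we check that $\cT^{(\tp)}(\e)$ is a nonsingular analytic curve close to $\cM^{(\tp)}$. By \eqref{expa}, \eqref{expc} we have $T^{(\tp)} = m_\tp(\al,\mu) + \cO(\e^2)$, up to sign. By \Cref{lem:description}, $\nabla m_\tp\neq 0$ on $\cM^{(\tp)}$, except at the cross point $(0,0)$ when $\tp=2$. The implicit function theorem (in polar coordinates near the origin when $\tp=2$) then shows that $\cT^{(\tp)}(\e)$ is a connected analytic curve, $\cO(\e^2)$-close to $\cM^{(\tp)}$, and that $T^{(\tp)}$ vanishes simply along it.

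Since $\fb^{(\tp)}$ is analytic and vanishes on this curve, it factors as $\fb^{(\tp)} = T^{(\tp)} g$ with $g$ analytic on a neighborhood of $\cT^{(\tp)}(\e)$. This is a Weierstrass division, or Hadamard's lemma in local coordinates where $T^{(\tp)}$ is a coordinate. It follows that
\[
D^{(\tp)} = (T^{(\tp)})^2\,(4g^2-1).
\]
Near the curve $g$ is small: by \eqref{expb}, $|\fb^{(\tp)}|\lesssim |\e|^\tp$, while $|\partial T^{(\tp)}|\gtrsim 1$, so a Cauchy estimate gives $|g|\lesssim|\e|^{\tp}$. Away from the curve, $|T^{(\tp)}|\gtrsim \mathrm{dist}$, and $|\fb^{(\tp)}|\le C|\e|^\tp$ is much smaller than $|T^{(\tp)}|/2$ at distances $\gg|\e|^\tp$. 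In every region we get $D^{(\tp)}\le 0$, hence $\cU^{(\tp)}_\e=\emptyset$.

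\emph{The direction ``$\Leftarrow$''.} This one is immediate. If some point of $\cT^{(\tp)}(\e)$ has $\fb^{(\tp)}\neq 0$, then $D^{(\tp)} = 4(\fb^{(\tp)})^2>0$ there, so the point lies in $\cU^{(\tp)}_\e$.

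\emph{The sufficient condition \eqref{bp}.} Assume $\fb_\tp(\und\al,\und\mu)\ne0$. Since $\cT^{(\tp)}(\e)$ is $\cO(\e^2)$-close to $\cM^{(\tp)}$, we can pick $(\al_\e,\mu_\e)\in\cT^{(\tp)}(\e)$ with $|(\al_\e,\mu_\e)-(\und\al,\und\mu)|\lesssim\e^2$. Then \eqref{expb} and continuity of $\fb_\tp$ give
\[
\fb^{(\tp)}(\al_\e,\mu_\e,\e)=\e^\tp\bigl(\fb_\tp(\und\al,\und\mu)+o(1)\bigr)\ne0
\]
for $0<|\e|$ small. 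Here one should choose $\e^{(\tp)}$ small enough, or note that the statement is meant for small $\e\neq0$.

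\emph{Main obstacle.} The delicate case is $\tp=2$ near the origin. There $m_2$ has a cross singularity, so $\cT^{(2)}(\e)$ need not be a smooth curve, and the functions are only polar-analytic. To handle this, I would work in the coordinates $(\rho,\theta)$. Alternatively, I would use \eqref{dpmbpm} and the explicit local description in \Cref{TeoremoneFinale}. By \eqref{exp:b-}, $\tb^-\approx-\e^2$ near $0$, so the sign of $D^{(2)}$ there is dictated by $\tb^+$, which changes sign across $\mu=\pm\sqrt2|\al|$. This shows directly that $\cU^{(2)}_\e$ is nonempty near the origin, which makes the local analysis unnecessary there.
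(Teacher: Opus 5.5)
\paragraph{Assessment.} Your easy direction and your treatment of \eqref{bp} follow the paper. For the hard direction you take a different route, and it leaves two genuine gaps, both at the singular points of the curves.

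\paragraph{Comparison of routes.} The paper argues geometrically. By \Cref{pertmcleandesc3456789} the curves $\cM^{(\tp)}_\pm(\e)$ are graphs over $\cT^{(\tp)}(\e)$. Hence $\fb^{(\tp)}\vert_{\cT^{(\tp)}(\e)}\equiv0$ forces $\cM^{(\tp)}_+(\e)=\cT^{(\tp)}(\e)=\cM^{(\tp)}_-(\e)$, and then \eqref{tagliaalberi} gives $D^{(\tp)}=\mathsf{d}^{(\tp)}_+\mathsf{d}^{(\tp)}_-\le0$. You instead factor $\fb^{(\tp)}=T^{(\tp)}g$ and bound $|g|<\tfrac12$ by a Cauchy estimate. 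This is correct, and more self-contained, wherever $\fb^{(\tp)}$ and $T^{(\tp)}$ are analytic on a complex neighbourhood of fixed size and $|\nabla T^{(\tp)}|\gtrsim1$. It fails exactly at the singular points, which is where the paper does its extra work.

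\paragraph{First gap: odd $\tp$.} Contrary to your reading of \eqref{reguabcp}, for $\tp\ge3$ the entries are only in the class $\cA$. This class allows singular terms $(\al^2+(\mu-j)^2)^{1/2}$ at the points $(0,j)$, $j\in\Z$. For odd $\tp$ the points $(0,\mu^\pm_*(\tp))$ of \eqref{sing.ML} are integer points lying \emph{on} $\cM^{(\tp)}$. There $T^{(\tp)}(\cdot,\cdot,\e)$ and $\fb^{(\tp)}(\cdot,\cdot,\e)$ are merely Lipschitz for $\e\ne0$. So neither the analytic implicit function theorem nor division with a fixed-radius Cauchy estimate is available. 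The paper uses a Lipschitz implicit function theorem together with the computation $T^{(\tp)}(0,\mu^\pm_*(\tp),\e)=\tfrac{\tp}{2}\e^2+\cO(\e^4)$ from \Cref{prop:values}.

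A repair in your spirit is to avoid dividing:
\begin{itemize}
\item Since the decomposition \eqref{dectM} is unique, both components of $\fb^{(\tp)}$ are $\cO(\e^\tp)$. Hence $\fb^{(\tp)}(\cdot,\cdot,\e)$ has Lipschitz constant $\lesssim|\e|^\tp$.
\item $T^{(\tp)}=-m_\tp+\e^2(\text{Lipschitz})$ with $\nabla m_\tp\ne0$, so $T^{(\tp)}$ has a lower Lipschitz bound along the normal lines to $\cM^{(\tp)}$.
\item Compare the two functions on each such line, starting from its unique point on $\cT^{(\tp)}(\e)$. This gives $|2\fb^{(\tp)}|<|T^{(\tp)}|$ off $\cT^{(\tp)}(\e)$.
\end{itemize}

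\paragraph{Second gap: $\tp=2$ near the origin.} Here your fallback does not close the argument.

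First, the division argument itself breaks down. Along $\cT^{(2)}(\e)$ one has $|\nabla T^{(2)}|\sim\rho$, and the analyticity radius is also $\sim\rho$, with $\rho\gtrsim|\e|$. So the Cauchy estimate only yields $|g|\lesssim\e^2/\rho^2=\cO(1)$. This is not an artefact: by \Cref{b00}, $\fb^{(2)}=\e^2(-\tfrac12+o(1))$ there. Polar coordinates do not help, since $T^{(2)}$ vanishes to second order in $\rho$ at $\e=0$.

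Second, showing $\cU^{(2)}_\e\ne\emptyset$ near the origin only establishes the \emph{left} side of \eqref{b.not.vanishing}. The equivalence still needs the right side. Your contrapositive argument yields $D^{(2)}\le0$ only outside a ball $B_{\rho_2}(0,0)$, while your point with $D^{(2)}>0$ lies inside it, so no contradiction follows.

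One more observation is needed; either of these works:
\begin{itemize}
\item $\cU^{(2)}_{\e,{\rm loc}}$ also contains points of the annulus $\rho_2\le\rho\le\rho_1$, just across the branches $\{\tb^+=0\}$. Indeed \eqref{exp:b+-}, \eqref{exp:b-} and \eqref{siannu} give $\tb^++\tb^-=-\e^2(1+\cO(\e^2,\rho))$, so $\tb^-<0$ on $\{\tb^+=0\}$. This contradicts your away-from-origin conclusion.
\item Prove the right side directly with your \eqref{bp} argument and \Cref{b00}. Along $\{\al=0\}$, $T^{(2)}=\e^2+\cO(\e^4)>0$ as $\mu\to0^+$, and $T^{(2)}<0$ at a fixed small $\mu_0$. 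So $\cT^{(2)}(\e)$ has a point on this segment, and there $\fb^{(2)}=\e^2(\fb_2+\cO(\e^2))\ne0$.
\end{itemize}

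\paragraph{Minor point.} Near the origin, $\cT^{(2)}(\e)$ is only $\cO(|\e|)$-close to $\cM^{(2)}$ (cf.\ \eqref{T2zero}), not $\cO(\e^2)$ as you assert in the \eqref{bp} part.
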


Regarding the proof of \Cref{TeoremoneMcLean} 
we  emphasize that, unlike previous works, we apply  the Kato reduction approach  {\it globally} 
around  the McLean curves $ \cM^{(\tp) }$.
\begin{enumerate} 
\item {\sc Splitting for $\tp = 2 $.}  
By comparison with \Cref{TeoremoneFinale} we prove in \Cref{b00} that $\fb_{2}(0,0) = - \tfrac12  $, thus 
the analytic
function $\fb_{2}(\alpha,\mu) $ 
is not identically zero along 
$\cM^{(2)}$ and  $\cM^{(2)}_+(\e) \cap \cM^{(2)}_-(\e) $ consists at most of finitely many points.
 Next we show  
 that     
$  \fb_{2}(\alpha,\mu) < 0 
$ for any $ (\alpha,\mu)
 \in \cM^{(2)} \setminus \{(0, \tfrac54) \} $,  
yielding 
\eqref{intersectionmcleans}.
\item {\sc Splitting for $\tp = 3 $.} 
The matrix 
$\tL^{(3)}(\al,\mu,\e)$ in \eqref{tocomputematrixb} 
has been computed in \cite{CNS} at the special point 
$ (\al_3(0),0) $ which belongs to  the 
unperturbed McLean curve $ \cM^{(3)} $ and to the axis 
$ \{ \mu = 0  \}$, showing  that  
 $\fb_{3}(\al_3(0),0) \neq 0 $. Therefore 
the analytic function $\fb_{3}(\alpha,\mu) $  is not identically zero along the $ 1 $-dimensional McLean curve $\cM^{(3)}$ and  $\cM^{(3)}_+(\e) \cap \cM^{(3)}_-(\e) $ is discrete, 
as stated in \Cref{TeoremoneIntro}. 
\item {\sc Upper bounds for any $\tp \geq 4 $.} 
The upper bounds 
\eqref{upboundeigv}  
 follow by the  expansions \eqref{expa}-\eqref{expc}. 
\end{enumerate}

\subsection{Regularity of the fiber Dirichlet-Neumann operator}\label{sec:MDN}

A key analytical  step of the paper 
is to establish  
the regularity properties in 
\Cref{DNProp1} 
of the fiber Dirichlet-Neumann operator $\cG (\al,\mu,\e)  $ 
defined in \eqref{def:Galmu}
in the Floquet parameters $ (\al, \mu) $
and in $\e $. 
 \\[1mm]
{\bf Notation.}
We identify a linear
(possibly unbounded) 
operator $A $  acting on $ C^\infty (\T, \C)$ as the infinite matrix $ \{ A^{k_1}_{k_2} \}_{k_1,k_2\in \Z} $  with respect to the exponential basis,
$$
A^{k_1}_{k_2} := ( A e^{\im k_1 x} , \ e^{\im k_2 x } )_{L^2(\T)} \, , 
\quad (f,g)_{L^2(\T)} = \frac{1}{2\pi} \int_{\T} f(x) \overline{g(x)} \,   \de x \, . 
$$
Thus  
the action of $A$  
is  given  by  
$ h(x) = \sum_{k_1 \in \Z} h_{k_1} e^{\im k_1 x} \mapsto  
(A h)(x) = \sum_{k_2\in \Z} 
\Big( \sum_{k_1 \in \Z} A^{k_1}_{k_2} \, h_{k_1} \Big) \, e^{\im k_2 x} $. 
We do not concern about issues of convergence
as we will deal with 
finitely many matrix entries. 

Given $\kappa \in \Z $, we define its
``$\kappa$-band" operator 
$ A^{[\kappa]} \equiv \{A^{k_1}_{k_2} \}_{k_2 - k_1 = \kappa} $ 
with matrix coefficients  supported on the 
``band" $ k_2-k_1 = \kappa $. 
In other words the action of 
$A^{[\kappa]} $  is  to ``shift 
the exponential $ e^{\im j x}$  of 
$ \kappa $ harmonics", namely 
\begin{equation}\label{band.def}
A^{[\kappa]} (e^{\im j x})  =
A^{j}_{ j + \kappa} e^{\im (j + \kappa) x} \, . 
\end{equation}
If $A = \begin{bmatrix}
    A_1 & A_2 \\
    A_3 & A_4
\end{bmatrix}$ is a $2\times 2$ matrix of operators acting on
$ C^\infty (\T, \C^2)$  
we define its $\kappa$-{\it band} as the
operator 
\begin{equation}\label{Am4}
A^{[\kappa]}:= 
\begin{bmatrix}
  A^{[\kappa]}_1 & A^{[\kappa]}_2 \\
  A^{[\kappa]}_3 & A^{[\kappa]}_4
\end{bmatrix}.
\end{equation}
Given a family of linear 
operators 
$A=A(\al, \mu,\e)$, 
analytic in $ \e $,  we define, for any 
$ \ell \in \N_0 $, its jets 
$ A_\ell := \frac{1}{\ell!} \big(\pa^\ell_\e A \big)(\al, \mu) $. 
Following \cite{BMV5}, we introduce the space of operators
whose jets $ A_\ell $  have ``finite-range interactions" of order at most $ \ell $ and with the same parity of $ \ell $.

\begin{definition}{\bf (Spaces $\mathfrak{F}_\ell$ and  $\mathtt{F}$)}
\label{defFell} 
For any $\ell \in \N_0$ we define $\mathfrak{F}_\ell$ the space of operators, or  $2\times 2$ matrices of operators $ B $,  such that 
\begin{equation}\label{TFj} 
 B^{[\kappa]} =  
0 \quad   \textup{ if }|\kappa| > \ell \quad 
 \textup{ or} \quad \kappa \nequiv \ell \ (\textup{mod }2) \, . 
\end{equation}
We denote by 
$\mathtt{F}$  the space of operators, or
$2\times 2$ matrices of operators, with a formal power series

$$ 
A(\e) =     
\sum_{\ell \geq 0} A_{\ell}\, \e^\ell  \quad
\text{such that} \quad 
A_{\ell}   \ \in \mathfrak{F}_\ell \, , \ \forall \ell \in \N_0 \, . 
$$  
\end{definition}
 
The  fiber Dirichlet-Neumann operator  $\mathcal{G}(\alpha, \mu, \epsilon) $  in \eqref{def:Galmu}, 
is a first  order 
operator, analytic in \( (\alpha, \mu) \) except at points in \( \{0\} \times \mathbb{Z} \), where it exhibits algebraic singularities.

\begin{theorem}\label{DNProp1} {\bf (Fiber Dirichlet-Neumann operator)}
   For any $s\in\R$,
 there exists $\e_0 :=  \e_0 (s)>0$ such that for any $(\al,\mu,\e)\in \R\times \R \times B_{\e_0}(0)$
    the fiber Dirichlet-Neumann operator $\cG({\al,\mu, \e}) $ 
    defined in \eqref{def:Galmu} 
    maps $ H^s (\T) \to H^{s-1} (\T) $, 
 and decomposes as 
\begin{equation}\label{decoGsha}
   \cG({\alpha, \mu, \e})    = |D|_{\al,\mu}  
   + \cG^\sharp({\alpha,\mu, \e}) 
\end{equation}
    where  
\\[1mm]
$ \bullet $    
$       |D|_{\al,\mu} := ((D+\mu)^2 + \alpha^2)^{\frac12} \colon H^s(\T)   \to H^{s-1}(\T) $; 
 \\[1mm]
$ \bullet $  the operator     $\cG^\sharp({\al,\mu,\e}): H^s (\T) \to H^{s+1} (\T)  $ is 1-smoothing
and satisfies, 
    for any $ \mu_0 > 0 $, 
\begin{equation}\label{est:DNpert}
    \sup_{|\mu|\leq\mu_0}\|\cG^{\sharp}(\al, \mu, \e)\|_{\cL(H^s,H^{s+1})}\leq C_{s,\mu_0} \al^2|\e|\, , \qquad 
    \sup_{|\mu|\leq\mu_0}\|\cG^{\sharp}(\al, \mu, \e)\|_{\cL(H^s,H^{s})}\leq C_{s,\mu_0}  |\al||\e|\, .  
        \end{equation}
$ \bullet $   the operators  
\be
\begin{aligned}
\label{regGs}
 (\alpha^2 \e)^{-1}\cG^\sharp({\al,\mu,\e})\in \cA(\R\times \R,\e_0;H^s(\T),H^{s+1}(\T))\cap \tF\,   , &  \\ \cG(\al,\mu,\e)\in \cA(\R\times \R,\e_0;H^s(\T),H^{s-1}(\T))\cap\tF \, , & 
    \end{aligned}
\ee
according to \Cref{def:tM,defFell}. 
\end{theorem}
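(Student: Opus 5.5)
We solve the elliptic problem \eqref{ellprobtransf} perturbatively, treating $\al^2 d_\e(x,z)\Theta$ as a source term. Write $\Theta = \Theta^{(0)} + \Theta^\sharp$. Here $\Theta^{(0)}(x,z) = \sum_k g_k e^{\im k x} e^{\langle k\rangle_{\al,\mu} z}$ with $\langle k\rangle_{\al,\mu} := ((k+\mu)^2+\al^2)^{1/2}$ is the unperturbed harmonic extension. The correction $\Theta^\sharp$ solves
\[
\pa_z^2\Theta^\sharp + (\pa_x+\im\mu)^2\Theta^\sharp - \al^2\Theta^\sharp = \al^2 d_\e(\Theta^{(0)}+\Theta^\sharp)\, , \qquad \Theta^\sharp(x,0)=0 \, .
\]
Mode by mode this is inverted with the Dirichlet Green function of $\pa_z^2 - \langle k\rangle^2$ on $\R_-$,
\[
\mathcal K_k(z,z') = -\frac{1}{2\langle k\rangle}\big(e^{-\langle k\rangle|z-z'|}-e^{\langle k\rangle(z+z')}\big)\, .
\]
This gives a fixed point $\Theta^\sharp = \al^2\mathcal K[d_\e(\Theta^{(0)}+\Theta^\sharp)]$. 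Then $\cG^\sharp[g] = \pa_z\Theta^\sharp|_{z=0}$, and $\cG = |D|_{\al,\mu}+\cG^\sharp$ gives \eqref{decoGsha}. Item (iv) of \Cref{DNonR3} is the case $\e=0$, since $d_0=0$.

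The first step is the contraction estimate. We use that $d_\e$ from \eqref{def:d} is $\cO(\e)$, analytic, and decays exponentially as $z\to-\infty$ (as $e^{z}$ at least, coming from the Levi-Civita map). The factor $\al^2$ combines with the Green kernel's $1/(2\langle k\rangle)$ and with $\pa_z$ at $z=0$. For modes $k$ with $|k+\mu|\gtrsim 1$ this yields a gain of one derivative, hence the $\cL(H^s,H^{s+1})$ bound $\al^2|\e|$. For the resonant mode $k+\mu\approx0$ (for $|\mu|\le\mu_0$, finitely many $k$ are involved) we have $\langle k\rangle\sim|\al|$, so the kernel is of size $1/|\al|$. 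The exponential decay of $d_\e$ absorbs the growth of the $z$-integral, so $\al^2\cdot|\al|^{-1}=|\al|$. This gives the $\cL(H^s,H^s)$ bound $|\al||\e|$ in \eqref{est:DNpert}, and the $H^{s+1}$ bound follows since only low modes are involved there. As in \Cref{rem:decay}, I would set the fixed point in a weighted space: exponential weight $e^{\delta z}$ for the nonresonant modes, and a weight allowing slow growth (or only boundedness) for the resonant zero mode. The source $d_\e\Theta$ still decays thanks to $d_\e$, and the map is a contraction for $|\e|\le\e_0(s)$ uniformly in $(\al,\mu)$. \emph{This weighted contraction, uniform as $\al\to0$, is the main obstacle.} I would first try the norm $\sup_z e^{-\delta z}|\cdot|$ on the nonzero modes and $\sup_z(1+|z|)^{-1}|\cdot|$ on the resonant mode.

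Structural properties. Analyticity in $\e$ follows from the contraction, since everything is analytic in $\e$ uniformly. Analyticity in $(\al,\mu)$ away from $\{0\}\times\Z$ holds because each $\langle k\rangle$ is then analytic. For the decomposition \eqref{dectM} near $(0,j)$, note that only the single mode $k=-j$ has a nonanalytic $\langle k\rangle = (\al^2+(\mu-j)^2)^{1/2}=:\rho_j$. Every expression built from it (the kernel $\mathcal K_k$, $e^{\rho_j z}$, $e^{-\rho_j|z-z'|}$) is an even analytic function of $\rho_j$ divided by $\rho_j$, or is $\rho_j$ times such a function. Since $\rho_j^2=\al^2+(\mu-j)^2$ is analytic, every even power of $\rho_j$ is analytic in $(\al^2,\mu)$ and every odd power is $\rho_j$ times an analytic function. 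The fixed-point solution is then a convergent series in these objects. Once the apparent $1/\rho_j$ is shown to be compensated (the $\al^2$ prefactor, and $\mathcal K$ vanishing at $z=0$), this yields the form $A^{[\mathrm I]}+\rho_j A^{[\mathrm{II}]}$, using closure of $\cA$ under products (\Cref{ap_cAF}). The same argument applies to $(\al^2\e)^{-1}\cG^\sharp$ after factoring out $\al^2$ and one $\e$ from $d_\e$. The dependence on $\al^2$ only is immediate, because $\al$ enters \eqref{ellprobtransf} only through $\al^2$.

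For membership in $\tF$, expand $d_\e=\sum_\ell \e^\ell d_\ell$ with $d_\ell$ a trigonometric polynomial in $x$ of degree $\le\ell$ and parity $\ell$, as in \eqref{pafparity} (inherited from the Stokes wave). The $\ell$-th jet of $\Theta^\sharp$ is a sum of iterated products $\mathcal K d_{\ell_1}\mathcal K d_{\ell_2}\cdots$ with $\sum\ell_i=\ell$. Each product shifts harmonics by at most $\ell$ with matching parity, so \eqref{TFj} holds.

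Finally, the gauge covariance, self-adjointness, reversibility and symmetries in \Cref{DNonR3} are used only as consistency checks. The $\mu\mapsto\mu+k$ covariance allows us to restrict to $|\mu|\le\mu_0$ in the estimates.
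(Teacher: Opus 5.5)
Your route is the paper's: split $\Theta=\Theta^\flat+\Theta^\sharp$, invert $\pa_z^2-\langle k\rangle^2$ mode by mode (your Dirichlet Green function is exactly the variation-of-constants formula \eqref{sol}), solve by Neumann series in a space where the zero mode need not decay, and get $\tF$ from the jets of $d_\e$. The gap is in the third bullet, \eqref{regGs}.

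With your norm $\sup_z(1+|z|)^{-1}|\cdot|$ on the zero mode, the contraction is fine for $\Re\rho\ge0$, since then $|\mathcal K_0(z,t)|\le\min(|z|,|t|)$. It cannot, however, deliver the decomposition \eqref{dectM} at $(0,j)$ as you sketch. The obstacle is not an ``apparent $1/\rho_j$'': $\mathcal K_0$ and $e^{\rho z}$ are entire in $\rho$, and any analytic function of $\rho$ is an even part plus $\rho$ times an even part. The real difficulty is that producing $A^{[\mathrm I]}$ and $A^{[\mathrm{II}]}$ means taking even and odd parts in $\rho$, i.e. evaluating the building blocks also at $-\rho$. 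The resulting pieces are $\cosh(\rho z)\Pi_0$, $\rho^{-1}\sinh(\rho z)\Pi_0$, and the Green function with $\rho\to-\rho$. These grow like $e^{|\rho||z|}$ as $z\to-\infty$, and they must be analytic in $(\al^2,\mu)$ on a complex neighbourhood. They are not bounded maps into a polynomially weighted space, so your ``convergent series'' has no Banach space to converge in.

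The missing idea is to let the zero mode grow exponentially. This is what the paper's spaces $H^{s,b}_{-a_0,a}$ with \eqref{choice} do:
\begin{itemize}
\item the zero mode may grow like $e^{a_0|z|}$, with $a+a_0<1$;
\item since $d_\e$ decays like $\e e^{cz}$ for every $c<1$, multiplication by $d_\e$ still lands in an $e^{az}$-decaying space;
\item the complex parameter is restricted to $|\rho_j|<a_0$.
\end{itemize}
Then $e^{z|D|_{\al,\mu}}$ and $\al^{-2}L_{\al,\mu}$ belong to $\cA$ as maps between these spaces. Composition and functional calculus (\Cref{lem:prodintM}) then transfer the structure to $\cG^\sharp$.

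Your derivation of the $|\al||\e|$ bound in \eqref{est:DNpert} is also off. The resonant mode is not of size $|\al|$: $\pa_z\mathcal K_0(0,t)=e^{\rho t}$ carries no $1/\rho$, so it contributes $\cO(\al^2|\e|)$. It must, since otherwise your claimed $\al^2|\e|$ bound in $\cL(H^s,H^{s+1})$ would fail for small $\al$. For $|\al|\le 1$ the second estimate follows from the first. Its content is the behaviour as $|\al|\to\infty$, where every mode with $|k|\lesssim|\al|$ has $\langle k\rangle_{\al,\mu}\sim|\al|$. Handling it needs a lower bound $\Re\langle k\rangle_{\al,\mu}\gtrsim|k|+|\al|$ uniform in $k$, also on the complex neighbourhood used for analyticity. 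An argument about finitely many resonant modes does not cover this.
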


 \Cref{DNProp1} is proved  in   \Cref{ap_analiticity}. 
Let us make some comments.

\begin{enumerate}
\item 
{\sc Special cases.}
For $ \alpha = 0 $ (purely longitudinal perturbations) the fiber Dirichlet-Neumann operator 
$
   \cG({0, \mu, \e})    = |D + \mu | $ reduces to a Fourier multiplier. For $ \mu = 0 $
(purely transverse case) the 
operator $
   \cG({\alpha, 0, \e})  $ is real
   by \eqref{cGcLmenomu}. The analyticity  
of  $ \cG^\sharp({\al,0,\e}) $ with respect to $ (\alpha, \epsilon) $  for 
$ \al   $  far   from zero was  proven in \cite[Proposition 4.2]{CNS}.
\item 
{\sc Regularity of $|D|_{\al,\mu} $.}
 The unperturbed  operator $|D|_{\al,\mu} $ 
 belongs to $\cA(\R^2,+ \infty ;H^s(\T),H^{s-1}(\T))$   
for any $s\in\R$.  Indeed,   denoting $ \Pi_j $, for any $ j \in \Z $,   the projector on $e^{\im j x}$ and $ \Pi_{j}^\perp := \uno - \Pi_{j} $, we have 
\begin{equation}\label{dec:Gflat}
        |D|_{\al,\mu}  =
          \underbrace{|D|_{\al,\mu} \Pi_{-j}^\perp }_{=:|D|^{[\mathrm{I}]}(\al^2,\mu)} + (\al^2 + (\mu-j)^2)^{\frac12}\underbrace{\Pi_{-j}}_{=:|D|^{[\mathrm{II}]}}
    \end{equation}
    and $|D|_{\al,\mu} \Pi_{-j}^\perp$ is analytic in $ (\alpha^2,\mu) $ in a neighborhood  $ (0,j)$.  
\item
{\sc Jets.}
The first two jets of the  fiber Dirichlet-Neumann operator   $\cG^\sharp(\al,\mu,\e) = \sum_{\ell\geq 1} \cG_\ell(\al,\mu)\e^\ell$  are 
\begin{equation}\label{cG12action}
  \begin{aligned}
            &\cG_1(\al,\mu)[e^{\im n x}] = c_{1,n}^+(\al,\mu)e^{\im(n+1)x} + c_{1,n}^-(\al,\mu)e^{\im(n-1)x}, \\
            &\cG_2(\al,\mu)[e^{\im n x}] = c_{2,n}^+(\al,\mu)e^{\im(n+2)x} + c_{2,n}^0 (\al,\mu)e^{\im n x} + c_{2,n}^-(\al,\mu)e^{\im(n-2)x}, 
            \end{aligned}
   \end{equation}  
   with coefficients 
     \begin{equation}  \label{actioncG12}     \begin{aligned}
            &c_{1,n}^\pm (\al,\mu)= \al^2 \left( |n\pm 1|_{\al,\mu} + |n|_{\al,\mu} + 1 \right)^{-1} \\
            &c_{2,n}^\pm (\al,\mu)= \al^2 \frac{\left(2- \al^2\left((|n\pm 1|_{\al,\mu} + |n|_{\al,\mu} + 1)(|n\pm 2|_{\al,\mu} + |n\pm 1|_{\al,\mu} + 1 )\right)^{-1} \right)}{|n\pm 2|_{\al,\mu} + |n|_{\al,\mu}+2} \\
            &c_{2,n}^0(\al,\mu) = \al^2 \frac{\left(1-\al^2\left(\frac{1}{(|n+1|_{\al,\mu}+|n|_{\al,\mu}+1)^2}+\frac{1}{(|n-1|_{\al,\mu}+|n|_{\al,\mu}+1)^2}\right)\right)}{2(|n|_{\al,\mu}+1)} \, . 
        \end{aligned}
    \end{equation}
   These formulas were computed in 
  \cite[Proposition 4.2]{CNS} (in the case $ \mu = 0 $). They can be derived also by Taylor expanding at quadratic order \eqref{formulathetasharppp1} and using the Taylor expansion  \eqref{deintF}. 
    They  
    agree with the property 
that $\cG(\al,\mu,\e)$ belongs to $\cA \cap \tF $ according to  \Cref{def:tM},
as  $|k|_{\al,\mu} = ((k+\mu)^2+\al^2)^\frac12$ have the form \eqref{dectM}. Note also that  $\cG_1 \in \mathfrak{F}_1$  has only bands $\pm 1$, and $\cG_2\in \mathfrak{F}_2$ has only bands $0, \pm 2$.
\item {\sc Regularity of $ \cL(\al,\mu,\e) $. } 
  The operators 
$ \cB(\al,\mu,\e) $ and $ \cL(\al,\mu,\e) $ in \eqref{operator Bmualep}, \eqref{cLame0} satisfy 
    \begin{align}
\label{regGBL}\cB(\al,\mu,\e),\, \cL(\al,\mu,\e) \in \cA(\R\times \R,\e_0;&H^s(\T;\C^2),H^{s-1}(\T;\C^2))\cap \tF \, ,
    \end{align}
as readily follows by
\eqref{regGs} and the properties 
\eqref{apexp}-\eqref{pafparity}
of the functions $ a_\epsilon (x), p_\epsilon (x) $.     
\end{enumerate}

\noindent 
{\bf Notation:}
We  write   $a\lesssim b$, resp. 
$a \gtrsim b$,  to mean  that there is a constant $C>0$ such that $a \leq C b$, resp. $ a \geq C b$,  for any $ a, b \geq 0 $. 

 \part{Benjamin-Feir 
 instability}\label{part:I}

In this part we fully describe the transverse and longitudinal 
Benjamin-Feir instability of the four spectral bands 
for Stokes waves in deep water, for 
$ (\alpha, \mu) $ near $(0,0) $, proving Theorem \ref{TeoremoneFinale}.

\section{Perturbative approach to  Benjamin-Feir eigenvalues}\label{sec:katoBF}

First we decompose the operator
$ \cL(\al,\mu,\e)  $ in \eqref{cLame1}, \eqref{cLame0} 
as 
\begin{equation}\label{cLsL}    \cL(\al,\mu,\e) = \im\mu + \sL(\al,\mu,\e)
\end{equation}
where    $\sL(\al,\mu,\e)$ is the Hamiltonian and reversible operator 
\begin{equation}\label{sLresca}
\sL(\al,\mu,\e) 
 = 
\cJ  \underbrace{\begin{bmatrix}
        1+a_\e(x) & - (1 + p_\e(x))\pa_x  - \im \mu p_\e(x)\\
         \pa_x\circ (1 + p_\e(x))  + \im \mu p_\e(x) & 
         \cG(\al,\mu,\e) 
    \end{bmatrix}}_{=: \sB (\alpha, \mu, \epsilon)} \, . 
\end{equation}
The  operators $\sL(\al,\mu,\e)$
and $\sB (\al,\mu,\e)$
satisfy 
the same properties of 
$\cL(\al,\mu,\e)$
and $\cB (\al,\mu,\e)$ in \Cref{DNonR3} and Theorem \ref{DNProp1}. 
We regard $\sL (\al,\mu,\e) $ as an operator with domain $ H^1:= H^1(\T,\C^2)$ and range $ L^2:=L^2(\T,\C^2)$. 

\begin{lemma}\label{lem:Kato1}
{\bf (Kato theory for separated eigenvalues near $0$)}
 Let $\Gamma$ be a closed, counterclockwise-oriented circle in the complex plane, centered in $0$, separating $\sigma'\left(\sL(0,0,0)\right)=\{0\}$
  and $\sigma''\left(\sL(0,0,0)\right) = \sigma''\left(\cL(0,0,0)\right)$ in \eqref{spettrodiviso0}.
Then there exist $\rho_0, \e_0 >0$  such that for any $(\al , \mu, \e) \in B_{\rho_0}(0,0) \times B_{\e_0}(0)$   the following  holds:
\\[1mm] 
1. The curve $\Gamma$ belongs to the resolvent set of 
the operator $\sL(\al,\mu,\e) : H^1 \subset L^2 \to L^2 $ and the operators
\begin{equation}\label{Pproj}
 P_{\al,\mu,\e} := \frac{1}{2\pi\im}\oint_\Gamma (\lambda-\sL(\al,\mu,\e) )^{-1} \di \lambda : L^2 \to H^1 
\end{equation}  
are well defined projectors commuting  with $\sL(\al,\mu,\e) $,  i.e. 
$ P_{\al,\mu,\e}^2 = P_{\al,\mu,\e} $, $ P_{\al,\mu,\e}\sL(\al,\mu,\e) = 
\sL (\al,\mu,\e)  P_{\al,\mu,\e} $.   In addition $P_{\al,\mu,\e}$ is \textit{skew-Hamiltonian}
 and reversibility preserving, i.e. 
   \be\label{Psym}
    \cJ P_{\al,\mu,\e}= P_{\al,\mu,\e}^* \cJ \, , \qquad 
    \varrho_c P_{\al,\mu,\e} = P_{\al,\mu,\e}  \varrho_c \, , 
    \ee
    where $ \varrho_c $ is defined in \eqref{def:cinvolution}. 
The map 
\be\label{Pamuan}
(\al , \mu,  \epsilon)\mapsto P_{\al , \mu, \epsilon}\ \quad \text{
belongs to} \quad  
\cA(B_{\rho_0}(0,0),\e_0;L^2,H^1) \cap \tF 
\ee
    according to \Cref{def:tildeM,defFell}.
\\[1mm] 
2.
The domain $H^1$  of the operator $\sL(\al,\mu,\e) $ decomposes as  the direct sum
\begin{equation}\label{projdec}
    H^1 = \mathcal{V}_{\al,\mu,\e} \oplus \text{Ker}(P_{\al,\mu,\e}\vert_{H^1}) \, , \quad \mathcal{V}_{\al,\mu,\e}:=\text{Rg}(P_{\al,\mu,\e})=\text{Ker}(\uno-P_{\al,\mu,\e}) \, ,
\end{equation}
of   closed invariant  subspaces, namely 
$ \sL (\al,\mu,\e) : \mathcal{V}_{\al,\mu,\e} \to \mathcal{V}_{\al,\mu,\e} $ and $
\sL(\al,\mu,\e) : \text{Ker}(P_{\al,\mu,\e}\vert_{H^1}) \to \text{Ker}(P_{\al,\mu,\e}) $ and 
$$
\begin{aligned}
&\sigma(\sL(\al,\mu,\e))\cap \{ z \in \C \mbox{ inside } \Gamma \} = \sigma(\sL(\al,\mu,\e) \vert_{{\mathcal V}_{\al,\mu,\e}} )  = \sigma'(\sL(\al,\mu,\e)) \, .  
\end{aligned}
$$
3.  The projectors $P_{\al,\mu,\e}$ 
are similar one to each other: the  transformation operators
\begin{equation} \label{OperatorU} 
U_{\al,\mu,\e}   := 
\big( \uno-(P_{\al,\mu,\e}-P_{0,0,0})^2 \big)^{-1/2} \big[ 
P_{\al,\mu,\e}P_{0,0,0} + (\uno - P_{\al,\mu,\e})(\uno-P_{0,0,0}) \big] 
\end{equation}
are bounded and  invertible in $ H^1 $ and in $ L^2 $, with inverse
$$
U_{\al,\mu,\e}^{-1}  = 
 \big[ 
P_{0,0,0} P_{\al,\mu,\e}+(\uno-P_{0,0,0}) (\uno - P_{\al,\mu,\e}) \big] \big( \uno-(P_{\al,\mu,\e}-P_{0,0,0})^2 \big)^{-1/2} \, , 
$$
 and 
$ U_{\al,\mu,\e} P_{0,0,0}U_{\al,\mu,\e}^{-1} =  P_{\al,\mu,\e}  $ and  
$  U_{\al,\mu,\e}^{-1} P_{\al,\mu,\e}  U_{\al,\mu,\e} = P_{0,0,0} $. In addition 
 $U_{\al,\mu,\e}$ are symplectic
    and reversibility preserving, i.e. 
\begin{equation}\label{Usymp}
    U_{\al,\mu,\e}^* \cJ U_{\al,\mu,\e} = \cJ  \, 
    , \quad \varrho_c U_{\al,\mu,\e} = U_{\al,\mu,\e}  \varrho_c \, , 
    \end{equation}
    where $ \varrho_c $ is defined in \eqref{def:cinvolution}.  
The map 
\begin{equation}\label{Ureg}
    (\al , \mu,  \epsilon)\mapsto U_{\al , \mu, \epsilon} \quad \text{belongs to} \quad \cA(B_{\rho_0}(0,0),\e_0;Z,Z) \cap \tF\, , \qquad Z \text{ either } H^1 \text{ or } L^2\, .
\end{equation} 
4. The $ 4 $ dimensional subspaces 
$$
\mathcal{V}_{\al,\mu,\e}=\text{Rg}(P_{\al,\mu,\e}) = U_{\al,\mu,\e}\mathcal{V}_{0,0,0} \, , \quad 
\forall (\al , \mu, \e) \in B_{\rho_0}(0,0)\times B(\e_0) \, , 
$$ 
which are all isomorphic to each other,  
are  \emph{symplectic} according to \Cref{def:sympsub}.
\\[1mm]
5. The operators $P_{\al,\mu,\e} $ and $ U_{\al,\mu,\e}$ satisfy  the symmetry properties  \begin{equation}\label{symmmmUP}
\overline{P_{\al,\mu,\e}} =
P_{\al,-\mu,\e} \, , \qquad   \overline{U_{\al,\mu,\e}} 
= U_{\al,-\mu,\e} \, .\end{equation}
 In particular 
 $ P_{\al,0,\e}$ and
$  U_{\al,0,\e}$ are real
operators.
The operators  $ P_{\al,\mu,0}$ and
$  U_{\al,\mu,0}$ are Fourier 
multipliers. 
\end{lemma}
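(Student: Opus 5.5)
The plan follows the standard Kato scheme for separated eigenvalues, as in \cite{BMV1,BMV5}. The one new ingredient is the regularity class $\cA\cap\tF$, which has to be propagated through the resolvent, the Cauchy integral and the functional calculus defining $U_{\al,\mu,\e}$.

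\emph{Step 1: resolvent on $\Gamma$.} Write $\sL(\al,\mu,\e)=\sL(0,0,0)+\big(\sL(\al,\mu,\e)-\sL(0,0,0)\big)$. The operator $\sL(0,0,0)$ is a Fourier multiplier, and its resolvent on $\Gamma$ maps $L^2\to H^1$ with a bound uniform in $\lambda\in\Gamma$. This uses the explicit eigenvalues \eqref{eig.0} and the separation \eqref{0905:1852}, so $\Gamma$ stays at positive distance from $\sigma''$.

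The difference $\sL(\al,\mu,\e)-\sL(0,0,0)$ is bounded $H^1\to L^2$ with norm $\lesssim |\al|+|\mu|+|\e|$. The pieces are:
\begin{itemize}
\item[] the multiplicative terms $a_\e,p_\e$, of size $\cO(\e)$ by \eqref{apexp};
\item[] the term $\im\mu p_\e$;
\item[] $|D|_{\al,\mu}-|D|$, a Fourier multiplier bounded by $\sqrt{\al^2+\mu^2}$ uniformly in the frequency, since $\big||k+\mu|_{\al}-|k|\big|\le|(\al,\mu)|$;
\item[] $\cG^\sharp$, controlled by \eqref{est:DNpert}.
\end{itemize}
A Neumann series then shows that $\lambda-\sL(\al,\mu,\e)$ is invertible for $\lambda\in\Gamma$ and $(\al,\mu,\e)$ small. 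This gives items 1 and 2 by the classical Kato argument: $P$ is a projector commuting with $\sL$, and the invariant splitting and spectral identification follow.

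\emph{Step 2: symmetries.} For skew-Hamiltonianity, use $\sL=\cJ\sB$ with $\sB$ self-adjoint. Then $\big((\lambda-\sL)^{-1}\big)^*\cJ=\cJ(\bar\lambda+\sL)^{-1}$, and since $\Gamma$ is symmetric under $\lambda\mapsto-\bar\lambda$, integrating gives $P^*\cJ=\cJ P$.

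For reversibility, $\varrho_c$ is antilinear and anticommutes with $\sL$. Hence $\varrho_c(\lambda-\sL)^{-1}=(\bar\lambda+\sL)^{-1}\varrho_c=-(-\bar\lambda-\sL)^{-1}\varrho_c$, and the change of variables $\lambda\mapsto-\bar\lambda$ together with the antilinearity (which conjugates $\frac{1}{2\pi\im}\,\de\lambda$) gives $\varrho_c P=P\varrho_c$.

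For the symmetry \eqref{symmmmUP}, combine \eqref{cGcLmenomu} with the reflection of $\Gamma$ under conjugation. At $\e=0$ everything is a Fourier multiplier, so $P$ and $U$ are Fourier multipliers. For $U$, the Kato formula \eqref{OperatorU} is built algebraically from $P$ and $P_{0,0,0}$ (which satisfy these properties) through the even functional calculus $(\uno-R^2)^{-1/2}$. Hence $U^*\cJ U=\cJ$ and $\varrho_c U=U\varrho_c$ follow by the standard computation in \cite{BMV1}. The identity $UP_{0,0,0}U^{-1}=P$ is Kato's classical one. Item 4 follows since $U$ is symplectic and $\cV_{0,0,0}$ is symplectic, by the basis \eqref{base3e}.

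\emph{Step 3: regularity \eqref{Pamuan}, \eqref{Ureg}, the main obstacle.} By \eqref{regGBL}, $\sL\in\cA\cap\tF$. I would show three things.
\begin{itemize}
\item[(a)] $\cA$ is an algebra under composition. The key point is that $(\al^2+(\mu-j)^2)^{1/2}$ squared equals $\al^2+(\mu-j)^2$, which is analytic in $(\al^2,\mu)$. So products of two decompositions \eqref{dectM} are again of that form.
\item[(b)] $\cA$ is stable under inversion. Expand $(\lambda-\sL)^{-1}$ as a Neumann series around an analytic base point. Alternatively, write the inverse as $X+sY$ with $s$ the square root, and solve the resulting $2\times2$ linear system over the analytic functions.
\item[(c)] $\cA$ is stable under the Cauchy integral over the fixed contour $\Gamma$, and under the power series for $(\uno-R^2)^{-1/2}$. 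This uses that $R=P-P_{0,0,0}$ is small.
\end{itemize}
Near $(0,0)$ only $j=0$ is relevant in $B_{\rho_0}$. Away from $\{0\}\times\Z$, analyticity is standard.

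For the class $\tF$, each $\e$-jet of a product is a finite sum $\sum_{\ell_1+\ell_2=\ell}A_{\ell_1}B_{\ell_2}$. Bands add, so jets of band range $\le\ell_1$ and $\le\ell_2$ with matching parities combine into a jet of range $\le\ell$ and parity $\ell$. Inversion keeps this, because $\sL_0$ and its resolvent are diagonal, i.e.\ lie in $\mathfrak F_0$. The Cauchy integral and the functional calculus are linear limits of such expansions, so they also preserve $\tF$. I expect the delicate point to be checking that the Neumann series converges in the analytic-in-$(\beta,\mu,\e)$ topology with $\beta=\al^2$ complexified. This should be handled by complexifying $(\beta,\mu,\e)$ in small polydiscs and using the bounds of Step 1 uniformly there, as developed in the appendices.
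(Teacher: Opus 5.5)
Your proposal is correct and follows essentially the same route as the paper: the paper delegates the classical Kato statements to \cite{BMV1} and obtains \eqref{Pamuan} and \eqref{Ureg} from the stability of $\cA$ and $\tF$ under composition, functional calculus and Cauchy integrals (Lemmas~\ref{lem:prodintM} and~\ref{TFjprop}, which is exactly what your Step~3 sketches), and it proves \eqref{symmmmUP} by conjugating the contour integral as you do. The only deviation is item~4, where the paper deduces that $\cV_{\al,\mu,\e}$ is symplectic from the skew-Hamiltonianity of $P_{\al,\mu,\e}$ via Lemma~\ref{existence:Pskewham}, while you transport the symplecticity of $\cV_{0,0,0}$ through the symplectic map $U_{\al,\mu,\e}$; both arguments are valid.
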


\begin{proof}
    The lemma is analogous to \cite[Lemmas 3.1
  3.2]{BMV1}, 
    proved at $\alpha = 0 $ for 
    $ \cG (0,\mu,0) = |D+\mu|$. The novelties are \eqref{Pamuan}
    and \eqref{Ureg}, which directly follow by \eqref{regGBL} and \Cref{lem:prodintM,TFjprop}, and \eqref{symmmmUP}. This follows because
\begin{align}
    \overline{P_{\al, \mu, \e}}  &= \oint_{\Gamma} \overline{(\lambda-\sL(\al,\mu,\e))^{-1}} \frac{\overline{\de\lambda}}{-2\pi \im} = -\oint_{\overline{\Gamma}} (\bar\lambda-\overline{\sL(\al,\mu,\e)})^{-1} \frac{\de\bar\lambda}{2\pi \im} \notag \\
    & \stackrel{\eqref{cGcLmenomu}}{=}  \oint_{\Gamma} (\bar\lambda-\sL(\al,-\mu,\e))^{-1} \frac{\de\bar\lambda}{2\pi \im} 
=
    \oint_{\Gamma} (\bar\lambda-\sL(\al,-\mu,\e))^{-1} \frac{\de\bar\lambda}{2\pi \im} = P_{\al, -\mu, \e} \, , \label{primsim}
\end{align}
where $\overline{\Gamma}$ is clockwise oriented.  The identity in \eqref{symmmmUP} for $U_{\al,\mu,\e}$ follows taking the complex conjugate of \eqref{OperatorU} and using \eqref{primsim}.
The subspaces 
$\mathcal{V}_{\al,\mu,\e} $ are symplectic  because $ P_{\al,\mu,\e} $ is skew-Hamiltonian and \Cref{existence:Pskewham}. 
\end{proof}

There is a  one-to-one correspondence between skew-Hamiltonian projectors and closed symplectic subspaces.

\begin{lemma} {\bf (Skew-Hamiltonian  projectors)}
\label{existence:Pskewham}
    Let $\cV\subset L^2$ be a closed symplectic subspace, equipped with the symplectic form $\cW_c $ in \eqref{ses}. Then there exists a unique skew-Hamiltonian projector $P$ on $\cV$, cf. \eqref{Psym}.
    Viceversa,  the range of any skew-Hamiltonian projector  is a closed symplectic subspace.
\end{lemma}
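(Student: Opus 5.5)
The natural candidate for $P$ is the $\cW_c$-orthogonal projector onto $\cV$. First I will show that $\cV$ is transversal to its symplectic complement, and take $P$ to be the projection along that complement. Then I will check the skew-Hamiltonian identity $\cJ P = P^*\cJ$, prove uniqueness, and finally prove the converse.

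\emph{Existence.} Set $\cV^{\perp_{\cW_c}} := \{ g \in L^2 : \cW_c(f,g)=0 \ \forall f\in\cV\}$. Since $\cW_c(f,g) = (\cJ f, g)$, we have $\cV^{\perp_{\cW_c}} = (\cJ\cV)^{\perp}$, the $L^2$-orthogonal complement of the closed subspace $\cJ\cV$ (closed because $\cJ$ is unitary). The main step is to show $L^2 = \cV \oplus \cV^{\perp_{\cW_c}}$.
\begin{itemize}
\item Trivial intersection is exactly non-degeneracy of $\cW_c|_{\cV}$.
\item For the sum to be all of $L^2$, I would use the Riesz representation on the Hilbert space $\cV$. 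Given $u\in L^2$, the functional $f\mapsto \overline{\cW_c(f,u)}$ on $\cV$ is bounded and antilinear. I then want $v\in\cV$ with $\cW_c(f,v) = \cW_c(f,u)$ for all $f\in\cV$, so that $u - v \in \cV^{\perp_{\cW_c}}$.
\item This requires the map $\cV \to \cV^*$, $v\mapsto \cW_c(\cdot,v)$, to be onto. I read ``non-degenerate'' on a closed subspace in the strong sense, i.e.\ $\Pi_\cV \cJ|_\cV$ is an isomorphism of $\cV$, where $\Pi_\cV$ is the $L^2$-orthogonal projector. This holds automatically when $\dim\cV<\infty$, which is the case used in the paper. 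I expect this to be the only delicate point.
\item With this reading, $v = (\Pi_\cV\cJ|_\cV)^{-1}\Pi_\cV \cJ u$, and $P u := v$ defines a bounded projector with range $\cV$ and kernel $\cV^{\perp_{\cW_c}}$.
\end{itemize}

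\emph{Skew-Hamiltonian identity.} For all $f,g$,
\[
(\cJ P f, g) = \cW_c(Pf, g) = \cW_c(Pf, Pg) = \cW_c(f, Pg) = (\cJ f, Pg) = (P^*\cJ f, g).
\]
The second and third equalities hold because $(1-P)f$ and $(1-P)g$ lie in $\cV^{\perp_{\cW_c}}$, which is also a left annihilator of $\cV$ by skew-Hermitianity. Hence $\cJ P = P^*\cJ$.

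\emph{Uniqueness.} Let $Q$ be any skew-Hamiltonian projector with range $\cV$. For $f\in\cV$ and $g\in\ker Q$,
\[
\cW_c(f,g) = \cW_c(Qf,g) = (Q^*\cJ f, g) = (\cJ f, Qg) = 0,
\]
so $\ker Q \subseteq \cV^{\perp_{\cW_c}}$. Both $\ker Q$ and $\cV^{\perp_{\cW_c}}$ are complements of $\cV$, so they coincide and $Q = P$.

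\emph{Converse.} Let $P$ be any skew-Hamiltonian projector and $\cV := \mathrm{Rg} P = \ker(1-P)$, which is closed.
\begin{itemize}
\item The computation above shows $\ker P \subseteq \cV^{\perp_{\cW_c}}$.
\item If $f\in\cV$ satisfies $\cW_c(f,g)=0$ for all $g\in\cV$, then it also annihilates $\ker P$, hence all of $L^2 = \cV\oplus\ker P$.
\item Non-degeneracy of $\cW_c$ on $L^2$ then gives $f=0$, so $\cW_c|_\cV$ is non-degenerate.
\item The strong form holds as well: the inverse of $\Pi_\cV\cJ|_\cV$ is read off from $P$ restricted to $\cJ^{-1}$ of $\cV$, or from dimension counting in the finite-dimensional case.
\end{itemize}
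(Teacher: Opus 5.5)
Your proof is correct and follows the same route as the paper. $P$ is the projector along $\cV^{\perp_{\cW_c}}$. Skew-Hamiltonianity comes from splitting vectors into their $\cV$ and $\cV^{\perp_{\cW_c}}$ components: you work directly on the sesquilinear form, while the paper uses the quadratic form of the self-adjoint operator $P^*\cJ-\cJ P$. Uniqueness comes from $\ker P=\cV^{\perp_{\cW_c}}$, and the converse from $L^2=\ker P\oplus\mathrm{Rg}P$.

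The one real difference is that the paper simply asserts $L^2=\cV\oplus\cV^{\perp_{\cW_c}}$. You correctly point out that for infinite-dimensional $\cV$ this needs $\Pi_\cV\cJ|_\cV$ to be onto, since weak non-degeneracy alone does not suffice. Your sketched inverse $P\cJ^{-1}|_\cV$ in the converse is indeed a two-sided inverse. This refinement is harmless here, because the lemma is only applied to finite-dimensional $\cV$.
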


\begin{proof}
   \underline{Existence:} let  $\cV$ be a  closed symplectic subspace.
   Then  $L^2=\cV \oplus \cV^{\perp\cW_c}$ and we define $P$ as the unique projector  satisfying 
    \begin{equation}\label{skewhamproj}
        \ker P = \cV^{\perp\cW_c}\, , \quad \textup{Rg}P = \cV\, .
    \end{equation}
    We now prove that $P $  is skew-Hamiltonian. Consider the self-adjoint operator 
$ P^*\cJ - \cJ P$
    and a vector
    $
    v = \underline{v} + v^\perp$, $ \underline{v} \in \cV$, $ v^\perp
    \in \cV^{\perp\cW_c}$.
    Then 
    $$
    \left((P^*\cJ - \cJ P)v,v\right) = \left(\cJ v , \underline{v} \right) - \left(\cJ \underline{v}, v \right) = \cW_c(v,\underline{v} ) - \cW_c (\underline{v} , v)
    $$
    but, by definition of $\cV^{\perp\cW_c}$, $\cW_c (\underline{v} , v^\perp ) = 0$, and therefore 
    $$
    \left((P^*\cJ - \cJ P)v,v\right) =
    \cW_c(\underline{v},\underline{v} ) - \cW_c (\underline{v}, \underline{v}) = 0 \quad \forall v \in X \Longrightarrow P^*\cJ - \cJ P = 0
    $$
    so that $P$ is skew-Hamiltonian.

    \noindent\underline{Uniqueness:} now let us assume that $P$ is skew-Hamiltonian.
    Assume that 
    $v\in \ker P$, i.e., using the   invertibility of $\cJ$,  $(Pv, \cJ g)= 0$ for any $g\in L^2$. Since $P$ is skew-Hamiltonian 
    $$
   0 =  (Pv,\cJ g) = (v,P^*\cJ g) = (v,\cJ P g) = - \cW_c (v, Pg) \ 
   \ \ \forall g \in L^2 
   \quad 
   \Rightarrow \quad 
   \ker P \perp_{\cW_c} \textup{Rg}P=: \cV  \, .
    $$
    On the contrary assume $v\in \cV^{\perp\cW_c}$. Then for every $g\in L^2$
    $$
    0 = \cW_c(v, Pg) =  -(v,\cJ Pg) \stackrel{\eqref{Psym}}{=} - (Pv,\cJ g) = \cW(Pv, g) \Rightarrow v \in \ker P
    $$
    so that $\ker P = \cV^{\perp\cW_c}$.
    Thus, the skew-Hamiltonian projector on $\cV$ satisfies \eqref{skewhamproj} and therefore is uniquely determined.

    In particular, if $P$ is skew-Hamiltonian then $L^2 = \ker P \oplus \textup{Rg} P = \cV \oplus \cV^{\perp\cW_c}$, so that $\cV$ is a closed symplectic subspace.
\end{proof}

{\noindent\bf Symplectic and reversible basis of $\cV_{\al,\mu,\e}$.} 
We now choose convenient basis in $ \cV_{\alpha, \mu, \epsilon}$ to represent linear operators. 
The  symplectic and reversible basis $\{f_1^\pm,f_0^\pm\}$ of $\cV_{0,0,0}$ defined in  
\eqref{base3e}  
 is mapped 
by $ U_{\alpha, \mu, \epsilon}$,
in view of \eqref{Usymp},  into the  symplectic and reversible  basis of 
$\cV_{\al,\mu,\e}$, 
\begin{equation}\label{basisF}
    \cF := \Big\{ f_k^\sigma (\al,\mu,\e) := U_{\al,\mu,\e}f_k^\sigma\ :  \ k=0,1; \ \sigma = \pm \Big\} \, .
\end{equation}
Any  vector $f_k^\sigma$
    of a reversible basis, i.e.  satisfying \eqref{def:rev},  has the form 
\be\label{lem:revbasis}
    f_k^+ = \vet{even(x)}{odd(x)} + \im \vet{odd(x)}{even (x)}\ , \quad f_k^- = \vet{odd(x)}{even(x)} + \im \vet{even(x)}{odd(x)} \ 
\ee
where  $even (x)$ and $odd (x)$ denote respectively even and odd $2\pi$ periodic, real valued function of $x$. 


\begin{lemma}\label{lem:simplbas}
     Let $  \{ \tf_k^\sigma\}_{k=1,\dots,n}^{\sigma=\pm}$ be a symplectic basis (\Cref{def:symprevbasis,}) of the symplectic subspace $\cV \subset L^2 $. Then 
     any $f\in \cV $ is 
decomposed as  
\begin{equation}\label{coefcks0}
 f = \sum_{k=1}^n- {\cal W}_c (f, \tf_k^-) \tf_k^+ + {\cal W}_c (f, \tf_k^ +) \tf_k^- \, . 
\end{equation}
The skew-Hamiltonian projection  $ P : L^2 \to \cV$ defined  in Lemma \ref{existence:Pskewham} can be written as
\be\label{lem:simplbas2}
P f = \sum_{k=1}^n- {\cal W}_c (f, \tf_k^-) \tf_k^+ + {\cal W}_c (f, \tf_k^ +) \tf_k^- \, , \quad
\forall f\in L^2 \, . 
\ee
\end{lemma}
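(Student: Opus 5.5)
The decomposition \eqref{coefcks0} is pure linear algebra in the finite-dimensional space $\cV$. Since $\{\tf_k^\sigma\}$ is a basis, write $f = \sum_{k=1}^n c_k^+ \tf_k^+ + c_k^- \tf_k^-$ for unique coefficients $c_k^\pm\in\C$. The coefficients are extracted by pairing with the basis vectors through $\cW_c$. Because $\cW_c$ is linear in its first argument, we have $\cW_c(f,\tf_j^-) = \sum_k c_k^+\cW_c(\tf_k^+,\tf_j^-) + c_k^-\cW_c(\tf_k^-,\tf_j^-)$. By \eqref{def:symp} every term vanishes except $\cW_c(\tf_j^+,\tf_j^-)$. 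Skew-Hermitianity together with $\cW_c(\tf_j^-,\tf_j^+)=1$ gives $\cW_c(\tf_j^+,\tf_j^-) = -\overline{1} = -1$. Hence $c_j^+ = -\cW_c(f,\tf_j^-)$. In the same way, $\cW_c(f,\tf_j^+) = c_j^-\cW_c(\tf_j^-,\tf_j^+) = c_j^-$. This yields \eqref{coefcks0}. The only point requiring care is bookkeeping the sesquilinearity and the sign coming from skew-Hermitianity; no analytic difficulty arises.

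For \eqref{lem:simplbas2}, define the operator $Qf := \sum_k -\cW_c(f,\tf_k^-)\tf_k^+ + \cW_c(f,\tf_k^+)\tf_k^-$ on all of $L^2$. It is bounded, since $\cW_c(f,g) = (\cJ f, g)$, and its range lies in $\cV$. By the first part, $Q$ restricted to $\cV$ is the identity. It follows that $Q^2 = Q$ and $\textup{Rg}\,Q = \cV$.

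By the uniqueness part of \Cref{existence:Pskewham}, specifically the characterization \eqref{skewhamproj}, it then suffices to show $\ker Q = \cV^{\perp\cW_c}$. If $f\in\cV^{\perp\cW_c}$, all coefficients $\cW_c(f,\tf_k^\pm)$ vanish, so $Qf=0$. Conversely, if $Qf=0$, linear independence of the basis forces $\cW_c(f,\tf_k^\pm)=0$ for all $k,\sigma$. By sesquilinearity this gives $\cW_c(f,v)=0$ for every $v\in\cV$, so $f\in \cV^{\perp\cW_c}$.

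Therefore $Q$ is the projector with kernel $\cV^{\perp\cW_c}$ and range $\cV$, which is exactly the skew-Hamiltonian projector $P$ of \Cref{existence:Pskewham}. As an optional cross-check, one can verify $\cJ Q = Q^*\cJ$ directly by computing $(\cJ Qf,g)$ and $(f, \cJ Q g)$ with the formula for $Q$, though this is not needed given the uniqueness argument.
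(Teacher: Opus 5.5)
Your first part is the paper's own argument: expand $f$ in the basis and pair with $\tf_k^\pm$ through $\cW_c$. Your sign bookkeeping $\cW_c(\tf_j^+,\tf_j^-)=-1$ is correct.

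For \eqref{lem:simplbas2} you take a different but valid route. You build the explicit bounded operator $Q$ and show it is a projector with range $\cV$ and kernel $\cV^{\perp\cW_c}$. You then conclude $Q=P$ from the characterization \eqref{skewhamproj} proved in Lemma~\ref{existence:Pskewham}, using that a projector is determined by its range and kernel.

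The paper instead applies \eqref{coefcks0} directly to $Pf\in\cV$. It then replaces the coefficients using the skew-Hamiltonian identity and $P\tf_k^\sigma=\tf_k^\sigma$:
$\cW_c(Pf,\tf_k^\sigma)=(\cJ Pf,\tf_k^\sigma)=(P^*\cJ f,\tf_k^\sigma)=(\cJ f,P\tf_k^\sigma)=\cW_c(f,\tf_k^\sigma)$.

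What each route buys:
\begin{itemize}
\item The paper's version is a two-line computation that uses only $\cJ P=P^*\cJ$. It therefore holds for any skew-Hamiltonian projector onto $\cV$ and re-derives uniqueness along the way.
\item Your version relies on the kernel characterization from the uniqueness proof, but it exhibits the projector concretely. Adding your optional check $\cJ Q=Q^*\cJ$, which does hold by skew-Hermitianity of $\cW_c$, would also give a self-contained existence proof.
\end{itemize}
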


\begin{proof}
    Decompose $f = \sum_{k=1}^n c_k^+ \tf_k^+ +  c_k^- \tf_k^-  $, and take the scalar products $(\cJ f, \tf_k^\sigma)$. Then, by \eqref{def:symp} we deduce \eqref{coefcks0}.
      Then \eqref{lem:simplbas2}  follows by \eqref{coefcks0} and   ${\cal W}_c (Pf, \tf_k^\sigma) = {\cal W}_c (f, \tf_k^\sigma)$ as $ P $ is skew-Hamiltonian.
\end{proof}

We now represent the action of 
${\sL}(\al,\mu,\e)_{|\mathcal{V}_{\al,\mu,\e}} $. 

\begin{lemma}\label{lem:B.mat}
The $ 4 \times 4 $ matrix that represents the Hamiltonian and reversible  operator 
$\sL(\al,\mu,\e)= \cJ \sB(\al,\mu,\e) :\mathcal{V}_{\al,\mu,\e}\to\mathcal{V}_{\al,\mu,\e} $ with respect to a  symplectic and reversible basis $ \tF := \{\tf_1^+,\tf_1^-,\tf_0^+,\tf_0^-\} $ of $\mathcal{V}_{\al,\mu,\e}$ is 
\begin{align}\label{Lform}
 \tJ_4 \tB_{\al,\mu,\e} \, ,\quad  \tJ_4 = 
 \begin{pmatrix} 
 \tJ_2& \vline & 0 \\
 \hline
0  & \vline & \tJ_2
\end{pmatrix}, \quad \tJ_2 = \begin{pmatrix} 
 0 & 1 \\
-1  & 0
\end{pmatrix}, \quad \text{where } \quad \tB_{\al,\mu,\e}= \tB_{\al,\mu,\e}^* \end{align}
is the self-adjoint matrix
\begin{equation}\label{matrix22} 
\tB_{\al,\mu,\e} = 
\begin{pmatrix}
\BVe{+}{1}{+}{1} & \BVe{-}{1}{+}{1} & \BVe{+}{0}{+}{1} & \BVe{-}{0}{+}{1} \\
\BVe{+}{1}{-}{1} & \BVe{-}{1}{-}{1} &  \BVe{+}{0}{-}{1} & \BVe{-}{0}{-}{1} \\
\BVe{+}{1}{+}{0} & \BVe{-}{1}{+}{0} & \BVe{+}{0}{+}{0} & \BVe{-}{0}{+}{0} \\
\BVe{+}{1}{-}{0} & \BVe{-}{1}{-}{0} &  \BVe{+}{0}{-}{0} & \BVe{-}{0}{-}{0} \\
	\end{pmatrix}.
\end{equation}
  The entries of $\tB_{\al,\mu,\e}$ are alternatively real or purely imaginary: for any $ \sigma = \pm $, $ k = 0, 1 $, 
\begin{equation}\label{revprop0}
       \molt{\sB(\al,\mu,\e)  \, \tf^{\sigma}_{k}}{\tf^{\sigma}_{k'}}\in\R\, , \quad  \molt{\sB(\al,\mu,\e)  \, \tf^{\sigma}_{k}}{\tf^{-\sigma}_{k'}}\in\im\R \, .
    \end{equation}
\end{lemma}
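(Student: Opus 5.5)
We expand the action of $\sL(\al,\mu,\e)$ on a vector of $\mathcal V_{\al,\mu,\e}$ in the basis $\tF$ using \Cref{lem:simplbas}. Since $\mathcal V_{\al,\mu,\e}$ is invariant, for each basis vector $\tf_{k}^{\sigma}$ the vector $\sL\,\tf_k^\sigma$ lies in $\mathcal V_{\al,\mu,\e}$. By \eqref{coefcks0} its coordinates are $-\cW_c(\sL \tf_k^\sigma,\tf_{k'}^-)$ along $\tf_{k'}^+$ and $\cW_c(\sL\tf_k^\sigma,\tf_{k'}^+)$ along $\tf_{k'}^-$. Writing $\sL=\cJ\sB$ and using $\cJ^2=-\uno$, we get $\cW_c(\cJ\sB f,g)=(\cJ\cJ\sB f,g)=-(\sB f,g)$. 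Hence the coefficient along $\tf_{k'}^+$ is $(\sB\tf_k^\sigma,\tf_{k'}^-)$ and the coefficient along $\tf_{k'}^-$ is $-(\sB\tf_k^\sigma,\tf_{k'}^+)$. Ordering the basis as $\tf_1^+,\tf_1^-,\tf_0^+,\tf_0^-$ and comparing with \eqref{matrix22}, the column of the representing matrix indexed by $\tf_k^\sigma$ is obtained from the corresponding column of $\tB_{\al,\mu,\e}$ by swapping the $+$ and $-$ rows within each $k'$-block and putting a minus sign on one of them. This is exactly left multiplication by $\tJ_4$, whose blocks are $\tJ_2$. So the representing matrix is $\tJ_4\tB_{\al,\mu,\e}$. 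I will check the sign convention of $\tJ_2$ carefully against \eqref{def:symp}, since $\cW_c(f_k^-,f_k^+)=1$ fixes which row gets the minus sign.

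Self-adjointness of $\tB$ follows from that of $\sB(\al,\mu,\e)$, which is inherited from \Cref{DNonR3}(i): $\sB$ differs from $\cB$ only by the shift $\cJ^{-1}\im\mu$, and $-\im\mu\,\cJ^{-1}$ is self-adjoint because $\cJ$ is skew-adjoint. Then $(\sB\tf_k^\sigma,\tf_{k'}^{\sigma'})=(\tf_k^\sigma,\sB\tf_{k'}^{\sigma'})=\overline{(\sB\tf_{k'}^{\sigma'},\tf_k^\sigma)}$, which gives $\tB=\tB^*$. All the vectors involved lie in $H^1$, so the pairings make sense.

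For \eqref{revprop0} we use reversibility. The operator $\sB$ commutes with $\varrho_c$ by \eqref{cLr:rev}; the extra term $\im\mu\,\cJ^{-1}$ also commutes with $\varrho_c$, which is a short check. Moreover $\varrho_c$ is antilinear and satisfies $(\varrho_c f,\varrho_c g)=\overline{(f,g)}$, since it is conjugation composed with $x\mapsto -x$ and a sign on the second component, all of which preserve the $L^2$ product up to conjugation. Using $\varrho_c\tf_k^\sigma=\sigma\tf_k^\sigma$ we obtain
$$(\sB\tf_k^\sigma,\tf_{k'}^{\sigma'})=\sigma\sigma'(\varrho_c\sB\tf_k^\sigma,\varrho_c\tf_{k'}^{\sigma'})=\sigma\sigma'\,\overline{(\sB\tf_k^\sigma,\tf_{k'}^{\sigma'})}.$$
Thus the entry is real when $\sigma=\sigma'$ and purely imaginary when $\sigma=-\sigma'$.

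The only delicate points are keeping track of the signs in $\tJ_2$ and verifying the conjugation identity for $\varrho_c$. No analytic obstacle arises.
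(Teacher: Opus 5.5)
Your proposal is correct and follows the same route as the paper. The paper also derives \eqref{Lform}--\eqref{revprop0} from \Cref{lem:simplbas}, together with self-adjointness and reversibility-preservation of $\sB(\al,\mu,\e)$ and reversibility of the basis. Your explicit coefficients $(\sB\tf_k^\sigma,\tf_{k'}^-)$ along $\tf_{k'}^+$ and $-(\sB\tf_k^\sigma,\tf_{k'}^+)$ along $\tf_{k'}^-$ are exactly left multiplication by $\tJ_4$ with the stated $\tJ_2$, so the sign check you flagged comes out right.
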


\begin{proof} The structure \eqref{Lform}--\eqref{matrix22} and  \eqref{revprop0} follow by 
  \Cref{lem:simplbas}, since 
  $\sB(\al,\mu,\e)$ is  selfadjoint and reversibility-preserving  (cf. \eqref{cLr:rev}) and  the   basis $ \tF $ is reversible (cf. \eqref{def:rev}),  
  cf.  \cite[Lemma 3.8]{BMV1}.
\end{proof}

In view of the previous lemma we introduce the following definition. 

\begin{definition}\label{def:revmat}
{\bf (Hamiltonian and reversible matrices)}  
    A $4\times 4$ matrix of the form $\tJ_4 \tB $ is Hamiltonian, if $\tB$ is self-adjoint;    {\it reversible} if $\tB$ is reversibility preserving, i.e. 
     its entries satisfy $[\tB]_{k,k'}^{\sigma,\sigma}\in \R$ and  $[\tB]_{k,k'}^{\sigma,-\sigma}\in \im \R$,  for any $ \sigma = \pm $, $ k = 0, 1 $. 
\end{definition}

The transformations preserving the Hamiltonian structure are called \textit{symplectic}. They satisfy
$    Y\tJ_4 Y^* = \tJ_4 $ 
so that,  for any 
Hamiltonian matrix $\tL = \tJ_4\tB $, the conjugated matrix 
\be\label{B1conj}
\tL_1 := Y\tL Y^{-1} = Y \tJ_4 Y^* Y^{-*} \tB Y^{-1} = \tJ_4 \tB_1\, , \qquad \tB_1:= Y^{-*} \tB Y^{-1} \, , 
\ee
is still Hamiltonian. Moreover, if $Y$ is reversibility preserving, $\tL_1$ is reversible if and only if $\tL$ is. 
In section \ref{sec:block} we will use that the flow of an Hamiltonian reversibility preserving matrix is symplectic and reversibility preserving.

\section {\texorpdfstring{Matrix representation of \( {\sL}(\al,\mu,\e)\) on \( \mathcal{V}_{\alpha,\mu, \e}\)  }{matrix representation of L on V}}\label{sec:mr}

The main result of this section is 
Proposition \ref{BexpG} which provides the
expansion of the  
matrix  representing  the action of  \( {\sL}(\al,\mu,\e)\) on the subspace  \( \mathcal{V}_{\alpha,\mu, \e}\) with respect to the symplectic and reversible  basis  $ \cG $ in \eqref{basisG}.

\smallskip

The operator $\sL(\al,\mu,\e)$
in \eqref{cLsL} 
satisfies, 
in addition to the regularity properties of \Cref{def:tM}, 
the symmetry 
$ \overline{\sL(\al,\mu,\e)} =
\sL(\al,-\mu,\e) 
$.
Such  operators have the following 
structure, as we  prove  in \Cref{ap_cAF}.

\begin{lemma}\label{lem:decomposition}Assume that 
$ A\in \cA(B_r(0,0),\e_0;Z) $ for some 
$r,\e_0>0$ small, 
and a Banach space $ Z $,  satisfying  
\be\label{simmeA}
 \overline{A(\al,\mu,\e)} =
 A(\al,-\mu,\e) \, . 
\ee 
Then $ A $ admits the expansion   
\begin{equation}
    \label{decompositiontM}
             \begin{aligned}
                 A(\al,\mu,\e) &= A^{[0,0]}(\al^2,\mu^2,\e) + \im\mu A^{[0,1]}(\al^2,\mu^2,\e) 
                 + \rho  A^{[1,0]}(\al^2,\mu^2,\e) + \im\mu \rho A^{[1,1]}(\al^2,\mu^2,\e)
             \end{aligned}
         \end{equation}
         where $ \rho = 
    (\al^2+\mu^2)^{\frac12} $ and 
         $A^{[i,j]}:
         B_{r^2}(0,0)\times B_{\e_0}(0) \to Z $ are the real-to-real analytic functions, 
          uniquely determined by $A$, \begin{equation}\label{A.decomp}
             \begin{aligned}
                 &A^{[0,0]} (\al^2,\mu^2,\e) : = \frac12 (A^{[\mathrm{I}]}(\al^2,\mu,\e) + A^{[\mathrm{I}]}(\al^2,-\mu,\e))\, , \\ &A^{[0,1]} (\al^2,\mu^2,\e) : = \frac1{2\im\mu} (A^{[\mathrm{I}]}(\al^2,\mu,\e) - A^{[\mathrm{I}]}(\al^2,-\mu,\e))\, , \\
                 &A^{[1,0]} (\al^2,\mu^2,\e) : = \frac12 (A^{[\mathrm{II}]}(\al^2,\mu,\e) + A^{[\mathrm{II}]}(\al^2,-\mu,\e))\, , \\ &A^{[1,1]} (\al^2,\mu^2,\e) : = \frac1{2\im\mu} (A^{[\mathrm{II}]}(\al^2,\mu,\e) - A^{[\mathrm{II}]}(\al^2,-\mu,\e)) \, . 
             \end{aligned}
         \end{equation}
         In particular $A $ is polar-analytic according to \Cref{def:tildeM}.
\end{lemma}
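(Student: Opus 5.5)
Since $A\in\cA(B_r(0,0),\e_0;Z)$ and the ball $B_r(0,0)$ meets $\{0\}\times\Z$ only at the origin (for $r<1$), \Cref{def:tM} with $j=0$ gives the decomposition $A(\al,\mu,\e)=A^{[\mathrm{I}]}(\al^2,\mu,\e)+\rho\,A^{[\mathrm{II}]}(\al^2,\mu,\e)$, with $A^{[\mathrm{I}]},A^{[\mathrm{II}]}$ analytic on $B_{r^2}(0)\times B_r(0)\times B_{\e_0}(0)$. The plan is to split each of $A^{[\mathrm{I}]},A^{[\mathrm{II}]}$ into even and odd parts in $\mu$ and apply Whitney's lemma on even/odd analytic functions. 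The even part $\frac12(F(\beta,\mu,\e)+F(\beta,-\mu,\e))$ is analytic and even in $\mu$, so its power series contains only even powers of $\mu$; it is therefore an analytic function of $(\beta,\mu^2,\e)$ on $B_{r^2}(0,0)\times B_{\e_0}(0)$. The odd part vanishes at $\mu=0$, so dividing it by $\im\mu$ gives an analytic even function, which is again analytic in $\mu^2$. This produces the four functions in \eqref{A.decomp} together with the identity \eqref{decompositiontM}.

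For the real-to-real property, we use \eqref{simmeA}. First we show that the decomposition $A^{[\mathrm{I}]}+\rho A^{[\mathrm{II}]}$ is unique. Fix $\mu$ and $\e$ and regard $\beta=\al^2\ge0$ as the variable. If $F(\beta)+(\beta+\mu^2)^{1/2}G(\beta)=0$ with $F,G$ analytic, then analytic continuation of $(\beta+\mu^2)^{1/2}$ around the branch point $\beta=-\mu^2$ (for $\mu\neq0$ small, this point lies inside the domain $B_{r^2}$) changes its sign, which forces $G=0$ and then $F=0$. The case $\mu=0$ follows by continuity. Conjugating the decomposition and using $\overline{\rho}=\rho$ for real $(\al,\mu)$ gives $\overline{A(\al,\mu,\e)}=\overline{A^{[\mathrm{I}]}}+\rho\overline{A^{[\mathrm{II}]}}$, where $\overline{F}(\beta,\mu,\e):=\overline{F(\bar\beta,\bar\mu,\bar\e)}$ is again analytic. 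By uniqueness and \eqref{simmeA} we get $\overline{A^{[\mathrm{I}]}(\beta,\mu,\e)}=A^{[\mathrm{I}]}(\beta,-\mu,\e)$ for real arguments, and the same for $A^{[\mathrm{II}]}$. From the formulas \eqref{A.decomp} it follows directly that $A^{[i,j]}$ is real for real arguments: the even part is equal to its own conjugate, and for the odd part divided by $\im\mu$ the sign of $\im$ compensates.

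The four functions $A^{[i,j]}$ in \eqref{decompositiontM} are uniquely determined. Write $A^{[0,0]}+\im\mu A^{[0,1]}$ as a function $\tilde F(\al^2,\mu,\e)$ and $A^{[1,0]}+\im\mu A^{[1,1]}$ as $\tilde G(\al^2,\mu,\e)$. The uniqueness argument above shows that $\tilde F$ and $\tilde G$ are determined by $A$. The even/odd splitting in $\mu$ is unique, so each $A^{[i,j]}$ is determined as well.

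For polar-analyticity, substitute $\mu=\rho\cos\theta$ and $\al=\rho\sin\theta$. The arguments $\al^2=\rho^2\sin^2\theta$ and $\mu^2=\rho^2\cos^2\theta$ are analytic in $(\rho,\theta)$ and remain in $B_{r^2}$ for $|\rho|\le r$ in a complex strip. The prefactors $\rho$, $\im\mu$ and $\im\mu\rho$ are polynomials in $\rho$ and $\cos\theta$. Hence $F(\rho,\theta,\e)$ extends analytically to $\cD_{r,\e_0}$, including $\rho\le0$, and $A\in\cA_P(B_r(0,0),\e_0;Z)$. I expect the main obstacle to be the uniqueness of the $\mathrm{I}/\mathrm{II}$ decomposition, since its continuation argument must be carried out carefully when $\mu$ is near $0$.
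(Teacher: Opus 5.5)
Your proof is correct. The existence part is the same as the paper's: split $A^{[\mathrm{I}]},A^{[\mathrm{II}]}$ into even and odd parts in $\mu$, and use that an even analytic function of $\mu$ is analytic in $\mu^2$. The genuine difference is in how you obtain uniqueness and the real-to-real property, and in which order.

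The paper asserts reality of the $A^{[i,j]}$ directly from \eqref{simmeA}. It then proves uniqueness among real-to-real coefficients by separating real and imaginary parts and restricting to rays $\al=t\mu$, $\mu>0$. Along such a ray $A^{[0,0]}(t^2\mu^2,\mu^2,\e)$ is even in $\mu$ while $(1+t^2)^{1/2}\mu\,A^{[1,0]}(t^2\mu^2,\mu^2,\e)$ is odd, so both vanish.

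You instead first prove uniqueness of the $\mathrm{I}/\mathrm{II}$ splitting itself, by continuing in $\beta=\al^2$ around the branch point $\beta=-\mu^2$ of $(\beta+\mu^2)^{1/2}$. Only afterwards do you derive $\overline{A^{[\mathrm{I}]}(\beta,\mu,\e)}=A^{[\mathrm{I}]}(\beta,-\mu,\e)$, likewise for $A^{[\mathrm{II}]}$, and hence reality.

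Your ordering buys rigor and generality. Condition \eqref{simmeA} is a statement about $A$. Passing from it to the separate reality of the even parts of $A^{[\mathrm{I}]}$ and $A^{[\mathrm{II}]}$ tacitly requires exactly the uniqueness of the $\mathrm{I}/\mathrm{II}$ splitting, which you make explicit. Moreover, your uniqueness holds among arbitrary complex analytic coefficients, not only real-to-real ones.

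The paper's route buys elementarity. It uses only Taylor expansion in $\mu$ along real rays with $\beta\ge 0$. Your monodromy argument instead needs the coefficients to be holomorphic in $\beta$ on a disc containing $-\mu^2$. \Cref{def:tM} does provide this, since $A^{[\mathrm{I}]},A^{[\mathrm{II}]}$ are analytic for $\beta$ in $B_{r^2}(0)$.

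One cosmetic point: in the polar-analytic step, $|\sin\theta|$ can exceed $1$ for complex $\theta$ in a strip. You should therefore shrink $r$ slightly so that $\rho^2\sin^2\theta$ and $\rho^2\cos^2\theta$ stay in the domain of the $A^{[i,j]}$.
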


\noindent 
{\bf Notation for remainders.} We  denote $\cO(\rho^n\e^m)$  a polar-analytic function of the form \eqref{remainpoa} and,  for brevity,
in this section 
we adhere to the following conventions. Remainders of $\sL(\al,\mu,\e)$ are estimated in $\cL(H^1,L^2)$,
remainders of $P_{\al,\mu,\e}$ in  $\cL(L^2,H^1)$, and remainders of $U_{\al,\mu,\e}$  in the maximal norm between $ \| \ \|_{\cL(H^1,H^1)}$ and $\| \ \|_{\cL(L^2,L^2)} $. Vectors are estimated in $H^1$ and $ n\times n$-scalar matrices in $\C^{n\times n}$.

\begin{lemma}
{\bf (Expansion of  
$\sL(\al,\mu,\e)$)} 
    The  operator $\sL(\al,\mu,\e)$ in \eqref{sLresca}
has regularity
$$
[(\al,\mu,\e)
   \mapsto
\sL(\al,\mu,\e) \in \cA(\R\times \R,\e_0;H^1,L^2)] \, , \quad \quad
\overline{\sL(\al,\mu,\e)} = 
\sL(\al,-\mu,\e) \, , 
 $$
and  the expansion
\begin{equation}\label{quattropuntosei}
    \begin{aligned}
        \sL(\al,\mu,\e)=\underbrace{\begin{bmatrix}
        \pa_x \circ (1+p_\e(x)) & |D| \\ -(1+a_\e(x)) & (1+p_\e(x))\circ \pa_x
    \end{bmatrix}}_{= \sL^{[0,0]}(0,0,\e)} &+ \rho \underbrace{\begin{bmatrix}
        0 & \Pi_0 \\ 0 & 0
    \end{bmatrix}}_{=\sL^{[1,0]}(0,0,\e)} + \im \mu \underbrace{\begin{bmatrix}
        p_\e(x) & -\im \sgn (D)\\
        0 & p_\e(x)
    \end{bmatrix}}_{=\sL^{[0,1]}(0,0,\e)} \\ &+ \al^2 \underbrace{\begin{bmatrix}
        0 & \frac12 |D|^{-1}\Pi_0^\perp \\ 0 & 0
    \end{bmatrix}}_{=:\sL^{[2,0]}}+ \cO(\rho^3,\rho^2\e)\, .
    \end{aligned}
\end{equation}
\end{lemma}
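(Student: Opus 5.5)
The regularity claim and the symmetry follow from what is already established. By \eqref{regGBL}, $\cL(\al,\mu,\e)\in\cA(\R\times\R,\e_0;H^1,L^2)$. The constant multiplier $\im\mu$ is trivially in $\cA$, so $\sL=\cL-\im\mu$ is in $\cA$ as well. The identity $\overline{\sL(\al,\mu,\e)}=\sL(\al,-\mu,\e)$ follows from $\overline{\cL(\al,\mu,\e)}=\cL(\al,-\mu,\e)$ in \eqref{cGcLmenomu} together with $\overline{\im\mu}=-\im\mu$. Lemma \ref{lem:decomposition} then applies and gives the unique decomposition \eqref{decompositiontM}:
$$\sL=\sL^{[0,0]}(\al^2,\mu^2,\e)+\im\mu\,\sL^{[0,1]}+\rho\,\sL^{[1,0]}+\im\mu\rho\,\sL^{[1,1]}.$$
The remaining task is to identify the low-order Taylor coefficients of these analytic pieces.

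For the expansion, I will write $\sB$ in \eqref{sLresca} as its $\e$-independent dependence on $\mu$ in the off-diagonal entries plus the entry $\cG(\al,\mu,\e)=|D|_{\al,\mu}+\cG^\sharp(\al,\mu,\e)$ from \eqref{decoGsha}. The off-diagonal entries of $\sL$ are polynomial in $\mu$. Multiplying by $\cJ$, the $\im\mu$ coefficients give $p_\e$ on the diagonal, which yields the terms $\im\mu\,\mathrm{diag}(p_\e,p_\e)$. For the Fourier multiplier $|D|_{\al,\mu}$ I use \eqref{dec:Gflat} with $j=0$, so that $|D|_{\al,\mu}=|D|_{\al,\mu}\Pi_0^\perp+\rho\,\Pi_0$. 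On modes $n\neq0$ I expand
$$((n+\mu)^2+\al^2)^{1/2}=|n|+\mu\,\sgn(n)+\frac{\al^2}{2|n|}+O(\mu^2,\al^2\mu,\al^4),$$
uniformly as a bounded operator $H^1\to L^2$, since $|n|\ge1$. This expansion produces $|D|$, the term $\im\mu(-\im\sgn D)$, and $\tfrac{\al^2}{2}|D|^{-1}\Pi_0^\perp$. The $\mu^2$ term is $O(\rho^2)$ with no $\e$ factor. To see that it fits the remainder class, I note that its coefficient is $\frac{\al^2}{2}|n|^{-3}$-type, i.e. of the form $\mu^2\cdot$ analytic. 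That is $\cO(\rho^2)$, which is weaker than the stated $\cO(\rho^3,\rho^2\e)$. So I must check more carefully: the second-order term is $\frac{\al^2}{2|n|}$ only, because $\partial_\mu^2$ of $|n+\mu|$ vanishes for $n\neq0$. Hence the genuine quadratic part is exactly $\frac{\al^2}{2}|D|^{-1}$, with remainder $O(\al^2\mu,\al^4)=\cO(\rho^3)$. After multiplying by $\cJ$, this entry lands in the top-right position as displayed.

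The main obstacle is the perturbative term $\cG^\sharp$. By \eqref{regGs}, $\cG^\sharp=\al^2\e\,\cR(\al,\mu,\e)$ with $\cR\in\cA$, hence polar-analytic by Remark \ref{rem:regtM_rhotheta}. Since $\al^2=\rho^2\sin^2\theta$, this term is $\cO(\rho^2\e)$ in $\cL(H^1,L^2)$; in fact it is even bounded $H^1\to H^2$ by \eqref{est:DNpert}. For the $\e$-dependence of the leading coefficient, I observe that $\rho\,\Pi_0$ and the $\mu$-terms of the off-diagonal entries are $\e$-independent, apart from the explicit factors $p_\e$. Therefore no $\rho\e$ or $\mu\e$ terms arise beyond those displayed. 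Combining these contributions gives \eqref{quattropuntosei} with remainder $\cO(\rho^3,\rho^2\e)$ in the polar-analytic sense.
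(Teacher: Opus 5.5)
Your proposal is correct and follows essentially the paper's proof. You split $\cG=|D|_{\al,\mu}+\cG^\sharp$, use \eqref{dec:Gflat} to isolate $\rho\,\Pi_0$, Taylor expand the multiplier on nonzero modes (where, as you eventually note, the pure $\mu^2$ coefficient vanishes exactly), and absorb $\cG^\sharp=\al^2\e\,(\cdot)$ into $\cO(\rho^2\e)$. The differences are cosmetic: the paper applies Lemma~\ref{lem:decomposition} to $\cG^\sharp$ separately and assembles the components $\sL^{[i,j]}$ in \eqref{svisLep}, whereas you apply it to $\sL$ as a whole. You should also delete your intermediate remark about a ``$\frac{\al^2}{2}|n|^{-3}$-type'' $\mu^2$ coefficient, since the correct statement is the one you give immediately after it.
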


\begin{proof}
We first expand  the operator 
  $\cG(\al,\mu,\e) = |D|_{\alpha,\mu}
  + \cG^\sharp (\al,\mu,\e) $ in \Cref{DNProp1}.
In view of 
\eqref{regGs} and 
\eqref{cGcLmenomu}, \Cref{lem:decomposition} implies that the operator  
$\cG^\sharp (\al,\mu,\e)$ admits the decomposition \eqref{decompositiontM}
with 
\begin{equation}\label{decAforGsharp}
    [\cG^\sharp]^{[i,j]}(\al^2,\mu^2,\e) =  \al^2\e \wt\cG^{[i,j]}(\al^2,\mu^2,\e) = \cO(\rho^2\e) \quad  
    i,j\in\{0,1\}\, .
\end{equation}
In addition, denoting $ \Pi_0 $ the $ L^2 $ projector on the zero Fourier mode, and using the Taylor expansion 
$ \sqrt{1+z} = 1 + \tfrac{z}{2} - \tfrac{z^2}{8} + \cO(z^3)$, 
the Fourier multiplier $|D|_{\alpha,\mu} $ in \eqref{cLvero0} has the expansion
\begin{align}
|D|_{\alpha,\mu}
&  = \sqrt{\al^2+\mu^2} \Pi_0 
+ |D|_{\alpha,\mu} \Pi_0^\bot  = 
\sqrt{\alpha^2+\mu^2} \Pi_0 
+ \big( |D| + \mu (\mbox{sgn} (D) + \cO(\rho^2)) + 
\frac{\alpha^2}{2} |D|^{-1} + \cO(\rho^4) \big) 
\Pi_0^\bot \notag  \\
& =  \underbrace{|D| + \frac{\al^2}{2}|D|^{-1}\Pi_0^\perp+\cO(\rho^4)}_{= |D|_{\al,\mu}^{[0,0]}} + \im \mu \underbrace{(-\im \, \sgn (D))+ \cO(\rho^2))}_{= |D|_{\al,\mu}^{[0,1]}} + \sqrt{\al^2+\mu^2} \underbrace{\Pi_0}_{= |D|_{\al,\mu}^{[1,0]}}  \label{decAforGflat} \, . 
\end{align}
The expansions \eqref{decAforGsharp},  \eqref{decAforGflat}  imply that
$$
\cG(\al,\mu,\e)  = \cG^{[0,0]}(\al^2,\mu^2,\e)   + \rho \, \cG^{[1,0]}(\al^2,\mu^2,\e) + \im \mu \, \cG^{[0,1]}(\al^2,\mu^2,\e) + \im \mu \rho\, \cG^{[1,1]}(\al^2,\mu^2,\e) 
$$
with 
\be
\begin{aligned}\label{F5}
\cG^{[0,0]}(\alpha^2, \mu^2, \e) & =  |D| +\frac{\al^2}{2} |D|^{-1}\Pi_0^\perp  + \cO(\rho^2\e,\rho^4 ) \ , \quad 
&&\cG^{[1,0]}(\alpha^2, \mu^2, \e)  =\Pi_0 + \cO(\rho^2 \e) \,\\
\cG^{[0,1]}(\alpha^2, \mu^2, \e) &
= -\im\, \sgn (D) + \cO(\rho^2) \, \quad 
&&\cG^{[1,1]}(\al^2,\mu^2,\e)  = \cO(\rho^2\e)\, .
\end{aligned}
\ee
The decomposition \eqref{A.decomp} of $\sL(\al,\mu,\e)$ in  \eqref{sLresca}    is then 
\be\label{svisLep}
 \begin{aligned}
 \sL(\al, \mu, \e) = &  
\underbrace{ \begin{bmatrix}
        \pa_x \circ (1+p_\e(x)) & \cG^{[0,0]}(\al^2,\mu^2,\e) \\ -1-a_\e(x) & (1+p_\e(x))\circ \pa_x
    \end{bmatrix}
    }_{=\sL^{[0,0]} (\al^2,\mu^2,\e)}
    +
    \rho
   \underbrace{ \begin{bmatrix}
        0 & \cG^{[1,0]}(\al^2,\mu^2,\e) \\
        0 & 0
    \end{bmatrix}
    }_{= \sL^{[1,0]} (\al^2,\mu^2,\e)}
   \\
   & +
    \im \mu 
    \underbrace{\begin{bmatrix}
        p_\e(x) & \cG^{[0,1]}(\al^2,\mu^2,\e) \\
        0 & p_\e(x)
    \end{bmatrix}
    }_{= \sL^{[0,1]}(\al^2,\mu^2,\e)}
    + \im \mu \rho 
     \underbrace{ \begin{bmatrix}
        0 & \cG^{[1,1]}(\al^2,\mu^2,\e) \\
        0 & 0
    \end{bmatrix}}_{=  \sL^{[1,1]}(\al^2,\mu^2,\e)}
    \end{aligned}
\ee
and, inserting \eqref{F5} in \eqref{svisLep}, we deduce 
\eqref{quattropuntosei}.
\end{proof}

The following lemma  provides the   expansion of the vectors  $ f_k^\sigma(\al,\mu,\e) $ in 
 \eqref{basisF}
 in $ (\al, \mu, \e ) $ near $ (0,0,0) $. 
\\[1mm]
\noindent 
{\bf Notation.}   
We  denote $even_0(x)$ an $even(x)$ function with zero space average.  
 $\cO(\rho^{n}\e^m) \footnotesize\vet{even(x)}{odd(x)}$ denotes a $\cO(\rho^{n}\e^m)$ function with values in  $ H^1(\T, \C^2) $ 
 whose first component is $even(x)$ and the second one is  $odd(x)$.
 Analogous meaning 
for $\cO(\rho^{m}) {\footnotesize\vet{odd(x)}{even(x)}}$, $\cO(\e^{m}) \footnotesize\vet{odd(x)}{even(x)}$ etc.

\begin{lemma}\label{expansion1} 
{\bf (Expansion of the basis $ {\cal F}$)} 
The symplectic and reversible basis $ {\cal F} $ of 
$\cV_{\al,\mu,\e}$ in \eqref{basisF} 
has regularity 
\begin{equation}\label{decF}
   [(\al,\mu,\e)
   \mapsto  f_1^\pm(\al,\mu,\e), f_0^\pm(\al,\mu,\e)]  \in \cA(B_{\rho_0}(0,0),\e_0;H^1)\, , \quad  \overline{f_k^\pm (\al,\mu,\e)} 
    = 
    f_k^\pm (\al,-\mu,\e) \, , 
\end{equation}
and  the expansion 
\begin{align}
 f^+_1(\al, \mu, \e) & = \vet{\cos(x)}{\sin(x)}  + 
 \epsilon \vet{2 \cos(2x)}{\sin(2x)} + \im\frac\mu 4 \vet{ \sin (x)}{\cos (x)}+\im\mu\e \vet{odd (x)}{even(x)}\label{exf41}
 \\ &  \notag+
\cO(\e^2) \vet{even_0(x)}{odd(x)} + \cO(\rho^2)\vet{even_0(x) + \im\, odd(x)}{odd(x)+\im\, even_0(x)}  + 
\cO(\rho^2\e,\rho \e^2)   \, , \notag \\
 f^-_1(\al,\mu, \e) \label{exf42} &= \vet{-\sin(x)}{\cos(x)} + \epsilon \vet{-2 \sin(2x)}{\cos(2x)} + \im \frac\mu 4 \vet{\cos (x)}{- \sin (x)}+ \im \mu \e \vet{even(x)}{odd(x)} \\ & + \cO(\e^2) \vet{odd(x)}{even(x)} + \cO(\rho^2) \vet{odd(x)+\im\, even_0(x) }{even_0(x)+\im odd(x)}+
 \cO(\rho^2\e,\rho \e^2) \, , \notag \\
 f^+_0(\al, \mu, \e) \label{exf43} &= \vet{1}{0}+ \epsilon \vet{ \cos(x)}{-\sin(x)}  +\frac14 \rho\e \vet{\cos (x)}{- \sin (x) } + \im\mu\e \vet{odd(x)}{even_0(x)}\\ \notag &
 + \cO(\e^2) \vet{even_0(x)}{odd(x)} + 
 \cO(\rho^2\e,\rho \e^2) \, , \\
 f^-_0(\al,\mu, \e) \label{exf44} & = \vet{0}{1} +\frac12 \rho\e \vet{ \sin (x)}{\cos (x)} +\im\mu\e\vet{even_0(x)}{odd(x)}+
 \cO(\rho^2\e,\rho \e^2)\, .
\end{align}
The vectors 
$ f_k^{\sigma} (0,0,\epsilon) $, $ k = 0,1, \sigma = \pm $, are real, and 
\begin{equation}\label{nonzeroaverage}
    f_1^+(0,0,\e) = \vet{even_0(x)}{odd(x)}\, , \ f_1^-(0,0,\e) = \vet{odd(x)}{even(x)}\, , \ f_0^+(0,0,\e) = \vet{1}{0} + \vet{even_0(x)}{odd(x)}\, , \ f_0^- (0,0,\e) = \vet{0}{1}\, .
\end{equation}
For $\e = 0$ we have  
\begin{equation} \label{vtrans}
f_1^+(\al, \mu,0) = \frac{1}{\im \sqrt{2} } \big(v_1^+(\al, \mu) - v_1^-(\al, \mu) \big) \, , \quad
f_1^-(\al, \mu,0) = \frac{1}{ \sqrt{2} } \big(v_1^+(\al, \mu) + v_1^-(\al, \mu) \big)  \, ,   
\end{equation}
where 
$ v_1^\pm (\al, \mu)  =  U_{\al, \mu, 0}\,  v_1^\pm(0,0)  $
are the complex eigenvectors of 
$ {\cal L} (\alpha, \mu, 0 ) $ in  
\eqref{unperturbed.eigv}.  
\end{lemma}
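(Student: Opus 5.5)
The plan is to compute $f_k^\sigma(\al,\mu,\e)=U_{\al,\mu,\e}f_k^\sigma$ through the expansion of $U_{\al,\mu,\e}$ inherited from that of $P_{\al,\mu,\e}$, following \cite[Lemma 3.2]{BMV1}. The longitudinal case $\al=0$ is already known there, and the new task is to track the polar-analytic dependence on $\rho$.

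\emph{Regularity and symmetries.} These follow from \eqref{Ureg} and \eqref{symmmmUP}: $\cA$ is stable under application to the fixed vectors $f_k^\sigma$, and $\overline{U_{\al,\mu,\e}f}=U_{\al,-\mu,\e}\bar f$ with $f_k^\sigma$ real. By \Cref{lem:decomposition}, each $f_k^\sigma$ then splits into the components $[0,0],[0,1],[1,0],[1,1]$. The parity structure (even/odd entries, real versus $\im$ times real) follows from reversibility \eqref{lem:revbasis}. The $\cO(\e^m)$ jets only contain the harmonics allowed by $\tF$ (\Cref{defFell}), and this fixes the Fourier modes $\cos(2x)$, $\sin(2x)$, and so on.

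\emph{Expansion of $P$.} Write $U=\uno+(P-P_0)(2P_0-\uno)+\cO((P-P_0)^2)$ with $P_0:=P_{0,0,0}$. We then expand the resolvent in the Cauchy integral \eqref{Pproj} using \eqref{quattropuntosei}:
$$(\lambda-\sL)^{-1}=(\lambda-\sL_0)^{-1}+(\lambda-\sL_0)^{-1}(\sL-\sL_0)(\lambda-\sL_0)^{-1}+\dots$$
Because $(\lambda-\sL(0,0,0))^{-1}$ has an explicit pole structure at $0$ (Jordan block $f_0^+\mapsto -f_0^-$), each term reduces to residue computations on finitely many Fourier modes.

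\emph{Term-by-term computation.}
\begin{itemize}
\item The $\e$-linear terms $\e\,(2\cos 2x,\sin 2x)$ and $\e(\cos x,-\sin x)$ coincide with the $\al=0$ computation of \cite{BMV1}, since $\cG^\sharp=\cO(\al^2\e)$.
\item The $\im\mu$ terms come from $\sL^{[0,1]}$. At $\e=0$ they are simply the Taylor expansion of $v_1^\pm(\al,\mu)$, because $U_{\al,\mu,0}$ is a Fourier multiplier. Formula \eqref{vtrans} follows from this observation together with \eqref{v1pv1-}.
\item The genuinely new terms are linear in $\rho$: $\tfrac14\rho\e(\cos x,-\sin x)$ in $f_0^+$ and $\tfrac12\rho\e(\sin x,\cos x)$ in $f_0^-$. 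They come from $\rho\sL^{[1,0]}=\rho\begin{bmatrix}0&\Pi_0\\0&0\end{bmatrix}$ acting only on the zero mode, combined with the $\e$-linear part of $\sL$.
\item $\rho$ does not appear at order $\e^0$ in $f_1^\pm$. The reason is that $\Pi_0$ annihilates $e^{\pm\im x}$ and $U_{\al,\mu,0}$ is a Fourier multiplier that fixes the zero mode components.
\end{itemize}
Finally, \eqref{nonzeroaverage} follows because at $(\al,\mu)=(0,0)$ the operator $\sL(0,0,\e)$ is real and the known kernel elements force $f_0^-(0,0,\e)=(0,1)$. The zero averages of the corrections follow from $\Pi_0$-conservation properties (the momentum structure) as in \cite{BMV1}.

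\emph{Main obstacle.} The hardest step is the bookkeeping of the $\rho\e$ coefficients: it requires second-order resolvent residues at the defective eigenvalue $0$ mixed with $\sL^{[1,0]}$. I would first compute $P^{[1,0]}$ at order $\e$ explicitly, using $\Pi_0 f_0^-$-type actions, and then apply $(2P_0-\uno)$. All remaining contributions are pushed into $\cO(\rho^2,\rho^2\e,\rho\e^2,\e^2)$, with the parities asserted in the statement.
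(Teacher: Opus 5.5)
Your route is the paper's: a Neumann expansion of the resolvent inside the Cauchy integral for $P_{\al,\mu,\e}$, residues on the invariant blocks of Lemma~\ref{lem:VUW}, reversibility for the parities, and \cite{BMV1} at $(\al,\mu)=(0,0)$. However, two steps do not hold as written.

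\emph{The expansion of $U$.} The formula $\uno+(P-P_{0,0,0})(2P_{0,0,0}-\uno)$ is only the first-order part of $U$. The coefficients you are after are all second order: $\tfrac14\rho\e$ and $\tfrac12\rho\e$, the absence of a $\rho\e$ term in $f_1^\pm$, and the zero average of the $\im\mu\e$ terms of $f_0^\pm$.
\begin{itemize}
\item On $\mathrm{Rg}\,P_{0,0,0}$ the quadratic term is $\tfrac12(P-P_{0,0,0})^2$. Its cross terms $\tfrac12\rho\e\,(P^{[1,0,0]}P^{[0,0,1]}+P^{[0,0,1]}P^{[1,0,0]})$ and $\tfrac12\im\mu\e\,(P^{[0,1,0]}P^{[0,0,1]}+P^{[0,0,1]}P^{[0,1,0]})$ are of exactly the order you are computing. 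They cannot be pushed into $\cO(\rho^2,\rho^2\e,\rho\e^2,\e^2)$.
\item You must show that the $\rho\e$ cross term vanishes on $\cV_{0,0,0}$. This holds because $P^{[1,0,0]}$ annihilates $\cV_{0,0,0}$ and every zero-average vector, while $P^{[0,0,1]}\cV_{0,0,0}$ consists of zero-average vectors.
\item You must also show that the $\im\mu\e$ cross term sends $f_0^\pm$ to zero-average vectors. This holds because $P^{[0,1,0]}f_0^\pm=0$ and $P^{[0,1,0]}$ has zero-average range. That zero average is not a consequence of reversibility; it comes from the mode structure of $\sL^{[0,1,0]}$, $\sL^{[0,0,1]}$, $\sL^{[0,1,1]}$ and $R_0(\lambda)$.
\item The paper handles this systematically. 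Expanding $P^2=P$ yields \eqref{Umix}, i.e. $U^{[i,j,1]}P_{0,0,0}=(P^{[i,j,1]}-\tfrac12P_{0,0,0}P^{[i,j,1]})P_{0,0,0}$. Moreover $P_{0,0,0}P^{[1,0,1]}P_{0,0,0}=0$ because $P^{[1,0,1]}\cV_{0,0,0}\subset\cU$.
\end{itemize}

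\emph{The identity \eqref{vtrans}.} That $U_{\al,\mu,0}$ is a Fourier multiplier only gives $U_{\al,\mu,0}v_1^+(0,0)=\nu(\al,\mu)\,v_1^+(\al,\mu)$ for some scalar $\nu$, since $v_1^+(\al,\mu)$ spans the $e^{\im x}$-part of $\cV_{\al,\mu,0}$. Nothing in your argument fixes $\nu$.
\begin{itemize}
\item The paper gets $|\nu|=1$ from the symplecticity of $U_{\al,\mu,0}$ and \eqref{symp.nonzero}.
\item It gets $\nu\in\R$ from $\varrho_c U_{\al,\mu,0}=U_{\al,\mu,0}\varrho_c$, with $\varrho_c$ antilinear and $\varrho_c v_1^+=-v_1^+$.
\item It concludes $\nu\equiv1$ by continuity at $(0,0)$.
\item Alternatively, you can compute $(\uno-(P_{\al,\mu,0}-P_{0,0,0})^2)^{-1/2}P_{\al,\mu,0}$ explicitly on the $2\times2$ block of the harmonic $e^{\im x}$. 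This recovers exactly the factor $(2\Omega_\al(1+\mu))^{-1/2}$ of \eqref{unperturbed.eigv}.
\end{itemize}
You read off the $\e^0$ part of $f_1^\pm$, including the coefficient $\im\tfrac{\mu}{4}$, from \eqref{vtrans}, so this normalization cannot be skipped.

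Likewise, your claim that $U_{\al,\mu,0}$ fixes the zero mode needs a reason; it is what rules out any $\cO(\rho^2)$ term in \eqref{exf43}--\eqref{exf44}. The reason is that, for $(\al,\mu)$ small, $\sL(\al,\mu,0)$ leaves $\mathrm{span}\{f_0^+,f_0^-\}$ invariant with eigenvalues $\pm\im\rho^{1/2}$ inside $\Gamma$. Hence $P_{\al,\mu,0}$ and $P_{0,0,0}$, and therefore $U_{\al,\mu,0}$, act as the identity on that span.
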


\begin{proof}
See Appendix \ref{ProofExpansion}. 
\end{proof}

\noindent
{\bf Second basis of $\cV_{\al,\mu,\e}$.} 
We now define 
the symplectic and  
reversible basis 
\begin{equation}\label{basisG}
    {\cal G} := \big\{ g_k^\sigma (\al,\mu,\e) \, : \ k= 0,1\, ;\ \sigma = \pm \big\}
\end{equation}
of  $\cV_{\al,\mu,\e}$ according to \Cref{def:symprevbasis,}, where 
\begin{equation}\label{basechange}
    \begin{aligned}
        &g_1^+ (\al,\mu,\e) := f_1^+ (\al,\mu,\e) \, , \qquad g_1^- (\al,\mu,\e) := f_1^- (\al,\mu,\e) - n(\al,\mu,\e) f_0^- (\al,\mu,\e) \, ,  \\
        &g_0^+ (\al,\mu,\e) := f_0^+ (\al,\mu,\e) + n(\al,\mu,\e) f_1^+ (\al,\mu,\e)\, ,\qquad g_0^- (\al,\mu,\e) := f_0^- (\al,\mu,\e)\, ,
    \end{aligned}
\end{equation}
and 
\begin{equation}\label{namep}
n(\al,\mu, \e) := \frac{(f_1^- (\al,\mu, \e),f_0^-(\al,\mu, \e))}{\| f_0^- (\al,\mu, \e)\|^2}\, .
\end{equation}
Note that $n(\al,\mu,\e)$ is real valued for any $\al,\mu,\e$, since,  
recalling \eqref{def:cinvolution} and \eqref{scalarcomplex}, we have $\overline{(\varrho_c f, \varrho_c g)_{L^2}} = (f,g)_{L^2}$, and therefore
\begin{equation}\label{nuisreal}
    n(\al,\mu,\e) = \frac {\overline{(\varrho_c f_1^-(\al,\mu,\e),\varrho_c f_0^-(\al,\mu,\e))}}{\|f_0^-(\al,\mu,\e)\|^2} \stackrel{\eqref{def:rev}} = \frac {\overline{(  f_1^-(\al,\mu,\e),f_0^-(\al,\mu,\e))}}{\|f_0^-(\al,\mu,\e)\|^2} = \overline{n(\al,\mu,\e)} \, .
\end{equation}
The key property of the new basis is that  $g_1^-(0,0,\e)$ has zero space average, see indeed the remainder $\cO(\e^2)$ in  \eqref{exg42}.
Such property,
which is not satisfied by  
$ f_1^- (0,0,\e)$ in \eqref{exf42},  is crucially exploited in Proposition \ref{BexpG}  to prove  \eqref{sB10ongksig00},\eqref{matY2}, see \Cref{rem:newbasis}.

\begin{lemma}\label{expansion2} 
{\bf (Expansion of the basis $ {\cal G}$)} 
The symplectic and reversible basis $ {\cal G}$ of 
$\cV_{\al,\mu,\e}$ in \eqref{basisG} 
have regularity 
\begin{equation}\label{decG}
   [(\al,\mu,\e)
   \mapsto g_1^\pm(\al,\mu,\e) \, ,
    g_0^\pm(\al,\mu,\e)] \in \cA(B_{\rho_0}(0,0),\e_0;H^1(\T,\C^2))\, , \quad \overline{g_k^\sigma(\al,\mu,\e)} = g_k^\sigma(\al,-\mu,\e) \, , 
\end{equation} 
and the  expansion
\begin{align}
 g^+_1(\al, \mu, \e) & = \vet{\cos(x)}{\sin(x)}  + 
 \epsilon \vet{2 \cos(2x)}{\sin(2x)} + \im\frac\mu 4 \vet{\sin (x)}{\cos (x)}+\im\mu\e \vet{odd (x)}{even (x)}\label{exg41}
 \\ &  \notag+
\cO(\e^2) \vet{even_0(x)}{odd(x)} + \cO(\rho^2)\vet{even_0(x) + \im\, odd(x)}{odd(x)+\im\, even_0(x)}  + \cO(\rho^2\e,\rho\e^2)  \, , \notag \\
 g^-_1(\al,\mu, \e) \label{exg42} &= \vet{-\sin(x)}{\cos(x)} + \epsilon \vet{-2 \sin(2x)}{\cos(2x)} + \im \frac\mu 4 \vet{ \cos (x)}{- \sin (x)}+ \im \mu \e \vet{even(x)}{odd(x)} \\ & + \cO(\e^2) \vet{odd(x)}{even_0(x)} + \cO(\rho^2) \vet{odd(x)+\im\, even_0(x) }{even_0(x)+\im odd(x)}+\cO(\rho^2\e,\rho\e^2) \, , \notag \\
 g^+_0(\al, \mu, \e) \label{exg43} &= \vet{1}{0}+ \epsilon \vet{ \cos(x)}{-\sin(x)}  +\frac14 \rho\e \vet{ \cos (x)}{- \sin (x) } 
 + \im\mu\e \vet{odd(x)}{even_0(x)}\\ \notag &
 + \cO(\e^2) \vet{even_0(x)}{odd(x)} + \cO(\rho^2\e,\rho\e^2) , \\
 g^-_0(\al,\mu, \e) \label{exg44} & = \vet{0}{1} +\frac12 \rho\e \vet{ \sin (x)}{ \cos (x)} +\im\mu\e\vet{even_0(x)}{odd(x)}+ \cO(\rho^2\e,\rho\e^2)\, .
\end{align} 
At $\e  = 0$ the basis 
$ g_k^\sigma 
(\al, \mu,0) = f_k^\sigma (\al, \mu,0)  $ for any $ k = 0, 1$, $ \sigma = \pm $.
\end{lemma}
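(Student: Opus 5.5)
Since $\cG$ is obtained from $\cF$ by the explicit change of basis \eqref{basechange}, the plan is to deduce \Cref{expansion2} from \Cref{expansion1} and the properties of the scalar function $n(\al,\mu,\e)$ in \eqref{namep}. In order, we establish: the regularity and symmetry of $n$, the symplecticity and reversibility of $\cG$, and finally the expansions \eqref{exg41}--\eqref{exg44}.

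\textbf{Regularity of $n$.} By \eqref{decF}, the maps $f_1^-$ and $f_0^-$ belong to $\cA(B_{\rho_0}(0,0),\e_0;H^1)$. The $L^2$ scalar product is bilinear in its first argument and antilinear in the second, but the symmetry $\overline{f_k^\sigma(\al,\mu,\e)}=f_k^\sigma(\al,-\mu,\e)$ lets us write
$$(f_1^-,f_0^-)=\tfrac1{2\pi}\int f_1^-(\al,\mu,\e)\cdot f_0^-(\al,-\mu,\e)\,\de x .$$
This is a bilinear expression in two functions of class $\cA$; note that $\mu\mapsto -\mu$ preserves the decomposition \eqref{dectM} near $(0,0)$. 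The same holds for $\|f_0^-\|^2$, which is close to $1$. Hence $n\in\cA$ by the closure properties of $\cA$ (\Cref{ap_cAF}), and the real-valuedness \eqref{nuisreal} gives $\overline{n(\al,\mu,\e)}=n(\al,-\mu,\e)$. It follows that the $g_k^\sigma$ satisfy \eqref{decG}.

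\textbf{Symplecticity and reversibility.} Since $n$ is real and $\varrho_c$ is antilinear, reversibility of $\cG$ is immediate. For symplecticity, we use $\cW_c(f_1^+,f_0^-)=\cW_c(f_0^+,f_1^-)=0$, skew-Hermitianity and \eqref{def:symp} to check that
$$\cW_c(g_1^-,g_0^+)=\cW_c(f_1^-,f_0^+)+n\,\cW_c(f_1^-,f_1^+)-n\,\cW_c(f_0^-,f_0^+)-n^2\cW_c(f_0^-,f_1^+)=0+n-n-0=0.$$
The remaining pairings follow in the same way.

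\textbf{Expansions.} We expand $n$. At $\e=0$, $f_1^-(\al,\mu,0)$ is supported on the modes $e^{\pm\im x}$ (it is a Fourier multiplier image of $f_1^-$) while $f_0^-$ is supported on mode $0$; hence $n(\al,\mu,0)=0$. At $\rho=0$, \eqref{nonzeroaverage} shows that $f_1^-(0,0,\e)$ has second component $even(x)$ and $f_0^-=\vet01$, so $n(0,0,\e)$ equals the average of that second component. That average is $\cO(\e^2)$, since the $\e$-term $\vet{-2\sin 2x}{\cos 2x}$ has zero average.

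The linear-in-$\rho$ contributions are of size $\cO(\rho\e)$. The terms $\im\mu\e\vet{even}{odd}$ from \eqref{exf42} paired with $\vet01$ give an imaginary odd average, which vanishes because $n$ is real; more precisely, the $\im\mu$ component must vanish by the symmetry $\overline{n(\al,\mu,\e)}=n(\al,-\mu,\e)$ combined with realness. The $\rho\e$ term of $f_0^-$ is orthogonal to $f_1^-(0,0,0)$ at order $\rho\e$, because $(\vet{-\sin x}{\cos x},\vet{\sin x}{\cos x})=0$. Hence
$$n=\cO(\e^2,\rho^2\e,\rho\e^2).$$

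Substituting into \eqref{basechange}:
\begin{itemize}
\item $g_1^-=f_1^--nf_0^-$ differs from $f_1^-$ by $\cO(\e^2)\vet01+\cO(\rho^2\e,\rho\e^2)$. By the choice of $n$ this exactly cancels the average of the second component at $\rho=0$, which turns $\cO(\e^2)\vet{odd}{even}$ into $\cO(\e^2)\vet{odd}{even_0}$.
\item $g_0^+=f_0^++nf_1^+$ changes only by $\cO(\e^2)\vet{even_0}{odd}$ plus higher remainders.
\item $g_1^+=f_1^+$ and $g_0^-=f_0^-$ are unchanged.
\end{itemize}
Since $n(\al,\mu,0)=0$, we get $g_k^\sigma(\al,\mu,0)=f_k^\sigma(\al,\mu,0)$.

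\textbf{Main obstacle.} The delicate point is the claim that $n$ has no $\cO(\rho)$ or $\cO(\rho\e)$ term that would spoil the stated expansions. We plan to settle it using the parity structure in \eqref{exf42}, \eqref{exf44} together with the real/imaginary symmetry, as sketched above. If some $\rho\e$ term in $n$ did survive, it would be absorbed into the remainder $\cO(\rho^2\e,\rho\e^2)$ only if it is actually of order $\rho\e^2$, so we would verify this by computing the $\rho\e$ coefficient of $(f_1^-,f_0^-)$ directly.
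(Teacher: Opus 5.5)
Your proposal is correct and follows the paper's route: you deduce everything from \Cref{expansion1} through the change of basis \eqref{basechange}, using $n\in\cA$, $n(\al,\mu,0)=0$, $n=r(\e^2,\rho^2\e)$, and $f_0^-(0,0,\e)=\vet{0}{1}$ from \eqref{nonzeroaverage} to show that the second component of $g_1^-(0,0,\e)$ has zero average. Your extra steps, namely writing the scalar product as a bilinear expression via the conjugation symmetry and checking symplecticity and reversibility of $\cG$ explicitly, are sound additions that the paper leaves implicit.
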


\begin{proof}
The scalar  $n(\alpha, \mu,\e)$ in 
\eqref{namep} satisfies, by \eqref{exf42}
and \eqref{exf44},
\begin{equation}
\label{nmuep}
n(\al,\mu,\e)  =  r(\e^2, \rho^2\e) \, . 
\end{equation}
The last claim follows since  $n(\al,\mu, 0) = 0 $.
In addition, by \eqref{decF} and \Cref{lem:prodintM} $(iii)$, we
deduce that $ n (\alpha, \mu, \epsilon ) $ belongs to 
$ \cA(B_{\rho_0}(0,0),\e_0;\C) $.
Hence, \eqref{decG} holds and, in view of \eqref{exf41}-\eqref{exf44}, 
 the vectors $ g^\sigma_k (\alpha, \mu,\e) $ in \eqref{basisG} 
have the expansions \eqref{exg41}-\eqref{exg44}. 
Furthermore,  by 
\eqref{nonzeroaverage},   $ f_0^-(0,0,\e) = \footnotesize\vet{0}{1}  $, and thus 
\begin{equation*}
g_1^-(0,0,\e) \stackrel{\eqref{basechange}, \eqref{namep}} = f_1^-(0,0,\e) - \Big( f_1^-(0,0,\e), \vet{0}{1} \Big)\vet{0}{1} \, ,
\end{equation*} 
which is $\vet{odd(x)}{even_0(x)}$ 
recalling  \eqref{nonzeroaverage}. 
\end{proof}

We now state the main result of this section.

\begin{proposition}\label{BexpG}
{\bf (Matrix representation of \( {\sL}(\al,\mu,\e)\) on \( \mathcal{V}_{\alpha,\mu, \e}\))} 
The Hamiltonian and reversible operator  
$ \sL (\alpha,\mu,\e) : \mathcal{V}_{\alpha, \mu,\e} \to \mathcal{V}_{\alpha,\mu,\e} $ is represented on the symplectic and reversible basis $\mathcal{G}$ of $\mathcal{V}_{\alpha, \mu,\e}$ in \eqref{basisG} 
by a   Hamiltonian matrix 
 $\tL_{\alpha,\mu,\e}=\tJ_4 \tB_{\alpha,\mu,\e}$, where 
 $\tB_{\alpha,\mu,\e} $
is a $ 4 \times 4$  matrix   satisfying 
\be\label{reg:TB} 
[(\al,\mu,\e)\mapsto\tB_{\al,\mu,\e}]\in\cA(B_{\rho_0}(0,0),\e_0;\C^{4\times 4}) \, ,  \qquad    \overline{\tB_{\al, \mu,\e}} =
\tB_{\al, - \mu,\e} \, , 
\ee
 self-adjoint and reversibility preserving,  of the form \begin{equation}\label{splitB}
\tB_{\alpha,\mu,\e}=
\begin{pmatrix} 
E & F \\ 
F^* & G 
\end{pmatrix} \, ,  \qquad
E  =\begin{pmatrix}
    E_{11} & \im E_{12} \\
    -\im E_{12} & E_{22}
\end{pmatrix} = E^* \, , \   \ G =\begin{pmatrix}
    G_{11} & \im G_{12} \\
    -\im G_{12} & G_{22}
\end{pmatrix} = G^* \, , 
\end{equation} 
  where $E,  G, F $  are the $ 2 \times 2 $  matrices 
\begin{align}\label{BinG1}
& E := 
\begin{pmatrix} 
  \e^2(1+r_1'(\e^2)) +\frac{\al^2}{4}(1+r_1''(\e^2,\rho))-\frac{\mu^2}{8}(1+r_1'''(\e^2,\rho))  & \im \frac12\mu\big( 1+ r_2(\e^2, \rho^2))  \\
- \im \frac12\mu\big(  1 + r_2(\e^2,\rho^2) \big) & \frac{\al^2}{4}(1+r_5'(\e^2,\rho))-\frac{\mu^2}{8}(1+r_5(\e^2,\rho))
 \end{pmatrix} \\
 & \label{BinG2} G := 
 \begin{pmatrix} 
1+ r_8(\e^4,\rho\e^2,\rho^3) &   - \im  \mu \,  r_9(\e^2,\rho^2)   \\
  \im  \mu \,  r_9(\e^2,\rho^2)   & \rho+r_{10}(\rho^2\e^2,\rho^3)
 \end{pmatrix} \\
 & \label{BinG3}
 F = \begin{pmatrix}
     F_{11} & \im F_{12}\\
     \im F_{21} & F_{22}
 \end{pmatrix}=
 \begin{pmatrix} 
 r_3(\e^3,\rho^2\e) & \im  \mu \, r_4(\e) \\
  \im  \mu \, r_6(\e)     & r_7(\rho^2\e) 
 \end{pmatrix} \, . 
 \end{align}
The matrices $E, G$ are even in $\e$ and $F$ is odd in $\e$.
At 
$\e = 0 $, 
\begin{equation}\label{E11E22}
E_{11} (\alpha, \mu, 0)
= E_{22} (\alpha, \mu, 0) = \tfrac12 
(\omega_1^-(\al,\mu)-\omega_1^+(\al,\mu))\, , 
 \quad  
    E_{12}(\al,\mu,0) = \mu -
    \tfrac12 
    (\omega_1^+ (\al,\mu) +\omega_1^-(\al,\mu)) \, . 
\end{equation}
\end{proposition}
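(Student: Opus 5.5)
Following \Cref{lem:B.mat}, the entries of $\tB_{\al,\mu,\e}$ are $(\sB(\al,\mu,\e) g_k^\sigma, g_{k'}^{\sigma'})$. The structural claims are taken from earlier results. Self-adjointness and the alternating real/imaginary pattern in \eqref{splitB} come from \eqref{revprop0}, because $\cG$ is a symplectic and reversible basis. The regularity \eqref{reg:TB} holds because $\sB\in\cA$ by \eqref{regGBL} and $g_k^\sigma\in\cA$ by \eqref{decG}, and $\cA$ is closed under products and scalar products (\Cref{lem:prodintM}). The conjugation symmetry follows from $\overline{\sB(\al,\mu,\e)}=\sB(\al,-\mu,\e)$ together with \eqref{decG}. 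For the parity in $\e$ I would use $\sB,P,U\in\tF$: the jets of order $\ell$ shift Fourier modes by $\kappa\equiv\ell$ mod $2$. Since $f_1^\pm$ live on odd modes and $f_0^\pm$ on even modes at $\e=0$, an entry pairing an odd-mode vector with an even-mode vector only receives odd jets, and an entry pairing vectors of the same type only even jets. This is what makes $E,G$ even and $F$ odd in $\e$. The correction $n f_0^-$ in \eqref{basechange} has to be checked against this, and it is compatible since $n=r(\e^2,\rho^2\e)$ is even in $\e$ by \eqref{nmuep}.

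Next I would expand each entry. By \Cref{lem:decomposition} every entry $A$ has the form $A^{[0,0]}+\im\mu A^{[0,1]}+\rho A^{[1,0]}+\im\mu\rho A^{[1,1]}$, with coefficients analytic in $(\al^2,\mu^2,\e)$. So it suffices to compute Taylor coefficients in $\al^2,\mu^2,\e$ up to the orders displayed. I would substitute \eqref{quattropuntosei} and \eqref{exg41}--\eqref{exg44} into $(\sB g_k^\sigma,g_{k'}^{\sigma'})$, where $\sB=-\cJ\sL$, and organise the computation by monomials $\e^a$, $\mu\e^a$, $\rho\e^a$, $\al^2$, $\mu^2$.

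The easiest entries are the $\e$-independent ones at $\e=0$. By the last statement of \Cref{expansion2}, $g_k^\sigma(\al,\mu,0)=f_k^\sigma(\al,\mu,0)$, and by \eqref{vtrans} these are combinations of the exact eigenvectors $v_1^\pm(\al,\mu)$ of the Fourier multiplier $\cL(\al,\mu,0)$. Hence $E(\al,\mu,0)$ is computed in closed form, which gives \eqref{E11E22}. The $\al^2/4$ and $-\mu^2/8$ terms then come from expanding $\tfrac12(\omega_1^--\omega_1^+)$, using $\Omega_\al(1\pm\mu)$ with $\Omega_\al$ from \eqref{eig.0}. The $\e^2$ coefficient of $E_{11}$ (equal to $1$) and the entries $G_{11}=1+\dots$ and $G_{22}=\rho+\dots$ at $(\al,\mu)=(0,0)$ reduce to the one-dimensional computations of \cite{BMV1}: at $\al=0$ the operator $\cG(0,\mu,\e)=|D+\mu|$, and $\cG^\sharp=\cO(\al^2\e)$ only contributes through terms already absorbed into $r(\rho^2\e)$.

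The main obstacle is establishing the vanishing orders in $F$ and $G$, in particular $F_{11}=r_3(\e^3,\rho^2\e)$, $F_{22}=r_7(\rho^2\e)$, and the absence of $\rho\e$ and $\rho\e^2$ terms in $G_{22}=\rho+r_{10}(\rho^2\e^2,\rho^3)$. These are exactly the cancellations that the basis $\cG$ was designed for. The $\rho$-terms enter only through $\sL^{[1,0]}$, which involves $\Pi_0$, and through the $\rho$-coefficients of the basis vectors. My plan is to show that $(\sL^{[1,0]}g,g')$ vanishes whenever one of $g,g'$ has second component with zero average. Here the zero-average property of $g_1^-(0,0,\e)$ (see \eqref{exg42}) is crucial: it kills the $\rho\e^2$ contributions in $E$ and $F$. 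The exact $\e$-structure $g_0^-(0,0,\e)=\footnotesize\vet{0}{1}$ from \eqref{nonzeroaverage} then gives $G_{22}$.

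For $F_{11}$ at $(\al,\mu)=(0,0)$, I would argue that $\e^3$ is the first odd order permitted. The key input is that $\sL(0,0,\e)$ has the kernel $\{f_1^+(0,0,\e),f_0^-\}$ coming from translation invariance and the $\psi$-shift symmetry, which forces the order-$\e$ term to vanish. For $F_{22}$ I would use the same kernel property, $\sB(0,0,\e)g_0^-=0$, and then control the $\mu\e$ and $\rho\e$ corrections through the parity and zero-average arguments above. This bookkeeping, together with the exact forms of the $\tfrac14\rho\e$ and $\tfrac12\rho\e$ terms in \eqref{exg43}--\eqref{exg44} (which should cancel against the $\Pi_0$ contribution), is where I expect most of the effort to lie.
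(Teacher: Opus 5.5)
Your framework matches the paper's: you decompose the entries via \Cref{lem:decomposition}, take the $(0,0,\e)$ values from \eqref{Bsoloeps}, control the $\Pi_0$-contributions with the zero average of the second component of $g_1^-(0,0,\e)$, and get parity in $\e$ from $\mathtt{F}$. Reading all $\e$-independent terms off the exact $\e=0$ data is a legitimate shortcut. Since $n(\al,\mu,0)=0$, $f_0^\pm(\al,\mu,0)=f_0^\pm$ (\Cref{expam0}) and $\sB(\al,\mu,0)$ is a Fourier multiplier, one gets $F(\al,\mu,0)=0$, $G(\al,\mu,0)=\mathrm{diag}(1,\rho)$ and $E(\al,\mu,0)$ from \eqref{E11E22}. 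This replaces the paper's computation of $\tB^{[0,2]}_0,\tB^{[2,0]}_0,\tB^{[1,1]}_0$.

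The parity step, however, is wrong as written: $n$ is \emph{odd} in $\e$, not even. \eqref{nmuep} only records vanishing orders (note the $\rho^2\e$ in it). Write $n=(U^*Uf_1^-,f_0^-)/(U^*Uf_0^-,f_0^-)$ with $U^*U\in\mathtt{F}$. The numerator pairs the harmonics $\pm1$ with $0$, so it has only odd jets; the denominator has only even ones (this is \eqref{n.exp}). Oddness of $n$ is exactly what the basis change needs. For example, $E_{22}$ contains $-n\big[(\mathfrak{B}f_0^-,f_1^-)+(\mathfrak{B}f_1^-,f_0^-)\big]$ with $\mathfrak{B}=U^*\sB U$, and this bracket is odd in $\e$. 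An even $n$ would therefore destroy the evenness of $E$, so your compatibility check runs backwards.

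Second, you misidentify the kernel at $(\al,\mu)=(0,0)$, and this breaks both your $F_{11}$ argument and the $\rho$-jet bookkeeping. By \eqref{exp:Bg}, $\sB(0,0,\e)$ annihilates $g_1^-(0,0,\e)$ (the translation mode, of type $(\text{odd},\text{even})$) and $g_0^-=(0,1)$. The vectors $g_1^+,g_0^+$ are generalized eigenvectors, with $\sB(0,0,\e)g_1^+=\e^2(\cos x,\sin x)+r(\e^3)$ and $\sB(0,0,\e)g_0^+=(1,0)+r(\e^3)$. If $f_1^+(0,0,\e)=g_1^+(0,0,\e)$ were in the kernel, then $E_{11}(0,0,\e)=(\sB g_1^+,g_1^+)$ would vanish, contradicting the Benjamin--Feir term $\e^2$ in \eqref{Bsoloeps}, and you would get $F_{11}(0,0,\e)\equiv 0$. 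The correct input is $F_{11}(0,0,\e)=r(\e^3)$ from \eqref{Bsoloeps}, i.e.\ $\sB g_1^+=O(\e^2)$ plus oddness; \Cref{decoupling1} later removes precisely this $\e^3$ term.

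Likewise, the $\rho\e$ terms of $g_0^\pm$ do not cancel against the $\Pi_0$-contribution. That contribution enters through $\sB^{[1,0]}=\mathrm{diag}(0,\Pi_0)$, not $\sL^{[1,0]}$, and it is nonzero only in the $(0^-,0^-)$ slot, where it equals $1$. The remaining terms $(\sB(0,0,\e)g_k^{\sigma,[0,0]},g_{k'}^{\sigma',[1,0]})$ are small for three reasons:
\begin{itemize}
\item the images of $\sB(0,0,\e)$ are the ones just listed;
\item $g_1^{\pm,[1,0]}=O(\e^2)$, since there is no $\rho\e$ term in \eqref{exg41}--\eqref{exg42};
\item $(1,0)$ pairs to zero with $\tfrac{\e}{4}(\cos x,-\sin x)$.
\end{itemize}
Carrying this out yields $\tB^{[1,0]}_\e$ as in \eqref{matB10}: entries $r(\e^4)$ and $r(\e^2)$, exact zeros in the $(1^-,1^-)$ and $(1^-,0^-)$ slots, and $1$ in the $(0^-,0^-)$ slot. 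Together with the corrected parity, this gives the stated remainders of $E_{11},E_{22},F_{11},F_{22},G_{11},G_{22}$.
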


 Note  that $ F = 0 $ vanishes at $\e = 0$.
For $ \alpha = 0 $ the matrix $\tL_{0,\mu,\e}  $ coincides with
the Hamiltonian matrix
in \cite[Proposition 4.4]{BMV1} (where the property that $ F $ vanishes in $ \epsilon = 0 $ had not been observed).

\smallskip 

The rest of this section is devoted to the proof of 
Proposition \ref{BexpG}. 

The matrix $\tB_{\al, \mu, \e}$ in \eqref{splitB} 
 is the self-adjoint and reversibility preserving matrix with entries
\be\label{matriceBame}
[\tB_{\al, \mu, \e}]_{k,k'}^{\sigma,\sigma'} = 
\left(   \sB(\al, \mu,\e)g_k^{\sigma}(\al, \mu,\e),g_{k'}^{\sigma'}(\al, \mu, \e)\right)\, ,  
\ee
  associated, as in \eqref{matrix22},  to the self-adjoint and reversibility
preserving operator $\sB(\al, \mu, \e)$ in \eqref{sLresca}.
The regularity \eqref{reg:TB} holds 
in view of \eqref{decG},  \eqref{regGBL}, \eqref{cLsL}, \eqref{cGcLmenomu} and  \Cref{lem:prodintM}$(iii)$. Thus \Cref{lem:decomposition} 
implies that  
\begin{equation}\label{decAforTB}
    \tB_{\al,\mu,\e} = \tB^{[0,0]}(\al^2,\mu^2,\e)
+ \rho\, \tB^{[1,0]}(\al^2,\mu^2,\e)
+ \im\mu\, \tB^{[0,1]}(\al^2,\mu^2,\e)
+ \im\mu\rho\, \tB^{[1,1]}(\al^2,\mu^2,\e)
\end{equation}
where each matrix $\tB^{[i,j]}(\al^2,\mu^2,\e)$ is real and analytic in its argument. 
Note also that, in view of \eqref{revprop0}, we have 
\be\label{eq:altrealim}
[\tB_{\al, \mu, \e}]_{k,k'}^{\sigma,\sigma'} = 
\begin{cases}
[\tB^{[0,0]}(\al^2,\mu^2,\e) 
+ \rho  \tB^{[1,0]}(\al^2,\mu^2,\e)]_{k,k'}^{\sigma,\sigma'}
\qquad & \sigma = \sigma', \\
\im \mu \, [\tB^{[0,1]}(\al^2,\mu^2,\e)
+ \rho  \tB^{[1,1]}(\al^2,\mu^2,\e)]_{k,k'}^{\sigma,\sigma'}  \qquad & \sigma = - \sigma'\, .
\end{cases}
\ee
We now provide the expansions of 
the matrix entries  \eqref{matriceBame}. 
\\[1mm]
{ \bf Expansion of $ \sB(\al,\mu,\e) $.}
In view of the decomposition \eqref{quattropuntosei}, we have   expansion 
\begin{equation}\label{expBflatij}
    \begin{aligned}
        \sB(\al,\mu,\e) =  -\cJ \sL(\alpha, \mu, \e)   = & \underbrace{
    \begin{bmatrix}
        1+a_\e(x) & -(1+p_\e(x))\pa_x \\
        \pa_x\circ (1+p_\e(x)) & |D|
    \end{bmatrix}}_{=\sB^{[0,0]}(0,0,\e)}
    +\rho
    \underbrace{\begin{bmatrix}
        0 & 0\\
        0 & \Pi_0
    \end{bmatrix}}_{= \sB^{[1,0]}(0,0,\e)}
     \\
    & 
    + \im\mu \underbrace{\begin{bmatrix}
        0 & -p_\e(x) \\
        p_\e(x) & -\im \, \sgn (D)
    \end{bmatrix}}_{=  \sB^{[0,1]}(0,0,\e)} 
    + \al^2 \underbrace{\begin{bmatrix}
        0 & 0 \\
        0 & \frac12 |D|^{-1}\Pi_0^\perp
    \end{bmatrix}}_{=: \sB^{[2,0]}} + \cO(\rho^3,\rho^2\e) \, .
    \end{aligned}
\end{equation}
By \Cref{expansion2} and \eqref{lem:revbasis} we get the following.
\\[1mm]
{ \bf Expansion of $ g_k^\sigma $.}
The vectors in \eqref{exg41}-\eqref{exg44} have the form, for any 
$ k = 0,1 $, $ \sigma = \pm $,  
\begin{equation}\label{expansionbasisinsideproof}
    \begin{aligned}
        g_k^\sigma(\al,\mu,\e) = g_k^{\sigma,[0,0]}(\e) &+ \rho g_k^{\sigma,[1,0]}(\e) + \im\mu g_k^{\sigma,[0,1]}(\e) 
        + g_k^{\sigma,[\geq 2]}(\al, \mu, \e)
    \end{aligned}
\end{equation}
where each $g_k^{\sigma,[i,j]}(\e)$ are real vectors.
The jets  $g_k^{\sigma,[0,0]}(\e)$ are
\begin{equation}\label{gksig00}
    \begin{aligned}
        &g_1^{+,[0,0]}(\e) = \vet{\cos (x)}{\sin (x)} + \e \vet{2\cos (2x)}{\sin (2x)} + \cO(\e^2)\vet{even_0(x)}{odd(x)}\, , \\ &g_1^{-,[0,0]}(\e) = \vet{-\sin (x)}{\cos (x)} + \e \vet{-2 \sin (2x)}{\cos (2x)} + \cO(\e^2)\vet{odd(x)}{even_0(x)}\, , \\ &g_0^{+,[0,0]}(\e) = \vet{1}{0} + \e \vet{\cos (x)}{-\sin (x)} + \cO(\e^2)\vet{even_0(x)}{odd(x)}\, ,\\
        &g_0^{-,[0,0]}(\e) = \vet{0}{1} \, ,
    \end{aligned}
\end{equation} 
\begin{equation}\label{exp:pa_rho_g}
    \begin{aligned}
        &g_1^{+,[1,0]}(\e) = \cO(\e^2)\vet{even(x)}{odd(x)}\, , \qquad &&g_0^{+,[1,0]}(\e) = \frac{\e}{4}  \vet{\cos (x)}{-\sin (x)} + \cO(\e^2)\vet{even(x)}{odd(x)}\, , \\
        & g_1^{-,[1,0]}(\e) = \cO(\e^2)\vet{odd(x)}{even(x)}\, , \qquad &&g_0^{-,[1,0]}(\e) = \frac{\e}{2}  \vet{\sin (x)}{\cos (x)} + \cO(\e^2)\vet{odd(x)}{even(x)}\, ,
    \end{aligned}
\end{equation}
and 
\begin{align}\label{exp:pa_mu_g}
 &g_1^{+,[0,1]}(\e)= \frac14 	\vet{\sin(x)}{\cos(x)}+\cO(\e) \vet{odd(x)}{even(x)} \, , \qquad  &&g_0^{+,[0,1]}(\e)= \e \vet{odd(x)}{even_0(x)} + \cO(\e^2)\vet{odd(x)}{even(x)} \, ,\\  
\notag 
 &g_1^{-,[0,1]}(\e) = \frac14\vet{\cos(x)}{-\sin(x)} + \cO(\e)\vet{even(x)}{odd(x)}\, ,\qquad &&g_0^{-,[0,1]}(\e) = \e \vet{even_0(x)}{odd(x)} +\cO(\e^2)\vet{even(x)}{odd(x)}  \, . 
\end{align}
The higher order jets $g_k^{\sigma,[\geq 2]}(\al, \mu, \e)$  satisfy 
\begin{equation}\label{gksig2}
    \begin{aligned}
        &g_1^{+,[\geq 2]}(\al, \mu, \e) = \cO(\rho^2)\vet{even_0(x)+\im odd(x)}{odd(x)+\im even_0(x)} +  \cO(\rho^2\e)\,  \, , \\
        &g_1^{-,[\geq 2]}(\al, \mu, \e) = \cO(\rho^2) \vet{odd(x)+\im even_0(x)}{even_0(x)+\im odd(x)} + \cO(\rho^2\e)\,  \, ,\\
         &g_0^{+,[\geq 2]}(\al, \mu, \e)\, , \ g_0^{-,[\geq 2]}(\al, \mu, \e) =  \cO(\rho^2\e)\,  \, .
    \end{aligned}
\end{equation}
{\bf First expansion of $ \tB_{\al, \mu, \e} $.}
The matrix \eqref{matriceBame} has the expansion 
\begin{equation}\label{TaylorexpBemu}
    \tB_{\al, \mu, \e}  = \tB^{[0,0]}_{\e} + \im \mu  \tB^{[0,1]}_{\e} +\rho  \tB^{[1,0]}_{\e} + \im \mu \rho \tB^{[1,1]}_{0} + \mu^2\tB^{[0,2]}_{0} + \al^2\tB^{[2,0]}_{0} + \tB^{[\geq 3]}_{\al,\mu,\e}
    + \cO(\rho^2\e,\rho^3)
\end{equation}
 where
 \be \label{B00}
\tB^{[0,0]}_{\e} := \left(   \sB^{[0,0]}(0,0,\e)g_k^{\sigma,[0,0]}(\e),g_{k'}^{\sigma',[0,0]}(\e)\right)_{k,k'}^{\sigma,\sigma'}\, ,
 \ee
\begin{equation}\label{MatrixX1}
\tB^{[0,1]}_{\e} := Y_1 - X_1 + X_1^* 
\qquad \text{with} \qquad\begin{aligned}&[X_1]_{k,k'}^{\sigma,\sigma'} :=
 \big(   \sB^{[0,0]}(0,0,\e) g_k^{\sigma,[0,0]}(\e),  g^{\sigma',[0,1]}_{k'}(\e) \big) \, ,\\
 &[Y_1]_{k,k'}^{\sigma,\sigma'} :=
 \big(  \sB^{[0,1]}(0,0,\e) g_k^{\sigma,[0,0]}(\e),  g^{\sigma',[0,0]}_{k'}(\e) \big) \, .
 \end{aligned}
 \end{equation}
\begin{equation}\label{MatrixX2}
\tB^{[1,0]}_{\e} = Y_2 + X_2 + X_2^* 
\qquad \text{with} \qquad\begin{aligned}&[X_2]_{k,k'}^{\sigma,\sigma'} :=
 \big(  \sB^{[0,0]}(0,0,\e) g_k^{\sigma,[0,0]}(\e),  g^{\sigma',[1,0]}_{k'}(\e) \big) \, ,\\
 &[Y_2]_{k,k'}^{\sigma,\sigma'} :=
 \big(  \sB^{[1,0]}(0,0,\e) g_k^{\sigma,[0,0]}(\e),  g^{\sigma',[0,0]}_{k'}(\e) \big) \, ;
 \end{aligned}
 \end{equation}
the second order jets, evaluated at $\e = 0$, are
 \begin{equation}\label{dersec1}
\begin{aligned}        [\tB^{[1,1]}_0]_{k,k'}^{\sigma,\sigma'} & := \underbrace{\big(\sB^{[0,0]}(0,0,0) g_k^{\sigma,[0,1]}(0),g_{k'}^{\sigma',[1,0]}(0)\big)}_{=: [X_7]_{k,k'}^{\sigma,\sigma'}} -\underbrace{\big(\sB^{[0,0]}(0,0,0) g_k^{\sigma,[1,0]}(0),g_{k'}^{\sigma',[0,1]}(0)\big)}_{= [X_7^*]_{k,k'}^{\sigma,\sigma'}} \\
        & - \underbrace{\big(\sB^{[1,0]}(0,0,0) g_k^{\sigma,[0,0]}(0),g_{k'}^{\sigma',[0,1]}(0)\big)}_{=: [Y_5]_{k,k'}^{\sigma,\sigma'}} + \underbrace{\big(\sB^{[1,0]}(0,0,0) g_k^{\sigma,[0,1]}(0),g_{k'}^{\sigma',[0,0]}(0)\big)}_{= [Y_5^*]_{k,k'}^{\sigma,\sigma'}}
        \\
        &+\underbrace{\big(\sB^{[0,1]}(0,0,0) g_k^{\sigma,[0,0]}(0),g_{k'}^{\sigma',[1,0]}(0)\big)}_{=: [Y_6]_{k,k'}^{\sigma,\sigma'}} +\underbrace{\big(\sB^{[0,1]}(0,0,0) g_k^{\sigma,[1,0]}(0),g_{k'}^{\sigma',[0,0]}(0)\big)}_{= -[Y_6^*]_{k,k'}^{\sigma,\sigma'}}\, ,
\end{aligned}
\end{equation}
\begin{equation}\label{dersec}
\begin{aligned}
     [\tB^{[0,2]}_0]_{k,k'}^{\sigma,\sigma'}
     & := \underbrace{\big(\sB^{[0,0]}(0,0,0) g_k^{\sigma,[0,1]}(0),g_{k'}^{\sigma',[0,1]}(0)\big)}_{=:[X_4]_{k,k'}^{\sigma,\sigma'}} + \underbrace{\big(\sB^{[0,0]}(0,0,0) g_k^{\sigma,[1,0]}(0,0,0),g_{k'}^{\sigma',[1,0]}(0)\big)}_{=:[X_5]_{k,k'}^{\sigma,\sigma'}}\\
     &+\underbrace{\big(\sB^{[0,1]}(0,0,0) g_k^{\sigma,[0,0]}(0),g_{k'}^{\sigma',[0,1]}(0)\big)}_{=:[Y_3]_{k,k'}^{\sigma,\sigma'}} - \underbrace{\big(\sB^{[0,1]}(0,0,0) g_k^{\sigma,[0,1]}(0),g_{k'}^{\sigma,[0,0]}(0)\big)}_{=-[Y_3^*]_{k,k'}^{\sigma,\sigma'}}\\
     &+\underbrace{\big(\sB^{[1,0]}(0,0,0) g_k^{\sigma,[0,0]}(0),g_{k'}^{\sigma',[1,0]}(0)\big)}_{=:[Y_4]_{k,k'}^{\sigma,\sigma'}} + \underbrace{\big(\sB^{[1,0]}(0,0,0) g_k^{\sigma,[1,0]}(0),g_{k'}^{\sigma',[0,0]}(0)\big)}_{=[Y_4^*]_{k,k'}^{\sigma,\sigma'}}\, ,
\end{aligned}
\end{equation}
and
\begin{equation}\label{dersec2}
\begin{aligned}
     [\tB^{[2,0]}_0]_{k,k'}^{\sigma,\sigma'}
     & :=\underbrace{\big(\sB^{[1,0]}(0,0,0) g_k^{\sigma,[0,0]}(0),g_{k'}^{\sigma',[1,0]}(0)\big)}_{=:[Y_4]_{k,k'}^{\sigma,\sigma'}} + \underbrace{\big(\sB^{[1,0]}(0,0,0) g_k^{\sigma,[1,0]}(0),g_{k'}^{\sigma',[0,0]}(0)\big)}_{=[Y_4^*]_{k,k'}^{\sigma,\sigma'}}\\
     &+ \underbrace{\big(\sB^{[2,0]} g_k^{\sigma,[0,0]}(0),g_{k'}^{\sigma',[0,0]}(0)\big)}_{=:[Z_1]_{k,k'}^{\sigma,\sigma'}}+ \underbrace{\big(\sB^{[0,0]}(0,0,0) g_k^{\sigma,[1,0]}(0),g_{k'}^{\sigma',[1,0]}(0)\big)}_{=:[X_5]_{k,k'}^{\sigma,\sigma'}}
\end{aligned}
\end{equation}
and finally 
\begin{equation}\label{Bgeq3}
    \tB^{[\geq 3]}_{\al,\mu,\e} = Z_2 + Z_2^*\, , \quad \text{where}\quad [Z_2]_{k,k'}^{\sigma,\sigma'}:= \big( \sB^{[0,0]}(\e)g_k^{\sigma,[0,0]}(\e),g_{k'}^{\sigma',[\geq 2]}(\al,\mu,\e)\big)\, .
\end{equation} 
Each $\tB^{[i,j]}_{\e}$ is real and
\begin{equation}\label{zeroentriesrev}
         \begin{aligned}
             &[\tB^{[1,0]}_{\e}]_{k,k'}^{\sigma,\sigma'} \equiv [\tB^{[0,2]}_0]_{k,k'}^{\sigma,\sigma'} = [\tB^{[2,0]}_0]_{k,k'}^{\sigma,\sigma'} = 0 \quad \forall  \sigma = -\sigma'\, , \\
             &[\tB^{[0,1]}_{\e}]_{k,k'}^{\sigma,\sigma'}\equiv [\tB^{[1,1]}_{\e}]_{k,k'}^{\sigma,\sigma'} =  0 \quad \quad \forall \sigma = \sigma'\, .
         \end{aligned}
     \end{equation}

\begin{proof} 
We insert the  expansions  \eqref{expBflatij} and  \eqref{expansionbasisinsideproof} in 
\eqref{matriceBame}.  
 To compute the expansions \eqref{TaylorexpBemu}, we collect all the terms $ (\cB^{[i_0,j_0]}(0,0,0) g_k^{\sigma,[i_1,j_1]},g_{k'}^{\sigma',[i_2,j_2]})$, and divide them into the groups in \eqref{B00}-\eqref{dersec2}, according to the resulting product of the coefficients ($1$, $\im\mu$, $\rho$, $\mu^2$, $\im\mu\rho$, $\al^2$). Note that combinations in which terms of type $[1,0]$ appear twice have to be considered both in $\tB^{[2,0]}_0$ and  $\tB^{[0,2]}_0$, since the product $\rho \cdot \rho = \rho^2 = \al^2 + \mu^2$.  We also use that $\sB^{[1,0]}$ is self-adjoint and $\sB^{[0,1]}$ is skew-adjoint (being respectively a real and an imaginary jet of a self-adjoint operator), to show, for example, that the third and fourth term in \eqref{dersec} are one the skew-adjoint of the other, while the last two terms in \eqref{dersec}  are reciprocally adjoint.
Finally \eqref{zeroentriesrev} follows recalling \eqref{TaylorexpBemu}, \eqref{reg:TB} 
     and \eqref{revprop0}.
\end{proof}

We now compute each term of \eqref{TaylorexpBemu}.

\smallskip
 
\noindent{\bf Expansion of $\tB^{[0,0]}_{\e}$ in \eqref{B00}.} Since 
 the operator $\sB^{[0,0]}(0,0,\e)$ defined in \eqref{expBflatij} coincides with the operator  $  {\cal B}_{0,\e}$  in \cite[formula (4.22)]{BMV1}, and the vectors $g_k^{\sigma,[0,0]}(\e) \equiv g_k^\sigma(0,0,\e)$ (cf.  \eqref{expansionbasisinsideproof}) coincide with the vectors $g_k^\sigma(0,\e)$ 
in \cite[formula (4.9)]{BMV1},  the matrix $\tB^{[0,0]}_\e$ 
coincides with the one computed in \cite[equation (4.30)]{BMV1}, namely 
\begin{equation}
\label{Bsoloeps}
\tB^{[0,0]}_{\e}  = 
 \begin{pmatrix}\e^2+ {r(\e^3)} & 0 & \vline & r(\e^3) & 0 \\ 
 0& 0 & \vline & 0 & 0 \\
 \hline 
  r(\e^3)  & 0 & \vline & 1+ r(\e^3) & 0 \\
 0 & 0 & \vline & 0 & 0\end{pmatrix}.
\end{equation}

\noindent{\bf Expansion of $\tB^{[0,1]}_\e$ in  \eqref{MatrixX1}.} In view of \eqref{zeroentriesrev}, the only non-zero entries $[\tB^{[0,1]}_\e]_{k,k'}^{\sigma,\sigma'}$ are those corresponding to $\sigma = -\sigma'$.
We now show that 
\begin{equation}\label{matB01}
    \tB^{[0,1]}_\e = \begin{pmatrix}
        0 & \frac12 + r(\e^2) & \rvline & 0 & r(\e)\\
        -\frac12 + r(\e^2) & 0 & \rvline & r(\e) & 0 \\
        \hline
        0 & r(\e) & \rvline & 0 & r(\e^2)\\
         r(\e) & 0 & \rvline & r(\e^2) & 0
    \end{pmatrix} \, . 
\end{equation}
 We compute the matrices  $X_1,Y_1$ in \eqref{MatrixX1}.
We start with $X_1$, noting that the matrix 
$$ \tL_\e^{[0,0]} := \tJ_4 \tB_\e^{[0,0]}  \stackrel{\eqref{Lform},\eqref{Bsoloeps}}{=} 
 \begin{pmatrix} 0 & 0 & \vline & 0 & 0 \\ 
 -\e^2+ {r(\e^3)}& 0 & \vline & r(\e^3) & 0 \\
 \hline 
 0 & 0 & \vline & 0 & 0 \\
  r(\e^3)  & 0 & \vline & -1+ r(\e^3) & 0
  \end{pmatrix}
$$ 
represents the action 
of the operator $\sL_{0,0,\e}:\mathcal{V}_{0,0,\e}\to\mathcal{V}_{0,0,\e}$ on the basis
$ \{ g^{\sigma}_k (0,0,\e) \} = \{ g_k^{\sigma,[0,0]}(\e)\} $, and therefore
\begin{align*}
& \sL(0,0,\e) g_1^{+,[0,0]}(\e) = -(\e^2 + r(\e^3))g_1^{-,[0,0]}(\e) +r(\e^3) g_0^{-,[0,0]}(\e)  \ , 
\\
& \sL(0,0,\e) g_1^{-,[0,0]}(\e)  = \sL(0,0,\e) g_0^{-,[0,0]}(\e) = 0
\\ 
& \sL(0,0,\e) g_0^{+,[0,0]}(\e) =r(\e^3) g_1^{-,[0,0]}(\e) - (1+r(\e^3))g_0^{-,[0,0]}(\e)\, .
\end{align*}
Using also $\sB^{[0,0]}(0,0,\e) = -\cJ\sL(0,0,\e)$ and \eqref{gksig00} we obtain
\begin{align}
\sB^{[0,0]}(0,0,\e)  g_1^{+,[0,0]}(\e)  &= \big(\e^2+r(\e^3)\big)\,  \cJ g_1^{-,[0,0]}(\e) + r(\e^3)\, \cJ \footnotesize\vet{0}{1} =\e^2 \vet{\cos (x)}{\sin (x)} + r(\e^3)\Big(\vet{1}{0} + \vet{even_0(x)}{odd(x)} \Big) \, , \notag \\
\sB^{[0,0]}(0,0,\e)  g_0^{+,[0,0]}(\e) &= r(\e^3)\cJ g_1^{-,[0,0]}(\e) +\big(1+r(\e^3)\big)\cJ \footnotesize\vet{0}{1}= \vet{1}{0}+r(\e^3)\Big(\vet{1}{0} + \vet{even_0(x)}{odd(x)} \Big) \, ,\notag\\
 \sB^{[0,0]}(0,0,\e)  g_1^{-,[0,0]}(\e) &= 0\, , \quad \sB^{[0,0]}(0,0,\e)  g_0^{-,[0,0]}(\e)   = 0\, ,
 \label{exp:Bg}
\end{align}
 so in particular  the second and fourth columns of the matrix $X_1$ in \eqref{matX1} are zero. 
 By  \eqref{exp:pa_mu_g} and \eqref{exp:Bg}, the other two columns of the matrix $X_1$  in \eqref{MatrixX1} have the expansion    
 \begin{equation}\label{matX1}
   X_1 = \begin{pmatrix} 
 0 & 0 & \vline & 0 & 0 \\ 
 r(\e^3)  & 0 & \vline & r( \e) & 0 \\
 \hline
 0 & 0 & \vline & 0 & 0 \\
 r(\e^3) & 0 & \vline & r(\e^2) & 0
 \end{pmatrix}.
 \end{equation}
 We now turn the attention to the matrix $Y_1$. We first compute the action of the operator  $\sB^{[0,1]}(0,0,\e)$ in \eqref{expBflatij} on the vectors $g_k^{\sigma,[0,0]}(\e)$ in \eqref{gksig00}: using that  $p_\e = - 2 \e \cos(x) + \cO(\e^2)[even(x)]$ (cf.  \eqref{apexp}), and that 
 \begin{equation}\label{action-isgnD}
     -\im  \, \sgn(D)\cos (kx) = \sin (kx)\, , \quad -\im \, \sgn(D)\sin (kx) = -\cos (kx) \quad \forall k\in\Z\, ,
 \end{equation}
 we obtain
 \begin{equation}\label{Bflat01ong00}
     \begin{aligned}
         \sB^{[0,1]}(0,0,\e) g_1^{+,[0,0]}(\e) &= \vet{0}{-\cos (x)} + \e \vet{\sin (2x)}{-1-2\cos (2x)} + \cO(\e^2)\vet{odd(x)}{even(x)}\, , \\
         \sB^{[0,1]}(0,0,\e) g_1^{-,[0,0]}(\e) &= \vet{0}{\sin (x)} + \e \vet{1+\cos (2x)}{2\sin (2x)} + \cO(\e^2)\vet{even(x)}{odd(x)} \, , \\
         \sB^{[0,1]}(0,0,\e) g_0^{+,[0,0]}(\e) &= \e \vet{0}{-\cos (x)} + \cO(\e^2)\vet{odd(x)}{even(x)}\,  ,\\
         \sB^{[0,1]}(0,0,\e) g_0^{-,[0,0]}(\e) & = \vet{- p_\epsilon(x)}{0} = \e \vet{2\cos (x)}{0} + \cO(\e^2)\vet{even(x)}{0}\, .
     \end{aligned}
 \end{equation}
 Taking the scalar products of the vectors in \eqref{Bflat01ong00} with those in \eqref{gksig00} as for $Y_1$ in \eqref{MatrixX1}, we obtain
 \begin{equation}\label{matY1}
     Y_1 = \begin{pmatrix}
    0 & \frac12 + r(\e^2) & \rvline & 0 & r(\e) \\
    -\frac12 + r(\e^2) &0 &\rvline  & r(\e) & 0\\
    \hline
    0 & r(\e ) &\rvline & 0 & r(\e^2)\\
    r(\e) & 0 &\rvline & r(\e^2) & 0
\end{pmatrix}\, .
 \end{equation}
 Then \eqref{matB01} follows from \eqref{MatrixX1}, \eqref{matX1} and \eqref{matY1}.
\\[1mm]
{\bf Expansion of $\tB^{[1,0]}_\e$  in \eqref{MatrixX2}.} Recalling \eqref{zeroentriesrev}, the only possibly non-zero entries $[\tB^{[1,0]}_\e]_{k,k'}^{\sigma,\sigma'}$ are those corresponding to $\sigma = \sigma'$.
We now show that 
\begin{equation}\label{matB10}
    \tB^{[1,0]}_\e = \begin{pmatrix}
        r(\e^4) & 0 & \rvline & r(\e^2) & 0\\
    0 & 0 & \rvline & 0 & 0\\
    \hline
    r(\e^2) & 0 &\rvline & r(\e^2) & 0\\
    0 & 0 & \rvline & 0 & 1
    \end{pmatrix} \, . 
\end{equation}
Let us first focus on the matrix $X_2$ in \eqref{MatrixX2}:
again by \eqref{exp:Bg}  the second and the fourth column of $X_2$ are zero. 
 Regarding the other entries, taking the scalar products of the vectors in \eqref{exp:Bg} and \eqref{exp:pa_rho_g} as for $X_2$ in \eqref{MatrixX2}, the first and third columns of the matrix $X_2$ have the expansion  
 \begin{equation}\label{matX2}
   X_2= \begin{pmatrix} 
 r(\e^4) & 0 & \vline & r(\e^2) & 0 \\ 
0 & 0 & \vline & 0 & 0 \\
 \hline
 r(\e^4) & 0 & \vline & r(\e^2) & 0 \\
 0 & 0 & \vline & 0 & 0
 \end{pmatrix} .
 \end{equation}
 Regarding the matrix $Y_2$ in \eqref{MatrixX2} instead, using \eqref{expBflatij} and \eqref{gksig00} we compute 
 \begin{equation}\label{sB10ongksig00}
     \begin{aligned}
        &  \sB^{[1,0]}(0,0,\e) g_1^{+,[0,0]}(\e)\equiv\sB^{[1,0]}(0,0,\e) g_1^{-,[0,0]}(\e)\equiv \sB^{[1,0]}(0,0,\e) g_0^{+,[0,0]}(\e) \equiv 0\, , \\
         & \sB^{[1,0]}(0,0,\e) g_0^{-,[0,0]}(\e) \equiv \vet{0}{1} \ . 
     \end{aligned}
 \end{equation}
 In the computation of $\sB^{[1,0]}(0,0,\e) g_1^{-,[0,0]}(\e)$, we  use that $g_1^{-,[0,0]}(\e) \equiv g_1^-(0,0,\e)$ has zero average, cf. \eqref{exg42}.
 Therefore taking the scalar products of the vectors in \eqref{sB10ongksig00} with \eqref{gksig00} for $Y_2$ as in \eqref{MatrixX2},  we obtain 
 \begin{equation}\label{matY2}
     Y_2 =\begin{pmatrix}
       0 & 0 & \rvline & 0 & 0\\
       0 & 0 & \rvline & 0 & 0\\
       \hline
       0 & 0 & \rvline & 0 & 0\\
       0 & 0 & \rvline & 0 & 1\\
   \end{pmatrix} \, .
 \end{equation}
 Therefore summing \eqref{matX2} and \eqref{matY2} one obtains \eqref{matB10}.
 
\begin{remark}\label{rem:newbasis}
We use that the second component of $g_1^{-,[0,0]}(\e)$ has zero average, in order to show that the $(2,2)$ entry of the matrix $\mathtt{B}_{\al,\mu,\e}$ in \eqref{splitB} contains no terms of order $\cO(\rho\e^k)$. This property is fundamental for verifying that the $(2,2)$ entry of the matrix $E$ in \eqref{BinG1} starts with $\frac{\alpha^2}{4}$ and is therefore positive for small $\alpha$. Such a property does not hold for the first basis ${\cal F}$ defined in \eqref{basisF}, motivating 
the use of the second basis ${\mathcal G}$.
\end{remark}

\vspace{1mm}
\noindent{\bf Expansion of $\tB^{[0,2]}_0$ in \eqref{dersec}.}
Recalling \eqref{zeroentriesrev}, the  only possibly non-zero entries $[\tB^{[0,2]}_0]_{k,k'}^{\sigma,\sigma'}$ are those corresponding to $\sigma = \sigma'$. We now show that
\begin{equation} \label{B02_0}
    \tB^{[0,2]}_0 = \begin{pmatrix}
    -\frac18 & 0 & \rvline & 0&0\\
    0 & -\frac18 & \rvline & 0 & 0 \\
    \hline
    0 & 0 & \rvline & 0 & 0 \\
    0 & 0 & \rvline & 0 & 0
\end{pmatrix} \, . 
\end{equation}
In view of \eqref{dersec}, we compute 
 the matrices $X_4$,  $X_5$, $Y_3$ and $Y_4$.
 We start with $X_4$ by  computing  the action of $\sB^{[0,0]}(0,0,0) = \begin{bmatrix}
     1 & - \pa_x \\
     \pa_x & |D|
 \end{bmatrix}$    in 
 \eqref{expBflatij} on the vectors $g_k^{\sigma,[0,1]}(0) $ in  \eqref{exp:pa_mu_g}, getting
$$
\begin{aligned}
& \sB^{[0,0]}(0,0,0)g_1^{+,[0,1]}(0) = \frac12 \vet{\sin (x)}{\cos (x)}\, ,   \quad \sB^{[0,0]}(0,0,0)g_1^{-,[0,1]}(0) = \frac12 \vet{\cos (x)}{-\sin (x)}\, ,
\\ 
& \sB^{[0,0]}(0,0,0)g_0^{+,[0,1]}(0) = \sB^{[0,0]}(0,0,0)g_0^{-,[0,1]}(0) = 0 \ . 
\end{aligned}
$$
Therefore, computing the scalar products for $X_4$ as in \eqref{dersec} with $g_k^{\sigma,[0,1]}(0)$ in \eqref{exp:pa_mu_g}, we obtain
\begin{align}\label{matZ}
 X_4= \begin{pmatrix} 
 \frac{1}{8} & 0 & \vline & 0 & 0 \\
 0 & \frac{1}{8} & \vline & 0 & 0 \\
 \hline
 0 & 0 & \vline & 0 & 0 \\
 0 & 0 & \vline & 0 & 0 \\
 \end{pmatrix} \, .
\end{align}
Regarding the matrices $X_5$ and $Y_4$ in \eqref{dersec}, by \eqref{exp:pa_rho_g} the vectors $g_k^{\sigma,[1,0]}(0) = 0$,  $ \forall k=0,1$ and $\sigma = \pm$, hence 
\be
\label{X5Y5}
X_5 = Y_4 = 0 \ . 
\ee
It remains to compute the matrix $Y_3$ in \eqref{dersec}: taking the scalar product of the vectors $\sB^{[0,1]}(0,0,0)g_0^{+,[0,0]}(0)$ in \eqref{Bflat01ong00} with $g_0^{+,[0,1]}(0)$ in \eqref{exp:pa_mu_g}, one obtains
\begin{equation} \label{matY3}
    Y_3 = \begin{pmatrix}
    -\frac18 & 0 & \rvline & 0&0\\
    0 & -\frac18 & \rvline & 0 & 0 \\
    \hline
    0 & 0 & \rvline & 0 & 0 \\
    0 & 0 & \rvline & 0 & 0
\end{pmatrix} \, . 
\end{equation}
Then \eqref{B02_0} follows from its definition in \eqref{dersec} and \eqref{matZ}, \eqref{X5Y5}, \eqref{matY3}.

\smallskip

\noindent{\bf Expansion of  $\tB^{[2,0]}_0$ in \eqref{dersec2}.} Recalling \eqref{zeroentriesrev}, the  only possibly non-zero entries $[\tB^{[2,0]}_0]_{k,k'}^{\sigma,\sigma'}$ are those corresponding to $\sigma = \sigma'$. 
We now show that 
\be\label{B200}
\tB^{[2,0]}_0 =
\begin{pmatrix}
        \frac14 & 0 & \rvline & 0 & 0\\
        0 & \frac14 & \rvline & 0 & 0\\
        \hline
        0 & 0 & \rvline & 0 & 0\\
        0 & 0 & \rvline & 0 & 0
    \end{pmatrix}\, .
\ee
By \eqref{X5Y5}, $X_5 = Y_4 = 0$.
Next we compute $Z_1$. The operator $\sB^{[2,0]}$ in 
\eqref{expBflatij}
acts on the vectors  $g_k^{\sigma,[0,0]}(0)$ in  \eqref{gksig00} as 
\begin{equation}\label{actionsB20}
   \begin{aligned}
       &\sB^{[2,0]} g_1^{+,[0,0]}(0) = \frac12\vet{0}{ \sin (x)}\, , \quad \sB^{[2,0]} g_1^{-,[0,0]}(0) = \frac12\vet{0}{ \cos (x)}\, , \\ &\sB^{[2,0]} g_0^{+,[0,0]}(0) =\sB^{[2,0]} g_0^{-,[0,0]}(0) = 0\, .
   \end{aligned}
\end{equation}
Therefore taking the scalar products of the vectors in \eqref{actionsB20} and in \eqref{gksig00} for $Z_1$ as in \eqref{dersec2} we obtain 
\begin{equation}\label{matZ1}
    Z_1 = \begin{pmatrix}
        \frac14 & 0 & \rvline & 0 & 0\\
        0 & \frac14 & \rvline & 0 & 0\\
        \hline
        0 & 0 & \rvline & 0 & 0\\
        0 & 0 & \rvline & 0 & 0
    \end{pmatrix}\, .
\end{equation}
\noindent{\bf Expansion of $\tB^{[1,1]}_0$ in \eqref{dersec1}.} 
We show that 
\begin{equation}\label{B11iszero}
    \tB^{[1,1]}_0 = 0\, .
\end{equation}
Indeed, evaluating \eqref{exp:pa_rho_g} at $\e=0$, it results $g_k^{\sigma,[1,0]}(0) = 0 $ for any $k=0,1$, $\sigma = \pm$, and therefore, recalling \eqref{dersec1}, $X_7 = Y_6 = 0$. It is therefore only left to compute $Y_5$. Computing the scalar products between the vectors in \eqref{sB10ongksig00} with those in \eqref{exp:pa_mu_g} evaluated at $\e=0$ as for $Y_5$ in \eqref{dersec1}, we get that also $Y_5 = 0$.
\\[1mm]
\noindent{\bf Expansion of $\tB^{[\geq 3]}_{\al,\mu,\e}$ in \eqref{Bgeq3}.} Finally we show that \begin{equation}\label{Bgeq3pert}
    \tB^{[\geq 3]}_{\al,\mu,\e} = \cO(\rho^2\e) \, . 
\end{equation}
Indeed, evaluating \eqref{exp:Bg} at $\e=0$, yields  $\sB^{[0,0]}(0,0,0)g_k^{\sigma,[0,0]}(0) = 0$ for all $(k,\sigma)\in \{ (1,+),(1,-),(0,-)\}$ and 
$\sB^{[0,0]}(0,0,0)g_0^{+,[0,0]}(0)  = \vet{1}{0}$, whereas evaluating  
$g_k^{\sigma,[\geq 2]}(\al,\mu,\e)$ in \eqref{gksig2} at  $\e=0$ yields vectors with  zero average of order $\cO(\rho^2)$; consequently, by  \eqref{gksig2}, the matrix $Z_2$ in \eqref{Bgeq3} is of order $ \cO(\rho^2\e)$ and \eqref{Bgeq3pert} follow.
    \\[1mm]
  \noindent{\bf Preliminary expansion of the matrix $\tB_{\al,\mu,\e}$.}  
By \eqref{TaylorexpBemu}
and summing up the expansions \eqref{Bsoloeps}, \eqref{matB01},     \eqref{matB10},  \eqref{B02_0},  \eqref{B200},  \eqref{B11iszero}, \eqref{Bgeq3pert}    we deduce the expansion $\tB_{\al,\mu,\e}$ as in \eqref{splitB}, 
with 
\begin{equation}\label{expBpreliminary}
    \begin{aligned}
    &E = \begin{pmatrix}
    \e^2(1+r'(\e)) + \frac{\al^2}4-\frac{\mu^2}8 + r_1''(\rho^2\e,\rho^3) & \frac{\im\mu}{2}+\im \, r_2(\rho\e^2,\rho^2\e,\rho^3)\\
    -\frac{\im\mu}{2}-\im \, r_2(\rho\e^2,\rho^2\e,\rho^3)) & \frac14\al^2-\frac18\mu^2 + r_5(\rho^2\e,\rho^3)
\end{pmatrix}\\
    &G = \begin{pmatrix}
        1 + r_8(\e^3,\rho\e^2,\rho^2\e,\rho^3) & \im\, r_9(\rho\e^2,\rho^2\e,\rho^3) \\
        -\im \, r_9(\rho\e^2,\rho^2\e,\rho^3) & \rho + r_{10}(\rho^2\e,\rho^3)
    \end{pmatrix} \, ,  \quad F = \begin{pmatrix}
        r_3(\e^3,\rho\e^2,\rho^2\e,\rho^3) & \im \, r_4(\rho\e,\rho^3) \\
        \im \, r_6(\rho\e,\rho^3) & r_7(\rho^2\e,\rho^3)
    \end{pmatrix} \, . 
    \end{aligned}
\end{equation}
The next lemma proves 
that 
$ E, G $ are even in $ \e $ and $ F $ is odd in 
$ \epsilon $.

\begin{lemma}\label{lem:orderEFG}
\begin{equation}
\label{EFGTayolr}
E(\al,\mu,\e)=\!\! 
\sum_{\substack{\ell\geq 0, 
\ell \text{even}} } \!\! \!\! E_\ell(\al,\mu)\e^\ell \, , \quad 
G(\al,\mu,\e)= \!\! \sum_{\substack{\ell\geq 0,\ell \text{even}} }\!\!\!\! G_\ell(\al,\mu)\e^\ell \, , \quad
F(\al,\mu,\e)
= \!\! \sum_{\substack{\ell\geq 1,\,  
\ell \text{odd}} } \!\! \!\!  F_\ell(\al,\mu)\e^\ell  \, .  
\end{equation} 
\end{lemma}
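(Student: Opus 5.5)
We obtain the parity of $E,G,F$ in $\e$ from the symmetry of the problem under the half-period translation $\tau_\pi u(x) := u(x+\pi)$, combined with the finite-range structure $\tF$ of \Cref{defFell}.

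The first step is a conjugation identity for $\sB(\al,\mu,\e)$. By \eqref{pafparity}, the jets $p_\ell, a_\ell$ contain only harmonics $\cos(kx)$ with $k\equiv \ell$ mod $2$. Hence $p_\ell(x+\pi) = (-1)^\ell p_\ell(x)$, and likewise for $a_\ell$. For the fiber Dirichlet--Neumann operator, \eqref{regGs} gives $\cG_\ell\in\mathfrak F_\ell$, so $\cG_\ell$ only has bands $\kappa\equiv \ell$ mod $2$. Conjugation by $\tau_\pi$ multiplies the band $\kappa$ by $e^{\im \pi\kappa}=(-1)^\kappa$, so $\tau_\pi \cG_\ell \tau_\pi^{-1} = (-1)^\ell \cG_\ell$. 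Summing over $\ell$ gives
$$
\tau_\pi\, \sB(\al,\mu,\e)\, \tau_\pi^{-1} = \sB(\al,\mu,-\e), \qquad \tau_\pi\, \sL(\al,\mu,\e)\, \tau_\pi^{-1} = \sL(\al,\mu,-\e).
$$
The same argument applies to $P_{\al,\mu,\e}$ and $U_{\al,\mu,\e}$, because they belong to $\tF$ by \eqref{Pamuan} and \eqref{Ureg}. Alternatively, one can conjugate the Cauchy integral \eqref{Pproj} directly, since $\tau_\pi$ commutes with $\cJ$.

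The second step transports this to the basis. The unperturbed vectors satisfy $\tau_\pi f_1^\pm=-f_1^\pm$ and $\tau_\pi f_0^\pm=f_0^\pm$ (see \eqref{base3e}). From $U_{\al,\mu,-\e}=\tau_\pi U_{\al,\mu,\e}\tau_\pi^{-1}$ we then get
$$
f_k^\sigma(\al,\mu,-\e) = (-1)^k\, \tau_\pi f_k^\sigma(\al,\mu,\e).
$$
Since $\tau_\pi$ is unitary and preserves $\cJ$, the scalar $n$ in \eqref{namep} satisfies $n(\al,\mu,-\e) = -n(\al,\mu,\e)$. The sign comes from the factor $(-1)^{1+0}$ between $f_1^-$ and $f_0^-$. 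With this, the recombinations \eqref{basechange} have matching parity: for example $g_1^-(-\e) = -\tau_\pi f_1^- + n\,\tau_\pi f_0^- = -\tau_\pi g_1^-(\e)$, and $g_0^+(-\e) = \tau_\pi f_0^+ + n\,\tau_\pi f_1^+ = \tau_\pi g_0^+(\e)$. So the relation $g_k^\sigma(\al,\mu,-\e) = (-1)^k\tau_\pi g_k^\sigma(\al,\mu,\e)$ holds for all $k,\sigma$.

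The final step plugs this into \eqref{matriceBame}:
$$
[\tB_{\al,\mu,-\e}]^{\sigma,\sigma'}_{k,k'} = (-1)^{k+k'}\big(\tau_\pi \sB(\al,\mu,\e)\tau_\pi^{-1}\tau_\pi g_k^\sigma,\tau_\pi g_{k'}^{\sigma'}\big) = (-1)^{k+k'}[\tB_{\al,\mu,\e}]^{\sigma,\sigma'}_{k,k'},
$$
using unitarity of $\tau_\pi$. The diagonal blocks ($k=k'$, namely $E$ and $G$) are therefore even in $\e$, and the off-diagonal block $F$ ($k\ne k'$) is odd. Analyticity in $\e$, from \eqref{reg:TB} and \Cref{def:tM}, turns these identities into the expansions \eqref{EFGTayolr} for each fixed $(\al,\mu)$.

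The main obstacle I expect is the first step for $\cG$: one has to make sure that the band parity in \eqref{regGs} holds for every jet, and that the $\tF$ property of $P$ and $U$ passes through the resolvent, inverse square root and products in \eqref{OperatorU}. If the $\tF$ membership of $U$ is delicate, I would instead verify $\tau_\pi$-covariance of $\sL$ directly from the elliptic problem \eqref{ellprobtransf}. By \eqref{pafparity}-type parity of $d_\e$, one has $d_{-\e}(x,z) = d_\e(x+\pi,z)$, and uniqueness of the solution then gives the covariance. After that, covariance of $P$ and $U$ follows from the explicit formulas \eqref{Pproj} and \eqref{OperatorU}, since $\tau_\pi P_{0,0,0}=P_{0,0,0}\tau_\pi$.
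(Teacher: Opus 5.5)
Your proof is correct and is essentially the paper's argument in different clothing. The conjugation identity $\tau_\pi A(\e)\tau_\pi^{-1}=A(-\e)$ is exactly the parity half of the condition $A_\ell\in\mathfrak{F}_\ell$, which the paper uses through \eqref{AFlele} on $\mathfrak{B}=U^*\sB U$ and the unperturbed vectors supported on the harmonics $\pm1$ and $0$, first to show that $n$ is odd and then to get the parities of $E,G,F$. Your half-period-symmetry packaging has the mild advantage that the covariance of $P_{\alpha,\mu,\e}$ and $U_{\alpha,\mu,\e}$ follows directly from \eqref{Pproj} and \eqref{OperatorU}, without the closure properties of $\mathtt{F}$ in \Cref{TFjprop}.
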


\begin{proof}
     The matrices $ E, G, F $ in \eqref{splitB} can be written, 
     recalling    \eqref{matriceBame},
     \eqref{basechange}, \eqref{basisF},  as
        \begin{align}
        &[ E(\al,\mu,\e)]_{\sigma,\sigma'} = \left(\mathfrak{B}(\al,\mu,\e) 
    m_1^\sigma(\al,\mu,\e), 
    m_1^{\sigma'}(\al,\mu,\e)  \right) \, , \quad [ G(\al,\mu,\e)]_{\sigma,\sigma'} = \left(\mathfrak{B}(\al,\mu,\e) 
    m_0^\sigma(\al,\mu,\e), 
    m_0^{\sigma'}(\al,\mu,\e)  \right) \notag \\
    &[F(\al,\mu,\e)]_{\sigma,\sigma'} =\left(\mathfrak{B}(\al,\mu,\e) 
    m_0^\sigma(\al,\mu,\e), 
    m_1^{\sigma'}(\al,\mu,\e)  \right)\, , \qquad \sigma,\sigma'=\pm\, ,\label{EFGsigmasigmap}
    \end{align}
    where $\mathfrak{B}(\al,\mu,\e) := U_{\al,\mu,\e}^* \sB(\al,\mu,\e) U_{\al,\mu,\e} $ and 
    \begin{equation}\label{Uinvgisf}
        \begin{aligned}
        & m_1^+ (\al,\mu,\e) := f_1^+  \, , \qquad m_1^- (\al,\mu,\e) := f_1^-  - n(\al,\mu,\e) f_0^-  \, ,  \\
        & m_0^+ (\al,\mu,\e) := f_0^+  + n(\al,\mu,\e) f_1^+ \, ,\qquad m_0^- (\al,\mu,\e) := f_0^- \, ,
    \end{aligned} \text{with} \quad  f_k^\sigma \text{ in }\eqref{base3e}
    \end{equation}
By \Cref{TFjprop}, \eqref{Ureg}, \eqref{regGBL}, the operator
$  \mathfrak{B}(\al,\mu,\e)     $  belongs to $ \tF$, as well as $U_{\al,\mu,\e}^*U_{\al,\mu,\e}$. Moreover $f_1^\pm$ are supported on the Fourier harmonics $\pm 1$, while $f_0^\pm$ are supported on the harmonic $0$, so that
    \be\label{n.exp}
n(\al,\mu,\e) \stackrel{\eqref{namep}}{=} \frac{(U_{\al,\mu,\e}^*U_{\al,\mu,\e} f_1^-,f_0^-)}{(U_{\al,\mu,\e}^*U_{\al,\mu,\e} f_0^-,f_0^-)} \stackrel{\eqref{AFlele}}= \sum_{m \geq 0} n_{2m+1}(\al,\mu)\e^{2m+1}\,  \, , 
\ee
is odd in $ \epsilon $.
Inserting \eqref{n.exp},\eqref{Uinvgisf} in \eqref{EFGsigmasigmap} we deduce \eqref{EFGTayolr} using again \eqref{AFlele}.
 \end{proof}

\noindent 
{\bf Proof of \Cref{BexpG} concluded.} In view of \eqref{eq:altrealim} and the  preliminary expansion \eqref{expBpreliminary} the entries $E_{12}$, $F_{12}$, $F_{21}$, $G_{12}$ are proportional to $\mu$, so that
$$
\begin{aligned}
& E_{12}(\al,\mu,\e) = 
\frac{\mu}{2}(1+ r_2(\e^2,\rho\e,\rho^2)) \stackrel{\eqref{EFGTayolr}} =
\frac{\mu}{2}(1+ r_2(\e^2,\rho^2))
\\ 
& F_{12}(\al,\mu,\e)   = \mu r_4(\e,\rho^2) \stackrel{\eqref{EFGTayolr}}= \mu r_4(\e) \, , \quad 
F_{21}(\al,\mu,\e) 
= \mu r_6(\e,\rho^2) \stackrel{\eqref{EFGTayolr}} = \mu r_6(\e) \, , \\ 
& G_{12}(\al,\mu,\e)
= \mu r_9(\e^2,\rho\e,\rho^2)
\stackrel{\eqref{EFGTayolr}} = \mu r_9(\e^2,\rho^2) \, ,
\end{aligned}
$$
because  the matrices $E,G$  have only even powers of $ \e $, while $F$ has only odd powers of $\e$.
Similarly 
$$
\begin{aligned}
    E_{11}(\al,\mu,\e) &= \e^2(1+r'(\e)) +\frac{\al^2}4 - \frac{\mu^2}8 + r_1''(\rho^2\e,\rho^3) = \e^2(1+r(\e)) +\frac{\al^2}4 - \frac{\mu^2}8 + (\al^2+\mu^2)r(\e,\rho)\\
    &\stackrel{\eqref{EFGTayolr}} =\e^2(1+r_1'(\e^2)) +\frac{\al^2}4(1+r_1''(\e^2,\rho)) - \frac{\mu^2}8(1+r_1'''(\e^2,\rho)) 
\end{aligned}
$$
and 
$$
E_{22}(\al,\mu,\e) = 
\frac{\al^2}{4} -\frac{\mu^2}{8} + r_5(\rho^2\e,\rho^3)
\stackrel{\eqref{EFGTayolr}} = \frac{\al^2}{4}(1+r_5'(\e^2,\rho))-\frac{\mu^2}{8}(1+r_5(\e^2,\rho))\, .
$$
The expansions of 
$  G_{11}, G_{22}, F_{11}, F_{22}  $
in 
\eqref{BinG1}–\eqref{BinG3}
follow similarly.
The proof of \Cref{BexpG} is 
completed by the following lemma. 
\begin{lemma}
Property \eqref{E11E22}  holds. 
\end{lemma}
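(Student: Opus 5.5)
At $\e=0$ we compute the entries of $E$ directly: at $\e=0$ everything is an explicit Fourier multiplier. By \Cref{expansion2}, $g_k^\sigma(\al,\mu,0)=f_k^\sigma(\al,\mu,0)$. By \eqref{vtrans}, $f_1^\pm(\al,\mu,0)$ are explicit combinations of the eigenvectors $v_1^\pm(\al,\mu)$ of $\cL(\al,\mu,0)$. These satisfy $\cL(\al,\mu,0)v_1^\pm=\im\omega_1^\pm v_1^\pm$, hence
\[
\sL(\al,\mu,0)v_1^\pm=\im(\omega_1^\pm-\mu)v_1^\pm
\]
by \eqref{cLsL}. Since $\sB=-\cJ\sL$, we get $\sB(\al,\mu,0)v_1^\pm=-\im(\omega_1^\pm-\mu)\cJ v_1^\pm$. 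We then need the pairings $(\cJ v_1^\sigma,v_1^{\sigma'})=\cW_c(v_1^\sigma,v_1^{\sigma'})$, which are given by \eqref{symp.nonzero}: $-\im$ for $(+,+)$, $\im$ for $(-,-)$, and $0$ otherwise. This yields the Hermitian form
\[
(\sB v_1^+,v_1^+)=-(\omega_1^+-\mu), \qquad (\sB v_1^-,v_1^-)=\omega_1^--\mu, \qquad (\sB v_1^\pm,v_1^\mp)=0 .
\]
One should check that the normalization of $v_1^\pm(\al,\mu)=U_{\al,\mu,0}v_1^\pm(0,0)$ agrees with \eqref{unperturbed.eigv}. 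This holds because $U_{\al,\mu,0}$ is a symplectic Fourier multiplier preserving \eqref{symp.nonzero}, and only the symplectic pairings enter.

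Next substitute $f_1^+=\frac{1}{\im\sqrt2}(v_1^+-v_1^-)$ and $f_1^-=\frac1{\sqrt2}(v_1^++v_1^-)$, and use sesquilinearity (conjugating the coefficient in the second slot). This gives
\[
E_{11}=(\sB f_1^+,f_1^+)=\tfrac12\big[-(\omega_1^+-\mu)+(\omega_1^--\mu)\big]=\tfrac12(\omega_1^--\omega_1^+),
\]
and the same value for $E_{22}=(\sB f_1^-,f_1^-)$. For the off-diagonal entry,
\[
\im E_{12}=(\sB f_1^-,f_1^+)=\tfrac{1}{\sqrt2}\cdot\overline{\tfrac{1}{\im\sqrt2}}\,\big[-(\omega_1^+-\mu)-(\omega_1^--\mu)\big]=\tfrac{\im}{2}\big(2\mu-\omega_1^+-\omega_1^-\big),
\]
so $E_{12}=\mu-\tfrac12(\omega_1^++\omega_1^-)$, as claimed in \eqref{E11E22}.

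The main point to watch is the ordering convention in \eqref{matrix22}, i.e. which index is the row and which slot is conjugated, together with the sign of $\overline{1/\im}=\im$. I would cross-check the result against the already established expansion \eqref{BinG1} at $\e=0$. For small $(\al,\mu)$, expanding $\omega_1^\pm$ from \eqref{eig.0} gives $\tfrac12(\omega_1^--\omega_1^+)\approx\frac{\al^2}{4}-\frac{\mu^2}{8}$ and $\mu-\tfrac12(\omega_1^++\omega_1^-)\approx\mu/2$. These match $E_{11},E_{22}$ and $E_{12}$ in \eqref{BinG1}, which pins down all the signs.
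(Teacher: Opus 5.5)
Your proposal is correct and follows essentially the same route as the paper: identify $g_1^\pm(\al,\mu,0)=f_1^\pm(\al,\mu,0)$ with combinations of $v_1^\pm(\al,\mu)$ via \eqref{vtrans}, use $\sB(\al,\mu,0)v_1^\pm=-\im(\omega_1^\pm-\mu)\cJ v_1^\pm$, drop the cross terms because $v_1^\pm$ are supported on different harmonics (equivalently by \eqref{symp.nonzero}), and read off the diagonal pairings from \eqref{symp.nonzero}. The paper writes out only $E_{11}$ and calls $E_{22}$ and $E_{12}$ analogous, so your explicit computation of $E_{12}$ (with the correct conjugation $\overline{1/\im}=\im$) and your small-$(\al,\mu)$ consistency check against \eqref{BinG1} fill in that step rather than constituting a different argument.
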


\begin{proof}
  Since $g_1^+(\al,\mu,0) = f_1^+(\al,\mu,0) = \frac{1}{\im\sqrt2}(v_1^+(\al,\mu) - v_1^-(\al,\mu))$ and $g_1^-(\al,\mu,0) = f_1^-(\al,\mu,0) = \frac{1}{\sqrt2}(v_1^+(\al,\mu) + v_1^-(\al,\mu))$ by  \eqref{vtrans}, 
  using that 
  $\sB(\al, \mu,0) = \cJ(\cL(\al,\mu,0) - \im\mu)$ in \eqref{cLsL} is a Fourier multiplier and the vectors $v_1^+(\al,\mu), v_1^-(\al,\mu)$ in \eqref{unperturbed.eigv} are supported on different Fourier harmonics, we get 
    \be
    E_{11}(\al,\mu,0)  
    =  \frac12(\sB(\al, \mu, 0)v_1^+(\al,\mu) , v_1^+(\al,\mu)) +  \frac12(\sB(\al, \mu, 0)  v_1^-(\al,\mu),  v_1^-(\al,\mu))
    \label{2310:1615} \, . 
    \ee
By \eqref{eig.0}, \eqref{unperturbed.eigv}, \eqref{cLsL} we have $ \sB(\al,\mu,0) v_1^\pm (\al,\mu) = -\im (\omega_1^\pm(\al,\mu)-\mu)\cJ v_1^\pm (\al,\mu)$
 which, inserted in \eqref{2310:1615} and exploiting the symplectic relations  \eqref{symp.nonzero}, yields  
    $
    E_{11}(\al,\mu,0)=  \tfrac12( \omega_1^-(\al,\mu)-\omega_1^+(\al,\mu)) $. The proof of \eqref{E11E22} for  $ E_{22}(\al,\mu,0)$ and $E_{12}(\al,\mu,0)$ is analogous. 
\end{proof}

\section{Block-decoupling}\label{sec:block}

By \Cref{BexpG} 
the $ 4 \times 4 $ Hamiltonian and reversible 
matrix $\tL_{\alpha,\mu,\e} = \tJ_4 \tB_{\alpha,\mu,\e} $  has the form 
\begin{equation}\label{Lepmu}
\tL_{\alpha,\mu,\e}= 
\tJ_4 \begin{pmatrix} 
 E &  F \\ 
 F^* &  G 
\end{pmatrix}
=
\begin{pmatrix} 
\tJ_2 E & \tJ_2 F \\ 
\tJ_2 F^* &\tJ_2 G 
\end{pmatrix} \, ,
\quad 
\forall (\al,\mu,\e)\in B_{\rho_0}(0,0)\times B_{\e_0}(0) \, , 
\end{equation}
where $ E, G, F $ are the $ 2 \times 2 $ matrices in \eqref{BinG1}-\eqref{BinG3}.
The goal of this  section  
is to  coniugate the matrix $\tL_{\al,\mu,\e}$ into a block-diagonal one, cf. \Cref{{completedec}}.

\begin{lemma}\label{decoupling1}
 {\bf (First step of block-decoupling)} 
 Conjugating the Hamiltonian and reversible matrix 
 $\tL_{\alpha,\mu,\e} = \tJ_4 \tB_{\alpha,\mu,\e} $ in \eqref{Lepmu}, through the symplectic, reversibility-preserving 
 $ 4 \times 4 $-matrix 
 \begin{align}\label{Ychange}
 Y = \uno_4 + m \begin{pmatrix} 0 & - P \\ Q & 0 \end{pmatrix}
 \  \text{with} \  \ 
 Q:=\begin{pmatrix} 1 & 0 \\ 0 & 0\end{pmatrix}  \, , 
 \   P:=\begin{pmatrix} 0 & 0 \\ 0 & 1\end{pmatrix} \, , 
 \ m := m_{\alpha,\mu,\e}:=-\frac{(F_{11})_{\alpha,\mu,\e}
 }{(G_{11})_{\alpha,\mu,\e}} \, ,  
\end{align} 
where  
\be\label{reg:m} 
[(\al,\mu,\e)\mapsto m_{\al,\mu,\e}]\in\cA(B_{\rho_0}(0,0),\e_0;\R) \, ,  \qquad    m_{\al, \mu,\e} =
m_{\al, - \mu,\e} \, , 
\qquad
 m(\alpha,\mu,\e) =  r(\e^3,  \rho^2\e ) \, , 
\ee
we obtain the Hamiltonian and   reversible matrix 
\begin{align}
&\tL^{(1)}_{\al,\mu,\e} := Y^{-1} \tL_{\al,\mu,\e} Y = \tJ_4\tB^{(1)} =
\begin{pmatrix}   \tJ_2 E^{(1)}  & \tJ_2  F^{(1)}  \\  
\tJ_2 [F^{(1)}]^*  &  \tJ_2 G^{(1)}  \end{pmatrix}  \, ,
\quad \overline{\tL^{(1)}_{\al,\mu,\e}} = \tL^{(1)}_{\al,-\mu,\e} \, ,  
\label{LinH}
  \end{align}
satisfying 
$ [(\al,\mu,\e) \mapsto \tL^{(1)}_{\al,\mu,\e}] \in \cA(B_{\rho_0}(0,0),\e_0;\C^{4\times 4}) $, 
 where the $ 2 \times 2 $  matrices  $E^{(1)} $, $ G^{(1)} $  have the expansions
 \eqref{BinG1}-\eqref{BinG2}   of $ E, G $ and
\begin{equation}\label{BinH}
 F^{(1)} = 
 \begin{pmatrix} 
 0 & \im  r_4(\rho\e)  \\
  \im  r_6(\rho\e)  & r_7(\rho^2\e) 
 \end{pmatrix}\, . 
 \end{equation} 
The matrices $E^{(1)}, G^{(1)}$ are even in $\e$ and $  F^{(1)}$ is odd in $\e$.
\end{lemma}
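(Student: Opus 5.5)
The conjugation is explicit, so the plan is to compute $Y^{-1}\tL Y$ directly, following the analogous step in \cite[Lemma 5.1]{BMV1}. Three structural facts come first. Write
\[
Y=\uno_4+mN,\qquad N=\begin{pmatrix}0&-P\\ Q&0\end{pmatrix}.
\]
Since $PQ=QP=0$, we get $N^2=0$, hence $Y^{-1}=\uno_4-mN$.

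\emph{Symplecticity.} We have $Y=\exp(mN)$. The matrix $N=\tJ_4 S$ with
\[
S=\tJ_4^{-1}N=\begin{pmatrix}0&\tJ_2 P\\ -\tJ_2 Q&0\end{pmatrix}\cdot(-1)
\]
up to sign. Computing $\tJ_2 P=\begin{pmatrix}0&1\\0&0\end{pmatrix}$ and $\tJ_2Q=\begin{pmatrix}0&0\\-1&0\end{pmatrix}$, one checks that $S$ is real symmetric, so $N$ is Hamiltonian. Its $(\sigma,\sigma')$ entries couple only equal signs ($f_1^+$ with $f_0^+$ and $f_1^-$ with $f_0^-$ type indices). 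So $N$ is Hamiltonian and reversibility preserving, and its flow $Y$ is symplectic and reversibility preserving. By \eqref{B1conj}, $\tL^{(1)}=\tJ_4\tB^{(1)}$ with $\tB^{(1)}=Y^{*}\tB Y$ (since $Y^{-*}=Y^*$-type identity from $Y\tJ_4Y^*=\tJ_4$), and it is Hamiltonian and reversible.

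\emph{Regularity of $m$.} $G_{11}=1+r_8$ is invertible near $0$, and $F_{11},G_{11}$ are entries of $\tB\in\cA$ satisfying $\overline{\tB_{\al,\mu,\e}}=\tB_{\al,-\mu,\e}$ and real (diagonal-sign entries). So $m\in\cA$ by \Cref{lem:prodintM}, and $m$ is real and even in $\mu$. From \eqref{BinG3}, $m=-r_3(\e^3,\rho^2\e)(1+r_8)^{-1}=r(\e^3,\rho^2\e)$. Because $F$ is odd and $G$ even in $\e$ (\Cref{lem:orderEFG}), $m$ is odd in $\e$. This gives \eqref{reg:m}, and the symmetry $\overline{\tL^{(1)}_{\al,\mu,\e}}=\tL^{(1)}_{\al,-\mu,\e}$ follows.

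\emph{Block computation.} Write
\[
\tB^{(1)}=(\uno+mN)^*\tB(\uno+mN)=\tB+m(N^*\tB+\tB N)+m^2N^*\tB N,
\]
with $N^*=\begin{pmatrix}0&Q\\-P&0\end{pmatrix}$ ($P,Q$ real symmetric). The blocks are:
\[
\begin{aligned}
E^{(1)}&=E+m(QF^*+FQ)+m^2QGQ,\\
G^{(1)}&=G-m(PF+F^*P)+m^2PEP,\\
F^{(1)}&=F+m(QG-EP)-m^2QF^*P.
\end{aligned}
\]
The key entry is $F^{(1)}_{11}=F_{11}+mG_{11}=0$ by the choice of $m$.

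The remaining entries of $F^{(1)}$ are bounded as follows:
\begin{itemize}
\item $F^{(1)}_{12}=\im F_{12}-mE_{12}\cdot\im$-type. Since $E_{12}=\tfrac{\mu}{2}(\cdots)$ and $m=\cO(\e)$, this is $\im\mu\, r(\e)=\im r(\rho\e)$.
\item $F^{(1)}_{21}=\im F_{21}+m(QG)_{21}=\im F_{21}$, up to $m^2$ terms that vanish, since $QF^*P$ has only its $(1,2)$ entry nonzero.
\item $F^{(1)}_{22}=F_{22}-mE_{22}$. Here $E_{22}=\cO(\rho^2)$, so this is $r(\rho^2\e)$.
\end{itemize}
Tracking the $(1,2)$ entry of $F^{(1)}$ carefully gives $\im\, r_4(\rho\e)$. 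This yields \eqref{BinH}.

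For $E^{(1)}$ and $G^{(1)}$, the corrections are $m F=\cO(\e^4,\rho^2\e^2)$, $m^2 G_{11}=r(\e^6,\rho^4\e^2)$, and $m^2E_{22}$. I expect these are absorbed in the remainders of \eqref{BinG1}--\eqref{BinG2}:
\begin{itemize}
\item In $E_{11}$: $\e^2 r(\e^2)$ and $\rho^2 r(\e^2)$.
\item In $E_{12}$: $\mu\, r$, because the reversible/imaginary structure forces a factor $\mu$, as in the proof of \Cref{BexpG} via \eqref{eq:altrealim}.
\item In $G$: the corrections are of size $r(\e^4,\rho^2\e^2)$.
\end{itemize}

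\emph{Parity.} Since $m$ is odd in $\e$, the terms $mF$ and $m^2(\cdot)$ are even in $\e$, and $mE$, $mG$ are odd. Hence $E^{(1)},G^{(1)}$ are even and $F^{(1)}$ is odd.

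The main obstacle is not algebraic. It is ensuring that each correction has the precise remainder form, for instance a $\mu$ factor in off-diagonal entries and $\rho^2$ factors in $E_{22}$. The tools for this are \eqref{eq:altrealim}, the polar-analytic decomposition of \Cref{lem:decomposition}, and the parity in $\e$ of \Cref{lem:orderEFG}. I would verify the $(2,2)$ entries $F^{(1)}_{22}$ and $E^{(1)}_{22}$ last, using $E_{22}=\cO(\rho^2)$.
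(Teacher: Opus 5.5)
Your proposal is correct and follows the paper's proof: both compute $\tB^{(1)}=Y^*\tB_{\al,\mu,\e}Y$ blockwise and obtain the same formulas, namely $E^{(1)}=E+m(QF^*+FQ)+m^2QGQ$, $F^{(1)}=F+m(QG-EP)-m^2QF^*P$ and $G^{(1)}=G-m(PF+F^*P)+m^2PEP$, with $F^{(1)}_{11}=F_{11}+mG_{11}=0$ by the choice of $m$, and the parities in $\e$ read off from $m$ being odd in $\e$. One precision: the blanket bound $r(\e^4,\rho^2\e^2)$ you quote for the corrections to $G$ would not by itself preserve $G_{22}=\rho+r_{10}(\rho^2\e^2,\rho^3)$; what you need is that your formula gives the $(2,2)$ correction $-2mF_{22}+m^2E_{22}=r(\rho^2\e^2)$, since $F_{22}=r_7(\rho^2\e)$ and $E_{22}=\cO(\rho^2)$ (this entry, not $E^{(1)}_{22}=E_{22}$, is the $(2,2)$ entry that actually needs checking).
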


\begin{proof}
The function $m_{\al,\mu,\e}$ in \eqref{Ychange} satisfies   \eqref{reg:m} by \eqref{BinG2}-\eqref{BinG3} and \Cref{lem:prodintM}, which also implies that $[(\al,\mu,\e) \mapsto \tL^{(1)}_{\al,\mu,\e}] \in \cA(B_{\rho_0}(0,0),\e_0;\C^{4\times 4})$.
The matrix $Y^{-1} $ is symplectic, i.e.   $ Y^{-1}\tJ_4 Y^{-*} = \tJ_4 $,
and, since $m$ is real, it is also reversibility preserving according to  \Cref{def:revmat}.  
By   \eqref{B1conj} we have 
\begin{equation}\label{B1forma}
  -\tJ_4\tL^{(1)} =: \tB^{(1)} = Y^* \tB_{\al, \mu,\e} Y = \begin{pmatrix}  E^{(1)} & F^{(1)} \\ [F^{(1)}]^* &  
   G^{(1)}  \end{pmatrix},
\end{equation}
where, by \eqref{Ychange} and \eqref{LinH},  the self-adjoint matrices 
$E^{(1)}, G^{(1)}  $ are 
\begin{equation}\label{E1G1}
\begin{aligned}
& E^{(1)} := E +
\begin{pmatrix}  2 m F_{11} + m^2 G_{11} & - \im m F_{21} \\ \im m F_{21} &  
0  \end{pmatrix}   \, , 
\quad 
G^{(1)} := G + 
\begin{pmatrix}  0 &  \im m F_{21}  \\ 
- \im m F_{21}  &   - 2m F_{22} +m^2 E_{22}  \end{pmatrix} \, .
\end{aligned}
\end{equation}
Similarly,  the off-diagonal $ 2 \times 2 $ matrix $F^{(1)} $ is 
\begin{equation}\label{F1}
F^{(1)} := F + m (QG -  EP) - m^2 QF^*P =
 \begin{pmatrix}  0 &  \im (F_{12} + mG_{12} - mE_{12} + m^2 F_{21}) \\ 
 \im F_{21}  &  F_{22} - m E_{22}  \end{pmatrix}  \, , 
 \end{equation}
where  the first matrix entry   is $F_{11} + m G_{11} = 0 $, 
by the definition of $ m $ in \eqref{Ychange}. 
By \eqref{B1forma}-\eqref{F1} and \eqref{BinG1}-\eqref{BinG3}
we deduce  \eqref{LinH}, with $F^{(1)}$ in \eqref{BinH}. Since $ m (\alpha, \mu, \e)$ is odd in $ \epsilon $,  the matrices $E^{(1)}, G^{(1)}$ are even in $\e$ and $  F^{(1)}$ is odd in $\e$, as $ E, G, F$ are.
\end{proof} 

Note that  the entry $ F^{(1)}_{11} $ in \eqref{BinH} is identically $ 0 $, and
the other entries of $ F^{(1)}$ have  the same size as the corresponding ones in \eqref{BinG3}. Note also that 
the matrix 
$ \tL^{(1)}_{0,0,\e} $  in 
\eqref{LinH} is already block-diagonal.

\subsection{Second step of Block-decoupling}\label{sec:secondBlock}
We now perform a further step of block-decoupling, obtaining a new Hamiltonian and reversible matrix $\tL^{(2)} $, whose top left block $\tJ_2 E^{(2)}$ still carries the couple of Benjamin-Feir unstable eigenvalues, while the size of the off-diagonal block $\tJ_2 F^{(2)}$ is reduced compared to $\tJ_2 F^{(1)}$.
It is now crucial to use polar coordinates, since 
the block-decoupling procedure does not preserve the 
 class   $\cA(B_{\rho_0}(0,0),\e_0;\C^{4\times 4})$, but it  does preserve  
    the wider class of polar-analytic functions $ \cA_P (B_{\rho_0}(0,0),\e_0;\C^{4\times 4})$ in  \Cref{def:tildeM}.

\begin{lemma}\label{lem:secondstep}
 {\bf (Second step of block-decoupling)} 
 There exists 
     a Hamiltonian, reversibility preserving, polar-analytic  matrix
     of the form   \begin{equation}
    \label{def:S}
        S^{(1)} = \tJ_4 \begin{pmatrix}
        0 & \rvline & \Sigma \\
        \hline 
        \Sigma^* & \rvline & 0 
    \end{pmatrix}\, , \quad \Sigma = \tJ_2 X \, , \quad 
        X = \begin{pmatrix}
        x_{11} & \im x_{12}\\
        \im x_{21} & x_{22}
    \end{pmatrix} = \begin{pmatrix}
        r(\rho\e) & \im r(\rho\e)\\
        \im r(\e) & r(\rho\e)
    \end{pmatrix} \, , \quad x_{ij}\in\R\, ,
    \end{equation}
    odd in $\e$,
    such that      \begin{equation}\label{defL2}
        \tL^{(2)} := \exp (S^{(1)}) \tL^{(1)} \exp{(-S^{(1)})} = 
        \tJ_4 \begin{pmatrix}
             E^{(2)} & \rvline &  F^{(2)} \\
            \hline
             [F^{(2)}]^* & \rvline &  G^{(2)}
        \end{pmatrix}
    \end{equation}
is a Hamiltonian, reversible and polar-analytic matrix    
    where 
    \begin{equation}\label{EG1toEG2}
        E^{(2)} =E^{(1)} + \begin{pmatrix}
        r_1(\rho\e^2) & \im r_2(\rho^2\e^2)\\ 
        -\im r_2(\rho^2\e^2) & r_5(\rho^2\e^2)
    \end{pmatrix} \, , \qquad G^{(2)}=G^{(1)} + \begin{pmatrix}
        r_8(\rho\e^2) & \im r_9(\rho^2\e^2)\\ 
        -\im r_9(\rho^2\e^2) & r_{10}(\rho^2\e^2)
    \end{pmatrix}
    \end{equation}
    where
    $E^{(1)}$, $G^{(1)}$ in \Cref{decoupling1} have the expansions in 
    \eqref{BinG1}, \eqref{BinG2}
    of $ E, G $, 
    and 
    \begin{equation}\label{Fexp2}
        F^{(2)} =  \begin{pmatrix}
            r_3(\rho^2\e^3) & \im r_4(\rho^2\e^3)\\
             \im r_6(\rho^2\e^3) &   r_7(\rho^2\e^3)
        \end{pmatrix}\, .
    \end{equation}
    The matrices $E^{(2)}, G^{(2)}$ are even in $\e$ and $  F^{(2)}$ is odd in $\e$.
\end{lemma}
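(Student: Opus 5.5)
This step follows the KAM-type homological approach of \cite[Lemma 5.3]{BMV1}, adapted to the polar-analytic class of \Cref{def:tildeM}. We use the Lie expansion
\[
\tL^{(2)}=\exp(S^{(1)})\tL^{(1)}\exp(-S^{(1)})=\tL^{(1)}+[S^{(1)},\tL^{(1)}]+\tfrac12[S^{(1)},[S^{(1)},\tL^{(1)}]]+\dots
\]
We split $\tL^{(1)}=D^{(1)}+R^{(1)}$, where $D^{(1)}$ is the block-diagonal part with blocks $\tJ_2E^{(1)},\tJ_2G^{(1)}$ and $R^{(1)}$ is the off-diagonal part built from $\tJ_2F^{(1)}$. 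We then choose $S^{(1)}$ of the form \eqref{def:S} to solve the homological equation
\[
[S^{(1)},D^{(1)}_0]+R^{(1)}_{\rm lin}=0 ,
\]
where $D^{(1)}_0$ is a convenient truncation of the diagonal blocks and $R^{(1)}_{\rm lin}$ is the part of $F^{(1)}$ that is linear in $\rho$ (at fixed $\e$), namely $F^{(1)}$ up to $\cO(\rho^2\e)$. Since $S^{(1)}$ is Hamiltonian and reversibility preserving, its flow is symplectic and reversibility preserving. By \eqref{B1conj}, $\tL^{(2)}$ is then Hamiltonian and reversible. The parity in $\e$ is inherited because $X$ is odd, $F^{(1)}$ is odd and $E^{(1)},G^{(1)}$ are even: every commutator with an even number of $S$ factors and $D$, or an odd number of $S$ factors and $R$, lands in the diagonal blocks and is even in $\e$, and the complementary terms land in the off-diagonal block and are odd.

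Writing the homological equation in terms of $X$ gives the Sylvester-type equation
\[
\tJ_2 E_0 \tJ_2 X - \tJ_2 X \tJ_2 G_0 = \tJ_2 F^{(1)}_{\rm lin}
\]
(up to sign conventions). The choice of $E_0,G_0$ is the delicate point. We take
\[
E_0:=0,\qquad G_0:=\begin{pmatrix}1&0\\0&\rho\end{pmatrix}.
\]
This is legitimate because $E=\cO(\e^2,\rho)$, and also consistent, because $G_{22}=\rho+\dots$ is exactly the polar-analytic entry. We solve entrywise. The $(1,2)$ and $(2,1)$ entries of $F^{(1)}$, of size $\rho\e$, are divided by $G_{11}\approx 1$ and give $x_{12},x_{21}$. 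The entry $r_7(\rho^2\e)$ is divided by $\rho$, which yields $r(\rho\e)$; this is why $x_{22}=r(\rho\e)$, and the same mechanism accounts for the $\im r(\e)$ entry $x_{21}$, obtained from $r_6(\rho\e)/\rho$. The division by $\rho$ is exactly where the class $\cA$ of \Cref{def:tM} is lost and only polar-analyticity survives. Since $F^{(1)}_{ij}$ is a function $\rho^k\e\, g(\rho,\theta,\e)$ with $g$ analytic, $F^{(1)}_{ij}/\rho$ is again analytic in $(\rho,\theta,\e)$. We also need to check that the first nonzero Taylor coefficients divide correctly. This uses \eqref{BinH}, and in particular $F^{(1)}_{11}=0$ from \Cref{decoupling1}, which removes any $\e^3/\rho$ term. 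Reality of the $x_{ij}$ follows from reversibility.

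Finally we estimate the remainders. The new off-diagonal block collects three kinds of term. The first is $[S,D^{(1)}-D_0]$, which involves $X$ times $E^{(1)}=\cO(\e^2,\rho)$ and times $G^{(1)}-G_0=\cO(\rho\e^2,\rho^2,\e^4)$. The second is the part of $F^{(1)}$ of order $\rho^2\e$ that was not removed. The third is the triple and higher commutators, of order $\cO(X^2F)$ and $\cO(X^3)$. Collecting, we aim to show that all of these are $\cO(\rho^2\e^3)$ after using parity, which is the claim \eqref{Fexp2}. The diagonal corrections are $[S,R]+\tfrac12[S,[S,D]]$, which are $\cO(X F)$ and even in $\e$. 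The $(1,1)$ entries are $\cO(\rho\e^2)$ and the remaining entries are $\cO(\rho^2\e^2)$, which gives \eqref{EG1toEG2}.

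The main obstacle is this bookkeeping. One must show that every surviving off-diagonal term carries a factor $\rho^2$ and not just $\rho$; the term $\rho\e\cdot\rho$ coming from $x_{22}$ times $G_{22}$ is the critical one. If the crude truncation $E_0=0$ leaves an off-diagonal term of order $\rho\e^3$, I would instead include $E^{(1)}_{11}\approx\e^2$ in $E_0$ and re-solve the Sylvester system. That system remains invertible, since its symbol is dominated by $G_{11}\approx1$ and by $\rho$ in the $(2,2)$ slot.
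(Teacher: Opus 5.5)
Your framework is the paper's: Lie series, a homological equation for $S^{(1)}$ of the form \eqref{def:S}, division by $\rho$ as the reason only polar-analyticity survives, and parity bookkeeping. However, the truncated homological equation you propose is not legitimate, and the remainder bound cannot close.

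\textbf{The truncation of $E^{(1)}$ fails.} $E^{(1)}$ is not perturbative relative to $G_0=\mathrm{diag}(1,\rho)$. The map $X\mapsto \tJ_2G_0X$ has an inverse of size $\rho^{-1}$; that is exactly how you obtain $x_{21}=r(\e)$. Meanwhile $E^{(1)}_{12}=\frac{\mu}{2}(1+\dots)$ has size $\rho$, so it is an $\cO(1)$ relative perturbation, not a small one.

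Concretely, with $E_0=0$ the first-order residual $[S^{(1)},D^{(1)}-D_0]$ has off-diagonal block $-X\tJ_2E^{(1)}+\tJ_2(G^{(1)}-G_0)X$.
\begin{itemize}
\item Its $(2,1)$ entry contains $x_{21}E^{(1)}_{12}$, which is of order $\e\mu$, i.e.\ $\rho\e$. This is as large as the couplings $F^{(1)}_{12},F^{(1)}_{21}$ you set out to remove.
\item The terms $x_{21}E^{(1)}_{22}=\cO(\rho^2\e)$, $x_{21}(G^{(1)}_{22}-\rho)=\cO(\rho^3\e)$ and $x_{21}G^{(1)}_{12}=\cO(\rho\e^3,\rho^3\e)$ also survive. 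None of them is $\cO(\rho^2\e^3)$.
\end{itemize}
Your fallback of adding $E^{(1)}_{11}\approx\e^2$ to $E_0$ does not touch any of these terms.

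\textbf{The truncation of $F^{(1)}$ also fails.} If you leave the $\cO(\rho^2\e)$ part of $F^{(1)}$ unremoved (e.g.\ $F^{(1)}_{22}=r_7(\rho^2\e)$, which you simultaneously divide by $\rho$ to get $x_{22}$), an off-diagonal term of size $\rho^2\e$ remains. Parity cannot upgrade it to $\rho^2\e^3$, because $\rho^2\e$ is already odd in $\e$.

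\textbf{The missing idea: solve the homological equation exactly.} You need $R^{(1)}+[S^{(1)},D^{(1)}]=0$ with the full $E^{(1)},G^{(1)},F^{(1)}$. This is the $4\times 4$ system \eqref{sylveq}, whose matrix $\cA$ has entries
\begin{itemize}
\item $a=G^{(1)}_{12}-E^{(1)}_{12}\approx-\frac12\rho\cos\theta$ and $b=G^{(1)}_{11}\approx 1$;
\item $c=E^{(1)}_{22}=r(\rho^2)$, $d=G^{(1)}_{22}\approx\rho$ and $e=E^{(1)}_{11}=r(\e^2,\rho^2)$.
\end{itemize}
Then $\det\cA=a^4-2a^2(bd+ce)+(bd-ce)^2=\rho^2(1+r(\e^2,\rho))$, so $\rho\,\cA^{-1}$ is polar-analytic. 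Its $\cO(1)$ diagonal entries $\frac12\cos\theta$ come precisely from $E^{(1)}_{12}$, the term your truncation discards. Setting $\vec x=\cA^{-1}\vec f$ gives \eqref{def:S}.

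Once the first-order off-diagonal term cancels identically, the Lie series collapses to
\[
D^{(1)}+\tfrac12[S,R^{(1)}]+\tfrac12\int_0^1(1-\tau^2)e^{\tau S}\,\mathrm{ad}_S^2(R^{(1)})\,e^{-\tau S}\,\de\tau .
\]
Here $\frac12[S,R^{(1)}]$ is block diagonal and yields \eqref{EG1toEG2}. The only off-diagonal source is $\mathrm{ad}_S^2(R^{(1)})$, with block $2(\Sigma\tJ_2\wt G-\wt E\tJ_2\Sigma)$. This is $\cO(\rho^2\e^3)$ for two reasons:
\begin{itemize}
\item $\wt E_{11},\wt G_{11}=\cO(\rho\e^2)$ multiply only the $\cO(\rho\e)$ entries of $X$;
\item $x_{21}=\cO(\e)$ multiplies only the $\cO(\rho^2\e^2)$ entries of $\wt E$ and $\wt G$.
\end{itemize}
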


The rest of the section is devoted to the proof of Lemma \ref{lem:secondstep}. 

The matrix $S^{(1)}$ in \eqref{def:S}  is Hamiltonian and reversibility preserving, and so 
 $\exp (S^{(1)})$ is symplectic (cf. \cite[Lemma 3.13]{BMV1}) and reversibility preserving. Thus $\tL^{(2)}$ in \eqref{defL2} is Hamiltonian and reversible as $\tL^{(1)}$. 
 First we split $\tL^{(1)}$ in \eqref{LinH} into its block diagonal and off-diagonal parts
\begin{equation}\label{def:DR}
     \tL^{(1)}= D^{(1)} + R^{(1)}\, , \qquad  D^{(1)} :=\begin{pmatrix}
    \tJ_2 E^{(1)} & \vline & 0 \\
    \hline
    0 & \vline & \tJ_2 G^{(1)}
\end{pmatrix}\, , \qquad R^{(1)} :=  \begin{pmatrix}
    0 & \vline & \tJ_2 F^{(1)} \\
    \hline
    \tJ_2 [F^{(1)}]^* & \vline & 0
\end{pmatrix}\, .
\end{equation}
We expand in Lie series the matrix $\tL^{(2)} = \exp (S) \tL^{(1)} \exp{(-S)}$, denoting simply  $S:=S^{(1)}$, as 
\begin{equation}\label{Lieser1}
    \begin{aligned}
        \tL^{(2)}&=D^{(1)} + R^{(1)} + [S,D^{(1)}] + [S,R^{(1)}] + \frac12 [S,[S,D^{(1)}]] \\
        &\quad + \int_{0}^1 (1-\tau) \exp (\tau S) \textup{ ad}^2_S(R^{(1)}) \exp (-\tau S) \de\tau +
    \frac12 \int_{0}^1 (1-\tau)^2 \exp (\tau S) \textup{ ad}^3_S(D^{(1)}) \exp (-\tau S) \de\tau
    \end{aligned}
\end{equation}
where $\textup{ad}_A(B) := [A,B] = AB-BA$ denotes the commutator. We look for a matrix $ S $ as in \eqref{def:S} that solves the homological equation
\begin{equation}\label{homoeqst1}
  R^{(1)} +  [S,D^{(1)}]   = 0\, ,
\end{equation}
which, recalling \eqref{def:DR},  amounts to 
\begin{equation}\label{homo2}
    \begin{pmatrix}
        0 & \rvline & \tJ_2 F^{(1)} + \tJ_2 \Sigma \tJ_2 E^{(1)} - \tJ_2 G^{(1)} \tJ_2 \Sigma\\
        \hline
        \tJ_2 [F^{(1)}]^* + \tJ_2 \Sigma^* \tJ_2 G^{(1)}  - \tJ_2 E^{(1)}\tJ_2 \Sigma^* & \rvline & 0
    \end{pmatrix} = 0 \, . 
\end{equation}
Note that the  equations $\tJ_2 F^{(1)} + \tJ_2 \Sigma \tJ_2 E^{(1)} - \tJ_2 G^{(1)} \tJ_2 \Sigma = 0$ and $\tJ_2 [F^{(1)}]^* + \tJ_2 \Sigma^* \tJ_2 G^{(1)}  - \tJ_2 E^{(1)}\tJ_2 \Sigma^* = 0$ are equivalent. In particular, writing $\Sigma = \tJ_2 X$, equation \eqref{homo2} amounts to solve the ``Sylvester'' equation
\begin{equation}\label{sylveq0}
    \tJ_2 G^{(1)}  X - X \tJ_2 E^{(1)} = - \tJ_2 F^{(1)} \, . 
\end{equation}
Recalling \eqref{def:S}, this corresponds to solve
\begin{equation}\label{sylveq}
\underbrace{\begin{pmatrix}
        G^{(1)}_{12}-E^{(1)}_{12} & G^{(1)}_{11} & E^{(1)}_{22} & 0 \\
    G^{(1)}_{22} & G^{(1)}_{12} -E^{(1)}_{12} & 0 & -E^{(1)}_{22} \\
    E^{(1)}_{11} & 0 & G^{(1)}_{12}-E^{(1)}_{12} & - G^{(1)}_{11}\\
    0 & -E^{(1)}_{11} & -G^{(1)}_{22} & G^{(1)}_{12}-E^{(1)}_{12}
\end{pmatrix}}_{=:\cA}
\underbrace{\begin{pmatrix}
        x_{11}\\
        x_{12}\\
        x_{21}\\
        x_{22}
    \end{pmatrix}}_{=:\vec x}
    = \underbrace{\begin{pmatrix}
        -F_{21}^{(1)}\\
        F_{22}^{(1)}\\
        -F_{11}^{(1)}\\
        F_{12}^{(1)}
    \end{pmatrix}}_{=:\vec f} 
\end{equation}
where $F_{11}^{(1)} = 0$ 
by \eqref{BinH}. 

\begin{lemma}
The  matrix $ \cA $ in \eqref{sylveq} has determinant   
$ \det \cA = \rho^2(1+r(\e^2,\rho)) $. Thus for any $ \rho > 0 $
it is invertible 
and $\cA^{-1}$ has the expansion 
\begin{equation}\label{inverseA}
    \cA^{-1} = \frac1{\rho }
    \begin{pmatrix}
        \frac12 \rho\cos\theta   & 1 & r(\rho^2) & r(\rho^2) \\
        \rho & \frac12 \rho \cos\theta  & r(\rho^3) & r(\rho^2)\\
        r(\e^2,\rho^2) & r(\e^2,\rho^2) & \frac12 \rho \cos\theta & -1 \\
        r(\rho\e^2,\rho^3) & r(\e^2,\rho^2) & -\rho  & \frac12 \rho\cos\theta 
    \end{pmatrix}(1+r(\e^2,\rho))\, . 
\end{equation}
\end{lemma}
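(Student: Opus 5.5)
The plan is a direct computation: expand $\det\cA$ with the Leibniz formula, using the sizes of the entries of $E^{(1)},G^{(1)}$ from \Cref{decoupling1}, which inherit the expansions \eqref{BinG1}--\eqref{BinG2}. Then obtain $\cA^{-1}$ as $\mathrm{adj}(\cA)/\det\cA$ and pass to the polar coordinates \eqref{polar}, which turns the singular factor $1/\rho$ into an explicit prefactor. First I would record the sizes of the entries, writing $\mu=\rho\cos\theta$:
\[
G^{(1)}_{11}=1+r(\e^4,\rho\e^2,\rho^3), \qquad G^{(1)}_{22}=\rho\big(1+r(\rho\e^2,\rho^2)\big),
\]
\[
d:=G^{(1)}_{12}-E^{(1)}_{12}=-\tfrac12\rho\cos\theta\,\big(1+r(\e^2,\rho^2)\big),
\]
\[
E^{(1)}_{11}=\e^2(1+r(\e^2))+r(\rho^2), \qquad E^{(1)}_{22}=\rho^2\big(\tfrac{\sin^2\theta}{4}-\tfrac{\cos^2\theta}{8}\big)+r(\rho^2\e^2,\rho^3).
\]
The crucial input is that $E^{(1)}_{22}=\cO(\rho^2)$, with no $\cO(\rho\e^k)$ terms (\Cref{rem:newbasis}). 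All of these are polar-analytic by \Cref{lem:decomposition} and the closure properties of \Cref{lem:prodintM}.

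Next I would expand $\det\cA$ over the permutations of the $4\times4$ matrix in \eqref{sylveq}. The dominant term is the product $G^{(1)}_{11}G^{(1)}_{22}\cdot G^{(1)}_{11}G^{(1)}_{22}$, coming from the two $2\times2$ diagonal blocks $\begin{pmatrix} d & G_{11}\\ G_{22} & d\end{pmatrix}$ and $\begin{pmatrix} d & -G_{11}\\ -G_{22} & d\end{pmatrix}$. This term equals $\rho^2(1+r(\e^2,\rho))$. The remaining terms should all be $\rho^2\,r(\e^2,\rho)$:
\begin{itemize}
\item Terms containing $d^2$ are $\cO(\rho^3)$, and $d^4=\cO(\rho^4)$.
\item Every term involving the off-diagonal blocks must use one $E^{(1)}_{11}$ and one $E^{(1)}_{22}$. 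This is forced by the zero pattern: the $E_{11}$ entries sit only in rows 3--4, columns 1--2, and the $E_{22}$ entries only in rows 1--2, columns 3--4. Such terms are therefore bounded by $E^{(1)}_{11}E^{(1)}_{22}\cdot\cO(1)=\cO(\rho^2(\e^2+\rho^2))$, or are even smaller when they also carry a factor $d$ or $G^{(1)}_{22}$.
\item The term $(E^{(1)}_{11}E^{(1)}_{22})^2$ is $\cO(\rho^4)$.
\end{itemize}
Summing gives $\det\cA=\rho^2(1+r(\e^2,\rho))$, which is nonzero for $0<\rho$ small and $|\e|$ small.

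For \eqref{inverseA} I would compute the sixteen $3\times 3$ cofactors, keeping only leading orders, and divide by $\det\cA$. For example, the $(1,2)$ entry of $\cA^{-1}$ comes from the cofactor $\approx G_{11}^2G_{22}\approx\rho$, giving $1/\rho$. The $(1,1)$ entry comes from $\approx d\,G_{11}G_{22}\approx\tfrac12\rho^2\cos\theta$, giving $\tfrac12\cos\theta$. The $(2,1)$ entry comes from $\approx G_{11}G_{22}^2\approx\rho^2$, giving $1$. The entries in the mixed blocks pick up at least one factor $E^{(1)}_{11}$ or $E^{(1)}_{22}$, which yields the $r(\e^2,\rho^2)$ and $r(\rho^2)$ bounds. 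In particular the bottom-left entries carry $E^{(1)}_{11}=\cO(\e^2,\rho^2)$, while the top-right entries carry $E^{(1)}_{22}=\cO(\rho^2)$. The common factor $(1+r(\e^2,\rho))$ comes from $\rho^2/\det\cA$.

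I expect the main obstacle to be purely bookkeeping: confirming that no cofactor contains a term like $E^{(1)}_{11}\cdot\cO(1)$ without a compensating factor of $\rho$. Such a term would spoil, for instance, the $r(\rho^3)$ bound in the $(2,3)$ entry. I would handle this by exploiting the block structure, writing $\cA=\begin{pmatrix} M_1 & B\\ C & M_2\end{pmatrix}$ with $B,C$ diagonal, and using the Schur-complement formulas for the inverse. These make transparent which products of $E^{(1)}_{11}$, $E^{(1)}_{22}$ and $M_1^{-1}$ appear. Throughout I would track that every coefficient remains analytic in $(\rho,\theta,\e)$, so that the result holds in the class $\cA_P$.
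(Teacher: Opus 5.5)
Your proposal is correct and follows essentially the paper's route: the paper also computes $\det\cA$ and the adjugate of $\cA$ directly, reading off orders from the sizes of the five entries $a=G^{(1)}_{12}-E^{(1)}_{12}$, $b=G^{(1)}_{11}$, $c=E^{(1)}_{22}=r(\rho^2)$, $d=G^{(1)}_{22}$, $e=E^{(1)}_{11}$. The only difference is that it uses the closed forms $\det\cA=a^4-2a^2(bd+ce)+(bd-ce)^2$ and the explicit adjugate, which settle the bookkeeping you anticipated without a Schur-complement argument; for instance, the $(2,3)$ entry of $\cA^{-1}$ is $2acd/\det\cA$ and contains no $e$, so the $E^{(1)}_{11}$ contamination you worried about cannot occur there.
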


\begin{proof}
The matrix $ \cA $ in \eqref{sylveq} has the form  
$$ \cA
=\begin{pmatrix}
    a & b & c & 0\\
    d & a & 0 & -c\\
    e & 0 & a & -b\\
    0 & -e & -d & a
\end{pmatrix} 
 \qquad 
  \begin{aligned}
        &a :=G_{12}^{(1)}-E_{12}^{(1)} =-\frac12 \rho\cos(\theta) (1+r(\e^2,\rho^2))\, , \quad b := G_{11}^{(1)}=1+ r(\e^4,\rho\e^2,\rho^3)\, , \\
    &c := E_{22}^{(1)}= r(\rho^2)\, , \quad d := G_{22}^{(1)} = 
    \rho +r(\rho^2 \e^2,\rho^3) \, ,
    \quad 
    e := E_{11}^{(1)}=r(\e^2,\rho^2)\, , 
\end{aligned}
$$
using the expansions of $ E^{(1)}, G^{(1)}$ in \eqref{BinG1}, \eqref{BinG2}, cf. \Cref{decoupling1}. 
Thus, cf. \cite[Lemma 5.4]{BMV1}, its determinant is  
$ \det \cA 
= a^4 -2a^2 (bd+ce) +(bd-ce)^2  
= \rho^2(1+r(\e^2,\rho))  $
and, using \Cref{lem:prodintM},  
$$
    \cA^{-1} = \frac{1}{\det \cA}\begin{pmatrix}
        a(a^2-bd-ce) & b(-a^2+bd-ce) & -c(a^2+bd-ce) & -2abc\\
        d(-a^2+bd-ce) & a(a^2-bd-ce) & 2acd & c(a^2+bd-ce)\\
        -e(a^2+bd-ce) & 2abe & a(a^2-bd-ce) & b(a^2-bd+ce) \\
        -2ade & e(a^2+bd-ce) & d(a^2-bd+ce) & a(a^2-bd-ce)
    \end{pmatrix} \, . 
$$
Using the expansions of the coefficients $a,b,c,d,e $  we deduce \eqref{inverseA}.  
\end{proof}

\begin{lemma}\label{lemmaX}
For any $\rho\neq 0 $, there exists a unique solution  $\vec x = \cA^{-1}\vec f$ of \eqref{sylveq}, namely a solution $X$ of the Sylvester equation \eqref{sylveq0}
which is polar-analytic with expansion \eqref{def:S}.
Furthermore $ X $ is odd in $\e$.
\end{lemma}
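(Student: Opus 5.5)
Since $\det\cA=\rho^2(1+r(\e^2,\rho))\neq0$ for $\rho\neq0$, the previous lemma gives existence and uniqueness of $\vec x=\cA^{-1}\vec f$. The equations \eqref{sylveq0} and \eqref{sylveq} are equivalent, so $X$ solves the Sylvester equation. What remains is to show polar-analyticity, the size bounds in \eqref{def:S}, and oddness in $\e$.

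\textbf{Expansion.} I would multiply \eqref{inverseA} by $\vec f=(-F^{(1)}_{21},F^{(1)}_{22},-F^{(1)}_{11},F^{(1)}_{12})^\top$. By \eqref{BinH}, $F^{(1)}_{11}=0$, $F^{(1)}_{12}=r_4(\rho\e)$, $F^{(1)}_{21}=r_6(\rho\e)$ and $F^{(1)}_{22}=r_7(\rho^2\e)$. The third column of $\cA^{-1}$ is therefore irrelevant. Reading row by row, with prefactor $\rho^{-1}(1+r(\e^2,\rho))$:
\begin{itemize}
\item $x_{11}$: $\tfrac12\rho\cos\theta\cdot r(\rho\e)+1\cdot r(\rho^2\e)+r(\rho^2)\,r(\rho\e)$. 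After dividing by $\rho$ this is $r(\rho\e)$.
\item $x_{12}$: $\rho\, r(\rho\e)+\tfrac12\rho\cos\theta\, r(\rho^2\e)+r(\rho^2)r(\rho\e)$. Dividing gives $r(\rho\e)$.
\item $x_{21}$: $r(\e^2,\rho^2)\bigl(r(\rho\e)+r(\rho^2\e)\bigr)-r(\rho\e)$. Dividing gives $r(\e)$.
\item $x_{22}$: $r(\rho\e^2,\rho^3)r(\rho\e)+r(\e^2,\rho^2)r(\rho^2\e)+\tfrac12\rho\cos\theta\, r(\rho\e)$. Dividing gives $r(\rho\e)$.
\end{itemize}
In every case each numerator term carries at least one explicit factor $\rho$. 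Those factors come from $F^{(1)}$ (its entries are $\cO(\rho\e)$ or better) or from the $\rho$ entries of $\cA^{-1}$. Hence the division by $\rho$ gives an analytic function in the polar variables $(\rho,\theta,\e)$, including at $\rho=0$. This yields \eqref{def:S}. The $x_{ij}$ are real because all the functions involved are real-valued.

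\textbf{Main obstacle.} The delicate point is that the factor $\rho^{-1}$ in \eqref{inverseA} must be absorbed exactly. That requires the numerator entries to be polar-analytic functions vanishing identically at $\rho=0$, not merely $o(1)$. I would rely on the closure of $\cA_P$ under products, and under division by nonvanishing factors such as $1+r(\e^2,\rho)$ (\Cref{lem:prodintM}). I would also use the remainder convention \eqref{remainpoa}: $r(\rho\e)=\rho\e\, g(\rho,\theta,\e)$ with $g$ analytic. With that convention each $\rho^{-1}(\cdots)$ is visibly of the form $\rho^{j}\e^{k}g$ with $g$ analytic on $\cD_{r,\e_0}$, so $X$ is in $\cA_P$.

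\textbf{Oddness.} By Lemma~\ref{decoupling1}, $E^{(1)}$ and $G^{(1)}$ are even in $\e$ and $F^{(1)}$ is odd. Hence $\cA$ is even in $\e$, and $\cA^{-1}$ is even as well, since it is defined for $\rho\neq0$. The vector $\vec f$ is odd, so $\vec x$ is odd. Alternatively, $X(-\e)$ solves the Sylvester equation with right-hand side $+\tJ_2F^{(1)}$, and uniqueness forces $X(-\e)=-X(\e)$.
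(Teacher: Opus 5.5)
Your proposal is correct and follows the paper's argument: apply the expansion \eqref{inverseA} of $\cA^{-1}$ to the vector $\vec f$ built from \eqref{BinH}, where $F^{(1)}_{11}=0$ so every numerator term carries an explicit factor $\rho$ that cancels the $\rho^{-1}$, and deduce oddness from $\cA$ being even and $\vec f$ odd in $\e$. Your row-by-row bookkeeping and the alternative uniqueness argument for oddness simply spell out what the paper states in two lines.
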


\begin{proof}
 The coefficients $x_{ij} = [\cA^{-1}\vec f]_{ij}$ in  \eqref{sylveq} satisfy  \eqref{def:S}, by  \eqref{inverseA} and \eqref{BinH}. The entries $x_{ij}$ are odd in $\e$ since $\cA$  and $\vec f$ in \eqref{sylveq} are respectively even and odd in $\e$.
\end{proof}

Since  $S$ solves the homological equation \eqref{homoeqst1}, we deduce by \eqref{Lieser1} that 
\begin{equation}
\label{Lieser2}
    \tL^{(2)} = D^{(1)} + \frac12 [S,R^{(1)}] +\frac12 \int_0^1 (1-\tau^2)\exp (\tau S) \textup{ad}_S^2 (R^{(1)})\exp (-\tau S) \de \tau \, .
\end{equation}
In view of  \eqref{def:S} and  \eqref{def:DR}, 
\begin{equation}
\label{1stcorr}
    \frac12 [S,R^{(1)}] = 
 \tJ_4 \begin{pmatrix}
        \wt E & 0 \\
        0 & \wt G
    \end{pmatrix} \, , \quad 
  \wt E := \text{Sym}(
\tJ_2 X \tJ_2 [F^{(1)}]^* ) \, , \quad 
 \wt G = 
 \text{Sym}(
X^* F^{(1)}) \, ,  
\end{equation}
denoting $  \text{Sym} (A) := \tfrac12 (A+A^*)$. 

\begin{lemma}\label{lemmatEG}
    The  $ 2 \times 2 $ matrices $\wt E$, $\wt G$ in \eqref{1stcorr} are self-adjoint,  reversibility preserving 
    and polar-analytic  with expansion
\begin{equation}\label{expwtEG}
        \wt E = \begin{pmatrix}
            r_1(\rho\e^2) & \im r_2(\rho^2\e^2) \\
            -\im r_2(\rho^2\e^2) & r_5(\rho^2\e^2) 
        \end{pmatrix}\,  , 
        \quad  \wt G  = \begin{pmatrix}
            r_8(\rho\e^2) & \im r_9(\rho^2\e^2) \\
            -\im r_9(\rho^2\e^2) & r_{10}(\rho^2\e^2) 
        \end{pmatrix}\, . 
    \end{equation} 
\end{lemma}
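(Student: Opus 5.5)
The proof is a direct computation of the two products in \eqref{1stcorr} with the explicit forms of $X$ in \eqref{def:S} and $F^{(1)}$ in \eqref{BinH}. The structural properties come first. $\wt E$ and $\wt G$ are symmetrizations, so they are self-adjoint by construction. For reversibility preservation we note that $X$ and $F^{(1)}$ have the same pattern, with real diagonal entries and purely imaginary off-diagonal entries. The matrix $\tJ_2$ is real and antidiagonal. A product of matrices with this pattern has it again, since the pattern is preserved under products with sign bookkeeping: real$\cdot$real and imaginary$\cdot$imaginary land on the diagonal, mixed products land off the diagonal. The adjoint also preserves the pattern. Hence $\wt E$ and $\wt G$ have real diagonal and imaginary off-diagonal entries, as in \Cref{def:revmat}. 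Polar-analyticity follows because $X$ is polar-analytic by \Cref{lemmaX}, $F^{(1)}$ is analytic in $\cA$ and hence polar-analytic by \Cref{lem:decomposition}, and the class $\cA_P$ is closed under products (\Cref{lem:prodintM}). Finally, $X$ and $F^{(1)}$ are both odd in $\e$, so their products are even in $\e$. This means every remainder that starts at order $\e^2$ automatically improves from $\e^{k}$ to $\e^2$ powers without odd terms.

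Next we compute the entries. Write $X=\begin{pmatrix} r(\rho\e) & \im r(\rho\e)\\ \im r(\e) & r(\rho\e)\end{pmatrix}$ and $F^{(1)}=\begin{pmatrix}0 & \im r(\rho\e)\\ \im r(\rho\e) & r(\rho^2\e)\end{pmatrix}$. For $\wt G=\mathrm{Sym}(X^*F^{(1)})$ we have $(X^*F^{(1)})_{11}=\bar x_{11}\cdot 0+\overline{\im x_{21}}\,\im F_{21}=x_{21}F_{21}=r(\e)r(\rho\e)=r(\rho\e^2)$. The $(2,2)$ entry is $\overline{\im x_{12}}\,\im F_{12}+x_{22}F_{22}=r(\rho\e)r(\rho\e)+r(\rho\e)r(\rho^2\e)=r(\rho^2\e^2)$. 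The off-diagonal entries are $x_{11}\,\im F_{12}+(-\im x_{21})F_{22}$ and similar, giving $\im\big(r(\rho^2\e^2)+r(\e)r(\rho^2\e)\big)=\im r(\rho^2\e^2)$. Similarly, $\tJ_2 X\tJ_2$ permutes and negates the entries of $X$: its $(1,1)$ entry is $-x_{22}$, its $(2,2)$ entry is $-x_{11}$, and its off-diagonal entries are $\pm\im x_{21}$ and $\pm\im x_{12}$. Multiplying by $[F^{(1)}]^*$ then gives a $(1,1)$ entry containing $\im x_{12}\cdot\overline{\im F_{12}}$-type terms and $x_{22}\,\overline{F_{22}}$, i.e. $r(\rho\e^2)$ at worst, coming from $x_{21}=r(\e)$ paired with $F_{21}$ or $F_{12}$. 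The $(2,2)$ entry involves only $x_{11}, x_{12}$ paired with $F^{(1)}$ entries, giving $r(\rho^2\e^2)$. The off-diagonal entries are $\im r(\rho^2\e^2)$.

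The only delicate step is tracking where the large entry $x_{21}=r(\e)$ (the one with no factor $\rho$) lands. It must meet only entries of $F^{(1)}$ carrying at least $\rho$, and it must fall into the $(1,1)$ slot of $\wt E$ and $\wt G$, not into the off-diagonal or $(2,2)$ slots. Hence the careful index bookkeeping, done once explicitly for each of the two $2\times2$ products. Any stray $r(\rho\e^2)$ that appears off the diagonal has to be upgraded to $r(\rho^2\e^2)$. We do this using $F^{(1)}_{11}=0$ together with the fact that the off-diagonal entries are odd in $\mu$, hence carry a factor $\im\mu$ by \eqref{eq:altrealim}-type reasoning applied to the symmetric-in-$\mu$ structure, and $\mu=\rho\cos\theta$.
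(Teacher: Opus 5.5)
Your proposal is correct and is exactly the paper's proof: an entrywise evaluation of \eqref{1stcorr} from \eqref{def:S} and \eqref{BinH}. It gives $\wt G_{11}=x_{21}F_{21}$ and $\wt E_{11}=x_{21}F_{12}$ of size $r(\rho\e^2)$, and all remaining entries of size $r(\rho^2\e^2)$. For the reversibility pattern, note that $\tJ_2$ itself flips the real/imaginary pattern, which is why the two factors in $\tJ_2X\tJ_2$ restore it.

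Drop your last paragraph. Your own computation shows that off the diagonal $x_{21}=r(\e)$ meets only $F_{22}=r(\rho^2\e)$, so no stray $r(\rho\e^2)$ term appears there. The proposed upgrade via oddness in $\mu$ would also fail: $X$ is only polar-analytic, so \eqref{eq:altrealim} does not apply, and, e.g., $\mu\e^2$ is odd in $\mu$ but is not $r(\rho^2\e^2)$.
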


 \begin{proof}
     Use \eqref{def:S} and  \eqref{BinH}.
 \end{proof}
 
We now show that the last term in \eqref{Lieser2}, which is Hamiltonian and reversible, is very small.

\begin{lemma}\label{lemmahatEFG}
    The  Hamiltonian, reversible and polar-analytic
    matrix 
    $$
        \frac12 \int_0^1 (1-\tau^2)\exp (\tau S) \textup{ad}_S^2 (R^{(1)})\exp (-\tau S) \de \tau = \tJ_4 \begin{pmatrix}
            \hat E & F^{(2)} \\
            [F^{(2)}]^* & \hat G
        \end{pmatrix}
    $$
    where  $ \hat E, \hat G $
    are $ 2 \times 2 $ self-adjoint,  reversibility preserving and polar-analytic  matrices 
\begin{equation}
\label{exphatEG}
\hat E = \begin{pmatrix}
        \hat E_{11} & \im\hat E_{12}\\
        -\im\hat E_{12} & \hat E_{22}
    \end{pmatrix} \, , \ \hat G = \begin{pmatrix}
        \hat G_{11} & \im\hat G_{12}\\
        -\im\hat G_{12} & \hat G_{22}
    \end{pmatrix}
\, , \quad 
       \hat E_{ij},\hat G_{ij} =  r(\rho^2\e^3)\, ,
       \ i,j =1,2 \, , 
  \end{equation}
   whereas $F^{(2)}$ is a  reversible $ 2 \times 2 $ polar-analytic matrix  with the expansion \eqref{Fexp2}. 
\end{lemma}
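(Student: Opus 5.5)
The plan is to bound the integrand directly, using the block structure of $S$ and $R^{(1)}$ and the sizes of $X$ in \eqref{def:S} and $F^{(1)}$ in \eqref{BinH}. First, Hamiltonianity and reversibility: $S$ is Hamiltonian and reversibility preserving, and $R^{(1)}$ is Hamiltonian and reversible. Commutators of Hamiltonian matrices are Hamiltonian, and conjugation by the symplectic, reversibility preserving $\exp(\tau S)$ preserves this class, cf. \eqref{B1conj}. Hence the integrand, and therefore the integral, has the form $\tJ_4$ times a self-adjoint, reversibility preserving matrix. This yields the alternating real/imaginary pattern in \eqref{exphatEG} and \eqref{Fexp2}. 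Polar-analyticity holds because $S$, $R^{(1)}$ are polar-analytic; the exponential series and the $\tau$-integral are then handled by \Cref{lem:prodintM}.

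Next, the size of $\textup{ad}_S^2(R^{(1)})$. Since $S$ and $R^{(1)}$ are both block off-diagonal, $[S,R^{(1)}]$ is block diagonal, and $\textup{ad}_S^2(R^{(1)}) = [S,[S,R^{(1)}]]$ is block off-diagonal. By \eqref{1stcorr} and \Cref{lemmatEG}, $[S,R^{(1)}] = 2\tJ_4\,\textup{diag}(\wt E,\wt G)$. Taking one more commutator with $S$ gives off-diagonal blocks of the type $\tJ_2 X\tJ_2\wt E - \tJ_2\wt G\tJ_2 X$. The crude bound $X = \cO(\e)$, $\wt E,\wt G = \cO(\rho\e^2)$ only gives $\cO(\rho\e^3)$, so I need finer structure. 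I will track entries. The only $\cO(\e)$ entry of $X$ is $x_{21}$, and the only $\cO(\rho\e^2)$ entries of $\wt E,\wt G$ are the $(1,1)$ ones. All remaining entries of $X$ are $r(\rho\e)$ and all remaining entries of $\wt E,\wt G$ are $r(\rho^2\e^2)$, so every product not involving both exceptional entries is already $\cO(\rho^2\e^3)$. For the products that pair $x_{21}$ with $\wt E_{11}$ or $\wt G_{11}$, I will either check that the index contraction through $\tJ_2$ kills them, or refine the bound on $\wt E_{11}$ and $\wt G_{11}$ by computing them from \eqref{BinH} and \eqref{inverseA}. I expect $\wt E_{11}$ to be proportional to $x_{12}F^{(1)}_{12}$-type terms, which are $r(\rho\e)\cdot r(\rho\e)=r(\rho^2\e^2)$.

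Then, conjugating by $\exp(\pm\tau S) = \uno + \cO(\e)$ keeps the leading off-diagonal part and produces diagonal-block corrections of size $\cO(\e)\cdot\cO(\rho^2\e^3)$, plus analogous higher terms. Expanding $\exp(\tau S)\,Z\exp(-\tau S) = Z + [\tau S, Z] + \dots$ with $Z := \textup{ad}^2_S(R^{(1)})$, every term is at least $\cO(\rho^2\e^3)$. This gives $\hat E,\hat G = r(\rho^2\e^3)$ and $F^{(2)} = r(\rho^2\e^3)$. Oddness/evenness in $\e$ follows since $S$ and $R^{(1)}$ are odd in $\e$: each term of the integrand contains $n$ factors of $S$ and one of $R^{(1)}$, and its block position determines whether the count is even or odd, matching the claimed parities.

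The main obstacle is the entry bookkeeping in the second step. I must show that the $\cO(\e)$ entry $x_{21}$ never multiplies an $\cO(\rho\e^2)$ entry without gaining an extra $\rho$. If the contraction does not vanish, the fix is to sharpen $\wt E_{11},\wt G_{11}$ via the explicit formula $\vec x = \cA^{-1}\vec f$.
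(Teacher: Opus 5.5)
Your plan is correct and is the paper's argument. The paper writes $\textup{ad}_S^2(R^{(1)})$ as $\tJ_4$ times a block off-diagonal matrix with upper-right block $\wt F = 2(\Sigma\tJ_2\wt G-\wt E\tJ_2\Sigma)$, reads off $\wt F = r(\rho^2\e^3)$ from \eqref{def:S} and \eqref{expwtEG}, and then uses only the boundedness of $\exp(\pm\tau S)$.

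Of your two alternatives for the bookkeeping, the first one works, and it is the check the paper leaves implicit.
In $\Sigma\,(\tJ_2\wt G)$, the only $\cO(\e)$ entry of $\Sigma=\tJ_2X$ is $\im x_{21}$, which lies in its first column. The only $\cO(\rho\e^2)$ entry of $\tJ_2\wt G$ is $-\wt G_{11}$, which lies in its second row, so these two never multiply.
In $\wt E\tJ_2\Sigma=-\wt E X$, the entry $\wt E_{11}$ lies in the first column of $\wt E$ while $x_{21}$ lies in the second row of $X$, so again they never multiply.

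The fallback would not have been available. Your guess about $\wt E_{11}$ has the wrong index: in fact $\wt E_{11}=x_{21}r_4$ and $\wt G_{11}=x_{21}r_6$, with $r_4,r_6$ as in \eqref{BinH}. This is exactly why these entries are only $r(\rho\e^2)$ and cannot be sharpened from the known sizes.
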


\begin{proof}
In view of  \eqref{def:S} and \eqref{1stcorr}
we get 
$$
        \textup{ad}_S^2 (R^{(1)}) = \tJ_4 \begin{pmatrix}
            0 & \wt F\\
            \wt F^* & 0
        \end{pmatrix} \qquad \text{ where }\qquad \wt F := 2(\Sigma \tJ_2 \wt G - \wt E \tJ_2 \Sigma)  \, .
$$
    By \eqref{expwtEG} and \eqref{def:S}, we deduce that
     $\wt F = r (\rho^2 \e^3)$.
    Since $\exp(\tau S)$ is bounded
    we deduce \eqref{exphatEG} and \eqref{Fexp2}.
\end{proof}

Lemma \ref{lem:secondstep} follows by \Cref{lemmaX,lemmatEG,lemmahatEFG}. The only off-diagonal terms in \eqref{Lieser2} are given by $F^{(2)}$ in  \Cref{lemmahatEFG}, while the corrections to the diagonal part $ D^{(1)}$ in   \eqref{def:DR} given by \Cref{lemmatEG,lemmahatEFG} are already perturbative
(recall that 
$E^{(1)} $, $ G^{(1)}$
have the  expansions
 \eqref{BinG1},\eqref{BinG2} by  \Cref{decoupling1}).
 The parity properties in $\e$ of $E^{(2)},G^{(2)},F^{(2)}$ follow from the following consideration. The product of matrices 
 $  X(\e),Y(\e)$ of the form 
 $$
 \begin{pmatrix}
     {\rm Even}_{2\times2}(\e) & \rvline & {\rm Odd}_{2\times2}(\e)\\
     \hline 
     {\rm Odd}_{2\times2}(\e) & \rvline & {\rm Even}_{2\times2}(\e)
 \end{pmatrix}$$
    where ${\rm Even}_{2\times2}(\e) , {\rm Odd}_{2\times2}(\e)$ are $2\times 2$ matrices, respectively even and odd in $\e$, has the same form, specifically
\begin{equation}\label{evod:properties}
        X(\e)\cdot Y(\e) = \begin{pmatrix}
     {\rm Even}_{2\times2}(\e) & \rvline & {\rm Odd}_{2\times2}(\e)\\
     \hline 
     {\rm Odd}_{2\times2}(\e) & \rvline & {\rm Even}_{2\times2}(\e)
 \end{pmatrix}\, , \qquad \exp (X(\e)) = \begin{pmatrix}
     {\rm Even}_{2\times2}(\e) & \rvline & {\rm Odd}_{2\times2}(\e)\\
     \hline 
     {\rm Odd}_{2\times2}(\e) & \rvline & {\rm Even}_{2\times2}(\e)
 \end{pmatrix}\, .
    \end{equation}
\subsection{Complete Block-decoupling} \label{sec:5.2}

We now block diagonalize the matrix $ \tL^{(2)}$ in \eqref{defL2} which we  split 
as
\begin{equation}
\label{L2D2R2}
     \tL^{(2)} = D^{(2)} + R^{(2)}
\end{equation}
with 
\begin{equation}
\label{D2E2f}
D^{(2)} :=\begin{pmatrix}
    \tJ_2 E^{(2)} & \vline & 0 \\
    \hline
    0 & \vline & \tJ_2 G^{(2)}
\end{pmatrix}\, , \qquad R^{(2)} :=  \begin{pmatrix}
    0 & \vline & \tJ_2 F^{(2)} \\
    \hline
    \tJ_2 [F^{(2)}]^* & \vline & 0
\end{pmatrix}\, .
\end{equation}  
The matrices $D^{(2)}$ and $R^{(2)} $ are Hamiltonian and reversible. 

\begin{lemma}\label{completedec}
   {\bf (Complete block-decoupling)} There exist $\rho_1, \epsilon_1 > 0 $ such that for any $(\al,\mu,\e)\in B_{\rho_1}(0,0)\times B_{\e_1}(0)$ there exists a $4\times 4$ Hamiltonian,  reversibility preserving, polar-analytic matrix
\begin{equation}\label{def:Scomplete}
        S^{(2)} = \tJ_4 \begin{pmatrix}
        0 & \vline & \Sigma^{(2)} \\ 
        \hline
        [\Sigma^{(2)}]^* & \vline & 0 
    \end{pmatrix}\, , \quad \Sigma^{(2)} = \tJ_2 X^{(2)}\, , \quad  X^{(2)} = \begin{pmatrix}
        x_{11}^{(2)} & \im x_{12}^{(2)} \\
        \im x_{21}^{(2)} & x_{22}^{(2)}
    \end{pmatrix}\, , \quad x_{ij}^{(2)} = r_{ij}(\e^2) \, , 
    \end{equation}
 even in $\e$,    such that 
     $$
        \tL^{(3)} := \exp{(\rho\e S^{(2)})} \tL^{(2)} \exp{(-\rho\e S^{(2)} )} 
  = 
        \tJ_4 \begin{pmatrix}
             E^{(3)} & \rvline &  0 \\
            \hline
           0 & \rvline &  G^{(3)}
        \end{pmatrix}
  $$
is a Hamiltonian, reversible and polar-analytic matrix    of the form   \begin{equation}\label{EG2toEG3}
        E^{(3)} =E^{(2)} +  \begin{pmatrix}
        r_1(\rho^3 \e^6) & \im r_2(\rho^3 \e^6)\\ 
        -\im r_2(\rho^3 \e^6) & r_5(\rho^3 \e^6)
    \end{pmatrix} \, , \qquad G^{(3)}=G^{(2)} + \begin{pmatrix}
        r_8(\rho^3 \e^6) & \im r_9(\rho^3 \e^6) \\ 
        -\im r_9(\rho^3 \e^6) & r_{10}(\rho^3 \e^6)
    \end{pmatrix} \, , 
    \end{equation}
    with
    $E^{(2)}$, $G^{(2)}$ in 
    \eqref{EG1toEG2}. The matrices $E^{(3)}, G^{(3)}$ are even in $\e$. 
\end{lemma}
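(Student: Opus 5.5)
The plan follows the KAM-type scheme of \cite{BMV1} (Lemma 5.8 there), with the new feature that everything is tracked in the polar-analytic class of \Cref{def:tildeM}. The point is that after \Cref{lem:secondstep} the off-diagonal block $F^{(2)}$ has size $\cO(\rho^2\e^3)$. This is one power of $\rho$ smaller than what the inverse Sylvester operator (of size $\rho^{-1}$ by \eqref{inverseA}) can lose. So we can solve the nonlinear block-diagonalization equation by an implicit function argument with unknown $X^{(2)}$ of size $\cO(\e^2)$, scaled by $\rho\e$.

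\emph{Step 1 (the equation).} We write $\tL^{(2)}=D^{(2)}+R^{(2)}$ as in \eqref{L2D2R2}. We look for $S=\rho\e S^{(2)}$ with $S^{(2)}$ of the form \eqref{def:Scomplete}, and require that the off-diagonal part of $\exp(S)\tL^{(2)}\exp(-S)$ vanish. Since $S$ is block off-diagonal, $\textup{ad}_S$ exchanges diagonal and off-diagonal blocks. The off-diagonal part is therefore
\[
R^{(2)}+[S,D^{(2)}]+\sum_{n\geq 1}\tfrac{1}{(2n)!}\textup{ad}_S^{2n}(R^{(2)})+\sum_{n\geq 1}\tfrac{1}{(2n+1)!}\textup{ad}_S^{2n+1}(D^{(2)}) .
\]
In the $X$-variables this becomes $\rho\e\,\mathscr{S}(X^{(2)})=-\tJ_2F^{(2)}+\rho\e\,\mathcal{N}(X^{(2)})$. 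Here $\mathscr{S}(X)=\tJ_2G^{(2)}X-X\tJ_2E^{(2)}$ is the Sylvester operator, and $\mathcal{N}$ collects the higher commutators. The odd terms carry at least a factor $\rho^2\e^2$ times $\textup{ad}_{S^{(2)}}^2\textup{ad}_{S^{(2)}}(D^{(2)})$. The even terms carry $R^{(2)}=\cO(\rho^2\e^3)$ times $(\rho\e)^{2n}$.

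\emph{Step 2 (inverting the Sylvester operator).} As in the lemma yielding \eqref{inverseA}, the matrix $\cA^{(2)}$ of $\mathscr{S}$ in the coordinates $\vec x$ has entries built from $E^{(2)},G^{(2)}$. By \eqref{EG1toEG2} these differ from $E^{(1)},G^{(1)}$ only by $\cO(\rho\e^2)$, which does not change the leading terms $a,b,d$ used there. Hence $\det\cA^{(2)}=\rho^2(1+r(\e^2,\rho))$, and $\rho\,(\cA^{(2)})^{-1}$ is polar-analytic and bounded. Dividing the equation by $\rho\e$ gives the fixed-point problem
\[
\vec x=(\rho\cA^{(2)})^{-1}\big(-\vec f^{(2)}/\e+\rho\,\vec{\mathcal N}(\vec x)\big),
\]
with $\vec f^{(2)}/\e=\cO(\rho^2\e^2)$ by \eqref{Fexp2}. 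Note that $(\rho\cA^{(2)})^{-1}\cdot\rho^{-1}\vec f^{(2)}$ is $\cO(\rho\e^2)$, so it is in fact $r(\e^2)$ even after losing one $\rho$. The nonlinear part is $\rho\cdot\cO(\rho^2\e^2|\vec x|+\rho\e^3)$ in the worst case, so it is a contraction for $\rho_1,\e_1$ small. We then apply the analytic implicit function theorem in the variables $(\rho,\theta,\e)$ on $\cD_{\rho_1,\e_1}$, with $\vec x$ analytic. This gives a unique polar-analytic solution $x^{(2)}_{ij}=r(\e^2)$. Evenness in $\e$ follows because the map commutes with $\e\mapsto-\e$: $F^{(2)}$ is odd and $E^{(2)},G^{(2)}$ are even, and the prefactor $\rho\e$ makes $X^{(2)}$ even by the parity structure \eqref{evod:properties}.

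\emph{Step 3 (structure and corrections).} If $X^{(2)}$ is real in the reversible pattern, then $S^{(2)}$ is Hamiltonian and reversibility preserving. Uniqueness together with the symmetry of the equation under the reversible involution guarantees this. Consequently $\exp(\rho\e S^{(2)})$ is symplectic and reversibility preserving, and $\tL^{(3)}$ is Hamiltonian and reversible. The diagonal corrections are $\tfrac12[S,R^{(2)}]$ plus higher terms. They are of size $\rho\e\cdot|X^{(2)}|\cdot\rho^2\e^3=\cO(\rho^3\e^6)$, which gives \eqref{EG2toEG3}, and they are even in $\e$.

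The main obstacle is Step 2. One must check that the $\rho^{-1}$ singularity of $(\cA^{(2)})^{-1}$ is compensated uniformly in $\theta$, including directions with $\cos\theta=0$, and that the polar-analytic class is preserved under inversion and composition, via \Cref{lem:prodintM}. I would first verify that $\rho^{-2}\det\cA^{(2)}$ is analytic and nonvanishing on $\cD_{\rho_1,\e_1}$; this reduces the argument to the analytic implicit function theorem.
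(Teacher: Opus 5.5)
Your proposal is correct and follows essentially the paper's route: Lie-expand the conjugation by $\exp(\rho\e S^{(2)})$, use $\cA^{-1}=\rho^{-1}\cT$ together with $F^{(2)}=\cO(\rho^2\e^3)$ to rewrite the off-diagonal equation as $\vec x=-\e\cT\vec v-\e\cT\vec g$ with $\vec v=\cO(\e)$, and solve it by the analytic implicit function theorem in $(\rho,\theta,\e)$. The $\cO(\rho^3\e^6)$ diagonal correction then comes, as in the paper, from using the homological equation to replace $\tfrac12\textup{ad}^2_S D^{(2)}$ (naively only $\cO(\rho^2\e^6)$) by $-\tfrac12[S,R^{(2)}]$ up to higher order. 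There is one bookkeeping slip: the operator in your fixed-point formula is $(\rho\cA^{(2)})^{-1}=\rho^{-2}\cT$, not the bounded $\rho(\cA^{(2)})^{-1}=\cT$, and it is exactly this $\rho^{-2}$ that turns the source term $\vec f^{(2)}/\e=\cO(\rho^2\e^2)$ into the $r(\e^2)$ size you correctly claim.
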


\begin{proof}
We prove that 
\begin{equation}
\label{diagonalizationins}
      \tL^{(3)}  = 
      \exp{(\rho\e S^{(2)})} \tL^{(2)} \exp{(-\rho\e S^{(2)} )} 
 =   D^{(2)} + P  
\ee
where  $D^{(2)}$ is 
in \eqref{L2D2R2} and  $ P $ 
    is  a $4\times 4$ Hamiltonian, reversible 
    and polar-analytic block diagonal matrix
\begin{equation}\label{PsmallBD}
    P=\begin{pmatrix}
        \cO(\rho^3\e^6)  & \rvline & 0\\
        \hline 
        0 & \rvline & \cO(\rho^3\e^6)
    \end{pmatrix} \, , \quad \text{even in} \ \e \, .
    \end{equation}
    For simplicity we 
    denote $ S := S^{(2)} $. Equation \eqref{diagonalizationins} is equivalent to the system
    \begin{equation}\label{diagequiv}
        \begin{cases}
            \Pi_D (\exp{(\rho\e S)} \tL^{(2)} \exp{( -\rho\e S )}) = D^{(2)} + P \\
            \Pi_\varnothing (\exp{(\rho\e S)} \tL^{(2)} \exp{( -\rho\e S )}) = 0  
        \end{cases}
    \end{equation}
    where $\Pi_D$ and $\Pi_\varnothing$ are the projections respectively  on the block diagonal and off diagonal  matrices. 
 Expanding the second equation in \eqref{diagequiv} in Lie series, we get by \eqref{L2D2R2}, 
\begin{equation}\label{nonlin_homo}
        R^{(2)} + \rho\e [S,D^{(2)}] + \rho^2 \e^2 \underbrace{\Pi_\varnothing \int_0^1 (1-\tau) \exp{(\tau\rho\e S)} \textup{ad}_S^2 ( 
        D^{(2)} + R^{(2)}) \exp{(-\tau\rho\e S )}\de \tau }_{=:\cR(\rho,\theta,\e;S)}= 0
    \end{equation}
    where the remainder $\cR$ is quadratic in $S$. 
    Since $S$ is  reversibility preserving, $[S,D^{(2)}]$ and $\cR$ are block off-diagonal Hamiltonian and reversible matrices. 
    By denoting
    \begin{equation}\label{2dsub}
       \vec x := (
        x_{11},
        x_{12},
        x_{21},
        x_{22})^\top\, , \quad \rho^2\e^2 \vec v(\rho, \theta, \e) := (
        -F_{21}^{(2)},
        F_{22}^{(2)},
        -F_{11}^{(2)},
        F_{12}^{(2)})^\top\, ,
    \end{equation}
    and $\vec g(\rho, \theta, \e, \vec x)$ the 
    vector associated to the entries of the Hamiltonian and reversible block off-diagonal matrix $\cR$, using the same ordering as for $\vec v(\rho, \theta, \e)$ in \eqref{2dsub}. 
    The vectors $\vec v(\rho, \theta, \e)$ and $\vec g(\rho,\theta,\e;x)$ are analytic in every entries, $\vec v$ is of size $\cO(\e)$ thanks to \eqref{Fexp2}, while $\vec g$  is quadratic in $\vec x$. Equation \eqref{nonlin_homo} is equivalent to the four dimensional system
    \begin{equation}\label{reducedhomoeq}
        \cA \vec x +\rho\e \, \vec v(\rho, \theta, \e) +\rho\e \,  \vec g(\rho, \theta, \e, \vec x) = 0\, ,
    \end{equation}
    where $\cA$ is the matrix representing the linear application $S\to [S,D^{(2)}]$ in \eqref{sylveq}, with inverse 
    $\cA^{-1}=\frac1\rho \cT (\rho,\theta,\e)$, with $\cT$ polar-analytic, as in \eqref{inverseA}.  Multiplying \eqref{reducedhomoeq} by $\cA^{-1}$, we get 
    \be\label{eqfinab}
    \vec x = -  \e \cT(\rho,\theta,\e)\vec v(\rho, \theta, \e) - \e \cT(\rho,\theta,\e)\vec g(\rho, \theta, \e, \vec x) \, .
    \ee
    Since  $\vec v  = \cO(\e)$,  by the analytic implicit function theorem, for $\rho$ and $\e$ sufficiently small,  there is a unique small solution $ \vec x= \cO(\e^2) $ of \eqref{eqfinab}, i.e. we obtained $ S $ as in \eqref{def:Scomplete}. We now show \eqref{PsmallBD}. 
   We  Lie expand \eqref{diagonalizationins} as
    $$
    \tL^{(3)} = 
    D^{(2)} + 
    \underbrace{R^{(2)}
     + \rho \e[S,D^{(2)}]}_{
    \stackrel{\eqref{nonlin_homo}} =- \rho^2 \e^2 {\cal R}(S)}
     + \rho \e[S,R^{(2)}] + \frac12 \rho^2\e^2\textup{ad}_S^2(\tL^{(2)}) + \rho^3\e^3\cO(S^3)
    $$ 
    and thus
\begin{equation}\label{est:Pcompdec}
        P \stackrel{\eqref{diagonalizationins}} = \tL^{(3)} - D^{(2)} = \underbrace{\rho\e[S,R^{(2)}]}_{ =\cO(\rho^3\e^6) \mbox{ by } \eqref{D2E2f}, \eqref{Fexp2}} + \frac12 \rho^2\e^2 \textup{ad}_S^2(D^{(2)}) + \underbrace{\rho^3\e^3\cO(S^3)}_{\cO(\rho^3\e^9)} \, ,
    \end{equation}
    since by \eqref{def:Scomplete} 
    we have $S=\cO(\e^2)$. 
    Finally we estimate $ \rho^2\e^2 \textup{ad}_S^2(D^{(2)})$.
    Taking the commutator of \eqref{nonlin_homo} with $\rho\e S$, using again \eqref{Fexp2}, \eqref{def:Scomplete} and that $\cR$ in \eqref{nonlin_homo} is quadratic in $S$, and isolating the second term we get
    $$
    \rho^2\e^2 \textup{ad}_S^2(D^{(2)}) = -\rho\e [S,R^{(2)}] - \rho^3\e^3 [S,\cR(\rho,\theta,\e;S)] = \cO(\rho^3\e^6)\, .
    $$
    Hence the matrix $P$  in \eqref{est:Pcompdec} satisfies \eqref{PsmallBD}.    Finally, the solution $\vec x$ of \eqref{reducedhomoeq} is odd in $\e$, since $\vec v$ in \eqref{2dsub} is odd in $\e$ and, using \eqref{evod:properties} for $\cR(\rho,\theta,\e;S)$ in \eqref{nonlin_homo},  $\vec g (\rho,\theta,-\e,\vec x) = - \vec g (\rho,\theta,\e,\vec x)$. Thus, using again \eqref{evod:properties}, one deduces that $P$ in \eqref{PsmallBD} is even in $\e$.
\end{proof}

\subsection{Proof of Theorem \ref{TeoremoneFinale}}\label{sec:proofteoremone}

The basis $\{h_1^\pm(\al,\mu,\e),h_0^\pm(\al,\mu,\e)\}$ is obtained transforming the symplectic and reversible basis $\cG$ in \eqref{basisG}-\eqref{basechange}, which satisfies \eqref{decG},  with  the changes of variables  of  \Cref{decoupling1,lem:secondstep,completedec}, which are the identity for $ \epsilon = 0 $. Then \eqref{proph}  follows by 
\eqref{nmuep}, \eqref{reg:m}, \eqref{def:S},  \eqref{def:Scomplete}
and \Cref{expansion1}. The identity  \eqref{basiepzero}
follows by 
\eqref{vtrans}.
The action of the  operator $ \cL(\al,\mu,\e) = \im \mu + \sL(\al,\mu,\e) $  in \eqref{cLsL} on $  \mathcal{V}_{\al, \mu, \e}  $, is represented,   
in the basis $ \cH $, 
by  the matrix 
$$ 
\tL(\alpha,\mu,
\epsilon) = \im \mu + \tL^{(3)}(\al,\mu,\e) \, , \quad \tL(\al,\mu,0) =
 \im \mu + \tJ_4 \tB_{\al,\mu,0} \, , 
$$ 
where $ \tL^{(3)}(\al,\mu,\e) $ is given by \Cref{completedec}
and $\tB_{\al,\mu,\e}$ is the matrix   \eqref{splitB}. 
Thus the matrices in \eqref{matricefinae} 
are $ \mathtt U = \im \mu  +   
\tJ_2  E^{(3)} $ and
$ \mathtt S = \im \mu  + 
\tJ_2  G^{(3)} $ where 
$ E^{(3)}, G^{(3)} $ 
are in \Cref{completedec}, \eqref{EG2toEG3}, \eqref{EG1toEG2} and \Cref{decoupling1}.
Thus \eqref{UU}, \eqref{S}, 
\eqref{exp:a}, \eqref{exp:b+-}-\eqref{exp:b-}, 
\eqref{siannu} 
follow using also \eqref{E11E22}. 
In particular the matrix  $E^{(3)} =  E^{(1)} + \cO(\rho \e^2)$
has a Lipschitz extension in a neighborhood of $(\al,\mu) = (0,0)$ by \Cref{lem:prodintM} $(v)$ and recalling that $\tL^{(1)}_{\al,\mu,\e}$ in \eqref{LinH} belongs to $\cA(B_{\rho_0}(0,0),\e_0;\C^{4\times 4})$ and has all Lipschitz entries. 
The form of the eigenvalues \eqref{eignear0} is a direct consequence of  \eqref{UU}.  

Finally we prove \eqref{graphsmclean2}. 
In view of \eqref{exp:b+-}  the equation $\mathsf{b}^+(\al, \mu, \e) = 0$ reads,  in  polar coordinates  \eqref{polar}, 
\be\label{tanteta}
\tan^2  (\theta) =
\frac12 (1 + r(\e^2,\rho))  \,  \quad   \Rightarrow  \quad 
\tan  (\theta) = \pm 
\frac{1}{\sqrt{2}} 
(1 + r(\e^2,\rho)) \, . 
\ee
By the analytic implicit function theorem we solve 
\eqref{tanteta}  getting
$ \theta(\rho,\e) = \pm \text{arctan}(\frac{1}{\sqrt{2}}) + r(\e^2,\rho) $. Thus
$ \mu = \rho(\al,\e) \cos (\theta(\rho(\al,\e),\e))$
where $\rho(\al,\e)$ is an analytic function defined for 
$|\alpha | <  \alpha_0 $ small,  solving implicitly the equation  
$\al = \rho \sin (\theta(\rho,\e))$    for $ (\al,\e) $ small. 
This proves that $
\cM^{(2)}_{+,\rm{loc}} (\e)$ is described 
 in \eqref{graphsmclean2}.

Regarding $\cM^{(2)}_{-,{\rm loc}}(\e)$, we look for solutions of 
\begin{equation}\label{b-e}
    -\tb^-(\al,\mu,\e) = \e^2 (1+r_1(\e^2,\rho)) + \frac{\al^2}4 (1+r_1'(\e^2,\rho)) - \frac{\mu^2}{8}(1+r_1''(\e^2,\rho)) = 0\, .
\end{equation}
The  set of  solutions $ (\alpha, \mu ) $ 
of \eqref{b-e}
is close to an hyperbola with two symmetric components, one with $ \mu \geq 0 $, and another one with $ \mu < 0 $. Let us consider the case 
$ \mu \geq 0 $. 
Dividing \eqref{b-e} by $ 1+r_1(\e^2,\rho) $ and isolating the last term, this amounts to solve
$$
G (\al,\mu,\e) := \mu (1 + r'(\e^2,\rho)) - \sqrt{ 8\e^2 + 2 \al^2 (1+ r''(\e^2,\rho)) }  = 0 \, .
$$
Note that the function 
$ G (\al,\mu,\e) $ has Lipschitz extension setting  $ G (0,0,\e) := 2 \sqrt{2}|\epsilon |$.
We now apply a  Lipschitz Implicit Function Theorem, see e.g. \cite[Theorem 4.8]{simader}. 
We compute the 
derivatives of a polar analytic function $r (\e^2,\rho) = \e^2 g_1(\rho,\theta,\e) + \rho g_2 (\rho,\theta,\e)$,
where  $g_1$, $g_2$ are 
analytic functions of their arguments, bounded in $(\rho,\theta,\e) \in B_{\rho_1}(0)\times \T \times B_{\e_1}(0)$, obtaining  
$$
    \pa_\mu r(\e^2,\rho) 
    = \frac{\mu}{\rho} \left( \e^2 \pa_\rho g_1  + g_2 + \rho \pa_\rho g_2  \right) - \frac{\al}{\rho^2}\left( \e^2 \pa_\theta g_1  + \rho \pa_\theta g_2 \right)  = \frac{1}{\rho} r(\e^2,\rho)\, .
$$
Similarly $\pa_\al r(\e^2,\rho) = \frac{1}{\rho} r(\e^2,\rho)$, $\pa_\e r(\e^2,\rho) = r(\e,\rho)$.
Thus 
$$
\pa_ \mu G (\al,\mu,\e) = 1 + r(\e^2,\rho) + \frac{\mu}{\rho}r(\e^2,\rho) - \frac{2\frac{\al^2}{\rho}r(\e^2,\rho)}{2\sqrt{ 8\e^2 + 2 \al^2 (1+ r''(\e^2,\rho)) }}\quad \Rightarrow \quad  1/2 \leq \pa_ \mu G (\al,\mu,\e) \leq 2 \, ,$$
and  
$$ 
\begin{aligned}
    &|\pa_\al G (\al,\mu,\e)| = \left|\frac{\mu}{\rho}r(\e^2,\rho) -  \frac{4\al(1+r(\e^2,\rho)) + 2\frac{\al^2}{\rho}r(\e^2,\rho)}{2\sqrt{ 8\e^2 + 2 \al^2 (1+ r''(\e^2,\rho)) }}\right| \leq 2 \ , \\ &|\pa_\e G (\al,\mu,\e)| = \left|\mu r(\e,\rho) - \frac{16\e + 2\al^2 r(\e,\rho)}{2\sqrt{ 8\e^2 + 2 \al^2 (1+ r''(\e^2,\rho)) }} \right| \leq 5\, .
\end{aligned}
$$
The Lipschitz Implicit Function \cite[Theorem 4.8]{simader} implies the  existence of a  unique Lipschitz function $\mu^-(\al,\e)$ 
defined for 
$ |(\alpha, \e) |$ small enough, 
such that 
$ G(\al,\mu^-(\al,\e) ,\e) = 0 $
satisfying  $\mu^-(0,0) = 0 $. 
Actually $\mu^-(\cdot,\e)$ is analytic for any 
$ \e \neq 0 $.  
Indeed, by  \eqref{b-e} we have  
$  \e^2+\al^2 \leq \mu^-(\al,\e) $ and so, for any $\e\neq 0$, the graph of $\mu^-(\al,\mu) > 0 $  belongs to the region where $\tb^- (\alpha, \mu, \e)$ is analytic. So 
$ \mu^-(\al,\e) $ is analytic 
by the implicit function theorem.
By \eqref{b-e}, we have 
$
\mu^-(\al,\mu) = \sqrt{8\e^2 (1 + r'(\e^2,\rho)) + 2\al^2 (1+r''(\e^2,\rho))}
$, 
and substituting $\mu = \mu^-(\al,\e) 
\leq 10(\e^2+\al^2)  $ in the remainders we obtain the expression in \eqref{graphsmclean2}, with remainders $|\ell_i(\al,\e)|\lesssim |\al|+|\e|$, analytic for every fixed $0 < |\e| < \e^{(1)}$. The Lipschitzianity of $(\al,\e)\mapsto\ell_i (\al,\e)$ follows by taking the $\al$ and $\e$ derivatives and using that $\rho = (\al^2+ (\mu^-(\al,\e))^2)^\frac12 \geq |\mu^-(\al,\e)| \geq |\e|$.
This concludes the proof of \Cref{TeoremoneFinale}.

 \part{McLean instabilities}\label{part:II}
 
In this part we prove \Cref{TeoremoneMcLean}, describing the $ 3 d $  spectral bands  for 
any $ (\alpha, \mu) $ 
in  a {\it whole} neighborhood 
of the   unperturbed McLean curves $ \cM^{(\tp)}$,  $ \tp \geq 2 $.

\section{Perturbative approach to  McLean eigenvalues}\label{sec:away}

We  now construct the spectral Kato projectors $ P^{(\tp)}_{\al,\mu,\e} $ 
associated to the $2$-dimensional spectral subspaces relative to the eigenvalues \eqref{McL.d}  for  
any $ (\alpha,\mu)$ 
in  a whole neighborhood $ K^{(\tp)} $  of $ \cM^{(\tp)} $, see \eqref{Pproj:all}, for any  $\tp \geq 2$. 
 A special attention  is required near the  McLean curve 
 $ \cM^{(2)} $ where the quadruple eigenvalue collision 
 \eqref{4collision} occurs at the origin. 
In the next  lemma we first construct the Kato projectors  for  $ (\alpha,\mu)$   {\it away} from zero. 


\begin{lemma}\label{KatonearMcLean}
{\bf (Kato theory for separated eigenvalues away from $ 0 $)} 
    For any $\tp \geq 2$ there exists a compact neighborhood $K^{(\tp)}$ of the McLean curve $\cM^{(\tp)}$ defined in \eqref{mcleanmanifoldsp0}, such that the following hold true:

    \vspace{2mm}
    \noindent $\bullet$ {\sc Case $\tp=2$.} For any $\rho_2>0$ there exist $r_2>0$ and $  \wt \e^{(2)} (\rho_2) > 0 $  such that for any $(\al,\mu)\in  K^{(2)}\setminus B_{\rho_2}(0,0)$ and 
        $ |\e| < \wt \e^{(2)} (\rho_2 )$, the curve $\Gamma^{(2)}(\al,\mu) := \pa B_{r_2}(\lambda^{(2)}_+(\al,\mu))$, where $\lambda^{(2)}_+ (\al,\mu) $ is defined in \eqref{McL.d},  belongs to the resolvent set of $\cL(\al,\mu,\e)$ 
  and the operators 
  \begin{equation}\label{Pproj:2}
            \wt P^{(2)}_{\al,\mu,\e} := \frac{1}{2\pi\im}\oint_{\Gamma^{(2)}
(\al,\mu)} (\lambda - 
\cL(\al,\mu,\e))^{-1} \de\lambda : L^2 \to H^1   
        \end{equation}
      are  well defined projectors 
      commuting with 
        $\cL(\al,\mu,\e)$, i.e. 
  $  [\wt P^{(2)}_{\al,\mu,\e}]^2 
 = 
 \wt P^{(2)}_{\al,\mu,\e} $, 
 $  \wt P^{(2)}_{\al,\mu,\e}\cL(\al,\mu,\e) = \cL(\al,\mu,\e) 
 \wt P^{(2)}_{\al,\mu,\e} $. 
 In addition  
        $ \wt P^{(2)}_{\al,\mu,\e} $
       is  skew-Hamiltonian and reversibility-preserving, i.e. 
$$
 \cJ \wt P^{(2)}_{\al,\mu,\e} = [\wt P^{(2)}_{\al,\mu,\e}]^* \cJ\, ,  \quad  
  \wt P^{(2)}_{\al,\mu,\e} \circ \varrho_c  = \varrho_c\circ \wt P^{(2)}_{\al,\mu,\e} \,   ,
  \quad 
   \varrho_c \ 
   \text{in} \  \eqref{def:cinvolution} \, . 
$$
  The map 
  $$
    (K^{(2)}\setminus B_{\rho_2}(0,0)) \times B_{\wt \e^{(2)}(\rho_2 )}(0) \to  \cL(L^2,H^1) \, ,\ (\al,\mu,\e)
        \mapsto \wt P^{(2)}_{\al,\mu,\e}\, , 
        $$ 
        is analytic
        and belongs to $ \tF $,
            according to \Cref{defFell}.
        The subspace $\wt \cV^{(2)}_{\al,\mu,\e}:=\textup{Rg} \wt P^{(2)}_{\al,\mu,\e}$ is $ 2 $-dimensional, symplectic, invariant under $\cL(\al,\mu,\e)$, and
        \be\label{biscot3}
        \sigma (\cL(\al,\mu,\e))\cap \big\{ z\in \C \ \text{inside } \Gamma^{(2)}(\al,\mu) \big\} = \sigma(\cL(\al,\mu,\e)\vert_{\wt \cV^{(2)}_{\al,\mu,\e}}) \, . 
        \ee
        \noindent $\bullet$ {\sc{Cases $\tp\geq 3$. }} There exist $r_\tp>0$ and $\e^{(\tp)}>0$ such that for any $(\al,\mu)\in K^{(\tp)}$ and $|\e| \leq  \e^{(\tp)} $ the curve $\Gamma^{(\tp)}(\al,\mu) := \pa B_{r_\tp}(\lambda^{(\tp)}_+(\al,\mu))$, where $\lambda^{(\tp)}_+ (\al,\mu) $ is defined in \eqref{McL.d},  belongs to the resolvent set of 
        $\cL(\al,\mu,\e)$ and the operators 
        \begin{equation}\label{Pproj:tp}
            P^{(\tp)}_{\al,\mu,\e} := \frac{1}{2\pi\im}\oint_{\Gamma^{(\tp)}
(\al,\mu)} (\lambda - 
\cL(\al,\mu,\e))^{-1} \de\lambda : L^2 \to H^1  
        \end{equation}
        are well defined  projectors,  commuting with 
        $\cL(\al,\mu,\e)$, skew-Hamiltonian and reversibility-preserving.  
        Moreover
    \be\label{defPp} 
        P^{(\tp)}_{\al,\mu,\e} \in  \cA(K^{(\tp)},\e^{(\tp)};L^2,H^1) \cap \tF 
        \ee
          according to \Cref{def:tildeM,defFell}. 
        Each subspace $\cV^{(\tp)}_{\al,\mu,\e} :=\textup{Rg} P^{(\tp)}_{\al,\mu,\e} $ is  $2$-dimensional,  symplectic, invariant under $\cL(\al,\mu,\e)$,  and
        $$
        \sigma (\cL(\al,\mu,\e))\cap \big\{ z\in \C \ \text{inside } \Gamma^{(\tp)}(\al,\mu) \big\} = \sigma(\cL(\al,\mu,\e)\vert_{\cV^{(\tp)}_{\al,\mu,\e}})
        \, . 
        $$

\end{lemma}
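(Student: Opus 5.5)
This is a standard Kato perturbation argument around the unperturbed spectral picture of \Cref{separation_eigenvalues}, carried out uniformly on the compact set $K^{(\tp)}$ (resp. $K^{(2)}\setminus B_{\rho_2}(0,0)$).

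\textbf{Step 1: choice of contour.} I first fix $K^{(\tp)}\subset\cN^{(\tp)}$ compact, small enough that on it $|\omega^{(\tp)}_+-\omega^{(\tp)}_-|\le \tc_\tp/4$. For $\tp=2$, away from $B_{\rho_2}$, I use $\tc_2(\rho_2)$ instead. By \eqref{0905:1850} every other unperturbed eigenvalue $\im\omega_q^\sigma$ is then at distance at least $\tc_\tp/4$ from both $\lambda^{(\tp)}_\pm(\al,\mu)$. Choosing $r_\tp:=\tc_\tp/8$, the circle $\Gamma^{(\tp)}(\al,\mu)$ encloses exactly the two colliding eigenvalues. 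The circle also stays at distance at least $\tc_\tp/8$ from all of $\sigma(\cL(\al,\mu,0))$, uniformly in $(\al,\mu)$.

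\textbf{Step 2: resolvent bound and perturbation.} Since $\cL(\al,\mu,0)$ is a Fourier multiplier, $\|(\lambda-\cL(\al,\mu,0))^{-1}\|_{\cL(L^2,H^1)}\le C$ uniformly for $\lambda\in\Gamma^{(\tp)}(\al,\mu)$ and $(\al,\mu)\in K^{(\tp)}$. The $H^1$ gain comes from the growth $|\omega_k^\sigma|\sim|k|$, which makes $\lambda-\cL(\al,\mu,0)$ elliptic of order one. By \Cref{DNProp1} and \eqref{regGBL}, $\cL(\al,\mu,\e)-\cL(\al,\mu,0)$ is $\cO(\e)$ in $\cL(H^1,L^2)$, uniformly on the compact set. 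A Neumann series then gives invertibility of $\lambda-\cL(\al,\mu,\e)$ for $|\e|<\e^{(\tp)}$, together with analyticity in $\e$. The operators \eqref{Pproj:tp} and \eqref{Pproj:2} are therefore well defined, are projectors commuting with $\cL$, and have rank two by continuity of the rank in $\e$.

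\textbf{Step 3: regularity.} This is the delicate point: the contour moves with $(\al,\mu)$. Locally, for $(\al',\mu')$ near $(\al,\mu)$, the same fixed circle still separates the spectrum, so $P$ is locally a Cauchy integral over a fixed contour. Hence $P\in\cA\cap\tF$, using that $\cA$ and $\tF$ are closed under inversion and Cauchy integrals (\Cref{ap_cAF}, \Cref{TFjprop}) together with \eqref{regGBL}. Local definitions agree on overlaps by uniqueness of the Riesz projector, so they glue into one map. For $\tp\ge3$, $K^{(\tp)}$ avoids $\{0\}\times\Z$ except possibly at points where the decomposition \eqref{dectM} is inherited, so we get $P\in\cA(K^{(\tp)},\e^{(\tp)};L^2,H^1)$. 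For $\tp=2$ away from $B_{\rho_2}$, the set $K^{(2)}$ meets no point of $\{0\}\times\Z$, provided the neighborhood is chosen thin near $(0,\pm\frac54)$; this gives plain analyticity.

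\textbf{Step 4: structure.} Skew-Hamiltonianity and reversibility follow as in \Cref{lem:Kato1} and \cite{BMV1}. From $\cL^*=-\cB\cJ$ we get $\cJ(\lambda-\cL)^{-1}=(\lambda+\cL^*)^{-1}\cJ$. The contour is symmetric under $\lambda\mapsto-\bar\lambda$, because it is centered on the imaginary axis, so taking adjoints of the Cauchy integral yields $\cJ P=P^*\cJ$. The reversibility relation \eqref{cLr:rev} yields $\varrho_cP=P\varrho_c$ in the same way. \Cref{existence:Pskewham} then gives that the range is symplectic. The spectral identity, invariance, and \eqref{biscot3} are standard properties of the Riesz projector.

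The main obstacle is the uniformity in Step 2: the bound on the $\cL(H^1,L^2)$ perturbation must be uniform on $K^{(\tp)}$, and for $\tp=2$ the separation constant $\tc_2(\rho_2)$ degenerates as $\rho_2\to0$. This is why the admissible size $\wt\e^{(2)}(\rho_2)$ has to depend on $\rho_2$.
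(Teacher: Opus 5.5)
Your proposal follows the paper's proof essentially step for step:
\begin{itemize}
\item a compact $K^{(\tp)}\subset\cN^{(\tp)}$ on which the colliding pair is close, with a circle centred at $\lambda^{(\tp)}_+(\al,\mu)$;
\item a Neumann series around the unperturbed resolvent, using $\cL(\al,\mu,\e)-\cL(\al,\mu,0)=\cO(\e)$ in $\cL(H^1,L^2)$ uniformly on the compact set;
\item regularity by locally freezing the contour, which is exactly the continuity hypothesis of \Cref{lem:prodintM}$(iv)$, combined with \Cref{TFjprop};
\item the structural properties obtained as in \Cref{lem:Kato1} and \Cref{existence:Pskewham}.
\end{itemize}

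The one step that fails as written is the choice of constants in Step 1. Take $|\omega^{(\tp)}_+-\omega^{(\tp)}_-|\le\tc_\tp/4$ and radius $r_\tp=\tc_\tp/8$ centred at $\lambda^{(\tp)}_+$. Then the partner $\lambda^{(\tp)}_-$ can lie on or outside $\Gamma^{(\tp)}(\al,\mu)$, so the projector need not be defined or of rank two. With only the bounds you state (other eigenvalues at distance $\ge\tc_\tp/4$, partner within $\tc_\tp/4$), no radius separates them at all.

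This is repaired by thinning $K^{(\tp)}$. Suppose $|\omega^{(\tp)}_+-\omega^{(\tp)}_-|\le\tc_\tp/8$ on $K^{(\tp)}$. Then the sum bound \eqref{0905:1850} gives $|\omega^{(\tp)}_+-\omega^\sigma_q|\ge\tfrac{7}{16}\tc_\tp$ for every other $(q,\sigma)$. With $r_\tp=\tc_\tp/4$ the circle encloses both $\lambda^{(\tp)}_\pm$ and stays at distance $\ge\tc_\tp/8$ from $\sigma(\cL(\al,\mu,0))$; for $\tp=2$ use $\tc_2(\rho_2)$ in place of $\tc_\tp$.

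The paper instead takes $|\lambda^{(\tp)}_+-\lambda^{(\tp)}_-|\le\tc_\tp/3$ and $r_\tp=\tfrac23\tc_\tp$. It relies on the stronger form proved in Appendix \ref{app:descmclean} (see \eqref{0905:1835} and \eqref{deleven}), where each of $\omega^{(\tp)}_\pm$ is individually at distance $\ge\tc_\tp$ from the rest of the spectrum.

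Finally, the hedge ``except possibly'' in Step 3 can be dropped. For odd $\tp$ the curve $\cM^{(\tp)}$ does pass through the integer points $(0,\mu_*^\pm(\tp))$, so the decomposition \eqref{dectM} is genuinely needed there. Your frozen-contour argument via \Cref{lem:prodintM}$(iv)$ is what supplies it.
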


\begin{proof}
We deal first with the case  $\tp\geq 3$.  By Lemma \ref{separation_eigenvalues}, there is an open neighborhood $\cN^{(\tp)}$ of  $\cM^{(\tp)}$ such that, for any 
    $(\al, \mu) \in \cN^{(\tp)}$, the  eigenvalues 
$\lambda^{(\tp)}_\pm(\al,\mu)$ in \eqref{McL.d}  are at distance at least $ \tc_\tp > 0 $ from the rest of the spectrum according to   \eqref{0905:1850}.
We then define the 
     compact set $K^{(\tp)}\subset \cN^{(\tp)}$ so that  
     \begin{equation}\label{separationinproof}
         |\lambda^{(\tp)}_+(\al,\mu)-\lambda^{(\tp)}_-(\al,\mu)|\leq \tc_\tp/3 \, , \quad \forall (\al, \mu) \in K^{(\tp)} \quad   \text{with} \  
\tc_\tp > 0 \ \text{defined in} \  \eqref{0905:1850} \, . 
\end{equation}
In this way,  setting 
$r_\tp :=  \frac23\tc_\tp $, the closed, counterclockwise oriented circuit 
     \be\label{Gpincl}
     \Gamma^{(\tp)}(\al,\mu) = \pa B_{r_\tp}(\lambda^{(\tp)}_+(\al,\mu)) 
     \subset \rho(\cL(\al,\mu,0)) \quad (\text{= the  resolvent set of }\cL(\al,\mu,0))\, ,  
     \ee
     isolate the eigenvalues $\lambda^{(\tp)}_\pm(\al,\mu)$ from the rest of the spectrum
     of $\cL(\al,\mu,0)$,
     for any $ (\al, \mu) \in K^{(\tp)}$.

We now analyze  the resolvent    $(\lambda-\cL(\al,\mu,\e))^{-1}$ for any $\lambda \in \Gamma^{(\tp)}(\al, \mu)$. In view of \eqref{decoGsha},
   we decompose $ \cL(\al,\mu,\e) $ in  \eqref{cLame1} as 
    $$
\lambda-\cL(\al,\mu,\e) = \lambda - \cL(\al,\mu,0) -\cR(\al,\mu,\e)  \quad \text{where} \quad     
    \cR(\al,\mu,\e) := \begin{bmatrix}
        (\pa_x+\im\mu)p_\e(x) & \cG^\sharp (\al,\mu,\e) \\
        -a_\e(x) & p_\e(x)(\pa_x+\im\mu)  
    \end{bmatrix} 
    $$
    satisfies 
by \eqref{apexp}, \eqref{est:DNpert},  
for any  $ |\e|\leq \e_0 $ in \Cref{DNProp1}, 
    $$
    \sup_{(\al,\mu) \in K}\|\cR(\al,\mu,\e) \|_{\cL(H^1,L^2)}\lesssim |\e|\, , \qquad \forall K\subset \R^2 \ \text{compact} \,.
    $$  
   For any $\tp\geq 3$, by \eqref{Gpincl} 
   and since $K^{(\tp)}$ is compact,  
$   \max_{\lambda\in\Gamma^{(\tp)} (\al,\mu)  \atop (\al,\mu)\in K^{(\tp)} } \|(\lambda-\cL(\al,\mu,0))^{-1}\|_{\cL(L^2, H^1)} < + \infty $ 
and  therefore 
there is $\e^{(\tp)}>0$ such that 
$$\sup_{\lambda\in\Gamma^{(\tp)} (\al,\mu)  \atop (\al,\mu)\in K^{(\tp)} } \|\cR(\al,\mu,\e)(\lambda-\cL(\al,\mu,0))^{-1}\|_{\cL(L^2, L^2)}\lesssim_\tp |\e| <1  \ , \quad \forall |\e|\leq \e^{(\tp)} \ . 
$$
Hence  for any $(\al, \mu) \in K^{(\tp)}$, $\lambda \in \Gamma^{(\tp)}(\al,\mu)$   and $|\e| \leq \e^{(\tp)}$,  
the resolvent 
of  $(\lambda-\cL(\al,\mu,\e))^{-1} $ 
is well defined as     \begin{equation}\label{def.risol}
        (\lambda-\cL(\al,\mu,\e))^{-1}
        = (\lambda - \cL(\al,\mu,0)  )^{-1}(\uno - \cR(\al,\mu,\e)(\lambda - \cL(\al,\mu,0))^{-1} )^{-1}  \in \cL(L^2,H^1) \, , 
    \end{equation}
and  consequently   $\Gamma^{(\tp)}(\al,\mu)\subset \rho(\cL(\al,\mu,\e)) $.

For $\tp = 2$ we repeat the same procedure away from the origin, exploiting the first of \eqref{0905:1850}. We  define  a compact set $K^{(2)}\subset \cN^{(2)}$ such that for any $(\al,\mu)\in K^{(2)}\setminus B_{\rho_2}(0,0)$ one has $
    | \lambda^{(2)}_+(\al,\mu)- \lambda^{(2)}_-(\al,\mu)|\leq \mathsf{c}_2(\rho_2)/3 $.
The closed, counterclockwise oriented circuit   
$$
    \Gamma^{(2)}(\al,\mu)
   = \pa B_{r_2}(\lambda^{(2)}_+(\al,\mu))   
   \subset \rho(\cL(\al,\mu,0))  \, , \quad
   r_2 := 2\mathsf{c}_2(\rho_2)/3 \, , 
$$
   separates $\lambda^{(2)}_\pm(\al,\mu)$ from the rest of the spectrum.    Then we obtain \eqref{def.risol} for  any $(\al,\mu)\in K^{(2)}\setminus B_{\rho_2}(0,0)$ and $|\e|\leq \wt \e^{(2)}(\rho_2)$ small. Note that 
   $ \wt \e^{(2)}(\rho_2) \to 0 $ as $\rho_2 \to 0$.
    
  This shows that  the projectors   \eqref{Pproj:2} and \eqref{Pproj:tp} are well defined bounded operators $L^2\to H^1$.
  The projectors $\wt P^{(2)}_{\al,\mu,\e}$ and $P^{(\tp)}_{\al,\mu,\e}$ commute with $\cL(\al,\mu,\e) $,
  are skew-Hamiltonian and reversibility preserving properties as  follow as in \Cref{lem:Kato1}. The range of a skew-Hamiltonian projector is a symplectic subspace by Lemma \ref{existence:Pskewham}.

We now prove \eqref{defPp}. 
To this goal, first note that, by the continuity of $(\al,\mu,\e) \mapsto \cL(\al,\mu,\e)$, for every fixed $(\und \al, \und \mu) \in K^{(\tp)}$ and $|\und\e|\leq \e^{(\tp)}$, there exists a neighborhood $\und{\cU} \ni (\und \al, \und \mu)$ such that
$$
\sigma (\cL(\al,\mu,\e))\cap B_{r_\tp}(\lambda^{(\tp)}_+(\al,\mu)) = \sigma (\cL(\al,\mu,\e) )\cap B_{r_\tp}(\lambda^{(\tp)}_+(\und \al, \und \mu))  \ , 
\quad \forall (\al, \mu,\e) \in \und{\cU} \ , 
$$
  in view of  the continuity of separated eigenvalues of $\cL(\al,\mu,\e)$, see for instance \cite[section 4.3.3]{Kato}.
    Therefore  the curve $\Gamma^{(\tp)}(\al,\mu)$ can be continuously deformed into $\Gamma^{(\tp)}(\und\al,\und\mu)$ inside the resolvent set of $\cL(\al,\mu,\e)$, verifying the continuity assumption of  \Cref{lem:prodintM} $(iv)$.
    Applying \Cref{lem:prodintM} $(iv)$ we 
    deduce  \eqref{defPp}.
    
  The McLean curve  $\cM^{(2)}$ intersects $\{0\}\times \Z$ only at $(0,0)$. Thus 
if the neighborhood $ K^{(2)} $ of $ \cM^{(2)} $  is sufficiently 
small, 
$  K^{(2)}\setminus B_{\rho_2}(0,0) $ does not intersect $\{0\}\times \Z $ as well, and the  operator $\cL(\al,\mu,\e)$ is analytic on $[K^{(2)}\setminus B_{\rho_2}(0,0)]\times B_{\e^{(2)}}(0)$, as well as the operator $ \wt P^{(2)}_{\al,\mu,\e}$.

    Last we prove that $ P^{(\tp)}_{\al,\mu,\e}\in\tF$. This follows immediately by integrating  the resolvent operator $(\lambda - \cL(\al,\mu,\e))^{-1}$, which, by \Cref{TFjprop} $(iii)$, belongs to $\tF$  for every $(\al,\mu) \in K^{(\tp)}$ and $\lambda\in \Gamma^{(\tp)}(\al,\mu)$. The same holds for $\wt P^{(2)}_{\al,\mu,\e}$ and  $(\al,\mu)\in K^{(2)}\setminus B_{\rho_2}(0,0) $.
\end{proof}

The  construction of  
\Cref{KatonearMcLean} is {\it not}  uniform in $\e$ when $ (\alpha, \mu) $ tends to $  (0,0) $ since 
$ \wt \e^{(2)} ( \rho_2) \to 0  $
if  $ \rho_2  \to 0 $. 
To   extend the projector $\wt P^{(2)}_{\al,\mu,\e}$ in \eqref{Pproj:2}, which is defined for any $ (\alpha, \mu) $ in $  
K^{(2)}\setminus B_{\rho_2}(0,0) $,  to a whole neighborhood of $ \cM^{(2) } $
we rely on Theorem \ref{TeoremoneFinale},
where we proved that 
for any $(\al, \mu) \in B_{\rho_1}(0,0)$
and 
$|\epsilon | < \epsilon_1 $, 
the $ 4 $-dimensional symplectic subspace $\cV_{\al,\mu,\e}$ in \eqref{projdec} has the symplectic decomposition  
\eqref{decosim}. In view of  \Cref{existence:Pskewham}, this  uniquely
defines  the skew-Hamiltonian projectors 
\( P^{(u)}_{\alpha,\mu,\epsilon} \) and \( P^{(s)}_{\alpha,\mu,\epsilon} \) 
onto 
\( \mathcal{V}^{(u)}_{\alpha,\mu,\epsilon} \) and \( \mathcal{V}^{(s)}_{\alpha,\mu,\epsilon} \) respectively.  
Recalling  \eqref{lem:simplbas2},
and \eqref{proph}  we have 
\begin{equation}\label{Punsta}
 P^{(u)}_{\alpha, \mu, \epsilon} =
 - \cW_c(\cdot , h_1^-) h_1^+ 
   + \cW_c(\cdot , h_1^ +) h_1^- \, , 
   \quad 
    P^{(u)}_{\alpha, \mu, \epsilon}  
    \in \cA_P(B_{\rho_1}(0,0),\e_1;H^1,L^2) \, . 
\end{equation}
We now show that   $ P^{(u)}_{\alpha, \mu, \epsilon}  $ defines an 
analytic 
continuation of $ \wt P^{(2)}_{\al, \mu, \e} $ for any $(\alpha,\mu) $ in a neighborhood of $ (0,0)$.

\begin{lemma}\label{prop.ident}
{\bf (Analytic continuation)}
Let $\rho_1 >0$ given by \Cref{TeoremoneFinale}. Fix   $ \rho_2 \in (0, \rho_1 )$ such that $B_{\rho_2}(0,0) \subset K^{(2)}$ and  set   $\e^{(2)} := 
    \min\{\wt\e^{(2)}(\rho_2), \e_1 \} $ (where $\wt\e^{(2)}(\rho_2) >0 $ is given by Lemma \ref{KatonearMcLean}
    and $\e_1 > 0 $ 
    by \Cref{TeoremoneFinale}).
    Then the  projector valued map $  (K^{(2)} \cup B_{\rho_1}(0,0)) \times B_{\e^{(2)}}(0) \to \cL(L^2, H^1)   $ defined by  
    \begin{equation}\label{P2}
(\alpha, \mu, \epsilon ) \mapsto P^{(2)}_{\alpha, \mu, \epsilon}  := \begin{cases}
        \wt P^{(2)}_{\alpha, \mu, \epsilon} \text{ in }\eqref{Pproj:2} \quad & \forall \,  (\al,\mu, \e)\in
        (K^{(2)}\setminus B_{\rho_2}(0,0)) \times  B_{\e^{(2)}}(0)  \ , 
        \\
            P^{(u)}_{\alpha, \mu, \epsilon}\text{ in }\eqref{Punsta} \quad & \forall \,   (\al,\mu, \e)\in B_{\rho_1}(0,0) \times B_{\e^{(2)}}(0)  \, , 
    \end{cases} 
\end{equation}
is polar-analytic in 
$ \cA_P (K^{(2)},\e^{(2)}; L^2,H^1) $, according to \Cref{def:tildeM}, and belongs to $\tF$. 
\end{lemma}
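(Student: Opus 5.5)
The key point is that the two definitions in \eqref{P2} agree on the overlap $(B_{\rho_1}(0,0)\setminus B_{\rho_2}(0,0))\times B_{\e^{(2)}}(0)$. Once this is shown, $P^{(2)}$ is well defined. Polar-analyticity in $\cA_P(K^{(2)},\e^{(2)};L^2,H^1)$ then follows by combining two facts: $\wt P^{(2)}$ is analytic away from the origin by Lemma \ref{KatonearMcLean}, and $P^{(u)}$ is polar-analytic on $B_{\rho_1}(0,0)$ by \eqref{Punsta}. Membership in $\tF$ also follows, since both pieces are in $\tF$; for $P^{(u)}$ this holds because it is obtained from $P_{\al,\mu,\e}\in\tF$ through the decoupling transformations, which preserve the parity/band structure as in the proof of \Cref{lem:orderEFG} and \eqref{evod:properties}. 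Alternatively, $\tF$ membership transfers by the identity theorem from the region where the two pieces coincide.

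To prove the coincidence, I would use uniqueness of skew-Hamiltonian projectors, \Cref{existence:Pskewham}. Both $\wt P^{(2)}_{\al,\mu,\e}$ and $P^{(u)}_{\al,\mu,\e}$ are skew-Hamiltonian. It therefore suffices to show that they have the same range, namely $\wt\cV^{(2)}_{\al,\mu,\e}=\cV^{(u)}_{\al,\mu,\e}$. Both subspaces are $2$-dimensional and invariant under $\cL(\al,\mu,\e)$. By \eqref{biscot2}, $\cV^{(u)}$ is the invariant subspace carrying the eigenvalues $\lambda_1^\pm(\al,\mu,\e)$. By \eqref{biscot3}, $\wt\cV^{(2)}$ is the spectral subspace for the part of the spectrum inside $\Gamma^{(2)}(\al,\mu)$.

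I would first check that the two spectral sets agree for $(\al,\mu)$ in the annulus. At $\e=0$ this is direct: $\lambda_1^\pm(\al,\mu,0)=\im\omega_1^\pm(\al,\mu)=\lambda^{(2)}_\pm(\al,\mu)$. For $\rho\ge\rho_2$ these eigenvalues are separated from $\lambda_0^\pm(\al,\mu,0)$ by a gap depending only on $\rho_2$, and they also lie inside $\Gamma^{(2)}$. By continuity in $\e$, for $|\e|<\e^{(2)}$ (shrinking $\e^{(2)}$ if necessary, uniformly on the compact annulus) the eigenvalues $\lambda_1^\pm(\al,\mu,\e)$ stay inside $\Gamma^{(2)}(\al,\mu)$, while $\lambda_0^\pm(\al,\mu,\e)$ and $\sigma''$ stay outside. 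Since $\cV^{(u)}\subset\cV_{\al,\mu,\e}$ is invariant and the spectrum of $\cL$ restricted to it lies inside $\Gamma^{(2)}$, it is contained in the Riesz spectral subspace $\textup{Rg}\,\wt P^{(2)}$. Equality then follows from equal dimension.

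An alternative route avoids the eigenvalue bookkeeping. On $\cV_{\al,\mu,\e}=\cV^{(u)}\oplus\cV^{(s)}$ the Riesz projector of $\cL|_{\cV}$ onto the spectrum inside $\Gamma^{(2)}$ is the projector along $\cV^{(s)}$. It is therefore natural to show that $\wt P^{(2)}$ and $P^{(u)}$ coincide on $\cV^{(u)}$ and both vanish on $\cV^{(s)}$ and on $\textup{Ker}P_{\al,\mu,\e}$.

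I expect the main obstacle to be the uniformity of the spectral separation on the overlap region, specifically that $\lambda_0^\pm(\al,\mu,\e)$ stays outside $\Gamma^{(2)}(\al,\mu)$ even though $\lambda_0^\pm$ and $\lambda_1^\pm$ approach each other as $\rho\to0$. This is precisely why the overlap is restricted to $\rho\ge\rho_2$ and why $\e^{(2)}\le\wt\e^{(2)}(\rho_2)$. I would handle it using the explicit expansions \eqref{eignear0} and \eqref{eignear0s} together with \eqref{0905:1850}. For the regularity statement, I would check directly from Definition \ref{def:tildeM} that the glued map is analytic on $\bar{[K^{(2)}\setminus B_{r}(0,0)]}$ for some $r<\rho_2$ and polar-analytic on $B_{\rho_1}(0,0)$, which holds by construction.
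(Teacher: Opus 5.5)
Your argument is correct, but it follows a different route from the paper's.

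\textbf{The paper's route.} The paper proves $\wt P^{(2)}=P^{(u)}$ only on a small open set and then propagates it.
\begin{itemize}
\item It fixes $(\al_0,\mu_0)$ in the overlap $\cU=K^{(2)}\cap(B_{\rho_1}\setminus B_{\rho_2})(0,0)$, chosen off $\cM^{(2)}$ so that at $\e=0$ all four eigenvalues near zero are simple.
\item On a neighbourhood of $(\al_0,\mu_0,0)$ it matches $\lambda^{(2)}_\pm(\al,\mu,\e)$ with $\lambda_1^\pm(\al,\mu,\e)$ by continuity of simple eigenvalues, which gives $\wt\cV^{(2)}=\cV^{(u)}$ and hence equality of the two skew-Hamiltonian projectors there.
\item It then extends the identity to all of $\cU\times B_{\e^{(2)}}(0)$ by the identity theorem, using that both maps are analytic and that this set is connected.
\end{itemize}

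\textbf{Your route.} You prove the identity pointwise on the whole overlap. Invariance of $\cV^{(u)}$, together with $\sigma(\mathtt{U})$ lying inside $\Gamma^{(2)}$, gives $\cV^{(u)}\subset\textup{Rg}\,\wt P^{(2)}$. Equal dimension gives equality of the ranges, and \Cref{existence:Pskewham} gives equality of the projectors. This needs only continuity of $\mathtt{U}$ and no connectedness of the overlap, at the price of locating $\sigma(\mathtt{U})$ uniformly.

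\textbf{Two adjustments.}
\begin{itemize}
\item You do not need to shrink $\e^{(2)}$; shrinking would change the constant fixed in the statement. For each $(\al,\mu)$ in the overlap, $\Gamma^{(2)}(\al,\mu)$ lies in the resolvent set of $\cL(\al,\mu,\e)$ for every $|\e|<\e^{(2)}$. By \eqref{biscot2}, $\sigma(\mathtt{U}(\al,\mu,\e))\subset\sigma(\cL(\al,\mu,\e))$. Hence the count $\frac{1}{2\pi\im}\oint_{\Gamma^{(2)}(\al,\mu)}\mathrm{tr}\,(\lambda-\mathtt{U}(\al,\mu,\e))^{-1}\,\de\lambda$ is continuous and integer valued in $\e$, and so equals its value $2$ at $\e=0$ throughout $B_{\e^{(2)}}(0)$.
\item What you single out as the main obstacle, keeping $\lambda_0^\pm$ outside $\Gamma^{(2)}$, is never used in your inclusion-plus-dimension argument.
\end{itemize}

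\textbf{On membership in $\tF$.} The paper's proof does not address this explicitly. Your first justification does not work: \Cref{lem:orderEFG} and \eqref{evod:properties} control only the parity in $\e$, not the band bound $|\kappa|\le\ell$ on the $\ell$-th jet. Your second justification is the right one. The entries $(P^{(u)}_\ell v_1e^{\im j_1x},v_2e^{\im j_2x})$ of the $\e$-jets are analytic in $(\rho,\theta)$ on the connected set $\{|\rho|<\rho_1\}\times\T$. For forbidden $j_2-j_1$ they vanish on the overlap, where $P^{(u)}=\wt P^{(2)}\in\tF$, and therefore they vanish identically.
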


\begin{proof}
Setting $\cU:= K^{(2)}\cap 
    (B_{\rho_1}\setminus B_{\rho_2})(0,0)$,  
the choice of $\rho_2$ guarantees that the set $\cU\times B_{\e^{(2)}}(0)$ is connected. 
    We shall prove first that
    $P^{(u)}_{\al,\mu,\e} \equiv \wt P^{(2)}_{\al,\mu,\e}$  in an open set contained in  $\cU\times B_{\e^{(2)}}(0) $.  
    Fix  $(\und\al, \und\mu) \in \cU\setminus \cM^{(2)}$,  so  the unperturbed eigenvalues 
    $
     \{ 
    \lambda_1^\pm(\und\al, \und\mu,0), \ \lambda_0^{\pm}(\und\al, \und\mu,0)\}$ are  simple and purely imaginary.
  Then take a sufficiently 
 small neighbourhood $\cU_0 \subset \cU\times B_{\e^{(2)}}(0) $ of $(\und \al, \und \mu, 0)$ so that the perturbed eigenvalues 
  $ \{ 
    \lambda_1^\pm(\al, \mu,\e), \ \lambda_0^{\pm}(\al, \mu,\e)\}$  are still  simple; by \Cref{TeoremoneFinale}, the  2-dimensional subspace 
    $\cV^{(u)}_{\al, \mu, \e}$ in \eqref{decosim} is spanned by the two eigenvectors corresponding to
    $\{\lambda_1^\pm(\al, \mu,\e)\}$, cf. also  \eqref{biscot2}.
   At the same time, by  \Cref{KatonearMcLean}, the simple  eigenvalues
   $
    \lambda^{(2)}_\pm(\al, \mu, \e)\equiv \lambda_1^\pm(\al, \mu,\e)$ on $\cU_0$, since they  coincide at $(\und \al, \und \mu, 0)$ and then by continuity of simple eigenvalues on $\cU_0$. 
    Therefore,  $ \wt \cV^{(2)}_{\al,\mu,\e}\equiv  \cV^{(u)}_{\al, \mu, \e}\ $ for any $(\al, \mu, \e) \in \cU_0$ as well as 
    $ P^{(u)}_{\al,\mu,\e} \equiv \wt P^{(2)}_{\al,\mu,\e}$.
Since both projectors are analytic on $\cU \times \cB_{\e^{(2)}}(0)$ and coincide on an open subset, they agree everywhere by analytic continuation.
    Then by \eqref{Punsta} the projector \eqref{P2} is also polar-analytic. 
\end{proof}

\smallskip 
\noindent{\bf Projectors in whole neighborhood $ K^{(\tp)}$ of the McLean curve $ \cM^{(\tp)}$.}
For any $ \tp \geq 2 $, 
we denote the projectors  defined in  \Cref{KatonearMcLean,prop.ident}
as
\begin{equation}\label{Pproj:all}
    P^{(\tp)}_{\al,\mu,\e} := \begin{cases}
        \eqref{P2} \quad & \text{ if }\tp = 2 \, , \\
        \eqref{Pproj:tp} \quad &\text{ if } \tp \geq 3 \, ,
    \end{cases} \qquad \forall (\al,\mu,\e)\in K^{(\tp)} \times B_{\e^{(\tp)}}(0)\, .
\end{equation}
By 
 \eqref{defPp} and  Lemma \ref{prop.ident}
\be\label{regPX}
P^{(2)}\in\cA_P(K^{(2)},\e^{(2)};L^2,H^1)\cap\tF \, , \quad
P^{(\tp)}\in\cA(K^{(\tp)},\e^{(\tp)};L^2,H^1)\cap\tF \, , \quad \forall \tp\geq 3 \, . 
\ee

\begin{lemma}\label{katoisomMcLean}
    For any $\tp\geq 2$, for any $(\al,\mu,\e)\in K^{(\tp)}\times B_{\e^{(\tp)}}(0)$
    (by possibly shrinking $\e^{(\tp)} $)
    the operator   
\begin{equation}\label{Umclean}
            U^{(\tp)}_{\al,\mu,\e}:= \big( \uno-(P^{(\tp)}_{\al,\mu,\e}-P^{(\tp)}_{\al,\mu,0})^2 \big)^{-1/2} \big[ 
P^{(\tp)}_{\al,\mu,\e}P^{(\tp)}_{\al,\mu,0} + (\uno - P^{(\tp)}_{\al,\mu,\e})(\uno-P^{(\tp)}_{\al,\mu,0}) \big] 
        \end{equation}
        is a symplectic and reversibility preserving isomorphism of both $L^2$ and $H^1$,  i.e. 
\begin{equation}\label{Upreversible}
        [U^{(\tp)}_{\al,\mu,\e}]^* \cJ U^{(\tp)}_{\al,\mu,\e}  =\cJ \, , \qquad \varrho_c \circ U^{(\tp)}_{\al,\mu,\e} = U^{(\tp)}_{\al,\mu,\e} \circ \varrho_c \, , 
    \end{equation}
   and  
    \be\label{U.P}
    U^{(\tp)}_{\al,\mu,\e} P^{(\tp)}_{\al,\mu,0}[U^{(\tp)}_{\al,\mu,\e}]^{-1} =  P^{(\tp)}_{\al,\mu,\e}  \, .
    \ee
The subspaces 
$\mathcal{V}_{\al,\mu,\e}^{(\tp)}= \textup{Rg} P^{(\tp)}_{\al,\mu,\e}$  are all  isomorphic.  
  Denoting   $ Z $   either
     $L^2$ or $H^1$, 
\be\label{regUX}
    U^{(2)}\in\cA_P(K^{(2)},\e^{(2)};Z,Z)\cap\tF \, , \quad
    U^{(\tp)}\in\cA(K^{(\tp)},\e^{(\tp)};Z,Z)\cap\tF \, , \quad \forall \tp\geq 3 \, . 
\ee
\end{lemma}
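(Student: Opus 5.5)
The plan is to follow the argument already used for the Benjamin--Feir projector in \Cref{lem:Kato1}, item 3, which in turn follows \cite[Lemma 3.2]{BMV1} and Kato's classical construction. The only new ingredient is that the reference projector is now $P^{(\tp)}_{\al,\mu,0}$, which depends on $(\al,\mu)$, instead of a fixed projector.

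The first step is to make $U^{(\tp)}_{\al,\mu,\e}$ well defined. By \eqref{regPX} and compactness of $K^{(\tp)}$, the map $\e\mapsto P^{(\tp)}_{\al,\mu,\e}$ is analytic and uniformly continuous. Hence $\|P^{(\tp)}_{\al,\mu,\e}-P^{(\tp)}_{\al,\mu,0}\|_{\cL(Z,Z)}\lesssim|\e|$ uniformly in $(\al,\mu)\in K^{(\tp)}$, for $Z=L^2$ and, since the projectors map into $H^1$, also for $Z=H^1$. After shrinking $\e^{(\tp)}$ this norm is $<1/2$. The inverse square root $(\uno-(P_\e-P_0)^2)^{-1/2}$ is then defined by its convergent binomial series, and it commutes with both $P_\e$ and $P_0$, since $(P_\e-P_0)^2$ commutes with each projector.

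The second step is the algebra. Using the standard identities $P_\e(P_\e P_0+(\uno-P_\e)(\uno-P_0))=P_\e P_0$ and $(P_\e P_0+(\uno-P_\e)(\uno-P_0))P_0=P_\e P_0$, one checks that
\[
V:=P_\e P_0+(\uno-P_\e)(\uno-P_0), \qquad V':=P_0P_\e+(\uno-P_0)(\uno-P_\e)
\]
satisfy $VV'=V'V=\uno-(P_\e-P_0)^2$. This gives invertibility of $U$ with the explicit inverse of \Cref{lem:Kato1}, and the intertwining $UP_0=P_\e U$, which is \eqref{U.P}. As a consequence the ranges $\cV^{(\tp)}_{\al,\mu,\e}$ are all isomorphic to $\cV^{(\tp)}_{\al,\mu,0}$.

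The third step is the structure \eqref{Upreversible}. Reversibility follows because $\varrho_c$ commutes with each $P$, and it is antilinear; this is harmless because all coefficients in the power series are real. For symplecticity, skew-Hamiltonianity $\cJ P=P^*\cJ$ gives $\cJ V=(V')^*\cJ$ and $\cJ(P_\e-P_0)^2=((P_\e-P_0)^2)^*\cJ$. Therefore
\[
U^*\cJ U = V^*\big[(\uno-(P_\e-P_0)^2)^{-1/2}\big]^*\cJ\,(\uno-(P_\e-P_0)^2)^{-1/2}V = V^*\cJ\,(\uno-(P_\e-P_0)^2)^{-1}V .
\]
Since $V^*\cJ=\cJ V'$, this equals $\cJ V'(VV')^{-1}V=\cJ$, exactly as in \cite[Lemma 3.2]{BMV1}.

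The last step is the regularity \eqref{regUX}, and this is where I expect the only real care to be needed. $U$ is obtained from $P_\e$ and $P_0$ by sums, products and a norm-convergent power series. Closure of $\cA$ and of $\tF$ under these operations is given by \Cref{lem:prodintM} and \Cref{TFjprop}, and $P_\e,P_0\in\cA\cap\tF$ by \eqref{regPX}. For $\tp=2$ the same closure must be checked in the polar-analytic class $\cA_P$. Near the origin this means working with $F(\rho,\theta,\e)$, which is analytic, so products and power series remain analytic uniformly in $\theta$. Away from the origin it is ordinary analyticity, and both descriptions agree on the overlap.

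The main point to verify is that the power series converges uniformly on the complexified domain in $(\rho,\theta,\e)$, or in $(\al^2,\mu,\e)$ for the decomposition \eqref{dectM}. I would secure this by shrinking the complex neighbourhood so that $\|P_\e-P_0\|<1/2$ holds there too. That bound follows from analyticity in $\e$ and $P_\e-P_0=\cO(\e)$.
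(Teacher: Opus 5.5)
Your proposal is correct and follows the same route as the paper. The paper obtains \eqref{U.P} and \eqref{Upreversible} ``as in \Cref{lem:Kato1}'', i.e.\ by Kato's construction combined with the skew-Hamiltonian and reversibility-preserving properties of the projectors, and it derives \eqref{regUX} from \eqref{regPX} through the composition and functional-calculus closure properties in \Cref{lem:prodintM} and \Cref{TFjprop}. You make explicit what the paper leaves implicit: the identities $VV'=V'V=\uno-(P_\e-P_0)^2$ and $\cJ V=(V')^*\cJ$, and the uniform smallness $\|P_\e-P_0\|\lesssim|\e|$ that the functional calculus requires.
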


\begin{proof}
The transformation 
operator 
$ U^{(\tp)}_{\al,\mu,\e} $ in \eqref{Umclean}
  satisfies  \eqref{U.P} and  \eqref{Upreversible}
 as in Lemma \ref{lem:Kato1}.   
Properties 
    \eqref{regUX}  follow by \eqref{regPX} and applying the functional calculus \Cref{lem:prodintM}$(i),(ii)$ and  \Cref{TFjprop} $(i)$. 
\end{proof}

The  unperturbed basis 
$ \{ v^{(\tp)}_\sigma (\al,\mu), \sigma = \pm  \} $ 
of $ \mathcal{V}_{\al,\mu,0}^{(\tp)}$ defined in  \eqref{ivMC}  is mapped by the operator 
$U^{(\tp)}_{\al,\mu,\e}$ in \eqref{Umclean}
into the  basis  of 
$ \mathcal{V}_{\al,\mu,\epsilon}^{(\tp)} $, 
\begin{equation}\label{basisFp}
{\cal F^{(\tp)}} := 
\big\{ v^{(\tp)}_\sigma (\al, \mu,\e)
= U_{\al, \mu,\e}^{(\tp)} v^{(\tp)}_\sigma (\alpha, \mu)
\,, 
\ \sigma = \pm \big\} \, . 
\end{equation}

\begin{lemma}{\bf (Matrix representation of $ \cL(\al, \mu,\e) $ on $ \mathcal{V}_{\al, \mu,\e}^{(\tp)}$)}
\label{lem:katored} 
For any $ \tp \geq 2 $ and  
$ (\al, \mu, \e) \in K^{(\tp)}\times B_{\e^{(\tp)}}(0) $ the matrix that represents  the Hamiltonian and reversible operator $\cL
(\al, \mu,\e): \mathcal{V}_{\al,\mu,\e}^{(\tp)}\to\mathcal{V}_{\al,\mu,\e}^{(\tp)} $ 
with respect to the  basis ${\cal F}^{(\tp)} $ 
in \eqref{basisFp},  is  
\be\label{Lbpagain}
\tL^{(\tp)}(\al, \mu,\e) =\tJ\tB^{(\tp)}(\al, \mu,\e) \, , \qquad \tJ =\begin{pmatrix} -\im & 0 \\ 0 & \im \end{pmatrix} \, , 
\ee
where 
$ \tB^{(\tp)}(\al, \mu,\e) $ is the $2\times 2$ real symmetric matrix 
\begin{align}\label{tocomputematrixB}
\tB^{(\tp)}(\al, \mu,\e) := \begin{pmatrix}
( \mathfrak{B}^{(\tp)}
(\al, \mu,\e) v^{(\tp)}_+(\al,\mu), v^{(\tp)}_+(\al,\mu)) 
& 
(\mathfrak{B}^{(\tp)}(\al, \mu,\e) v^{(\tp)}_-(\al,\mu), v^{(\tp)}_+(\al,\mu)) \\
(\mathfrak{B}^{(\tp)}(\al, \mu,\e) v^{(\tp)}_+(\al,\mu), v^{(\tp)}_-(\al,\mu)) 
&
(\mathfrak{B}^{(\tp)}(\al, \mu,\e) v^{(\tp)}_-(\al,\mu), v^{(\tp)}_-(\al,\mu))
\end{pmatrix}\, ,
\end{align}
and
\begin{equation}\label{Bgotico}
\mathfrak{B}^{(\tp)} (\al, \mu,\e) := [P_{\al, \mu,0}^{(\tp)}]^*
[U_{\al, \mu,\e}^{(\tp)}]^* \, \cB(\al, \mu,\e) \, U_{\al, \mu,\e}^{(\tp)} P_{\al, \mu,0}^{(\tp)}    \in \mathtt{F} \, . 
\end{equation} 
Moreover
\begin{equation}\label{regBtp}
            \tB^{(2)}(\al, \mu,\e )\in \cA_P (K^{(2)},\e^{(2)};\R^{2\times 2})  \, , \qquad \tB^{(\tp)}(\al, \mu,\e)\in \cA(K^{(\tp)},\e^{(\tp)};\R^{2\times 2}) \ \ \forall \tp\geq 3\, , 
\end{equation}
    according to \Cref{def:tildeM,defFell}.
\end{lemma}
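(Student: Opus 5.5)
The plan follows the pattern of \Cref{lem:B.mat} and \Cref{lem:simplbas}, applied now to the $2$-dimensional symplectic subspace $\cV^{(\tp)}_{\al,\mu,\e}$.

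\textbf{Symplectic and reversible structure of the basis.} By \eqref{symp.nonzero}, the unperturbed vectors $v^{(\tp)}_\pm(\al,\mu)$ in \eqref{ivMC} satisfy
\[
\cW_c(v^{(\tp)}_+,v^{(\tp)}_+)=-\im,\qquad \cW_c(v^{(\tp)}_-,v^{(\tp)}_-)=\im,\qquad \cW_c(v^{(\tp)}_+,v^{(\tp)}_-)=0 .
\]
For $\tp$ odd the two vectors carry signs $+$ and $-$, and the relations still hold after relabeling. By \eqref{rev.nonzero} both vectors satisfy $\varrho_c v = -v$. Since $U^{(\tp)}_{\al,\mu,\e}$ is symplectic and reversibility preserving by \eqref{Upreversible}, the vectors $v^{(\tp)}_\sigma(\al,\mu,\e)$ in \eqref{basisFp} satisfy the same relations, and they span $\cV^{(\tp)}_{\al,\mu,\e}$ by \eqref{U.P}.

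\textbf{The matrix $\tJ\tB^{(\tp)}$.} Write $\cL v_\sigma(\e)=\sum_{\sigma'} c_{\sigma'\sigma}\,v_{\sigma'}(\e)$ and pair both sides with $\cJ$ against $v_{\sigma''}(\e)$. Using $\cL=\cJ\cB$ and $\cJ^2=-\uno$,
\[
(\cJ\cL v_\sigma, v_{\sigma''}) = -(\cB v_\sigma, v_{\sigma''}),
\]
while the right-hand side equals $c_{\sigma''\sigma}\,\cW_c(v_{\sigma''},v_{\sigma''})=\mp\im\, c_{\sigma''\sigma}$. Solving gives $c=\tJ\tB$, with $\tJ=\mathrm{diag}(-\im,\im)$ and entries
\[
(\cB U v_\sigma, U v_{\sigma''}) = (U^*\cB U v_\sigma, v_{\sigma''}).
\]
Because $v^{(\tp)}_\sigma(\al,\mu)\in \textup{Rg}\,P^{(\tp)}_{\al,\mu,0}$, one may insert $P^{(\tp)}_{\al,\mu,0}$ on both sides, which yields $\mathfrak{B}^{(\tp)}$ in \eqref{Bgotico} and hence \eqref{tocomputematrixB}. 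Self-adjointness of $\cB$ gives Hermitian symmetry of $\tB^{(\tp)}$. For realness, use that $\cB$ and $U$ commute with $\varrho_c$ and the identity $\overline{(\varrho_c f,\varrho_c g)}=(f,g)$ from \eqref{nuisreal}. Since both basis vectors are $\varrho_c$-odd, each entry is real, and so $\tB^{(\tp)}$ is real symmetric. Finally $\mathfrak{B}^{(\tp)}\in\tF$ by \Cref{TFjprop}, since $P^{(\tp)},U^{(\tp)}\in\tF$ by \eqref{regPX}--\eqref{regUX}, $\cB\in\tF$ by \eqref{regGBL}, and $\tF$ is closed under adjoints and products.

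\textbf{Regularity \eqref{regBtp}, the main obstacle.} The difficulty is that $v^{(\tp)}_\sigma(\al,\mu)$ carries the factor $\Omega_\al(\sigma k+\mu)^{\pm 1/2}$, which is singular only where $\Omega_\al$ vanishes, i.e. at $(0,-\sigma k)$.

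For $\tp\ge3$, $K^{(\tp)}$ avoids $\{0\}\times\Z$ by \Cref{lem:description} (after shrinking $K^{(\tp)}$), and there all factors are analytic. Products of $\cA$-functions and scalar products are handled by \Cref{lem:prodintM}, so each entry lies in $\cA(K^{(\tp)},\e^{(\tp)};\R)$.

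For $\tp=2$, the vectors $v_1^\pm(\al,\mu)$ are singular only at $(0,\mp1)$. Away from the origin they are analytic, while near the origin $\Omega_\al(\pm1+\mu)$ is analytic and nonvanishing; moreover, by \eqref{vtrans} they are combinations of $f_1^\pm(\al,\mu,0)$, which lie in $\cA$ near the origin. Combining this with $P^{(2)},U^{(2)}\in\cA_P$ from \eqref{regPX}--\eqref{regUX} and the product rule for polar-analytic functions in \Cref{lem:prodintM} (together with \Cref{rem:regtM_rhotheta}) gives $\tB^{(2)}\in\cA_P(K^{(2)},\e^{(2)};\R^{2\times2})$.

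The step requiring the most care is checking that the pairing and the insertion of $P^{(\tp)}_{\al,\mu,0}$ preserve the $\cA$ and $\cA_P$ classes uniformly up to the origin for $\tp=2$.
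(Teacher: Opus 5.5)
Your derivation of $\tL^{(\tp)}=\tJ\tB^{(\tp)}$ from the symplectic decomposition, the reality of the entries via $\varrho_c$, and $\mathfrak{B}^{(\tp)}\in\tF$ all follow the paper's proof. Your treatment of $\tp=2$ is also correct: a small $K^{(2)}$ meets $\{0\}\times\Z$ only at the origin, and $v_1^\pm$ are singular only at $(0,\mp1)\notin K^{(2)}$.

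The error is in the regularity step for $\tp\ge3$. You assert that $K^{(\tp)}$ avoids $\{0\}\times\Z$, so that all factors are analytic, but this is false for every odd $\tp$. By \eqref{sing.ML}, $\cM^{(\tp)}$ meets $\{\al=0\}$ at $\mu_*^-(\tp)=-\tm-\tm^2$ and $\mu_*^+(\tp)=1+\tm+\tm^2$, with $\tm=\frac{\tp-1}{2}$. These are integers; for example $(0,-2),(0,3)\in\cM^{(3)}$. \Cref{lem:description} only excludes the origin.

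At such a point $(0,j)$, $j=\mu_*^\pm(\tp)$, the multiplier $|D|_{\al,\mu}$ acts on $e^{-\im jx}$ by $(\al^2+(\mu-j)^2)^{1/2}$. Hence $\cB$, $P^{(\tp)}$, $U^{(\tp)}$ and $\mathfrak{B}^{(\tp)}$ are in general not analytic there, only of the form \eqref{dectM}. This is exactly why \eqref{regBtp} is stated in the class $\cA$ rather than as analyticity. It is also why \Cref{sec:Mclean.des} treats $T^{(\tp)}$ and $\mathsf{d}^{(\tp)}_\pm$ as merely Lipschitz near $(0,\mu_*^\pm(\tp))$.

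The repair uses only tools you already cite, but you must separate the singularities of the basis vectors from those of the operator. First, $\mathfrak{B}^{(\tp)}\in\cA(K^{(\tp)},\e^{(\tp)};H^1,L^2)$, including at the points $(0,\mu_*^\pm(\tp))$, by \eqref{regGBL}, \eqref{regPX}, \eqref{regUX} and \Cref{lem:prodintM}. Second, the vectors $v^{(\tp)}_\pm$ are singular only at their own points: $(0,-\tm)$ and $(0,\tm+1)$ for $\tp$ odd, and $(0,\pm\frac{\tp}{2})$ for $\tp$ even. Comparing with \eqref{sing.ML}, none of these lie on $\cM^{(\tp)}$. So on a small $K^{(\tp)}$ the $v^{(\tp)}_\pm$ are analytic, in particular in $\cA$. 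Then \Cref{lem:prodintM}$(iii)$ places the entries of $\tB^{(\tp)}$ in $\cA(K^{(\tp)},\e^{(\tp)};\R)$, which is precisely the paper's argument.
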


\begin{proof}
In view of \eqref{Umclean}
the vectors 
$ v^{(\tp)}_\pm  (\al, \mu,\e)$
satisfy the same simplecticity property 
\eqref{symp.nonzero} 
of the unperturbed vectors $ v^{(\tp)}_\pm (\al, \mu) $. 
Then  any vector $v \in \mathcal{V}_{\al,\mu,\e}^{(\tp)}$ decomposes as 
$$v = \im \cW_c( v, v^{(\tp)}_+(\al,\mu,\e)) \,  v^{(\tp)}_+(\al,\mu,\e) - \im  \cW_c( v, v^{(\tp)}_-(\al,\mu,\e)) \,  v^{(\tp)}_-(\al,\mu,\e) \ . $$
Applying this formula with $v \leadsto  \cL(\al, \mu, \e) v^{(\tp)}_\sigma(\al,\mu,\e)$, $\sigma \in \{\pm\}$, recalling \eqref{ses}, \eqref{cLame0}, $\cJ^2 = -\uno$, and using \eqref{Bgotico} shows  that 
$\cL
(\al, \mu,\e): \mathcal{V}_{\al,\mu,\e}^{(\tp)}\to\mathcal{V}_{\al,\mu,\e}^{(\tp)} $ 
 is represented in   the  basis ${\cal F}^{(\tp)} $ in \eqref{basisFp} by the matrix \eqref{Lbpagain}.
The  operator $\mathfrak{B}^{(\tp)}
(\al, \mu,\e) $ in \eqref{Bgotico}
is  reversibility preserving because 
$\cB(\al,\mu,\e)$,
$P_{\al, \mu,\e}^{(\tp)}$ and 
$ U_{\al, \mu,\e}^{(\tp)}$
are reversibility preserving by 
\eqref{cLr:rev},  \Cref{KatonearMcLean}
and \eqref{Upreversible}.
Therefore,     using \eqref{rev.nonzero} 
and  $(\varrho_c f, \varrho_c g) = \overline{(f,g)}$, 
we deduce 
    $$
        \overline{( \mathfrak{B}^{(\tp)}
 v^{(\tp)}_\sigma, v^{(\tp)}_{\sigma'}) } = (\varrho_c \mathfrak{B}^{(\tp)}
 v^{(\tp)}_\sigma, \varrho_c v^{(\tp)}_{\sigma'}) = ( \mathfrak{B}^{(\tp)} \varrho_c
 v^{(\tp)}_\sigma,\varrho_c v^{(\tp)}_{\sigma'}) = ( \mathfrak{B}^{(\tp)} 
 v^{(\tp)}_\sigma, v^{(\tp)}_{\sigma'})  
    $$
proving that the entries are real. 
Property
\eqref{Bgotico} 
and 
\begin{equation}\label{fBanalytic}
       \mathfrak{B}^{(2)}(\al,\mu,\e)\in
\cA_P(K^{(2)},\e^{(2)};H^1,L^2)\, , \qquad \mathfrak{B}^{(\tp)}(\al,\mu,\e)\in\cA(K^{(\tp)},\e^{(\tp)};H^1,L^2)\,  , \ \tp\geq 3 \, ,  
   \end{equation}
follow by    \eqref{regGBL}, 
\eqref{regPX}, 
\eqref{regUX}
and \Cref{TFjprop,lem:prodintM}. 
For any  $\tp \geq 2$, the unperturbed eigenvectors  $ v^{(\tp)}_\pm (\al,\mu) $ in \eqref{unperturbed.eigv} 
are analytic except at the  points $(\al,\mu) = (0,\pm \frac{\tp}{2})$ if $\tp$ is even, respectively $(\al,\mu) \in \{ (0, -\frac{\tp-1}{2}), (0, \frac{\tp+1}{2})\}$ if $\tp$ is odd. In view of \eqref{sing.ML}  
these points do not stay on the unperturbed  McLean curve $\cM^{(\tp)}$, and therefore neither in a sufficiently small neighborhood  
$ K^{(\tp)} $ of $\cM^{(\tp)}$.  
We conclude that  the matrix 
$
\tB^{(\tp)}(\al, \mu,\e) $ in \eqref{tocomputematrixB} satisfies
\eqref{regBtp}  in view of    \eqref{fBanalytic} and Lemma~\ref{lem:prodintM}$(iii)$. 
\end{proof}

In view of the symmetry properties 
of the McLean curves stated in  \Cref{lem:description}, 
we may assume that each 
neighborhood 
$K^{(\tp)}$ of the McLean curve $\cM^{(\tp)}$  is invariant both under the reflection $(\al,\mu) \leadsto (-\al,\mu)$ and, if $\tp$ is even, under $(\al,\mu) \leadsto (\al,-\mu)$, respectively if $\tp$ is odd under $(\al,\mu) \leadsto (\al,1-\mu)$. 

\begin{lemma}
{\bf (Symmetries)}
For any $ \tp \geq 2 $ and  
$ (\al, \mu, \e) \in K^{(\tp)}\times B_{\e^{(\tp)}}(0) $ the operator 
$ \mathfrak{B}^{(\tp)} (\al, \mu,\e)$
in \eqref{Bgotico} satisfies 
        \begin{align}
&        
\mathfrak{B}^{(\tp)}(\al,-\mu,\e) = \overline{\mathfrak{B}^{(\tp)}(\al,\mu,\e)}\, , \ 
\text{if }\tp \text{ is even;}
\quad 
\mathfrak{B}^{(\tp)}(\al,1-\mu,\e) = e^{-\im x}\overline{\mathfrak{B}^{(\tp)}(\al,\mu,\e)} e^{\im x}\, , \ 
\text{if }\tp \text{ is odd}\, , 
\label{symmBgotico} \\  
& \mathfrak{B}^{(\tp)}(-\al,\mu,\e) = \mathfrak{B}^{(\tp)}(\al,\mu,\e) \label{symmBgotico1} \, . 
        \end{align}
\end{lemma}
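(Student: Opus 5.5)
Since $\mathfrak{B}^{(\tp)}=[P^{(\tp)}_{\al,\mu,0}]^*[U^{(\tp)}_{\al,\mu,\e}]^*\cB(\al,\mu,\e)U^{(\tp)}_{\al,\mu,\e}P^{(\tp)}_{\al,\mu,0}$ is a product of operators built from $\cB$, it suffices to establish the corresponding symmetries for $\cB(\al,\mu,\e)$, for the projectors $P^{(\tp)}_{\al,\mu,\e}$ (at every $\e$, in particular $\e=0$), and then for $U^{(\tp)}_{\al,\mu,\e}$. Complex conjugation and conjugation by the unitary $e^{\im x}$ commute with taking adjoints, products, the functional calculus $(\uno-(\cdot)^2)^{-1/2}$ and Cauchy integrals, so the symmetries propagate through \eqref{Umclean} and \eqref{Bgotico}.

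For $\cB$, and likewise $\cL$, the needed identities are already available. \eqref{cGcLmenomu} gives $\overline{\cB(\al,\mu,\e)}=\cB(\al,-\mu,\e)$ and $\cB(-\al,\mu,\e)=\cB(\al,\mu,\e)$. Combining the first with the gauge covariance \eqref{cBLmu+k} for $k=1$ yields $\cB(\al,1-\mu,\e)=e^{-\im x}\cB(\al,-\mu,\e)e^{\im x}=e^{-\im x}\overline{\cB(\al,\mu,\e)}e^{\im x}$, and the same identities hold for $\cL$.

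For the projectors with $\tp\ge3$, I would repeat the computation \eqref{primsim}. Conjugating the Cauchy integral \eqref{Pproj:tp} gives a resolvent integral of $\cL(\al,-\mu,\e)$ over the reflected contour $\overline{\Gamma^{(\tp)}(\al,\mu)}$, respectively of $e^{\im x}\cL(\al,1-\mu,\e)e^{-\im x}$. The key check is that this contour encloses exactly the McLean pair at the reflected point. By \eqref{eig.0} and \eqref{labeling.eig}, $\overline{\lambda^\sigma_k(\al,\mu)}=-\im\omega_k^\sigma(\al,\mu)=\lambda^\sigma_{-k}(\al,-\mu)$. For $\tp$ odd one then uses the covariance shift $\lambda^\sigma_{-k}(\al,-\mu)=\lambda^\sigma_{-k-\sigma}(\al,1-\mu)$, which maps $\{(\frac{\tp-1}2,+),(\frac{\tp+1}2,-)\}$ to itself: the pair at $(\al,\mu)$ is sent to the pair at $(\al,1-\mu)$, and the circle is sent to a circle around it. Invariance of $K^{(\tp)}$ under this reflection makes the reflected point admissible, and uniqueness of the spectral projector (or a continuous deformation of the contour inside the resolvent set, as in the proof of \Cref{KatonearMcLean}) gives $\overline{P^{(\tp)}_{\al,\mu,\e}}=e^{\im x}P^{(\tp)}_{\al,1-\mu,\e}e^{-\im x}$. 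For $\tp$ even the pair $\omega^\pm_{\tp/2}$ is mapped to $\omega^\pm_{-\tp/2}$ at $(\al,-\mu)$, which is not literally the same labels. Here I would either use a conjugation by $e^{\pm\im \tp x}$ together with the $\mu\mapsto-\mu$ symmetry of $\cM^{(\tp)}$, or, more simply, identify the enclosed eigenvalues through the symmetry of the set $\{\omega^{(\tp)}_\pm\}$. The $\al\mapsto-\al$ symmetry is immediate, since $\cL(-\al,\mu,\e)=\cL(\al,\mu,\e)$ and $\lambda^{(\tp)}_\pm$ is even in $\al$.

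For $\tp=2$ one must also treat the part of $P^{(2)}$ defined by $P^{(u)}$ near the origin. On $K^{(2)}\setminus B_{\rho_2}$ the argument above applies. Near the origin I would not check $P^{(u)}$ separately: since $P^{(2)}$ is polar-analytic on the connected set $K^{(2)}$ and the identity $\overline{P^{(2)}_{\al,\mu,\e}}=P^{(2)}_{\al,-\mu,\e}$ holds on an open subset, analytic continuation in the sense of \Cref{prop.ident} extends it to all of $K^{(2)}$. The same reasoning handles $\al\mapsto-\al$.

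I expect the main obstacle to be the label bookkeeping for $\tp$ even and the consistency of the unitary gauge factors at $\e=0$ versus $\e\ne0$. Once the projector identities are established, $U^{(\tp)}$ inherits them directly from \eqref{Umclean}, and \eqref{symmBgotico}--\eqref{symmBgotico1} follow by conjugating \eqref{Bgotico} term by term.
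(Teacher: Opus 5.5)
Your route is the paper's: derive $\overline{\cL(\al,\mu,\e)}=\cL(\al,-\mu,\e)=e^{\im x}\cL(\al,1-\mu,\e)e^{-\im x}$ (and the same for $\cB$), transform the Cauchy integral for $P^{(\tp)}$, check which eigenvalues the transformed circle encloses, pass the identity through \eqref{Umclean} and \eqref{Bgotico}, and for $\tp=2$ extend from outside $B_{\rho_2}(0,0)$ by analyticity. The flaw is in the computation you single out as the key check. The rule $\overline{\lambda^\sigma_k(\al,\mu)}=\lambda^\sigma_{-k}(\al,-\mu)$ is false. Since $\Omega_\al$ is even, \eqref{eig.0} gives $\lambda^{-\sigma}_k(\al,-\mu)=\im\big(-(\sigma k+\mu)+\sigma\Omega_\al(\sigma k+\mu)\big)=-\im\omega^\sigma_k(\al,\mu)$, so the correct rule is
\[
\overline{\lambda^\sigma_k(\al,\mu)}=\lambda^{-\sigma}_k(\al,-\mu).
\]
Conjugation flips $\sigma$ and keeps $k$, consistently with \eqref{invareigvec}. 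For instance, at $(0,0)$ one has $\overline{\lambda^+_2}=-\im(2-\sqrt2)$, while your formula gives $\lambda^+_{-2}(0,0)=-\im(2+\sqrt2)$.

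With your rule and \eqref{labeling.eig}, the odd pair $\{(\tm,+),(\tm+1,-)\}$ goes to $\{(-\tm-1,+),(-\tm,-)\}$, not to itself. So your odd-case conclusion does not follow from what you wrote. In the even case your rule creates a label mismatch that does not exist. Your proposed remedy, conjugating by $e^{\pm\im\tp x}$, also cannot work. The two members of the image pair would need opposite shifts $\ell=\pm\tp$ in \eqref{labeling.eig} to return to the labels $\tp/2$. In any case \eqref{symmBgotico} for even $\tp$ carries no gauge factor.

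With the correct rule both parities are immediate and parallel. For $\tp$ even, $\overline{\lambda^{(\tp)}_\pm(\al,\mu)}=\lambda^{(\tp)}_\mp(\al,-\mu)$. For $\tp$ odd, applying \eqref{labeling.eig} with $\ell=-1$ gives $\overline{\lambda^{(\tp)}_\pm(\al,\mu)}=\lambda^{(\tp)}_\mp(\al,1-\mu)$, which is the relation $\overline{\lambda^+_\tm(\al,1-\mu)}=\lambda^-_{\tm+1}(\al,\mu)$ used in \eqref{transfPpbis}. Thus the conjugated circle is centred at the other member of the McLean pair at the reflected point. By \eqref{separationinproof} it can be deformed within the resolvent set to $\Gamma^{(\tp)}$ at that point, which gives the projector identity. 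The remaining steps match the paper: the $\al\mapsto-\al$ symmetry, the propagation to $U^{(\tp)}$ and $\mathfrak{B}^{(\tp)}$, and the analytic continuation for $\tp=2$. Note that this continuation yields the identity only for $(\al,\mu)\neq(0,0)$, not on all of $K^{(2)}$ as you claim.
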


\begin{proof}
We first prove \eqref{symmBgotico} if  $\tp$ is odd. 
    By \eqref{cBLmu+k} and \eqref{cGcLmenomu}
    \begin{equation}\label{cLsymmetry6bis}
        \cL(\al, \mu, \e)  = e^{-\im x} \, \overline{\cL(\al, 1-\mu, \e)} \, e^{ \im x} 
    \quad 
    \Rightarrow 
    \quad 
    \big(\lambda - \cL(\al, \mu, \e) \big)^{-1}= e^{-\im x} \, \big(\lambda -  \overline{\cL(\al, 1-\mu, \e)} \big)^{-1} \, e^{\im x}\, .
    \end{equation}
  Then, denoting 
    $\tm=\frac{\tp-1}{2}$, the projector $ P^{(\tp)}_{\alpha,\mu,\e}$
    in     \eqref{Pproj:all} is equal to  
    \begin{align}\label{transfPpbis}
        e^{-\im x}\overline{P^{(\tp)}_{\al,1-\mu,\e}}e^{\im x} &
        \stackrel{\eqref{Pproj:tp},\eqref{McL.d}} = \overline{\oint_{\pa B_{r_\tp}(\lambda_\tm^+(\al,1-\mu))}e^{\im x}(\lambda-\cL(\al,1-\mu,\e))^{-1}e^{-\im x} \frac{\de\lambda}{2\pi \im}} \\
       \notag
       & \stackrel{\eqref{cLsymmetry6bis}} = \oint_{\pa B_{r_\tp}(\overline{\lambda_\tm^+(\al,1-\mu)})}(\zeta-\cL(\al,\mu,\e))^{-1}\frac{\de\zeta}{2\pi \im}  \\
              \notag
       & \stackrel{\eqref{eig.0}} =  \oint_{\pa B_{r_\tp}(\lambda_{\tm+1}^-(\al,\mu))} (\zeta-\cL(\al,\mu,\e))^{-1}\frac{\de\zeta}{2\pi\im} 
       =  \oint_{\pa B_{r_\tp}(\lambda_{\tm}^+(\al,\mu))} (\zeta-\cL(\al,\mu,\e))^{-1}\frac{\de\zeta}{2\pi\im} 
         \stackrel{\eqref{Pproj:tp}}  = P^{(\tp)}_{\al,\mu,\e} 
    \end{align}
    noting that the curves $ 
    \pa B_{r_\tp}(\lambda_\tm^+(\al,\mu))$ and $\pa B_{r_\tp}(\lambda_{\tm+1}^-(\al,\mu))$ are homotopic inside $\rho(\cL(\al,\mu,\e))$, and 
    encircles the pair of nearby 
    eigenvalues $ \{ \lambda_{\tm}^+(\al,\mu)), 
    \lambda_{\tm+1}^-(\al,\mu)) \} $
    for any 
    $ (\alpha, \mu) $ near the McLean curve $ \cM^{(\tp) }$. 
    
By \eqref{transfPpbis}, the operator $ U^{(\tp)}_{\al,\mu,\e} $ in 
\eqref{Umclean} satisfies  
$ e^{-\im x} \overline{U^{(\tp)}_{\al,1-\mu,\e}} e^{ \im x} =  U^{(\tp)}_{\al,\mu,\e} $ as well, 
and, since
$ \cB (\al, \mu, \e)  = e^{-\im x} \, \overline{\cB(\al, 1-\mu, \e)} \, e^{ \im x} $ (cf. \eqref{cLsymmetry6bis}),  
we deduce 
\eqref{symmBgotico} when $ \tp  $ is odd.
In the case $\tp$ is even, the proof of \eqref{symmBgotico} outside a small neighborhood of the origin follows similarly. 
By analyticity 
$ \mathfrak{B}^{(2)}(\al,-\mu,\e) = \overline{\mathfrak{B}^{(2)}(\al,\mu,\e)}$  
extends for any $ (\alpha, \mu) \neq (0,0 )$.

Finally, since $ \cL(\alpha, \mu, \epsilon)$ is even in $ \alpha $
by \eqref{cGcLmenomu}, the projector 
$ P^{(\tp)}_{\alpha,\mu,\e}$
    in     \eqref{Pproj:all} 
and the similarity transformation operators  $U^{(\tp)}_{\al,\mu,\e}$ in \eqref{Umclean} are even in $\al$ as well, 
and  $ \mathfrak{B}^{(\tp)}(\al,\mu,\e) $ in \eqref{Bgotico} satisfies \eqref{symmBgotico1}. 
\end{proof}

 We now provide the Taylor expansion of the matrix $\tB^{(\tp)}(\al,\mu,\e)$ in  \eqref{tocomputematrixB}, 
 which is  \eqref{tocomputematrixb}. 
The regularity  property \eqref{reguabcp} is proved by \eqref{regBtp}.
 In view of \eqref{Bgotico},
\be\label{Bgotpex}
\mathfrak{B}^{(\tp)} (\al, \mu,\e)
= 
\sum_{\ell \in \N_0} 
\mathfrak{B}_\ell^{(\tp)} (\al, \mu) \e^\ell \, ,
\quad 
\mathfrak{B}_\ell^{(\tp)}(\al, \mu)  \in 
\mathfrak{F}_\ell \, ,
\quad  \forall \ell \in \N_0 \, , \  (\alpha,\mu)
\in K^{(\tp)} \, , 
\ee
where the spaces $ \mathfrak{F}_\ell $ are  introduced in \Cref{defFell}. 

\begin{lemma}{\bf (Taylor expansion of  $\tB^{(\tp)}(\al, \mu,\e)$)}
\label{prop:katored} 
For any $\tp\geq 2$  the  
real valued functions 
$\fa^{(\tp)}(\al, \mu,\e) $, 
$ \fb^{(\tp)}(\al, \mu,\e)  $,
$\fc^{(\tp)}(\al, \mu,\e)$ in \eqref{tocomputematrixb} 
have the expansions 
\eqref{matrixentries} with coefficients,  for any 
$ (\alpha, \mu) \in K^{(\tp)}\setminus \{(0,0)\} $,  
\be\label{abceta}
    \begin{aligned}
   &  \fa_\tp(\al,\mu) := ({\frak B}^{(\tp)}_2(\al,\mu) v^{(\tp)}_+
    (\al,\mu),v^{(\tp)}_+(\al,\mu)) \, , \\
&     \fb_\tp(\al,\mu) := ({\frak B}^{(\tp)}_\tp(\al,\mu) v^{(\tp)}_-(\al,\mu) ,v^{(\tp)}_+(\al,\mu))  \, ,\\
&     \fc_\tp(\al,\mu) := ({\frak B}^{(\tp)}_2(\al,\mu) v^{(\tp)}_-(\al,\mu) ,v^{(\tp)}_-(\al,\mu))\, ,\\
& \beta_\tp(\al, \mu):= ({\frak B}^{(\tp)}_{\tp+2}(\al,\mu) v^{(\tp)}_-(\al,\mu) ,v^{(\tp)}_+(\al,\mu)) \, , 
    \end{aligned}
\ee
where $v^{(\tp)}_\pm(\alpha, \mu) $ are the unperturbed eigenvectors in \eqref{ivMC}.
Furthermore  $ \fa^{(\tp)}(\al, \mu , \e)$, $ \fc^{(\tp)}(\al, \mu , \e) $
are even in $ \e $, while 
$ \fb^{(\tp)}(\al, \mu , \e)$ is odd in $ \e $, resp. even, if $ \tp $ is odd, resp. even, and satisfy
\begin{equation}\label{symmetryapcpbp}
    \begin{aligned}
        &\text{if }\tp \text{ is even: }\quad  \fa^{(\tp)}(\al,-\mu,\e) = \fc^{(\tp)}(\al,\mu,\e)\, , \quad \fb^{(\tp)}(\al,-\mu,\e) = \fb^{(\tp)}(\al,\mu,\e) \, , \\ &\text{if }\tp \text{ is odd: }\quad \fa^{(\tp)}(\al,1-\mu,\e) = \fc^{(\tp)}(\al,\mu,\e)\, , \quad \fb^{(\tp)}(\al,1-\mu,\e) = \fb^{(\tp)}(\al,\mu,\e)\, ,\\
              & \fa^{(\tp)} (-\al,\mu,\e) = \fa^{(\tp)} (\al,\mu,\e)\, , \quad \fb^{(\tp)} (-\al,\mu,\e) = \fb^{(\tp)} (\al,\mu,\e)\, , \quad \fc^{(\tp)} (-\al,\mu,\e) = \fc^{(\tp)} (\al,\mu,\e)\, .
        \end{aligned}
\end{equation}
\end{lemma}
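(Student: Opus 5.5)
The plan is to expand each entry of $\tB^{(\tp)}(\al,\mu,\e)$ in \eqref{tocomputematrixB} in powers of $\e$, using the jet decomposition \eqref{Bgotpex}. Write $\fa^{(\tp)}=(\mathfrak{B}^{(\tp)}v_+^{(\tp)},v_+^{(\tp)})$, $\fb^{(\tp)}=(\mathfrak{B}^{(\tp)}v_-^{(\tp)},v_+^{(\tp)})$ and $\fc^{(\tp)}=(\mathfrak{B}^{(\tp)}v_-^{(\tp)},v_-^{(\tp)})$. Each is then $\sum_\ell(\mathfrak{B}^{(\tp)}_\ell v_\sigma,v_{\sigma'})\e^\ell$, and the argument is a Fourier-support selection rule based on $\mathfrak{B}^{(\tp)}_\ell\in\mathfrak{F}_\ell$.

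By \eqref{unperturbed.eigv}, $v_\pm^{(\tp)}$ are supported on single harmonics. For $\tp$ even these are $e^{\pm\im\frac{\tp}{2}x}$, which differ by $\tp$. For $\tp$ odd they are $e^{\im\frac{\tp-1}{2}x}$ and $e^{-\im\frac{\tp+1}{2}x}$, which also differ by $\tp$.
\begin{itemize}
\item Off-diagonal entry: $(\mathfrak{B}_\ell v_-,v_+)$ only sees the band $\kappa=\pm\tp$ of $\mathfrak{B}_\ell$. By \eqref{TFj} this vanishes unless $\ell\ge\tp$ and $\ell\equiv\tp \pmod 2$. This gives $\fb^{(\tp)}=\fb_\tp\e^\tp+\beta_\tp\e^{\tp+2}+r(\e^{\tp+4})$ with coefficients as in \eqref{abceta}, and $\fb^{(\tp)}$ has the parity of $\tp$ in $\e$.
\item Diagonal entries: they use band $0$, so only even $\ell$ contribute. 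This gives the evenness in $\e$ and identifies $\fa_\tp,\fc_\tp$ as the $\ell=2$ coefficients.
\end{itemize}

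For the zeroth order I would use $U^{(\tp)}_{\al,\mu,0}=\uno$ and $P_{\al,\mu,0}v_\pm=v_\pm$. Then $\mathfrak{B}_0=P^*\cB(\al,\mu,0)P$ and, as in the proof of \eqref{E11E22}, $\cB(\al,\mu,0)v_\sigma=-\im\omega_\sigma\cJ v_\sigma$. With \eqref{symp.nonzero} this gives $(\mathfrak{B}_0v_+,v_+)=-\omega_+^{(\tp)}$ and $(\mathfrak{B}_0 v_-,v_-)=\omega_-^{(\tp)}$. The $\ell=1$ terms vanish by parity, which yields \eqref{expa} and \eqref{expc}.

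The remainders are $r^{(\tp)}(\e^n)$ in the right class because $\tB^{(\tp)}$ lies in $\cA$, respectively $\cA_P$, by \eqref{regBtp}. Dividing out the known low-order terms and using analyticity in $\e$ with uniform bounds on the compact $K^{(\tp)}$ then gives the estimate $|r|\le C|\e|^n$ with a uniform constant.

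For the symmetries \eqref{symmetryapcpbp} I would combine \eqref{symmBgotico} and \eqref{symmBgotico1} with the transformation of the eigenvectors. For $\tp$ even, \eqref{unperturbed.eigv} gives $\overline{v_\pm^{(\tp)}(\al,-\mu)}$ in terms of $v_\mp^{(\tp)}(\al,\mu)$: conjugation flips the harmonic and the sign $\im\sigma$. Writing this out carefully shows it swaps $v_+$ and $v_-$ up to a sign, with the phases cancelling in the quadratic forms. Since the entries are real,
\[
\fa^{(\tp)}(\al,-\mu,\e)=\overline{(\mathfrak{B}(\al,\mu,\e)\bar v_+,\bar v_+)}=\fc^{(\tp)}(\al,\mu,\e),
\]
and similarly $\fb^{(\tp)}(\al,-\mu,\e)=\fb^{(\tp)}(\al,\mu,\e)$. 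For $\tp$ odd the same computation works after composing with multiplication by $e^{\pm\im x}$, which maps $\overline{v_{\frac{\tp-1}{2}}^+(\al,1-\mu)}$ to a multiple of $v^-_{\frac{\tp+1}{2}}(\al,\mu)$. Evenness in $\al$ is immediate because $v_\pm$ depend on $\al$ only through $\al^2$.

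The main obstacle I expect is the bookkeeping of the signs and phases of the eigenvectors under conjugation in this symmetry step, so that the relations hold exactly and not just up to sign. For $\fb$, a sign ambiguity would break $\fb(\al,-\mu)=\fb(\al,\mu)$, and I would check it directly against \eqref{unperturbed.eigv}. A second, lesser point is $\tp=2$ near the origin, where the identities must come from polar-analyticity and continuation; this is why the coefficients are stated only on $K^{(\tp)}\setminus\{(0,0)\}$.
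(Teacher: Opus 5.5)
Your proposal is correct and follows the paper's proof essentially step by step. It uses band selection via $\mathfrak{B}^{(\tp)}_\ell\in\mathfrak{F}_\ell$ together with the $\tp$-harmonic separation of $v^{(\tp)}_\pm$ for the $\e$-expansions and parities, the identity $\cB(\al,\mu,0)v^{(\tp)}_\pm=-\im\omega^{(\tp)}_\pm\cJ v^{(\tp)}_\pm$ with \eqref{symp.nonzero} for the zeroth order, and \eqref{symmBgotico}--\eqref{symmBgotico1} for the symmetries.

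The sign issue you flag does not arise, because the eigenvectors satisfy the exact identities $v_j^\sigma(\al,-\mu)=\overline{v_j^{-\sigma}(\al,\mu)}$ and $v_j^\sigma(\al,1-\mu)=\overline{v_{j+\sigma}^{-\sigma}(\al,\mu)}\,e^{-\im x}$ with no extra sign. Combined with the reality of the entries and the self-adjointness of $\mathfrak{B}^{(\tp)}$, these give \eqref{symmetryapcpbp} directly. In your display, $\bar v_+$ should be read as $\overline{v_+(\al,-\mu)}=v_-(\al,\mu)$.
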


\begin{proof}
We denote for brevity 
$ v^{(\tp)}_\pm := v^{(\tp)}_\pm (\al,\mu) $.
We have 
\be\label{bpam}
\fb^{(\tp)}(\al,\mu,\e) \stackrel{ \eqref{tocomputematrixB}}{=} (\mathfrak{B}^{(\tp)}(\al, \mu,\e) v^{(\tp)}_- , v^{(\tp)}_+) 
\stackrel{\eqref{Bgotpex}} = 
\sum_{\ell=0}^{\tp +3}
\underbrace{(\mathfrak{B}_\ell^{(\tp)}(\al, \mu) v^{(\tp)}_- , v^{(\tp)}_+)}_{=:\fb^{(\tp)}_\ell (\al,\mu)} \e^{\ell}  
+ {r_\fb^{(\tp)}(\epsilon^{\tp+4})}
\, . 
\ee
The coefficients  $\fb^{(\tp)}_\ell (\al, \mu)  = 0$ for any $ \ell<\tp$ and  $\ell \not\equiv   \tp $ $ \textup{mod } 2$, 
since $ {\frak B}^{(\tp)}_\ell \in  
\mathfrak{F}_\ell $ by 
\eqref{Bgotpex}, \eqref{AFlele} and since  the vectors  $v^{(\tp)}_\pm(\al,\mu)$ in \eqref{ivMC} are separated by $\tp$ harmonics according to \eqref{unperturbed.eigv}. In view of \eqref{bpam}, we deduce \eqref{expb}
with the coefficients
$\fb_\tp(\al,\mu)$ 
and $ \beta_\tp(\al,\mu) $ defined in \eqref{abceta}.

We now prove \eqref{expa}, \eqref{expc}.  Arguing as above, by \eqref{AFlele} and \eqref{unperturbed.eigv},  we deduce $({\frak B}^{(\tp)}_1(\al,\mu) v^{(\tp)}_-,v^{(\tp)}_-) = ({\frak B}^{(\tp)}_3(\al,\mu) v^{(\tp)}_-,v^{(\tp)}_-) = 0$, and therefore
\be\label{daac}
    \begin{aligned}
        &\fa^{(\tp)}(\al,\mu,\e) 
\stackrel{\eqref{tocomputematrixB}} 
=
        ({\frak B}^{(\tp)}(\al,\mu,\epsilon) v^{(\tp)}_+ ,v^{(\tp)}_+)  
       \stackrel{\eqref{tocomputematrixB}}   = ({\frak B}^{(\tp)}(\al,\mu,0) v^{(\tp)}_+ ,v^{(\tp)}_+)  + 
\underbrace{({\frak B}^{(\tp)}_2(\al,\mu) v^{(\tp)}_+ ,v^{(\tp)}_+)}_{=: \fa_{\tp}(\al, \mu)} \e^2  + r_\fa^{(\tp)}(\e^4)\\
        &\fc^{(\tp)}(\al,\mu,\e)   \stackrel{\eqref{tocomputematrixB}} 
        = 
        ({\frak B}^{(\tp)}(\al,\mu,\epsilon) v^{(\tp)}_-,v^{(\tp)}_-) \stackrel{\eqref{tocomputematrixB}}  = ({\frak B}^{(\tp)}(\al,\mu,0) v^{(\tp)}_-,v^{(\tp)}_-)  + 
\underbrace{({\frak B}^{(\tp)}_2(\al,\mu) v^{(\tp)}_-,v^{(\tp)}_-)}_{=: \fc_{\tp}(\al, \mu)} \e^2 + r_\fc^{(\tp)}(\e^4)\, .
    \end{aligned}
\ee
   At $\e=0$ 
 we have  $U_{\al,\mu,0}^{(\tp)} = \uno$ in \eqref{Umclean}, so  \eqref{Bgotico} reduces to  ${\frak B}^{(\tp)}(\al,\mu,0) = [P^{(\tp)}_{\al,\mu,0}]^* \cB (\al,\mu,0)
    P^{(\tp)}_{\al,\mu,0} $, and therefore
    \be\label{zerotermac}
    \begin{aligned}
    ({\frak B}^{(\tp)}(\al,\mu,0) v^{(\tp)}_\pm,v^{(\tp)}_\pm) & = 
    (\cB (\al,\mu,0) v^{(\tp)}_\pm ,v^{(\tp)}_\pm ) = -(\cJ \cL(\al,\mu,0) v^{(\tp)}_\pm,v^{(\tp)}_\pm)\\
    & \stackrel{\eqref{eig.0},\eqref{unperturbed.eigv},\eqref{McL.d}}{=} -\im \omega^{(\tp)}_\pm(\al,\mu) (\cJ  v^{(\tp)}_\pm,v^{(\tp)}_\pm) \stackrel{\eqref{symp.nonzero}}{=} \mp \omega^{(\tp)}_\pm(\al,\mu)\, .
\end{aligned}
\ee
By \eqref{daac}, \eqref{zerotermac} 
we deduce 
\eqref{expa} and  \eqref{expc}. 

We finally prove \eqref{symmetryapcpbp}. The eigenvectors  \eqref{unperturbed.eigv} satisfy 
\begin{equation}\label{invareigvec}
        v_j^\sigma(\al,-\mu) = \overline{v_j^{-\sigma}(\al,\mu)} \, , \quad v_j^\sigma(\al,1-\mu) = \overline{v_{j+\sigma }^{-\sigma}(\al,\mu)}e^{-\im x} \, , \quad 
        v_j^\sigma(-\al,\mu) = v_j^\sigma(\al,\mu)  \,. 
    \end{equation}
    Then, in view of \eqref{tocomputematrixB} and \eqref{ivMC}, denoting $\tm = \tfrac{\tp-1}{2}$  for $\tp$ odd, 
    using \eqref{symmBgotico}, 
    \eqref{invareigvec} 
   we have
    $$
    \begin{aligned}
       &\fa^{(\tp)}(\al,1\!-\!\mu,\e) \!=\! ( \mathfrak{B}^{(\tp)}
(\al, 1 \! - \! \mu,\e) v^+_{\tm}(\al,1-\mu), v^+_{\tm}(\al,1-\mu))\!  = \! (\frak{B}^{(\tp)}(\al,\mu,\e) v_{\tm+1}^-(\al,\mu), v_{\tm+1}^-(\al,\mu)) \! = \! \fc^{(\tp)}(\al,\mu,\e), \\
& \fb^{(\tp)}(\al,1\!-\!\mu,\e) \!=\! ( \mathfrak{B}^{(\tp)}
(\al, 1\!-\!\mu,\e) v^+_{\tm}(\al,1\!-\!\mu), v^-_{\tm+1}(\al,1-\mu))\! =\! (\mathfrak{B}^{(\tp)} (\al,\mu,\e) v^-_{\tm+1}(\al,\mu), v^+_{\tm}(\al,\mu)) \!=\!\fb^{(\tp)}(\al,\mu,\e).
    \end{aligned}
    $$
    Similarly we get 
\eqref{symmetryapcpbp} for 
$\tp$ even. Each 
$ \fa^{(\tp)}(\al, \mu , \e)$,
$ \fb^{(\tp)}(\al, \mu , \e)$, $ \fc^{(\tp)}(\al, \mu , \e) $
is  even in $ \alpha $ by 
\eqref{symmBgotico1}
and \eqref{invareigvec}.
\end{proof}

The matrix 
$\tL^{(2)}(\al, \mu, \e)$
 in \eqref{Lbpagain} is similar to 
the matrix $\tU ( \al, \mu, \e ) $ 
in \eqref{UU} for any 
$ (\alpha, \mu )$ near $ (0,0)$.  This identification allows to determine 
the second order coefficients.

\begin{lemma}\label{b00} 
The coefficients 
$\fa_2(\alpha, \mu), \fb_2
(\alpha, \mu), \fc_2 (\alpha, \mu)$ 
in \eqref{matrixentries} extend to continuous functions at the origin with \be\label{dir.limit.abc}
\fb_2(0,0)  = 
- \fa_2(0,0) = - \fc_2(0,0)  =   - \frac12
 \, .
    \ee 
\end{lemma}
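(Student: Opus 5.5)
The plan is to compare the two matrix representations of $\cL(\al,\mu,\e)$ restricted to the same $2$-dimensional invariant subspace near the origin. By \Cref{prop.ident}, for $(\al,\mu)\in B_{\rho_1}(0,0)$ the projector $P^{(2)}_{\al,\mu,\e}$ coincides with $P^{(u)}_{\al,\mu,\e}$. Hence $\tL^{(2)}(\al,\mu,\e)$ in \eqref{Lbpagain} and $\tU(\al,\mu,\e)$ in \eqref{UU} are similar. Their spectral invariants therefore agree, namely the trace and the discriminant; equivalently \eqref{dpmbpm} holds together with
\[
\tfrac{\im}{2}\big(\fc^{(2)}-\fa^{(2)}\big)=\im\,\mathsf a .
\]
Concretely, I would match
\[
\fc^{(2)}-\fa^{(2)} = 2\mathsf a + \mu
\]
(taking into account that $\tL$ represents $\cL=\im\mu+\sL$, so one side carries the shift $\im\mu$), and
\[
4(\fb^{(2)})^2-(T^{(2)})^2 = 4\,\mathsf b^+\mathsf b^- .
\]
Both identities hold on $B_{\rho_1}(0,0)\setminus\{0\}$ and for $|\e|$ small.

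The next step is to expand in $\e^2$ at fixed $(\al,\mu)\neq(0,0)$ and use \eqref{matrixentries}. At order $\e^0$ the identities reduce to \eqref{siannu}. At order $\e^2$, the trace identity gives $\fc_2-\fa_2=2\,\pa_{\e^2}\mathsf a|_{\e=0}$, which is $\cO(\rho)$ by \eqref{exp:a}. Hence $\fc_2-\fa_2\to0$ as $(\al,\mu)\to(0,0)$.

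For the discriminant, write $T^{(2)}=m_2(\al,\mu)\cdot(\text{sign})+(\fa_2+\fc_2)\e^2+\cO(\e^4)$, where $m_2=\omega_1^--\omega_1^+$ up to sign. Also write $\fb^{(2)}=\fb_2\e^2+\cO(\e^4)$, since $\fb$ is even and $\fb_0=\fb_1=0$. Comparing the $\e^2$ coefficients gives
\[
-2m_2(\fa_2+\fc_2)=4\big[\pa_{\e^2}(\mathsf b^+\mathsf b^-)\big]_{\e=0}.
\]
By \eqref{exp:b+-}--\eqref{exp:b-} and \eqref{siannu}, the right-hand side is $4\big(-\mathsf b^+(\al,\mu,0)\big)(1+\cO(\rho))+\cO(\rho^3)$. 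Since $\mathsf b^+(\al,\mu,0)=-m_2/2$ with the correct sign convention, this yields $\fa_2+\fc_2=1+\cO(\rho)$ along directions where $m_2\neq0$. Combining with the trace identity gives $\fa_2,\fc_2\to\tfrac12$.

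The $\e^4$ coefficient then gives
\[
4\fb_2^2-(\fa_2+\fc_2)^2 - 2m_2\,(\e^4\text{-coeff. of }T) = 4\,(\e^4\text{-coeff. of }\mathsf b^+\mathsf b^-).
\]
The right-hand side tends to $0$ as $\rho\to0$, because $\mathsf b^+=\cO(\rho^2)$ at every $\e$ order. The $m_2$ term also tends to $0$, provided the $\e^4$ coefficient of $T$ stays bounded. This is the delicate point: I would need uniform bounds on the Taylor coefficients of $\fa^{(2)},\fc^{(2)}$ near the origin. These come from polar-analyticity in \eqref{reguabcp}, which gives coefficients bounded uniformly in $\theta$ as $\rho\to0$. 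Hence $4\fb_2^2\to1$, so $|\fb_2|\to\tfrac12$. The sign is not fixed by the spectral invariants, so I would instead use continuity of $\fb_2$ on $\cM^{(2)}$ near the origin together with a direct computation of its sign. Alternatively, I would identify $\fb_2$ with the coupling that converges to $-\mathsf b^-/\e^2\to$ the $(1,1)$ entry of $E$, namely $\e^2$. This follows by relating the basis $v^{(2)}_\pm(\al,\mu,\e)$ to $h_1^\pm$ via \eqref{basiepzero}, since the change of basis is the identity at $\e=0$ up to $\cO(\e)$. In that basis the off-diagonal entry of $\tB^{(2)}$ at order $\e^2$ equals $\tfrac12\big(E^{(3)}_{11}-E^{(3)}_{22}\big)$-type combinations, which tend to $-\tfrac12$ with the sign set by \eqref{tocomputematrixB}.

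The main obstacle is the step where the $\fa_2+\fc_2$ and $\fb_2$ limits must be shown to be direction-independent. Near the cross of $\cM^{(2)}$, $m_2$ vanishes along the curve, so one must divide only where $m_2\neq0$ and then use continuity, with polar-analyticity keeping all coefficients uniformly bounded in $\theta$.
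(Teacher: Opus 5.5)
Your comparison of spectral invariants is a valid and genuinely different route for part of the statement. One correction first: the trace identity has no $+\mu$. The matrix $\mathtt U=\im\mu+\mathtt{J}_2E^{(3)}$ already represents the full $\cL$, and $\fc^{(2)}-\fa^{(2)}=2\mathsf a$ is exactly what \eqref{siannu} gives at $\e=0$. This slip does not affect the $\e^2$ coefficient.

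The trace then gives $\fc_2-\fa_2=\cO(\rho)$. Write $\mathsf b^\pm_{[2]}$ for the $\e^2$-coefficients and $m_2=\omega_1^+-\omega_1^-$. The $\e^2$-coefficient of \eqref{dpmbpm} is exactly $2m_2(\fa_2+\fc_2)=2m_2(\mathsf b^+_{[2]}-\mathsf b^-_{[2]})$. Since $m_2\not\equiv0$, polar-analyticity gives $\fa_2+\fc_2=\mathsf b^+_{[2]}-\mathsf b^-_{[2]}\to1$. There is no stray $\cO(\rho^3)$ term and no need to divide direction by direction. The $\e^4$-coefficient then gives $4\fb_2^2\to1$. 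So your route does yield $\fa_2(0,0)=\fc_2(0,0)=\tfrac12$ and $|\fb_2(0,0)|=\tfrac12$.

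The gap is the sign of $\fb_2(0,0)$. This is precisely what is used later: it gives $\mathsf d^{(2)}_-(0,0,\e)=-2\e^2+\dots\neq0$ and fixes the sign of $\fb_2$ along $\cM^{(2)}$. As you note, no spectral invariant detects it. Your first option is circular if it relies on \Cref{lem:b2.alongMcLean}, since that lemma's sign is fixed by the very statement you are proving. Otherwise it needs an explicit evaluation of \eqref{fb2appendix}, which you do not supply.

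Your second option is the right idea, but as written it uses the wrong combination. Since $\mathsf b^+=E^{(3)}_{22}$ and $\mathsf b^-=-E^{(3)}_{11}$, the relevant entry is $\tfrac12(\mathsf b^++\mathsf b^-)=\tfrac12(E^{(3)}_{22}-E^{(3)}_{11})$, whose $\e^2$-coefficient tends to $-\tfrac12$. Your $\tfrac12(E^{(3)}_{11}-E^{(3)}_{22})$ would give $+\tfrac12$, so the sign must be derived rather than matched to the target.

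What is needed is the paper's argument:
\begin{enumerate}
\item Set $w^{(2)}_\pm=\tfrac1{\sqrt2}(h_1^-\pm\im h_1^+)$. In this basis $\cL$ is represented by $\tW=\tC^{-1}\mathtt U\tC$, which has the Hamiltonian form \eqref{similarcomplex} with symmetric off-diagonal entry $\tfrac12(\mathsf b^++\mathsf b^-)$.
\item By \eqref{basiepzero}, $w^{(2)}_\pm(\al,\mu,0)=v^{(2)}_\pm(\al,\mu,0)$ exactly. This fixes the relative phase on which the sign of $\fb_2$ depends, so the polar-analytic change-of-basis matrix satisfies $\mathrm T=\uno+\cO(\e)$.
\item The directional limit of $\tW$ as $\rho\to0$ is $\tfrac12\mathsf b^-(0,0,\e)\begin{pmatrix}-\im&0\\0&\im\end{pmatrix}\begin{pmatrix}-1&1\\1&-1\end{pmatrix}=\cO(\e^2)$. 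Conjugating by $\mathrm T$ therefore alters it only at order $\e^3$.
\item Reading off $\e^2$-coefficients gives $\fa_2\to\tfrac12$, $\fb_2\to-\tfrac12$, $\fc_2\to\tfrac12$, for every $\theta$.
\end{enumerate}
This single argument delivers all three limits at once and independently of direction. Once you use it, the trace and discriminant expansions are no longer needed.
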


\begin{proof}
For $(\al,\mu,\e)$ sufficiently small we have constructed two basis on the $2 $-dimensional symplectic subspace 
$
 \cV^{(2)}_{\al, \mu, \e} = \wt   \cV^{(2)}_{\al, \mu, \e}
 = 
 \cV^{(u)}_{\al, \mu, \e}  
$. 
The first is 
$\{h_1^\pm (\al,\mu,\e) \} $ 
defined in 
\eqref{decosim} on which $\cL(\al,\mu,\e)$ is represented by the matrix $\mathtt{U} (\al,\mu,\e) $ in \eqref{UU}; the second is 
$\{ v^{(2)}_\pm (\al,\mu,\e)\}$, 
defined in  \eqref{basisFp}, on which $\cL(\al,\mu,\e)$ is represented by $\tL^{(2)}(\al,\mu,\e)$ in \Cref{lem:katored}.   
A third basis 
is  
\begin{equation}\label{tildev}
    w^{(2)}_+(\al,\mu,\e) := \frac{1}{\sqrt 2} \left( h_1^-(\al,\mu,\e) + \im  h_1^+(\al,\mu,\e)\right) , \qquad w^{(2)}_- (\al,\mu,\e):= \frac{1}{\sqrt 2} \left( h_1^-(\al,\mu,\e) - \im  h_1^+(\al,\mu,\e)\right)\,  .  
\end{equation}
 The matrix associated to the change of basis $\{h_1^\pm(\al,\mu,\e) \}\leadsto \{w^{(2)}_\pm(\al,\mu,\e)\} $ in \eqref{tildev} is 
\begin{equation}\label{Ccomplex}
    \tC = \frac{1}{\sqrt2} \begin{pmatrix}
        \im  & -\im \\
        1  & 1
    \end{pmatrix} \ , \qquad 
    \tC^{-1} = \tC^* = \frac{1}{\sqrt2} \begin{pmatrix}
        - \im  & 1  \\
        \im   &  1
    \end{pmatrix} \,
\end{equation}
which maps coordinates $[\begin{smallmatrix}
    a \\ b\end{smallmatrix}]$ in the basis $\{ w^{(2)}_+(\al, \mu), w^{(2)}_-(\al, \mu)\}$ to coordinates $\tC[\begin{smallmatrix}
    a \\ b\end{smallmatrix}] $ in the basis $\{h_1^+(\al, \mu), h_1^-(\al, \mu)\}$. Thus, $\cL(\al,\mu,\e)$ is represented 
in the basis $\{ w^{(2)}_\pm(\al,\mu,\e)\}$  
by the matrix 
\begin{equation}
\label{similarcomplex}
\tW (\al,\mu,\e) := \tC^{-1} \mathtt{U}
(\al,\mu,\e) \tC = 
    \begin{pmatrix}
        - \im & 0 \\
        0 & \im 
    \end{pmatrix}
    \begin{pmatrix}
    -   \mathsf{a} 
    + \tfrac{\mathsf{b}^+ - \mathsf{b}^-}{2} &  
    \tfrac{\mathsf{b}^+ + 
    \mathsf{b}^-}{2} \\
   \tfrac{\mathsf{b}^+ + \mathsf{b}^-}{2} &   
   \mathsf{a} + \tfrac{\mathsf{b}^+  - \mathsf{b}^-}{2}
    \end{pmatrix}(\al,\mu,\e) \, ,
\end{equation}
    with $\mathsf{a}(\al,\mu,\e)
    $ in \eqref{exp:a} and $\mathsf{b}^\pm (\al,\mu,\e) $ in \eqref{exp:b+-}-\eqref{exp:b-}. 
In  view of \eqref{basiepzero}, 
  \eqref{tildev}, and 
  \eqref{ivMC} 
$$
v^{(2)}_\pm (\al,\mu,0) = w^{(2)}_\pm (\al,\mu,0) \, . 
$$
The matrix associated to the change of basis  $\{ w^{(2)}_\pm(\al,\mu,\e) \}\leadsto\{v^{(2)}_\pm(\al,\mu,\e) \}$ is 
\be\label{T.matrix}
{\rm T} (\al,\mu,\e) := \begin{pmatrix}
    \im \cW_c (w^{(2)}_+(\al,\mu,\e) ,
v^{(2)}_+ (\al,\mu,\e)) & \im \cW_c(w^{(2)}_-(\al,\mu,\e) ,
v^{(2)}_+(\al,\mu,\e) )\\
- \im \cW_c(w^{(2)}_+ (\al,\mu,\e) ,
v^{(2)}_- (\al,\mu,\e) )& - \im \cW_c(w^{(2)}_-(\al,\mu,\e) ,
v^{(2)}_- (\al,\mu,\e))
\end{pmatrix}
\in \cA_{P}(K^{(2)}, \e^{(2)}, \C^{2\times 2}) \, , 
\ee
which by \eqref{tildev}, \eqref{proph}, \eqref{basiepzero} 
has the form 
\begin{equation}\label{samebase0}
    {\rm T}(\al,\mu, \e) = \uno + \cO_{\C^{2\times 2}}
    (\e)\, , 
\end{equation}
and, in view of \eqref{similarcomplex},  the two representations are related by 
    \begin{equation}\label{simUL}
        \mathtt{L}^{(2)}(\al, \mu, \e) = {\rm T}(\al,\mu,\e)\, 
        \tW (\al,\mu,\e)
        {\rm T}^{-1} (\al,\mu,\e) \, . 
    \end{equation}
    Taking the limit of  \eqref{simUL} along any straight line  $(\al,\mu) =
    (\rho \sin (\theta), \rho  \cos (\theta) ) $ as $ \rho \to 0 $, in view of   \eqref{exp:a}, \eqref{exp:b+-}, \eqref{exp:b-},  the matrix in 
\eqref{similarcomplex}
reduces to 
$$
    \tW (0,0,\e) 
    = \frac12 
    \tb^- (0,0,\epsilon) 
    \begin{pmatrix}
        - \im & 0 \\
        0 & \im 
    \end{pmatrix}
    \begin{pmatrix}
    -1 & 1   
     \\
   1 &   
   - 1
    \end{pmatrix} \, ,
    \quad 
    \mathsf{b}^-(0,0,\e) = -\e^2(1+r(\e^2)) \, , 
$$
and  by \eqref{samebase0}, \eqref{T.matrix},  
    we deduce  that, for any $\theta \in \T$, the directional limits exist and are given by 
    \begin{equation}\label{expansionL2}
      \lim_{\rho \to 0 \atop \substack{\al = \rho \sin \theta \atop \mu = \rho \cos \theta} }\begin{pmatrix}
        -\im & 0\\
        0 & \im
    \end{pmatrix}
   \begin{pmatrix} \fa^{(2)}(\al, \mu,\e) & \fb^{(2)}(\al, \mu,\e) \\ \fb^{(2)}(\al, \mu,\e) & \fc^{(2)}(\al, \mu,\e) \end{pmatrix}  = \begin{pmatrix}
        -\im & 0\\
        0 & \im
    \end{pmatrix}
    \begin{pmatrix}
        \frac12 \e^2 (1+r(\e)) & -\frac12 \e^2 (1+r(\e))\vspace{1mm}\\
        -\frac12 \e^2 (1+r(\e)) & \frac12 \e^2 (1+r(\e)) 
    \end{pmatrix}\, . 
    \end{equation}
Identifying the entries \eqref{expansionL2} with those in \eqref{expa}-\eqref{expc} for $ \tp = 2 $, one deduces 
$$
\lim_{\rho \to 0}
\fa_2(\rho \sin(\theta), \, \rho \cos(\theta))  =
\lim_{\rho \to 0}
\fc_2(\rho \sin(\theta), \, \rho \cos(\theta)) = 
\lim_{\rho \to 0}
-\fb_2(\rho \sin(\theta), \, \rho \cos(\theta))  =  \frac12  \, .
$$
Since $\fa_2 (\alpha,\mu) $, $\fb_2 (\alpha,\mu) $ and $\fc_2 (\alpha,\mu) $ are polar analytic functions 
whose  directional limits are all the same, 
they extend with continuity at $(0,0)$.
\end{proof}

In order to detect the emergence of the instability region 
   $ \cU_\e^{(\tp)} $ in \eqref{def:instaintro}
we first describe  the perturbed McLean curves $\cM^{(\tp)}_\pm(\e)$ in \eqref{defMepm}, which form  the boundary $ \partial 
\cU_\e^{(\tp)} $ in \eqref{boundinst}. 
    
\section{Perturbed McLean curves $\cM^{(\tp)}_\pm(\e)$}\label{sec:Mclean.des}
The  perturbed McLean curves $\cM^{(\tp)}_\pm(\e)$ in \eqref{defMepm}
satisfy the following properties.  

\begin{proposition}\label{pertmcleandesc3456789} {\bf (Perturbed McLean curves)}
For any $\tp\geq 2$ there exist $\e^{(\tp)}>0 $ and  $C_\tp >0$ such that, for any $|\e|\leq \e^{(\tp)} $:

\noindent$\bullet$ {\sc Case $\tp=2 $. } The set $\cM^{(2)}_+ (\e)$ is a   connected curve, analytic   away from the origin,  with a cross-singularity at the origin described in \eqref{graphsmclean2}, while   $\cM^{(2)}_-(\e)
$ and $ \cT^{(2)}(\e)$  in \eqref{tracciazero} are real-analytic curves, with  two connected components each, see \Cref{fig:pertmctotal}. 
Moreover \eqref{asympmclean2intro} holds. 
$\cT^{(2)}(\e)$ is a real analytic graph over $\cM^{(2)}$, and    
 $\cM_\pm^{(2)}(\e)$ are real analytic graphs over  $\cT^{(2)}(\e)$ outside the origin. Near the origin  $\cM_\pm^{(2)}(\e) \equiv \cM^{(2)}_{\pm, {\rm loc}}(\e)$ in \eqref{M2pm}-\eqref{graphsmclean2}.\\
\noindent$\bullet$ {\sc Cases $\tp\geq 3$.} The sets 
$\cM_\pm^{(\tp)}(\e)$ in \eqref{defMepm} 
and  
$\cT^{(\tp)}(\e)$ in \eqref{tracciazero}   are connected real-analytic curves satisfying 
\begin{equation}\label{deformationmcleantp}
      \textup{d}_{\rm H}(\cT^{(\tp)}(\e),\cM^{(\tp)})\leq C_\tp \e^2 \, , \qquad   \textup{d}_{\rm H}(\cT^{(\tp)}(\e), \cM^{(\tp)}_\pm(\e) ) \leq C_\tp  |\e|^{\tp} \, .
    \end{equation}
 The set $\cT^{(\tp)}(\e)$ is a real analytic graph over $\cM^{(\tp)}$ and    
 $\cM_\pm^{(\tp)}(\e)$ are real analytic graphs over  $\cT^{(\tp)}(\e)$.
\end{proposition}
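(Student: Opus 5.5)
The plan is to study the zero sets of $T^{(\tp)}$ and $\td^{(\tp)}_\pm = 2\fb^{(\tp)} \pm T^{(\tp)}$ as small analytic perturbations of the zero set of $m_\tp$, using the expansions \eqref{matrixentries}.

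\textbf{Trace and the curve $\cT^{(\tp)}(\e)$.} By \eqref{expa}, \eqref{expc},
\[
T^{(\tp)}(\al,\mu,\e) = \omega^{(\tp)}_-(\al,\mu) - \omega^{(\tp)}_+(\al,\mu) + (\fa_\tp+\fc_\tp)(\al,\mu)\,\e^2 + r(\e^4) = -m_\tp(\al,\mu) + \cO(\e^2),
\]
and $m_\tp$ is the defining function of $\cM^{(\tp)}$ in \eqref{mcleanmanifoldsp0}.

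First I would verify that $\nabla m_\tp \neq 0$ on $\cM^{(\tp)}$ for $\tp\geq 3$, and on $\cM^{(2)}\setminus\{(0,0)\}$. This is already implicit in \Cref{lem:description}: the curve is a regular analytic manifold, a graph $\al=\al_\tp(\mu)$ for $\al>0$, and near $\mu_*^\pm$ one checks $\pa_\mu m_\tp\neq 0$.

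Next I introduce tubular coordinates $(s,t)$ around $\cM^{(\tp)}$, with $s$ an arclength parameter and $t$ the normal displacement. In these coordinates $T^{(\tp)}=0$ reads $t\,\pa_n m_\tp(s)(1+\cO(t)) = \cO(\e^2)$. The analytic implicit function theorem, applied uniformly in $s$ on the compact curve, then gives $t = \tau(s,\e^2) = \cO(\e^2)$. This shows that $\cT^{(\tp)}(\e)$ is an analytic graph over $\cM^{(\tp)}$ at Hausdorff distance $\lesssim \e^2$, which is the first estimate in \eqref{deformationmcleantp}. Connectedness is inherited from $\cM^{(\tp)}$.

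\textbf{Curves $\cM^{(\tp)}_\pm(\e)$.} I would repeat the argument with $\td^{(\tp)}_\pm = \pm T^{(\tp)} + 2\fb^{(\tp)}$ and $\fb^{(\tp)} = \cO(|\e|^\tp)$ by \eqref{expb}. Writing the normal coordinate relative to $\cT^{(\tp)}(\e)$, the derivative $\pa_t T^{(\tp)}$ is bounded away from zero, so the implicit function theorem gives $t_\pm(s,\e) = \tau(s,\e^2) + \cO(|\e|^\tp)$. Hence $\cM^{(\tp)}_\pm(\e)$ are analytic graphs over $\cT^{(\tp)}(\e)$ within $C_\tp|\e|^\tp$, which is the second estimate in \eqref{deformationmcleantp}. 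All functions involved are real analytic on $K^{(\tp)}$ for $\tp\geq 3$, since they belong to the class $\cA$ and $K^{(\tp)}$ avoids $\{0\}\times\Z$.

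\textbf{Case $\tp=2$.} Away from a small ball $B_{r}(0,0)$ the same argument applies, since all functions are analytic there by the definition of $\cA_P(K^{(2)})$. Removing the ball leaves two arcs of $\cM^{(2)}$, one on each side of the singular point.

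Inside $B_{\rho_1}(0,0)$ I would switch to the local description. By \eqref{dpmbpm} and the similarity $\tL^{(2)}\sim\tU$, the zero set of $D^{(2)}$ coincides with that of $\tb^+\tb^-$, and \Cref{TeoremoneFinale} gives the graphs \eqref{graphsmclean2}. The task is to match branches: $\cM^{(2)}_\pm$ must correspond to $\{\tb^\pm=0\}$ rather than being swapped. I would settle this by continuity along the arcs, using \Cref{b00}: $\fb_2(0,0)=-\tfrac12$. This makes $\td_+ = 2\fb + T$ vanish near the unperturbed corner lines, that is, on the cross $\mu=\pm\sqrt2|\al|$ — consistent with $\tb^+$ — while $\td_-$ vanishes on the hyperbola at distance $\sim 2\sqrt2|\e|$.

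I expect this gluing to be the main obstacle. The difficulty is showing that the far-field graph branches connect to the local branches without extra components, and that $\cT^{(2)}(\e)$ passes through the corner region as an analytic curve with two components. For the latter I would use that $T^{(2)}$ near the origin equals $\mathrm{Tr}$-type expressions, computed via $\tW$ in \eqref{similarcomplex}, namely $T^{(2)} = \tb^++\tb^-$ up to the $\mathrm{T}$-conjugation. Then I would apply the Lipschitz implicit function theorem on the hyperbola-like level set, as in the proof of \eqref{graphsmclean2}.

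The distance bounds \eqref{asympmclean2intro} follow afterwards: $\fb^{(2)}=\cO(\e^2)$ gives the $\e^2$ bound for $\cM^{(2)}_+$. Near the origin the local hyperbola $\mu^-\sim 2\sqrt2|\e|$ is only within distance $\sim|\e|$ of the corner, which gives the $|\e|$ bound for $\cM^{(2)}_-$.
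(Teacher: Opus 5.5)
Your overall route matches the paper's. You use normal coordinates around $\cM^{(\tp)}$ and the implicit function theorem for $T^{(\tp)}=-m_\tp+\e^2t_\tp+r(\e^4)$, then again for $\td^{(\tp)}_\pm$ over $\cT^{(\tp)}(\e)$. For $\tp=2$ you identify the branches through $\fb_2(0,0)=-\tfrac12$, i.e.\ $\td^{(2)}_-(0,0,\e)=-2\e^2+r(\e^4)\neq0$. There is, however, a genuine gap for odd $\tp\ge3$.

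\textbf{The false claim.} You assert that $K^{(\tp)}$ avoids $\{0\}\times\Z$, so that everything is analytic. This is false for odd $\tp$. By \eqref{sing.ML}, $\mu_*^-(\tp)=-\tm-\tm^2$ and $\mu_*^+(\tp)=1+\tm+\tm^2$, with $\tm=\frac{\tp-1}{2}$, are integers. For instance $(0,-2)$ and $(0,3)$ lie on $\cM^{(3)}$. At these points the mode $-j$ of $|D|_{\al,\mu}$ degenerates. Consequently $T^{(\tp)}$ and $\td^{(\tp)}_\pm$ belong to $\cA$ only through the decomposition $A^{[\mathrm I]}+(\al^2+(\mu-j)^2)^{1/2}A^{[\mathrm{II}]}$. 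They are merely Lipschitz in $(\al,\mu)$, with a non-smooth part of size $\cO(\e^2)$; note that $m_\tp$ itself is analytic there.

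\textbf{Why it matters.} The analytic implicit function theorem cannot be run uniformly along the curve. Near $(0,\mu_*^\pm(\tp))$ you only get Lipschitz graphs, via a Lipschitz implicit function theorem as in the proof of \eqref{graphsmclean2}. The claimed real-analyticity of $\cT^{(\tp)}(\e)$ and $\cM^{(\tp)}_\pm(\e)$ does not follow there, since a zero set passing through $(0,\mu_*^\pm(\tp))$ could have a corner.

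\textbf{What is missing.} You need a nondegeneracy showing that, for $\e\neq0$, the perturbed curves miss these points. The paper obtains it from the explicit values in \Cref{prop:values}, computed via the entanglement coefficients: $\fa_\tp(0,\mu_*^+(\tp))=\frac12(1+\tm)^2$ and $\fc_\tp(0,\mu_*^+(\tp))=-\frac{\tm^2}{2}$, and symmetrically at $\mu_*^-(\tp)$. Hence $t_\tp=\fa_\tp+\fc_\tp=\frac{\tp}{2}$ at these points, and $T^{(\tp)}(0,\mu_*^\pm(\tp),\e)=\frac{\tp}{2}\e^2+\cO(\e^4)\neq0$. So $\cT^{(\tp)}(\e)$ stays at distance $\gtrsim\e^2$ from the singular points. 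The curves $\cM^{(\tp)}_\pm(\e)$ are only $\cO(|\e|^{\tp})$ away from $\cT^{(\tp)}(\e)$ with $\tp\ge3$, so they avoid them too. Away from $(0,\mu_*^\pm(\tp))$ the functions are analytic with nonvanishing gradient, and analyticity follows.

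\textbf{A smaller slip for $\tp=2$.} $T^{(2)}$ is not a conjugation invariant; only $D^{(2)}=4(\fb^{(2)})^2-(T^{(2)})^2$ is, so ``up to the $\mathrm{T}$-conjugation'' is not meaningful. In the representation $\tW$ of \eqref{similarcomplex}, the trace-type quantity is $\tb^+-\tb^-$, not $\tb^++\tb^-$. The latter equals $-\e^2+\dots$ and plays the role of $2\fb^{(2)}$. The paper instead expands directly near the origin, using $t_2\to1$ from \Cref{b00}:
$T^{(2)}=-m_2+\e^2t_2+r(\e^4)=-\frac{\mu^2}{4}(1+r(\rho))+\frac{\al^2}{2}(1+r(\rho))+\e^2(1+r(\rho,\e^2))$.
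It then applies the Lipschitz implicit function theorem to this expression.
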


The rest of this section is devoted to prove \Cref{pertmcleandesc3456789}. 
The proof is divided into three  steps. 
First, 
locally close to $ (0,0) $, 
in  view of  \eqref{dpmbpm},  the sets $\cM^{(2)}_\pm (\e)$ are  described  in \eqref{graphsmclean2}.
Next  we exclude small neighborhoods of the origin and of the singular points $(0,\mu_*^\pm(\tp)) \subset  \{0\} \times \Z $
where the functions  $\mathsf{d}^{(\tp)}_\pm(\al,\mu,\e)$ 
and 
$T^{(\tp)}(\al,\mu,\e)$  defined in \eqref{def:dppm}
and \eqref{tracciaBep}, have a  Lipschitz singularity, of the form \eqref{dectM}.
Finally we argue  near such singular points. We first use a Lipschitz  implicit function theorem to show that  $\cT^{(\tp)}(\e)$ is a Lipschitz graph over $\cM^{(\tp)}$, and $\cM^{(\tp)}_\pm(\e)$ are  Lipschitz  graphs over $\cT^{(\tp)}(\e)$. Then   we  show that, for any 
$\e\neq 0 $,   the perturbed curves never cross the singular points, deducing their analyticity properties.


\smallskip 

\noindent{\bf {Step 1: 
$\cM^{(2)}_\pm (\e)$ and 
$ \cT^{(2)} (\e) $ near the origin.}}  
We claim that 
$$
\cM^{(2)}_\pm 
(\e) \cap B_{\rho_2} (0,0) = \big\{ (\al,\mu)\in B_{\rho_2} (0,0)\, : \ \mathsf{d}^{(\tp)}_\pm(\al,\mu,\e)=0 \big\}
= \cM^{(2)}_{\pm,{\rm loc}}(\e)
$$ 
defined in \eqref{M2pm}. Indeed, by  \eqref{def:dppm} 
$$
\mathsf{d}^{(2)}_-(0,0,\e) = \e^2(2\fb_2(0,0)- \fa_2(0,0)-\fc_2(0,0)) + r(\e^3) \stackrel{\eqref{dir.limit.abc}}{=} - 2 \e^2 + r(\e^4)
\neq 0 \, , \quad \forall
\e \neq 0 \, , 
$$
implying  that $\cM^{(2)}_-(\e)$ does not cross the origin and thus locally coincide with  $\cM^{(2)}_{-,{\rm loc}}(\e)$.  
Last, we describe $\cT^{(2)}_{\rm loc}(\e)$ close to zero with $ \mu \geq 0 $, which,  by \eqref{exp:tracetp}, \Cref{b00} and \eqref{eig.0}, is the set of solutions of 
\be\label{T2zero}
T^{(2)}(\al,\mu,\e) = -\frac{\mu^2}{4}(1+r(\rho))+\frac{\al^2}2(1+r(\rho)) + \e^2(1+r(\rho,\e^2)) = 0 \, .
\ee
The same procedure used for solving $\tb^- = 0 $ in \eqref{b-e} shows that the 
solutions to \eqref{T2zero}  
  are graphs of 
  Lipschitz functions
  on $ \alpha $ small.

\smallskip 

\noindent
\noindent{\bf {Step 2: non-singular points.}}
The set 
$ \cT^{(\tp)}(\e)$ in  \eqref{tracciazero} is the level set of the function 
$ T^{(\tp)}(\al,\mu,\e)$
in \eqref{tracciaBep} that, 
by \eqref{expa} and \eqref{expc}, has the form 
\begin{equation}\label{exp:tracetp}
    \begin{aligned}
        &T^{(\tp)}(\al,\mu,\e) = 
- \omega^{(\tp)}_+(\al, \mu) 
+
 \omega^{(\tp)}_-(\al, \mu)
 +\e^2 t_\tp (\al, \mu) + r^{(\tp)}(\e^4) = -m_\tp(\al,\mu) + \e^2 t_\tp (\al,\mu) +  r^{(\tp)}(\e^4)\, , \\
 &t_\tp(\al,\mu) = \fa_\tp (\al,\mu)+\fc_\tp(\al,\mu)\, .
    \end{aligned}
 \end{equation}
The  functions $T^{(\tp)}$ and $\mathsf{d}^{(\tp)}_\pm$ 
belong to
$\cA (K^{(\tp)} ,\e^{(\tp)};\R)$, by  \eqref{tracciaBep}, \eqref{def:dppm}  and  \eqref{regBtp}.
At $\e=0$ each 
 $$
 \cT^{(\tp)}(0) = \left\lbrace
(\al, \mu) \in K^{(\tp)} \ \vert \ 
m_\tp(\al, \mu) =  \omega^{(\tp)}_+(\al, \mu) - \omega^{(\tp)}_-(\al, \mu)    = 0
 \right\rbrace  = \cM^{(\tp)}\, ,  
 $$
reduces to the unperturbed McLean curve described in \Cref{lem:description}.  
We consider  an analytic parametrization   $\nu^{(\tp)} \colon S^1 \to \R^2 $ 
of $\cM^{(\tp)}$, such that 
$ m_\tp(\nu^{(\tp)} (\tau )) = 0 $ for any $ \tau \in S^1 $, and we define the 
outward  normal at any  $\nu^{(\tp)}(\tau)\in \cM^{(\tp)}$, 
$$
\vec{n}^{(\tp)}(\tau) := -\nabla_{\al,\mu} m_\tp (\nu^{(\tp)}(\tau))/ |\nabla_{\al,\mu} m_\tp(\nu^{(\tp)}(\tau))|\, ,
\quad \forall 
\tau \in S^1 \, ,
$$  
except 
at the point  $\nu^{(2)} (\tau) = (0,0) $, where $\vec{n}^{(2)}(\tau)$
is not defined. 
The set $ \cT^{(\tp)} (\e) $ can be described as 
\be\label{2002:1727}
\breve T^{(\tp)}(\tau,s,\e) := T^{(\tp)} ( \nu^{(\tp)}(\tau) + s \vec n^{(\tp)}(\tau) , \e ) = 0\, ,\quad 
\breve T^{(\tp)}(\tau,0,0)
\equiv 0\,  \, . 
\ee
  The  function $T^{(\tp)}$  belong to
$\cA (K^{(\tp)} ,\e^{(\tp)};\R)$
and so  
they
are analytic on $K^{(\tp)} \times B_{\e^{(\tp)}}(0)$ for any $\tp$ even,  
$ \tp \neq 2 $, while, if $\tp=2$, resp. $\tp$ odd, they  are analytic at any $(\al,\mu,\e)\in ( K^{(2)}\setminus [B_{\rho_2}(0,0)])\times B_{\e^{(2)}}(0)$, resp. at any $(\al,\mu,\e)\in (K^{(\tp)}\setminus [B_{\rho_\tp}(0,\mu_*^+ (\tp))\cup B_{\rho_\tp}(0,\mu_*^- (\tp))])\times B_{\e^{(\tp)}}(0)$ for some $\rho_\tp>0$, with $\mu_*^\pm(\tp)$ as in \eqref{sing.ML} which are integers for every $\tp$ odd. For unity of  exposition, we denote
$$
\cK^{(\tp)}:=\begin{cases}
   K^{(2)}\setminus B_{\rho_2}(0,0) \quad &\text{if }\tp = 2\, ,     \\
    K^{(\tp)}\setminus [B_{\rho_\tp}(0,\mu_*^+ (\tp))\cup B_{\rho_\tp}(0,\mu_*^- (\tp))]\quad &\text{if }\tp \text{ odd } \\
K^{(\tp)} \quad &\text{if }\tp \text{ even}, \tp\neq 2\, .
    \end{cases}
$$
\begin{lemma}
For any $ \tp \geq 2 $
there is $\e^{(\tp)}>0$  such that for any $|\e|\leq \e^{(\tp)}$, the set $\cT^{(\tp)}(\e)\cap \cK^{(\tp)}$ is a real analytic manifold 
with a analytic 
parametrization of the form 
\begin{equation}\label{eq:expparamtr}
    \nu_{\cT^{(\tp)}}(\tau,\e) = \nu^{(\tp)}(\tau) - \left(  \frac{t_\tp(\nu^{(\tp)}(\tau))}{|\nabla_{\al,\mu}m_\tp(\nu^{(\tp)}(\tau))|} \e^2 + r(\e^4)\right)\vec n^{(\tp)}(\tau)\, , 
\end{equation}
for any $\tau$ such that $\nu^{(\tp)}(\tau)\in \cK^{(\tp)}$. 
The sets  $\cM_\pm^{(\tp)}(\e) \cap \cK^{(\tp)} $ are graphs over $\cT^{(\tp)}(\e) \cap \cK^{(\tp)} $
of the form 
\begin{equation}\label{McLean.e2}
 \nu_{\cM^{(\tp)}_\pm}(\tau,\e)  = \nu_{\cT^{(\tp)}}(\tau,\e) \mp 
 \left(  2 \frac{\fb_{\tp}(\nu^{(\tp)}(\tau))}{|\nabla_{\al,\mu} m_\tp(\nu^{(\tp)}(\tau))|}\e^\tp + r(\e^{\tp+1}) \right) \vec{n}_{\cT^{(\tp)}}(\tau,\e) \,   
\end{equation}
where $ \fb_{\tp} (\alpha, \mu )$ is defined in \eqref{abceta}.
\end{lemma}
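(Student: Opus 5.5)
The plan is to apply the analytic implicit function theorem twice, in tubular coordinates around $\cM^{(\tp)}$, working on $\cK^{(\tp)}$ where $T^{(\tp)}$ and $\mathsf d^{(\tp)}_\pm$ are real analytic by \eqref{regBtp} and the definition of the class $\cA$.

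\emph{Step 1: the curve $\cT^{(\tp)}(\e)$.} Using \eqref{exp:tracetp}, I expand $\breve T^{(\tp)}(\tau,s,\e)$ in \eqref{2002:1727} in $s$. Since $m_\tp(\nu^{(\tp)}(\tau))=0$ and $\vec n^{(\tp)}=-\nabla m_\tp/|\nabla m_\tp|$, this gives
\[
\breve T^{(\tp)}(\tau,s,\e)= |\nabla_{\al,\mu} m_\tp(\nu^{(\tp)}(\tau))|\, s + O(s^2) + \e^2 t_\tp(\nu^{(\tp)}(\tau)) + O(s\e^2) + r(\e^4).
\]
The nondegeneracy $\nabla m_\tp\neq 0$ on $\cM^{(\tp)}\cap\cK^{(\tp)}$ follows from \Cref{lem:description}: $\cM^{(\tp)}$ is a regular analytic curve away from the cross at the origin when $\tp=2$. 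By compactness of $\cK^{(\tp)}\cap\cM^{(\tp)}$, the gradient is bounded below uniformly. The analytic IFT, applied uniformly in $\tau$, then gives $s=s(\tau,\e)$ analytic with $s(\tau,\e)=-\frac{t_\tp}{|\nabla m_\tp|}\e^2+r(\e^4)$. Evenness of $T^{(\tp)}$ in $\e$ ensures that no odd powers of $\e$ appear. This proves \eqref{eq:expparamtr}. Uniqueness in a tube of fixed width, together with $|T^{(\tp)}|\gtrsim$ dist to $\cM^{(\tp)}$ outside that tube, shows that the parametrization exhausts $\cT^{(\tp)}(\e)\cap\cK^{(\tp)}$.

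\emph{Step 2: the curves $\cM^{(\tp)}_\pm(\e)$.} I next parametrize a neighborhood of $\cT^{(\tp)}(\e)$ as $\nu_{\cT^{(\tp)}}(\tau,\e)+s\,\vec n_{\cT^{(\tp)}}(\tau,\e)$, where $\vec n_{\cT^{(\tp)}}$ is the outward normal to $\cT^{(\tp)}(\e)$; it is an $O(\e^2)$ analytic perturbation of $\vec n^{(\tp)}$. On this neighborhood I solve $\mathsf d^{(\tp)}_\pm=2\fb^{(\tp)}\pm T^{(\tp)}=0$. Since $T^{(\tp)}$ vanishes at $s=0$ and $\nabla T^{(\tp)}=-\nabla m_\tp+O(\e^2)$, we have $T^{(\tp)}=|\nabla m_\tp|\,s\,(1+O(\e^2,s))$. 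By \eqref{expb}, $\fb^{(\tp)}=\fb_\tp(\nu^{(\tp)}(\tau))\e^\tp+O(\e^{\tp+1},s\e^\tp)$. The equation therefore becomes
\[
\pm|\nabla m_\tp|\, s\,(1+O(\e^2,s))+2\fb_\tp\e^\tp+O(\e^{\tp+1}) + O(s\e^\tp)=0.
\]
A second application of the IFT gives $s_\pm=\mp 2\frac{\fb_\tp}{|\nabla m_\tp|}\e^\tp+r(\e^{\tp+1})$, which is \eqref{McLean.e2}.

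\emph{Main obstacle: uniformity.} Everything is routine except keeping the IFT constants uniform. The difficulty is that $\cK^{(\tp)}$ is cut out by small balls whose radii $\rho_\tp$ are fixed, while $\e^{(\tp)}$ must be chosen after them. The required bounds are: a uniform lower bound for $|\nabla m_\tp|$ on $\cM^{(\tp)}\cap\cK^{(\tp)}$, and uniform analyticity radii for $T^{(\tp)}$ and $\fb^{(\tp)}$ on a complex neighborhood of $\cK^{(\tp)}\times B_{\e^{(\tp)}}(0)$, which come from compactness. With these in hand, one fixed $\e^{(\tp)}$ works for all $\tau$ at once. Real analyticity of the resulting manifolds then follows from analyticity of the solutions $s(\tau,\e)$ and $s_\pm(\tau,\e)$ in $\tau$.
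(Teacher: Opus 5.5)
Your proposal is correct and follows essentially the same route as the paper: two successive applications of the analytic implicit function theorem in normal coordinates. The first is applied to $\breve T^{(\tp)}(\tau,s,\e)$, using $\pa_s\breve T^{(\tp)}(\tau,0,0)=|\nabla_{\al,\mu} m_\tp(\nu^{(\tp)}(\tau))|\neq 0$; the second to $\mathsf d^{(\tp)}_\pm$ along the normal $\vec n_{\cT^{(\tp)}}(\tau,\e)$ to $\cT^{(\tp)}(\e)$, with the leading terms read off from \eqref{exp:tracetp} and \eqref{expb}. Your added remarks on uniformity of the constants, and on the parametrization exhausting $\cT^{(\tp)}(\e)\cap\cK^{(\tp)}$, only make explicit what the paper leaves implicit; for the latter the precise input is $|m_\tp|\geq c>0$ on $\cK^{(\tp)}$ outside a fixed tube around $\cM^{(\tp)}$, not a bound $|T^{(\tp)}|\gtrsim$ distance.
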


\begin{proof}
The function $\breve T^{(\tp)}(\tau,s,\e)$ 
in \eqref{2002:1727} is analytic on the set $\{ (\tau,s)\, : \ \nu^{(\tp)}(\tau) + s \vec n^{(\tp)}(\tau)\in \cK^{(\tp)} \}\times B_{\e^{(\tp)}}(0)$ for $s_0$ sufficiently small, and satisfies
$$
\pa_s  \breve T^{(\tp)}(\tau,0,0)  = 
|\nabla_{\al,\mu}m_\tp(\nu^{(\tp)}(\tau))|  \neq 0  \quad
\text{for every }\tau \in I.$$
Then, by the analytic Implicit Function Theorem
and \eqref{exp:tracetp},  we deduce  \eqref{eq:expparamtr}. 

We now construct $\cM_\pm^{(\tp)}(\e)$ as graphs over $\cT^{(\tp)}(\e)$. In view of \eqref{defMepm} we 
have to solve  the implicit equation
\be\label{dpmame=0}
\breve{\mathsf{d}}(\tau, s, \e):= \mathsf{d}^{(\tp)}_+(\nu_{\cT^{(\tp)}}(\tau,\e) + s \vec{n}_{\cT^{(\tp)}}(\tau,\e), \e) \equiv 0  
\ee
where 
$ \vec{n}_{\cT^{(\tp)}}(\tau,\e) $ is the outward normal to $\cT^{(\tp)}(\e)$, 
which is analytic in $(\tau,\e)$.
Since by \eqref{eq:expparamtr}  $\nu_{\cT^{(\tp)}}(\tau,0)  = \nu^{(\tp)}(\tau)$ and since by \eqref{expb} $\fb^{(\tp)}(\al,\mu,0)\equiv 0$, we get
$$\breve{\mathsf{d}}(\tau, 0,0) \stackrel{\eqref{def:dppm},\eqref{expb}}{=} T^{(\tp)}(\nu^{(\tp)}(\tau), 0) \stackrel{\eqref{exp:tracetp}}{=} 0 \quad \text{and} \quad
(\pa_s \breve{\mathsf{d}})(\tau, 0, 0)  \stackrel{\eqref{def:dppm},\eqref{expb},\eqref{exp:tracetp}}{=} |\nabla_{\al,\mu}m_\tp(\al,\mu)| \neq 0 \ , 
$$
 thus the implicit function theorem guarantees the existence of a unique  analytic function $s(\tau, \e)$, which, expanding in $s$ and employing \eqref{def:dppm} and \eqref{expb}, satisfies
    \begin{align*}
       0 \equiv \breve{\mathsf{d}}(\tau, s(\tau, \e), \e) =
       2\fb_\tp(\nu_{\cT^{(\tp)}}(\tau,\e))\e^{\tp} + 
       s(\tau, \e)  \underbrace{\grad_{(\al, \mu)} T^{(\tp)}(\nu_{\cT^{(\tp)}}(\tau,\e), \e) \cdot \vec{n}_{\cT^{(\tp)}}(\tau,\e)}_{=|\grad_{(\al, \mu)} T^{(\tp)}(\nu_{\cT^{(\tp)}}(\tau,\e), \e)|} +  r(\e^{\tp+1}, (s(\tau, \e))^2)  \ .
   \end{align*}
 In particular, solving for $s(\tau,\e)$ we find
   $$
        s(\tau, \e) = -2  \e^\tp\frac{\fb_{\tp}(\nu^{(\tp)}(\tau))}{|\grad_{(\al, \mu)} T^{(\tp)}(\nu_{\cT^{(\tp)}}(\tau,\e), \e)|} + r(\e^{\tp+1}) 
        = - 2  \e^\tp\frac{\fb_{\tp}(\nu^{(\tp)}(\tau))}{|\grad_{(\al, \mu)} m_\tp (\nu^{(\tp)}(\tau))|} + r(\e^{\tp+1}) 
   $$
 for any $ \tau $ such that  $ \nu^{(\tp)}(\tau)\in \cK^{(\tp)}$. 
   By setting
   $
   \nu_{\cM^{(\tp)}_\pm}(\tau,\e) := \nu_{\cT^{(\tp)}}(\tau,\e) + s(\tau,\e) \, \vec n_{\cT^{(\tp)}}(\tau,\e)
   $
   we obtain
   \eqref{McLean.e2}. 
   \end{proof}

 \noindent{\bf {Step 3: singular points for $\tp \geq 3 $ odd}.} For any $\tp$ odd,   the functions $T^{(\tp)}(\cdot ,\cdot ,\e)$ and $\mathsf{d}^{(\tp)}_\pm (\cdot ,\cdot  ,\e)$ are only  Lipschitz in $ B_{\rho_\tp}(0,\mu_*^\pm(\tp))$. 
 Suppose with no loss of generality that $\nu^{(\tp)}(0) = (0,\mu_*^+(\tp))$.

 \begin{lemma}
For any  $\tp \geq 3 $ odd,  
there is $ \e^{(\tp)} > 0 $ such that 
$\cT^{(\tp)}(\e)\cap 
B_{\rho_{\tp} }(0, \mu_*^\pm(\tp))$
is described by a
Lipschitz parametrization 
of the form
\be\label{singLip}
\nu_{\cT^{(\tp)}}(\tau,\e) := \nu^{(\tp)}(\tau) + \Big( -\e^2 \frac{t_\tp(\nu^{(\tp)}(\tau))}{|\nabla_{\al,\mu} m_\tp(\nu^{(\tp)}(\tau))|} + r^{(\tp)}(\e^4)\Big) 
\vec n^{(\tp)}(\tau)\, , \quad (\tau,\e)\in B_{\tau^{(\tp)}}(0)\times B_{\e^{(\tp)}}(0)\, .
\ee
The sets  $\cM_\pm^{(\tp)}(\e) \cap B_{\rho_{\tp} }(0, \mu_*^\pm(\tp)) $ 
are Lipschitz graphs over $\cT^{(\tp)}(\e) \cap B_{\rho_{\tp} }(0, \mu_*^\pm(\tp)) $
of the form 
\be\label{nuMep}
\nu_{\cM^{(\tp)}_\pm}(\tau,\e) = \nu_{\cT^{(\tp)}}(\tau,\e) \mp \left( 2  \frac{\fb_\tp (\nu^{(\tp)}(\tau))}{|\nabla_{\al,\mu}m_\tp(\nu^{(\tp)}(\tau))|}  \e^\tp + r^{(\tp)}(\e^{\tp+1})\right)\vec n^{(\tp)}(\tau) \, .
\ee
 \end{lemma}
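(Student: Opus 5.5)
We work near the singular point $(0,\mu_*^+(\tp))$ (the other is symmetric via $(\al,\mu)\mapsto(\al,1-\mu)$) and solve two successive implicit equations, exactly as in Step 2. The only difference is that we now use a Lipschitz implicit function theorem (e.g.\ \cite[Theorem 4.8]{simader}, as in the proof of \Cref{TeoremoneFinale}) in place of the analytic one. The regularity input is that $T^{(\tp)}$, $\fb^{(\tp)}$ and hence $\mathsf{d}^{(\tp)}_\pm$ belong to $\cA(K^{(\tp)},\e^{(\tp)};\R)$. Near $(0,j)$ with $j=\mu_*^+(\tp)\in\Z$ they therefore have the form \eqref{dectM}, namely $A^{[\mathrm I]}(\al^2,\mu,\e)+(\al^2+(\mu-j)^2)^{1/2}A^{[\mathrm{II}]}(\al^2,\mu,\e)$ with analytic $A^{[\mathrm I]},A^{[\mathrm{II}]}$. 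Consequently they are Lipschitz in $(\al,\mu)$, analytic in $\e$, and their difference quotients in $(\al,\mu)$ are uniformly bounded.

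\textbf{Step A: the curve $\cT^{(\tp)}(\e)$.} Define $\breve T^{(\tp)}(\tau,s,\e)$ as in \eqref{2002:1727}. Since $\cM^{(\tp)}$ is an analytic manifold through $(0,j)$ by \Cref{lem:description}, $\nu^{(\tp)}$ and $\vec n^{(\tp)}$ are analytic there. The unperturbed part $-m_\tp$ is itself Lipschitz but nondegenerate; one should check from \eqref{eig.0} that $\nabla m_\tp$ extends continuously and is nonzero along $\cM^{(\tp)}$ near $(0,j)$. This holds because the singular eigenvector harmonic does not belong to the colliding pair: the singular $\Omega_\al$ term appears in $\omega^\sigma_k$ only for $\sigma k+\mu=0$, which is not one of the two resonant indices. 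With that in hand, write $\breve T^{(\tp)}(\tau,s,\e)=-|\nabla m_\tp(\nu^{(\tp)}(\tau))|\,s+\cO(s^2)+\e^2 t_\tp(\nu^{(\tp)}(\tau)+s\vec n)+r^{(\tp)}(\e^4)$. The $\e$-dependent terms are Lipschitz in $s$ with constant $\cO(\e^2)$, so the one-sided difference quotients of $s\mapsto \breve T^{(\tp)}$ lie in $[-2c,-c/2]$ for some $c>0$, uniformly in $\tau,\e$ small. The Lipschitz IFT then gives a unique Lipschitz $s(\tau,\e)$. Substituting back and using Lipschitz bounds yields $s=-\e^2 t_\tp(\nu^{(\tp)}(\tau))/|\nabla m_\tp|+r^{(\tp)}(\e^4)$, which is \eqref{singLip}.

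\textbf{Step B: the curves $\cM^{(\tp)}_\pm(\e)$.} Solve $\mathsf{d}^{(\tp)}_\pm(\nu_{\cT^{(\tp)}}(\tau,\e)+s\vec n^{(\tp)}(\tau),\e)=0$. We use the fixed normal $\vec n^{(\tp)}$ rather than the normal to $\cT^{(\tp)}(\e)$, since the latter is only defined almost everywhere. At $s=0$ the trace vanishes, so the value is $\pm2\fb^{(\tp)}=\pm2\fb_\tp\e^\tp+\cO(\e^{\tp+2})$ by \eqref{expb}; Lipschitz continuity of $\fb_\tp$ lets us evaluate it at $\nu^{(\tp)}(\tau)$ at the cost of an $\cO(\e^{\tp+2})$ error. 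The difference quotients in $s$ are again dominated by $\mp$-independent terms comparable to $|\nabla m_\tp|$, because the $\fb^{(\tp)}$ part contributes only $\cO(\e^\tp)$ to them. A second application of the Lipschitz IFT gives $s_\pm(\tau,\e)=\mp2\fb_\tp\e^\tp/|\nabla m_\tp|+r^{(\tp)}(\e^{\tp+1})$, which is \eqref{nuMep}.

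\textbf{Main obstacle.} The delicate point is making the uniform monotonicity of the difference quotients rigorous near $(0,j)$. Concretely, one must verify that $\nabla m_\tp$ stays nondegenerate there, and that the $\rho$-term $(\al^2+(\mu-j)^2)^{1/2}A^{[\mathrm{II}]}$ in the $\cO(\e^2)$ corrections has Lipschitz constant $\cO(\e^2)$. The latter is immediate once $A^{[\mathrm{II}]}$ is known to be $\cO(\e^2)$, which holds because at $\e=0$ the trace $-m_\tp$ is itself the unperturbed contribution.
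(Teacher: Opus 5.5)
Your argument is correct and is essentially the paper's: the Lipschitz implicit function theorem is applied along the normal lines to $\cM^{(\tp)}$, with nondegeneracy coming from $\nabla_{\al,\mu} m_\tp(0,\mu_*^\pm(\tp))\neq 0$ (where $m_\tp$ is in fact analytic) and with the $\e^2$-corrections having Lipschitz constant $\cO(\e^2)$, a point you justify via the structure \eqref{dectM} more carefully than the paper does; the curves $\cM^{(\tp)}_\pm(\e)$ then follow by the same procedure. You have one sign slip: since $\vec n^{(\tp)}=-\nabla_{\al,\mu} m_\tp/|\nabla_{\al,\mu} m_\tp|$ and $T^{(\tp)}=-m_\tp+\e^2 t_\tp+\dots$, one has $\pa_s\breve T^{(\tp)}=+|\nabla_{\al,\mu} m_\tp|+\cO(s,\e^2)$ (as in Step 2 of the paper), and only with this sign does solving $\breve T^{(\tp)}=0$ produce the term $-\e^2 t_\tp/|\nabla_{\al,\mu} m_\tp|$ in \eqref{singLip}.
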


 \begin{proof}
 We 
use the  Lipschitz  Implicit Function Theorem  in \cite[Theorem 4.8]{simader}.
  By direct inspection, using \eqref{mcleanmanifoldsp0} and \eqref{sing.ML}, we have that 
 $$
 |\pa_s m_\tp(\nu^{(\tp)}
(\tau)+s \vec n^{(\tp)}(\tau))|\vert_{\tau=s=0} = |\nabla_{\al,\mu} m_\tp(0,\mu_*^+(\tp)) |  =:C_\tp> 0\, . $$
By continuity, there exist $\tau^{(\tp)},s^{(\tp)}>0$ such that
$$
|\pa_s m_\tp(\nu^{(\tp)}
(\tau)+s \vec n^{(\tp)}(\tau))| \geq \frac{C_\tp}{2}\qquad \forall (\tau,s)\in B_{\tau^{(\tp)}}(0)\times B_{s^{(\tp)}}(0) \, .
$$
Let $L_{\tp}$ be a Lipschitz constant of $t^{(\tp)}$ in $B_{\rho_\tp}(0,\mu_*^+(\tp))\times B_{\e^{(\tp)}}(0)$.  Then, taking $\e^{(\tp)}$ sufficiently small,  the function  $\breve{T}^{(\tp)}(\tau,s,\e)$ in
 \eqref{2002:1727} satisfies, recalling \eqref{exp:tracetp},
$$
\left|\breve{T}^{(\tp)}(\tau,s,\e) - \breve{T}^{(\tp)}(\tau,s',\e) \right| \geq \underbrace{\left(\frac{C_\tp}{2}-\e^2 L_{\tp}\right)}_{>0}|s-s'|\, , \quad 
\forall \  \tau \in B_{\tau^{(\tp)}}(0),  \   \e\in B_{\e^{(\tp)}}(0), \  s,s'\in B_{s^{(\tp)}}(0) \ .
$$
Since  $\breve{T}^{(\tp)} (\cdot, \cdot, \cdot ) $ is also Lipschitz  in $B_{\tau^{(\tp)}}(0)\times B_{s^{(\tp)}}(0) \times B_{\e^{(\tp)}}(0) $, the Lipschitz implicit function  \cite[Theorem 4.8]{simader} guarantees the existence of a unique Lipschitz function 
$B_{\tau^{(\tp)}}(0)\times  B_{\e^{(\tp)}}(0) \to  B_{s^{(\tp)}}(0) $,   
$ (\tau,\e) \mapsto s(\tau,\e) $, of the form \eqref{singLip},  solving $\breve{T}^{(\tp)}(\tau,s(\tau,\e),\e) \equiv 0$ for all $(\tau,\e)\in B_{\tau^{(\tp)}}(0)\times B_{\e^{(\tp)}}(0) $.

A similar procedure can be used to describe the curves $\cM^{(\tp)}_\pm(\e)$ near the singular points.
\end{proof}

 Actually  the perturbed 
 curve $\cT^{(\tp)}(\e)$ does not cross the singular point $(0,\mu_*^\pm(\tp))$.
We need the following lemma. 

\begin{lemma}\label{prop:values}
 For any odd $\tp \geq 3 $, the functions  $\fa_\tp (\alpha, \mu) $ and $\fc_\tp (\alpha, \mu) $ in \eqref{expa}, \eqref{expc}, evaluated at the points $(0, \mu_*^\pm(\tp))$ in \eqref{sing.ML} where the unperturbed McLean curve
 $ \cM^{(\tp)} $ intersect $ \{ \alpha = 0 \} $, are,
 setting $ \tm := \frac{\tp-1}{2}  $, 
\begin{equation}
\label{apcpatextremes}
\fa_\tp(0,\mu_*^+(\tp))= 
        \tfrac12 (1+\tm)^2\, , \ \fc_\tp(0,\mu_*^+(\tp))=-\tfrac{\tm^2}{2}\, ,  \  
        \fa_\tp(0,\mu_*^-(\tp))= -\tfrac{\tm^2}2 \, , \  \fc_\tp(0,\mu_*^-(\tp))= \tfrac12 (1+\tm)^2 \, .
\end{equation}
\end{lemma}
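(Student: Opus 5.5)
We plan to reduce the computation to the purely longitudinal setting and then carry out an explicit second-order Kato expansion.

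\textbf{Reduction.} At $\al=0$, \Cref{DNProp1} gives $\cG(0,\mu,\e)=|D+\mu|$, a Fourier multiplier. Hence $\cL(0,\mu,\e)$ involves only the explicit functions $p_\e,a_\e$ in \eqref{apexp}. By the symmetry \eqref{symmetryapcpbp} for $\tp$ odd, namely $\fa^{(\tp)}(\al,1-\mu,\e)=\fc^{(\tp)}(\al,\mu,\e)$, together with $1-\mu_*^+(\tp)=\mu_*^-(\tp)$ from \eqref{sing.ML}, we get $\fa_\tp(0,\mu_*^-)=\fc_\tp(0,\mu_*^+)$ and $\fc_\tp(0,\mu_*^-)=\fa_\tp(0,\mu_*^+)$. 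So only the two values at $\mu_*^+(\tp)=1+\tm+\tm^2$ need computing.

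Since $\mu_*^+$ is an integer, we would use the gauge covariance \eqref{Gmu+k}, \eqref{cBLmu+k} to shift to $\mu=0$. This only relabels harmonics. The two colliding modes are $v^+_\tm$ and $v^-_{\tm+1}$. The corresponding quantities $\sigma k+\mu$ equal $(\tm+1)^2$ and $-(\tm+1)+\mu_*^+=\tm^2$, so $\Omega_0=\tm+1$ and $\Omega_0=\tm$ respectively. In particular both are nonzero, so the eigenvectors \eqref{unperturbed.eigv} are well defined there, even though $T^{(\tp)}$ is only Lipschitz at this point. All quantities are then directional limits along $\al=0$, which are legitimate by the $\cA$-structure \eqref{dectM}.

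\textbf{Second-order formula.} Next we expand $\mathfrak{B}^{(\tp)}_2=[\,P^*U^*\cB UP\,]_2$ in \eqref{Bgotico} at $\al=0$, using $U_{\al,\mu,0}=\uno$ and the Kato formulas for $U_1,U_2$ in terms of $P_1,P_2$. The standard second-order Kato identity should give
\[
\fa_\tp=(\cB_2 v_+,v_+)+\sum_{(q,\sigma)}\frac{\big|\cW_c\big(\cL_1 v_+,v_q^\sigma\big)\big|^2\,(\pm)}{\im\big(\omega^{(\tp)}_+-\omega_q^\sigma\big)}.
\]
Here the sign comes from the Krein signature \eqref{symp.nonzero}. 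The sum runs only over the harmonics $(\tm+1)\pm1$ of $v_+$, excluding the partner $v_-$, because $\cB_1\in\mathfrak F_1$ has only bands $\pm1$ (\Cref{defFell}). The analogous formula holds for $\fc_\tp$ with $v_-$, whose neighbouring harmonics are $\pm1$ away from the harmonic of $v_{\tm+1}^-$. The first-order jets are read off from $p_1=-2\cos x$ and $a_1=-2\cos x$; the diagonal (band $0$) parts of the second-order jets come from $p_2$ and $a_2$, namely the constants $\tfrac32$ and $2$ in \eqref{apexp}.

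\textbf{Evaluation and main obstacle.} Finally we evaluate the finitely many scalar products explicitly. We would use $\Omega_0(\tm+1\pm1+\cdot)$-type square roots, which are rational at these points because $\sigma k+\mu$ are perfect squares. The denominators are the explicit gaps $\omega^+_\tm-\omega^\sigma_q$, and we would then simplify to $\tfrac12(1+\tm)^2$ and $-\tfrac{\tm^2}{2}$. We expect this bookkeeping to be the main obstacle: it requires tracking all four neighbouring modes (both signs $\sigma$) with their Krein signs and checking that the denominators telescope to the stated values. As a cross-check we would compare with the longitudinal computations of \cite{BMV4,BMV5}. We would also check the case $\tm=0$ against the value $\fa_2(0,0)=\tfrac12$ from \Cref{b00}, as a sanity test of the signs.
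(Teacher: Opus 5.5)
Your route is the paper's own. The proof in Appendix~\ref{sec:5} evaluates exactly the second-order formulas \eqref{fapappendix}--\eqref{A} and \eqref{fcpappendix}--\eqref{C}, namely the band-$0$ part of $\cB_2$ plus a Krein-signed sum over the two neighbouring harmonics, with the entanglement coefficients \eqref{entexp} at $\al=0$. Your use of \eqref{symmetryapcpbp} together with $1-\mu_*^+(\tp)=\mu_*^-(\tp)$ to reduce to $\mu_*^+(\tp)$ is valid.

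\textbf{The gap.} You check $\Omega_0\neq0$ only for the colliding pair. Your sum also needs the intermediate eigenvectors $v_q^\sigma$ on the neighbouring harmonics to exist, and for $\tp=3$ one pair does not. Take $\tm=1$, $\mu_*^+(3)=3$. The mode $v_-^{(3)}=v_2^-$ lives on $e^{-2\im x}$, and $\cB_1^{[-1]}$ couples it to the harmonic $-3$, where $-3+\mu_*^+(3)=0$. There $\Omega_0=0$ and the vectors $v_{-3}^+,v_3^-$ of \eqref{unperturbed.eigv} are not defined. Moreover $\cL(0,3,0)$ restricted to $e^{-3\im x}$ is the nilpotent block $\begin{pmatrix}0&0\\-1&0\end{pmatrix}$; after your shift to $\mu=0$ this is the Jordan pair $f_0^\pm$. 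So the eigenvector expansion of the resolvent behind your formula does not exist on that harmonic. This affects $\fc_3(0,\mu_*^+)$, and therefore, through your reduction, also $\fa_3(0,\mu_*^-)$, which sees the singular harmonic $2$ directly.

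\textbf{How to repair it.} The paper writes the formulas only for $(\al,\mu)\notin\{0\}\times\Z$. It then uses that the entanglement coefficients extend continuously to $\{0\}\times\Z$ (\Cref{lem:entexp}) and that $\fc_\tp\in\cA$ is continuous. The coefficients into that harmonic vanish in the limit, while the gaps tend to $-\omega_2^-(0,3)=-2\neq0$.

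Alternatively, treat that harmonic with the reduced resolvent $(\im\omega_2^- -\cL(0,3,0))^{-1}$. At $\al=0$ the $e^{-3\im x}$-component of $\cB_1 v_2^-$ has the form $\vet{\ast}{0}e^{-3\im x}$, because its second entry carries the factor $\pa_x+3\im$ and $\cG_1(0,\mu)=0$. Then $\cJ$ maps it into the kernel of the Jordan block. Finally $\cB_1$ annihilates $\vet{0}{\ast}e^{-3\im x}$, by the same two facts. So this harmonic contributes $0$, and $\fc_3(0,3)=\tfrac52+0-3=-\tfrac12$.

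\textbf{Further corrections.}
\begin{itemize}
\item Only the colliding modes have perfect-square $\sigma k+\mu$. The neighbours of $v_\tm^+$ are the harmonics $\tm\pm1$, not $(\tm+1)\pm1$, with $\sigma k+\mu=(\tm+1)^2\pm1$; those of $v_{\tm+1}^-$ have $\tm^2\pm1$. So individual terms contain irrational roots, and rationality appears only after adding the two signs $\sigma=\pm$ on each harmonic. For $\tm=1$ one gets $\fa_3(0,3)=-4-9+15=2$, where the $-9$ and $15$ each combine $\sqrt3$-, resp.\ $\sqrt5$-terms.
\item Working instead with the $2\times2$ blocks of $(\im\omega^{(\tp)}_\pm-\cL(0,\mu_*^+,0))^{-1}$ on each harmonic avoids both this and the Jordan issue, since those blocks are rational in $n+\mu$.
\item Your displayed formula carries a stray $\im$: as written it is imaginary. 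The correct weight is $\sigma/(\omega_q^\sigma-\omega_\tm^+)$, as in \eqref{A}.
\item The check at $\tm=0$ against $\fa_2(0,0)$ is not meaningful. $\tm=0$ is $\tp=1$, where the partner $v_1^-$ itself sits on the Jordan harmonic. By contrast, $\fa_2(0,0)$ comes from the quadruple-collision analysis of \Cref{b00}.
\end{itemize}
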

 
\begin{proof}
In  Appendix \ref{sec:5}.
\end{proof}

As a consequence 
we obtain the following 
corollary.

\begin{lemma}
For any  $\tp \geq 3 $ odd, 
$\cT^{(\tp)}(\e)\cap B_{\rho_\tp}(0,\mu_*^\pm(\tp))$ and 
$\cM^{(\tp)}_\pm (\e)\cap B_{\rho_\tp}(0,\mu_*^\pm(\tp))$ 
are  analytic manifolds. 
\end{lemma}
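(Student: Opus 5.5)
The plan is to show that for $\e\neq 0$ small the curves $\cT^{(\tp)}(\e)$ and $\cM^{(\tp)}_\pm(\e)$ stay at positive distance from the singular points $(0,\mu_*^\pm(\tp))$, where the Lipschitz singularity \eqref{dectM} is located. Away from those points $T^{(\tp)}$ and $\mathsf{d}^{(\tp)}_\pm$ are analytic by \Cref{def:tM}. Once the curves avoid them, the implicit function argument of Step 2 applies verbatim with the analytic IFT, giving analyticity; this upgrades the Lipschitz parametrizations \eqref{singLip}, \eqref{nuMep}. By the symmetry $\mu\mapsto 1-\mu$ in \eqref{symmetryapcpbp}, which exchanges $\fa_\tp$ and $\fc_\tp$ and maps $\mu_*^+$ to $\mu_*^-$, it suffices to treat $(0,\mu_*^+(\tp))$.

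First I evaluate $T^{(\tp)}$ at the singular point itself. There $m_\tp(0,\mu_*^+(\tp))=0$, so \eqref{exp:tracetp} gives
$$
T^{(\tp)}(0,\mu_*^+(\tp),\e)=\e^2\big(\fa_\tp+\fc_\tp\big)(0,\mu_*^+(\tp))+r(\e^4).
$$
By \Cref{prop:values}, $(\fa_\tp+\fc_\tp)(0,\mu_*^+(\tp))=\tfrac12(1+\tm)^2-\tfrac12\tm^2=\tm+\tfrac12\neq0$. Hence $T^{(\tp)}(0,\mu_*^+,\e)\neq 0$ for all $0<|\e|\leq\e^{(\tp)}$, so the singular point is not on $\cT^{(\tp)}(\e)$.

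Next I make this quantitative, since analyticity needs a whole neighborhood in which \eqref{dectM} is regular. From \eqref{singLip}, the curve point at $\tau=0$ is displaced from $(0,\mu_*^+)$ along $\vec n^{(\tp)}(0)$ by $-\e^2 t_\tp(0,\mu_*^+)/|\nabla m_\tp|+r(\e^4)$, which has size about $\e^2$. Because $\cM^{(\tp)}$ is transverse to $\{\alpha=0\}$ at $(0,\mu_*^+)$ (it is a graph $\al_\tp(\mu)$ ending there, with $\nabla m_\tp\neq0$), nearby parameters $\tau$ move tangentially by about $|\tau|$. Thus $\mathrm{dist}(\nu_{\cT^{(\tp)}}(\tau,\e),(0,\mu_*^+))\gtrsim \e^2+|\tau|$. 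The singular part of $T^{(\tp)}$ is $\rho_*A^{[\mathrm{II}]}$ with $\rho_*=(\al^2+(\mu-\mu_*^+)^2)^{1/2}$, and $\rho_*$ is real-analytic wherever $\rho_*>0$. So near each point of the curve, $T^{(\tp)}$ is analytic in $(\al,\mu)$ for fixed $\e\neq0$. It remains to check $\partial_s\breve T\neq0$ there, which follows from the Lipschitz lower bound $C_\tp/2-\e^2L_\tp>0$ already obtained. The analytic IFT then gives analyticity of $\tau\mapsto\nu_{\cT^{(\tp)}}(\tau,\e)$ for each fixed $\e\neq0$, and it coincides with the Lipschitz solution by uniqueness. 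For $\e=0$ the curve is $\cM^{(\tp)}$, analytic by \Cref{lem:description}.

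For $\cM^{(\tp)}_\pm(\e)$, \eqref{nuMep} shows they differ from $\cT^{(\tp)}(\e)$ by $O(|\e|^\tp)$ with $\tp\geq3$. That is negligible against the $\gtrsim\e^2$ distance of $\cT^{(\tp)}(\e)$ from the singular point, so they avoid it too and the same analytic-IFT argument applies to $\mathsf{d}^{(\tp)}_\pm$. The main obstacle is the step just described: the singular point is only an $\e^2$-distance away, while the radius of analyticity of $\rho_*$ around a curve point is comparable to that distance. So one cannot take a uniform neighborhood, and analyticity must be argued pointwise for fixed $\e$, with nondegeneracy coming from the uniform Lipschitz transversality bound rather than analytic estimates.
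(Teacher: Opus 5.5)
Your proposal is correct and follows the paper's proof. Evaluating $T^{(\tp)}$ at $(0,\mu_*^+(\tp))$ via \Cref{prop:values} gives $\frac{\tp}{2}\e^2+r(\e^4)\neq 0$; hence for $\e\neq0$ the curve $\cT^{(\tp)}(\e)$ and its $\cO(|\e|^\tp)$-perturbations $\cM^{(\tp)}_\pm(\e)$ avoid the Lipschitz singularity, and the analytic implicit function theorem then applies pointwise with nonvanishing gradient. Your explicit distance bound, and your use of the Lipschitz lower bound $C_\tp/2-\e^2L_\tp$ in place of the paper's $\nabla_{\al,\mu}T^{(\tp)}=-\nabla_{\al,\mu}m_\tp+\cO(\e^2)$, are harmless refinements of the same argument.
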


\begin{proof}
For any $\e\neq 0$ small enough,
 $$
 T^{(\tp)}(0,\mu_*^+ (\tp),\e) \stackrel{\eqref{exp:tracetp}}= -\underbrace{m_\tp (0,\mu_*^+(\tp))}_{=0} + \e^2 t_\tp(0,\mu_*^+(\tp)) + r^{(\tp)}(\e^4)  \stackrel{\eqref{apcpatextremes}}{=} \frac{\tp}{2}\e^2 + r(\e^4) \neq 0  \, .
 $$
As a result, any point of $\cT^{(\tp)}(\e)$ has a neighborhood in which the function $T^{(\tp)}(\al,\mu,\e)$ is analytic in $(\al,\mu)$ and the gradient $\nabla_{\al,\mu}T^{(\tp)}(\al,\mu,\e) = -\nabla_{\al,\mu} m_\tp(\al,\mu) + \cO(\e^2) \neq 0$ does not vanish. Thus, by the analytic implicit function theorem, $\cT^{(\tp)}(\e)\cap B_{\rho_\tp}(0,\mu_*^+(\tp))$ is an analytic manifold.
In view of \eqref{nuMep} the Lipschitz sets 
$\cM^{(\tp)}_\pm(\e)$ are $\cO(\e^\tp) $ perturbations of $\cT^{(\tp)}(\e)$, which are
$ \cO(\e^2)$-distant from the singular point $(0,\mu_*^+(\tp))$. Therefore also $\cM^{(\tp)}_\pm(\e)$ do not intersect $(0,\mu_*^+(\tp))$, and are therefore are analytic curves as well. 
\end{proof}

\Cref{pertmcleandesc3456789} follows by 
gluing the local analytic branches of $\cM^{(\tp)}_\pm(\e)$, $\cT^{(\tp)}(\e)$ constructed in the previous steps. 
In particular the bounds  \eqref{asympmclean2intro}
 follow by 
\eqref{eq:expparamtr}
and \eqref{singLip} outside $ B_{\rho_2}(0,0)$ and noting that, 
by \eqref{graphsmclean2},   
inside $B_{\rho_2}(0,0)$, 
it results   $|\mu^+(\al,\e^2)-\mu^+(\al,0)| \lesssim \e^2$, $|\mu^-(\al,\e)-\mu(\al,0)|\lesssim |\e|$. 

This concludes the proof of \Cref{pertmcleandesc3456789}.

\subsection{Instability criterion}

 We may now prove the instability criterion of \Cref{lem:inst}.

\begin{proof}[Proof of \Cref{lem:inst}.]
Note  that 
\be\label{tagliaalberi}
\begin{aligned}
& \{(\al,\mu)\in K^{(\tp)}\, : \ \mathsf{d}^{(\tp)}_+(\al,\mu,\e)>0 \} =   K^{(\tp)}\setminus \mathring\cM^{(\tp)}_+(\e)\, \\
& \{(\al,\mu)\in K^{(\tp)}\, : \ \mathsf{d}^{(\tp)}_-(\al,\mu,\e)>0 \} = \mathring\cM^{(\tp)}_-(\e) \cap K^{(\tp)} \, ,
\end{aligned}
\ee
 since, in view of  \eqref{tracciaBep}, \eqref{def:dppm} and \Cref{prop:katored}, 
$$
\begin{aligned}
    &\{(\al,\mu)\in K^{(\tp)}\, : \ \mathsf{d}^{(\tp)}_+(\al,\mu,\e)>0 \}\stackrel{\e\sim 0}{\sim } K^{(\tp)}\cap \{(\al,\mu)\, : \ m_\tp(\al,\mu)<0  \} \quad \text{the {\it outer} region to } \cM^{(\tp)}\, , \ \text{ see \eqref{outinunp}}\\
    &\{(\al,\mu)\in K^{(\tp)}\, : \ \mathsf{d}^{(\tp)}_-(\al,\mu,\e)>0 \}\stackrel{\e\sim 0}{\sim } K^{(\tp)} \cap \{(\al,\mu)\, : \ m_\tp(\al,\mu)>0  \} \quad \text{the {\it enclosed} region inside } \cM^{(\tp)} \, . \\
\end{aligned}
$$
Now we prove \eqref{b.not.vanishing}.
If $\fb^{(\tp)} (\al, \mu,\e)\vert_{\cT^{(\tp)}(\e)} \not \equiv 0$, there exists $ (\alpha, \mu) \in  \cT^{(\tp)}(\e)$ where $\fb^{(\tp)} (\al, \mu,\e) \neq 0$ and thus  $D^{(\tp)}(\al, \mu, \e)$ in \eqref{traccianulla} is strictly positive, so 
  $  \cU_\e^{(\tp)}$ in \eqref{def:instaintro}
is nonempty. Viceversa, suppose that   $\cU_\e^{(\tp)} \neq \emptyset$ and assume by contradiction that $\fb^{(\tp)} (\al, \mu,\e)\vert_{\cT^{(\tp)}(\e)}  \equiv 0$.
Then $\cT^{(\tp)}(\e) \subset \cM^{(\tp)}_\pm (\e)$, hence by \eqref{newoffice}
$$
\cM^{(\tp)}_+(\e) \cap \cM^{(\tp)}_- (\e) \subset \cT^{(\tp)}(\e)   \subset  \cM^{(\tp)}_\pm(\e) \quad \Rightarrow \quad \cM^{(\tp)}_+(\e) \cap \cM^{(\tp)}_- (\e) = \cT^{(\tp)}(\e) \, . 
$$
But then, since $\cM^{(\tp)}_\pm(\e) $ are graphs over $\cT^{(\tp)}(\e)$,
necessarily $\cM^{(\tp)}_+(\e) \equiv \cT^{(\tp)}(\e) \equiv \cM^{(\tp)}_-(\e)$.
Consequently,  by \eqref{tagliaalberi}, $D^{(\tp)}  =\mathsf{d}^{(\tp)}_+\mathsf{d}^{(\tp)}_- \leq 0$ in $K^{(\tp)}$ and  $\cU_\e^{(\tp)} = \emptyset$. This is a contradiction.

We prove now that 
       \eqref{bp} implies \eqref{b.not.vanishing}.
       Indeed, in view of \eqref{expb}, the  condition  \eqref{bp} implies  the 
   existence of a neighborhood $\cU(\und \al,\und \mu)\ni (\und \al,\und \mu)$ and  $\e(\und \al,\und \mu)>0$ such that $\fb^{(\tp)} (\al,\mu,\e) \neq 0$ for every $(\al,\mu,\e)\in \cU(\und \al,\und \mu) \times [B_{\e(\und \al,\und \mu)}(0)\setminus\{0\}]$. By \Cref{pertmcleandesc3456789} the zero-trace set \eqref{tracciazero} lies closer to the corresponding McLean curve $\cM^{(\tp)}$ the more $\e$ is close to  $0$. As soon as $\cT^{(\tp)}(\e)$ intersects the neighborhood $\cU(\und \al,\und \mu)$,
   the function $\fb^{(\tp)} (\al, \mu,\e)\vert_{\cT^{(\tp)}(\e)} \not \equiv 0$ and thus by \eqref{b.not.vanishing} the instability region $\cU_\e^{(\tp)}\supset \cT^{(\tp)}(\e) \cap \cU(\und \al,\und \mu)$ is not empty. 
   \end{proof}

In the next lemma we verify the 
instability criterion 
for $\tp = 3$.

\begin{lemma}\label{b00_3}
{\bf (Instability criterion for $ \tp = 3 $)} The coefficient 
$ \fb_3(\al_3(0),0) \neq 0 $  where $\al_3(0)$ is the unique positive number such that   $(\al_3(0),0) \in  \cM^{(3)}$.
Therefore  $\cM^{(3)}_+(\e)\cap \cM^{(3)}_-(\e)$ is discrete.
\end{lemma}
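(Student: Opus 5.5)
The plan has two steps. First, identify $\fb_3(\al_3(0),0)$ with the off-diagonal coefficient computed in \cite{CNS}. Second, deduce discreteness of $\cM^{(3)}_+(\e)\cap\cM^{(3)}_-(\e)$ from the analyticity of $\fb_3$ along the curve.

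\emph{Step 1: matching with \cite{CNS}.} At $\mu=0$ the point $(\al_3(0),0)\in\cM^{(3)}$ is the purely transverse collision, and by \eqref{ivMC} the colliding eigenvectors are $v^+_1(\al,0)$ and $v^-_2(\al,0)$. By \eqref{abceta},
$$
\fb_3=(\frak B^{(3)}_3 v^{(3)}_-,v^{(3)}_+).
$$
This cubic jet is built from the jets $\cG_1,\cG_2$ in \eqref{cG12action}, the functions $p_\ell,a_\ell$ with $\ell\le 3$, and the first jets of $P^{(3)}_{\al,\mu,\e}$ and $U^{(3)}_{\al,\mu,\e}$. Since the spectral subspace and the skew-Hamiltonian projector are intrinsic, the reduced $2\times 2$ matrix in \cite{CNS} is obtained from the same Kato reduction. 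The Krein-normalized bases agree up to unimodular factors, because both satisfy \eqref{symp.nonzero} and reversibility.

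The invariant I would compare is the discriminant factor $\mathsf d_+\mathsf d_-$, or equivalently $|\fb^{(3)}|$ on the zero-trace set, which does not depend on the basis choice. The expected obstacle is the dictionary between conventions: the conformal flattening and good unknown \eqref{cLvero}, the normalization $g=1$, and the Stokes amplitude $\e$ in \Cref{PTW}. I would handle it by observing that any reparametrization $\e\mapsto \e(1+\cO(\e))$ rescales the leading $\e^3$ coefficient by a nonzero factor. Hence nonvanishing transfers once \cite{CNS} shows that the unstable eigenvalues have real part of exact order $|\e|^3$ at that point.

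If this transfer proves awkward, the fallback is to compute $\fb_3(\al_3(0),0)$ directly. I would expand $\frak B^{(3)}_3$ as in Appendix~\ref{sec:5}, using the band structure $\frak B^{(3)}_\ell\in\mathfrak F_\ell$. Only the chains of harmonic shifts $+1,+1,+1$ connecting $e^{2\im x}$ to $e^{-\im x}$ contribute, which is a finite computation.

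\emph{Step 2: discreteness.} Since $\fb_3\in\cA(K^{(3)},\e^{(3)};\R)$ is real analytic on the compact connected analytic curve $\cM^{(3)}$, at least away from the two singular points where it is still Lipschitz, and is not identically zero there, it has finitely many zeros on $\cM^{(3)}$. By \eqref{newoffice}, points of $\cM^{(3)}_+(\e)\cap\cM^{(3)}_-(\e)$ lie on $\cT^{(3)}(\e)$ and satisfy $\fb^{(3)}=0$. Using \eqref{expb} and the graph description \eqref{eq:expparamtr}, \eqref{singLip} of $\cT^{(3)}(\e)$ over $\cM^{(3)}$, the restriction $\e^{-3}\fb^{(3)}(\nu_{\cT^{(3)}}(\tau,\e),\e)=\fb_3(\nu^{(3)}(\tau))+\cO(\e^2)$ is analytic in $\tau$, by \Cref{pertmcleandesc3456789}. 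It is also not identically zero for $\e$ small, so its zero set in $\tau\in S^1$ is finite. Hence the intersection is discrete, and in fact finite.
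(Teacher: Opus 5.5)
Your proposal is correct. Step 1 reaches $\fb_3(\al_3(0),0)\neq0$ by a different route from the paper, and Step 2 is essentially the paper's argument.

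\textbf{Step 2.} For $\e\neq0$ the curve $\cT^{(3)}(\e)$ avoids the singular points $(0,\mu_*^\pm(3))$. So $\fb^{(3)}(\cdot,\cdot,\e)$ restricted to it is analytic and nonvanishing near $(\al_3(0),0)$, hence has finitely many zeros, and \eqref{newoffice} concludes. Your side claim about the zeros of $\fb_3$ on $\cM^{(3)}$ itself is not needed. It would also require the one-sided analytic structure of the class $\cA$ at $(0,-2),(0,3)$: being Lipschitz there and analytic on the open arcs does not by itself rule out accumulating zeros.

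\textbf{Step 1, comparison of routes.} The paper identifies its Kato basis $U^{(3)}v^{(3)}_\pm$ at $(\al_3(0)+\delta,0)$ with the basis $V^{\e,\delta}_{1,2}/\sqrt2$ of \cite{CNS}. The two $2\times2$ matrices then coincide, giving $\fb_3(\al_3(0),0)=-b_{3,0}$ exactly, sign included. You transfer only spectral information, which works once stated precisely:
\begin{itemize}
\item On the slice $\mu=0$, the condition $D^{(3)}>0$ forces $|\al-\al_3(0)|\lesssim\e^2$, since $T^{(3)}=-m_3+\cO(\e^2)$ and $\nabla m_3\neq0$.
\item In that window, $\tfrac12\sqrt{D^{(3)}}\le|\fb^{(3)}|=|\fb_3(\al_3(0),0)||\e|^3+\cO(|\e|^5)$, with equality up to $\cO(|\e|^5)$ where $T^{(3)}=0$.
\item Hence the largest real part of the eigenvalues born at this collision is $|\fb_3(\al_3(0),0)||\e|^3+\cO(|\e|^5)$.
\end{itemize}
This quantity is invariant under conjugation, so it can be matched with the $|\e|^3$ lower bound of \cite{CNS} after any analytic reparametrization of the amplitude. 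Your route needs only that the two operators are conjugate and that the amplitudes are analytically related, not that bases and normalizations agree. It recovers only $|\fb_3|$, but that is all the lemma requires.

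\textbf{Justifications to correct.}
\begin{itemize}
\item The normalization \eqref{symp.nonzero} together with reversibility does not fix the basis up to unimodular factors. These constraints leave a real hyperbolic rotation free, under which $\fb^{(3)}\mapsto\fb^{(3)}\cosh2t+\tfrac12T^{(3)}\sinh2t$ and $T^{(3)}\mapsto T^{(3)}\cosh2t+2\fb^{(3)}\sinh2t$. Only $4(\fb^{(3)})^2-(T^{(3)})^2$ is invariant, so neither $\fb^{(3)}$ nor the zero-trace set is basis independent. The invariant to use is the supremum of $\Re\lambda^{(3)}_+$ described above.
\item In your fallback direct computation, the relevant band is $+3$, from $e^{-2\im x}$ (support of $v_2^-$) to $e^{\im x}$ (support of $v_1^+$). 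Besides the $+1,+1,+1$ chains, the direct $+2$ and $+3$ bands of $\cB_2$, $\cB_3$ and the second-order jets of $P^{(3)}$, $U^{(3)}$ also contribute.
\end{itemize}
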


\begin{proof}
We claim that our matrix 
$\tL^{(3)}(\al_3(0)+ \delta,0,\e)$ coincides with the matrix $\tL_{\e,\delta}$ of  \cite[lemmata 6.1 and 6.2]{CNS},  i.e.
$$
\tL^{(3)}(\al_3(0)+ \delta,0,\e) = \tL_{\e,\delta} =
\begin{pmatrix}
     -\im & 0 \\
     0 & \im
 \end{pmatrix} 
 \begin{pmatrix}
      -\omega_1^+(\al_3(0),0) -A (\e,\delta) & -B(\e,\delta)  \\
     -B(\e,\delta)  &   \omega_1^+(\al_3(0),0)+ C(\e,\delta)  
 \end{pmatrix} 
$$
where 
 $ A,B,C : 
\{ |\delta | < \delta_0 \} \times \{ |\epsilon | < \epsilon_0 \} \to \R $ 
are real analytic functions and 
$$
 B(\e,\delta)  = b_{3,0}\e^3 + \cO(\e^4 + \delta^4 )
 \quad \text{with} \quad b_{3,0}\neq 0 \, .
$$
Indeed our  perturbed 
basis \eqref{basisFp} is the same one used in 
\cite{CNS} up to a scaling factor, 
 $$
 v^{(3)}_+(\al_3(0)+\delta,0,\epsilon) = \frac{V_1^{\e,\delta}}{\sqrt{2}} \, , \quad 
 v^{(3)}_-(\al_3(0)+\delta,0,\epsilon) = \frac{V_2^{\e,\delta}}{\sqrt{2}}  \,,  
 $$ 
 where $ V_m^{\e,\delta} $, 
 $ m = 1,2 $ are defined in 
 \cite[equation (3.25)]{CNS}.  
Thus  $ \fb_3(\al_3(0),0) =  -b_{3,0}  \neq 0 $. 
Thus the analytic function  
$\fb^{(3)}(\cdot,\cdot,\e)$  vanishes at most at finitely many points on $\cT^{(3)}(\e)$,  and 
\Cref{lem:inst} guarantees that 
the analytic manifolds $\cM^{(3)}_+(\e)\cap \cM^{(3)}_-(\e)$ intersect at most finitely many times. 
\end{proof}

\subsection{Proof of \Cref{TeoremoneMcLean,TeoremoneIntro}  
}\label{sec:verification}

\noindent{\bf Proof of \Cref{TeoremoneMcLean}.} 
The matrix representation 
\eqref{tocomputematrixb} is proved in  \Cref{lem:katored}
and the expansions \eqref{matrixentries}  in 
\Cref{prop:katored}.  All the claimed properties \eqref{boundinst}-\eqref{asympmclean2intro} of the McLean curves $\cM^{(\tp)}_\pm(\e)$ are proved in \Cref{pertmcleandesc3456789}.
Property \eqref{symdifmclean} follows directly by \eqref{tagliaalberi}. Equation \eqref{newoffice} follows by \eqref{def:dppm}
and \eqref{upboundeigv} by  \eqref{eigenvalues}, \eqref{matrixentries}.
By \Cref{TeoremoneFinale} the instability region  
$\cU_\e^{(2)}\neq \emptyset $. 
\Cref{b00_3} proves that $\cU_\e^{(3)}\neq \emptyset$.
In order to prove 
\eqref{ordinstaintro2} and 
\eqref{intersectionmcleans}
we rely on the following result.

\begin{lemma}\label{lem:coeff.b2}
The coefficient $ \fb_2(\al,\mu) $ in  
\eqref{abceta} 
satisfies
\be\label{b2.alongMcLean}
\fb_2(\al,\mu)<0 \  \ \   \forall (\al, \mu) \in \cM^{(2)}\setminus \{(0, \pm \tfrac54)\} \ , 
\quad 
\fb_2(0,\tfrac54) = 0 \ , 
\quad 
\pa_\mu \fb_2(0,\tfrac54) = \tfrac{1}{2\sqrt 3} \, . 
\ee 
Furthermore, recalling \eqref{abceta} and \eqref{tracciaBep},
\begin{equation}\label{b2onmclean}
 \beta_2 (0,\tfrac54) = \tfrac{39\sqrt 3}{512}\, , \qquad \begin{aligned}
        &T_1:=\pa_\mu T^{(2)}(0,\tfrac54,0)= |\nabla_{\al,\mu} m_2(0,\tfrac54)|=\tfrac43\, , \\ &T_2:= \fa_2(0,\tfrac54) + \fc_2(0,\tfrac54) = \tfrac{19}{16}\ . 
 \end{aligned}
\end{equation}
\end{lemma}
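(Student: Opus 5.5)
The plan is to compute $\fb_2(\al,\mu)=(\mathfrak{B}^{(2)}_2(\al,\mu)v^{(2)}_-,v^{(2)}_+)$ in closed form along $\cM^{(2)}$. Here $v^{(2)}_\pm=v_1^\pm(\al,\mu)$, which are supported on the harmonics $e^{\pm\im x}$. Expanding \eqref{Bgotico} to second order in $\e$ gives
\[
\mathfrak{B}^{(2)}_2=P_0^*\big(\cB_2+\cB_1U_1+U_1^*\cB_1+U_1^*\cB_0U_1+\cB_0U_2+U_2^*\cB_0\big)P_0 ,
\]
where $U_1,U_2$ come from the Kato projector expansion $P_1=\oint R_0\cB\text{-jet}\,R_0$ and its second-order analogue. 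Here $R_0(\lambda)=(\lambda-\cL(\al,\mu,0))^{-1}$ is a Fourier multiplier. Only the band-$2$ parts contribute to the matrix element between $e^{-\im x}$ and $e^{\im x}$, and these enter in two ways:
\begin{itemize}
\item directly, through $\cL_2^{[2]}$, which contains $p_2,a_2$ and $\cG_2$ with coefficients $c_{2,n}^\pm$ from \eqref{actioncG12};
\item through second-order resolvent terms $\cL_1 R_0\cL_1$ passing through the intermediate harmonic $e^{0\cdot\im x}$.
\end{itemize}
As in \cite{BMV5}, the standard reduction gives $\fb_2=(\cB_2 v_-,v_+)+\sum_{q}\frac{(\cB_1 v_-,\cJ^{-1}\text{-dual}_q)(\cB_1 w_q,v_+)}{\text{denominators }\im(\omega-\omega_0^\sigma)}$. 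The intermediate states are the two eigenvectors $v_0^\pm(\al,\mu)$ of the zero harmonic, with eigenvalues $\im\omega_0^\pm=\im(\mu\mp\Omega_\al(\mu))$. The denominators are $\omega_1^+(\al,\mu)-\omega_0^\pm(\al,\mu)$ evaluated on the curve, where $\omega_1^+=\omega_1^-$. I would use the first-order jets $p_1=a_1=-2\cos x$ and $\cG_1$ from \eqref{actioncG12}, and the second-order jets $p_2,a_2$ from \eqref{apexp}. This yields an explicit algebraic function of $(\al,\mu)$, or equivalently of $\Omega_\al(\mu\pm1)$ and $\Omega_\al(\mu)$, on $\cM^{(2)}$.

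Next I parametrize $\cM^{(2)}\cap\{\al\ge0,\mu\ge0\}$ by $s=\Omega_\al(\mu+1)\in[1,3/2]$ and $2-s=\Omega_\al(\mu-1)$. Then $\mu=(s^4-(2-s)^4)/4$ and $\al^2=s^4-(\mu+1)^2$. The symmetries \eqref{symmetryapcpbp} reduce the problem to this quarter. Substituting into the explicit formula turns the sign claim into the negativity of a one-variable algebraic function on $(1,3/2)$. At $s=1$, i.e. the origin, this is consistent with Lemma \ref{b00}, which gives $-1/2$. The endpoint $s=3/2$ is $(0,5/4)$, where $\al=0$, so $\cG=|D+\mu|$. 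There one recovers the value $\fb_2(0,5/4)=0$, which matches the $\cO(\e^4)$ isola of \cite{BMV4}. The derivative $\pa_\mu\fb_2(0,5/4)=\tfrac1{2\sqrt3}$ follows by differentiating the formula at $\al=0$.

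The quantities $\beta_2(0,5/4)$ and $T_2$ require the $\e^4$, respectively $\e^2$, diagonal jets, but only at $\al=0$. There everything reduces to the purely longitudinal computation of \cite{BMV4}, and I would import those values, checking that the normalization of $v^{(2)}_\pm$ matches. The value $T_1=|\nabla m_2(0,5/4)|=4/3$ is direct from \eqref{McLean1}: writing $\Omega_\al(\mu\pm1)=(9/4)^{1/2}$ and $(1/4)^{1/2}$ and differentiating $\omega_1^\pm$ in $\mu$ gives $\tfrac12\cdot\tfrac23+\tfrac12\cdot 2=4/3$.

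The main obstacle is the global sign along the whole curve. The closed form is messy, so I would clear denominators, which are positive on the curve since the spectral separation of Lemma \ref{separation_eigenvalues} holds, and then show that the resulting polynomial in $s$ has no roots in $(1,3/2)$ apart from the endpoint. I would do this via a Sturm sequence or a certified interval evaluation, checking the simple zero at $s=3/2$ against the nonzero $\mu$-derivative.
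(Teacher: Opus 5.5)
Your proposal follows essentially the paper's proof. It writes $\fb_2$ explicitly through second-order perturbation with intermediate states $v_0^\pm$, which is the paper's entanglement-coefficient formula. It parametrizes the first-quadrant arc of $\cM^{(2)}$; your $s=\Omega_\al(\mu+1)$ is exactly $2-x$ for the paper's $x=\sqrt r$. It excludes zeros on the open arc algebraically and fixes the sign with $\fb_2(0,0)=-\tfrac12$. It evaluates $\fb_2(0,\tfrac54)$, $\pa_\mu\fb_2(0,\tfrac54)$ and $T_1$ directly, and imports $\beta_2(0,\tfrac54)$ and $T_2$ from \cite{BMV4} via the shift $\mu\mapsto\mu+1$ and the eigenvector renormalization.

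One step needs tightening. Clearing denominators does not give a polynomial in $s$, because the radical $y=\sqrt{(s-1)^2+6}$ survives; it enters through $|0|_{\al,\mu}=(s-1)y$. You must eliminate it first, for instance by a resultant with $y^2-(s-1)^2-6$ as the paper does. After an exact root count on the open interval, no information on the order of the endpoint zero is needed. That is fortunate, since along the curve this order depends on $\pa_{\al^2}\fb_2$ as well, not on $\pa_\mu\fb_2$ alone.
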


\begin{proof}
    The long proof is in  Appendix \ref{sec:5}.
\end{proof}
\medskip
\noindent
\underline{\sc Proof of the lower bound in \eqref{ordinstaintro2}, \eqref{ordinstaintro3}.} 
Taking $ r>0$ sufficiently small, there exists $ c_r >0$, such that, for any $(\al,\mu) \in \cT^{(2)}(\e) \setminus B_r(0,\tfrac54) $, cf. \eqref{tracciazero},  
one has 
\begin{align*}
& \left|\Re\lambda^{(2)}_\pm  (\al,\mu,\e)\right|  \stackrel{\eqref{eigenvalues},\eqref{traccianulla}}= |\fb^{(2)}(\al,\mu,\e)| \stackrel{\eqref{expb},\eqref{b2.alongMcLean}}\geq c_r |\e|^2\, , \qquad \forall |\e|\leq \e^{(2)}  \, ,
\end{align*}
proving  \eqref{ordinstaintro2}.
In the case $\tp = 3$, considering an analytic parametrization 
$\tau \mapsto \nu_{\cT^{(3)}}(\tau, \e)$ of $\cT^{(3)}(\e)$,  one has 
$$
\left|\Re\, \lambda^{(3)}_\pm(\nu_{\cT^{(3)}}(\tau;\e),\e)\right| = |\fb^{(3)}(\nu_{\cT^{(3)}}(\tau;\e),\e)| \stackrel{\eqref{expb}, \eqref{eq:expparamtr}} \geq  \tfrac12 |\fb_3(\nu^{(3)}(\tau)) +
r(\e^2)|\e^3
$$
where $\nu^{(3)}(\tau)$ is an analytic parametrization of $\cM^{(3)} = \cT^{(3)}(0)$. The proof of
\eqref{ordinstaintro3} follows because the analytic function $\fb_3(\nu^{(3)}(\tau))$ is different from zero except for finitely many $ \tau $ by 
\Cref{b00_3}. 
\\[2mm]
\noindent{\bf Proof of \Cref{TeoremoneIntro}.}
It remains only to prove 
\eqref{intersectionmcleans}.

\smallskip
\noindent\underline{\sc Proof that $\cM^{(2)}_+(\e) \cap \cM^{(2)}_-(\e) = \emptyset$  for  any $\e \neq 0$.}
In view of \eqref{newoffice}
it is sufficient to show that,  for any  $
0 < |\e| \leq \e^{(2)}$ small enough, the function $\fb^{(2)}(\al,\mu,\e)$ never vanishes on the curve $\cT^{(2)}(\e)$ defined in  \eqref{tracciazero} by $ T^{(2)} (\al, \mu,\e) = 0 $. In view of the symmetry \eqref{symmetryapcpbp}  it is sufficient to  consider the right component of $\cT^{(2)}(\e)$. By  \eqref{b2.alongMcLean}, the function $\fb_2(\nu_{\cT^{(2)}}(\tau,0)) < 0 $  for any $\tau$ such that  $\nu_{\cT^{(2)}}(\tau;0) \neq  (0,\tfrac54 )$. Therefore,  
by compactness and \eqref{eq:expparamtr},  for $|\e| $ small enough,
the function $\fb^{(2)}(\cdot,\cdot,\e)\vert_{\cT^{(2)}(\e)} < 0$ outside  a small neighborhood of $(0,\tfrac54)$.  
We now prove that 
$\fb^{(2)}(\cdot,\cdot,\e) < 0$ on $ \cT^{(2)}(\e) \cap B_{\rho_2}(0,\tfrac{5}{4}) $ for some $ \rho_2 >0 $. Consider a local  analytic parametrization $(-\tau_0,\tau_0) \to\cT^{(2)}(\e)\cap B_{\rho_2}(0,\tfrac54) $, $ \tau \mapsto \nu_{\cT^{(2)}}(\tau,\e) $ 
as in \eqref{eq:expparamtr} fulfilling  $\nu_{\cT^{(2)}} (0,\e) = (0, \underline{\mu}(\e)) 
= \cT^{(2)} \cap \{ \alpha = 0 \} 
$ in the half plane $ \mu > 0 $.
It results 
$ \underline{\mu}(\e) = \tfrac54 + \cO(\epsilon) $. 
By Taylor expansion in $ (\e, \tau)$ at $ (0,0) $, using 
\eqref{expb},   \eqref{eq:expparamtr}
for $ \tp = 2 $, $ \nu_{\cT^{(2)}}(\tau,0) = \nu^{(2)}(\tau)$, 
noting that $ \vec n^{(2)}(0) $ is the versor along the $ \mu $ axis, 
\eqref{exp:tracetp} and \eqref{b2onmclean}
one gets
\begin{equation}\label{b2onT2f}
    \fb^{(2)}(\nu_{\cT^{(2)}}(\tau,\e),\e)  = \e^2\Big(\underbrace{\fb_2(\nu_{\cT^{(2)}}(\tau,0))}_{\leq 0\text{ by }\eqref{b2.alongMcLean}}
   + \e^2 
 \big(\underbrace{\beta_2(0,\tfrac54) - \pa_\mu \fb_2(0,\tfrac54) \frac{T_2}{T_1}}_{=-\frac{37\sqrt3}{512}\text{ by }\eqref{b2onmclean}} + \cO(\tau,\e)  \big) \Big) < -
 \e^4 \tfrac{37\sqrt{3}}{1024}
\end{equation}
for any $ |\tau | $ small, uniformly in $ \epsilon $.

\appendix

\section{Conformal flattening} 
\label{ApC}

According to \eqref{LC}, the deviation of the conformal diffeomorphism $(X,Y,Z) \mapsto (x,y,z)$ which flattens the domain 
$\cD_{\eta_\e} $ in 
\eqref{flatdom} from the identity decays exponentially as $Z \to -\infty$. 
To characterize this asymptotic behavior, we introduce the following function spaces: given $s \in \mathbb{R}$, $b \in \mathbb{N}_0$, and $a \ge 0$, we define
\begin{equation}\label{L2ascalar}
    L^{2,a}:=L^{2,a}(\R_-,\C) := L^2(\R_-,e^{-2az}\de z;\C) \, , \qquad  
            \langle f,g\rangle_{L^{2,a}} := \int_{-\infty}^0 \bar f(z) g(z)e^{-2az}\de z\ 
\end{equation}
and 
\begin{equation}
\label{spaziok}
H^{s, b}_a := H^{s, b}_a(\T\times\R_-):= 
\Big\{ 
u(x,z) = \sum_{k \in \Z}   u_k (z) e^{\im k  x}
\, \colon \, \T \times (-\infty, 0] \to \C \ \ \mbox{:} \ \    \| u \|_{s,b, a} < \infty 
\Big\}  \,
\end{equation}
endowed with  the norm 
\begin{equation}
\label{usba}
        \| u \|_{ s,b, a}^2  := 
\sum_{j = 0}^b 
\| \pa_z^j u \|_{L^{2,a}( \R_-;H^{s-j})}^2 
= 
\sum_{j = 0}^b \sum_{k \in \Z}
\langle k \rangle^{2(s-j)}
\| \pa_z^j u_k \|_{L^{2,a}}^2  \, .
\end{equation}
Each mode $ u_k(z) $ of a  function $ u $ in $ H^{s,b}_a $ 
exhibit uniform exponential decay   as $ z \to - \infty $.

\begin{remark}\label{remBMV}
The key difference
with respect to 
the spaces $H^{\sigma, s, a}$  in \cite[equation 2.7]{BMV2},
is that the norm 
\eqref{usba} distinguishes the horizontal space regularity $s$ from the vertical one $b$ (and we drop the $\sigma$-analyticity in $x$). For any $s\in \N_0 $ the norm  $\| \cdot \|_{s,s,a}$ in \eqref{usba} coincides with the norm $\|\cdot \|_{0,s,a}$ in \cite[equation (2.7)]{BMV2}.
\end{remark}

The next result is essentially  established in \cite[Proposition~3.3]{NS},   \cite{BMAMS}[Section 2.4]. We revisit the construction  here to prove the quantitative decay estimate  \eqref{est:d}, roughly stating that $ |d_\e (x,z)| \lesssim \epsilon e^{z} $,  and  property \eqref{deintF}. 

\begin{lemma}
\label{LeviCivita}
{\bf (Levi-Civita conformal flattening)} 
There is $ \e_0 > 0 $ such that  for any $ |\e| \leq \e_0  $ there exist a constant $ \tc_\e$ and a smooth $2\pi$-periodic odd function  $ \mathfrak{p}(X):=\mathfrak{p}_\e(X) = \sum_{k\in \Z} \mathfrak{p}_k e^{\im k X}$,
analytic in $ \e $
as a map 
$B(\e_0) \to H^{s}(\T)$ for any $ s \in \R $,  satisfying
\begin{equation}\label{est:fp}
\| \mathfrak{p}_\e \|_{H^s(\T)}= \cO (\e) \, , 
\end{equation}
such that:
\\[1mm]
(i) the change of coordinates
\begin{equation}
\label{LC}
    \begin{cases}
     x=  U(X, Z) = X + \sum_{k \neq 0} \mathfrak{p}_k \, e^{|k| Z } \, e^{\im k X} \, , 
     \\ 
     y = Y\, , \\
        z =  V(X, Z) = Z + \tc_\e + \sum_{k \neq 0} \im \, \textup{sign}(k) \mathfrak{p}_k \, e^{|k| Z} \, e^{\im k X} \, ,
    \end{cases}
\end{equation}
    is a conformal diffeomorphism  from  $\R^2 \times (- \infty, 0)$ to $\{(x,y,z)\in \R^3\, :\ z < \eta_\e(x)\}$ that transforms  the boundary 
    $ \R^2 \times \{ 0\} $ into the boundary $ \{(x,y,z)\in\R^3\, : \ z = \eta_\e (x) \} $. 
    The  functions $U,V$ satisfy the Cauchy--Riemann equations
$ U_X = V_Z $ and $
U_Z = - V_X $. 
    The map
    $ X \mapsto x = X+ \mathfrak{p}(X) $ defines a diffeomorphism of  $ \R $ satisfying 
    \begin{equation}\label{LCb}
    U(X,0) = X + \mathfrak{p}(X) \ , 
    \quad 
    V(X, 0) = 
        \eta_\e (U(X,0)) =  
        \eta_\e (X + \mathfrak p(X)) 
       \, .
    \end{equation}
    The function $ U(X,Z) $ is odd in $ X $ and $V(X,Z)$ is even in $X$.
\\[1mm]
(ii) The real valued function
\begin{equation}\label{def:d}
        d_\e(X, Z):=  U_X^2 (X,Z) +  U_Z^2 (X, Z) - 1  
    \end{equation}
    is even in $ X $ and analytic in $\e $ as a map $ B_{\e_0}(0) \to H^{s,b}_a(\T\times\R_-)$ for any $a\in (0,1)$, $ s \geq  3 $, $b \in \N_0 $, 
    and 
\begin{equation}
\label{est:d}
        \|d_\e\|_{s,b,a} \leq C_{s,b} |\e| \, , \qquad \forall \e \in B_{\e_0}(0) \, .
    \end{equation}
    For any $Z \in \R_-$, 
    \begin{equation}\label{deintF}
        d_\e(X,Z) = \sum_{\ell \geq 1} \e^\ell d_{\ell}(X,Z) \ , \quad d_\ell(X,Z) = \sum_{\substack{k = 0,\dots,\ell \\
        k\equiv \ell \ \textup{mod }2 }} d_\ell^{[k]}(Z) \cos (kX)\, , 
    \end{equation}
with   $ d_1 (X,Z) = 2 e^Z \cos (X)  $ and $ d_2 (X,Z) = e^{2Z} (1+4 \cos (2X))$. 
\end{lemma}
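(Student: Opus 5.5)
The plan is to construct everything explicitly from a conformal map of the lower half plane, following the classical Levi-Civita/Nekrasov construction as in \cite[Proposition~3.3]{NS}. First I look for $U+\im V$ as a holomorphic function of $X+\im Z$ of the form $X+\im(Z+\tc_\e)+\sum_{k\ne0}\mathfrak p_k e^{|k|Z}e^{\im kX}$, with the imaginary part coefficients $\im\,\mathrm{sign}(k)\mathfrak p_k$ as in \eqref{LC}. This form is forced by requiring $U-X$ and $V-Z$ to be bounded harmonic conjugates that decay as $Z\to-\infty$, so the Cauchy--Riemann equations hold automatically. The boundary condition \eqref{LCb} then becomes a fixed point equation for the odd function $\mathfrak p$ and the constant $\tc_\e$:
\[
\tc_\e+\mathcal H\mathfrak p=\eta_\e(X+\mathfrak p(X)),
\]
where $\mathcal H$ is the Hilbert transform. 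I will take the zero mode of this equation to define $\tc_\e$, and then solve for $\mathfrak p$ by the analytic implicit function theorem in $H^s(\T)$ (odd functions), using that $\eta_\e$ is analytic in $\e$ and even, with $\eta_\e=\cO(\e)$ by \Cref{PTW}. Since $\mathcal H$ is invertible on zero-average functions and composition with $X+\mathfrak p$ is analytic in $\mathfrak p\in H^s$ (because $\eta_\e$ is real analytic in $x$), this gives \eqref{est:fp}, the parities, and injectivity of $X\mapsto X+\mathfrak p(X)$ for small $\e$. Global diffeomorphism onto the fluid domain follows from the argument principle, since the map is a small holomorphic perturbation of the identity that maps boundary to boundary.

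For (ii), I compute $d_\e=U_X^2+U_Z^2-1=2\Re w'+|w'|^2$ with
\[
w'=\sum_{k\ne0}\im k\,\mathfrak p_k e^{|k|Z}e^{\im kX}.
\]
Every mode of $w'$ carries a factor $e^{|k|Z}$ with $|k|\ge1$, and every mode of $|w'|^2$ carries a factor $e^{(|k|+|k'|)Z}$, so every Fourier mode of $d_\e$ decays at least like $e^{Z}$. The weighted norm \eqref{usba} with $a<1$ is therefore finite. The $z$-derivatives produce factors $|k|^j$, which are absorbed by taking $\mathfrak p\in H^{s+b+1}$; this is possible since $\mathfrak p$ lies in every $H^s$. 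The product is handled by an algebra estimate in $s\ge3$. Together these give \eqref{est:d} and analyticity in $\e$, and evenness in $X$ follows from the oddness of $U$.

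The finite-range structure \eqref{deintF} is the step needing the most care. I would first prove by induction on the order in $\e$ that the $\ell$-th jet $\mathfrak p_\ell$ contains only harmonics $k\le\ell$ with $k\equiv\ell\pmod 2$. For this I use that $\eta_\ell$ has this property, as stated in \Cref{PTW} and \eqref{pafparity}, and that the fixed point equation preserves the graded class $\tF$ under composition and products (the rules of \Cref{defFell}). The property for $d_\ell$ then follows from the quadratic formula above, since products of jets add orders and harmonics consistently. Finally, $d_1,d_2$ come from explicit low-order solving: $\mathfrak p_1=-\sin X$ gives $w'_1=e^{Z+\im X}$, hence $d_1=2e^Z\cos X$. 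Pushing to second order, using $\eta_2=\tfrac12\cos2X$, gives $d_2$.
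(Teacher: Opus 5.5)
Your strategy is the paper's: the boundary condition \eqref{LCb} becomes a fixed-point equation for the odd function $\mathfrak p$, which is solved by the analytic implicit function theorem. Then $d_\e$ is written through the harmonic extension of $\mathfrak p$, all of whose nonzero modes decay like $e^{|k|Z}$, and the finite-range structure of the jets is propagated through products. The paper quotes \cite{BMV5} for $\mathfrak p_\ell$ and \cite{CNS} for $d_1,d_2$.

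The trouble is in the explicit formulas you write along the way. They are wrong, and carried out literally they do not give the stated $d_1,d_2$.

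\textbf{The sign of $\mathfrak p_1$.} From \eqref{LC}, $V(X,0)-\tc_\e=\im\,\mathrm{sgn}(D)\mathfrak p$. With the paper's convention $\cH=-\im\,\mathrm{sgn}(D)$ this is $-\cH\mathfrak p$, not $+\cH\mathfrak p$. Independently of conventions, $\mathfrak p=\e\sin X$ gives $V(X,0)=\tc_\e+\e\cos X+\cO(\e^2)$, which matches $\eta_\e(X+\mathfrak p(X))=\e\cos X+\cO(\e^2)$. So $\mathfrak p_1=+\sin X$, and then $\mathfrak p_2=\cH[\eta_2+\eta_1'\mathfrak p_1]=\cH[\cos 2X-\tfrac12]=\sin 2X$. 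With your $\mathfrak p_1=-\sin X$, the first-order part $2e^{Z|D|}\mathfrak p_1'$ of $d_\e$ (cf. \eqref{formadep}) is $-2e^Z\cos X$, the wrong sign.

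\textbf{The formula for $d_\e$.} The expression $X+\im(Z+\tc_\e)+\sum_{k\neq0}\mathfrak p_k e^{|k|Z}e^{\im kX}$ is not holomorphic. The holomorphic map is $\zeta+\im\tc_\e+2\sum_{k<0}\mathfrak p_k e^{\im k\zeta}$ with $\zeta=X+\im Z$. The right quantity is
$g:=U_X-1-\im U_Z=-2\im\sum_{k<0}|k|\mathfrak p_k e^{|k|Z}e^{\im kX}$, and then $d_\e=|1+g|^2-1=2\Re g+|g|^2$.

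Your $w'=\sum_{k\neq0}\im k\,\mathfrak p_k e^{|k|Z}e^{\im kX}$ is real and equals $U_X-1$. So $2\Re w'+|w'|^2=U_X^2-1$, which drops $U_Z^2$. Even with the correct $\mathfrak p$, its $\e^2$-coefficient is $e^{2Z}(\tfrac12+\tfrac92\cos 2X)\neq d_2$. Also, $w_1'=e^{Z+\im X}$ does not follow from your definition, which gives $w_1'=-e^Z\cos X$.

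With the correct data, $g=\e e^{Z-\im X}+2\e^2e^{2Z-2\im X}+\cO(\e^3)$. This yields $d_1=2e^Z\cos X$ and $d_2=e^{2Z}(1+4\cos 2X)$. The decay and product estimates in (ii) are unaffected, since $U_Z^2$ has the same structure as $(U_X-1)^2$.

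\textbf{Two smaller points.}
\begin{enumerate}
\item Running the implicit function theorem separately in each $H^s(\T)$ gives a radius $\e_0(s)$, and nothing in your argument prevents $\e_0(s)\to0$. The lemma asserts a single $\e_0$ for all $s$. The paper solves instead in an analytic class $H^{\sigma_0,s_0}$ with $0<\sigma_0<\sigma_1$, using $\eta_\e\in H^{\sigma_1,s_1}$ from \Cref{PTW}. This controls every Sobolev norm at once.
\item Your induction takes as input the finite-harmonic structure of the jets $\eta_\ell$. That structure is not stated in \Cref{PTW}, which gives only the first two jets, nor in \eqref{pafparity}, which concerns $a_\ell,p_\ell$. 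It has to be imported from \cite{BMV5}, the same source the paper uses for the structure of $\mathfrak p_\ell$.
\end{enumerate}
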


\begin{proof}\smallskip
\noindent
\textit{(i)} 
The function $\mathfrak{p} $ is determined as the fixed point of
\begin{equation}\label{def:fp}
    \mathfrak{p} = \cH\big[\eta_\e \circ (\mathrm{Id} + \mathfrak{p})\big] 
\end{equation}
where $\cH = - \im \, \textup{sgn} (D)$ is the Hilbert transform, see \cite[Proposition~3.3]{NS}, \cite{BMAMS}[(2.125)].  
In view of 
\Cref{PTW} 
the function $\e \mapsto \eta_\e \in H^{\sigma,s} (\T) $ is analytic for any $\sigma > 0$ and $s > 5/2$, and therefore the map
$$
F : B_{\epsilon_0}(0) \times H^s (\T) \to H^s(\T)\, , \quad
(\epsilon, \mathfrak{p}) \mapsto 
F(\epsilon, \mathfrak{p}) := 
\mathfrak{p} - 
\cH\big[\eta_\e \circ (\mathrm{Id} + \mathfrak{p})\big] \, , 
$$
is analytic. 
Actually  the composition operator $p \mapsto \eta(x + p(x))$ induced by an analytic function $\eta(x) \in H^{\sigma_1,s_1} (\T) $ is analytic on $H^{\sigma_0,s_0}(\T)$ for any $ 0 \leq \sigma_0 < \sigma_1$ and $s_0 > 1/2$.  Since $ F(0,0) = 0 $
and $ \pa_{\mathfrak{p}} F(0,0) = \rm{Id} $,
the analytic implicit function theorem implies the existence of a unique analytic solution $ \mathfrak{p}_\epsilon $
of \eqref{def:fp}
satisfying \eqref{est:fp} (the same holds for  $\mathfrak{p}_\epsilon $ in $ H^{\sigma_0,s_0}$). 

The fixed point $\mathfrak{p}_\e$ is odd  since $\eta_\e$ is even. 
The functions  $U(X,Z)$ and $V(X,Z)$ are respectively odd and even in~$X$, hence $d_\e (X,Z)$ in~\eqref{def:d} is even in~$X$. 
Defining
$ \tc_\epsilon = \frac{1}{2\pi} \int_{\T} \eta_\e(X + \mathfrak{p}_\e(X))\,\de X $ 
and using~\eqref{def:fp}, we have  formula~\eqref{LCb}. 
\\[1mm]
\textit{(ii)} 
Differentiating \eqref{LC} we have
\begin{equation}
\label{UXUZ}
    U_X(X,Z) = 1 +  e^{Z|D|}\mathfrak{p}'(X) \, ,
\qquad 
U_Z(X,Z) =  e^{Z|D|} |D| \mathfrak{p}(X) 
\end{equation}
and then, by \eqref{def:d}, 
\be\label{formadep}
d_\epsilon (X,Z) = 
(e^{Z|D|}\mathfrak{p}'(X))^2 
+ 2 e^{Z|D|}\mathfrak{p}'(X) + 
(e^{Z|D|} |D| \mathfrak{p}(X))^2 \, . 
\ee
The propagator estimate in~\cite[Lemma~2.5]{BMV2}
and \Cref{remBMV} 
show that for any 
$s\geq 3$ and $a\in (0,1)$,  
\be\label{propa} 
\| e^{Z|D|} \Pi_0^\perp g \|_{\lfloor s+\frac12\rfloor,\lfloor s+\frac12\rfloor,a}
\lesssim_s \| g \|_{H^s(\T)} 
\, .
\ee
The function $ d_\epsilon $ in \eqref{formadep} satisfies  \eqref{est:d} 
by \eqref{propa},  
the tame estimates in~\cite[Lemma~2.4]{BMV2} satisfied by the norm 
$ \| \ \|_{s,s,a}$, and 
\eqref{est:fp}.   
According to 
\cite[Lemma 3.13]{BMV5}  the function $\fp_\e(X)$ has the expansion 
$$
    \fp_\e(X) = \sum_{ \ell\geq 1 }\e^\ell \fp_\ell(X)\, , \quad \text{where}\quad \fp_\ell(X) = \sum_{\substack{k = 1,\dots,\ell \\
    k\equiv \ell \textup{ mod }2} }\fp_\ell^{[\kappa]} \sin (kX)\, ,
$$
and therefore the function $ d_\epsilon $ in 
\eqref{formadep} has the form \eqref{deintF} using \cite[lemmata 3.2, 3.3]{BMV5}.
The first terms  $ d_1 (X,Z) = 2 e^Z \cos (X)  $ and $ d_2 (X,Z) = e^{2Z} (1+4 \cos (2X))$ are computed in \cite[page 25]{CNS}.
\end{proof}

\paragraph{Proof of \eqref{G.id}.}
If  $\Phi (X,Y,Z) $ is a solution of \eqref{Phi.ell} then  $\vartheta(X,Y, Z):= \Phi (U(X,Z),Y, V(X,Z))$, with $U$ and $V$ defined in \eqref{LC},  
solves \eqref{transf_laplace} with $ \varsigma = \mathfrak{P}\psi $. Then, using  the identities
\begin{equation}\label{1010:1754}   \Phi_x(U(X,Z), Y, V(X, Z)) = \frac{\vartheta_X U_X + \vartheta_Z U_Z}{U_X^2 + U_Z^2} \, , \quad \Phi_z(U(X,Z), Y, V(X, Z)) = \frac{\vartheta_Z U_X - \vartheta_X U_Z}{U_X^2 + U_Z^2} \,,  
\end{equation}
we obtain
\begin{align}
\notag
    G(\eta_\e)[\psi](x,y) 
   & \stackrel{\eqref{DN1}}{ =}  (\pa_z \Phi) (x,y, \eta_\e(x)) - (\eta_\e)_x(x)   
   (\pa_x \Phi)(x,  y,  \eta_\e(x)) \\
   \notag
   & \stackrel{\eqref{LCb}, \eqref{diffeo}}{=} 
   \mathfrak{P}^{-1} \left[ 
   (\pa_z \Phi) (U(X,0), Y, V(X,0)) - 
   (\mathfrak{P}(\eta_\e)_x)(X)
   (\pa_x \Phi)(U(X,0),Y,   V(X,0))
   \right]  \\
   & \stackrel{\eqref{1010:1754}}{ =} \mathfrak{P}^{-1}
   \Big\{ \frac{1}{U_X^2 + U_Z^2}\left[\vartheta_Z U_X - \vartheta_X U_Z -  (\eta_\e)_x(U(X,0))  \left(\vartheta_X U_X + \vartheta_Z U_Z \right) \right] \vert_{(X,Y,0)} \Big\} \, . 
   \label{1110:1109}
\end{align}
Using the Cauchy-Riemann equations for $ U, V  $ and differentiating \eqref{LCb} we deduce  
$ 
-U_Z(X,0) = V_X(X,0) = $ $  (\eta_\e)_x(U(X,0)) U_X(X,0) 
$, 
and  \eqref{1110:1109} becomes 
$$
\begin{aligned}
G(\eta_\e)[\psi](x,y) 
& = \mathfrak{P}^{-1} \left\lbrace\frac{(1 + (\eta_x\circ U)^2)U_X }{U_X^2 + U_Z^2} \vartheta_Z \vert_{(X,Y,0)}\right\rbrace
 = 
\mathfrak{P}^{-1}
 \left\lbrace
 \frac{\vartheta_Z(X,Y,0)}{U_X(X,0)}  \right\rbrace  \\
 & =
\mathfrak{P}^{-1}
 \left\lbrace
\frac{\vartheta_Z(X,Y,0)}{1+ {\mathfrak p}'(X)} 
\right\rbrace \stackrel{\eqref{def:cG}} = 
\mathfrak{P}^{-1}
 \left\lbrace
\frac{ (\cG_{\e} \circ \mathfrak{P}  \psi)(X,Y)}{1+ {\mathfrak p}'(X)} 
\right\rbrace 
\end{aligned}
$$
which is the first identity in \eqref{G.id}. 

\section{McLean curves and spectral separations}\label{app:descmclean}

In this Appendix we provide the proofs of the 
properties of the McLean curves stated in 
\Cref{lem:description} and the separation properties of 
the spectrum in Lemma \ref{separation_eigenvalues}. 
We first prove \eqref{samesignint}. 
\\[1mm]
\noindent{\sc Same sign wave interactions.} 
 Using \eqref{eig.0} we have 
\begin{align} 
    \omega_k^\sigma(\al,\mu)-\omega_m^\sigma (\al,\mu) 
    & = \sigma (k-m) - \sigma [
    \Omega_\alpha (\sigma k + \mu) -
    \Omega_\alpha (\sigma m + \mu) 
     ] \notag \\
    & = \tq - \sigma [ 
(\varphi^2+\al^2)^\frac14 - ((\varphi-\tq)^2+\al^2)^\frac14] \label{ref:collisionsigsig}
    \end{align}
where $\tq := \sigma(k-m)$ and $\varphi := \sigma k + \mu$. 
In view of \eqref{ref:collisionsigsig} 
we have 
\begin{equation}\label{ref:collisionsigsigin}
|\omega_k^\sigma (\al,\mu)- \omega_m^\sigma(\al,\mu)| \geq |\tq| - \left|S_\tq(\varphi, \alpha)\right| \quad \text{where} 
\quad 
S_\tq(\varphi, \alpha) := \sqrt{d[(\varphi,\al),(0,0)] } - \sqrt{d[(\varphi,\al),(\tq,0)]} 
\end{equation}  
and $d$ denotes the euclidean distance on $\R^2$.
Triangular inequality implies that 
\begin{equation}\label{triangineqB}
    \left| d[(\varphi,\al),(0,0)]  - d[(\varphi,\al),(\tq,0)]\right|\leq |\tq| 
\end{equation}
and thus 
$$
|S_{\tq} (\varphi, \alpha)| = 
|\sqrt{d[(\varphi,\al),(0,0)] } - \sqrt{d[(\varphi,\al),(\tq,0)]}|  \leq \sqrt{|d[(\varphi,\al),(0,0)] -d[(\varphi,\al),(\tq,0)]|} \leq \sqrt{|\tq|}\, ,
$$
proving, in view of  \eqref{ref:collisionsigsigin}, that
for any 
    $ (\alpha, \mu ) \in \R^2 $, for any $ \sigma = \pm $, for any $k, m 
    \in \Z $, 
\begin{equation}\label{sepeigenvalues:dist2}
        |\omega_k^\sigma (\al,\mu)- \omega_m^\sigma(\al,\mu)| \geq 
        |k-m|-\sqrt{|k-m|} 
\end{equation}
which is strictly positive  for any $ |k-m|\geq 2 $.  In order to prove \eqref{samesignint}
it remains to analyze
$ | k - m  | = 1 $. 
So assume $\omega_k^\sigma (\al,\mu)= \omega_m^\sigma(\al,\mu)$. In this case 
\be\label{k-m=1}
k - m = \sqrt{d[(\varphi,\al),(0,0)]}-\sqrt{d[(\varphi,\al),(\tq,0)]} \, . 
\ee
We now use that, if   $x,y\geq 0$ and $\sqrt{x} - \sqrt{y}=1 $ then $ x-y \geq 1$. Moreover if  $\sqrt{x} - \sqrt{y}= 1 $, $ x-y \leq  1 $ then  $(x,y)= (1,0) $. 
\\[1mm]
If 
$ k-m = 1 $, by  \eqref{k-m=1} 
and  \eqref{triangineqB} we deduce
$$
d[(\varphi,\al),(0,0)]= 1 \, , \qquad d[(\varphi,\al),(\tq,0)] = 0 \, , \quad \varphi = \sigma k + \mu \, , \quad
\tq = \sigma (k-m) \, , 
$$
and so $ \alpha = 0 $ and 
$ \mu   = - \sigma m  = \sigma (1-k)  
 $. Similarly we deduce that if 
$ k - m = - 1 $ then $ \mu = - \sigma k $. Thus  
 \eqref{samesignint} is proved.
\\[1mm]
\noindent{\sc Opposite sign wave interactions.}
Let us prove \eqref{oppsigninteractions}. 
The fact that  $
\cM^{(\tp)} = \emptyset $ for any  $\tp\leq-1$,
and $
\cM^{(0)} = \{(0,0)\} $
directly follow by 
 \eqref{def:McleanI} and \eqref{def:McleanII}.  If  $(\alpha, \mu) $
 belongs to 
 $ \cM^{(1) }$ then, by 
\eqref{def:McleanII} with $ \tm = 0 $,
$$
1 = (\al^2 + \mu^2)^\frac14 + [(\mu-1)^2+\al^2]^\frac14 = \sqrt{d[(\al,\mu),(0,0)]} +
\sqrt{d[(\al,\mu),(0,1)] } \, , 
$$
which, jointly with the triangular inequality 
$ 1 \leq  d[(\al,\mu),(0,0)] +
d[(\al,\mu),(0,1)]  $, 
implies that 
$$
d[(\al,\mu),(0,0)] = 1\, , \ d[(\al,\mu),(0,1)] = 0 \,  \quad \text{or} \quad d[(\al,\mu),(0,1)] = 1\, , \ d[(\al,\mu),(0,0)] = 0 \, .  
$$
Thus $\cM^{(1)}= \{(0,0),(0,1)\}$. 

By   \eqref{def:McleanI_II_m} for any 
$ \tp \geq 3 $ each $ \cM^{(\tp)} \cap \{ (0,0) \} = \emptyset $. 
The symmetry properties of 
the McLean curves $ \cM^{(\tp)}$,  $\tp \geq 2 $,  directly follow by \eqref{mcleanmanifoldsp0}
and \eqref{eig.0}. 
Then  \eqref{sing.ML} is a direct computation using \eqref{mcleanmanifoldsp0}. The curves $ \cM^{(\tp) } $ are analytic since the function
$
(\al,\mu) \to  m_\tp (\al,\mu)
$  in \eqref{mcleanmanifoldsp0}
is analytic at any point, except $ (0,\pm \frac\tp 2)$ (that are not on the curve $\cM^{(\tp)}$), and its gradient vanishes only at $(0,0)$. Since $(0,0)\in\cM^{(\tp)}$ only for $\tp=2$, the curves $\cM^{(\tp)}$ are analytical manifolds for any $\tp\geq 3$ even, and $\cM^{(2)}$ is an analytical variety with a unique singular point at $(0,0)$.
By the explicit expression \eqref{McLean1} we recognize that 
the two tangent lines of $  \cM^{(2)}$ at $(0,0)$ 
are $ \mu = \pm \sqrt{2} \alpha $.
Moreover for any $\tp\geq 3$ 
$$
m_\tp (0,\mu) >0 \ \ \forall \mu \in (\mu_*^-(\tp),\mu_*^+(\tp))\, , \qquad m_\tp (0,\mu) <0 \ \  \forall \mu \in \R\setminus (\mu_*^-(\tp),\mu_*^+(\tp))\, ,
$$
and  $\al \in \R_+ \to m_\tp(\al,\mu)$ is strictly decreasing for any $\mu$, with $\lim_{\al\to\infty}m_\tp(\al,\mu)=-\infty$, so that $\cM^{(\tp)} \cap \{\al>0\}$ is the graph of an analytic function $\al_\tp:(\mu_*^-(\tp),\mu_*^+(\tp))\to \R_+$.   For every even $\tp\geq 4$, $\cM^{(\tp)}$ does not intersect the origin and is symmetric with respect both axes with horizontal tangents at $(\al,\mu)=(\pm  \sqrt{\left(\frac{\tp}{2}\right)^4 -  \left(\frac{\tp}{2}\right)^2} , 0)$ and vertical tangent at $(\al,\mu) = (0, \pm \mu_*(\tp))$. These are the unique intersections with the axes, thus $\cM^{(\tp)}$ is a simple closed curve. This also proves that the regions $\cU^+_\tp$  and $\cU^-_\tp$ are respectively bounded and unbounded. 

We now prove that the McLean curves do not intersect each other, namely 
\begin{equation}
\label{noint}
\cM^{(\tp)}\cap \cM^{(\tp+1)} = \emptyset \, , 
\quad \forall \tp \geq 2 \, . 
\end{equation}
Suppose first that $\tp$ is even. If $(\al,\mu)\in \cM^{(\tp)}\cap \cM^{(\tp+1)}$, then  $\omega^+_{\frac{\tp}{2}}(\al,\mu)=\omega^-_{\frac{\tp}{2}}(\al,\mu)$ and $\omega^+_{\frac{\tp}{2}}(\al,\mu) = \omega^-_{\frac{\tp}{2}+1}(\al,\mu)$, thus $\omega^-_{\frac{\tp}{2}}(\al,\mu) = \omega^-_{\frac{\tp}{2}+1}(\al,\mu)$. Then, by \eqref{samesignint}, $(\al,\mu) = (0,\frac{\tp}{2})$.  But by \eqref{sing.ML} this point belongs neither to $\cM^{(\tp)}$ nor to $\cM^{(\tp+1)}$, obtaining a contradiction. The case $\tp$ odd is similar. By \eqref{noint}, and by direct inspection $m_{\tp+1}((0,\mu^+_*(\tp)))>0$, then
$
\cM^{(\tp)}\subset \cU_{\tp+1}^+,
$
proving \eqref{outinunp}.

\begin{proof}[Proof of Lemma \ref{separation_eigenvalues}]

\noindent{\sc Proof of \eqref{0905:1852}}.  Consider first $\omega_0^+(\alpha, \mu)$.
We claim that   
for any  $ \delta > 0 $ small enough  
\begin{equation}\label{0995:1916+}
\abs{\omega_0^+ (\al, \mu) - \omega_q^+ (\al, \mu)} \geq \tfrac12   \, , \quad 
\forall q \neq 0,1 \, , \quad  
\forall (\al, \mu) \in B_\delta(0,0) \, . 
\end{equation}
Indeed, if $|q| \geq 2$, by \eqref{sepeigenvalues:dist2}, we have 
$ |\omega_0^+(\al, \mu) - \omega_q^+(\al, \mu) | \geq 2 - \sqrt{2} $. 
If  $q =- 1$,
since $\omega_{-1}^+ (\al, \mu) = - 1+ \mu - \Omega_{\al}(-1 + \mu)\stackrel{(\al,\mu)\sim (0,0)}{\sim} -2$, hence for any $ \delta >0 $ small it results 
$ | \omega_0^+(\al, \mu) - \omega_{-1}^+(\al, \mu) | \geq 1 $ for any 
$ (\al, \mu) \in B_\delta (0,0) $. 
This proves \eqref{0995:1916+}.

Similarly, by \eqref{sepeigenvalues:dist2} and since 
$\omega_{-1}^- (0, 0) = 2$,
for any $ \delta > 0 $ small enough 
\begin{equation}\label{0995:1916}
\abs{\omega_0^-(\al, \mu) - \omega_q^-(\al, \mu)} \geq \tfrac12   \, , \quad 
\forall q \neq 0,1 \, , \quad  
\forall (\al, \mu) \in B_\delta(0,0) \, . 
\end{equation}
Then, for any $\delta>0$ so small that $\abs{\omega_0^+(\al, \mu) - \omega_0^-(\al, \mu)}\leq \frac{1}{4}$ for $(\al, \mu) \in  B_\delta (0,0)$, we deduce,
for any $ q \neq 0,1 $,  
\begin{equation}\label{1612:1554}
    \begin{aligned}
\abs{\omega_0^+(\al, \mu) - \omega_q^-(\al, \mu)} \geq
\abs{\omega_0^-(\al, \mu) - \omega_q^-(\al, \mu)} - \abs{\omega_0^+(\al, \mu) - \omega_0^-(\al, \mu)}
\stackrel{\eqref{0995:1916}}{\geq} \tfrac14 \ , \\
\abs{\omega_0^-(\al, \mu) - \omega_q^+(\al, \mu)} \geq
\abs{\omega_0^+(\al, \mu) - \omega_q^+(\al, \mu)} - \abs{\omega_0^+(\al, \mu) - \omega_0^-(\al, \mu)}
\stackrel{\eqref{0995:1916+}}{\geq} \tfrac14\ .
    \end{aligned}
\end{equation}
Then \eqref{0995:1916+} and 
 \eqref{1612:1554} yield \eqref{0905:1852}  for $(k,\sigma) \in \{ (0,+),
 (0,-) \} $.
 The bound \eqref{0905:1852} for $(k, \sigma) = (1,\pm) $ follows analogously,  exploiting that $ |\omega_2^\pm (\al, \mu) | $ is close to $2-\sqrt{2} $ for $(\al, \mu) \in B_\delta(0,0)$.
\\[2mm]
\noindent{\sc proof of  \eqref{0905:1850}.} 
Let us consider first the case $ \tp \geq 4 $ even. We claim that  
there is $ \tc_\tp >0$  such that for any small neighborhood 
$ \cN^{(\tp)} $ of 
$ \cM^{(\tp)} $  
\begin{align}\label{0905:1835}
       & \big| \omega_{\frac{\tp}{2}}^\sigma (\al,\mu)-\omega_q^\sigma (\al,\mu)
       \big| 
       \geq 2 \tc_\tp > 0  \, ,\qquad \forall  q\neq \frac{\tp}{2}\, , 
       \  \sigma = \pm \, , 
       \ (\al,\mu)\in \cN^{(\tp)} \, . 
    \end{align}
This follows by \eqref{sepeigenvalues:dist2} for $ |q- \frac{\tp}{2}| \geq 2 $, by 
\eqref{samesignint} 
if $ |q- \frac{\tp}{2}| = 1 $, using that $(0,\sigma(1-\tfrac{\tp}{2})),\, (0,\sigma\tfrac\tp2)\not\in \cM^{(\tp)}$ for $\tp\geq 4$ (or use \eqref{diffaut}),
for any neighborhood 
$ \cN^{(\tp)}$
of the McLean curve
$\cM^{(\tp)}$  
not intersecting $(0,0)$ 
 (recall that $\cM^{(\tp)}$, $\tp \geq 3 $, does not intersect $(0,0)$). 

Now we examine interactions of opposite sign. 
 For any $(\al,\mu)\in \cM^{(\tp)}$ we have $\omega^+_{\frac{\tp}{2}}(\al,\mu) = \omega^-_{\frac{\tp}{2}}(\al,\mu)$ and  therefore, up to shrinking the neighborhood $\cN^{(\tp)}$ of $ \cM^{(\tp)} $, we have (recall that 
 $\cM^{(\tp)}$ is compact) 
\begin{equation}\label{maxcp}
 \max_{(\al,\mu)\in  \cN^{(\tp)}} |\omega^+_{\frac{\tp}{2}}(\al,\mu) - \omega^-_{\frac{\tp}{2}}(\al,\mu)| <  \tc_\tp   \, . 
\end{equation}
 Thus, for any  $\tq\neq \frac\tp 2$, 
 $ \sigma = \pm $, for any 
 $ (\alpha, \mu) \in \cN^{(\tp) }$, \begin{equation}\label{deleven}
        |\omega^\sigma _{\frac{\tp}{2}}(\al,\mu) - \omega^{-\sigma} _{\tq}(\al,\mu)| \geq  |\omega^{-\sigma} _{\frac{\tp}{2}}(\al,\mu) - \omega^{-\sigma} _{\tq}(\al,\mu)| - |\omega^\sigma _{\frac{\tp}{2}}(\al,\mu) - \omega^{-\sigma} _{\frac{\tp}{2}}(\al,\mu)| 
        \stackrel{\eqref{0905:1835},\eqref{maxcp}} \geq \tc_\tp \, . 
    \end{equation}
 Then  \eqref{0905:1835}, \eqref{deleven} prove \eqref{0905:1850} when $\tp \geq 4 $ is even. The cases $\tp \geq 3 $ odd and $ \tp = 2 $ 
 follow similarly.
\end{proof}

\section{Properties of the classes $ \cA $ and $ \mathtt F $}\label{ap_cAF}

In this appendix we prove  key  properties of the classes $\cA$ and $\tF$ introduced in  \Cref{sec:MDN}.
First we prove \Cref{lem:decomposition}.
\begin{proof}[Proof of \Cref{lem:decomposition}]
    Define for a function $A\in\cA(B_r(0,0),\e_0;X)$ its even an odd components in $\mu$: $$A_{even}(\al,\mu,\e) := \frac{A(\al,\mu,\e) + A(\al,-\mu,\e)}{2} \, , \qquad   A_{odd}(\al,\mu,\e) :=\frac{A(\al,\mu,\e) - A(\al,-\mu,\e)}{2}\, ,$$ 
where $A_{even}$ is actually analytic in $\mu^2$ since it is analytic and even in $\mu$, while $A_{odd}$ has a zero of order 1 at $\mu = 0$, so that, gathering out a factor $\mu$, one obtains an analytic function of $\mu^2$. In addition, by \eqref{simmeA} the function $A_{even}$ is real and $A_{odd}$ is purely imaginary.  
Hence starting from \eqref{dectM} one has 
$$
A(\al,\mu,\e) = \underbrace{A^{[\mathrm{I}]}_{even}(\al^2,\mu,\e)}_{=:A^{[0,0]}(\al^2,\mu^2,\e)} + \im\mu \underbrace{\frac{-\im}{\mu}A^{[\mathrm{I}]}_{odd}(\al^2,\mu,\e)}_{=:A^{[0,1]}(\al^2,\mu^2,\e)} + (\al^2+\mu^2)^\frac12 \underbrace{A^{[\mathrm{II}]}_{even}(\al^2,\mu,\e)}_{=:A^{[1,0]}(\al^2,\mu^2,\e)} + \im\mu (\al^2+\mu^2)^\frac12 \underbrace{\frac{-\im}{\mu}A^{[\mathrm{II}]}_{odd}(\al^2,\mu,\e)}_{=:A^{[1,1](\al^2,\mu^2,\e)}}\, , 
$$
and all the operators $A^{[i,j]}$ are well-defined on the set $B_{r^2}(0,0)\times B_{\e_0}(0)$ for some sufficiently small $r$, analytic in every variable and real-to-real, proving \eqref{decompositiontM}. 

In order to prove that the decomposition in \eqref{decompositiontM} is unique let us show that if
$$
A^{[0,0]}(\al^2,\mu^2,\e) + \rho A^{[1,0]}(\al^2,\mu^2,\e) + \im\mu A^{[0,1]}(\al^2,\mu^2,\e) + 
\im\mu \rho A^{[1,1]}
(\al^2,\mu^2,\e) \equiv 0 
$$
then 
$ A^{[i,j]}(\al^2,\mu^2,\e)\equiv 0 $ for any  $ i,j  \in \{ 0,1\} $. 
Considering first only the real part, one has for every fixed $t\in \R$
$$
A^{[0,0]}(t^2\mu^2,\mu^2,\e) + (1+t^2)^\frac12 |\mu| A^{[1,0]}(t^2\mu^2,\mu^2,\e) \equiv 0\, .
$$
Taylor expanding in $\mu>0$, the function $A^{[0,0]}(t^2\mu^2,\mu^2,\e)$ has only even degree monomials in $\mu$, while $\mu A^{[1,0]}(t^2\mu^2,\mu^2,\e)$ has only monomials of odd degree, and therefore $A^{[0,0]}(t^2\mu^2,\mu^2,\e) \equiv A^{[1,0]}(t^2\mu^2,\mu^2,\e) \equiv 0$, and letting $t$ vary we deduce that   $A^{[0,0]}\equiv A^{[1,0]} \equiv 0$. The argument for the imaginary part is analogous.
\end{proof}

Next we consider 
the class $ \cA $ in 
\Cref{def:tM}.
\begin{lemma}\label{lem:prodintM} Let $\Omega \subset \R^2 $ be an open set, $ X,Y,Z$ be Banach spaces,  and $\e_0 \in (0,+\infty] $. The following properties hold:
    \begin{enumerate}
        \item[(i)]{\bf Composition:} If $A\in \cA(\Omega,\e_0;Y,Z)$, 
        $B\in \cA(\Omega,\e_0;X,Y)$ (recall notation \eqref{notaLYZ}) then $AB\in \cA(\Omega,\e_0;X,Z)$. 
    \item[(ii)]{\bf Functional calculus:} 
    Let   $f(z) = \sum_{k\geq 0} f_kz^k$ be an analytic function on $\{z\in\C\, : \ |z|<a\}$ for some $ a > 0 $. 
    Let  $A\in \cA(\Omega,\e_0;X,X)$ satisfy  $\|A(\al,\mu,\e)\|_{\cL(X,X)}\le a$ for any $ (\alpha, \mu, \epsilon)$ in $\Omega\times B_{\e_0}(0)$, and suppose that for any $j\in \Z$
    $$
\sup_{(\al,\mu,\e)\in(\Omega \cap B_r(0,j))\times B_{\e_0}(0)}\left( \|A^{[\mathrm{I}]}(\al^2,\mu,\e)\|_{\cL(X,X)} + (\al^2+(\mu-j)^2)^\frac12 \|A^{[\mathrm{II}]}(\al^2,\mu,\e)\|_{\cL(X,X)} \right) < a\, .
$$
Then 
    \be\label{funcalcap}
    (\al, \mu, \e) \mapsto f(A)(\al, \mu, \e) := \sum_{k\geq 0} f_k A^k(\al, \mu, \e) \in \cA(\Omega,\e_0;X,X)\, .
    \ee
    \item[(iii)]{\bf Scalar product:} Let $Y $ be an Hilbert space with scalar product $(\cdot,\cdot)_Y $. If $v\in \cA(\Omega,\e_0;X)$, $w\in \cA(\Omega,\e_0;Y)$ and $A\in \cA(\Omega,\e_0;X,Y)$, then 
    $$
    (Av,w)_Y \in \cA(\Omega,\e_0;\C)\, .
    $$
    \end{enumerate}
\!\! \!\!\! \!\!\! \!\!\! \!
\!\! \!\!\! \! Properties $(i)$-$(iii)$ hold analogously for the class of polar-analytic functions $\cA_P (\Omega,\e_0;X)$ in  \Cref{def:tildeM}.
\begin{enumerate}
    \item[(iv)]{\bf Spectral projectors:} Let $A(\al,\mu,\e) \in 
\cA (\Omega,\e_0;Y,X)$ and  $\Gamma(\al,\mu,\e)$ is a 
family of closed smooth curves,
counterclockwise-oriented, 
belonging  to the resolvent set of $ A(\al,\mu,\e)$ and  satisfying 
\begin{itemize}
    \item {\bf (continuity property)}  for any $(\und\al,\und\mu,\und\e) \in \Omega\times B_{\e_0}(0)$ there exists a small neighborhood $\cU(\und\al,\und\mu,\und\e)$ such that for any $(\al,\mu,\e)\in\cU(\und\al,\und\mu,\und\e)$ the curve $\Gamma(\al,\mu,\e)$ can be continuously deformed into $\Gamma(\und\al,\und\mu,\und\e)$ inside the resolvent set of $A(\al,\mu,\e)$. 
\end{itemize}
        Then
        $$\Omega\times B_{\e_0} (0) \ni (\al,\mu,\e) \mapsto \oint_{\Gamma(\al,\mu,\e)} (\lambda-A(\al,\mu,\e))^{-1}\frac{\de\lambda}{2\pi\im}\in \cA(\Omega,\e_0;X,Y) \, . 
        $$
        \item[(v)]{\bf (Lipschitz functions)} Let 
        $A(\al,\mu,\e) $ be a polar-analytic function in $ \cA_P( B_{r}(0,0),\e_0;X)$, $r, \e_0 >0$,  satisfying  $A(\al,\mu,\e) = \cO(\rho)$ according to  \Cref{def:tildeM}. Then $A(\al,\mu,\e)$ extends to a Lipschitz function of $(\al,\mu)$ in a neighborhood of the origin. 
        \end{enumerate}
\end{lemma}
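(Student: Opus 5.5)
We prove each item by reducing to the decomposition \eqref{dectM} locally near each point $(0,j)$, $j\in\Z$, since away from $\{0\}\times\Z$ all statements are just the classical facts that analytic maps compose, that convergent power series and Cauchy integrals of analytic operator families are analytic. So throughout we fix $j$ and $(\al,\mu)\in B_r(0,j)\cap\Omega$, write $\rho_j:=(\al^2+(\mu-j)^2)^{1/2}$, and use the algebraic identity $\rho_j^2=\al^2+(\mu-j)^2$, which is analytic in $(\al^2,\mu)$. The key observation is that the set of functions of the form $A^{[\mathrm{I}]}(\al^2,\mu,\e)+\rho_j A^{[\mathrm{II}]}(\al^2,\mu,\e)$ with analytic coefficients is a ring: it is the image of the ring of analytic functions of $(\beta,\mu,\e,t)$ modulo $t^2-(\beta+(\mu-j)^2)$ under $\beta=\al^2$, $t=\rho_j$.

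For (i), we multiply: $(A^{[\mathrm{I}]}+\rho_jA^{[\mathrm{II}]})(B^{[\mathrm{I}]}+\rho_jB^{[\mathrm{II}]})=(A^{[\mathrm{I}]}B^{[\mathrm{I}]}+(\al^2+(\mu-j)^2)A^{[\mathrm{II}]}B^{[\mathrm{II}]})+\rho_j(A^{[\mathrm{I}]}B^{[\mathrm{II}]}+A^{[\mathrm{II}]}B^{[\mathrm{I}]})$, and both brackets are analytic in $(\al^2,\mu,\e)$; analyticity in $\e$ and off $\{0\}\times\Z$ is immediate. Item (iii) is identical, the scalar product being bilinear/sesquilinear and continuous (the antilinearity in $w$ is harmless since we regard real analyticity, and $\rho_j$ is real). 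For (ii), we introduce $t$ as an independent complex variable: set $\mathcal A(\beta,\mu,\e,t):=A^{[\mathrm{I}]}(\beta,\mu,\e)+tA^{[\mathrm{II}]}(\beta,\mu,\e)$; the hypothesis gives $\|\mathcal A\|<a$ on the real set, hence, by continuity and compactness after shrinking, on a complex neighborhood, so $f(\mathcal A)=\sum f_k\mathcal A^k$ converges uniformly and is analytic in $(\beta,\mu,\e,t)$. Splitting $f(\mathcal A)$ into its even and odd parts in $t$ and using $t^2=\beta+(\mu-j)^2$ yields $F^{[\mathrm{I}]}+tF^{[\mathrm{II}]}$ with analytic coefficients; evaluating at $t=\rho_j$ gives \eqref{dectM}. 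Same argument verbatim for $\cA_P$: in polar coordinates everything is just analytic in $(\rho,\theta,\e)$, so (i)–(iii) are closure of analytic maps under products and power series.

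For (iv), the main obstacle, we again lift to $\mathcal A(\beta,\mu,\e,t)$ near a fixed $(\und\al,\und\mu,\und\e)$. By the continuity property we replace $\Gamma(\al,\mu,\e)$ by the fixed curve $\und\Gamma:=\Gamma(\und\al,\und\mu,\und\e)$ without changing the integral (Cauchy's theorem, since the deformation stays in the resolvent set). Since $\und\Gamma$ is compact and lies in the resolvent set of $\mathcal A$ at the base point, the Neumann series shows $(\lambda-\mathcal A(\beta,\mu,\e,t))^{-1}$ exists and is analytic in all variables, uniformly for $\lambda\in\und\Gamma$ and $(\beta,\mu,\e,t)$ in a small complex neighborhood, including nearby non-physical $t$. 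Integrating over $\und\Gamma$ gives an analytic function of $(\beta,\mu,\e,t)$; splitting even/odd in $t$ as in (ii) gives the decomposition. The delicate point is that the lifted family must be considered at $t$ near $\rho_j$ and not only at $t=\rho_j$; we handle this by noting the resolvent condition is open.

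For (v), write $A=\rho\,g(\rho,\theta,\e)$ with $g$ analytic and bounded. Setting $A(0,0,\e):=0$ gives continuity. For the Lipschitz bound, the gradient in Cartesian coordinates is $\partial_\rho A\,\nabla\rho+\rho^{-1}\partial_\theta A\,\nabla^\perp\theta$-type terms, i.e. $g+\rho\partial_\rho g$ and $\partial_\theta g$ times unit-size factors, all bounded on $B_r\setminus\{0\}$. A function with bounded gradient on a punctured ball is Lipschitz there (segments avoiding the origin, or passing through it and using continuity), hence Lipschitz on the ball.
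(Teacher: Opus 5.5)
Your proposal is correct and follows essentially the paper's route: the product expansion that absorbs $\rho_j^2=\alpha^2+(\mu-j)^2$ into the $[\mathrm{I}]$ part for (i) and (iii), the even/odd splitting in an auxiliary variable $t$ standing for $\rho_j$ for (ii) and (iv), and the polar-coordinate gradient bound for (v). The differences are minor: you get joint analyticity by composition rather than via Hartogs, and in (iv) you lift the resolvent and integrate over the fixed contour before splitting, whereas the paper first decomposes the resolvent via (ii) and then integrates. One point in (iv) deserves to be stated explicitly: the construction must be based at $(\alpha,\mu)=(0,j)$, where $t=0$, so that openness of the resolvent set gives a neighbourhood symmetric under $t\mapsto -t$, as the splitting requires; you then shrink $r$, just as the paper does when it works near $(0,0)$.
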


\begin{proof} 
 For  the operator $AB$ in item $(i)$, the operator $f(A)$ in item $(ii)$ and the scalar function $(Av,w)_Y $ in item $(iii)$ it is immediate to verify condition 1) and the first bullet of 2)  in Definition \ref{def:tM}.
It remains to show that these functions satisfy the condition in the second bullet of item 2). For simplicity we do this only for $j=0$. 
\\[1mm]
$(i)$ Since $A$ and $B$ decompose 
as in \eqref{dectM}, we have 
\begin{equation}\label{gatheringtM}
    \begin{aligned}
    AB = \underbrace{A^{[\mathrm{I}]}B^{[\mathrm{I}]}  + (\al^2+\mu^2)A^{[\mathrm{II}]}B^{[\mathrm{II}]}}_{=: (AB)^{[\mathrm{I}]}}+ (\al^2+\mu^2)^\frac12 \underbrace{ \left(A^{[\mathrm{II}]}B^{[\mathrm{I}]} + A^{[\mathrm{I}]}B^{[\mathrm{II}]}\right)}_{=:(AB)^{[\mathrm{II}]}}\, .
\end{aligned}
\end{equation}
Clearly $(AB)^{[\mathrm{I}]}$ and $(AB)^{[\mathrm{II}]}$ are analytic functions of $\al^2$ and $\mu$, as well as $A^{[\mathrm{I}]},B^{[\mathrm{I}]},A^{[\mathrm{II}]}$ and $B^{[\mathrm{II}]}$.
\\[1mm]
$(ii)$ Define the following functions depending on an additional free variable $\rho$:
\begin{equation}\label{addfreevar}
    \begin{aligned}
    &g_I(A^{[\mathrm{I}]},A^{[\mathrm{II}]},\rho^2) := \frac12 \left( f(A^{[\mathrm{I}]}+\rho A^{[\mathrm{II}]}) + f(A^{[\mathrm{I}]}-\rho A^{[\mathrm{II}]}) \right)\, , \\ &g_{II}(A^{[\mathrm{I}]},A^{[\mathrm{II}]},\rho^2) := \frac1{2\rho} \left( f(A^{[\mathrm{I}]}+\rho A^{[\mathrm{II}]}) - f(A^{[\mathrm{I}]}-\rho A^{[\mathrm{II}]}) \right)\, .
\end{aligned}
\end{equation}
Defining the open set $\cU :=\left\{(A^{[\mathrm{I}]},A^{[\mathrm{II}]},\rho^2)\in\cL(X,X)\times \cL(X,X) \times \C \, : \ \|A^{[\mathrm{I}]}\|+|\rho|\|A^{[\mathrm{II}]}\|<a \right\} $, we prove now that the functions 
$  g_{\mathrm{I}}\, , \ g_{\mathrm{II}}\, :\cU \to \cL(X,X) $
are holomorphic.
Indeed, since  the function $ \{A\in \cL(X,X)\, : \ \|A\|_{\cL(X,X)}<a \} \ni A \to f(A)\in\cL(X,X)$ is analytic, the functions $g_{\mathrm{I}},g_{\mathrm{II}}$ depend analytically on $\rho $ for every fixed $A^{\mathrm{[I]}}$, $A^{\mathrm{[II]}}$, and are also even in $\rho$. Thus, they are holomorphic in $\rho^2$. The holomorphicity of the maps  $A^{[\mathrm{I}]}\mapsto g_{\star}(A^{[\mathrm{I}]},A^{[\mathrm{II}]},\rho^2) $, 
$A^{[\mathrm{II}]}\mapsto g_{\star}(A^{[\mathrm{I}]},A^{[\mathrm{II}]},\rho^2) $, $\star = \mathrm{I}, \mathrm{II}$, 
follow directly by the analyticity of $f$. By the Hartogs theorem in Banach spaces -- see \cite[Theorem 36.8]{muj} -- separate holomorphicity in every variable implies holomorphicity of the full function.
Therefore, considering a function $A(\al,\mu,\e) = A^{[\mathrm{I}]}(\al^2,\mu,\e) + (\al^2+\mu^2)^\frac12 A^{[\mathrm{II}]}(\al^2,\mu,\e) \in \cA(\Omega,\e_0;X,X)$, we define
$$
\begin{aligned}
    &f(A)^{[\mathrm{I}]}(\al^2,\mu,\e):=g_{\mathrm{I}}(A^{[\mathrm{I}]}(\al^2,\mu,\e),A^{[\mathrm{II}]}(\al^2,\mu,\e),\al^2+\mu^2)\, ,\\
    & f(A)^{[\mathrm{II}]}(\al^2,\mu,\e):=g_{\mathrm{II}}(A^{[\mathrm{I}]}(\al^2,\mu,\e),A^{[\mathrm{II}]}(\al^2,\mu,\e),\al^2+\mu^2)\, ,
    \end{aligned}
$$
which are analytic in the variables $\al^2,\mu$ and $\e$, and satisfy, by \eqref{addfreevar}, the identity
$$
f(A)(\al,\mu,\e) = f(A)^{[\mathrm{I}]}(\al^2,\mu,\e) + (\al^2+\mu^2)^\frac12 f(A)^{[\mathrm{II}]}(\al^2,\mu,\e)\, ,
$$
so that $f(A)\in \cA(\Omega,\e_0;X,X)$. 
\\[1mm]
$(iii)$ We now prove the decomposition \eqref{dectM} for the scalar product $(Av,w)_F$ of  item $(iii)$.  Expanding $A,v,w$ as in \eqref{dectM} we decompose
\begin{alignat*}{2}
    (Av,w)_Y &= \overbrace{(A^{[\mathrm{I}]}v^{[\mathrm{I}]},w^{[\mathrm{I}]})_Y + (\al^2+\mu^2)\bigg[(A^{[\mathrm{I}]}v^{[\mathrm{II}]},w^{[\mathrm{II}]})_Y 
    +  (A^{[\mathrm{II}]}v^{[\mathrm{I}]},w^{[\mathrm{II}]})_Y + (A^{[\mathrm{II}]}v^{[\mathrm{II}]},w^{[\mathrm{I}]})_Y \bigg]}^{=:(Av,w)_Y^{[\mathrm{I}]}}\\
    &+ (\al^2+\mu^2)^\frac12 \underbrace{\left[(A^{[\mathrm{I}]}v^{[\mathrm{I}]},w^{[\mathrm{II}]})_Y + (A^{[\mathrm{I}]}v^{[\mathrm{II}]},w^{[\mathrm{I}]})_Y + (A^{[\mathrm{II}]}v^{[\mathrm{I}]},w^{[\mathrm{I}]})_Y + (\al^2+\mu^2)(A^{[\mathrm{II}]}v^{[\mathrm{II}]},w^{[\mathrm{II}]})_Y \right]}_{=:(Av,w)_Y^{[\mathrm{II}]}}.
\end{alignat*}
Clearly both $(Av,w)_Y^{[\mathrm{I}]}$ and $(Av,w)_Y^{[\mathrm{II}]}$ are analytic in $\al^2$ and $\mu$ in a neighborhood of the origin, since $A^{[\mathrm{I}]},A^{[\mathrm{II}]},$ $v^{[\mathrm{I}]},v^{[\mathrm{II}]},w^{[\mathrm{I}]}$ and $w^{[\mathrm{II}]}$ have the same property.
\\[1mm]
 $(iv)$ 
We now prove that the spectral projectors belong to the class $\cA$. 
For every fixed $(\und\al,\und\mu)\in \Omega\setminus \{ (0,0)\}$ and $\und\e\in B_{\e_0}(0)$ there exists a sufficiently small complex neighborhood $\cU\equiv \cU(\und\al,\und\mu,\und\e)\subset \C^3$ on which $A(\al,\mu,\e)$ admits an analytic extension. If instead $(\und\al,\und\mu) = (0,0) $ we choose the neighborhood $\cU(0,0,\und\e) = B_{r^2}(0)\times B_r(0) \times B_{\und\e_0}(\und\e)\subset\C^3$ for some small $r,\und\e_0>0$, so that $\cU(0,0,\und\e)\ni(\al^2,\mu,\e) \mapsto A^{[\star]}(\al^2,\mu,\e)$ is analytical, for both $\star = \mathrm{I},\mathrm{II}$.
In particular, from hypothesis $(a)$, one can choose $\cU$ so that
for every $(\al, \mu, \e)$ in $\cU\cap \R^3$
the curve $\Gamma(\und\al,\und\mu,\und\e)$ can be continuously deformed into $\Gamma(\und\al,\und\mu,\und\e)$ inside the resolvent set $A(\al,\mu,\e)$. 
Then  for   any $(\al,\mu,\e)\in \cU\cap \R^3$,
$$
\oint_{\Gamma(\al,\mu,\e)} (\lambda-A(\al,\mu,\e))^{-1} \frac{\de\lambda}{2\pi \im} = \oint_{\Gamma(\und\al,\und\mu,\und\e)} (\lambda-A(\al,\mu,\e))^{-1} \frac{\de\lambda}{2\pi \im}\, .
$$
Moreover, since $(\al,\mu,\e)\to A(\al,\mu,\e)$ belongs to $\cA(\Omega,\e_0;Y,X)$,
$$
\begin{aligned}
    &\Gamma(\und\al,\und\mu,\und\e) \times \cU\ni (\al,\mu,\e) \mapsto (\lambda-A(\al,\mu,\e))^{-1} \in \cL(X,Y) \ \ \text{is continuous,}\\
    &\cU \ni (\al,\mu,\e) \mapsto (\lambda-A(\al,\mu,\e))^{-1}\in \cL(X,Y) \ \ \text{is analytic for every }\lambda\in \Gamma(\und\al,\und\mu,\und\e)\, .
\end{aligned}
$$
Instead, if $(\und\al,\und\mu)=(0,0)$, then by item $(ii)$ also 
$$
\begin{aligned}
    &\Gamma(0,0,\und\e) \times B_{r^2}(0)\times B_r(0)\times B_{\und\e_0}(\und\e) \ni (\al^2,\mu,\e) \mapsto  [(\lambda-A)^{-1}]^{[\star]} (\al^2,\mu,\e) \ \ \text{is continuous,}\\
    &B_{r^2}(0)\times B_r(0)\times B_{\und\e_0}(\und\e) \ni (\al^2,\mu,\e) \mapsto [(\lambda-A)^{-1}]^{[\star]} (\al^2,\mu,\e) \ \ \text{is analytic for every }\lambda\in \Gamma(0,0,\und\e)\, ,
\end{aligned} \quad \star = \mathrm{I}, \ \mathrm{II}\, ,
$$
both with values in $\cL(X,Y)$. Then, the fact that the maps 
$$
\begin{aligned}
    [\Omega \setminus \{(0,0)\}]\times B_{\e_0}(0) \ni (\al,\mu,\e) &\mapsto \oint_{\Gamma(\al,\mu,\e)} (\lambda-A(\al,\mu,\e))^{-1} \frac{\de\lambda}{2\pi \im}\in\cL(X,Y)\, ,\\
    B_{r^2}(0)\times B_r(0)\times B_{\e_0}(0) \ni (\al^2,\mu,\e) &\mapsto \oint_{\Gamma(\al,\mu,\e)} [(\lambda - A)^{-1}]^{[\star]}(\al^2,\mu,\e) \frac{\de\lambda}{2\pi\im}\in\cL(X,Y)\, , \quad \star = \mathrm{I}, \ \mathrm{II}\,, 
\end{aligned}
$$
are analytic  is a consequence of the following general result: let $ X$ and $Y$ be two complex Banach spaces, and $\cU\subset X $ open. If $\Gamma\times \cU
\ni (\lambda,y)
\mapsto f(\lambda,y)\in Y $ is continuous and for every fixed $\lambda$  the map $\cU
\ni y
\mapsto f(\lambda,y)\in Y $ is holomorphic, then $g(y) = \oint_\Gamma f(\lambda,y)\de\lambda$ is holomorphic on $\cU$.

\noindent $(v)$ 
Since 
$A(\al,\mu,\e) = \cO(\rho)$,  the function 
 $A(\rho\sin\theta,\rho\cos\theta,\e) = \rho  F(\rho, \theta, \e)$ for an analytic function $F(\rho,\theta,\e)$.
 Since  $\mu = \rho \cos \theta$ and $\al = \rho \sin \theta$, we have $\pa_\mu \rho = \cos \theta$ and $\pa_\mu \theta = - \sin\theta / \rho$, and thus 
$$
   \pa_\mu A(\al,\mu,\e) =(\rho \pa_\rho F + F) \cos \theta - \pa_\theta F \sin \theta \, , 
\quad 
\pa_\alpha A(\al,\mu,\e) =(\rho \pa_\rho F + F) \sin \theta + \pa_\theta F \cos \theta  \, , 
$$
which are uniformly bounded in a neighborhood of the origin. Thus  
$A$  is Lipschitz. 

This concludes the proof of the lemma for the class $\cA$.
Properties $(i)$-$(iii)$ hold analogously for the class of polar-analytic functions $ \cA_P (\Omega,\e_0;X)$ in \Cref{def:tildeM}.
\end{proof}

We  now consider  the class $\tF$ in \Cref{defFell}. 
Note  that  
 the $ \kappa $-band operator 
 \eqref{band.def} 
 associated to 
 the multiplication operator 
  for a function $ a(x) $ is the multiplication operator for the $\kappa $-harmonic  of $ a(x)$, namely 
\be\label{armoMOP}
[a(x)]^{[\kappa]}\equiv a_{\kappa} e^{\im \kappa x } :\,  \  
h(x) \mapsto a_{\kappa} e^{\im \kappa x } h(x) 
\qquad \text{where} \qquad 
a_\kappa := \frac{1}{2 \pi}
\int_{\T} a(x) e^{-\im \kappa x}
\de x \, . 
\ee
For a Fourier  multiplier $ g(D) $, 
for any $ \kappa \in \N $ the $ \kappa $-band operator 
$ [g(D)]^{[\kappa]} $  is zero. 

If  $ A $ is a matrix operator as in \eqref{Am4}, then  
  the adjoint of the $ \kappa $-band  operator $ A^{[\kappa]} $ 
  with respect to the scalar product 
  \eqref{scalarcomplex} is  
\begin{equation}\label{starij}
\big[A^{[\kappa]}\big]^* = (A^*)^{[-\kappa]} \, . 
\end{equation}
Formally each $ A$ is the sum of its $ \kappa $-bands
$    A = \sum_{\kappa \in \Z} A^{[\kappa]} $ 
and the $ \kappa $-band of the composed operator is 
\begin{equation}\label{bandAB}
    (A\circ B)^{[\kappa]} = \sum_{\kappa_1 + \kappa_2 = \kappa} A^{[\kappa_1]} \circ B^{[\kappa_2]} \, . 
\end{equation}

\begin{lemma}\label{TFjprop}
Let $ \ell, \ell' \in \N_0 $
and $ A_\ell \in \mathfrak{F}_\ell$, $ B_{\ell'} \in \mathfrak{F}_{\ell'}$. If  $A, B \in \mathtt{F}$ then 
\begin{itemize}
    \item[(i)] {\bf Composition:} $A_\ell \circ B_{\ell'} \in \mathfrak{F}_{\ell+ \ell'}$ and $A \circ B \in  \mathtt{F}$. 
    \item[(ii)] {\bf Adjoint:} $A_\ell^* \in \mathfrak{F}_{\ell}$ and $A^* \in \mathtt{F}$. 
    \item[(iii)] {\bf Functional calculus:} Let $f(z) = \sum_{k\geq 0}f_k z^k$ be a formal power series. Then 
    \begin{equation}\label{FCFeq}
        f(A) := \sum_{k\geq 0}f_k A^k \in \mathtt{F} \, .
    \end{equation}
    If $ A(\lambda) \in F $ for any  $\lambda $ on a closed smooth curve $\Gamma \subset \C$, then 
    $
    \oint_\Gamma f(A(\lambda))\de\lambda \in \tF$.
    \item[(iv)]  {\bf Finite range interaction:} If $ 
    A \in \mathfrak{F}_\ell$, then for any $ v_1, v_2 \in \C^2$,
    \begin{equation}\label{AFlele}
( A\,  v_1 e^{\im j_1 x} , 
v_2 e^{\im j_2 x} ) = 0 \ \ \mbox{ if } |j_1 - j_2| > \ell \mbox{ or } j_1 - j_2 \not\equiv \ell \mbox{ mod } 2 \, . 
\end{equation}
\end{itemize}
\end{lemma}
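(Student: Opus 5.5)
The whole argument is bookkeeping of Fourier bands, built on \eqref{bandAB} and \eqref{starij}. I would prove (iv) and (i) first, because (ii) and (iii) reduce to them.

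\emph{(iv) and (i).} For (iv), note that $A\, v_1 e^{\im j_1 x} = \sum_\kappa A^{[\kappa]} v_1 e^{\im j_1 x}$. By \eqref{band.def} and \eqref{Am4}, the $\kappa$-band term is supported on the harmonic $e^{\im (j_1+\kappa)x}$. Pairing it with $v_2 e^{\im j_2 x}$ in \eqref{scalarcomplex} therefore leaves only $\kappa = j_2 - j_1$. Since $A\in\mathfrak F_\ell$, this band vanishes when $|j_2-j_1|>\ell$ or when $j_2-j_1\not\equiv \ell$ mod $2$, which gives \eqref{AFlele}.

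For (i), \eqref{bandAB} gives $(A_\ell B_{\ell'})^{[\kappa]}=\sum_{\kappa_1+\kappa_2=\kappa}A_\ell^{[\kappa_1]}B_{\ell'}^{[\kappa_2]}$. A summand can be nonzero only when $|\kappa_1|\le \ell$, $|\kappa_2|\le\ell'$, $\kappa_1\equiv\ell$ and $\kappa_2\equiv \ell'$ mod $2$. Any such $\kappa$ then satisfies $|\kappa|\le \ell+\ell'$ and $\kappa\equiv \ell+\ell'$ mod $2$. For $2\times2$ matrices of operators the same argument applies entrywise, since the entries of the product are sums of compositions. The series statement follows from the Cauchy product: $(AB)_n=\sum_{\ell+\ell'=n}A_\ell B_{\ell'}\in\mathfrak F_n$, because $\mathfrak F_n$ is a linear space.

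\emph{(ii).} By \eqref{starij}, $(A_\ell^*)^{[\kappa]}=[A_\ell^{[-\kappa]}]^*$. This vanishes whenever $-\kappa$ is a forbidden band, and the band constraints are symmetric under $\kappa\mapsto-\kappa$, so $A_\ell^*\in\mathfrak F_\ell$. Taking adjoints coefficientwise, with the coefficients of a family real in $\e$, gives $A^*\in\tF$.

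\emph{(iii).} By (i) and induction on $k$, $A^k\in \tF$. The $\e^n$-jet of $f(A)=\sum_k f_kA^k$ is $\sum_k f_k (A^k)_n$, and each $(A^k)_n\in\mathfrak F_n$. This is the one point that needs care. As a formal power series, $(A^k)_n$ involves only finitely many products of jets, yet the sum over $k$ is infinite when $A_0\neq0$. I would handle it by noting that $\mathfrak F_n$ is closed under convergent sums in operator norm, since bands are defined entrywise by matrix coefficients and matrix coefficients pass to the limit. In the applications the series converges, for example as a Neumann series for the resolvent $(\lambda-A)^{-1}$, and then each jet is a convergent series of elements of $\mathfrak F_n$, hence lies in $\mathfrak F_n$. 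For the contour integral, differentiating under the integral shows the $n$-th jet of $\oint_\Gamma f(A(\lambda))\,\de\lambda$ equals $\oint_\Gamma f(A(\lambda))_n\,\de\lambda$. The band coefficients of this integral are integrals of band coefficients that vanish identically, so the integral lies in $\mathfrak F_n$.

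I expect the main obstacle to be this convergence issue in (iii). If needed, the fallback is to rely on the fact that the matrix coefficients $(\,\cdot\, e^{\im j_1x},e^{\im j_2x})$ are continuous linear functionals.
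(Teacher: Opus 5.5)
Your proposal is correct and follows essentially the same route as the paper: items (i), (ii) and (iv) are the same band bookkeeping via \eqref{bandAB}, \eqref{starij} and Definition~\ref{defFell}, and for (iii) the paper likewise expands $f(A)$ jet by jet, applies (i), and then moves the band condition inside the contour integral. Your extra observation is also sound: when $A_0\neq 0$, each jet $f(A)_n$ is an infinite sum over $k$, so one needs $\mathfrak{F}_n$ to be closed under limits of matrix coefficients, and the paper passes over this point silently.
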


\begin{proof}
Items $(i),\, (ii)$ and $(iv)$ readily follow by \eqref{bandAB}, \eqref{starij} and \Cref{defFell}, cf.  \cite[Lemma 5.5]{BMV5}. We now prove $(iii)$.
 By  functional calculus
    $$
    f(A) = \sum_{k\in\N_0} f_k \Big( 
    \sum_{\ell\in\N_0} \e^\ell A_\ell \Big)^k = \sum_{\ell\geq 0}\e^\ell f(A)_\ell \, , \qquad f(A)_\ell: =\sum_{k\in\N_0}f_k \sum_{
    \scriptscriptstyle \ell_1+\dots+\ell_k = \ell, \ell_1, \ldots, \ell_k \in \N_0} \!\!
    \!\!\!\! \! A_{\ell_1}\dots A_{\ell_k}
    $$
    and each $f(A)_\ell \in {\frak F}_{\ell}$
    by  item $(i)$. Hence $f(A) \in \mathtt{F}$, proving \eqref{FCFeq}.  If $A=A(\lambda)$, $\lambda\in\Gamma$, then 
   $f(A(\lambda))\in \tF$ for any  $\lambda\in \Gamma$ by \eqref{FCFeq},  and then   
    $
    \Big[ \oint_\Gamma f(A(\lambda))_\ell\de\lambda\Big]^{[\kappa]} = \oint_\Gamma f(A(\lambda))_\ell^{[\kappa]}\de\lambda = 0 $ if 
    $|\kappa|>\ell $ or  $ \kappa\not\equiv \ell \mod{2} $.  
\end{proof}

\section{Fiber Dirichlet-Neumann operator $\cG (\al,\mu,\e)$}\label{ap_analiticity}

In this Appendix we analyze the fiber Dirichlet-Neumann operator $\cG(\al, \mu, \e)$   defined in \eqref{def:Galmu} and establish 
\Cref{DNProp1}.  The core analysis   focuses on the elliptic problem 
\eqref{ellprobtransf}, detailed  in 
\Cref{sub:ell.f,sec:proofs.FDN} 
contains the proofs of both 
\Cref{DNProp1,DNonR3}.

\subsection{The elliptic problem \eqref{ellprobtransf}}\label{sub:ell.f}
We look for a  solution $ \Theta := \Theta_g $ of   \eqref{ellprobtransf}
in the form  
\begin{equation}\label{decompsolution}
    \Theta_g (x,z) = \Theta_{g}^\flat (x,z) + \Theta^\sharp_g 
(x,z) 
\end{equation}  where   
\begin{equation}\label{eq0}
\begin{cases}
    \pa_z^2\Theta^\flat_g  (x,z) +(\pa_x+\im\mu)^2
    \Theta_g^\flat (x,z) -  \al^2 \Theta^\flat_g (x,z) = 0\\
    \Theta^\flat_g (x,z)|_{z=0}=g(x) \, , \quad 
    \lim_{z\to-\infty} \pa_z \Theta^\flat_g (x,z) = 0   \, , 
\end{cases}    
\end{equation}
and $ \Theta^\sharp_g (x,z) $ solves the non-homogeneus problem
\begin{equation}\label{eqsharp}
    \begin{cases}
    \pa_z^2\Theta^\sharp_g (x,z) +(\pa_x+\im\mu)^2\Theta_g^\sharp (x,z) -  \al^2 \Theta^\sharp_g
    (x,z) = 
    \al^2 d_\e (x,z) (\Theta^\flat_g + \Theta^\sharp_g)
    (x,z)\\
\Theta^\sharp_g (x,z)|_{z=0}=0 \, , \quad 
    \lim_{z\to-\infty} \pa_z \Theta^\sharp_g
    (x,z)= 0  \, . 
\end{cases}
\end{equation}
The solution of
the homogeneous elliptic problem  
\eqref{eq0}
is  
\begin{equation}\label{propg}
\Theta^\flat_g (x,z) :=  
( e^{z|D|_{\al,\mu}}g )(x) := \sum_{k\in \Z} g_k \, e^{z|k|_{\al,\mu}} \, e^{\im k x} 
 \end{equation} 
 where
\begin{equation}\label{disprel}
        |k|_{\al,\mu}:= ((k+\mu)^2+\al^2)^\frac12 \, .
\end{equation}
The  propagator solution \eqref{propg}   does not have,
on the mode
$k=0$, a fast decay as $ z \to - \infty $, uniformly  for $\al,\mu$ close to $(0,0)$. We therefore introduce 
the following Hilbert spaces
of functions:
given $s\in\R,\, b \in \N_0$, $a >0 $, $a_0 \in \R$, we define 
\begin{equation}\label{def:expmod0}
    H^{s, b}_{-a_0, a}:= 
\Big\{ 
u(x,z) = \sum_{k \in \Z}   u_k (z) e^{\im k  x}
\, \colon \, \T \times (-\infty, 0] \to \C \ \ \mbox{:} \ \   
\| u \|_{\subalign{ &\scriptscriptstyle s,b\\  & \scriptscriptstyle -a_0, a}} < \infty 
\Big\}  \,
\end{equation}
endowed with the norm 
\begin{align}
\| u \|_{\subalign{ &\scriptscriptstyle s,b\\  &\scriptscriptstyle-a_0, a}}^2 & := 
\sum_{j = 0}^b 
\Big( 
 \| \pa_z^j \Pi_0 u \|_{L^{2,-a_0}}^2   
+ 
 \,  
 \| \pa_z^j \Pi_0^\bot u \|_{L^{2,a}(\R_-,H^{s-j})}^2   
\Big) 
\notag  \\
& = \sum_{j = 0}^b 
\Big( 
 \| \pa_z^j u_0 \|_{L^{2,-a_0}}^2   
+ 
\sum_{k \neq  0}
 \, |k|^{2(s-j)} \,  \| \pa_z^j u_k \|_{L^{2,a}}^2   
\Big)  
\label{migliore4}
\end{align}
where $\Pi_0$ is the projector on the zero mode, and  $\Pi_0^\perp := \uno-\Pi_0$.
        Note that, if $a_0>0$, a function   $ u(x,z) \in H^{s,b}_{-a_0,a}$
may exhibit a  first mode $ u_0 (z) $ that grows  
 exponentially as $z\to - \infty$. When  $a_0 = -a$ the space $H^{s,b}_{a,a}$ 
 coincides with 
the space defined in  \eqref{spaziok}, namely  
$H^{s,b}_{a,a} = H^{s,b}_a(\T\times\R_-) $, equipped with norm $\|\cdot\|_{s,b,a} \equiv  \|\cdot\|_{\subalign{ &\scriptscriptstyle s,b\\  &\scriptscriptstyle a, a}}$. 

\begin{remark}\label{rem:decay}
We shall prove  in \eqref{ezDA} 
that the free propagator $e^{z|D|_{\al,\mu}}$ in \eqref{propg}
is analytic 
in a full complex 
neighborhood of $ (\alpha,\mu) $ near $ (0,0) $ (in the sense of \Cref{def:tM}) as an operator valued in $ H^{s,0}_{-a_0,a} $. Then in
\Cref{lem:Thetasharp} 
we shall be able to construct a solution $ \Theta^\sharp_g (x,z) $ of the elliptic problem  \eqref{eqsharp} 
in such spaces 
thanks to the fact that $ d_\epsilon (x,z) = \cO(\epsilon e^z)$ as $ z \to - \infty $, cf. 
\eqref{est:d}.  
\end{remark}

\subsubsection{The linear propagator}

We now provide the analytic properties of the linear propagator 
\eqref{propg} which, 
given  the 
 covariance property \eqref{Gmu+k}, is sufficient 
to consider for  
$ (\alpha, \mu) \in   \R\times (-\tfrac23,\tfrac23)$. 
To establish analytic dependence on
$ (\alpha, \mu) $, we now extend $ e^{z|D|_{\al,\mu}} $ to a complex neighborhood
$$
\R\times (-\tfrac23,\tfrac23) \subset \cU_1\cup\cU_2 \, , 
$$
where
    \begin{equation}\label{0710:1740}
        \begin{aligned}
            &\cU_1 := \cU_1(c) := \big\{ (\al,\mu)\in \C^2 \, : \ |\textup{Im}\al|< c(|\Re \,\al| + |\Re\mu|)\, , \ |\textup{Im}\mu|< c(|\Re \,\al| + |\Re\mu|)\, , \ |\Re\mu|<\tfrac23 \big\} \, , 
            \\
            &\cU_2 := \cU_2(c) := \big\{ (\al,\mu)\in \C^2\, : |\al|^2+|\mu|^2 <c^2 \big\} \, ,
        \end{aligned}
    \end{equation}
and $ c \in (0,1) $. We shall choose the constant $ c $ such that  
\eqref{choice} is satisfied.   
    Note that $ (\R\times (-\tfrac23,\tfrac23))\setminus\{(0,0)\} \subset  \cU_1 $.   

    We now define an
    analytic extension of the function 
$ |k|_{\al,\mu}$ in \eqref{disprel} 
to the complex neighborhood $\cU_1 \cup \cU_2$. 
  For any   
 $\alpha := \alpha_1 + \im \alpha_2$ and $\mu := \mu_1 + \im \mu_2$ we define
     \be\label{k.ana}
    (\al,\mu) \mapsto |k|_{\al,\mu}
    := \sqrt{(k+\mu)^2 + \al^2} =  
    \big( (k+\mu_1)^2  - \mu_2^2 + \alpha_1^2 - \alpha_2^2
   + \im 2 ( k \mu_2  + \mu_1\mu_2 + \alpha_1\alpha_2)\big)^{1/2} 
    \ee
where we fix the branch of the complex square root 
\begin{equation}\label{defsqrt}
\C \setminus \{0\} \ni z = r e^{\im\theta} 
\;\longmapsto\; \sqrt z :=
r^{1/2} e^{\im\theta/2} \, ,
\quad 
r \in (0,\infty),\quad \theta \in [-\pi,\pi) \, . 
\end{equation}
Note that $ |k|_{\alpha,\mu} $
is a function of $ \beta = \alpha^2 $.

\begin{lemma}\label{lemmaassest}  There are $ c_0 > 0 $ and $ \tC \in (0,1)$   such that for any $c \in (0, c_0)$, for any $k\in\Z$,  
     \begin{equation}\label{assest}  
     \Re \, |k|_{\al,\mu} \geq 
     \tC ( |k| +|\al|)   \mbox{ in } \cU_1(c) \ , \qquad 
    \Re \, |k|_{\al,\mu} \geq 
     \tC  |k|   \mbox{ in } \cU_2(c) 
     \, .
    \end{equation}
For any $k \in \Z$ the  range of the map      
      $\cU_1(c)\cup \cU_2(c) \to \C $, $ (\al,\mu) \mapsto |k|_{\al,\mu}$, is included in 
 \be\label{rangek}
 \{\lambda\in\C\, : \ \Re\lambda \geq \tC \}
 \quad \text{if} \ 
 k \neq 0 \, , \qquad \text{resp.} \ \  
 \{\lambda\in\C\, : \ \Re\lambda \geq 0\} \quad \text{if} \ k = 0 \, , 
\ee
 and 
    is analytic on $\cU_1(c)$. For any $k\neq 0$, the function
    $$
    \{  (\al^2,\mu) \, : \ (\al,\mu) \in \cU_1(c)\cup\cU_2(c) \} \to \C \, , \ 
    (\al^2,\mu) = (\beta,\mu) \mapsto ((k+\mu)^2+\beta)^\frac12\  \ \text{is analytic.}
    $$
    \end{lemma}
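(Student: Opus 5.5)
We reduce everything to the argument $w:=(k+\mu)^2+\al^2$ of the square root in \eqref{k.ana}. With the principal branch \eqref{defsqrt} we have $\Re\sqrt w=\sqrt{(|w|+\Re w)/2}\geq 0$, and this quantity is bounded below by a multiple of $|w|^{1/2}$ whenever $w$ stays in a closed sector $|\arg w|\leq \pi-\delta$. Hence the plan has three steps. First, on $\cU_1(c)$ we show that $w$ lies in such a sector and that $|w|\gtrsim (|k|+|\al|)^2$. Second, on $\cU_2(c)$ we do the same with the weaker bound $|w|\gtrsim k^2$ for $k\neq0$. Third, analyticity follows once $w$ avoids the branch cut $(-\infty,0]$.

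On $\cU_1$, write $\al=\al_1+\im\al_2$ and $\mu=\mu_1+\im\mu_2$, with $|\al_2|,|\mu_2|<c(|\al_1|+|\mu_1|)$. Then
\[
\Re w=(k+\mu_1)^2+\al_1^2-\mu_2^2-\al_2^2\geq (k+\mu_1)^2+\al_1^2-2c^2(|\al_1|+|\mu_1|)^2 ,
\]
\[
|\Im w|=2|(k+\mu_1)\mu_2+\al_1\al_2|\leq 2c\,(|k+\mu_1|+|\al_1|)(|\al_1|+|\mu_1|).
\]
We split into two cases.
\begin{itemize}
\item[$k=0$:] Here $(k+\mu_1)^2+\al_1^2=\mu_1^2+\al_1^2\geq \tfrac12(|\al_1|+|\mu_1|)^2$. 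So $\Re w\geq(\tfrac12-2c^2)(|\al_1|+|\mu_1|)^2$ and $|\Im w|\leq 4c(|\al_1|+|\mu_1|)^2$. For $c$ small, $w$ lies in a thin sector around $\R_+$ with $|w|\gtrsim|\al|^2$, since $|\al|\lesssim|\al_1|+|\mu_1|$.
\item[$k\neq0$:] Since $|\mu_1|<2/3$, we have $|k+\mu_1|\geq|k|/3$ and $|\mu_1|\leq \tfrac23|k|$. Therefore $(|\al_1|+|\mu_1|)^2\lesssim |\al_1|^2+k^2\lesssim (k+\mu_1)^2+\al_1^2$. The same two bounds then give $\Re w\geq \tfrac12\big((k+\mu_1)^2+\al_1^2\big)$ and $|\Im w|\leq C c\big((k+\mu_1)^2+\al_1^2\big)$ for $c$ small.
\end{itemize}
In both cases $|\arg w|\leq Cc<\pi/2$ and $|w|\gtrsim (|k|+|\al|)^2$. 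This yields $\Re|k|_{\al,\mu}\geq \tC(|k|+|\al|)$. It also shows that $w$ never touches $(-\infty,0]$ on $\cU_1$: indeed $w\neq0$ there, because $\al_1,\mu_1$ cannot both vanish on $\cU_1$. Hence $\sqrt w$ is analytic on $\cU_1$.

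On $\cU_2$, $|\al|,|\mu|<c$. For $k\neq0$ we get $w=k^2+\mathcal O(c|k|)$, which lies in a small sector around $\R_+$ with $|w|\geq k^2/2$. This gives $\Re\sqrt w\geq\tC|k|$ and analyticity in $(\beta,\mu)=(\al^2,\mu)$, because $(k+\mu)^2+\beta$ stays away from the cut. For $k=0$ only $\Re\sqrt w\geq0$ is claimed, which is automatic from the branch choice. This case is the genuinely singular one, and no analyticity is asserted there.

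The range statement \eqref{rangek} follows from the bounds above once $\tC\leq1$ and $|k|\geq1$. The analyticity of $(\beta,\mu)\mapsto((k+\mu)^2+\beta)^{1/2}$ on the union of the two regions, for $k\neq0$, combines the sector estimates obtained on each region.

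The main obstacle is the uniformity of the sector bound on $\cU_1$ as $k$ varies together with large $|\al_1|$. Everything must be controlled by the single quantity $(k+\mu_1)^2+\al_1^2$, and the constraint $|\Re\mu|<2/3$ is exactly what makes this work. I would first verify the $k\neq0$ inequality $|\mu_1|\leq 2|k+\mu_1|$ carefully, since all constants depend on it; then $c_0$ is chosen so that $Cc_0<1/2$ in all the estimates above.
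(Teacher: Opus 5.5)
Your proposal is correct and follows essentially the same route as the paper. Both arguments set $w=(k+\mu)^2+\alpha^2$, bound $\Re w$ from below and $|\Im w|/\Re w$ from above so that $w$ lies in a sector around the positive real axis, and conclude that $\Re\sqrt{w}\simeq |w|^{1/2}$ and that $\sqrt{w}$ is analytic away from the branch cut. The only difference is cosmetic: you split into $k=0$ and $k\neq 0$ (using $|k+\mu_1|\geq |k|/3$), whereas the paper treats all $k$ at once via the factorization $(k+\mu_1)^2-4c^2\mu_1^2=(k+(1-2c)\mu_1)(k+(1+2c)\mu_1)$.
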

    
    \begin{proof} 
   \underline{Case $(\al, \mu) \in \cU_1(c)$.} There is $  c_0 > 0 $ such that, for any $ 0 < c < c_0 $,  any $k \in \Z $, 
the real part of $(k+\mu)^2+\al^2$ is positive and satisfies    
 \begin{align}\notag
\Re\left((k+\mu)^2+\al^2\right)&  \geq 
     (k + \mu_1)^2 - 4 c^2 \mu_1^2 + (1-4c^2)\al_1^2   =  
    \big(k + (1-2c)\mu_1\big) \big(k + (1+2c) \mu_1\big) 
    + (1-4c^2)\al_1^2\\ 
    & \gtrsim
    \max(k^2, \mu_1^2) + |\al|^2  \label{D.13} \, 
\end{align}
 where to pass from the first to the second line we added and subtracted $\al_2^2$ and used the bound  $|\al_2| \leq c(|\al_1| + |\mu_1|)$. 
As a consequence, there is $ C_1 > 0 $ such that for any 
$ 0 < c < c_0 $, for any $ (\al, \mu) \in \cU_1(c)$, any $k\in \Z$, 
\be\label{tanabs}
\frac{\big| \Im \left((k+\mu)^2+\al^2\right) \big|}
     {\Re \left((k+\mu)^2+\al^2\right) } \stackrel{\eqref{k.ana},\eqref{D.13}} \lesssim 
     \frac{2 |k| c\,(  |\al_1|+|\mu_1|)  + 4c\,( |\al_1|+|\mu_1|)^2}{\max ( k^2, \mu_1^2)   + \al_1^2}  < c\,  C_1 \, .
\ee
By \eqref{tanabs}, 
there is a 
compact set $ K \subset (-\frac{\pi}{2}, \frac{\pi}{2}) $ such that 
$ \arg ( (k+\mu)^2+\al^2 ) \in K $
for any $ (\al,\mu)\in\cU_1(c) $, 
$ k \in \Z $, and therefore 
each $  \Re\,  |k|_{\al,\mu} > 0 $  and  
\begin{align*}
    \Re\,  |k|_{\al,\mu} & \simeq ||k|_{\al,\mu}| \simeq \sqrt{\Re\left((k+\mu)^2+\al^2\right) }  \stackrel{\eqref{D.13}}{\gtrsim} 
    \max (|k|, |\mu_1|)+|\al|
    \gtrsim  |k| +|\al|
    \, ,
    \end{align*}
proving  \eqref{assest} on $\cU_1(c)$. \\
\underline{Case $(\al,\mu)\in \cU_2(c)$.} There is $ c_0 > 0 $ such that, for any $ 0 < c < c_0 $, any $k\neq 0$, 
any  $
|\alpha|^2 + |\mu|^2 < c^2$,  we have 
$ \Re \left((k+\mu)^2+\al^2\right) 
\gtrsim k^2 $ and 
$ \Im \left((k+\mu)^2+\al^2\right) 
\lesssim c |k|  $ 
 and so 
\begin{align*}
  \frac{\big| \Im \left((k+\mu)^2+\al^2\right) \big|}
     {\Re \left((k+\mu)^2+\al^2\right) } 
\leq 
c C_1\, .
\end{align*}
Hence 
$ \Re\, |k|_{\al,\mu} \simeq ||k|_{\al,\mu}| \gtrsim |k| $ for any 
$  
(\al,\mu)\in\cU_2 (c) $ and 
$ k\neq 0 $. 
If $k = 0$, $\Re |0|_{\alpha,\mu} \geq 0 $ in view of  \eqref{defsqrt}, concluding the proof of  \eqref{assest}.
The inclusions \eqref{rangek} follow by \eqref{assest}. 
Finally $ |k|_{\alpha,\mu} $
is analytic on 
$\cU_1(c)$ because, 
by  \eqref{D.13}-\eqref{tanabs},    
the range of the analytic function 
$ \cU_1(c) \to \C $, 
$ (\al,\mu)\mapsto (k+\mu)^2+\al^2 $  does not intersect the semiaxis $(-\infty,0]\subset \C $, 
on which the square root in \eqref{defsqrt} fails to be analytic.
    \end{proof}

\noindent{\bf Parameters:} From now on we fix 
\be \label{choice}
0<  a < \tC< 1 \ ,  
\quad 
0 < a_0 < \min(a, 1-a) \ , 
\quad 0 < c < \min \Big(c_0, \frac{a_0}{2}\Big)  \, , 
\ee
so that  by \eqref{assest},
for any $(\al,\mu)\in\cU_1\cup \cU_2 $, 
\be\label{eq:D14}
\Re |0|_{\al, \mu} + a_0 \geq \frac{a_0}{2} + \tC |\alpha|>0  \, , \qquad \Re |k|_{\al,\mu} - a > (\tC -a) (|k| +  |\al|) \quad \forall k\neq 0\,.
\ee

\begin{proposition}[{\bf Propagator}]\label{lem:und.varphi}
There is $ C >0 $ such that, 
for any    $s\in \R$, any 
$ (\al,\mu)\in \R\times (-\tfrac23,\tfrac23)
$, 
    \begin{equation}
\label{mappaBa}
 \|e^{z|D|_{\al,\mu}}\|_{\cL\big(H^{s},  
 H^{s+\frac12 , 0}_{-a_0, a}\big)} \leq C  \, ,  \qquad \|e^{z|D|_{\al,\mu}}\|_{\cL\big(H^{s},  
 H^{s , 0}_{-a_0, a}\big)}\leq C \langle\al\rangle^{-\frac12} \, .
\end{equation}
Furthermore, 
recalling Definition \ref{def:tM}, 
\begin{equation}\label{ezDA}
e^{z|D|_{\al,\mu}} \in \cA\left(\R\times \left(\scriptstyle{-\frac23},\scriptstyle\frac23\right),+\infty;H^{s},  
 H^{s+\frac12 , 0}_{-a_0, a} \right) \, .
\end{equation} 
\end{proposition}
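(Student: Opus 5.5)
Since $e^{z|D|_{\al,\mu}}$ is a Fourier multiplier acting diagonally on the modes, both bounds in \eqref{mappaBa} reduce to one-dimensional integrals computed mode by mode. For $g=\sum_k g_k e^{\im kx}$, the $k$-th mode of $e^{z|D|_{\al,\mu}}g$ is $g_k e^{z|k|_{\al,\mu}}$. For $(\al,\mu)$ real the quantity $|k|_{\al,\mu}$ is real and nonnegative, and
\[
\|e^{z|k|_{\al,\mu}}\|_{L^{2,a}}^2=\int_{-\infty}^0 e^{2z(|k|_{\al,\mu}-a)}\,\de z=\frac{1}{2(|k|_{\al,\mu}-a)} \quad (k\neq0),\qquad \|e^{z|0|_{\al,\mu}}\|_{L^{2,-a_0}}^2=\frac{1}{2(|0|_{\al,\mu}+a_0)} .
\]
By \eqref{eq:D14}, which holds in particular on the real set $\R\times(-\tfrac23,\tfrac23)\subset\cU_1\cup\cU_2$, the denominators are bounded below by $(\tC-a)(|k|+|\al|)$ and by $\tfrac{a_0}{2}+\tC|\al|$. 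Summing with the weights $|k|^{2(s+\frac12)}$ for $k\ne0$ gives $\sum_k|k|^{2s}|g_k|^2\,\tfrac{|k|}{|k|+|\al|}\lesssim\|g\|_s^2$, and the zero mode contributes $\lesssim|g_0|^2$; this proves the first bound. With weight $|k|^{2s}$ one gets instead the factor $(|k|+|\al|)^{-1}\lesssim\langle\al\rangle^{-1}$ for $k\ne0$, and $(a_0/2+\tC|\al|)^{-1}\lesssim\langle\al\rangle^{-1}$ for $k=0$, which proves the second bound.

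For \eqref{ezDA} the same estimates are repeated for complex $(\al,\mu)\in\cU_1\cup\cU_2$, now with $|e^{z|k|_{\al,\mu}}|=e^{z\Re|k|_{\al,\mu}}$ and $\Re|k|_{\al,\mu}$ bounded below as in \eqref{assest} and \eqref{eq:D14}. This shows that the operator is uniformly bounded $H^s\to H^{s+\frac12,0}_{-a_0,a}$ on the complex neighborhood. On $\cU_1$ each mode map $(\al,\mu)\mapsto g_k e^{z|k|_{\al,\mu}}\in L^{2,a}$ (or $L^{2,-a_0}$) is holomorphic by \Cref{lemmaassest}. A locally uniformly bounded series of holomorphic maps converges to a holomorphic map in the Hilbert space; one can check this either by weak holomorphy plus local boundedness, or by uniform convergence of the finite-rank truncations. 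This gives analyticity away from $\{0\}\times\Z$, and since $|\Re\mu|<\tfrac23$ only the point $(0,0)$ is excluded. The $\e$-regularity in \Cref{def:tM} is trivial, since nothing depends on $\e$.

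The main point is the decomposition \eqref{dectM} near $(0,0)$ with $j=0$. I would split the operator as $e^{z|D|_{\al,\mu}}\Pi_0^\perp+e^{z|0|_{\al,\mu}}\Pi_0$. For $k\ne0$, \Cref{lemmaassest} shows that $|k|_{\al,\mu}=((k+\mu)^2+\beta)^{1/2}$ is analytic in $(\beta,\mu)$ on $\cU_2$, so the first piece is analytic in $(\al^2,\mu)$ by the same summation argument. For the zero mode, write $\rho=(\al^2+\mu^2)^{1/2}$. Expanding $e^{z\rho}$ into its even and odd parts in $\rho$ gives
\[
e^{z\rho}=\cosh(z\rho)+\rho\,\frac{\sinh(z\rho)}{\rho},
\]
and both $\cosh(z\rho)$ and $\sinh(z\rho)/\rho$ are entire functions of $\rho^2=\al^2+\mu^2$. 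The obstacle is to verify that these entire functions define analytic maps into $L^{2,-a_0}$. They grow like $e^{|z||\Re\rho|}$, and the bound $|\rho|<c<a_0/2$ from \eqref{choice} ensures that $e^{|z|\,|\rho|}e^{a_0 z}$ is square integrable on $\R_-$ uniformly. The factor $\sinh(z\rho)/\rho$ carries an extra factor $|z|$, which is absorbed by the remaining exponential margin. Holomorphy then follows from uniform bounds on the complex ball. Together these give $A^{[\mathrm I]}=e^{z|D|_{\al,\mu}}\Pi_0^\perp+\cosh(z\rho)\Pi_0$ and $A^{[\mathrm{II}]}=\frac{\sinh(z\rho)}{\rho}\Pi_0$, as required.
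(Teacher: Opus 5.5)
Your proposal is correct and follows the paper's proof essentially step by step. The mode-by-mode identity $\|e^{z|k|_{\al,\mu}}\|_{L^{2,d}}^2=(2(\Re|k|_{\al,\mu}-d))^{-1}$, combined with the lower bounds \eqref{eq:D14} on $\cU_1\cup\cU_2$, gives \eqref{mappaBa} and local boundedness, and analyticity on $\cU_1$ then comes from weak analyticity plus local boundedness. Near the origin the paper uses exactly your splitting $e^{z|D|_{\al,\mu}}\Pi_0^\perp+\cosh(z\rho)\Pi_0+\rho\,\frac{\sinh(z\rho)}{\rho}\Pi_0$, with the $L^{2,-a_0}$ bounds coming from $|\rho|<c<a_0$.

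One fine point is shared by your argument and the paper's. The norm \eqref{migliore4} weights nonzero modes by $|k|^{2s}$ while $H^s$ uses $\langle k\rangle^{2s}$, so for $s<0$ the resulting constant is of size about $2^{|s|}$ (the worst case is $|k|=1$). Hence $C$ is uniform only for $s$ bounded below, which is harmless because $s$ is fixed in every application.
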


\begin{proof}
Let $g \in H^{s}(\T)$.
For any  $d\in\R$ and $k \in \Z $, if  $ \Re |k|_{\al,\mu} > d $ then 
\begin{equation}\label{suifour}
\|e^{ z|k|_{\al,\mu}}g_k \|_{L^{2,d}}^2 = \frac{|g_k|^2}{2(\Re|k|_{\al,\mu}- d)} 
\, . 
\end{equation}
Recalling \eqref{migliore4}
we deduce by \eqref{suifour} that,
for any $ s \in \R $,   
\begin{equation}\label{sobolevnorm}
    \|e^{z|D|_{\al,\mu}} g \|_{\subalign{ &\scriptscriptstyle s,0\\  &\scriptscriptstyle-a_0, a}}^2 = \frac{|g_0|^2}{2(\Re |0|_{\al, \mu}+a_0)}+\sum_{k\neq 0}  | k|^{2s}   \frac{|g_k|^2}{2(\Re|k|_{\al,\mu} - a)} \, . 
\end{equation}
By  \eqref{sobolevnorm} and \eqref{eq:D14} there is $ C > 0 $ such that, for any 
$(\al,\mu)\in \cU_1 \cup \cU_2 $,
\begin{equation}
\label{est:propagatorfarzero}
        \|e^{z|D|_{\al,\mu}} g \|_{\subalign{ &\scriptscriptstyle s+\frac12,0\\  &\scriptscriptstyle-a_0, a}}^2 
        \leq C  \|g\|_{H^s}^2 \, , \qquad \|e^{z|D|_{\al,\mu}} g \|_{\subalign{ &\scriptscriptstyle s,0\\  &\scriptscriptstyle-a_0, a}}^2 \leq C  \langle \al\rangle^{-1} \|g\|_{H^s}^2 \, ,
\end{equation}
proving in particular \eqref{mappaBa}.
We now prove \eqref{ezDA}. 
\Cref{def:tM}-Item 1)  trivially holds since the operator $e^{z|D|_{\al,\mu}}$ does not depend on $\e$. Furthermore,
 the map  $\cU_1 \to \cL(H^{s},  
H^{s+\frac12 , 0}_{-a_0, a}) $,
$ (\al,\mu )\mapsto 
e^{z|D|_{\al,\mu}} $, is analytic 
because it is locally  bounded by  \eqref{est:propagatorfarzero}
and weakly analytic, namely 
for any $k,k'\in \Z$,
and any $\chi \in \cC^{\infty}(\R_-,\C)$ with compact support,  
$$
 \cU_1 \to \C \, , \quad   (\al,\mu) \mapsto (e^{\im k x}\chi(z) , e^{z|D|_{\al,\mu}} e^{\im k' x})_{L^2(\R_- \times \T)} \, , 
$$
is analytic. This  proves the first bullet in Definition \ref{def:tM}-item $2$. 
In order to check that the propagator 
$ e^{z|D|_{\al,\mu}} $ has the structure \eqref{dectM} on  some small ball 
$ B_r(0,0) = \{ |\alpha|^2 + |\mu |^2 < r^2 \} $, we decompose  $$
    e^{z|D|_{\al,\mu}} = 
        \underbrace{e^{z|D|_{\al,\mu}}\Pi_0^\perp + \cosh(z(\al^2+\mu^2)^\frac12)\Pi_0}_{=: [e^{z|D|_{\al,\mu}}]^{[\mathrm{I}]}} + (\alpha^2 + \mu^2)^{\frac12} \underbrace{\frac{\sinh(z(\al^2+\mu^2)^\frac12)}{(\al^2+\mu^2)^\frac12}\Pi_0}_{=:[e^{z|D|_{\al,\mu}}]^{[\mathrm{II}]}}
$$
where $\Pi_0$ is the projector on the zero mode and $\Pi_0^\perp := \uno-\Pi_0 $.
Let us prove that 
 the functions  $ B_r(0,0) \to \cL(H^{s},  
H^{s+\frac12 , 0}_{-a_0, a}) $,
$ (\al^2 ,\mu )\mapsto 
[e^{z|D|_{\al,\mu}}]^{[\mathrm{I}]},
[e^{z|D|_{\al,\mu}}]^{[\mathrm{II}]} $ 
are analytic  for $ r $ small enough.
We shall use that both $\cosh(\lambda)$ and $\frac{\sinh(\lambda)}{\lambda}$ are entire functions of $\lambda^2$, satisfying the bounds
$$
\abs{\cosh(\lambda)} \leq \cosh(|\lambda|) \ , \quad 
\abs{\frac{\sinh(\lambda)}{\lambda}} \leq 
\frac{\sinh(|\lambda|)}{|\lambda|} \ , \quad \forall \lambda \in \C \, . 
$$
First,  the maps $B_r(0,0) \ni  (\al^2 ,\mu )\mapsto 
[e^{z|D|_{\al,\mu}}]^{[\mathrm{I}]},
[e^{z|D|_{\al,\mu}}]^{[\mathrm{II}]}$ are weakly analytic  since  the maps 
$$
(\al^2,\mu)\to (e^{\im k x}\chi(z) , [e^{z|D|_{\al,\mu}}]^{[*]} e^{\im k' x})_{L^2(\R_- \times \T)}, \qquad * \in \{I, II\} \, , 
$$
are analytic for any $k,k'\in \Z$ and any 
  smooth function $\chi(z)$ with compact support   in $(-\infty, 0]$.
They are also locally bounded since   there is $ C > 0 $ such that, for any 
$(\al, \mu) \in B_r(0,0)$ and    
$0 < r < a_0 $,  then 
$$
\| \cosh(z(\al^2+\mu^2)^\frac12) \|_{L^{2,-a_0}} \, , \  
\Big\| \frac{\sinh(z(\al^2+\mu^2)^\frac12)}{(\al^2+\mu^2)^\frac12} \Big\|_{L^{2,-a_0}} \leq C \, . 
$$
As a consequence, recalling\eqref{def:expmod0}-\eqref{migliore4}, we get  that 
  $
  \cosh(z(\al^2+\mu^2)^\frac12)\Pi_0 $, $
  \ \frac{\sinh(z(\al^2+\mu^2)^\frac12)}{(\al^2+\mu^2)^\frac12}\Pi_0 \in 
\cL(H^s, H^{s+\frac12, 0}_{-a_0, a})
$ 
with operatorial norm uniformly bounded for any  $(\al, \mu) \in B_r(0,0)$. 
Finally 
also $e^{z|D|_{\al,\mu}}\Pi_0^\perp$ is bounded by  \eqref{est:propagatorfarzero},  concluding the proof 
 that 
$ [e^{z|D|_{\al,\mu}}]^{[\mathrm{I}]},
[e^{z|D|_{\al,\mu}}]^{[\mathrm{II}]} $ 
are locally bounded near $ (0,0) $. 
\end{proof}

\subsubsection{The perturbative elliptic problem}
The study of the solution $ \Theta^\sharp_{g}$ of \eqref{eqsharp} is more complicated.

\begin{proposition}\label{lem:Thetasharp} {\bf (Solution of \eqref{eqsharp})}
For any $s \in \R$ there exist $\e_0 := \e_0 (s)>0$  such that, for any $(\alpha,\, \mu,\, \e) \in \R \times (-\frac23,\frac23)\times B_{\e_0}(0)$ and any $g\in H^s(\T)$, 
the elliptic problem  
\eqref{eqsharp} 
has a unique solution $\Theta^\sharp_{g} \in H^{s+\frac52,2}_{-a_0,a}$ 
satisfying 
\begin{equation}\label{est:thetasharpnonzero}
    \begin{aligned}
    &\|\Theta^\sharp_g\|_{\subalign{ &\scriptscriptstyle s+\frac52,2\\  &\scriptscriptstyle-a_0, a}}\leq C_s\al^2|\e|\|g\|_s  \ , \quad &&\|\Theta^\sharp_g\|_{\subalign{ &\scriptscriptstyle s+\frac12,0\\  &\scriptscriptstyle-a_0, a}} \leq C_s |\e|\|g\|_s\, ,\\
       &\|\pa_z \Theta^\sharp_g\vert_{z=0}\|_{s+1} \leq C_s \al^2|\e|\|g\|_s \, , \quad &&\|\pa_z \Theta^\sharp_g\vert_{z=0}\|_{s} \leq C_s |\al||\e|\|g\|_s \, .
    \end{aligned}
\end{equation}
Moreover, recalling \Cref{def:tM,defFell}, 
\begin{equation}\label{pazThetainA}
    [g\mapsto   (\alpha^2 \e)^{-1}\pa_z\Theta^\sharp_{g}\vert_{z=0}] \in \cA \Big(\R\times (-\tfrac23,\tfrac23),\e_0; H^s(\T), H^{s+1}(\T) \Big) \cap  \tF\, .
\end{equation}
\end{proposition}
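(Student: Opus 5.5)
We will write \eqref{eqsharp} as a fixed point problem $\Theta^\sharp = \al^2 \cK_{\al,\mu}\big[ d_\e (\Theta^\flat_g + \Theta^\sharp)\big]$. Here $\cK_{\al,\mu}$ is the Green operator of $\pa_z^2 - |D|_{\al,\mu}^2$ on the half line, with homogeneous Dirichlet condition at $z=0$ and no growth at $-\infty$. Mode by mode it is explicit:
$$
(\cK f)_k(z) = -\frac{1}{2|k|_{\al,\mu}}\int_{-\infty}^0 \big(e^{-|k|_{\al,\mu}|z-w|}-e^{|k|_{\al,\mu}(z+w)}\big) f_k(w)\,\de w .
$$
For $k=0$ and $|0|_{\al,\mu}\to 0$ this kernel degenerates to $\min(|z|,|w|)$-type growth. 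The quotient $\big(e^{-\lambda|z-w|}-e^{\lambda(z+w)}\big)/\lambda$ is, however, an entire function of $\lambda^2$, bounded by $\min(|z|,|w|)e^{\Re\lambda\cdot(\ldots)}$. We would therefore estimate $\cK$ on the weighted spaces of \eqref{def:expmod0}.

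\textbf{Step 1: the Green operator.} The main linear estimate to establish is
$$
\|\cK f\|_{\subalign{&\scriptscriptstyle s+2,2\\&\scriptscriptstyle -a_0,a}}\lesssim \|f\|_{H^{s,0}_{a_0',a}} ,
$$
where the source is allowed a zero mode decaying like $e^{(1-a_0)z}$-ish. On nonzero modes we use \eqref{eq:D14}: since $\Re|k|_{\al,\mu}-a\gtrsim |k|+|\al|$, Young's inequality for the kernel gives a gain of $\langle k\rangle^{-2}$ together with two $z$-derivatives. On the zero mode we use the explicit kernel bounded by $\min(|z|,|w|)$, the weight $e^{2a_0 z}$ on the output, and decay of the input; this yields a bounded operator from $L^{2,b}$ with $b>0$ into $L^{2,-a_0}$ for every small complex $(\al,\mu)$. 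As in \Cref{lem:und.varphi}, the even dependence on $\lambda=|0|_{\al,\mu}$ gives analyticity in $(\al^2,\mu)$ near $(0,0)$ in the sense of \eqref{dectM}. Away from $(0,0)$ it gives analyticity on $\cU_1$ via \Cref{lemmaassest}, where we extend to complex parameters and argue weakly-analytic plus locally bounded.

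\textbf{Step 2: multiplication by $d_\e$.} By \eqref{est:d} we have $d_\e\in H^{\sigma,b}_{a'}$ for any $a'<1$, with norm $O(\e)$. Multiplication by $d_\e$ therefore maps $H^{s,0}_{-a_0,a}$ into a space whose zero mode decays: the product of $e^{-a_0 z}$ growth with $e^{a'z}$ decay gives decay of rate $a'-a_0>0$ by \eqref{choice}. Since $d_\e$ is smooth, tame product estimates of \cite{BMV2}-type preserve the horizontal regularity.

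\textbf{Step 3: fixed point and estimates.} Combining Steps 1 and 2, the map $T:\Theta\mapsto \al^2\cK[d_\e(\Theta^\flat_g+\Theta)]$ is affine, with linear part of norm $\lesssim \al^2|\e|$ on $H^{s+\frac12,0}_{-a_0,a}$. For large $|\al|$ we need the extra decay $|k|_{\al,\mu}^{-2}\lesssim (|k|+|\al|)^{-2}$ from \eqref{eq:D14}, which compensates $\al^2$, so the norm is in fact $\lesssim|\e|$ uniformly in $\al$. The contraction yields a unique solution satisfying the $\al$-uniform bound $\|\Theta^\sharp_g\|\lesssim|\e|\|g\|_s$. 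Bootstrapping through $\cK$ (gain of two derivatives) gives the $H^{s+\frac52,2}$ bound with prefactor $\al^2|\e|$. The Neumann trace is obtained by differentiating the Green formula at $z=0$:
$$
\pa_z\cK f|_{z=0} = -\int e^{|k|w}f_k\,\de w .
$$
This gives the $H^{s+1}$ bound with $\al^2|\e|$, and, interpolating against $|k|_{\al,\mu}^{-1}\lesssim|\al|^{-1}$, the $H^s$ bound with $|\al||\e|$.

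\textbf{Step 4: regularity classes.} Analyticity in $\e$ is clear, and the class $\cA$ follows because the Neumann series of analytic $\cA$-operators stays in $\cA$ by \Cref{lem:prodintM}. The $\tF$ structure follows from \eqref{deintF}: $d_\ell$ has bands $|\kappa|\le\ell$ of parity $\ell$, while $\cK$ and $e^{z|D|}$ are Fourier multipliers; expanding the Neumann series in $\e$ and applying \Cref{TFjprop}(i) gives the claim.

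The main obstacle is Step 1 on the zero mode near $(\al,\mu)=(0,0)$ with complex parameters. The difficulty is showing that the kernel, divided by $\lambda$, is controlled uniformly while $\Re\lambda$ may be small, and that the weights $-a_0$ and $a$ close up. We would handle this using the entire-in-$\lambda^2$ representation and the crude bound $|\sinh(\lambda t)/\lambda|\le |t|e^{|\lambda||t|}$, together with $|\lambda|<c<a_0/2$ from \eqref{choice}.
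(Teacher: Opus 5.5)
Apart from one point, your route is the paper's. Its $L_{\al,\mu}$ is your $\al^2\cK$ (written through $T_\lambda,\wt T_\lambda$ in \eqref{opT}), and the spaces $H^{s,b}_{-a_0,a}$ absorb the non-decaying zero mode. Multiplication by $d_\e$ brings every mode back into $L^{2,a}$ because $a+a_0<1$. The Neumann series together with \Cref{lem:prodintM,TFjprop} then gives \eqref{pazThetainA}.

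The point that fails is the tool you single out for the main obstacle. The zero-mode kernel $-(2\lambda)^{-1}\big(e^{-\lambda|z-w|}-e^{\lambda(z+w)}\big)=\lambda^{-1}e^{\lambda\min(z,w)}\sinh\big(\lambda\max(z,w)\big)$ is entire in $\lambda$ but \emph{not} even. On the diagonal it equals $(e^{2\lambda z}-1)/(2\lambda)$, and the factor $e^{\lambda\min(z,w)}$ encodes the choice of the solution admissible at $-\infty$. Hence $\cK_0$ is not an analytic function of $\lambda^2=\al^2+\mu^2$ near $0$. The decomposition \eqref{dectM} genuinely contains a $\rho\,A^{[\mathrm{II}]}$ term, and an argument based on evenness produces the wrong structure.

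The repair is the paper's. First show that $\lambda\mapsto\cK_0(\lambda)\in\cL(L^{2,a},L^{2,-a_0})$ is analytic on the full disc $|\lambda|<c$, including $\Re\lambda<0$ where $e^{\lambda\min(z,w)}$ grows. Your crude bound together with $|e^{\lambda m}|\le e^{|\lambda||m|}$ bounds the kernel by $\min(|z|,|w|)\,e^{|\lambda|(|z|+|w|)}$. This is Hilbert--Schmidt against the weights $e^{a_0z}$ and $e^{aw}$, since $c<a_0/2<a$. Then split $\cK_0(\lambda)=E(\lambda^2)+\lambda\,O(\lambda^2)$ into even and odd parts, exactly as with $\wt F_{\mathrm{even}},\wt F_{\mathrm{odd}}$ in the proof of \Cref{solution_BVP}. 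On the zero mode, $E$ and $O$ furnish $L^{[\mathrm{I}]}$ and $L^{[\mathrm{II}]}$.

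Separately, your one-line derivation of the $|\al||\e|$ bound for $\|\pa_z\Theta^\sharp_g|_{z=0}\|_s$ needs to be made precise. Cauchy--Schwarz in your trace formula only gives a factor $\al^2\big(2(\Re|k|_{\al,\mu}+a)\big)^{-1/2}\lesssim|\al|^{3/2}$ (the third bound in \eqref{estlal}), not a full factor $|k|_{\al,\mu}^{-1}$. The paper takes the missing $\langle\al\rangle^{-1/2}$ from the propagator, $\|e^{z|D|_{\al,\mu}}\|_{\cL(H^s,H^{s,0}_{-a_0,a})}\lesssim\langle\al\rangle^{-1/2}$ (second bound in \eqref{mappaBa}). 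This gain survives $d_\e$ and $(\uno-L_{\al,\mu}\circ d_\e)^{-1}$ at the level of $H^{s,0}$ norms, uniformly in $\al$. The product is $|\al|^{3/2}\langle\al\rangle^{-1/2}\lesssim|\al|$, which is the claimed bound.
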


The rest of the section is devoted to the proof of \Cref{lem:Thetasharp}. 

We
first provide  properties 
of the  spaces $H^{s, b}_{-a_0, a}$. 
In view of  \eqref{migliore4}  we directly get the following  lemma. 

\begin{lemma}\label{lem:prop}
For any  $ s \in \R $,  $ a_0, a > 0$, $b \in \N$,  
the map
 $ \pa_z :
 H^{s , b}_{-a_0, a}  \mapsto H^{s-1 , b-1}_{-a_0, a} $
is continuous.
\end{lemma}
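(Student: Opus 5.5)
The plan is to prove the bound mode by mode, using the Fourier characterization \eqref{migliore4} of the norm. Write $u(x,z)=\sum_{k\in\Z}u_k(z)e^{\im kx}$. Since $\pa_z$ acts diagonally on the Fourier modes, $\pa_z u=\sum_k u_k'(z)e^{\im kx}$, and in particular $\Pi_0\pa_z u=\pa_z\Pi_0 u$ and $\Pi_0^\perp\pa_z u=\pa_z\Pi_0^\perp u$. So it suffices to compare the terms of $\|\pa_z u\|^2$ in the $(s-1,b-1)$ norm with the terms of $\|u\|^2$ in the $(s,b)$ norm.

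For the zero mode, the $(s-1,b-1)$ norm of $\pa_z u$ contains $\sum_{j=0}^{b-1}\|\pa_z^{j}(\pa_z u_0)\|_{L^{2,-a_0}}^2=\sum_{j'=1}^{b}\|\pa_z^{j'}u_0\|_{L^{2,-a_0}}^2$. This is bounded by the corresponding sum over $0\le j'\le b$ in $\|u\|^2_{s,b;-a_0,a}$. The weight $e^{2a_0 z}$ is the same on both sides, so no exponential factor is involved.

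For $k\neq0$, the term with index $j$ in the norm of $\pa_z u$ is $|k|^{2(s-1-j)}\|\pa_z^{j+1}u_k\|_{L^{2,a}}^2$. Setting $j'=j+1\in\{1,\dots,b\}$, this equals $|k|^{2(s-j')}\|\pa_z^{j'}u_k\|^2_{L^{2,a}}$, which is exactly the $j'$-th term in the norm of $u$. The regularity index shifts in the same way as the derivative index, so the two terms match exactly.

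Summing over $k$ gives
\[
\|\pa_z u\|_{\subalign{ &\scriptscriptstyle s-1,b-1\\ &\scriptscriptstyle -a_0,a}}\le\|u\|_{\subalign{ &\scriptscriptstyle s,b\\ &\scriptscriptstyle -a_0,a}}.
\]
Since $\pa_z$ is linear, this bound gives continuity, with operator norm at most one. There is no real obstacle. The only points to check are that $\pa_z$ commutes with $\Pi_0$ and that the index conventions in \eqref{migliore4} line up, which the computation above verifies.
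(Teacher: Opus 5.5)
Your proof is correct and is essentially the paper's argument: the paper says only that the lemma follows directly from the norm \eqref{migliore4}, and your mode-by-mode index shift $j\mapsto j+1$ is exactly that verification, giving $\|\pa_z u\|\le \|u\|$.
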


The  next  lemma provides a simple  characterization of  the spaces $H^{s , b}_c$ in \eqref{spaziok}.
\begin{lemma}\label{scalarexp}
    Let $s\in \R$, $b\in \N_0 $ and $c\in \R$. Then a function $u(x,z) $ belongs to $ H^{s,b}_c(\T\times\R_-)$ if and only if $\wt u(x,z):=e^{-cz}u(x,z) $ belongs to $  H^{s,b}_0 (\T\times\R_-)$  
     with equivalence of the norms
    \begin{equation}\label{eq:eq}
        \|u\|_{s,b,c} \sim_{b,c} \| \wt u \|_{s,b,0}\, .
    \end{equation}
\end{lemma}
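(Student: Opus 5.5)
The claim follows from the mode-by-mode structure of the norm \eqref{usba}: the space $H^{s,b}_c$ has weight $e^{-2cz}$ on every Fourier mode, so the map $u\mapsto \wt u=e^{-cz}u$ should be an isometry-type correspondence between $L^{2,c}$ and $L^{2,0}$, and the only extra work is controlling $z$-derivatives. Since both $e^{-cz}$ and its inverse are independent of $x$, the multiplication acts diagonally on Fourier modes, $\wt u_k(z)=e^{-cz}u_k(z)$. It therefore suffices to prove, for each fixed $k$ and each $j\le b$, a two-sided bound of the weighted $L^2$ norms of $\pa_z^j u_k$ against those of $\pa_z^i \wt u_k$, $i\le j$, with constants depending only on $b,c$. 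Multiplying by $\langle k\rangle^{2(s-j)}$ and summing then yields the result, using $\langle k\rangle^{2(s-j)}\le \langle k\rangle^{2(s-i)}$ for $i\le j$.

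\textbf{Step 1 (weight identity).} For any $f$,
\[
\|e^{-cz}f\|_{L^{2,0}}^2=\int_{-\infty}^0|f|^2e^{-2cz}\,\de z=\|f\|_{L^{2,c}}^2 .
\]
This gives the case $b=0$ with equality.

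\textbf{Step 2 (Leibniz).} Expanding $\pa_z^j \wt u_k$ with the Leibniz rule gives
\[
\pa_z^j \wt u_k=e^{-cz}\sum_{i\le j}\binom ji(-c)^{j-i}\pa_z^iu_k ,
\]
hence
\[
\|\pa_z^j\wt u_k\|_{L^{2,0}}\le C_{j,c}\sum_{i\le j}\|\pa_z^iu_k\|_{L^{2,c}} .
\]
Conversely, writing $u_k=e^{cz}\wt u_k$, the same expansion gives
\[
\|\pa_z^ju_k\|_{L^{2,c}}=\|e^{-cz}\pa_z^j(e^{cz}\wt u_k)\|_{L^{2,0}}\le C_{j,c}\sum_{i\le j}\|\pa_z^i\wt u_k\|_{L^{2,0}} .
\]

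\textbf{Step 3 (summation).} Multiply by $\langle k\rangle^{2(s-j)}$, bound $\langle k\rangle^{2(s-j)}\le\langle k\rangle^{2(s-i)}$, and sum over $k$ and $j\le b$. This gives both inequalities in \eqref{eq:eq}, and with them the equivalence of membership.

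The only subtle point is justifying differentiation when $u$ is merely in the weighted Sobolev space. I would handle it by working with weak derivatives in $z$ (the Leibniz rule holds for a smooth multiplier times an $H^j_{\rm loc}$ function), or by density of compactly supported smooth functions followed by a limit, since the estimates are uniform. Nothing here is a genuine obstacle.
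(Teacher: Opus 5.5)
Your proof is correct and follows essentially the same route as the paper. Both arguments work mode by mode, apply the Leibniz rule to $\partial_z^j(e^{\mp cz}\,\cdot\,)$, and control the lower-order terms via $(A_1+\dots+A_j)^2\lesssim_j A_1^2+\dots+A_j^2$ together with $\langle k\rangle^{2(s-j)}\le\langle k\rangle^{2(s-i)}$ for $i\le j$. Your remark about justifying the Leibniz rule for weak $z$-derivatives is a detail the paper leaves implicit.
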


\begin{proof}
    In view of \eqref{usba} and the inequality $(A_1+\dots + A_j)^2 \lesssim_j A_1^2 +\dots +A_j^2$ we obtain 
    \begin{equation*}
        \begin{aligned}
            \| \wt u \|_{s,b,0}^2 &= \sum_{j=0}^b \sum_{k\in\Z} \langle k \rangle^{2(s-j)} \int_{-\infty}^0 |\pa_z^j(e^{-cz} u_k(z))|^2\de z \\ 
            &\lesssim_{b,c} \sum_{j=0}^b \sum_{k\in\Z} \langle k \rangle^{2(s-j)}  \sum_{l=0}^j \int_{-\infty}^0 \left|\pa_z^{l}u_k(z)\right|^2 e^{-2cz} \de z
            \lesssim_{b,c} \|u \|_{s,b,c}^2 \, . 
        \end{aligned}
    \end{equation*}
Similar estimates show that $\|u\|_{s,b,c}^2 = \|e^{cz} \wt u \|_{s,b,c}^2 \lesssim_{b,c} \| \wt u \|_{s,b,0}^2$, yielding the equivalence \eqref{eq:eq}.
\end{proof}

The next lemma proves the existence of a well defined trace operator in the spaces $H^{s , b}_{-a_0, a}$. 
\begin{lemma}[{\bf Trace}]\label{lem:trace} 
Let  $s\in \R$, 
$ a_0 > 0 $. Then 
the  trace operator 
\begin{equation} \label{optraccia}
\Gamma (u):= u(\cdot, 0) := u\vert_{z= 0}
\end{equation}
extends to a linear bounded 
 map between $ H^{s, 1}_{-a_0, a} \to H^{ s-\frac12}(\T)$, 
satisfying
\begin{equation}
\label{est:trace}
\| \Gamma (u) \|_{H^{ s- \frac12}} 
\lesssim_{s,a_0,a}  \| u \|_{\subalign{ &\scriptscriptstyle s,1\\  &\scriptscriptstyle-a_0, a}} \, . 
\end{equation}
\end{lemma}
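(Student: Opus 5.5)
The plan is to split $u$ into its zero mode and its non-zero modes, since the norm \eqref{migliore4} weighs the two pieces differently, and to prove a one-dimensional trace inequality for each mode separately. Throughout, $b=1$, so only $u_k$ and $\pa_z u_k$ are controlled. We first argue for smooth $u$ whose modes have compact support in $z$ (or decay fast), and then extend $\Gamma$ by density.

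\emph{Zero mode.} For a scalar function $f$ on $(-\infty,0]$ we would prove
\[
|f(0)|^2 \lesssim_{a_0} \|f\|_{L^{2,-a_0}}^2+\|f'\|_{L^{2,-a_0}}^2 .
\]
Set $\tilde f := e^{a_0 z} f$. Then $\tilde f\in H^1(\R_-)$ with $\|\tilde f\|_{H^1}\lesssim_{a_0}\|f\|_{L^{2,-a_0}}+\|f'\|_{L^{2,-a_0}}$, in the spirit of \Cref{scalarexp}. The standard one-dimensional Sobolev trace $|\tilde f(0)|^2\le \|\tilde f\|_{L^2}^2+\|\tilde f'\|_{L^2}^2$ follows from $|\tilde f(0)|^2 = \int_{-\infty}^0 (|\tilde f|^2)'\,\de z$ and Cauchy--Schwarz. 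Since $\tilde f(0)=f(0)$, this gives the claim. Applied with $f=u_0$, and using $\langle 0\rangle=1$, it bounds the zero-mode contribution to $\|\Gamma u\|_{H^{s-1/2}}^2$.

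\emph{Non-zero modes.} For $k\ne0$ the weight is $e^{-2az}\ge1$ on $\R_-$, so $\|f\|_{L^2}\le \|f\|_{L^{2,a}}$. The scaled trace inequality
\[
|f(0)|^2 \le 2\|f\|_{L^2}\|f'\|_{L^2}\le |k|\,\|f\|_{L^2}^2+|k|^{-1}\|f'\|_{L^2}^2
\]
again follows from $|f(0)|^2=\int_{-\infty}^0 (|f|^2)'$ together with AM--GM; the decay of $f$ at $-\infty$ is guaranteed by the weighted $L^2$ bounds for $f,f'$. Multiplying by $|k|^{2s-1}$ gives
\[
|k|^{2(s-\frac12)}|u_k(0)|^2\le |k|^{2s}\|u_k\|_{L^{2,a}}^2+|k|^{2(s-1)}\|\pa_z u_k\|_{L^{2,a}}^2 .
\]
Summing over $k\neq0$ yields exactly the $\Pi_0^\perp$ part of $\|u\|^2_{\subalign{ &\scriptscriptstyle s,1\\  &\scriptscriptstyle-a_0, a}}$.

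Adding the two contributions gives \eqref{est:trace} on the dense class, and linearity plus the bound extends $\Gamma$ to the whole space. There is no serious obstacle here. The one point needing care is the zero mode, which may grow like $e^{a_0|z|}$; the factor $e^{a_0 z}$ restores decay, and this is exactly why the bound depends on $a_0$.
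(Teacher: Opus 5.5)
Your proof is correct and is essentially the paper's argument. The paper multiplies all of $u$ by $e^{a_0 z}$, invokes the unweighted trace bound $\|\Gamma v\|_{s-\frac12}\le \|v\|_{s,1,0}$ from \cite[Lemma 2.3]{BMV2}, and then uses that the weight $e^{-2az}$ on the nonzero modes dominates $e^{2a_0 z}$; you instead conjugate only the zero mode and prove the scaled one-dimensional trace inequality by hand for each $k\neq 0$. The only thing you gloss over is swapping $\langle k\rangle$ for $|k|$ when $k\neq0$, which costs a harmless constant depending on $s$.
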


\begin{proof} 
By  \cite[Lemma 2.3]{BMV2}
and recalling \eqref{usba}, 
the trace operator $ \Gamma $ extends to a bounded map satisfying 
\be\label{Gammavtr}
    \|\Gamma (v) \|_{s-\frac12}\leq \|v \|_{s,1,0} \ , \quad \forall v \in H^{s,1}_0 (\T\times\R_-)\ .
\ee
For any  continuous function $u(x, z) $ the function 
$ v(x,z) = e^{a_0z} u(x,z)$ 
satisfies 
    $ u(x,0) = v (x,0) $ and we get 
$$
\| \Gamma (u) \|_{H^{s-\frac12}}
=
\| \Gamma (v) \|_{H^{s-\frac12}}
\stackrel{\eqref{Gammavtr}} \leq \| v \|_{s,1,0}
\stackrel{\eqref{eq:eq}}\sim \|u \|_{s,1,-a_0} \stackrel{\eqref{usba},\eqref{migliore4}} \leq \|u\|_{\subalign{ &\scriptscriptstyle s,1\\  &\scriptscriptstyle-a_0, a}}
$$
proving \eqref{est:trace}.  
\end{proof}

 We now show that the multiplication operator by the function $d_\e (x,z) $ 
in \eqref{def:d} 
 maps the space
$H^{ s, b}_{-a_0, a}$ into the space $H^{s,b}_a$, namely it 
improves the decay on the zero mode.

\begin{lemma} \label{prop:mult.d}
Let $ a,a_0 $ as in \eqref{choice}.
 For any $s \in \R $, $b\in \N_0$ there is $C_{s,b} >0 $ such that for any $\e \in B_{\e_0}(0)$ 
 (with $\e_0$ in \Cref{LeviCivita})
\begin{equation}
\label{algebrassa}
\| d_\e u \|_{s,b, a} \leq C_{s,b} |\e| 
\| u \|_{\subalign{ &\scriptscriptstyle s,b\\  &\scriptscriptstyle-a_0, a}}   \, , \quad 
\forall u  
\in H^{ s, b}_{-a_0, a}  \, , 
\end{equation}
where  $d_\e(x,z) $ is the function in \eqref{def:d}.
The map $\e \mapsto [u \mapsto {d_\e}u]$ is analytic from $B_{\e_0}(0)$ to $\cL(H^{s,b}_{-a_0,a}, H^{s,b}_{a,a})$. Moreover, for any fixed $z \in \R_-$, the multiplication operator $ [u(\cdot,z)\to d_\e(\cdot,z)u(\cdot,z)] $ belongs to the class $\tF$ in \Cref{defFell}.
\end{lemma}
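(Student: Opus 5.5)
The plan is to work mode by mode in $x$ and exploit the exponential decay of $d_\e$ given by \eqref{est:d}, namely $d_\e\in H^{s',b}_a$ with $\|d_\e\|_{s',b,a}\lesssim|\e|$ for every $s'\geq 3$, $a\in(0,1)$. By \Cref{scalarexp}, applied to both factors, it is enough to bound, for $j\le b$, the weighted $L^2$ norms of $\pa_z^j(d_\e u)$. Write $u=\Pi_0u+\Pi_0^\perp u$, with $u_0\in L^{2,-a_0}$ (possibly growing like $e^{-a_0 z}$) and $\Pi_0^\perp u$ decaying like $e^{az}$. Since $a,a_0$ satisfy \eqref{choice}, I would fix an auxiliary exponent $a'\in(a+a_0,1)$; this is possible because $a_0<1-a$. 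Applying \Cref{LeviCivita} with $a'$ instead of $a$ gives $\|d_\e\|_{s',b,a'}\lesssim|\e|$.

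\emph{Zero-mode part.} The product $d_\e u_0(z)$ has $k$-th mode $d_{\e,k}(z)u_0(z)$, with weight $e^{-2az}$. I write
\[
d_{\e,k}u_0\,e^{-az}=\big(d_{\e,k}e^{-a'z}\big)\big(u_0e^{a_0z}\big)e^{(a'-a-a_0)z}.
\]
Since $a'-a-a_0>0$, the last factor is bounded by $1$. To control the first factor in $L^\infty_z$, I use the one-dimensional Sobolev embedding $H^1(\R_-)\subset L^\infty$ on $e^{-a'z}\pa_z^i d_{\e,k}$. This requires at least one more $z$-derivative of $d_\e$, which is available since $b$ is arbitrary in \Cref{LeviCivita}. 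The Fourier weights in $k$ are absorbed by the high horizontal regularity $s'$ of $d_\e$, which gives summability $\sum_k\langle k\rangle^{|s|+1}\|e^{-a'z}\pa^i_zd_{\e,k}\|_{L^\infty}\lesssim|\e|$ once $s'$ is chosen large depending on $s,b$. For the $z$-derivatives I use the Leibniz rule, distributing $\pa_z^j$ between $d_\e$ and $u_0$. This gives the bound $|\e|\,\|u_0\|$, with $\|u_0\|$ measured in the $b$-derivative $L^{2,-a_0}$ norm appearing in \eqref{migliore4}.

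\emph{Nonzero-mode part.} Here $\Pi_0^\perp u$ already has decay $e^{az}$ in $H^{s,b}_a$, so I reduce to a product estimate. Via \Cref{scalarexp}, I write $d_\e\Pi_0^\perp u$ as $d_\e$ times $e^{az}\wt u$, with $\wt u\in H^{s,b}_0$. Then I need $\|d_\e \wt u\|_{s,b,0}\lesssim \|d_\e\|_{\text{high}}\|\wt u\|_{s,b,0}$. This is a multiplier estimate, valid for arbitrary $s\in\R$ when the multiplier is smooth enough in $x$: for $s\geq0$ it is the tame estimate \cite[Lemma~2.4]{BMV2}, and for $s<0$ it follows by duality. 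In $z$, $d_\e$ is bounded with its derivatives by the same Sobolev embedding as above. The case of negative $s$, where I need this duality/commutator-type multiplier bound in the anisotropic norm \eqref{usba} with $\langle k\rangle^{s-j}$, is the step I expect to be the most delicate. I would handle it by estimating the convolution in $k$ with Peetre's inequality $\langle k\rangle^{\sigma}\lesssim\langle k-k'\rangle^{|\sigma|}\langle k'\rangle^{\sigma}$ together with Young's inequality.

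\emph{Analyticity and the class $\tF$.} The map $\e\mapsto d_\e\in H^{s',b+1}_{a'}$ is analytic by \Cref{LeviCivita}. Since the estimates above are bilinear and bounded in $(d,u)$, the map $\e\mapsto[u\mapsto d_\e u]$ is analytic into $\cL(H^{s,b}_{-a_0,a},H^{s,b}_{a,a})$. Finally, by \eqref{deintF} each jet $d_\ell(\cdot,z)$ is a trigonometric polynomial with harmonics $|k|\le\ell$, $k\equiv\ell$ mod $2$. By \eqref{armoMOP}, multiplication by $d_\ell(\cdot,z)$ only has bands $\kappa$ with $|\kappa|\le\ell$, $\kappa\equiv\ell$ mod $2$, so it lies in $\mathfrak F_\ell$; hence multiplication by $d_\e(\cdot,z)$ belongs to $\tF$.
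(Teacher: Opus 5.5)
Your argument is correct and uses the same mechanism as the paper. You exploit the exponential decay of $d_\e$ with an exponent $c\ge a+a_0$ (possible since $a+a_0<1$) to absorb the $e^{-a_0z}$ growth of the zero mode, reduce to a multiplier estimate in the unweighted space $H^{s,b}_0$, and read off the $\tF$ property from the band structure \eqref{deintF}.

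The only difference is that your split into zero and nonzero modes is unnecessary. The $H^{s,b}_{-a_0}$ norm of $u$ is already bounded by its $H^{s,b}_{-a_0,a}$ norm, so the paper applies the weight $e^{a_0z}$ to all of $u$ at once and takes exactly $c=a+a_0$ for $d_\e$.
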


\begin{proof}
    By  \Cref{LeviCivita} the  function $d_\e\in H^{s_0,b_0}_{c}(\T\times\R_-)$ for any $s_0 \geq 0$,  $b_0\in \N_0$ and $c\in(0,1)$. 
     In view of \Cref{scalarexp}
     the functions 
      $ \wt d_\e (x,z)  := e^{-cz}  d_\e (x,z)  $ and 
      $\wt u (x,z)  := e^{a_0z}  u (x,z) $ 
   belong to  
       $H^{s_0,b_0}_0(\T\times\R_-)$ 
       and $H^{s,b}_0(\T\times\R_-)$ respectively, and
     \begin{equation}\label{tameproof}
      \|\wt d_\e \|_{s_0,b_0,0} \simeq \|d_\e \|_{s_0,b_0,c}\stackrel{\eqref{est:d}}{\lesssim_{s_0,b_0}} |\e|\, ,
      \qquad 
      \|\wt u\|_{s,b,0} \simeq \| u\|_{s,b,-a_0} \lesssim \| u\|_{\subalign{ &\scriptscriptstyle s,b\\  &\scriptscriptstyle-a_0, a}}\, .
     \end{equation}
Let   $c = a + a_0 \in (0,1)$.  Using  \Cref{scalarexp}
and recalling \eqref{usba} we have
     $$
     \begin{aligned}
      \| d_\e \, u  \|_{s,b,c-a_0}  & 
      \simeq_b \| \wt d_\e  \, \wt u  \|_{s,b,0}  \lesssim_b 
      \| \wt d_\e  \|_{|s|+1,b,0}\| \wt u  \|_{s,b,0}  
    \stackrel{\eqref{tameproof}} {\lesssim_{s}} |\e| \| u\|_{\subalign{ &\scriptscriptstyle s,b\\  &\scriptscriptstyle-a_0, a}}
      \end{aligned}
     $$
 proving \eqref{algebrassa}.
   The multiplication operator $[u(\cdot,z)\to d_\e(\cdot,z)u(\cdot,z)] $ belongs to $ \tF$, in view of  \eqref{armoMOP} and since the  $\ell$'th jet $d_\ell(x,z)$ of $d_\e(x,z) $ has the form  \eqref{deintF}. 
\end{proof}

In order to study \eqref{eqsharp} 
we   
consider the auxiliary
non-homogeneous 
elliptic problem 
\begin{equation}\label{non-pert}
        \begin{cases}
            \pa_z^2 u (x,z)  +  (\pa_x+\im\mu)^2u (x,z)- \al^2 u(x,z) = \al^2  q (x,z)  \\
            u(x,z)|_{z=0} = 0 \, , \quad 
            \lim_{z\to -\infty}\pa_z u (x,z) = 0 \, .
        \end{cases}
    \end{equation}

\begin{lemma}\label{solution_BVP}
    There is $C>0$ such that, for any $s \in \R$,  any $(\al,\mu)\in \R\times(-\tfrac{2}{3},\tfrac23) $ and
    any $ q \in H^{s,0}_a (\T\times\R_-) $, the elliptic problem \eqref{non-pert}
    admits a unique solution 
    $u := L_{\alpha,\mu }q  $ in
    $   H^{s+2,2}_{-a_0,a}$ satisfying  
    \begin{equation}\label{estlal}
        1) \ \norm{L_{\al, \mu} q}_{\subalign{ &\scriptscriptstyle s+2,2\\  &\scriptscriptstyle-a_0, a}} \leq C \alpha^2 \norm{q}_{s,0,a} \, , \qquad 2) \ \norm{L_{\al, \mu} q}_{\subalign{ &\scriptscriptstyle s,0\\  &\scriptscriptstyle-a_0, a}} \leq C  \norm{q}_{s,0,a} \, , \qquad 3) \ \|\pa_z L_{\al,\mu}q\vert_{z=0}\|_s \leq C |\al|^\frac32\|q\|_{s,0,a} \, . 
    \end{equation}
Furthermore, 
recalling Definition \ref{def:tM},
the map  
\be \label{lemmad8ana}
(\alpha, \mu ) \mapsto \alpha^{-2} L_{\alpha,\mu } \in  \cA(\R\times(-\tfrac{2}{3},\tfrac23),+\infty;H^{s,0}_a,  H^{s+2,2}_{-a_0,a}) \, . 
\ee
\end{lemma}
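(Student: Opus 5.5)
Since \eqref{non-pert} has coefficients independent of $x$, we diagonalize in Fourier modes. Writing $u=\sum_k u_k(z)e^{\im kx}$ and $q=\sum_k q_k(z)e^{\im kx}$, each mode solves the scalar ODE
\[
u_k''-|k|_{\al,\mu}^2u_k=\al^2q_k,\qquad u_k(0)=0,\qquad u_k'(z)\to0 \ \text{as } z\to-\infty .
\]
Its explicit solution, built from the Green function of $\pa_z^2-\lambda^2$ on $\R_-$ with Dirichlet condition at $0$ (with $\lambda=|k|_{\al,\mu}$), is
\[
u_k(z)=-\frac{\al^2}{2\lambda}\int_{-\infty}^0\big(e^{-\lambda|z-w|}-e^{\lambda(z+w)}\big)q_k(w)\,\de w .
\]
We set $L_{\al,\mu}q:=\sum_k u_ke^{\im kx}$. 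Uniqueness follows because the homogeneous problem on each mode has only the solutions $c\sinh(\lambda z)$ (or $cz$ if $\lambda=0$). Their derivative does not tend to zero unless $c=0$. This is checked first for $\Re\lambda>0$, and the degenerate case $\lambda=0$, i.e.\ $(\al,\mu)=(0,0)$ with $k=0$, is trivial since then the right-hand side vanishes.

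For the estimates we treat $k\neq0$ and $k=0$ separately. For $k\ne0$, \eqref{eq:D14} gives $\Re\lambda-a\gtrsim |k|+|\al|$. Young's inequality in the weighted space $L^{2,a}$ (the kernel $e^{-\lambda|z-w|}e^{-a(z-w)}$ is integrable with $L^1$ norm $\lesssim(\Re\lambda-a)^{-1}$) then bounds $\|u_k\|_{L^{2,a}}\lesssim \al^2|\lambda|^{-1}(|k|+|\al|)^{-1}\|q_k\|_{L^{2,a}}$. Using the equation and differentiating the kernel, we also get $\|u_k'\|\lesssim\al^2|k|^{-1}\|q_k\|$ and $\|u_k''\|\lesssim \al^2\|q_k\|$. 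These give 1) on $\Pi_0^\perp$ with the weights $|k|^{2(s+2-j)}$, and give 2) using $\al^2/(|k|+|\al|)^2\le1$.

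For $k=0$, $\lambda=(\mu^2+\al^2)^{1/2}$ may be small, and the key point is the weight mismatch. We take $q_0\in L^{2,a}$, which decays like $e^{az}$, and measure $u_0$ in $L^{2,-a_0}$, which allows growth $e^{-a_0z}$. The term $e^{\lambda(z+w)}$ is harmless. The term $e^{-\lambda|z-w|}$ for $w<z$ integrates $q_0$ against $e^{\lambda(w-z)}\le 1$, giving a bounded contribution since $\int e^{aw}|q_0|<\infty$. For $w>z$ it gives $e^{\lambda(z-w)}\le1$ times the tail. Altogether $|u_0(z)|\lesssim \al^2\lambda^{-1}\min(1,\lambda|z|)\|q_0\|_{L^{2,a}}$, and in fact $\lesssim\al^2(1+|z|)\|q_0\|$ uniformly in $\lambda$. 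This lies in $L^{2,-a_0}$ with the needed $z$-derivatives, giving 1) and 2), and for 2) with $|\al|$ large we use $\al^2/\lambda^2\le1$. For 3) we use $u_k'(0)=-\al^2\int e^{\lambda w}q_k(w)\de w$, so by Cauchy--Schwarz $|u_k'(0)|\lesssim\al^2(\Re\lambda+a)^{-1/2}\|q_k\|_{L^{2,a}}$. Since $\Re\lambda\gtrsim|k|+|\al|$ for $k\ne0$, this is $\lesssim |\al|^{3/2}\|q_k\|$; for $k=0$ the bound $\al^2(|\al|+a)^{-1/2}\lesssim|\al|^{3/2}$ holds only for $|\al|\lesssim1$, and for $|\al|$ large we use $\Re\lambda\ge|\al|$.

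For the analyticity \eqref{lemmad8ana} we argue as in Proposition \ref{lem:und.varphi}. We extend $(\al,\mu)$ to $\cU_1\cup\cU_2$ via \Cref{lemmaassest}. The above bounds persist by \eqref{eq:D14}, so local boundedness plus weak analyticity gives analyticity on $\cU_1$. Near $(0,0)$ only the zero mode is singular, and we split its Green kernel $\frac{1}{2\lambda}(e^{-\lambda|z-w|}-e^{\lambda(z+w)})$ into its even and odd parts in $\lambda$. This yields the form \eqref{dectM}: a $\cosh$-type term analytic in $\lambda^2=\al^2+\mu^2$, plus $\lambda$ times a $\sinh(\cdot)/\lambda$-type term. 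The main obstacle is proving that these entire functions of $\lambda^2$ give operators bounded $H^{s,0}_a\to H^{s+2,2}_{-a_0,a}$ uniformly for small complex $\lambda$. Here the choice $a_0>0$ and the decay of $q$ absorb the polynomial growth in $z$, and a factor $e^{|\lambda||z|}$ with $|\lambda|<c<a_0/2$ fits in $L^{2,-a_0}$.
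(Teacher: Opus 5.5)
Your proposal is correct and follows essentially the same route as the paper: your Dirichlet Green kernel gives exactly the paper's representation $\frac{\alpha^2}{2\lambda}\bigl(-T_\lambda q_k-\widetilde T_\lambda q_k+e^{\lambda z}(T_\lambda q_k)(0)\bigr)$, and your Young-type and pointwise weighted bounds play the role of Lemma~\ref{Test}. Analyticity is obtained in the same way (local boundedness plus weak analyticity on $\mathcal{U}_1$, and an even/odd splitting in $\lambda$ of the zero-mode operator on $\mathcal{U}_2$), and the only slips are cosmetic: $\partial_z u_k(0)=+\alpha^2\int_{-\infty}^0 e^{\lambda w}q_k(w)\,\mathrm{d}w$, and in the splitting you swapped the labels, since the part analytic in $\lambda^2$ is $-\frac{\sinh(\lambda|z-w|)+\sinh(\lambda(z+w))}{2\lambda}$ while the odd part is $\lambda\cdot\frac{\cosh(\lambda|z-w|)-\cosh(\lambda(z+w))}{2\lambda^2}$, which does not affect the argument.
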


\begin{remark}\label{decaysolv}
The estimate \eqref{estlal}-2) is implied by the 1) for any $ |\alpha | \lesssim 1 $, not as $ |\alpha | \to \infty $. 
Note that $q$ has exponential decay, roughly  $ \cO( e^{a z}) $ as $z \to - \infty $
along all non-zero Fourier modes, while the zero mode of the solution $L_{\al,\mu}q$ 
decays only as $ \cO ( e^{z \sqrt{\al^2+\mu^2}} ) $ as $z \to - \infty $ for any $ (\al,\mu) \neq (0,0) $, cf. \eqref{sol}.  
\end{remark}

\begin{proof}
    Expanding \eqref{non-pert} in Fourier series in the $x $-variable we get
   \begin{equation}\label{2nonhomo}
      \begin{cases}
  \pa_z^2 u_k(z) - 
    \underbrace{((k+\mu)^2+\al^2)}_{= |k|_{\al,\mu}^2}u_k (z)  = \alpha^2 q_k (z) \, , \quad \forall k \in \Z \, ,  \\
            u_k(0) = 0 \, , \quad 
            \lim\limits_{z\to -\infty}\pa_z u_k (z) = 0 \, .
        \end{cases}
    \end{equation}    
    By the variation of constants method, for any $(\al,\mu)\in \R\times (-\tfrac{2}{3},\tfrac{2}{3})$, the unique solution of 
    \eqref{2nonhomo}  is, if  $ |k|_{\al,\mu} \neq 0$, 
    \begin{equation}\label{sol}
         u_k(z) := [L_{\al,\mu} q]_k (z) = 
            \frac{\alpha^2}{2|k|_{\al,\mu}}\left(-[T_{|k|_{\al,\mu}} q_k] (z) - 
            [\wt T_{|k|_{\al,\mu}}q_k](z) + e^{|k|_{\al,\mu}z} 
            [T_{|k|_{\al,\mu}} q_k] (0)\right)
    \end{equation}
    where, 
    for any  $ \lambda  \in \C $,
\begin{equation}\label{opT}
        [T_\lambda p](z) :=e^{-\lambda z}\int_{-\infty}^z e^{\lambda t} p(t) \, \de t\ ,
        \qquad
        [\wt T_\lambda p] (z) :=e^{\lambda z}\int_{z}^0 e^{-\lambda t} p(t) \, \de t  \, .
    \end{equation} 
We have   $ u_k (0) = 0 $ and    
    \begin{equation}\label{solder}
         \pa_z u_k(z) = 
            \frac12 \al^2 \Big((T_{|k|_{\al,\mu}} q_k)  (z) - (\wt T_{|k|_{\al,\mu}}q_k)(z) + e^{|k|_{\al,\mu}z} (T_{|k|_{\al,\mu}} q_k) (0)\Big)
            \, .
    \end{equation}
By  \Cref{Test} if 
$ q_k \in L^{2,a}$ the function $ u_k (z) $ in \eqref{sol}-\eqref{opT}
is well defined 
and the  inequalities on the right of \eqref{item1d}, \eqref{item2d} with $\lambda = |k|_{\al,\mu}$, imply  also the decay property  $ \pa_z u_k (z) \to 0 $ as $ z \to - \infty $ (note that $ |k|_{\alpha,\mu} >0  $ for any $ (k,\alpha,\mu) \neq (0,0,0) $).  
For $(\al,\mu)=(0,0)$ and $ k = 0 $
the solution of  \eqref{2nonhomo} is zero (whereas 
\eqref{sol} is not well defined) 
and  
$ L_{0,0} q = 0 $.  
The operators in \eqref{opT} satisfy 
    $$
    \overline{T_\lambda} = T_{\bar \lambda} \, , \quad 
    \overline{\wt T_\lambda} = \wt T_{\bar \lambda} \, . 
    $$ 
We now rewrite 
    \eqref{sol} as \begin{equation}\label
    {altformsolBVP}
        [L_{\al,\mu}q]_k(z) := \frac{\al^2}{|k|_{\al,\mu}} [F(|k|_{\al,\mu})q_k] (z) 
        = \al^2 \big[ \wt F(|k|_{\al,\mu})q_k \big] (z)
    \end{equation}
where, for any $ \lambda \in \C $,  
\begin{equation}\label{def:Flemap}
        [F(\lambda)q_k] (z) := \frac12\left(-[T_{\lambda} q_k] (z)- [\wt T_\lambda q_k ](z) + e^{\lambda z} [T_\lambda q_k](0)\right)\, \, , \quad
F(\lambda) = \lambda \wt F(\lambda) \, . 
    \end{equation} 
Note that, by \eqref{opT}, we have  $F(0) = 0$ and 
$\overline{F(\lambda)} = F(\bar \lambda)$.
\\[1mm]
    {\noindent\bf Step 0: Estimates on $ F $ and $ \tilde F $.}
By 
    \Cref{Test} the maps
    \begin{equation}\label{randomF}
        \{\lambda\in\C\, : \ \Re\lambda> -a_0  \} \ni \lambda\mapsto F(\lambda) \in \cL(L^{2,a},L^{2,-a_0})\, , \ \  \{\lambda\in\C\, : \ \Re\lambda> a  \}\ni\lambda \mapsto F(\lambda) \in \cL(L^{2,a},L^{2,a})\, ,
    \end{equation}
    are analytic, and satisfy, since $ a_0 < a  $  by \eqref{choice}, 
\begin{equation}\label{est:FFF}
    \begin{aligned}
        &\|  F(\lambda)q\|_{L^{2,-a_0}} \lesssim  \frac{\|q\|_{L^{2,a}}}{a_0+\Re\lambda} \quad \forall \ \Re\, \lambda > -a_0\, , 
        \qquad 
        \|F(\lambda)q\|_{L^{2,a}} \lesssim  \frac{\|q\|_{L^{2,a}}}{ \Re\lambda-a} \quad \forall \ \Re\lambda > a\, .
    \end{aligned}
    \end{equation}
     Thus, since $F(\lambda)$ is analytic in $\lambda$, we have that 
$
F(\lambda) = \lambda \wt F(\lambda) 
$ where
    \begin{equation}\label{est:derF}
    \begin{aligned}
    & \wt F(\lambda) \mbox{ is an analytic   function with range and domain as in } \eqref{randomF}, \mbox{ with } 
    \overline{\wt F(\lambda)} = \wt F(\bar \lambda) \mbox{ and } \\
        &\| \wt F(\lambda)q\|_{L^{2,-a_0}} \lesssim  \frac{\|q\|_{L^{2,a}}}{\langle\lambda\rangle(a_0+\Re\lambda)} \quad \forall \, \Re\lambda > -a_0\, , \quad \|\wt F(\lambda)q\|_{L^{2,a}} \lesssim  
        \frac{\|q\|_{L^{2,a}}}{ \langle\lambda\rangle(\Re\lambda-a)} \quad \forall \, \Re\lambda > a\, .
    \end{aligned}
    \end{equation}
    
    {\noindent\bf Step 1: uniform boundedness of $L_{\al,\mu}$.}
  We now show the uniform boundedness of $L_{\al,\mu}$ for complex-valued $(\al, \mu)$ in the complex neighborhood $\cU_1\cup \cU_2$ defined   in \eqref{0710:1740}. 
\\[1mm]
    \noindent{\sc Proof of \eqref{estlal}-2)}
    For any  $(\al,\mu)\in\cU_1\cup\cU_2$
    we have that $ \Re |k|_{\alpha,\mu}> a $ by \eqref{eq:D14} and $\Re |0|_{\al,\mu} \geq   0$ by \eqref{rangek},\eqref{choice}. Thus, applying \eqref{est:derF}, we have, taking $\star = a$   if $k\neq 0$ and  $\star = -a_0$ if $ k = 0 $, 
\begin{equation}\label{proofsamespace}
        \|[L_{\al,\mu}q]_k \|_{L^{2,\star}} \stackrel{\eqref{altformsolBVP},\eqref{est:derF}}{\lesssim} \frac{|\al|^2
        \|q_k\|_{L^{2,a}} }{\langle |k|_{\al,\mu}\rangle (-\star + \Re|k|_{\al,\mu})}  \stackrel{\eqref{eq:D14}}{\lesssim} \min\{1,\al^2\langle k \ra^{-2}\}\|q_k\|_{L^{2,a}}\, .
    \end{equation}
     We deduce
    \begin{equation}
        \|L_{\al,\mu}q\|_{\subalign{ &\scriptscriptstyle s,0\\  &\scriptscriptstyle-a_0, a}}^2 = \|[L_{\al,\mu}q]_0\|_{L^{2,-a_0}}^2 + \sum_{k\in \Z\setminus\{0\}}| k|^{2s} \|[L_{\al,\mu}q]_k\|_{L^{2,a}}^2 \stackrel{\eqref{proofsamespace}}{\lesssim} \sum_{k\in\Z} \langle k\rangle^{2s}\|q_k\|_{L^{2,a}}^2=\|q\|_{s,0,a}^2\, ,
    \end{equation}
    proving in particular the second estimate in \eqref{estlal} uniformly in $(\al,\mu)\in \cU_1\cup\cU_2$.
    \\[1mm]
    \noindent{\sc Proof of \eqref{estlal}-1)}. By \eqref{2nonhomo}, for any $(\al,\mu)\in\cU_1\cup\cU_2$, using the first inequality in \eqref{proofsamespace},   \eqref{eq:D14}, and the estimates $ |0|_{\alpha,\mu} \lesssim \langle \alpha \rangle $, $ |k|_{\alpha,\mu} \lesssim 1+ | \alpha| + |k|$, for any $ k \neq 0 $,  we deduce 
    \begin{equation}\label{eqpar2}
        \|\pa_z^2[L_{\al,\mu}q]_k \|_{L^{2,\star}} = \||k|_{\al,\mu}^2[L_{\al,\mu}q]_k + \al^2 q_k\|_{L^{2,\star}} \lesssim|\al|^2\|q_k\|_{L^{2,a}} 
    \end{equation}
    where $\star = -a_0$ if $k=0$, and 
$\star = a$ if $k \neq 0$. 
    For the first $z$-derivative, using \eqref{solder}, \eqref{assest} and Lemma \ref{Test}, for every $k\in\Z$ and $(\al,\mu)\in\cU_1\cup\cU_2$ one has
    \begin{equation}\label{eqpar3}
        \|\pa_z [L_{\al,\mu}q]_k\|_{L^{2,\star}} \lesssim |\al|^2 \frac{1}{-\star + \Re|k|_{\al,\mu}} \|q_k\|_{L^{2,a}} \lesssim |\al|^2\langle k \rangle^{-1}\|q_k\|_{L^{2,a}} 
    \end{equation}
    where $\star = -a_0$ if $k = 0$ and $\star = a $ if $k \neq 0$. 
    Recall also that by \eqref{proofsamespace}, for every  $(\al,\mu)\in\cU_1\cup\cU_2$ one has 
    \begin{equation}\label{eqpar1}
        \|[L_{\al,\mu}q]_k\|_{L^{2,\star}} \lesssim |\al|^2 \langle k \rangle^{-2} \|q_k\|_{L^{2,a}} 
    \end{equation}
    where $\star = -a_0$ if $k = 0$ and $\star = a $ if $k \neq 0$. 
    Recalling \eqref{migliore4}, 
    we obtain, summing up \eqref{eqpar2}-\eqref{eqpar1}  for any $(\al,\mu)\in\cU_1\cup\cU_2$, 
    \begin{equation}\label{0710:1729}
        \begin{aligned}
        \|L_{\al,\mu}q\|_{\subalign{ &\scriptscriptstyle s+2,2\\  &\scriptscriptstyle-a_0, a}} = \Big(\sum_{j=0}^{2}\Big[\|\pa_z^j [L_{\al,\mu}q]_0\|_{L^{2,-a_0}}^2 + \sum_{k\in\Z\setminus \{0\}} |k|^{2(s+2-j)}\|\pa_z^j [L_{\al,\mu}q]_k\|_{L^{2,a}}^2 \Big] \Big)^{\frac12}& 
        \\
        \lesssim |\al|^2\Big(\sum_{k\in\Z}\langle k\rangle^{2s}\|q_k\|_{L^{2,a}}^2\Big)^\frac12  & \lesssim |\al|^2 \|q\|_{s,0,a} \, , 
    \end{aligned}
    \end{equation}
    proving in particular also the first estimate in \eqref{estlal}.  
    \\[1mm]
    \noindent{\sc Proof of \eqref{estlal}-3)} In view of  \eqref{solder}, Recalling  that $[\wt T_\lambda \cdot](0) = 0$ by \eqref{opT},  we get
    $$
    |\pa_z [L_{\al,\mu}q]_k\vert_{z=0}| = |\al|^2 |(T_{|k|_{\al,\mu}}q_k)(0)| \stackrel{\eqref{item1d}} \leq  \frac{|\al|^2 \|q_k\|_{L^{2,a}} }{\sqrt{2(\Re |k|_{\al,\mu}+a)}} \stackrel{\eqref{eq:D14}}{\lesssim} |\al|^\frac32 \|q_k\|_{L^{2,a}} \, . 
    $$
    Summing up in $k$  we obtain
    $$
    \|\pa_z [L_{\al,\mu}q]\vert_{z=0}\|_s^2 \leq \sum_{k\in \Z}\langle k \rangle^{2s} |\pa_z [L_{\al,\mu}q]_k\vert_{z=0}|^2 \lesssim |\al|^3\|q\|_{s,0,a}^2\, , \quad \forall (\al,\mu)\in\cU_1\cup\cU_2 \ . 
    $$
We have proved that the map $(\al,\mu)\in \cU_1\cup\cU_2 \to L_{\al,\mu} \in \cL(H^{s,0}_a,H^{s+2,2}_{-a_0,a})$ satisfies the estimates \eqref{estlal}. 
\\[1mm]
{\bf \noindent Step 2: proof of \eqref{lemmad8ana}.} Clearly property 1 of \Cref{def:tM} is automatic since the operator is $\e$-independent. \\
{\sc Regularity in $(\alpha , \mu)$:} First we show that the map $L_{\al,\mu}$ is analytic in $(\al,\mu)\in \cU_1 \supset (\R\times (-\tfrac23,\tfrac23))\setminus \{(0,0)\}$. By \eqref{0710:1729}, for all $q\in H^{s,0}_a$ the map   
\begin{equation}\label{operatorrrrrr}
   \cU_1 \ni  (\al,\mu)  \to [\alpha^{-2} L_{\al, \mu} q] (x,z) \stackrel{\eqref{altformsolBVP}}{\equiv }\sum_{k\in \Z}[\wt F(|k|_{\al,\mu}) q_k](z)e^{\im k x} \in H^{s+2,2}_{-a_0,a} 
\end{equation}
is uniformly bounded on every bounded subset of $\cU_1$.
Moreover, by Lemma \ref{lemmaassest}, $\cU_1 \ni (\al,\mu) \mapsto |k|_{\al,\mu}$ is analytic, with range $\{\lambda\in\C\, : \ \Re\lambda > a\}$ whenever $k\neq 0$, and $\{\lambda\in\C\, : \ \Re\lambda > -a_0\}$ if $k=0$. Since by \eqref{est:derF}  $\lambda \mapsto \wt F(\lambda)$ is analytic with domain and range as in \eqref{randomF}, the map \eqref{operatorrrrrr} is weakly analytic, therefore analytic.

We now analyze the regularity of $ L_{\al,\mu}$ for any $(\al,\mu)\in\cU_2$ defined in  \eqref{0710:1740}. Let 
$$
\wt F_{odd}(\lambda) := \frac{\wt F(\lambda)-\wt F(-\lambda)}{2}\, , \quad \wt F_{even}(\lambda) := \frac{\wt F(\lambda)+\wt F(-\lambda)}{2}\, , 
$$ 
be respectively the odd and even components of $\wt F$, both analytic functions with the same domain and range as in \eqref{randomF}. 
 We write 
$$
\wt F_{even}(\lambda) = F^{[\mathrm{I}]}(\lambda^2) \, , \qquad \wt F_{odd}(\lambda) = \lambda F^{[\mathrm{II}]}(\lambda^2)
$$
where  $ F^{[\mathrm{I}]}(\eta)$ and $ F^{[\mathrm{II}]}(\eta)$ are  analytic  functions with domain $B_{c^2}(0)$ and range $\cL(L^{2,a},L^{2,-a_0})$. 
Then $L_{\al,\mu}$ decomposes as 
\begin{equation}
    \begin{aligned}
       \alpha^{-2} L_{\al, \mu}& = 
          \wt F_{even} ((\al^2+\mu^2)^\frac12)\Pi_0 + \alpha^{-2} L_{\al,\mu}\Pi_0^\perp  +   \wt F_{odd} ((\al^2+\mu^2)^\frac12)\Pi_0\\
         &= \underbrace{ F^{[\mathrm{I}]} (\al^2+\mu^2)\Pi_0 + \widetilde{F}(|D|_{\al,\mu})\Pi_0^\perp }_{=: L^{[\mathrm{I}]}}
        + (\alpha^2 + \mu^2)^{\frac12}
        \underbrace{  F^{[\mathrm{II}]} (\al^2+\mu^2) \Pi_0}_{=:L^{[\mathrm{II}]} } \, ,
    \end{aligned}
\end{equation}
where $\Pi_0$ is the orthogonal projection on the zero mode,  $\Pi_0^\perp = \uno-\Pi_0$ and we substituted $|0|_{\al,\mu} = (\al^2+\mu^2)^\frac12$.
By step $1$, the map $\widetilde{F}(|D|_{\al,\mu})\Pi_0^\perp$ is uniformly bounded on $\cU_2$, and testing it against functions of the form $\chi(z)e^{\im k x}$, where $\chi(z)\in\cC^\infty_c(\R_-)$, one proves   that it is  weakly analytic in $\al^2$ and $\mu$. Therefore both maps
$$
    \cU_2\ni (\al,\mu) \mapsto L^{[\mathrm{I}]}, L^{[\mathrm{II}]} \in \cL(H^{s,0}_a,H^{s+2,2}_{-a_0,a})
$$
are uniformly bounded and  weakly analytic in $\al^2$ and $\mu$, hence analytic.
\end{proof}

\begin{proof}[\bf Proof of \Cref{lem:Thetasharp}]
In view of
Lemma \ref{solution_BVP}
and  \eqref{propg}, 
any  solution $\Theta^\sharp_{g} $ of the elliptic problem \eqref{eqsharp} solves 
\begin{equation}
\label{invimpl}
(\uno -  L_{\al,\mu}\circ d_\e)\Theta^\sharp_{g} = L_{\al,\mu}[ d_\e \Theta^\flat_g] \qquad \text{where} \qquad  
\Theta^\flat_g = e^{z|D|_{\al,\mu}}g  
\quad 
\end{equation}
belongs to $ H^{s+\frac12,0}_{-a_0,a}$ since $ g \in H^s(\T)$ and  Lemma \ref{lem:und.varphi}.
We now use in a crucial way that 
$ L_{\al,\mu} \circ d_\e $ maps functions with an exponentially divergent zero mode as $ z \to - \infty $ into themselves. Indeed, 
by Lemma \ref{prop:mult.d} and the second estimate in \eqref{estlal}, the operator
$L_{\al,\mu} \circ d_\e
: H^{s+\frac12,0}_{-a_0,a} 
\to H^{s+\frac12,0}_{-a_0,a} $ is bounded uniformly in $(\al,\mu)\in \R\times (-\tfrac23,\tfrac23)$ with operatorial norm 
\begin{equation}\label{stimaLdamu}
 \norm{L_{\al,\mu} \circ d_\e}_{\cL(H^{s+\frac12,0}_{-a_0,a},H^{s+\frac12,0}_{-a_0,a})} \leq \norm{L_{\al,\mu}  }_{\cL(H^{s+\frac12,0}_{a},H^{s+\frac12,0}_{-a_0,a})} \norm{d_\e}_{\cL(H^{s+\frac12,0}_{-a_0,a},H^{s+\frac12,0}_{a})} \leq C_s |\e|   \, .
\end{equation}
Thus, provided $|\e|\leq \epsilon_0 (s) $ is small enough, we can invert by Neumann series  the operator 
$\uno -  L_{\al,\mu}\circ d_\e$ in 
$ \cL(H^{s+\frac12,0}_{-a_0,a})$, obtaining, by \eqref{invimpl},  
\begin{align}
    \Theta_g^\sharp 
    & = (\uno - L_{\al,\mu}\circ d_\e)^{-1}L_{\al,\mu}\circ d_\e \,  [ e^{z|D|_{\al,\mu}}g]
    \label{formulathetasharppp} \\
  &  = L_{\al,\mu}\circ d_\e \, (\uno - L_{\al,\mu}\circ d_\e)^{-1} \,  [ e^{z|D|_{\al,\mu}}g] \label{formulathetasharppp1}
\end{align}
because
the operators $(\uno - L_{\al,\mu}\circ d_\e)^{-1}$ and $L_{\al,\mu}\circ d_\e$ commute.
The second  estimate in \eqref{est:thetasharpnonzero}
follows by 
\eqref{formulathetasharppp},
\eqref{stimaLdamu} and
\eqref{mappaBa}. The first 
estimate in \eqref{est:thetasharpnonzero}
is a consequence of  \eqref{formulathetasharppp1},
\eqref{estlal}-1)
and \eqref{mappaBa}. 
The third estimate in \eqref{est:thetasharpnonzero} follows by
$$
\|\pa_z \Theta^\sharp_g\vert_{z=0}\|_{s+1} \stackrel{\eqref{est:trace}} {\lesssim_s}  
\|\pa_z \Theta^\sharp_g \|_{\subalign{ &\scriptscriptstyle s + \frac32,0\\  &\scriptscriptstyle-a_0, a}}
\stackrel{\Cref{lem:prop}} {\lesssim_s}  
\|\pa_z \Theta^\sharp_g \|_{\subalign{ &\scriptscriptstyle s + \frac52,2\\  &\scriptscriptstyle-a_0, a}} \lesssim_s \al^2|\e|\|g\|_s 
$$
by the first estimate in \eqref{est:thetasharpnonzero}.
The fourth estimate in  \eqref{est:thetasharpnonzero}
is a consequence of  
$$
\begin{aligned}
    \|\pa_z\Theta^\sharp_g \vert_{z=0}\|_{s} & \stackrel{\eqref{formulathetasharppp1}}= \Big\| \pa_z L_{\al,\mu} \big[ d_\e (\uno-L_{\al,\mu}\circ d_\e)^{-1} e^{z|D|_{\al,\mu}} g\big]\big\vert_{z=0}\Big\|_{s} \\
    &\stackrel{\eqref{estlal}-3)}{\lesssim} |\al|^\frac32\|d_\e (\uno-L_{\al,\mu}\circ d_\e)^{-1} e^{z|D|_{\al,\mu}} g\|_{s,0,a} \stackrel{\eqref{algebrassa}}{\lesssim_s} |\al|^\frac32 |\e| \|e^{z|D|_{\al,\mu}}g\|_{\subalign{ &\scriptscriptstyle s,0\\  &\scriptscriptstyle-a_0, a}} \\
    &\stackrel{\text{2nd of }\eqref{mappaBa}}{\lesssim_s} |\al||\e| \|g\|_s \, .  
\end{aligned}
$$
\noindent{\sc Proof that} 
\be\label{scopo1D25} 
g \mapsto (\alpha^2 \e)^{-1}\pa_z\Theta^\sharp_g\vert_{z=0} \in \cA(\R\times (-\tfrac23,\tfrac23),\e_0; H^s(\T), H^{s+1}(\T)) \, . 
\ee 
By \eqref{lemmad8ana}
the map 
  $(\al, \mu) \mapsto \al^{-2} L_{\al,\mu}\in \cA(\R\times (-\frac23,\frac23),+\infty;H^{s+\frac12,0}_a,H^{s + 
\frac52,2}_{-a_0,a})$, 
and by  \Cref{prop:mult.d}
the map $\e \mapsto [u\mapsto \e^{-1} d_\e(x) u]$ is analytic from  $B_{\e_0}(0) \to \cL(H^{s+\frac12,0}_{-a_0,a}, H^{s+\frac12,0}_a)$, 
hence \Cref{lem:prodintM}-$(i)$ implies that 
the composition
$(\al, \mu, \e) \mapsto (\al^2\e)^{-1} L_{\al,\mu} \circ d_\e$ belongs to 
$\cA(\R\times (-\tfrac23,\tfrac23),\e_0;H^{s+\frac12,0}_{-a_0,a},H^{s+\frac52,0}_{-a_0,a})$.
Moreover by Lemma \ref{lem:prodintM}-$(ii)$ the operator $(\uno - L_{\al,\mu}\circ d_\e)^{-1} \in \cA(\R\times (-\tfrac23,\tfrac23),\e_0;H^{s+\frac12,0}_{-a_0,a},H^{s+\frac12,0}_{-a_0,a})$ and by \Cref{lem:prodintM}-$(i),(ii)$ and \eqref{ezDA}. 
\be\label{quafa1}
[g\mapsto(\al^2\e)^{-1}\Theta^\sharp_g ]= \al^{-2}L_{\al,\mu} \circ \e^{-1}d_\e \circ (\uno - L_{\al,\mu}\circ d_\e)^{-1}e^{z|D|_{\al,\mu}} \in \cA(\R\times (-\tfrac23,\tfrac23),\e_0;H^{s}(\T),H^{s+\frac52,2}_{-a_0,a}) \, . 
\ee
Since  $\pa_z: H^{s,b}_{-a_0,a} \to H^{s-1,b-1}_{-a_0,a} $ and the trace operator $\Gamma :H^{s,1}_{-a_0,a} \to H^{s-\frac12}$ in \eqref{optraccia} do not depend on $\al,\mu$ and $\e$, and therefore trivially satisfy the axioms of Definition \ref{def:tM},
we deduce \eqref{scopo1D25}  by \eqref{quafa1} and \Cref{lem:prodintM}-$(i)$. 
\\[1mm]
\noindent{\sc Proof that}  
\be\label{scopo2D25}
g \mapsto \pa_z\Theta^\sharp_{g}\vert_{z=0} \in \mathtt{F} \, . 
\ee
For any $ z \leq 0 $ the operator 
$ g\mapsto \pa_z 
\Theta^{\sharp}_{g}(\cdot, z)  $  in 
\eqref{formulathetasharppp1} is the composition of 
the Fourier multipliers $ e^{z|D|_{\al,\mu}}$, $ \pa_z $, 
and $ f( L_{\al,\mu} \circ d_\epsilon )$ 
where $ f(\zeta) := \frac{\zeta}{1-\zeta}$. For any 
$ z \leq 0 $ the operator 
$ L_{\al,\mu} \circ d_\epsilon $ belongs to 
$ \mathtt{F} $ by \eqref{deintF}
and since  $L_{\al,\mu}$  in \eqref{sol} is a Fourier multiplier (in $ x $). Thus 
$ f( L_{\al,\mu} \circ d_\epsilon ) \in \mathtt{F} $ by \Cref{TFjprop}-($iii$) and  \eqref{scopo2D25}
follows. 
\end{proof}

We finally prove properties of the operators $T_\lambda$, $\wt T_\lambda$ defined in \eqref{opT} used in Lemma \ref{solution_BVP}.
    \begin{lemma}\label{Test}
      Let $a,a_0>0$.     
\\[1mm]   
      If $\Re \lambda>-a$, then
    \be\label{item1d} \| T_\lambda p\|_{L^{2,a}} \leq \frac{\| p\|_{L^{2,a}}}{\Re\lambda + a} \, , \qquad |T_\lambda p(z)|\leq  \frac{e^{az} \| p\|_{L^{2,a}}}{\sqrt{2(\Re\lambda + a)}} \quad \forall z\leq 0\, .
    \ee
Moreover, 
\be\label{item2d}
     \| \wt T_\lambda p\|_{L^{2,a}} \leq \frac{\| p\|_{L^{2,a}} }{\Re\lambda - a}   \ \ \text{if }\Re \lambda > a    \,
      , \quad |\wt T_\lambda p(z)|\leq
     \begin{cases}
     \left(\frac{e^{2az} - e^{2\Re\lambda\,  z}}{2(\Re\lambda - a)}\right)^\frac12\| p\|_{L^{2,a}}  & \mbox{ if }  \ \Re\lambda \neq a  \\
     e^{az} |z|^{\frac12} \| p\|_{L^{2,a}} &  \mbox{ if } \Re\lambda  =  a \, . 
     \end{cases}
\ee
If $\Re\lambda>-a_0$  one has
\be\label{item3d}
            \| \wt T_\lambda p\|_{L^{2,-a_0}} \leq \frac{\|p\|_{L^{2,a}}}{\Re\lambda+a_0}  \, . 
\ee
        The following maps are analytic:
        \begin{equation}\label{mapsT}
            \begin{aligned}
                    \{\Re \lambda>-a \}\ni \lambda&\to T_{\lambda}\in \cL(L^{2,a},L^{2,a})\, , \qquad  \{\Re \lambda>-a \}\ni \lambda\to [T_{\lambda}\cdot](0)\in \cL(L^{2,a},\C) \, , \\
                    \{\Re \lambda>a \}\ni \lambda&\to \wt T_{\lambda}\in \cL(L^{2,a},L^{2,a})\, , \qquad \{\Re \lambda>-a_0 \}\ni \lambda\to \wt T_{\lambda}\in \cL(L^{2,a},L^{2,-a_0})\, .
                \end{aligned}
            \end{equation}
            
    \end{lemma}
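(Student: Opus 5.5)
The plan is to treat $T_\lambda$ and $\wt T_\lambda$ as convolution-type Volterra operators on the half line. I would prove all three weighted $L^2$ bounds with a Schur test (or Young's inequality) after conjugating by the exponential weights. The pointwise bounds then follow from Cauchy--Schwarz, and analyticity from a standard local-boundedness plus weak-analyticity argument.

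For \eqref{item1d} I set $p(t)=e^{at}\tilde p(t)$, so that $\|p\|_{L^{2,a}}=\|\tilde p\|_{L^2(\R_-)}$. Then
\[
e^{-az}T_\lambda p(z)=\int_{-\infty}^z e^{-(\lambda+a)(z-t)}\tilde p(t)\,\de t ,
\]
which is convolution of $\tilde p$ with the kernel $k(s)=e^{-(\lambda+a)s}\mathbf 1_{s\ge0}$. Since $\|k\|_{L^1}=(\Re\lambda+a)^{-1}$, Young's inequality gives the first bound. For the pointwise bound I apply Cauchy--Schwarz:
\[
|T_\lambda p(z)|\le e^{-\Re\lambda z}\Big(\int_{-\infty}^z e^{2(\Re\lambda+a)t}\,\de t\Big)^{1/2}\|\tilde p\|_{L^2}=\frac{e^{az}}{\sqrt{2(\Re\lambda+a)}}\,\|p\|_{L^{2,a}} .
\]

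For \eqref{item2d} I argue the same way. Here
\[
e^{-az}\wt T_\lambda p(z)=\int_z^0 e^{(\lambda-a)(z-t)}\tilde p(t)\,\de t ,
\]
with kernel $e^{(\lambda-a)s}\mathbf 1_{s\le0}$, whose $L^1$ norm is $(\Re\lambda-a)^{-1}$ when $\Re\lambda>a$. For the pointwise bound Cauchy--Schwarz gives
\[
|\wt T_\lambda p(z)|\le e^{\Re\lambda z}\Big(\int_z^0 e^{2(a-\Re\lambda)t}\,\de t\Big)^{1/2}\|p\|_{L^{2,a}} .
\]
This integral evaluates to $\frac{e^{2(a-\Re\lambda)z}-1}{2(\Re\lambda-a)}$, or to $|z|$ when $\Re\lambda=a$. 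Multiplying by $e^{2\Re\lambda z}$ gives exactly the claimed expressions.

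For \eqref{item3d} the weights differ, and I expect this to be the only slightly delicate step. With $p=e^{at}\tilde p$,
\[
e^{a_0 z}\wt T_\lambda p(z)=\int_z^0 e^{(\lambda+a_0)z}e^{(a-\lambda)t}\tilde p(t)\,\de t ,
\]
with kernel $K(z,t)=e^{(\Re\lambda+a_0)(z-t)}e^{(a+a_0)t}\mathbf 1_{z\le t\le0}$. I would bound $e^{(a+a_0)t}\le1$ and run the Schur test. For fixed $t$, $\int_{-\infty}^t e^{(\Re\lambda+a_0)(z-t)}\,\de z=(\Re\lambda+a_0)^{-1}$. For fixed $z$, the integral over $t\in[z,0]$ is also at most $(\Re\lambda+a_0)^{-1}$ by the same substitution. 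Hence the operator norm is at most $(\Re\lambda+a_0)^{-1}$, using only $\Re\lambda>-a_0$.

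Finally, for \eqref{mapsT} the bounds above show that each family is locally bounded on the stated half-planes, uniformly on compacts. Pairing with a compactly supported test function, or evaluating at $z=0$, gives an integral of an entire function of $\lambda$ against $L^1$ data. Dominated convergence makes these pairings holomorphic in $\lambda$. Locally bounded plus weakly analytic implies analytic, which I would apply exactly as in \Cref{lem:und.varphi}. The map $\lambda\mapsto[T_\lambda\cdot](0)$ is directly given by $\int_{-\infty}^0 e^{\lambda t}p(t)\,\de t$, which is holomorphic, with the Cauchy--Schwarz bound above.
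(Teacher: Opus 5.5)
Your proof is correct and follows the paper's route. Your Young/Schur bounds for the conjugated kernels are the paper's Cauchy--Schwarz against a normalized exponential measure followed by Fubini, the pointwise bounds are the same Cauchy--Schwarz computations, and analyticity comes, as in the paper, from local boundedness plus weak analyticity. The one deviation is in \eqref{item3d}: you first discard the factor $e^{(a+a_0)t}\le 1$ and then run the Schur test on the kernel $e^{(\Re\lambda+a_0)(z-t)}$ restricted to $z\le t\le 0$, which gives the same constant $(\Re\lambda+a_0)^{-1}$ uniformly on $\Re\lambda>-a_0$ and avoids the paper's case split $\Re\lambda=a$ versus $\Re\lambda\neq a$ and its explicit maximization over $t$.
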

    
    \begin{proof} 
     Using the definition of $T_\lambda$ in \eqref{opT} we estimate the $\|\cdot\|_{L^{2,a}}$ norm in \eqref{L2ascalar} by
    \begin{equation*}
        \| T_\lambda p\|_{L^{2,a}}^2 = \int_{-\infty}^0 e^{-2az}\Big|\int_{-\infty}^z e^{\lambda(t-z)}p(t)\de t\Big|^2\de z \leq  \frac{1}{(\Re\lambda+a)^2}\int_{-\infty}^0 \Big(\int_{-\infty}^z e^{-at}|p(t)|(\Re\lambda+a)e^{(\Re\lambda+a) (t-z)}\de t\Big)^2\de z \, .      
    \end{equation*}
   If $\Re\lambda>-a$ the  measure $(\Re\lambda+a)e^{(\Re\lambda+a) (t-z)}\de t$ on $(-\infty,z)$ is normalized and by using the Cauchy-Schwartz inequality with this measure we obtain
    $$
    \| T_\lambda p\|_{L^{2,a}}^2\leq  \frac{1}{(\Re\lambda+a)^2}\int_{-\infty}^0 \int_{-\infty}^z e^{-2at}|p(t)|^2(\Re\lambda+a)e^{(\Re\lambda+a) (t-z)}\de t\de z \, .
    $$
    Thus, exchanging the order of integration 
    \begin{equation*}
        \begin{split}
            \| T_\lambda p\|_{L^{2,a}}^2&\leq\frac{1}{(\Re\lambda+a)^2} \int_{-\infty}^0 e^{-2at}|p(t)|^2\int_t^0 (\Re\lambda+a)e^{(\Re\lambda+a) (t-z)}\de z\de t \\
            &\leq \frac{1}{(\Re\lambda+a)^2} \int_{-\infty}^0 e^{-2at}|p(t)|^2 \de t= \frac{\|p\|_{L^{2,a}}^2}{(\Re\lambda+a)^2}   
        \end{split}
    \end{equation*}
 proving the first inequality in \eqref{item1d}.
    The second inequality in \eqref{item1d} follows by \eqref{opT}, \eqref{L2ascalar} and Cauchy-Schwartz inequality. The bounds \eqref{item2d} are  proven similarly to \cite[Lemma C.1, formula (C.6)]{BMV2}. Though special carefulness is required to prove the right of \eqref{item2d} in the case $\lambda = a$.

We now prove \eqref{item3d}, first in case $  \Re\lambda \neq a$.  
     Recalling the definition of $\wt T_\lambda$ in \eqref{opT}, we have
    \begin{align}\label{provvvv}
             \| \wt T_\lambda p\|_{L^{2,-a_0}}^2 &= \int_{-\infty}^0 e^{2a_0z}\left|\int_{z}^ 0 e^{\lambda(z-t)}p(t)\de t\right|^2\de z = \int_{-\infty}^0 e^{2(a_0+a)z}\left|\int_{z}^ 0 e^{-at}p(t)  e^{(\lambda-a)(z-t)}\de t\right|^2\de z  \notag \\
             &= \int_{-\infty}^0 e^{2(a_0+a)z}\Big|\tfrac{1-e^{(\Re\lambda-a)z}}{\Re\lambda-a}\Big|^2\left|\int_{z}^ 0 e^{-at}p(t)  \left|\tfrac{\Re\lambda-a}{1-e^{(\Re\lambda-a)z}} \right|e^{(\lambda-a)(z-t)}\de t\right|^2\de z \, . 
        \end{align}
    Note that the measure 
    $$
    \de\mu(t) := \left|\frac{\Re\lambda-a}{1-e^{(\Re\lambda-a)z}}\right|e^{(\Re\lambda-a)(z-t)}\de t
    = \frac{\Re\lambda-a}{1-e^{(\Re\lambda-a)z}} e^{(\Re\lambda-a)(z-t)}\de t
    $$ 
    is normalized on $(z,0)$.
    Using the Cauchy-Schwartz inequality with respect to the measure $ d \mu (t) $, and exchanging the order of integration, yield, if
      $\Re\lambda>-a_0 $,  
    \begin{align}
        \eqref{provvvv} &\leq  \int_{-\infty}^0e^{-2at}|p(t)|^2\int_{-\infty}^t  e^{2(a_0+a)z} \tfrac{1-e^{(\Re\lambda-a)z}}{\Re\lambda-a} e^{(\Re\lambda-a)(z-t)}\de z \de t \notag 
        \\ &= \int_{-\infty}^0e^{-2at}|p(t)|^2 \tfrac{1}{\Re\lambda-a}\Big[\tfrac{e^{2(a_0+a)t}}{\Re\lambda + 2a_0 + a}-\tfrac{e^{(\Re\lambda +2a_0 +a)t}}{2(\Re\lambda + a_0)} \Big]\de t \, . \label{passin1}
    \end{align}
Since
\begin{align}\label{ilmass}
    \max_{t \in (-\infty,0]} \tfrac{1}{\Re\lambda-a}\Big[\tfrac{e^{2(a_0+a)t}}{\Re\lambda + 2a_0 + a}-\tfrac{e^{(\Re\lambda +2a_0 +a)t}}{2(\Re\lambda + a_0)} \Big] 
&\leq \max \left\{ \frac{1}{2(\Re\lambda + a_0)(\Re\lambda+2a_0+a)} , \frac{1}{(\Re\lambda+2a_0+a)^2} \right\} \notag \\
&\leq \frac{1}{(\Re\lambda+a_0)^2} \, ,
\end{align}
where, on the left of the first line, the first value is the  maximum, attained at $t = 0$, if $\Re\lambda < a$, and the second is an upper bound of the maximum if $\Re\lambda>a$. In conclusion we deduce \eqref{item3d} by 
\eqref{provvvv}, \eqref{passin1},  and \eqref{ilmass}, if $  \Re\lambda \neq a$. 
If  $\Re\lambda = a$ formula \eqref{item3d} is proved   by
$$
\|\wt T_\lambda p\|^2_{L^{2,-a_0}} \stackrel{\eqref{provvvv}}{=} \int_{-\infty}^0 e^{2(a+a_0)z} \left|\int_{z}^0e^{-\lambda t}p(t)\de t\right|^2\de z \leq  \int_{-\infty}^0 e^{2(a+a_0)z}|z|\int_{-\infty}^0 e^{-2at}|p(t)|^2\de t\de z  = \frac{\|p\|_{L^{2,a}}^2}{4(a+a_0)^2}  .
$$
    To verify the analyticity of the maps in \eqref{mapsT} it is only left to check the weak analyticity, since the uniform boundedness on compact sets follow from the first inequalities of \eqref{item1d}-\eqref{item2d}. To do that, by the density of the step functions in $L^2$, it suffices to test the weak analyticity on indicator functions of intervals, see for instance the proof of \Cref{lem:und.varphi}.
    \end{proof}

\subsection{Proof of  Proposition \ref{DNonR3} and \Cref{DNProp1}}\label{sec:proofs.FDN}

\paragraph{Proof of \Cref{DNProp1}.} 
For any $g\in H^s(\T)$, $s \in \R$, 
$(\al,\mu)\in\R\times (-\tfrac23,\tfrac23)$ and  
$|\epsilon | \leq \epsilon_0(s) $ determined  in \Cref{lem:Thetasharp},  there is a unique solution of the elliptic problem \eqref{ellprobtransf} as in  \eqref{decompsolution}
with $ \Theta^\flat_g (x,z) =  
( e^{z|D|_{\al,\mu}}g )(x)$ in 
\eqref{propg}
and $\Theta_g^\sharp$ in \Cref{lem:Thetasharp}. Then 
the fiber-Dirichlet Neumann operator in \eqref{def:Galmu} is 
\be\label{ilGdecom}
 \cG(\al, \mu, \e)[g]= \pa_z(\Theta_g^\flat + \Theta_g^\sharp)\vert_{z=0}  =  |D|_{\al,\mu}g + \cG^\sharp (\al,\mu,\e)[g] \, ,
 \quad \cG^\sharp (\al,\mu,\e)[g] := \pa_z \Theta_g^\sharp \vert_{z=0} \, , 
\ee
proving  
\eqref{est:DNpert} for any $|\mu| < 2/3 $, 
in view of 
\eqref{est:thetasharpnonzero}.
Using the purely algebraic 
 covariance property \eqref{Gmu+k} proved below,
 formula \eqref{decoGsha} and  the bound  \eqref{est:DNpert}  hold on each vertical strip $ \R\times (k-\tfrac23,k+\tfrac23)$, $k \in \Z$, 
 with a constant depending on $ |k| $, thus on $\R^2$. 
Finally 
 $(\al, \mu,\e) \mapsto (\alpha^2 \e)^{-1} \cG^\sharp({\al,\mu,\e}) $ belongs to $ \cA(\R^2,\e_0;H^s,H^{s+1})$ and to  $\mathtt{F}$ by 
\eqref{pazThetainA} and  the covariance property. This proves
\eqref{regGs}. 

\paragraph{Proof of Proposition \ref{DNonR3}.}
The operator
$\cG(\al,\mu,\e) $ satisfies  \eqref{dominiGL} 
by \Cref{DNProp1}.

\noindent
{\sc Self-adjointness and Hamiltonianity.} 
We first show  that 
$ (\cG(\al,\mu,\e) g,f) = 
 (g, \cG(\al,\mu,\e) f) $ 
for any $f,g\in\cC^\infty(\T;\C)$.
 By the divergence Theorem 
 (the boundary  contribution 
 at infinity vanishes by \Cref{decaysolv} for any $ (\al,\mu) \neq (0,0) $) 
 we have 
 \begin{align}
  &    \int_\T  \cG(\al,\mu,\e)[g](x) 
     \overline{f(x)} \de x \stackrel{\eqref{def:Galmu}} = \int_\T \overline{f(x)} \pa_z \Theta_g (x,z)\vert_{z=0} \de x  = \int_{\T\times \R_-} \textup{div}\big(\overline{\Theta_f(x,z)}\grad_{x,z}\Theta_g(x,z)\big)\de x \de z \notag \\
     &= \int_{\T\times \R_-} \big(\overline{\grad_{x,z}\Theta_f(x,z)}\cdot \grad_{x,z}\Theta_g(x,z) + \overline{\Theta_f(x,z)}\Delta_{x,z}\Theta_g(x,z)\big)\de x \de z \notag \\
     &\stackrel{\eqref{ellprobtransf}}{=} \int_{\T\times \R_-} \big(\overline{\grad_{x,z}\Theta_f(x,z)}\cdot \grad_{x,z}\Theta_g(x,z)  
  + \overline{\Theta_f(x,z)} (-2\im\mu\pa_x +\al^2(1+d_\e(x,z)) + \mu^2)\Theta_g(x,z)\big)\de x \de z \label{simmetr}
 \end{align}
which is equal to 
 $ (g, \cG(\al,\mu,\e) f) $.
 Thus $\cG(\al,\mu,\e) $ is self-adjoint, being the sum of the selfadjoint operator
 $|D|_{\al,\mu}$ with domain $H^1(\T)$ and the operator 
 $\cG^\sharp(\al,\mu,\e) $ in \eqref{ilGdecom}, which is  bounded and symmetric on $L^2(\T)$ by \eqref{est:thetasharpnonzero}
 and \eqref{simmetr}.
 Thus, $\cB(\al,\mu,\e)$ in \eqref{operator Bmualep}  is self-adjoint on $L^2(\T;\C^2)$ and $\cL(\al,\mu,\e)$ is Hamiltonian.
 \\[1mm]
\noindent
{\sc Reversibility.}
Since $ d_\epsilon (x,z) $
is real and even in $ x $, 
if  $ \Theta_g (x, z) $ solves  the elliptic problem \eqref{ellprobtransf},  then $\overline{ \Theta_g(-x,z)} $ solves the same problem with Dirichlet  datum $\bar g (-x)$. Therefore by uniqueness $\Theta_{\bar g^\vee} (x,z) = \overline{ \Theta_g(-x,z)}$ and 
$$
\cG(\al,\mu,\e)[\bar g^\vee] (x) \stackrel{\eqref{def:Galmu}} = \pa_z  \Theta_{\bar g^\vee }  (x,z) \vert_{z=0}    = \overline{ \pa_z \Theta_g (-x,z)}\vert_{z=0} = \overline{\cG(\al,\mu,\e)[g] }(-x) \, ,  
$$
proving \eqref{DN-rev2}. 
\\[1mm]
\noindent
{\sc Gauge covariance.}
If  $\Theta_g(x,z)$ solves  \eqref{ellprobtransf}, then $e^{- \im k x} \Theta_g(x,z)$ solves the same problem with $\mu \leadsto \mu+k$ and datum $g(x)\leadsto e^{- \im k x} g(x)$.
Therefore, in view of \eqref{def:Galmu},
$$
\cG(\al,\mu+k,\e)[e^{- \im k x} g(x)] = \pa_z [e^{- \im k x} \Theta_g(x,z)]\vert_{z=0} = e^{- \im k x} \pa_z\Theta_g(x,z)\vert_{z=0} = e^{- \im k x} \cG(\al,\mu,\e)[g] \, , 
$$
proving \eqref{Gmu+k}.  
\\[1mm]
\noindent
{\sc Unperturbed operators.}
It follows  because 
  $\Theta^\sharp_g\vert_{\e = 0} = 0$ for any $g$, see \Cref{lem:Thetasharp}.
  \\[1mm]
\noindent
{\sc Symmetry.} If $\Theta_g(\al,\mu,\e;x,z)$ is a solution of 
\eqref{ellprobtransf}, then $\overline{\Theta_g(\al,\mu,\e;x,z)}$ solves the same problem with $\mu\leadsto -\mu$ and $g\leadsto \overline{g}$. Thus, by  uniqueness, $\Theta_{\overline{g}}(\al,-\mu,\e;x,z) = \overline{\Theta_g(\al,\mu,\e)}$ 
and the operator \eqref{def:Galmu}  satisfies
$ \overline{\cG (\alpha,\mu,\e) g } = \cG (\alpha, -\mu , \epsilon ) [\bar g ] $.
Thus $\cB(\al,\mu,\e)$ in \eqref{operator Bmualep} and 
$ \cL(\al,\mu,\e)=\cJ\cB(\al,\mu,\e)$  satisfy  the same property. The operator $\cG(\al,\mu,\e)$  is even in $\al$ since  the elliptic problem \eqref{ellprobtransf} only depends on $\al^2$. The other entries of
$\cB(\al,\mu,\e)$ and $\cL(\al,\mu,\e)$ in  \eqref{cLame1} and \eqref{operator Bmualep}  are  independent of  $\al$. This proves  
\eqref{cGcLmenomu}.

\section{Expansion of the 
basis $ \cF $ }\label{ProofExpansion}

We now prove Lemma \ref{expansion1}. 
We perform a  Taylor expansion of the operators in  \eqref{svisLep}, 
\begin{equation}
    \label{defijksL}
    \sL^{[i,j]}(\al^2,\mu^2,\e) = \sL^{[i,j,0]} + \e \sL^{[i,j,1]} + \cO(\rho^2,\e^2)\, , \quad i, j \in \{0,1\} \, , 
\end{equation}
where
$ \sL^{[i,j,0]} := \sL^{[i,j]}(0,0,0) $ and $ \sL^{[i,j,1]} := \pa_\e \sL^{[i,j]}(0,0,0) $.

\begin{lemma}
The operator $\sL(\al,\mu,\e)$ in
\eqref{sLresca}  expands as
\begin{equation}\label{expcL1}
    \sL(\al,\mu,\e) = \sL(0,0,0)  + \rho \sL^{[1,0,0]} + \im\mu \sL^{[0,1,0]} + \e \sL^{[0,0,1]}  
   +\rho\e \sL^{[1,0,1]} + \im\mu\e {\sL}^{[0,1,1]} + \cO(\rho^2,\e^2) \, , 
\end{equation}
with
\begin{equation}
\sL^{[1,0,0]} = \begin{bmatrix} 0 & \Pi_0 \\ 0 & 0 \end{bmatrix} \, ,
\quad \sL^{[0,1,0]} = \begin{bmatrix}0  &  -\im \, \sgn (D)\\ 0 & 0\end{bmatrix} \, ,\quad 
\sL^{[0,0,1]} = \begin{bmatrix} \pa_x \circ p_1(x) & 0 \\ -a_1(x) & p_1(x)\circ \pa_x \end{bmatrix} \, , \label{cLfirstorder}
\end{equation}
where $a_1(x)=p_1(x)=-2\cos(x)$.
The second order terms are
\begin{equation}\label{cLmisto}
    \sL^{[0,1,1]}=
    \begin{bmatrix}
         p_1(x) & 0 \\
        0 &  p_1(x)
    \end{bmatrix}\, , \quad \sL^{[1,0,1]}= 0 \, . 
\end{equation}
\end{lemma}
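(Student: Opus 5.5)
The expansion \eqref{expcL1} follows from the decomposition \eqref{svisLep}, \eqref{defijksL} once the first-order jets of the operator-valued coefficients are known. The plan is to substitute \eqref{defijksL} into \eqref{svisLep} and collect terms according to the monomials $1,\rho,\im\mu,\e,\rho\e,\im\mu\e$. Everything else goes into $\cO(\rho^2,\e^2)$. This includes $\im\mu\rho\,\sL^{[1,1]}$, since $|\mu\rho|\le\rho^2$, and the $\cO(\rho^2)$ parts of each $\sL^{[i,j]}(\al^2,\mu^2,\e)$, since these depend on $(\al^2,\mu^2)$ and so produce $\cO(\rho^2)$ corrections. Then $\sL^{[0,0,0]}=\sL(0,0,0)$, and I read off the remaining jets from the explicit formulas \eqref{svisLep} and \eqref{F5}.

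The order-one terms in $\rho$ and $\mu$ come from evaluating \eqref{F5} at $\e=0$: $\cG^{[1,0]}(0,0,0)=\Pi_0$ and $\cG^{[0,1]}(0,0,0)=-\im\,\sgn(D)$. Together with $p_0=0$ in the diagonal of $\sL^{[0,1]}$, this gives the first two matrices in \eqref{cLfirstorder}. For $\sL^{[0,0,1]}$, I differentiate $\sL^{[0,0]}(0,0,\e)$ in \eqref{svisLep} at $\e=0$. By \eqref{F5}, $\cG^{[0,0]}(0,0,\e)=|D|+\cO(\rho^2\e,\rho^4)$, so the $(1,2)$ entry has no $\e$-jet at $\rho=0$. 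The remaining entries depend linearly on $p_\e,a_\e$, whose first jets are $p_1=a_1=-2\cos x$ by \eqref{apexp}.

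The mixed terms need one more input. From \eqref{svisLep}, $\sL^{[0,1]}$ has diagonal entries $p_\e$ and $(1,2)$ entry $\cG^{[0,1]}$. Its $\e$-derivative therefore has diagonal $p_1$, provided $\pa_\e\cG^{[0,1]}(0,0,0)=0$. Similarly, $\sL^{[1,0,1]}=\begin{bmatrix}0&\pa_\e\cG^{[1,0]}(0,0,0)\\0&0\end{bmatrix}$, so both claims in \eqref{cLmisto} reduce to the vanishing of the $\e$-derivatives at $(\al^2,\mu^2)=(0,0)$ of $\cG^{[1,0]}$ and $\cG^{[0,1]}$. For this I use \eqref{decAforGsharp}: the Dirichlet--Neumann correction satisfies $[\cG^\sharp]^{[i,j]}=\al^2\e\,\wt\cG^{[i,j]}$. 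Hence every $\e$-dependence of $\cG^{[i,j]}$ carries a factor $\al^2$ and vanishes at $\al=0$. The flat part $|D|_{\al,\mu}$ from \eqref{decAforGflat} is $\e$-independent.

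The main point to check is that the decomposition \eqref{decompositiontM} of $\cG$ is the sum of the decompositions of $|D|_{\al,\mu}$ and of $\cG^\sharp$; this follows from the uniqueness in \Cref{lem:decomposition}. Once that is granted, all jets are explicit and the rest is bookkeeping.
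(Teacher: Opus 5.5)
Your proposal is correct and follows the paper's own one-line argument: substitute \eqref{defijksL} into \eqref{svisLep} and read the jets off \eqref{F5} and \eqref{apexp}. Your appeal to \eqref{decAforGsharp} shows that $\pa_\e\cG^{[0,0]}$, $\pa_\e\cG^{[1,0]}$ and $\pa_\e\cG^{[0,1]}$ vanish at $(\al^2,\mu^2)=(0,0)$; this is exactly the information encoded in \eqref{quattropuntosei}, which the paper cites for the same purpose.
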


\begin{proof}
    Use \eqref{svisLep}, \eqref{defijksL}, \eqref{quattropuntosei} and \eqref{apexp}.
    \end{proof}

    Next we express the jets of the operators  $P_{\al,\mu,\e}$ and $U_{\al,\mu,\e}$
    in terms of $\sL(\al,\mu,\e)$.
\begin{lemma}\label{lem:U.expansion}
{\bf (Taylor expansion of projectors)}
 The projector $P_{\al,\mu,\e}$ in \eqref{Pproj} expands as 
\begin{equation}\label{expP1}
\begin{aligned}
    P_{\al,\mu,\e} = P_{0,0,0} &+\rho P^{[1,0,0]}+ \im\mu P^{[0,1,0]} + \e P^{[0,0,1]}   
    + \im \mu \e P^{[0,1,1]} 
    +\rho \e  P^{[1,0,1]}+ \cO(\rho^2,\e^2)
\end{aligned}
\end{equation}
 where the real-to-real operators  $P^{[i,j,k]} $ are 
\begin{align}\label{Pder}
 P^{[i,j,k]} := \frac{1}{2\pi\im} \oint_\Gamma R^{[i,j,k]}(\lambda) \de\lambda 
\end{align}
and 
\begin{align}\label{R000}
  &   R^{[0,0,0]}(\lambda) = (\lambda-\sL(0,0,0))^{-1} =: R_0(\lambda)\, , \quad R^{[0,0,1]}(\lambda) = R_0(\lambda) \sL^{[0,0,1]}R_0(\lambda) \, , \\
& \label{Rijk}
R^{[0,1,1]} (\lambda)=  R_0(\lambda) (\sL^{[0,1,0]} R_0(\lambda)  \sL^{[0,0,1]} +
\sL^{[0,0,1]} R_0(\lambda) \sL^{[0,1,0]} 
+\sL^{[0,1,1]} )R_0(\lambda) 
\end{align}
and any other $ R^{[i,j,k]} $ is obtained after permutation of the apex indices.

The operator $U_{\al,\mu,\e}P_{0,0,0}$ has the expansion
\begin{equation}\label{expU}
    U_{\al,\mu,\e}P_{0,0,0}= P_{0,0,0} + \e U^{[0,0,1]} + \im\mu U^{[0,1,0]} + \rho U^{[1,0,0]} 
    + \im\mu\e U^{[0,1,1]} + \rho\e U^{[1,0,1]} + \cO(\rho^2,\e^2)\, ,
\end{equation}
where the jets in \eqref{expU} are 
the real-to-real operators 
 \begin{align}
 &  U_{0,0,0}P_{0,0,0}=P_{0,0,0} \, , \quad U^{[0,0,1]}P_{0,0,0}=P^{[0,0,1]}P_{0,0,0} \, ,  \label{Ufirstorder}\\
& U^{[0,1,1]}P_{0,0,0} =
\big(P^{[0,1,1]}- \tfrac12 P_{0,0,0}P^{[0,1,1]} \big)P_{0,0,0} \, .  \label{Umix} 
 \end{align}
Identities \eqref{Ufirstorder} and \eqref{Umix} hold after any permutation of the apex indices.
\end{lemma}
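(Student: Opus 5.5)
The plan is to Taylor expand the resolvent $(\lambda-\sL(\al,\mu,\e))^{-1}$ via a Neumann series around $R_0(\lambda)$, insert it into the Cauchy integral \eqref{Pproj}, and then feed the resulting expansion of $P_{\al,\mu,\e}$ into the closed formula \eqref{OperatorU} for $U_{\al,\mu,\e}$.

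\emph{Resolvent and projector.} For $\lambda\in\Gamma$ write $\sL(\al,\mu,\e)=\sL(0,0,0)+\Delta$, where by \eqref{expcL1}
$$\Delta=\rho\sL^{[1,0,0]}+\im\mu\sL^{[0,1,0]}+\e\sL^{[0,0,1]}+\rho\e\sL^{[1,0,1]}+\im\mu\e\sL^{[0,1,1]}+\cO(\rho^2,\e^2).$$
Then
$$(\lambda-\sL)^{-1}=R_0+R_0\Delta R_0+R_0\Delta R_0\Delta R_0+\cO(\Delta^3),$$
which converges uniformly on $\Gamma$ by \Cref{lem:Kato1}. Now collect monomials. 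The linear terms give $R^{[i,j,k]}=R_0\sL^{[i,j,k]}R_0$, consistent with \eqref{R000}. The mixed coefficient of $\im\mu\e$ is formed by the first-order term $R_0\sL^{[0,1,1]}R_0$ together with the two cross terms $R_0\sL^{[0,1,0]}R_0\sL^{[0,0,1]}R_0$ and $R_0\sL^{[0,0,1]}R_0\sL^{[0,1,0]}R_0$; this is exactly \eqref{Rijk}, and the $\rho\e$ coefficient is obtained by the same collection. Products such as $\rho\cdot\im\mu$, $\rho^2$, $\mu^2$ and $\e^2$ are absorbed in $\cO(\rho^2,\e^2)$, since $|\mu|\le\rho$. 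Integrating over $\Gamma$ then gives \eqref{expP1}--\eqref{Pder}. The jets are real-to-real because each $\sL^{[i,j,k]}$ is real, by \Cref{lem:decomposition}, and $\Gamma$ is symmetric under conjugation, as in \eqref{primsim}. Polar-analyticity of the remainder follows from \Cref{lem:prodintM}.

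\emph{Transformation operator.} Write $P=P_{0,0,0}+\delta P$ with $\delta P=\cO(\rho,\e)$. Then $(\uno-(\delta P)^2)^{-1/2}=\uno+\tfrac12(\delta P)^2+\dots$, and
$$U_{\al,\mu,\e}P_{0,0,0}=\big(\uno+\tfrac12(\delta P)^2+\dots\big)P\,P_{0,0,0}.$$
At first order this gives $U^{[0,0,1]}P_{0,0,0}=P^{[0,0,1]}P_{0,0,0}$, which is \eqref{Ufirstorder}. At the mixed order the contribution to the $\im\mu\e$ coefficient is
$$P^{[0,1,1]}P_{0,0,0}+\tfrac12\big(P^{[0,1,0]}P^{[0,0,1]}+P^{[0,0,1]}P^{[0,1,0]}\big)P_{0,0,0}.$$

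To reduce this to \eqref{Umix}, differentiate the identity $P^2=P$. This gives
$$P_0P^{[0,1,1]}+P^{[0,1,1]}P_0+P^{[0,1,0]}P^{[0,0,1]}+P^{[0,0,1]}P^{[0,1,0]}=P^{[0,1,1]},$$
and also $P_0P^{[1]}P_0=0$ for each first-order jet $P^{[1]}$. Multiplying the first relation on the right by $P_0$ and solving for the symmetric product yields
$$\big(P^{[0,1,0]}P^{[0,0,1]}+P^{[0,0,1]}P^{[0,1,0]}\big)P_0=-P_0P^{[0,1,1]}P_0.$$
Substituting this into the mixed coefficient gives exactly $\big(P^{[0,1,1]}-\tfrac12P_0P^{[0,1,1]}\big)P_0$, which is \eqref{Umix}. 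The same computation applies after permuting the apex indices.

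\emph{Main obstacle.} The step requiring care is this bookkeeping: making sure the second-order terms of $(\uno-(\delta P)^2)^{-1/2}$ are combined correctly with the projector identities. A secondary but necessary check is that the polar-analytic remainders $\cO(\rho^2,\e^2)$ remain closed under these compositions, which is guaranteed by \Cref{lem:prodintM}.
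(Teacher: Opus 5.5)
Your proposal is correct and follows the paper's proof essentially step by step. The paper proceeds in the same three stages. First, it Neumann-expands $(\uno-R_0(\lambda)(\sL(\al,\mu,\e)-\sL(0,0,0)))^{-1}R_0(\lambda)$ inside the Cauchy integral and collects the $\rho,\ \im\mu,\ \e,\ \rho\e,\ \im\mu\e$ coefficients. Second, it writes $U_{\al,\mu,\e}P_{0,0,0}=(\uno-(P_{\al,\mu,\e}-P_{0,0,0})^2)^{-1/2}P_{\al,\mu,\e}P_{0,0,0}$ and expands the square root to second order. Third, it reduces the mixed term with the identity $(P^{[0,1,0]}P^{[0,0,1]}+P^{[0,0,1]}P^{[0,1,0]})P_{0,0,0}=-P_{0,0,0}P^{[0,1,1]}P_{0,0,0}$, obtained from $P^2=P$ multiplied on the right by $P_{0,0,0}$. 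Both your argument and the paper's identify coefficients in $P^2=P$; this is justified by the uniqueness part of \Cref{lem:decomposition}, which the paper also uses only implicitly.
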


\begin{proof}
We write 
\be\label{Pamu2}
P_{\al,\mu,\e} = \frac{1}{2\pi\im}\oint_\Gamma \left( \uno - R_0(\lambda) \cR_{\al,\mu,\e}\right)^{-1}R_0(\lambda) \de\lambda \, , \qquad \cR_{\al,\mu,\e} := \sL(\al,\mu,\e) - \sL(0,0,0)\, .
\ee
Inserting the expansion \eqref{expcL1} in \eqref{Pamu2}  and Neumann expanding  $\left( \uno - R_0(\lambda) \cR_{\al,\mu,\e}\right)^{-1}$, we get 
$$
\begin{aligned}
    P_{\al,\mu,\e} &= \sum_{n\geq 0} \frac1{2\pi\im}\oint_\Gamma \bigg[ R_0(\lambda)\left( \rho \sL^{[1,0,0]} + \im\mu \sL^{[0,1,0]} + \e \sL^{[0,0,1]} + \im\mu\e \sL^{[0,1,1]} + \rho\e \sL^{[1,0,1]} + \cO(\rho^2,\e^2) \right) \bigg]^n R_0(\lambda)  \de\lambda  \\&= \frac{1}{2\pi\im}\oint_\Gamma \bigg[ R_0(\lambda) + R_0(\lambda)\left( \rho \sL^{[1,0,0]} + \im\mu \sL^{[0,1,0]} + \e \sL^{[0,0,1]} + \im\mu\e \sL^{[0,1,1]} + \rho\e \sL^{[1,0,1]} \right) R_0(\lambda) \\
    &\qquad \qquad+ \im\mu \e R_0(\lambda) \left(  \sL^{[0,1,0]} R_0(\lambda)\sL^{[0,0,1]} + \sL^{[0,0,1]} R_0(\lambda)\sL^{[0,1,0]}\right)R_0(\lambda) \\
    &\qquad \qquad+ \rho \e R_0(\lambda)\left( \sL^{[1,0,0]} R_0(\lambda)\sL^{[0,0,1]} + \sL^{[0,0,1]} R_0(\lambda)\sL^{[1,0,0]}\right)R_0(\lambda) \,
    \bigg] \de \lambda  +  \cO(\rho^2,\e^2)
\end{aligned}
$$
which is \eqref{expP1} with $P^{[i,j,k]}$ as in \eqref{Pder}, c.f. \eqref{R000}, \eqref{Rijk}. 
 By \eqref{OperatorU} one has the Taylor expansion  in $\cL(H^1)$
$$
   U_{\al,\mu,\e}P_{0,0,0}  = P_{\al,\mu,\e}P_{0,0,0} + \frac{1}{2}(P_{\al,\mu,\e}-P_{0,0,0})^2P_{\al,\mu,\e}P_{0,0,0} +\cO(P_{\al,\mu,\e}-P_{0,0,0})^4   \, ,
  $$
  where  $\cO(P_{\al,\mu,\e}-P_{0,0,0})^4 = \cO(\text{ord. }4) := \cO(\rho^4,\rho^3\e,\dots,\e^4) \in \cL(H^1)$. Thus the first order jets of $U_{\al,\mu,\e}$ are given, using \eqref{expP1}, by \eqref{Ufirstorder}. Using again \eqref{expP1} the second order terms are 
  $$
  \begin{aligned}
      (P_{\al,\mu,\e}-P_{0,0,0})^2P_{\al,\mu,\e}P_{0,0,0} = (\rho P^{[1,0,0]} + \im\mu P^{[0,1,0]} + \e P^{[0,0,1]})^2 P_{0,0,0} + \cO(\textup{ord. }3)\\
      =\rho \e (P^{[1,0,0]}P^{[0,0,1]} + P^{[0,0,1]}P^{[1,0,0]}) + \im\mu\e (P^{[0,1,0]}P^{[0,0,1]} + P^{[0,0,1]}P^{[0,1,0]}) +  \cO(\rho^2,\e^2)\, ,
  \end{aligned}
  $$
and using  the identities
$P^{[0,1,0]} P^{[0,0,1]} P_{0,0,0} + P^{[0,0,1]} P^{[0,1,0]} P_{0,0,0} = - P_{0,0,0} P^{[0,1,1]} P_{0,0,0}$,
which holds also for any permutation of the indices, we derive \eqref{Umix}. In particular the jets in \eqref{Ufirstorder} and \eqref{Umix} are real-to-real operators. The previous identity is obtained inserting \eqref{expP1} in $P_{\al,\mu,\e}^2 = P_{\al,\mu,\e}$ and identifying the results of the left and right hand sides term by term, also multiplying by $P_{0,0,0}$ to the right.
\end{proof}

The  vectors 
$ f_k^\sigma(\al,\mu,\e) = 
U_{\al,\mu,\e} f_k^\sigma $ satisfy  
\eqref{decF} because of \eqref{Ureg} and \eqref{symmmmUP}. 
In view of \eqref{expU}, \eqref{Ufirstorder}, \eqref{Umix} 
the vectors 
$ f_k^\sigma(\al,\mu,\e) $ 
have the Taylor expansion
\begin{equation}\label{ordinibase}
\begin{aligned}
    f_k^\sigma(\al,\mu,\e) = f_k^\sigma &+ \e P^{[0,0,1]} f_k^\sigma+ \im\mu P^{[0,1,0]} f_k^\sigma +\rho P^{[1,0,0]} f_k^\sigma
    + \im\mu\e  U^{[0,1,1]}f_k^\sigma +  \rho\e  U^{[1,0,1]} f_k^\sigma  + \cO(\rho^2,\e^2) \, .
\end{aligned}
\end{equation}
The jets of the expansion \eqref{ordinibase}
are computed in \Cref{lem:jetsP,jetsordine2}. 
Preliminary, in view of 
 \eqref{Pder}, we need to know the action of  $R_0(\lambda)$ on the vectors 
\begin{equation}
\label{fksigma}
f_k^+:=\vet{\cos(kx)}{\sin(kx)},
\quad f_k^- :=\vet{-\sin(kx)}{\cos(kx)},
\quad f_{-k}^+ :=\vet{\cos(kx)}{-\sin(kx)},
\quad
f_{-k}^- :=\vet{\sin(kx)}{\cos(kx)} , \quad k \in \N \, .  
\end{equation}

\begin{lemma}\label{lem:VUW}
The space $ H^1$ decomposes as 
$
H^1 =  \cV_{0,0,0} \oplus \cU \oplus \cW_{H^1} $, with $\cW_{H^1}:= \overline{\bigoplus\limits_{k\geq 2} \cW_k}^{H^1}\!\!\!\!\!\!\!
$, where the subspaces $\cV_{0,0,0} $, $  \cU $, $ \cW_k $ defined below, are 
invariant  under   $\sL(0,0,0) $ and  the following properties hold:
\\[1mm]
($i$) $ \cV_{0,0,0} = \text{span} \{ f^+_1, f^-_1, f^+_0, f^-_0\}$  is the generalized kernel of $\sL(0,0,0)$. For any $ \lambda \neq 0 $ the operator 
$ \lambda-\sL(0,0,0) :  \cV_{0,0,0} \to \cV_{0,0,0} $ is invertible and  
 \begin{align} 
 \label{primainversione2}R_0(\lambda)f_1^+ = \frac1\lambda f_1^+ \, ,
\quad 
R_0(\lambda)f_1^- = \frac1\lambda f_1^-,
\quad  R_0(\lambda)f_0^- = \frac1\lambda f_0^- \, ,   \quad
R_0(\lambda)f_0^+ = \frac1\lambda f_0^+ - \frac{1}{\lambda^2} f_0^- \, .
\end{align} 
($ii$) $\cU := \text{span}\left\{ f_{-1}^+, f_{-1}^-  \right\}$.   For any 
$ \lambda \neq \pm 2 \im $ the operator 
$ \lambda-\sL(0,0,0) :  \cU \to \cU $ is invertible and
\begin{equation}
\label{primainversione3}
 R_0(\lambda) f_{-1}^+ = \frac{1}{\lambda^2+4}\left(\lambda f_{-1}^+ - 2 f_{-1}^-\right), \quad R_0(\lambda) f_{-1}^- = \frac{1}{\lambda^2+4}\left(2 f_{-1}^+ + \lambda f_{-1}^-\right) \, .
\end{equation}
($iii$) 
Each
subspace $\cW_k:= \text{span}\left\{f_k^+, \ f_k^-, f_{-k}^+, \ f_{-k}^- \right\}$ is  invariant under $ \sL(0,0,0) $.  Let $\cW_{L^2}:=\overline{\bigoplus\limits_{k\geq 2} \cW_k}^{L^2}\!\!\!\!\!\!$. For any
$|\lambda| < \frac12$, the operator 
$ \lambda - \sL(0,0,0) :  \cW_{H^1} \to \cW_{L^2} $ is invertible and, 
 for any $f \in \cW_{L^2} $, 
\begin{equation}
\label{primainversione4}
 R_0(\lambda) f  = - (\pa_x^2 + |D|)^{-1} \begin{bmatrix} \partial_x & - |D| \\ 1 & \partial_x\end{bmatrix} f + \lambda \varphi_f(\lambda, x) \, ,
\end{equation}
for some analytic  function  $\lambda \mapsto \varphi_f(\lambda, \cdot) \in H^1$.
\end{lemma}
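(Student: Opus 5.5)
The argument is a direct Fourier computation. The unperturbed operator $\sL(0,0,0)=\begin{bmatrix}\pa_x & |D| \\ -1 & \pa_x\end{bmatrix}$ is a Fourier multiplier, so it maps each finite-dimensional span of vectors supported on the harmonics $\pm k$ into itself. The plan is to verify invariance and compute the resolvent block by block. The three families $\cV_{0,0,0}$ (harmonics $0$ and the ``$+$'' pair at $\pm1$), $\cU$ (the ``$-$'' pair at $\pm 1$) and $\cW_k$, $k\ge2$, exhaust all harmonics. Hence $H^1$ is their (closed) direct sum, and the resolvent can be computed on each piece separately.

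For ($i$), I would compute directly from \eqref{base3e}: $\sL(0,0,0)f_1^\pm=0$ and $\sL(0,0,0)f_0^-=0$ since $|D|1=0$, while $\sL(0,0,0)f_0^+=-f_0^-$. Thus on $\cV_{0,0,0}$ the operator is nilpotent with a single Jordan chain $f_0^+\mapsto -f_0^-\mapsto 0$. Then $R_0(\lambda)=\lambda^{-1}\sum_{n\ge0}(\sL/\lambda)^n$ terminates, giving \eqref{primainversione2}, including the $-\lambda^{-2}f_0^-$ term.

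For ($ii$), applying the operator to $f_{-1}^\pm$ in \eqref{fksigma} gives $\sL f_{-1}^+=(-\sin x - \sin x, -\cos x-\cos x)^\top=-2f_{-1}^-$ and $\sL f_{-1}^-=2f_{-1}^+$. So $\sL|_{\cU}$ is the matrix $\begin{pmatrix}0&2\\-2&0\end{pmatrix}$, with eigenvalues $\pm2\im$. Inverting the $2\times2$ matrix $\lambda-\sL|_\cU$ gives \eqref{primainversione3}.

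For ($iii$), on $\cW_k$ the operator acts through $2\times2$ blocks on $e^{\pm\im kx}$ with symbol $\begin{pmatrix}\im \xi & |\xi|\\ -1 & \im\xi\end{pmatrix}$, $\xi=\pm k$. Its eigenvalues are $\im(\xi\pm|\xi|^{1/2})$, which have modulus at least $k-\sqrt k\ge 2-\sqrt2>1/2$ for $k\ge2$. Therefore $\lambda-\sL$ is invertible for $|\lambda|<\frac12$, with symbol inverse bounded uniformly in $k$ by $C\langle k\rangle^{-1}$. This gives an operator $\cW_{L^2}\to\cW_{H^1}$ that is analytic in $\lambda$.

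At $\lambda=0$ the inverse is $-\sL^{-1}$. Using the adjugate of $\begin{bmatrix}\pa_x & |D|\\ -1&\pa_x\end{bmatrix}$ and the determinant $\pa_x^2+|D|$, which is invertible on modes $|k|\ge2$ since $-k^2+|k|\ne0$, we obtain $-\sL^{-1}=-(\pa_x^2+|D|)^{-1}\begin{bmatrix}\pa_x&-|D|\\1&\pa_x\end{bmatrix}$. Taylor expanding the analytic resolvent at $\lambda=0$ then yields \eqref{primainversione4}, with $\varphi_f(\lambda)=\lambda^{-1}(R_0(\lambda)-R_0(0))f$ analytic in $H^1$.

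The only step needing care is the uniform-in-$k$ bound in ($iii$), which makes the direct sum closed and gives the $H^1$ gain; it follows from the explicit eigenvalue gap above.
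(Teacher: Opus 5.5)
Your proposal is correct and is essentially the argument of \cite[Lemma A.2]{BMV1}, to which the paper simply refers. That argument also uses the invariance of $\cV_{0,0,0}$, $\cU$ and $\cW_k$ under the Fourier multiplier $\sL(0,0,0)$, the single Jordan chain $f_0^+\mapsto -f_0^-$ on $\cV_{0,0,0}$, the explicit $2\times 2$ inverse on $\cU$, and the gap $k-\sqrt{k}\ge 2-\sqrt2>\tfrac12$ on $\cW_k$ with an $\langle k\rangle^{-1}$ bound on the inverse symbol. One point you leave implicit: (ii)--(iii) give invertibility of $\sL(0,0,0)$ on $\cU\oplus\cW_{H^1}$, and this is exactly what makes $\cV_{0,0,0}$ the whole generalized kernel.
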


\begin{proof}
See \cite[Lemma A.2]{BMV1}
\end{proof}
We shall also use the following formulas, obtained  by 
\eqref{cLfirstorder} and \eqref{base3e}:
\begin{equation}\label{derivoeps}
\begin{aligned}
&\sL^{[0,0,1]}f_1^+ = 2\vet{\sin(2x)}{0} \, , \quad  
\sL^{[0,0,1]}f_1^- = 2\vet{\cos(2x)}{0} \, ,  \quad
\sL^{[0,0,1]}f_0^+ = 2\vet{\sin(x)}{\cos(x)} \, , \quad
\sL^{[0,0,1]}f_0^- = 0 \, , \\
& \sL^{[1,0,0]}f_k^\sigma= 0\quad \forall \, (k,\sigma)\neq (0,-)\, , \qquad
 \sL^{[1,0,0]}f_0^- =  f_0^+ \, .
\end{aligned}
\end{equation}

\begin{lemma}\label{lem:jetsP}
{\bf (First order jets)}
The first order jets   $P^{[0,0,1]} f_k^\sigma$, $P^{[0,1,0]}f_k^\sigma$, $P^{[1,0,0]}f_k^\sigma$ of the perturbed basis $ \{ f_k^\sigma (\al,\mu,\e)\, ; \ k = 0,1 $, $ \sigma = \pm \} $, are
\begin{equation}\label{tuttederivate}
\begin{aligned}
 &P^{[0,0,1]}f^+_1 =\vet{2\cos(2x)}{\sin(2x)} \, ,\ \ \ 
  P^{[0,0,1]}f^-_1 =\vet{-2\sin(2x)}{\cos(2x)} \, , \ \ \ 
   P^{[0,0,1]}f^+_0 = f^+_{-1} \, , \ \ \  
   P^{[0,0,1]}f^-_0 =0 \, ,   \\
    &P^{[0,1,0]} f_1^+ = \frac{1}{4} f_{-1}^-\, , \quad P^{[0,1,0]} f_1^- = \frac{1}{4} f_{-1}^+\, , \quad  P^{[0,1,0]} f_0^\pm = 0\, ,     \quad P^{[1,0,0]} f_k^\sigma = 0 \quad \forall k=0,1\, , \sigma = \pm\, .
\end{aligned}
\end{equation}
\end{lemma}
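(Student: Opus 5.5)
The approach is to compute each jet from the contour formula \eqref{Pder}: $P^{[i,j,k]}f_k^\sigma=\frac{1}{2\pi\im}\oint_\Gamma R_0(\lambda)\,\sL^{[i,j,k]}R_0(\lambda)f_k^\sigma\,\de\lambda$ for the three first-order indices $[0,0,1],[0,1,0],[1,0,0]$. In each case I will first act with $R_0(\lambda)$ on $f_k^\sigma\in\cV_{0,0,0}$ using \eqref{primainversione2}. This produces $\frac1\lambda f_k^\sigma$, plus the extra term $-\frac{1}{\lambda^2}f_0^-$ when the vector is $f_0^+$. Next I apply $\sL^{[i,j,k]}$, using \eqref{derivoeps} and \eqref{cLfirstorder}. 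Then I decompose the result along $\cV_{0,0,0}\oplus\cU\oplus\cW_{H^1}$ and apply $R_0(\lambda)$ again via \Cref{lem:VUW}. Finally I evaluate the residue at $\lambda=0$; this is the only singularity inside $\Gamma$, since $\pm2\im$ lie outside and the $\cW$-part is analytic for $|\lambda|<\tfrac12$.

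\emph{The $\rho$-jet.} By \eqref{derivoeps}, $\sL^{[1,0,0]}$ annihilates every $f_k^\sigma$ except $f_0^-$. For $f_0^+$, the only contribution comes from the $-\lambda^{-2}f_0^-$ term, which gives $-\lambda^{-2}R_0(\lambda)f_0^+=-\lambda^{-3}f_0^++\lambda^{-4}f_0^-$. This has zero residue. For $f_0^-$, the integrand is $\lambda^{-1}R_0(\lambda)f_0^+=\lambda^{-2}f_0^+-\lambda^{-3}f_0^-$, again with zero residue. Hence $P^{[1,0,0]}f_k^\sigma=0$ for all $k,\sigma$.

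\emph{The $\mu$-jet.} The operator $\sL^{[0,1,0]}$ acts only through $-\im\,\sgn(D)$ on the second component, so it kills $f_0^\pm$ (both have constant components) and hence $P^{[0,1,0]}f_0^\pm=0$. For $f_1^\pm$, \eqref{action-isgnD} gives $\sL^{[0,1,0]}f_1^+=\vet{-\cos x}{0}$ and $\sL^{[0,1,0]}f_1^-=\vet{\sin x}{0}$. I will split these as $\tfrac12(f_1^\cdot+f_{-1}^\cdot)$-type combinations: $\vet{\cos x}{0}=\tfrac12(f_1^++f_{-1}^+)$ and $\vet{\sin x}{0}=\tfrac12(f_{-1}^--f_1^-)$. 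Then \eqref{primainversione2} and \eqref{primainversione3} give residues at $0$ of $\lambda^{-1}R_0(\lambda)(\cdot)$. The $\cV$-components produce $\lambda^{-2}$ terms, which have no residue. The $\cU$-components produce $\lambda^{-1}\frac{1}{\lambda^2+4}(\dots)$, whose residue is $\frac14$ of the $\lambda$-independent part, i.e. $\frac14(\mp2f_{-1}^\mp)$ times the coefficient $\tfrac12$. I will check that the signs give exactly $\frac14 f_{-1}^-$ and $\frac14 f_{-1}^+$.

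\emph{The $\epsilon$-jet.} This coincides with the $\alpha=0$ computation of \cite[Lemma A.3]{BMV1}, since $\sL^{[0,0,1]}=\sL^{[0,0,1]}(\e\text{-part at }\alpha=\mu=0)$ is the same operator, and I can cite it. If I redo it instead: $\sL^{[0,0,1]}f_1^\pm$ lies in $\cW_2$, so the residue of $\lambda^{-1}R_0(\lambda)(\cdot)$ is $-(\pa_x^2+|D|)^{-1}\begin{bmatrix}\pa_x&-|D|\\1&\pa_x\end{bmatrix}$ applied to it. This is an explicit $2\times2$ computation on $\cos 2x$ and $\sin 2x$, with $(\pa_x^2+|D|)^{-1}=-\frac12$, and it yields $\vet{2\cos2x}{\sin2x}$ and $\vet{-2\sin 2x}{\cos 2x}$. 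For $f_0^+$, $\sL^{[0,0,1]}f_0^+=2f_{-1}^-$ lies in $\cU$, and the extra $-\lambda^{-2}f_0^-$ term is annihilated. The residue of $\lambda^{-1}\frac{2}{\lambda^2+4}(2f_{-1}^++\lambda f_{-1}^-)$ is $f_{-1}^+$. Finally, $f_0^-$ gives $0$.

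The main obstacle is bookkeeping of signs and residues, especially keeping the $\cU$-block formula \eqref{primainversione3} consistent with the orientation of $f_{-1}^\pm$.
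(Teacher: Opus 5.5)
Your proposal is correct and follows the paper's proof: you evaluate \eqref{Pder} by taking the residue at $\lambda=0$, using \eqref{primainversione2}--\eqref{primainversione4} for $R_0(\lambda)$. For the $\e$-jets the paper simply cites \cite[Lemma A.3]{BMV1}, as you propose, and your explicit $\rho$-jet and $\e$-jet residue computations check out.

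In the $\mu$-jet, keep the coefficient $-\tfrac12$ from $\sL^{[0,1,0]}f_1^+=-\tfrac12(f_1^++f_{-1}^+)$; your phrase ``times the coefficient $\tfrac12$'' drops it. With that sign the residue is exactly $\tfrac14 f_{-1}^-$, and likewise the coefficient $+\tfrac12$ on $f_{-1}^-$ gives $\tfrac14 f_{-1}^+$.
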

\begin{proof}
The first line of \eqref{tuttederivate} is computed in \cite[Lemma A.3]{BMV1}.
Now, by \eqref{Pder}, \eqref{R000} and \eqref{cLfirstorder}, 
$$
\begin{aligned}
    P^{[0,1,0]}f_1^+ &= \frac{1}{2\pi\im}\oint_\Gamma R_0(\lambda) \begin{bmatrix}
        0 & -\im \sgn(D) \\
        0 & 0
    \end{bmatrix} R_0(\lambda) f_1^+ \de\lambda = \frac{1}{2\pi\im}\oint_\Gamma  \frac 1 \lambda R_0(\lambda) \begin{bmatrix}
        0 & -\im \sgn(D) \\
        0 & 0
    \end{bmatrix}  f_1^+ \de\lambda \\
    &=-\frac{1}{2\pi\im}\oint_\Gamma  \frac{1}{2\lambda} R_0(\lambda)  (f_1^+ +f_{-1}^+)\de\lambda \stackrel{\eqref{primainversione2},\eqref{primainversione3}}{=} \frac{1}{4}f_{-1}^- \, . 
\end{aligned}
$$
Similarly one computes $P^{[0,1,0]}f_1^- = \frac{1}{4}f_{-1}^+$, $P^{[0,1,0]}f_0^\pm = 0$. 
We now compute $P^{[1,0,0]} f^+_1$. 
First note that, by \eqref{cLfirstorder}, every zero-average function is in the kernel of $\sL^{[1,0,0]}$, and that, by Lemma \ref{lem:VUW}, $(\sL(0,0,0)-\lambda)^{-1}$ preserves zero average functions. Then, by \eqref{Pder},\eqref{R000} and  \eqref{fksigma}, for $k=1$ we have that $P^{[1,0,0]} f^\sigma_1 = 0$. It is then sufficient to test $P^{[1,0,0]}$ of $f_0^\pm$. By \eqref{primainversione2} and \eqref{Pder} we have
$$ P^{[1,0,0]}f_0^+   =\ \frac{1}{2\pi\im} \oint_\Gamma R_0(\lambda)\sL^{[1,0,0]}\left(\frac{1}{\lambda}f_0^+ - \frac{1}{\lambda^2}f_0^-\right)  \de\lambda \stackrel{\eqref{derivoeps}}{=} -\frac{1}{2\pi\im} \oint_\Gamma \frac{1}{\lambda^2}R_0(\lambda) f_0^+ \stackrel{\eqref{primainversione2}}{=} 0\, ,$$
using again the residue theorem. The computation of $P^{[1,0,0]}f^-_0$is analogous.
\end{proof}


We then consider  the second order terms in the expansion \eqref{ordinibase}.

\begin{lemma}\label{jetsordine2}
{\bf (Second order jets)}
The second order jets  
$ U^{[0,1,1]}f_k^\sigma $ and 
$  U^{[1,0,1]}f_k^\sigma $ 
of the perturbed basis $ \{ f_k^\sigma (\al,\mu,\e)\, ; \ k = 0,1 $, $ \sigma = \pm \} $, 
are 
 \begin{align}
& U^{[0,1,1]}f_1^+ =  \vet{odd(x)}{even(x)}\, , \ \  U^{[0,1,1]}f_1^- = \vet{even(x)}{odd(x)} \, , \label{struttura2.1}\\ &  U^{[0,1,1]}f_0^+ =  \vet{odd(x)}{even_0(x)}\, ,\qquad \quad U^{[0,1,1]}f_0^- = \vet{even_0(x)}{odd(x)} \label{struttura2.2}\\
&  U^{[1,0,1]}f_1^\pm = 0,\qquad 
U^{[1,0,1]}f_0^+ = \tfrac{1}{4}f_{-1}^+,
\qquad 
U^{[1,0,1]} f_0^- = \tfrac{1}{2}f_{-1}^-   \, . \label{struttura2.3}
\end{align}
\end{lemma}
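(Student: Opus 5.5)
By \eqref{Umix}, applied also with the indices permuted, for $(k,\sigma)\in\{(0,\pm),(1,\pm)\}$ we have
\[
U^{[0,1,1]}f_k^\sigma=P^{[0,1,1]}f_k^\sigma-\tfrac12 P_{0,0,0}P^{[0,1,1]}f_k^\sigma,
\qquad
U^{[1,0,1]}f_k^\sigma=P^{[1,0,1]}f_k^\sigma-\tfrac12 P_{0,0,0}P^{[1,0,1]}f_k^\sigma .
\]
Here $P^{[i,j,k]}$ is the contour integral \eqref{Pder} of $R^{[i,j,k]}(\lambda)$ given in \eqref{Rijk}. The plan is to compute $R^{[1,0,1]}(\lambda)f_k^\sigma$ explicitly and to track only the parity structure of $R^{[0,1,1]}(\lambda)f_k^\sigma$. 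This uses three ingredients: the resolvent formulas of \Cref{lem:VUW}, the action \eqref{derivoeps} of $\sL^{[0,0,1]}$ and $\sL^{[1,0,0]}$, and the explicit forms \eqref{cLfirstorder}, \eqref{cLmisto}.

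\emph{The $[1,0,1]$ jets.} Since $\sL^{[1,0,1]}=0$ by \eqref{cLmisto}, we get
\[
R^{[1,0,1]}(\lambda)=R_0\big(\sL^{[1,0,0]}R_0\sL^{[0,0,1]}+\sL^{[0,0,1]}R_0\sL^{[1,0,0]}\big)R_0 .
\]
The operator $\sL^{[1,0,0]}$ only sees the zero mode of the second component. $R_0$ preserves Fourier supports, and $\sL^{[0,0,1]}$ shifts modes by $\pm1$ and has no zero-mode output on the second component when applied to modes $\pm1$ (from \eqref{derivoeps}: $\sL^{[0,0,1]}f_1^\pm$ lives on mode $2$, first component). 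Hence for $f_1^\pm$ both terms vanish, and $U^{[1,0,1]}f_1^\pm=0$.

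For $f_0^\pm$ we use \eqref{primainversione2} and get $\sL^{[1,0,0]}R_0f_0^-=\lambda^{-1}f_0^+$ and $\sL^{[1,0,0]}R_0f_0^+=-\lambda^{-2}f_0^+$. We then apply $R_0$ (which gives $\lambda^{-1}f_0^+-\lambda^{-2}f_0^-$) and $\sL^{[0,0,1]}$ (using $\sL^{[0,0,1]}f_0^+=2f_{-1}^-$ and $\sL^{[0,0,1]}f_0^-=0$). The resulting vector in $\cU$ is inverted with \eqref{primainversione3}. The first term, with $\sL^{[0,0,1]}$ applied first, contributes only through $\sL^{[0,0,1]}R_0f_0^\pm$, which lies in $\cU$ and has no second-component zero mode, so it vanishes. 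Each remaining term has the form $\lambda^{-m}\frac{\lambda\,\cdot+2\,\cdot}{\lambda^2+4}$. The residue at $0$ is then computed by expanding $(\lambda^2+4)^{-1}=\tfrac14-\tfrac{\lambda^2}{16}+\dots$, which produces the coefficients $\tfrac14 f_{-1}^+$ and $\tfrac12 f_{-1}^-$. Since these lie in $\cU\subset\ker P_{0,0,0}$, the correction term in \eqref{Umix} vanishes and \eqref{struttura2.3} follows.

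\emph{The $[0,1,1]$ jets.} Only the structure is needed, so the argument is by parity and reality. The operators $\sL^{[0,0,1]}$ and $R_0(\lambda)$ (for real $\lambda$, extended analytically) are real and reversibility preserving. The operators $\sL^{[0,1,0]}=\begin{bmatrix}0&-\im\sgn D\\0&0\end{bmatrix}$ and $\sL^{[0,1,1]}$ equal $\im$ times a real operator that anticommutes with the reversibility involution. Consequently, in the real variable, each term of \eqref{Rijk} maps $\vet{even}{odd}$ to $\vet{odd}{even}$ and vice versa. This gives \eqref{struttura2.1} and the parity part of \eqref{struttura2.2}; equivalently it follows from \eqref{revprop0}-type arguments, namely $\varrho_c$ commutes with $U_{\al,\mu,\e}$ and $\im\mu$ flips the parity class.

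\emph{The zero-average claim.} The main obstacle is the zero average in \eqref{struttura2.2}: the $even_0$ component of $U^{[0,1,1]}f_0^\pm$ must have no zero mode. Since $\sL^{[0,0,1]}$ and $\sL^{[0,1,1]}$ shift Fourier modes by $\pm1$, the zero-mode component of $R^{[0,1,1]}f_0^\pm$ can only arise from the term $\sL^{[0,1,0]}R_0\sL^{[0,0,1]}$ or from mode-preserving combinations of order two, and $\sgn(D)$ kills mode $0$. I would verify directly that $P^{[0,1,1]}f_0^\pm$ has vanishing mode-$0$ part: $\sL^{[0,1,1]}R_0f_0^\pm$ is supported on modes $\pm1$, $\sL^{[0,1,0]}(\cdot)$ has no zero mode, and $\sL^{[0,0,1]}R_0\sL^{[0,1,0]}f_0^\pm=0$ because $\sL^{[0,1,0]}f_0^\pm=0$. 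Then $P_{0,0,0}P^{[0,1,1]}f_0^\pm$ has no $f_0^\pm$ component, which gives the $even_0$ claim. If this counting fails, I would compute the mode-$0$ residues explicitly from \eqref{primainversione2}.
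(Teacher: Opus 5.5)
Your treatment of the $[1,0,1]$ jets and of the zero-average part of \eqref{struttura2.2} is correct and is essentially the paper's proof. The term of $R^{[1,0,1]}(\lambda)$ in which $\sL^{[1,0,0]}$ acts after $\sL^{[0,0,1]}$ vanishes on $\cV_{0,0,0}$. The other term reduces to $-\tfrac{2}{\lambda^3}R_0(\lambda)f_{-1}^-$ on $f_0^+$ and to $\tfrac{2}{\lambda^2}R_0(\lambda)f_{-1}^-$ on $f_0^-$, whose residues are $\tfrac14 f_{-1}^+$ and $\tfrac12 f_{-1}^-$. Finally $\cU\subset\ker P_{0,0,0}$ kills the correction in \eqref{Umix}. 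Likewise $R^{[0,1,1]}(\lambda)f_0^\pm$ is supported on the harmonics $\pm1$, and so is its image under the Fourier multiplier $P_{0,0,0}$.

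The parity argument you lead with for \eqref{struttura2.1}--\eqref{struttura2.2}, however, is not valid. Let $\mathcal{S}\vet{u}{v}(x):=\vet{u(-x)}{-v(-x)}$ be the complex-linear version of $\varrho_c$. Then:
\begin{itemize}
\item $\sL^{[0,1,0]}$ and $\sL^{[0,1,1]}$ are themselves real operators ($-\im\,\sgn(D)$ is the Hilbert transform), and they \emph{commute} with $\mathcal{S}$;
\item $\sL^{[0,0,1]}$ \emph{anticommutes} with $\mathcal{S}$;
\item $R_0(\lambda)$ is not reversibility preserving, since $\mathcal{S}R_0(\lambda)\mathcal{S}=-R_0(-\lambda)$. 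Indeed $R_0(\lambda)f_0^+=\lambda^{-1}f_0^+-\lambda^{-2}f_0^-$ mixes the two parity classes.
\end{itemize}
Consequently the integrand does not swap the classes for fixed $\lambda$. For instance, $R_0\sL^{[0,1,1]}R_0f_0^+$ contains $-\lambda^{-2}f_1^+$, which lies in the same class as $f_0^+$. Your sign count reaches the right conclusion only because the wrong premises compensate.

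What is true is $\mathcal{S}R^{[0,1,1]}(\lambda)\mathcal{S}=R^{[0,1,1]}(-\lambda)$. The invariance of the circle $\Gamma$ under $\lambda\mapsto-\lambda$ costs a sign through $\de\lambda$, and this gives $\mathcal{S}P^{[0,1,1]}\mathcal{S}=-P^{[0,1,1]}$, while $\mathcal{S}P_{0,0,0}\mathcal{S}=P_{0,0,0}$. With this correction the direct route works.

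Alternatively, make the fallback you mention in passing the actual proof, since it is exactly the paper's argument. $U_{\al,\mu,\e}$ commutes with $\varrho_c$, so each $f_k^\sigma(\al,\mu,\e)$ is reversible and of the form \eqref{lem:revbasis}. The real jet $U^{[0,1,1]}f_k^\sigma$ enters \eqref{ordinibase} with the factor $\im\mu\e$, so it must lie in the parity class opposite to that of $f_k^\sigma$.
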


\begin{proof}
First we consider \eqref{struttura2.1}, \eqref{struttura2.2}. By \eqref{ordinibase} the functions $\im\mu\e U^{[0,1,1]}f_k^\sigma$ are purely imaginary jets of  $f_k^\sigma(\al,\mu,\e)$, which form the symplectic and reversible basis $\cF$ in \eqref{basisF}. Therefore the parity properties in \eqref{struttura2.1},\eqref{struttura2.2} follow by \eqref{lem:revbasis}. Moreover, the functions in \eqref{struttura2.2} have zero, average as we now show: by \eqref{Pder}, the operator $P^{[0,1,1]}$ has the form 
in \eqref{Pder}, cf. \eqref{Rijk}. 
The vectors $f_0^\pm$ are both supported on the zero-mode, cf. \eqref{fksigma}. Moreover $R_0(\lambda)$ acts invariantly both on functions with zero average and functions supported on the zero mode. By \eqref{cLfirstorder}-\eqref{cLmisto}, $\sL^{[0,1,0]}$ preserves zero-average functions and annihilates functions supported on the zero mode, while $\sL^{[0,0,1]}$ and $\sL^{[0,1,1]}$ map functions supported on the zero mode to functions supported on the modes $\pm 1$. Therefore 
$
P^{[0,1,1]}f_0^\pm
$ have both zero average, and so does $U^{[0,1,1]}f_0^\pm$ by \eqref{Umix}. 

Next consider the operator $P^{[1,0,1]}$ in \eqref{Pder}, given by 
\begin{equation*}
    \begin{aligned}
        P^{[1,0,1]} =  &\frac{1}{2\pi\im} \oint_\Gamma R_0(\lambda) \sL^{[1,0,0]} R_0(\lambda)  \sL^{[0,0,1]} R_0(\lambda) \de\lambda
        + \frac{1}{2\pi\im} \oint_\Gamma R_0(\lambda) \underbrace{\sL^{[1,0,1]}}_{=0 \mbox{ by } \eqref{cLmisto}} R_0(\lambda)  \de\lambda
        \\
        &+\frac{1}{2\pi\im} \oint_\Gamma R_0(\lambda) \sL^{[0,0,1]} R_0(\lambda) \sL^{[1,0,0]} R_0(\lambda) \de\lambda = \mathrm{I} + \mathrm{II} .
    \end{aligned}
\end{equation*}
We claim that $\mathrm{I}f = 0$ for every $f \in \cV_{0,0,0}$.
Indeed, by Lemma \ref{lem:VUW}-item $(i)$, $R_0(\lambda)$ maps $\cV_{0,0,0}$ into itself. By the expressions in \eqref{derivoeps}, $\sL^{[0,0,1]}$ maps $\cV_{0,0,0}$ into zero-average functions, a property which is preserved by the Fourier multiplier  $R_0(\lambda)$.
Finally, zero-average functions belong to $\ker \sL^{[1,0,0]}$ by the first equation in \eqref{cLfirstorder}, proving that  $\mathrm{I} = 0$.

Let us compute the action of $\mathrm{II}$: arguing as above, if $f$ is zero average, $R_0(\lambda)f$ is zero average as well, and thus $\sL^{[1,0,0]}R_0(\lambda) f= 0$. As a consequence, $P^{[1,0,1]} f_{1}^\pm = 0$. 
We now compute $\mathrm{II}\, f_0^-$: 
\begin{equation*}
    \begin{aligned}
 \mathrm{II}\, f_0^-  & \stackrel{\eqref{primainversione2}}{=} \frac{1}{2\pi \im}\oint_\Gamma \frac1\lambda R_0(\lambda)\sL^{[0,0,1]} R_0(\lambda) \sL^{[1,0,0]} f_0^- \de \lambda \\
&\stackrel{\eqref{cLfirstorder}}{=} \frac{1}{2\pi \im}\oint_\Gamma \frac1\lambda R_0(\lambda)\sL^{[0,0,1]} R_0(\lambda) f_0^+ \de \lambda \stackrel{\eqref{primainversione2}}{=}  \frac{1}{2\pi \im}\oint_\Gamma \frac1\lambda R_0(\lambda)\sL^{[0,0,1]} \left(\frac1\lambda f_0^+ -\frac{1}{\lambda^2 }f_0^- \right)   \\
&\stackrel{\eqref{derivoeps}}{=} \frac{1}{2\pi \im}\oint_\Gamma \frac2{\lambda^2} R_0(\lambda) f_{-1}^-\de \lambda  \stackrel{\eqref{primainversione3}}{=} -\frac{1}{2\pi \im}\oint_\Gamma \frac2{\lambda^2}  \frac{1}{\lambda^2+4}\left(-2 f_{-1}^+ - \lambda f_{-1}^-\right)\de\lambda = \frac{1}{2}f_{-1}^- \, .
    \end{aligned}
\end{equation*}
Finally we are left to compute $\mathrm{II}\,f_0^+$:
\begin{equation*}
    \begin{aligned}
\mathrm{II}\, f_0^+&  \stackrel{\eqref{primainversione2}}{=} \frac{1}{2\pi \im}\oint_\Gamma R_0(\lambda) \sL^{[0,0,1]} R_0(\lambda) \sL^{[1,0,0]} \left( \frac1\lambda f_0^+ - \frac{1}{\lambda^2}f_0^- \right) \de \lambda \\
& \stackrel{\eqref{cLfirstorder}}{=} -\frac{1}{2\pi \im}\oint_\Gamma \frac1{\lambda^2} R_0(\lambda) \sL^{[0,0,1]} R_0(\lambda) f_0^+ \de \lambda \stackrel{\eqref{primainversione2}}{=}  \frac{1}{2\pi \im}\oint_\Gamma \frac1{\lambda^2} R_0(\lambda) \sL^{[0,0,1]} \left(-\frac1\lambda f_0^+ +\frac{1}{\lambda^2 }f_0^- \right)   \\
& \stackrel{\eqref{derivoeps}}{=} -\frac{1}{2\pi \im}\oint_\Gamma \frac2{\lambda^3} R_0(\lambda) f_{-1}^-\de \lambda  \stackrel{\eqref{primainversione3}}{=} \frac{1}{2\pi \im}\oint_\Gamma \frac2{\lambda^3}  \frac{1}{\lambda^2+4}\left(-2 f_{-1}^+ - \lambda f_{-1}^-\right)\de\lambda = \frac{1}{4}f_{-1}^+ \, .
    \end{aligned}
\end{equation*}
In both the previous computations, the last step made use of the residue theorem. 

The proof of \eqref{struttura2.3} is concluded using \eqref{Umix} and  noticing that $P^{[1,0,1]} \cV_{0,0,0} \subset \cU$ in Lemma \ref{lem:VUW}$(ii)$, and thus  $P_{0,0,0}P^{[1,0,1]}\cV_{0,0,0} = \{0\}$.
\end{proof}

We now provide further information about the vectors $f_k^\sigma(0,0,\e)$ at  $\al,\mu=0$.

 \begin{lemma}\label{expe0}
Property \eqref{nonzeroaverage} holds.
 \end{lemma}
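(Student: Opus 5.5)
The four vectors $f_k^\sigma(0,0,\e)=U_{0,0,\e}f_k^\sigma$ are exactly the vectors of the purely longitudinal basis of \cite{BMV1} evaluated at $\mu=0$. The reason is that at $(\al,\mu)=(0,0)$ the operator $\sL(0,0,\e)=\sL^{[0,0]}(0,0,\e)$ coincides with the operator of \cite{BMV1}, since $\cG(0,0,\e)=|D|$ by \Cref{DNProp1}. My plan is to derive \eqref{nonzeroaverage} from three ingredients: the structure of reversible bases \eqref{lem:revbasis}, reality at $\mu=0$ given by \eqref{symmmmUP}, and an invariance argument for the zero Fourier mode.

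\emph{Reality and parity.} By \eqref{symmmmUP}, $U_{0,0,\e}$ is a real operator, so each $f_k^\sigma(0,0,\e)$ is real. Being also reversible, \eqref{lem:revbasis} forces $f_k^+(0,0,\e)=\vet{even(x)}{odd(x)}$ and $f_k^-(0,0,\e)=\vet{odd(x)}{even(x)}$. This already gives the $f_1^-$ claim in \eqref{nonzeroaverage}.

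\emph{Zero average of the first components.} I will use the Hamiltonian structure: $\sL(0,0,\e)=\cJ\sB(0,0,\e)$, whose first row is $\pa_x\circ(1+p_\e)$ and $|D|$. Hence the first component of $\sL(0,0,\e)h$ has zero average for every $h$. It follows that the range of $\sL(0,0,\e)$ lies in $\{\Pi_0 h_1 =0\}$, and so does the range of $\lambda^{-1}\sL(0,0,\e)(\lambda-\sL)^{-1}$. Since $P_{0,0,\e}=\frac{1}{2\pi\im}\oint_\Gamma(\lambda-\sL)^{-1}d\lambda$ and $(\lambda-\sL)^{-1}=\lambda^{-1}+\lambda^{-1}\sL(\lambda-\sL)^{-1}$, the zero mode of the first component of $P_{0,0,\e}h$ equals that of $h$. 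Equivalently, $\Pi_0[(P_{0,0,\e}-P_{0,0,0})h]_1=0$. Feeding this into the formula \eqref{OperatorU} for $U_{0,0,\e}$, which is a power series in products of $P_{0,0,\e}$ and $P_{0,0,0}$, I will check that the average of the first component of $U_{0,0,\e}f$ equals that of $f$. The identity $\Pi_0[\,\cdot\,]_1\circ P=\Pi_0[\,\cdot\,]_1$ is preserved under composition and under the functional calculus of $(P_{0,0,\e}-P_{0,0,0})^2$, which maps into functions with zero first-component average. This yields the first components: $even_0$ for $f_1^+$ and $1+even_0$ for $f_0^+$.

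\emph{The vector $f_0^-$.} This is the step I expect to be the main obstacle. I plan to show $U_{0,0,\e}f_0^-=f_0^-$ by proving that $f_0^-=\vet01$ lies in $\ker\sL(0,0,\e)$: indeed $\cG(0,0,\e)1=|D|1=0$ and $(1+p_\e)\pa_x 1=0$. Therefore $P_{0,0,\e}f_0^-=f_0^-=P_{0,0,0}f_0^-$. Then $(P_{0,0,\e}-P_{0,0,0})f_0^-=0$, and the bracket in \eqref{OperatorU} gives $P_{0,0,\e}P_{0,0,0}f_0^-=f_0^-$. The remaining issue is that $(P_{0,0,\e}-P_{0,0,0})^2f_0^-=0$ requires the square to act on $f_0^-$ first, so the order of operators matters. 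I will handle this by expanding $(\uno-Q^2)^{-1/2}$ as $\sum c_n Q^{2n}$ applied to the vector $[\dots]f_0^-=f_0^-$, where $Q$ annihilates $f_0^-$; this gives $U_{0,0,\e}f_0^-=f_0^-$ exactly. Alternatively, the result can be cited directly from the corresponding computation in \cite{BMV1}.
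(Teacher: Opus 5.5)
Your proof is correct. For $f_0^-$ and for the parity claims it coincides with the paper's argument. The paper cites \cite{NS} for $\sL(0,0,\e)f_0^-=0$, which you check directly from $\cG(0,0,\e)=|D|$. Your worry about the order of operators in \eqref{OperatorU} is harmless: with $Q:=P_{0,0,\e}-P_{0,0,0}$, the relation $Qf_0^-=0$ already gives $Q^{2n}f_0^-=0$.

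Where you differ is the average of the first components. You introduce the functional $\ell(h):=\Pi_0[h]_1=(h,(1,0)^\top)$. You show that $\ell\circ\sL(0,0,\e)=0$, and deduce $\ell\circ P_{0,0,\e}=\ell$ from the resolvent identity. Then $\ell\circ U_{0,0,\e}=\ell$, because $\ell$ kills the summand $(\uno-P_{0,0,\e})(\uno-P_{0,0,0})$ and every $Q^{2n}$ with $n\geq 1$.

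The paper instead first proves $f_0^-(0,0,\e)=f_0^-$ and then reads the averages off the symplectic relations of the basis. Since $\cJ f_0^-=(1,0)^\top$, one has $\ell(h)=\overline{\cW_c(f_0^-,h)}$. Hence
$$\ell(f_1^+(0,0,\e))=\overline{\cW_c(f_0^-(0,0,\e),f_1^+(0,0,\e))}=0, \qquad \ell(f_0^+(0,0,\e))=1.$$

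The two arguments are dual to each other. By \eqref{Usymp}, $\ell\circ U_{0,0,\e}=\ell$ is equivalent to $U_{0,0,\e}f_0^-=f_0^-$. Likewise, since $\sL^*=-\sB\cJ$, the identity $\ell\circ\sL(0,0,\e)=0$ is equivalent to $\sB(0,0,\e)f_0^-=0$, that is, to $\sL(0,0,\e)f_0^-=0$.

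The paper's route is shorter: once the $f_0^-$ step is done, the averages come for free from the symplecticity of $U_{0,0,\e}$. Your route re-derives the dual fact by hand and never uses \eqref{Usymp}, so your $f_0^-$ step and your average step are logically independent.
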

 
 \begin{proof}
By \cite{NS}, for any $\e\neq 0$  small, the operator $\cL(0,0,\e) $ possesses the
eigenvalue $ 0 $  of algebraic multiplicity $4$, 
and the generalized kernel of $\cL (0,0,\e)$ is spanned by  four vectors $U_1$, $\wt U_2$, $U_3$, $U_4$ satisfying
$$
    \cL(0,0,\e) U_1 = 0\, , \quad \cL (0,0,\e) \wt U_2 = 0\, , \quad \cL (0,0,\e) U_3 = \al_\e \wt U_2\, , \quad \cL (0,0,\e) U_4 = -U_1\, , \quad U_1 = \vet{0}{1} = f_0^- \,.
$$
Therefore, 
 $(\sL_{0,0,\e}-\lambda)^{-1}f_0^-= -\frac{1}{\lambda}f_0^- $ and then $P_{0,0,\e}f_0^- =f_0^-$.
 Then  \eqref{OperatorU} yields   
$f_0^-(0,0,\e) = U_{0,0,\e} f_0^- = f_0^-$ for any  $ \e$ sufficiently small, proving the last of \eqref{nonzeroaverage}. 
Let us prove the others.
In view of \eqref{lem:revbasis} and the fact that $U_{0,0,\e}$ is real by  \eqref{symmmmUP}, 
$ \footnotesize  f_k^+ (0,0,\e) =\vet{even(x)}{odd(x)} $, $ \footnotesize  f_k^- (0,0,\e) =\vet{odd(x)}{even(x)} $,
 for any  $ k=0,1 $. 
  By  \eqref{Usymp} the basis $\{f_k^\sigma(0,0,\e)\}$ is symplectic  and, since 
  $\cJ f_0^-(0,0,\e)  = \cJ f_0^- = \footnotesize \vet{1}{0}$, for any $\epsilon $, we get  
$$
 0 = \molt{\cJ f_0^-(0,0,\e)}{f_1^+(0,0,\e)} = \Big( \vet{1}{0}, f_1^+(0,0,\e) \Big) \, , \quad 
 1 = \big( \cJ f_0^-(0,0,\e), f_0^+(0,0,\e) \big) = \Big( \vet{1}{0}, f_0^+(0,0,\e) \Big) \, . 
$$
Thus the first component of both $f_1^+(0,0,\e)$ and $f_0^+(0,0,\e)- \footnotesize \vet{1}{0}$ has zero average,
 proving 
 \eqref{nonzeroaverage}.
 \end{proof}

We now provide further information about the the vectors $f_k^\sigma(\al,\mu,0)$ at $\e=0$. 
\begin{lemma}\label{expam0}
We have $f_0^\sigma (\al,\mu,0) =  f_0^\sigma $  for any $ \sigma = \pm  $ and $ (\alpha, \mu)$ small. 
\end{lemma}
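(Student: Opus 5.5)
The claim is that at $\e=0$ the vectors $f_0^\pm(\al,\mu,0)=U_{\al,\mu,0}f_0^\pm$ coincide with $f_0^\pm$ from \eqref{base3e}. At $\e=0$ everything is a Fourier multiplier (\Cref{lem:Kato1}, item 5), so the plan is to work mode by mode, on the zero harmonic.

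\textbf{Step 1: the zero-mode block of $\sL(\al,\mu,0)$.} On functions supported on the zero harmonic, \eqref{sLresca} together with $\cG(\al,\mu,0)=|D|_{\al,\mu}$ gives
\[
\sL(\al,\mu,0)\vert_{\text{mode }0}=\begin{pmatrix}0&\rho\\-1&0\end{pmatrix}.
\]
Its eigenvalues are $\pm\im\sqrt\rho$, and these equal $\lambda_0^\pm(\al,\mu)-\im\mu$. For $(\al,\mu)$ small they lie inside $\Gamma$.

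\textbf{Step 2: the projector and the transformation on mode $0$.}
\begin{itemize}
\item Since $P_{\al,\mu,0}$ is a Fourier multiplier, it maps the mode-$0$ subspace $\text{span}\{f_0^+,f_0^-\}$ into itself.
\item For $\rho>0$ both eigenvalues from Step 1 are inside $\Gamma$, so $P_{\al,\mu,0}$ acts as the identity on this two-dimensional space. Equivalently, the resolvent integral of a $2\times2$ matrix whose whole spectrum is enclosed by $\Gamma$ is the identity.
\item At $\rho=0$ the same holds, because $P_{0,0,0}f_0^\pm=f_0^\pm$ by \eqref{primainversione2}.
\item Hence $P_{\al,\mu,0}f_0^\pm=f_0^\pm=P_{0,0,0}f_0^\pm$, so $(P_{\al,\mu,0}-P_{0,0,0})$ annihilates the zero-mode subspace.
\end{itemize}

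\textbf{Step 3: apply \eqref{OperatorU}.} We have
\[
[P_{\al,\mu,0}P_{0,0,0}+(\uno-P_{\al,\mu,0})(\uno-P_{0,0,0})]f_0^\pm=P_{\al,\mu,0}f_0^\pm=f_0^\pm.
\]
Next, $(\uno-(P_{\al,\mu,0}-P_{0,0,0})^2)^{-1/2}$ is a Fourier multiplier given by a convergent power series in $(P_{\al,\mu,0}-P_{0,0,0})^2$. Since that difference vanishes on the zero mode, this factor acts as the identity on $f_0^\pm$. Therefore $U_{\al,\mu,0}f_0^\pm=f_0^\pm$.

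The only point requiring care is that the eigenvalues $\pm\im\sqrt\rho$ stay inside $\Gamma$ uniformly for $\rho<\rho_0$. This follows from the choice of $\Gamma$ and $\rho_0$ in \Cref{lem:Kato1} (cf.\ \eqref{0905:1852}). The argument is otherwise routine.
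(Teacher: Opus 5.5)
Your proof is correct and follows essentially the same route as the paper: the zero-mode block $\begin{pmatrix}0&\rho\\-1&0\end{pmatrix}$ of $\sL(\al,\mu,0)$ has spectrum $\pm\im\sqrt\rho$ inside $\Gamma$. Hence $P_{\al,\mu,0}f_0^\pm=f_0^\pm$, and therefore $U_{\al,\mu,0}f_0^\pm=f_0^\pm$. Your Step 3 checks directly from \eqref{OperatorU} that $P_{\al,\mu,0}-P_{0,0,0}$ annihilates $f_0^\pm$, so the square-root factor acts trivially; this spells out the final implication, which the paper states without detail.
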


\begin{proof}
The operator $\sL(\al,\mu,0)  
$  leaves invariant  the subspace $\mathcal{Z}:=\text{span}\{f_0^+,\,f_0^-\}$ since
$$ 
\sL(\al,\mu,0) f_0^+ = -f_0^-  \, , \quad 
\sL(\al,\mu,0) f_0^- = 
\sqrt{\alpha^2 
+ \mu^2} f_0^+ \, .
$$
The operator $\restr{\sL(\al,\mu,0)}{\mathcal{Z}}$ has the two eigenvalues $\pm\im (\al^2+\mu^2)^\frac14$, which, for small $\al,\mu$, lie inside the loop $\Gamma$ around $0$ in \eqref{Pproj}.  
Then, by \eqref{projdec},  we have 
 $\mathcal{Z} \subset {\cV}_{\al,\mu,0} = \text{Rg}(P_{\al,\mu,0}) $ and 
$ P_{\al,\mu,0} f_0^\pm = f_0^\pm $, 
$ f_0^\pm(\al,\mu,0) = U_{\al,\mu,0} f_0^\pm = f_0^\pm $ 
for any $ \al,\mu $  small.
\end{proof}

\begin{lemma}\label{identityvtransholdstrue}
Identities \eqref{vtrans} hold true.
\end{lemma}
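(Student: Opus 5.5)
The plan is to show that $U_{\al,\mu,0}$ acts on $\cV_{0,0,0}$ in a way that can be computed explicitly, using the fact that at $\e=0$ every operator involved is a Fourier multiplier (\Cref{lem:Kato1}, item 5). First I would note that $\cL(\al,\mu,0)$, $P_{\al,\mu,0}$ and $U_{\al,\mu,0}$ all commute with translations. Hence $U_{\al,\mu,0}$ maps the space spanned by vectors supported on the harmonic $e^{\im x}$ to itself, and likewise for the zero mode. \Cref{expam0} already treats the zero mode. On the harmonic $e^{\im x}$, the range of $P_{\al,\mu,0}$ inside $\C^2 e^{\im x}$ is spanned by the eigenvectors $v_1^\pm(\al,\mu)$ in \eqref{unperturbed.eigv}. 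Indeed, for $(\al,\mu)$ small the eigenvalues $\lambda_1^\pm(\al,\mu)$ lie inside $\Gamma$, and $(\al,\mu)\neq(0,-1)$, so these eigenvectors are defined.

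Next I would compute $U_{\al,\mu,0}v_1^\pm(0,0)$ directly from \eqref{OperatorU}, restricted to the two-dimensional space $\C^2 e^{\im x}$. On this space $P_{0,0,0}$ is the identity, because both eigenvectors $v_1^\pm(0,0)=\tfrac1{\sqrt2}(\pm\im,1)^\top e^{\im x}$ lie in $\cV_{0,0,0}$. The same holds for $P_{\al,\mu,0}$, because both eigenvalues $\im\omega_1^\pm(\al,\mu)$ are enclosed by $\Gamma$. Then $P_{\al,\mu,0}-P_{0,0,0}=0$ on $\C^2e^{\im x}$, and the bracket in \eqref{OperatorU} reduces to the identity, so $U_{\al,\mu,0}=\uno$ there. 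The eigenvectors, however, depend on $(\al,\mu)$: by \eqref{unperturbed.eigv}, $v_1^\pm(\al,\mu)$ is normalized by $\Omega_\al(1+\mu)$. So $U_{\al,\mu,0}v_1^\pm(0,0)=v_1^\pm(0,0)$, which differs from $v_1^\pm(\al,\mu)$ as given by \eqref{unperturbed.eigv}. I therefore expect the statement of \Cref{expansion1} to mean that $v_1^\pm(\al,\mu)$ are defined as $U_{\al,\mu,0}v_1^\pm(0,0)$. That is how \eqref{vtrans} phrases it ("where $v_1^\pm(\al,\mu)=U_{\al,\mu,0}v_1^\pm(0,0)$ are the complex eigenvectors").

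With that convention I need to show that these are eigenvectors of $\cL(\al,\mu,0)$. For that I would use the intertwining property $U_{\al,\mu,0}P_{0,0,0}=P_{\al,\mu,0}U_{\al,\mu,0}$ together with the fact that, on the invariant subspace $\C^2e^{\im x}$, $\cL(\al,\mu,0)$ acts as the matrix $\begin{pmatrix}\im(1+\mu)&\Omega_\al(1+\mu)^2\\-1&\im(1+\mu)\end{pmatrix}$. The key step is then the linear identity \eqref{vtrans}, which I would obtain by linearity from \eqref{v1pv1-}. Inverting \eqref{v1pv1-} gives $f_1^+=\frac{1}{\im\sqrt2}(v_1^+(0,0)-v_1^-(0,0))$ and $f_1^-=\frac1{\sqrt2}(v_1^+(0,0)+v_1^-(0,0))$. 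Applying $U_{\al,\mu,0}$ and using the definitions $f_1^\pm(\al,\mu,0)=U_{\al,\mu,0}f_1^\pm$ from \eqref{basisF} then gives \eqref{vtrans} immediately.

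The main obstacle is reconciling the normalization with \eqref{unperturbed.eigv}, that is, checking whether $U_{\al,\mu,0}v_1^\pm(0,0)$ is literally the eigenvector in \eqref{unperturbed.eigv}. The subspace $\C^2e^{\im x}$ is not spanned by eigenvectors with a fixed form independent of $(\al,\mu)$. I would first try to verify that the spectral projector on each individual eigenvalue commutes with $U$, using that $U_{\al,\mu,0}$ is symplectic and reversible. If that fails, I would interpret \eqref{vtrans} through the definition stated there, which makes the lemma a direct consequence of linearity.
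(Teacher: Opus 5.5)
Your proof breaks at its central computation, the claim that $U_{\al,\mu,0}=\uno$ on $\C^2e^{\im x}$. By \eqref{unperturbed.eigv}, $v_k^\sigma$ is supported on $e^{\im\sigma kx}$. Hence $v_1^-(0,0)=\frac1{\sqrt2}(-\im,1)^\top e^{-\im x}$ lives on the harmonic $e^{-\im x}$. The factor $e^{\im x}$ written for $v_1^-(0,0)$ in \eqref{v1pv1-} is a misprint, as computing $f_1^--\im f_1^+$ shows.

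Your own $2\times2$ matrix on $\C^2e^{\im x}$ says the same thing. Its eigenvalues are $\im(1+\mu)\mp\im\Omega_\al(1+\mu)$: the first is $\im\omega_1^+(\al,\mu)$, close to $0$, and the second is $\im\omega_{-1}^-(\al,\mu)$, close to $2\im$ and hence outside $\Gamma$. So on $\C^2e^{\im x}$ the projectors $P_{0,0,0}$ and $P_{\al,\mu,0}$ are rank one, onto $v_1^+(0,0)$ and onto $v_1^+(\al,\mu)$ respectively, both along $v_{-1}^-$. They differ as soon as $\Omega_\al(1+\mu)\neq1$, and $U_{\al,\mu,0}$ genuinely moves $v_1^+(0,0)$ onto the line of $v_1^+(\al,\mu)$.

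Consequently the normalization conflict you found does not exist. Your fallback, reading $v_1^\pm(\al,\mu):=U_{\al,\mu,0}v_1^\pm(0,0)$ as a definition, is not acceptable. The paper needs these to be exactly the vectors of \eqref{unperturbed.eigv}, with that normalization and phase. This is used in the proof of \eqref{E11E22}, which relies on $\sB(\al,\mu,0)v_1^\pm=-\im(\omega_1^\pm-\mu)\cJ v_1^\pm$ and on \eqref{symp.nonzero}, and again in \eqref{basiepzero} and \Cref{b00}.

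The missing step is the identity $U_{\al,\mu,0}v_1^\pm(0,0)=v_1^\pm(\al,\mu)$ itself. Here is how to close it.
\begin{itemize}
\item Since $v_1^+(0,0)\in\cV_{0,0,0}$, the intertwining relation gives $U_{\al,\mu,0}v_1^+(0,0)\in\cV_{\al,\mu,0}$.
\item Since $U_{\al,\mu,0}$ is a Fourier multiplier, the image stays on $e^{\im x}$. As computed above, $\cV_{\al,\mu,0}\cap\C^2e^{\im x}$ is the line spanned by $v_1^+(\al,\mu)$, so $U_{\al,\mu,0}v_1^+(0,0)=\nu(\al,\mu)\,v_1^+(\al,\mu)$.
\item The factor $(2\Omega_\al(1+\mu))^{-1/2}$ in \eqref{unperturbed.eigv} is exactly the symplectic normalization $\cW_c(v_1^+,v_1^+)=-\im$. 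Therefore \eqref{Usymp} gives $|\nu|=1$.
\item Reversibility of $U_{\al,\mu,0}$, together with $\varrho_c v_1^+=-v_1^+$ from \eqref{rev.nonzero} and the antilinearity of $\varrho_c$, gives $\nu=\bar\nu$.
\item Continuity, together with $U_{0,0,0}=\uno$, then forces $\nu\equiv1$.
\end{itemize}
The case of $v_1^-$, on the harmonic $e^{-\im x}$, is identical. Only after this does your final linearity step, applying $U_{\al,\mu,0}$ to the inverted relations \eqref{v1pv1-} and using \eqref{basisF}, yield \eqref{vtrans}; that step is correct.
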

\begin{proof}
We claim that the vectors $v_1^\pm (\al, \mu)$ defined in \eqref{unperturbed.eigv} satisfy 
\be\label{Uv00}
v_1^\pm (\al, \mu)  = U_{\al, \mu,0} v_1^\pm(0,0)  \quad \text{with}
\quad U_{\al,\mu,\e} \text{ in }\eqref{OperatorU}. 
\ee
  Indeed $v_1^+(0,0) \in \cV_{0,0,0}$ (recall \eqref{unperturbed.eigv}, \eqref{base3e}) and  thus  $U_{\al, \mu,0} v_1^+(0,0) $ belongs to the subspace $  \cV_{\al, \mu, 0}$, which is spanned  by 
    the four eigenvectors $v_1^\pm(\al, \mu), v_0^{\pm}(\al, \mu)$ of $ \sL (\alpha, \mu,0) $ in \eqref{unperturbed.eigv} whose eigenvalues \eqref{eig.0} are close to $ 0 $. 
    Since $U_{\al, \mu,0}$ is a Fourier multiplier, cf. \Cref{lem:Kato1} ,  and $ v_1^+(0,0) $ is supported on the harmonic $e^{\im x}$, 
    while $v_0^\pm(\al, \mu)$, $v_1^-(\al,\mu)$ are supported on different harmonics, we deduce that
\begin{equation}\label{66:1843}
    U_{\al, \mu,0} v_1^+(0,0)  = \nu(\al, \mu) v_1^+(\al, \mu)
    \quad \text{for some}\quad 
    \nu(\al, \mu) \in \C \setminus \{0\} \, .  
\end{equation}
    We now show that $\nu(\al, \mu) = 1$. Indeed, by symplecticity,  
    \begin{align*}
 &   \cW_c \big(  U_{\al, \mu,0} v_1^+(0,0),  U_{\al, \mu,0} v_1^+(0,0) \big) \stackrel{\eqref{Usymp}} =  \cW_c \big(   v_1^+(0,0),   v_1^+(0,0) \big) \stackrel{ \eqref{symp.nonzero}} = - \im \\
&   \cW_c \big(  U_{\al, \mu,0} v_1^+(0,0),  U_{\al, \mu,0} v_1^+(0,0) \big) 
\stackrel{\eqref{66:1843}} 
= |\nu(\al, \mu)|^2 \cW_c\big( v_1^+(\al, \mu), v_1^+(\al, \mu)) 
\stackrel{ \eqref{symp.nonzero}} = - \im  |\nu(\al, \mu)|^2
    \end{align*}
implying  that $|\nu(\al, \mu)| = 1$.
    Furthermore, by reversibility,
    using \eqref{Usymp},  \eqref{rev.nonzero}, 
    \eqref{66:1843},   
\eqref{def:cinvolution}, we have  
    \begin{align*}
     - \nu(\al, \mu)\,  v_1^+(\al, \mu) = U_{\al, \mu, 0} [\varrho_c v_1^+(0,0)] =    \varrho_c \left( U_{\al, \mu,0} v_1^+(0,0)\right) = - \bar{\nu(\al, \mu)} v_1^+(\al, \mu) 
    \end{align*}
    implying  that $\nu(\al, \mu) $ is real. Hence $\nu(\al, \mu) \in \{\pm 1 \}$. 
    Finally, since the left hand side of \eqref{66:1843}  and $v_1^+(\al, \mu)$ tend to $v_1^+(0,0)$ as $(\al, \mu) \to (0,0)$, 
    we deduce that $\nu(\al, \mu) = 1 $.  This  proves \eqref{Uv00} for $v_1^+$. The case $v_1^-$ is analogous. 
    Then 
\eqref{vtrans} follows also recalling \eqref{base3e}-\eqref{v1pv1-}.
\end{proof}

The expansions  \eqref{exf41}-\eqref{exf44} follow by \eqref{ordinibase}, 
\Cref{lem:jetsP,jetsordine2} and the parity properties of the remainders $\cO(\rho^2)$ and $\cO(\e^2)$,  which follow by  \eqref{lem:revbasis} and  \Cref{expe0,expam0,identityvtransholdstrue}. Equations \eqref{nonzeroaverage} and \eqref{vtrans} are respectively proved in \Cref{expe0} and \Cref{identityvtransholdstrue}.
The  proof of  \Cref{expansion1} is complete.

\section{Computation of $\fa_\tp$, $\fc_\tp$ and $\fb_2$}\label{sec:5}
 
We now prove \Cref{prop:values,lem:coeff.b2}. 
We have to compute the second order jets  
\begin{equation}
    \label{generalapcp}
 \fa_\tp = \left(\mathfrak{B}^{(\tp)}_2 v^{(\tp)}_+, v^{(\tp)}_+ \right)\, , \quad \fb_2 = \left(\mathfrak{B}^{(2)}_2 v^{(2)}_-, v^{(2)}_+ \right)\, , \quad 
   \fc_\tp = \left(\mathfrak{B}^{(\tp)}_2 v^{(\tp)}_-, v^{(\tp)}_- \right)\, , \ \quad 
   \forall \tp \geq 3 \, , 
\end{equation}
of the functions (cf.   \eqref{tocomputematrixb},
\eqref{tocomputematrixB}) 
$$
\begin{aligned}
& \fa^{(\tp)}(\al,\mu,\e) = ( \mathfrak{B}^{(\tp)}
(\al, \mu,\e) v^{(\tp)}_+(\al,\mu), v^{(\tp)}_+(\al,\mu)) 
\, , \\
&  \fb^{(2)}(\al,\mu,\e) 
= (\mathfrak{B}^{(2)}(\al, \mu,\e) v^{(2)}_-(\al,\mu), v^{(2)}_+(\al,\mu)) \, ,   \\
& 
\fc^{(\tp)}(\al,\mu,\e) = (\mathfrak{B}^{(\tp)}(\al, \mu,\e) v^{(\tp)}_-(\al,\mu), v^{(\tp)}_-(\al,\mu)) \, .
\end{aligned}
$$
By \eqref{regBtp}, each $\fa_{\tp}(\alpha,\mu),\, \fb_{\tp}(\alpha,\mu),\, \fc_{\tp}(\alpha,\mu) $ belongs to $\cA(K^{(\tp)},+\infty;\R)$ for $\tp \geq 3$, or to $\cA_P(K^{(2)},+\infty;\R)$ if $\tp = 2 $.
\\[1mm]
{\bf Notation.} 
In this section, we denote $\cO(\e^n)$ a bounded operator $A(\al,\mu,\e)$ in  $\cA(K^{(\tp)},\e^{(\tp)};H^1(\T;\C^2),L^2(\T;\C^2))$ for some $\tp\geq 3$, or to $\cA_P(K^{(2)},\e^{(2)};H^1(\T;\C^2),L^2(\T;\C^2))$, satisfying 
$ \|A(\al,\mu,\e)\|_{\cL(H^1(\T;\C^2),L^2(\T;\C^2))} \leq C |\e|^n $
for all $ (\al,\mu,\e) \in K^{(\tp)}\times B_{\e^{(\tp)}}(0) $.
\\[1mm]
\noindent{\bf Expansion of ${\cal B} (\al,\mu,\e) $.} For  any $ (\alpha,\mu) \in \R^2 $,
the operator ${\cal B}  (\al,\mu, \e)  $ in 
\eqref{operator Bmualep} 
has the Taylor  expansion 
\begin{equation*}
{\cal B} ( \al,\mu,\e ) = {\cal B}_0(\al,\mu) + \e{\cal B}_1(\al,\mu) + \e^2 {\cal B}_2(\al,\mu)+\cO(\e^3)  
\end{equation*}
where, in view of  \Cref{DNProp1}, \eqref{cG12action} and \eqref{apexp}, 
\begin{equation}
\label{Bsani.appF} 
{\cal B}_0:=
{\cal B}_0(\al,\mu) 
:= \begin{bmatrix} 1 & -( \pa_x + \im \mu) \\  \pa_x  + \im \mu&  |D|_{\al,\mu}\end{bmatrix}
  \, , \   \ \ 
{\cal B}_\ell:={\cal B}_{\ell} (\al,\mu)
:=\begin{bmatrix} a_\ell (x) & -p_\ell (x)(\pa_x+\im  \mu) \\ (\pa_x+\im  \mu)\circ p_\ell (x) & \cG_\ell (\al,\mu) \end{bmatrix} \, ,
\end{equation}
for any $ \ell \in \N  $. 
For any $ |\kappa|\leq \ell $, $\kappa \equiv \ell$ mod $2$, the $\kappa$th-band of the operators in   \eqref{Bsani.appF} are 
\begin{equation}\label{paritymatters}
{\cal B}_{\ell}^{[\kappa]}(\al, \mu) = 
 \dfrac{1}{2}\begin{bmatrix} e^{\im\kappa x} a_\ell^{[\kappa]}  & - e^{\im\kappa x} p_\ell^{[\kappa]}(\pa_x+\im \mu) \\  
 e^{\im\kappa x} p_\ell^{[\kappa]}(\pa_x+\im \mu + \im \kappa ) &  2 \cG_\ell^{[\kappa]}(\al, \mu)  \end{bmatrix} \, ,
\end{equation}
where, in view of \eqref{apexp},
\begin{equation}
\label{pklakl}
 p_1^{[\pm 1]} :=a_1^{[\pm 1]} := -2  \, , \quad
p_2^{[0]} := 3 \, , \quad a_2^{[0]} := 4\, ,\quad 
p_2^{[\pm 2]} := a_2^{[\pm 2]} := -2 \, . 
\end{equation} 
{\bf Expansion of $P^{(\tp)}_{\al,\mu,\e}$.}
For any 
\be \label{fuoripal}
(\al,\mu) \in K^{(\tp)} \quad 
\text{if} \ \tp\geq 3 , \quad 
\text{resp.} \  (\al,\mu) \in K^{(2)}\setminus B_{\rho_2}(0,0) \ 
\text{for some} \ 
\rho_2 > 0 \, , 
\ee
 the  projector $P^{(\tp)}_{\al,\mu,\e}$ in  \eqref{Pproj:all} is given by the integral \eqref{Pproj:tp}, resp. 
\eqref{Pproj:2}, and
has the Taylor  expansion 
\begin{equation*}
P^{(\tp)}_{\al,\mu,\e} =P_0^{(\tp)}(\al,\mu) + \e P_1^{(\tp)}(\al,\mu)  + \cO(\e^2) \, 
\end{equation*}
where 
\begin{align}\label{Psani}
 &P_0^{(\tp)}:= P_0^{(\tp)}(\al,\mu) := P^{(\tp)} _{\al,\mu,0} \ , \quad P_1^{(\tp)}:= P_1^{(\tp)}(\al,\mu) := \mathcal{P}^{(\tp)} \big[\cB_1(\al,\mu)\big]  \, , \\
& \label{hP}
 \mathcal{P}^{(\tp)}  \big[A \big] := \frac{1 }{2\pi \im } \oint_{\Gamma^{(\tp)}(\al, \mu)} (\lambda-\cL(\al,\mu,0))^{-1}\,  \cJ A \, (\lambda-\cL(\al,\mu,0))^{-1} \de\lambda \, ,
 \end{align}
and $\Gamma^{(\tp)}(\al, \mu)$ are  circuits defined in  \Cref{KatonearMcLean}.
\\[1mm]
{ \bf Expansion of $\mathfrak{B}^{(\tp)} (\al,\mu,\e) $.}
For any  $ (\al,\mu) $ 
as in \eqref{fuoripal}
the operator $\mathfrak{B}^{(\tp)} (\al,\mu,\e) $  in \eqref{Bgotico} has  the Taylor expansion
 \begin{equation*}
\mathfrak{B}^{(\tp)} (\al,\mu,\e) =
\mathfrak{B}_0^{(\tp)}(\al,\mu) +
\e \mathfrak{B}_1^{(\tp)}(\al,\mu) +
\e^2 \mathfrak{B}_2^{(\tp)}(\al,\mu) +\cO(\e^3)
\end{equation*}
where, cf. \cite[Lemma 3.6]{BMV_ed},
 \begin{equation}\label{ordini012}
\mathfrak{B}_0^{(\tp)}  := [P_0^{(\tp)}]^*{\cal B}_0 P_0^{(\tp)} \, , 
 \qquad 
 \mathfrak{B}_1^{(\tp)} := [P_0^{(\tp)}]^*{\cal B}_1 P_0^{(\tp)} \, , 
 \qquad 
 \mathfrak{B}_2^{(\tp)} := [P_0^{(\tp)}]^*\mathbf{Sym}[{\cal B}_2+{\cal B}_1 \, P_1^{(\tp)}]  P_0^{(\tp)}\, , 
\end{equation}
with operators $ ({\cal B}_\ell)_{\ell =0,1,2} $  in \eqref{Bsani.appF}, 
$ (P_\ell^{(\tp)})_{\ell =0,1} $  in \eqref{Psani} and $\mathbf{Sym}[A]:= \frac12 A+ \frac12 A^*$.
\\[1mm]
{\bf Entanglement coefficients.}
We introduce  the {\it entanglement coefficients} 
\begin{equation}\label{entcoeff}
\ent{\ell }{\kappa }{j'}{j}{\sigma'}{\sigma}
:=\ent{\ell }{\kappa }{j'}{j}{\sigma'}{\sigma}(\al,\mu)
:= \big({\cal B}_\ell^{[\kappa]} v_j^\sigma, v_{j'}^{\sigma'}  \big)   \, ,\qquad \ell \in \N_0 \, ,\ j',j\in \Z\, , \ \sigma,\sigma'=\pm\, ,
\end{equation}
where  $\cB_\ell^{[\kappa]}$ is the $\kappa$th-band  operator  in \eqref{band.def} of $\cB_\ell$ in  \eqref{Bsani.appF}.
The entanglement coefficients are well defined only for $(\al,\mu) \not \in \{0\}\times \Z $, where the $v_j^\sigma(\al,\mu)$ in \eqref{unperturbed.eigv} are well defined,  and satisfy 
\begin{equation}\label{entprop}
\ent{\ell}{\kappa }{j'}{j}{\sigma'}{\sigma} \equiv 0 \ \text{ if } \ \sigma'j'\neq \sigma j+\kappa \, ,\quad \quad \bar{\ent{\ell}{\kappa }{j'}{j}{\sigma'}{\sigma}} =\ \ent{\ell}{-\kappa }{j}{j'}{\sigma}{\sigma'}\, . 
\end{equation} 
The next lemma gives effective formulas to compute the action of  $\cJ \cB_\ell$ and   $ \mathcal{P}^{(\tp)}[\cB_{\ell}]$ on the 
basis
$\{v_j^\sigma\}$
in \eqref{unperturbed.eigv}.

\begin{lemma}
\label{actionofL}
Let $(\al,\mu)\in K^{(\tp)} \setminus (\{0\}\times \Z)$, $\tp\geq3$, or $(\al,\mu)\in K^{(2)} \setminus B_{\rho_2}(0,0)$.  Then the following hold:\\
$(i)$ for any $\ell\in \N_0$ and $j,\kappa\in \Z$ and $\sigma = \pm$, one has
\begin{equation}\label{Lsacts}
\cJ {\cal B}_\ell^{[\kappa]} v_j^\sigma =
 \sum_{\sigma_1 = \pm \atop \sigma_1 j_1 = \sigma j + \kappa } -\im \sigma_1  
 \, 
  \ent{\ell}{\kappa}{ j_1}{j}{\sigma_1}{\sigma}\; v_{ j_1}^{\sigma_1}  \ . 
\end{equation}
$(ii)$ Let $(j,\sigma)$ such that the eigenvalue  $\im\omega_j^\sigma$ of $ {\cL } (\alpha, \mu,0)$ in \eqref{eig.0} lies inside the circuit $\Gamma^{(\tp)}$ (see \eqref{separationinproof} and \eqref{McL.d}), and let $\kappa\in\Z$, $\kappa\notin \{ 0,-\sigma\tp\}$. 
Then the operator $ \mathcal{P}^{(\tp)}[\cB_{1}^{[\kappa]}]  $ defined by \eqref{hP} satisfies 
\begin{align}
\label{hppar}
\mathcal{P}^{(\tp)}[\cB_{1}^{[\kappa]}]  v_j^\sigma =
\sum_{\sigma_1= \pm \atop \sigma_1 j_1 = \sigma j + \kappa  } \,  \sigma_1 \frac{\ent{1}{\kappa}{j_1}{j}{\sigma_1}{\sigma}}{\omega_{j_1}^{\sigma_1}-\omega_j^\sigma}\; v_{j_1}^{\sigma_1} \, .
\end{align} 

\smallskip
\noindent$(iii)$ Let $(j,\sigma)$ as in item $(ii)$ and $(j',\sigma') = (-(j-\tp),-\sigma)$ (so that $\im\omega_{j'}^{\sigma'}$ lies inside $\Gamma^{(\tp)}$). Then for any $\kappa_1, \kappa_2 \in \Z \setminus \{  0, -\sigma \tp\}$, one has 
\begin{align}
&\big( \cB_{1}^{[\kappa_1]}\mathcal{P}^{(\tp)}[\cB_{1}^{[\kappa_2]}]  v_j^\sigma , v_{j'}^{\sigma'}\big)  =  \sum_{\substack{\sigma_1= \pm \\ \sigma_1 j_1  = \sigma j + \kappa_2 \\
\ \sigma'j' = \sigma_1 j_1 + \kappa_1}}\!\!\!\!\!\!\!  \sigma_1
\frac{
\ent{1}{\kappa_2}{j_1}{j}{\sigma_1}{\sigma} \, 
\ent{1}{\kappa_{1}}{j'}{j_1}{\sigma'}{\sigma_1} 
}{ \omega_{j_1}^{\sigma_1}-\omega_j^\sigma} \label{mirabilis} \\
&\big( \cB_{1}^{[\kappa_2]}  v_j^\sigma , \mathcal{P}^{(\tp)}[\cB_{1}^{[-\kappa_1]}]v_{j'}^{\sigma'}\big) =\sum_{\substack{\sigma_1= \pm \\ \sigma_1 j_1  = \sigma j + \kappa_2 \\
\ \sigma'j' = \sigma_1 j_1 + \kappa_1}}\!\!\!\!\!\!\!  \sigma_1
\frac{
\ent{1}{\kappa_2}{j_1}{j}{\sigma_1}{\sigma} \, 
\ent{1}{\kappa_{1}}{j'}{j_1}{\sigma'}{\sigma_1} 
}{ \omega_{j_1}^{\sigma_1}-\omega_{j'}^{\sigma'}} \, . \label{mirabilis2}
\end{align}
\end{lemma}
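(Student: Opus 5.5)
The plan is to expand in the eigenbasis $\{v_j^\sigma\}$ of the unperturbed operator and evaluate the Cauchy integrals by residues. Since $\cL(\al,\mu,0)$ is a Fourier multiplier with simple eigenvalues $\im\omega_j^\sigma$ and eigenvectors $v_j^\sigma$, for $(\al,\mu)\notin\{0\}\times\Z$ we have $(\lambda-\cL(\al,\mu,0))^{-1}v_j^\sigma=(\lambda-\im\omega_j^\sigma)^{-1}v_j^\sigma$.

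For item $(i)$, we use that $\cJ\cB_\ell^{[\kappa]}v_j^\sigma$ is supported on the harmonic $e^{\im(\sigma j+\kappa)x}$. On that two-dimensional Fourier block it can be decomposed in the symplectic basis $\{v_{j_1}^{\sigma_1}:\sigma_1j_1=\sigma j+\kappa\}$, using the expansion $v=\sum_{\sigma_1}\im\sigma_1\,\cW_c(v,v_{j_1}^{\sigma_1})\,v_{j_1}^{\sigma_1}$ from \eqref{symp.nonzero}, exactly as in the proof of \Cref{lem:katored}. The coefficients are
\[
\im\sigma_1\cW_c(\cJ\cB_\ell^{[\kappa]}v_j^\sigma,v_{j_1}^{\sigma_1})=\im\sigma_1(\cJ^2\cB_\ell^{[\kappa]}v_j^\sigma,v_{j_1}^{\sigma_1})=-\im\sigma_1\,\ent{\ell}{\kappa}{j_1}{j}{\sigma_1}{\sigma},
\]
which gives \eqref{Lsacts}. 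The block degenerates only at $(\al,\mu)\in\{0\}\times\Z$, which is excluded.

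For item $(ii)$, we insert \eqref{Lsacts} into \eqref{hP} and obtain
\[
\mathcal P^{(\tp)}[\cB_1^{[\kappa]}]v_j^\sigma=\sum_{\sigma_1j_1=\sigma j+\kappa}(-\im\sigma_1)\,\ent{1}{\kappa}{j_1}{j}{\sigma_1}{\sigma}\,\frac{1}{2\pi\im}\oint_{\Gamma^{(\tp)}}\frac{\de\lambda}{(\lambda-\im\omega_{j_1}^{\sigma_1})(\lambda-\im\omega_j^\sigma)}\,v_{j_1}^{\sigma_1}.
\]
The key observation is that the condition $\kappa\notin\{0,-\sigma\tp\}$ guarantees that the shifted harmonic $\sigma j+\kappa$ differs from both harmonics carried by the two eigenvectors inside $\Gamma^{(\tp)}$. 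Those harmonics are $\sigma j$ and $\sigma'j'=\sigma j-\sigma\tp$ (check via \eqref{ivMC}, \eqref{unperturbed.eigv}). Hence every $\im\omega_{j_1}^{\sigma_1}$ lies outside $\Gamma^{(\tp)}$ by the separation of \Cref{separation_eigenvalues} and the choice of $r_\tp$. The residue at $\im\omega_j^\sigma$ then equals $1/(\im\omega_j^\sigma-\im\omega_{j_1}^{\sigma_1})$, and multiplying by $-\im\sigma_1$ gives $\sigma_1/(\omega_{j_1}^{\sigma_1}-\omega_j^\sigma)$, which is \eqref{hppar}.

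The main point to verify carefully is this harmonic bookkeeping: showing that $j_1$ never coincides with an index inside the contour, including when $(j_1,\sigma_1)$ has the opposite sign.

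For item $(iii)$, \eqref{mirabilis} follows by applying \eqref{hppar} and then pairing $\cB_1^{[\kappa_1]}v_{j_1}^{\sigma_1}$ with $v_{j'}^{\sigma'}$, which produces the entanglement coefficient $\ent{1}{\kappa_1}{j'}{j_1}{\sigma'}{\sigma_1}$ by definition \eqref{entcoeff}. The selection rule \eqref{entprop} restricts the sum to $\sigma'j'=\sigma_1j_1+\kappa_1$.

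For \eqref{mirabilis2}, we apply \eqref{hppar} to $v_{j'}^{\sigma'}$ with band $-\kappa_1$. This is legitimate because $-\kappa_1\notin\{0,-\sigma'\tp\}=\{0,\sigma\tp\}$. The intermediate indices are $\sigma_1j_1=\sigma'j'-\kappa_1$, and the denominator is $\omega_{j_1}^{\sigma_1}-\omega_{j'}^{\sigma'}$. The coefficients are real, since reversibility \eqref{rev.nonzero} together with \eqref{entprop} gives $\overline{\ent{1}{-\kappa_1}{j_1}{j'}{\sigma_1}{\sigma'}}=\ent{1}{\kappa_1}{j'}{j_1}{\sigma'}{\sigma_1}$. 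Taking the conjugate in the second slot of the scalar product and then pairing with $\cB_1^{[\kappa_2]}v_j^\sigma$ yields the factor $\ent{1}{\kappa_2}{j_1}{j}{\sigma_1}{\sigma}$, which concludes the proof.
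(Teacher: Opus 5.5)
Your proposal is correct and follows the paper's proof essentially step by step. In (i) you extract the coefficients by the symplectic pairing from \eqref{symp.nonzero}, which is equivalent to the paper's pairing with $\cJ v_{j_1}^{\sigma_1}$. In (ii) you use the residue theorem, with the condition $\kappa\notin\{0,-\sigma\tp\}$ ensuring that the harmonic $\sigma j+\kappa$ avoids both harmonics carried inside $\Gamma^{(\tp)}$. In (iii) you pair \eqref{hppar}, applied with band $-\kappa_1$ to $v_{j'}^{\sigma'}$ for \eqref{mirabilis2}, and use \eqref{entprop}.

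One small correction: the conjugation identity you invoke is exactly the second relation in \eqref{entprop}, which comes from self-adjointness of $\cB_1$ together with \eqref{starij}, not from reversibility. That identity, together with the reality of the $\omega$'s in the denominators, is all the argument needs.
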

\begin{proof}
$(i)$ 
Since the subspace of vectors supported on the harmonic $j$ is spanned by $\{v_{j}^+, v_{-j}^-\}$ 
(cf. \eqref{unperturbed.eigv}), and the action of 
operator $\cJ {\cal B}_\ell^{[\kappa]}$ 
shifts the harmonics of $ \kappa$, we write 
$ \cJ {\cal B}_\ell^{[\kappa]} v_j^\sigma = \alpha^- v_{-(\sigma j+\kappa)}^- + \alpha^+  v_{\sigma j+\kappa}^+ $. 
Then
\begin{equation}\label{troppocaldo}
\big( \cJ {\cal B}_\ell^{[\kappa]} v_j^\sigma, \cJ v_{-(\sigma j+\kappa)}^- \big) = \alpha^- \big(  v_{-(\sigma j+\kappa)}^-, \cJ v_{-(\sigma j+\kappa)}^- \big) + \alpha^+ \big(  v_{\sigma j+\kappa}^+, \cJ v_{-(\sigma j+\kappa)}^- \big) \stackrel{\eqref{symp.nonzero}}{=} -\im \alpha^- \, . 
\end{equation}
 On the other hand the left-hand side of \eqref{troppocaldo} writes as
$$
\big( \cJ {\cal B}_\ell^{[\kappa]} v_j^\sigma, \cJ v_{-(\sigma j+\kappa)}^- \big) = \big({\cal B}_\ell^{[\kappa]} v_j^\sigma, v_{-(\sigma j+\kappa)}^- \big) \stackrel{\eqref{entcoeff}}{=} \ent{\ell}{\kappa}{-(\sigma j+\kappa)}{j}{-}{\sigma} \stackrel{\eqref{troppocaldo}} = - \im \al^- \, .
$$
Similarly $ \ent{\ell}{\kappa}{\sigma j+\kappa}{j}{+}{\sigma} =  \big( \cJ {\cal B}_\ell^{[\kappa]} v_j^\sigma, \cJ v_{\sigma j+\kappa}^+ \big) =  \im \alpha^+ $
and  \eqref{Lsacts} follows. 
\\[1mm]
$(ii)$ 
By \eqref{eig.0}-\eqref{unperturbed.eigv} we have $(\lambda-\cL(\al,\mu,0))^{-1} v_j^\sigma = \frac{1}{\lambda-\im\omega_j^\sigma} v_j^\sigma$
and, 
using \eqref{Lsacts}, we obtain
\be\label{intENT}
(\lambda-\cL(\al,\mu,0))^{-1}\cJ {\cal B}_{1}^{[\kappa]} (\lambda-\cL(\al,\mu,0))^{-1} v_{j}^{\sigma}  = \sum_{\substack{\sigma_1 = \pm\\ \sigma_1 j_1 = \sigma j + \kappa}} -\im \sigma_1  \frac{ \ent{1}{\kappa}{j_1}{j}{\sigma_1}{\sigma} }{(\lambda-\im \omega_{j_1}^{\sigma_1})(\lambda-\im \omega_{j}^{\sigma})} v_{j_1}^{\sigma_1} \, .
\ee
Integrating \eqref{intENT} as in \eqref{hP}, and noting that $\im \omega_j^\sigma$ lies inside $\Gamma^{(\tp)}$, while $\im \omega_{j_1}^{\sigma_1}$ lies outside  $\Gamma^{(\tp)}$ (since $\kappa\neq -\sigma\tp$ and $\Gamma^{(\tp)}$ only contains the pair of eigenvalues 
$ \{ \im\omega_{j}^{\sigma}, \im\omega_{-(j-\tp)}^{-\sigma} \}$, see \eqref{separationinproof}) we  obtain \eqref{hppar} by the residue Theorem.
\\[1mm]
$(iii)$ Identity  \eqref{mirabilis} 
follows by  \eqref{hppar}, \eqref{entcoeff}, \eqref{entprop}. 
To prove \eqref{mirabilis2}  note that \eqref{hppar} holds also with $(j,\sigma)\leadsto(j',\sigma') = (-(j-\tp),-\sigma)$, since $\im\omega_{j'}^{\sigma'}$ also lies inside $\Gamma^{(\tp)}$, and $ -\kappa_1 \neq - \sigma' \tp $. Then use \eqref{entcoeff}
and \eqref{entprop}. 
\end{proof}

We now give explicit formulas for the entanglement  coefficients  \eqref{entcoeff}.

\begin{lemma}\label{lem:entexp}
Let $(\al,\mu) \not \in \{0\}\times \Z $.
For any $\ell \in \{ 1,2\}$, for any $|\kappa| \leq \ell$ with the same parity of $\ell$, for any $\sigma,\sigma' \in \{\pm\} $, $j,j' \in \Z$ with $\sigma' j' = \sigma j + \kappa$, the  entanglement coefficients  \eqref{entcoeff} is 
\begin{equation} \label{entexp}
\begin{aligned}
     \ent{\ell}{\kappa}{j'}{j}{\sigma'}{\sigma} &=\frac{
c_{\ell,\sigma j}^{\sgn(\kappa)}(\al, \mu) }{2\sqrt{\Omega_\al( \sigma j +\mu)\, \Omega_\al( \sigma'j'+\mu)}}  \\
&\qquad + \frac{\sigma \sigma'\sqrt{\Omega_\al(\sigma j+ \mu)\, \Omega_\al(\sigma'j' + \mu)}}{4}\left( 
a_\ell^{[\kappa]}  - p_\ell^{[\kappa]} \left( 
\frac{\sigma(\sigma j + \mu)}{\Omega_\al(\sigma j + \mu)} + 
\frac{\sigma' (\sigma'j' + \mu)}{\Omega_\al(\sigma' j' + \mu)}
\right)
\right) \
\end{aligned}
\end{equation}
where $c_{\ell, n}^{\pm}(\al, \mu)$ and $c^0_{2,n}(\al, \mu)$ are given in \eqref{actioncG12}, and we denote $\sgn (0) = 0$.     The entanglement coefficients  \eqref{entexp} extend continuously at any $(\al,\mu) \in \{0\}\times \Z$. 
\end{lemma}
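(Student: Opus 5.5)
The formula \eqref{entexp} is a direct computation of the scalar product \eqref{entcoeff}. I will use the explicit band structure \eqref{paritymatters} of ${\cal B}_\ell^{[\kappa]}$ and the explicit eigenvectors \eqref{unperturbed.eigv}. Write $n:=\sigma j$ and $n':=\sigma' j' = n+\kappa$. Set $\Omega:=\Omega_\al(n+\mu)$ and $\Omega':=\Omega_\al(n'+\mu)$. Then
\[
v_j^\sigma = (2\Omega)^{-1/2}\big(\im\sigma\Omega,\,1\big)^{\top} e^{\im n x}, \qquad
v_{j'}^{\sigma'} = (2\Omega')^{-1/2}\big(\im\sigma'\Omega',\,1\big)^{\top} e^{\im n' x} .
\]

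First I apply \eqref{paritymatters} to $v_j^\sigma$. The operator $\pa_x+\im\mu$ acts on $e^{\im nx}$ as multiplication by $\im(n+\mu)$. On the shifted harmonic, $\pa_x+\im\mu+\im\kappa$ acts on $e^{\im nx}$ as multiplication by $\im(n'+\mu)$, and $\cG_\ell^{[\kappa]}e^{\im nx}=c^{\sgn\kappa}_{\ell,n}e^{\im n'x}$ by \eqref{cG12action}. Each of the four matrix entries therefore produces a scalar multiple of $e^{\im n'x}$.

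Next I pair with $v_{j'}^{\sigma'}$ using \eqref{scalarcomplex}. Only the $e^{\im n'x}$ component survives, which also gives the vanishing statement in \eqref{entprop}. I collect terms with the prefactor $1/(2\sqrt{\Omega\Omega'})$.
\begin{itemize}
\item The $(2,2)$ entry gives $c^{\sgn\kappa}_{\ell,n}/(2\sqrt{\Omega\Omega'})$, the first term in \eqref{entexp}.
\item The $(1,1)$ entry gives $\tfrac12 a_\ell^{[\kappa]}(\im\sigma\Omega)\overline{(\im\sigma'\Omega')} = \tfrac12\sigma\sigma' a_\ell^{[\kappa]}\Omega\Omega'$.
\item The $(1,2)$ entry gives $-\tfrac12 p_\ell^{[\kappa]}\im(n+\mu)\,\overline{\im\sigma'\Omega'} = -\tfrac12 p_\ell^{[\kappa]}\sigma'\Omega'(n+\mu)$.
\item The $(2,1)$ entry gives $\tfrac12 p_\ell^{[\kappa]}\im(n'+\mu)\im\sigma\Omega = -\tfrac12 p_\ell^{[\kappa]}\sigma\Omega(n'+\mu)$.
\end{itemize}
Factoring $\sigma\sigma'\Omega\Omega'$ out of the last three terms and using $\sigma^2=\sigma'^2=1$ yields exactly the bracket $a^{[\kappa]}_\ell - p^{[\kappa]}_\ell\big(\sigma(n+\mu)/\Omega + \sigma'(n'+\mu)/\Omega'\big)$, with the prefactor $\sqrt{\Omega\Omega'}/4$. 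The only real care needed is in the signs and conjugations, and in the index convention that $c_{\ell,n}$ is taken with $n=\sigma j$.

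For the continuous extension, note that $\Omega\to0$ only when $(\al,\mu)\to(0,-n)$. At such a point, $(n+\mu)/\Omega=\sgn(n+\mu)\,\Omega$ when $\al=0$, and in general $|n+\mu|/\Omega\le\Omega$, so every $p$-term stays bounded and tends to $0$. The remaining issue is the term $c/\sqrt{\Omega\Omega'}$. This is the main obstacle, and I will handle it with the $\al^2$ factor in \eqref{actioncG12}. We have $|c^{\pm}_{1,n}|,|c_{2,n}|\lesssim\al^2$, while $\Omega\ge|\al|^{1/2}$ and $\Omega'$ stays bounded away from $0$ for $\kappa\ne0$. Hence the term is $O(\al^{2}/|\al|^{1/4})\to0$. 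When $\kappa=0$ (so $\ell=2$), $\Omega'=\Omega$ and the bound becomes $\al^2/\Omega^2\le|\al|\to0$. Thus all terms extend continuously, with limit values obtained by dropping the vanishing pieces.
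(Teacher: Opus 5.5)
Your proposal is correct and follows essentially the same route as the paper: compute $\mathcal{B}_\ell^{[\kappa]}v_j^\sigma$ explicitly from \eqref{paritymatters}, \eqref{unperturbed.eigv}, \eqref{cG12action}, pair it with $v_{j'}^{\sigma'}$, and for the continuous extension use that the $c$-coefficients are $O(\alpha^2)$ while $|n+\mu|/\Omega\le\Omega\to0$. One harmless slip: for $\kappa=0$ the denominator is $\sqrt{\Omega\Omega'}=\Omega$, not $\Omega^2$, so the term is bounded by $\alpha^2/\Omega\le|\alpha|^{3/2}$; your bound $\alpha^2/\Omega^2$ still dominates this near the singular point, so the conclusion stands.
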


\begin{proof}
In view of \eqref{paritymatters},   \eqref{unperturbed.eigv}, \eqref{cG12action} we obtain
\begin{equation}\label{cBlkappavjsigma}
    {\cal B}_{\ell}^{[\kappa]} v_j^\sigma = \frac{e^{\im (\sigma j+\kappa )x}}{2\sqrt{2\Omega_\al (\sigma j + \mu)}}\vet{\im \sigma a_\ell^{[\kappa]}\Omega_\al(\sigma j + \mu) - \im p_\ell^{[\kappa]}(\sigma j + \mu)}{-\sigma p_\ell^{[\kappa]}(\sigma j + \kappa + \mu)\Omega_\al(\sigma j + \mu) + 2 c_{\ell,\sigma j}^{\sgn (\kappa)} } .
\end{equation}
Taking the scalar product of \eqref{cBlkappavjsigma} with $v_{j'}^{\sigma'}$ in \eqref{unperturbed.eigv}, $\sigma'j'=\sigma j + \kappa$, we obtain \eqref{entexp}. 
    The formula in \eqref{entexp} extend continuously at any $(\al,\mu) \in \{0\}\times \Z$: the ratio $\tfrac{\sigma j + \mu}{\Omega_\al(\sigma j + \mu)}\to 0$ as  $(\al,\mu) \to (0, -\sigma j)$, and $c_{\ell,\sigma j}^{\sgn (\kappa)} (\al,\mu) = \cO(\al^2) $ by \eqref{actioncG12}, so that $\tfrac{
c_{\ell,\sigma j}^{\sgn(\kappa)}(\al, \mu) }{2\sqrt{\Omega_\al( \sigma j +\mu)\, \Omega_\al( \sigma'j'+\mu)}} \to 0$ as $(\al,\mu) \to (0,-\sigma j)$ or $(\al,\mu) \to (0,-\sigma' j')$. 
\end{proof}

 We now compute
$\fa_\tp(\al,\mu), \fc_\tp(\al,\mu)$, $ \fb_2 (\al,\mu) $
in terms of the entanglement coefficients  \eqref{entexp}
for any  $ (\al,\mu) $ 
satisfying  \eqref{fuoripal}. 
\\[1mm]
\noindent{\bf Computation of $\fa_\tp$}.
In view of \eqref{ivMC}, we 
have 
$v^{(\tp)}_+ = v_\tm^+$ 
where $\tm = \frac{\tp}{2}$ if $\tp$ is even, resp. $\tm = \frac{\tp-1}{2}$ if $\tp$ is odd. 
Then using  \eqref{ordini012},  
$P_{1}^{[\pm1]} := {\mathcal P}^{(\tp)}[\cB_{1}^{[\pm 1]}]$ and  \eqref{mirabilis}, 
\be
\label{fapappendix}\begin{aligned}
 \fa_\tp \stackrel{\eqref{generalapcp}}
 =\left( \kB^{(\tp)}_{2} v_\tm^+, v_\tm^+ \right)  &= 
 \left( \cB_{2}  v_\tm^+,  v_\tm^+ \right) 
 +
 \tfrac12 \left( \cB_{1} P_{1}^{(\tp)}   v_\tm^+,  v_\tm^+ \right) 
  +
\tfrac12   \left( \cB_{1}  v_\tm^+,  P_{1}^{(\tp)}  v_\tm^+ \right) \\
   & = 
   	\underbrace{ \big( {\cal B}_{2}^{[0]}  v_\tm^+,  v_\tm^+\big)}_{ =:\mathrm{Aa}  }
	 + 
   \tfrac12
  \underbrace{ \big( {\cal B}_{1}^{[+1]} P_{1}^{[-1]} v_\tm^+,  v_\tm^+\big)}_{=:\mathrm{Ab} } 
    +
    \tfrac12 
\underbrace{    \big( {\cal B}_{1}^{[-1]} P_{1}^{[+1]} v_\tm^+,  v_\tm^+\big)}_{=:\mathrm{Ac} }
\\ &\quad +
 \tfrac12
 \underbrace{\big( {\cal B}_{1}^{[+1]}  v_\tm^+, P_{1}^{[+1]} v_\tm^+\big) }_{=\mathrm{Ac}}
 +
 \tfrac12 
\underbrace{ \big( {\cal B}_{1}^{[-1]} v_\tm^+, P_{1}^{[-1]} v_\tm^+\big) }_{=\mathrm{Ab}}
= \mathrm{Aa} + \mathrm{Ab} + \mathrm{Ac}\, ,
   \end{aligned}
   \ee
   where the second and fifth terms are equal by comparing \eqref{mirabilis} and \eqref{mirabilis2} with $(j,\sigma)=(j',\sigma')=(\tm,+)$, analogously for the third and fourth. Then,  using also \eqref{entcoeff}, \eqref{entprop} and \eqref{mirabilis}, we have 
\begin{align} \label{A}
   & \mathrm{Aa} \stackrel{\eqref{entcoeff}}{=}  {\tB_{2}^{[0]}}_{\tm,\tm}^{+,+}  \, \\ 
   & \mathrm{Ab} = \big( {\cal B}_{1}^{[+1]} P_{1}^{[-1]} v_\tm^+,  v_\tm^+\big) = 
  \big( {\cal B}_{1}^{[-1]}  v_\tm^+, P_{1}^{[-1]} v_\tm^+\big)   =
  \sum_{\sigma =\pm\atop \sigma j = \tm - 1}  \sigma \frac{  |\ent{1}{-1}{j}{\tm}{\sigma}{+}|^{2}}{\omega_j^{\sigma}- \omega_{\tm}^+}
   = 
    \frac{ |\ent{1}{-1}{\tm-1}{\tm}{+}{+}|^{2}}{\omega_{\tm-1}^{+}- \omega_{\tm}^+}
    -
     \frac{  |\ent{1}{-1}{1-\tm}{\tm}{-}{+}|^{2}}{\omega_{1-\tm}^{-}- \omega_{\tm}^+}\, ,\notag \\
   & \mathrm{A c} 
   = \big( {\cal B}_{1}^{[-1]} P_{1}^{[+1]}v_\tm^+, v_\tm^+\big)
   =  \big( {\cal B}_{1}^{[+1]} v_\tm^+, P_{1}^{[+1]} v_\tm^+\big) 
   =  \sum_{\sigma =\pm\atop \sigma j = \tm + 1}  \sigma \frac{ |\ent{1}{+1}{j}{\tm}{\sigma}{+}|^{2}}{\omega_j^{\sigma}- \omega_{\tm}^+} = 
    \frac{  |\ent{1}{+1}{\tm+1}{\tm}{+}{+}|^{2}}{\omega_{\tm+1}^{+}- \omega_{\tm}^+} 
    -
     \frac{ |\ent{1}{+1}{-\tm-1}{\tm}{-}{+}|^{2}}{\omega_{-\tm-1}^{-}- \omega_{\tm}^+} 
   \, .  \notag 
   \end{align} 
\noindent{\bf Computation of $\fc_\tp$}. 
In  view of \eqref{ivMC}, we have 
$v^{(\tp)}_- = v_\tm^-$ where 
 $\tm = \frac{\tp}{2}$ if $\tp$ is even, resp. $\tm = \frac{\tp+1}{2}$ if $\tp$ is odd. 
Then, arguing as above,   
\be\label{fcpappendix}\begin{aligned}
\fc_\tp \stackrel{\eqref{generalapcp}} =  \left( \kB^{(\tp)}_{2} v_{\tm}^-, v_{\tm}^- \right) 
 &= 
\underbrace{\big( {\cal B}_{2}^{[0]} v_{\tm}^-, v_{\tm}^-\big) }_{= \mathrm{Ca}}
+
 \frac12
 \underbrace{\big( {\cal B}_{1}^{[+1]} P_{1}^{[-1]}v_{\tm}^-, v_{\tm}^-\big)}_{= \mathrm{Cb}}
  +
  \frac12
  \underbrace{ \big( {\cal B}_{1}^{[-1]} P_{1}^{[+1]}v_{\tm}^-, v_{\tm}^-\big)}_{= \mathrm{Cc}}
  \\ &\quad + 
  \frac12
  \underbrace{\big( {\cal B}_{1}^{[+1]} v_{\tm}^-, P_{1}^{[+1]} v_{\tm}^-\big) }_{= \mathrm{Cc}}
  +
  \frac12 
  \underbrace{\big( {\cal B}_{1}^{[-1]} v_{\tm}^-, P_{1}^{[-1]} v_{\tm}^-\big)}_{= \mathrm{Cb}} = 
   \mathrm{Ca} +  \mathrm{Cb} +  \mathrm{Cc}\, ,
  \end{aligned}\ee
where, using also \eqref{entcoeff}, \eqref{entprop},  \eqref{mirabilis},    \eqref{mirabilis2}, 
\begin{equation}\label{C}
    \begin{aligned}
& \mathrm{Ca} \stackrel{\eqref{entcoeff}}{=} {\tB_{2}^{[0]}}_{\tm,\tm}^{- , -}  \\
& \mathrm{Cb} = \big( {\cal B}_{1}^{[+1]} P_{1}^{[-1]}v_{\tm}^-, v_{\tm}^-\big)
= \big( {\cal B}_{1}^{[-1]} v_{\tm}^-, P_{1}^{[-1]} v_{\tm}^-\big)
 \stackrel{\eqref{mirabilis}}{=}
\dfrac{\big|{\tB_{1}^{[-1]}}_{-\tm-1,\tm}^{+,-}\big|^2}{\omega_{-\tm-1}^+ - \omega_{\tm}^-}- \dfrac{\big|{\tB_{1}^{[-1]}}_{\tm+1,\tm}^{-,-}\big|^2}{\omega_{\tm+1}^- - \omega_{\tm}^-} 
 \\
& \mathrm{Cc} = 
\big( {\cal B}_{1}^{[-1]} P_{1}^{[+1]}v_{\tm}^-, v_{\tm}^-\big) = \big( {\cal B}_{1}^{[+1]} v_{\tm}^-, P_{1}^{[+1]} v_{\tm}^-\big)
\stackrel{\eqref{mirabilis}}{=} \frac{|\ent{1}{1}{-\tm+1}{\tm}{+}{-}|^2}{\omega_{-\tm+1}^+-\omega_{\tm}^-}- \frac{|\ent{1}{1}{\tm-1}{\tm}{-}{-}|^2}{\omega_{\tm-1}^--\omega_{\tm}^-} \, . 
\end{aligned}
\end{equation}

\noindent{\bf Computation of $\fb_2$}. In view of \eqref{ivMC}, \eqref{ordini012}, $P_{1}^{[\pm1]}={\mathcal P}^{(2)}[\cB_{1}^{[\pm 1]}]$ we get
\begin{equation}\label{fb2appendix}
\fb_2  \stackrel{\eqref{generalapcp}} =   \left( \kB^{(2)}_{2} v_1^-, v_1^+ \right)
= \underbrace{ \big({\cal B}_{2}^{[+2]} v_1^-\,,\,v_{1}^+\big)}_{=:\mathrm{Ba}} + 
\frac12
\underbrace{ \big({\cal B}_{1}^{[+1]}P_{1}^{[+1]} v_1^-\, , \,v_{1}^+\big)}_{=:\mathrm{Bb}} + 
\frac12 
\underbrace{
\big({\cal B}_{1}^{[+1]} v_1^-\, , \, P_{1}^{[-1]}v_1^+\big)}_{=:\mathrm{Bc}}\, .
\end{equation}
Then
\begin{align}\label{B}
\mathrm{Ba} & \stackrel{\eqref{entcoeff} }{=}  {\tB_2^{[+2]}}_{1,1}^{+,-}  \ , \\ \notag
\mathrm{Bb} &
\stackrel{\eqref{mirabilis}}{=} \sum_{\sigma=\pm}\sigma \frac{\ent{1}{+1}{0}{1}{\sigma}{-}\, \ent{1}{+1}{1}{0}{+}{\sigma}}{\omega_0^{\sigma}-\omega_1^-}=\frac{\ent{1}{+1}{0}{1}{+}{-}\, \ent{1}{+1}{1}{0}{+}{+}}{\omega_0^{+}-\omega_1^-} - \frac{\ent{1}{+1}{0}{1}{-}{-}\, \ent{1}{+1}{1}{0}{+}{-}}{\omega_0^{-}-\omega_1^-} \ , \\
\mathrm{Bc} & \stackrel{\eqref{mirabilis2}}{=} \sum_{\sigma=\pm}\sigma \frac{\ent{1}{+1}{0}{1}{\sigma}{-}\, \ent{1}{+1}{1}{0}{+}{\sigma}}{\omega_0^{\sigma}-\omega_1^+}=\frac{\ent{1}{+1}{0}{1}{+}{-}\, \ent{1}{+1}{1}{0}{+}{+}}{\omega_0^{+}-\omega_1^+} - \frac{\ent{1}{+1}{0}{1}{-}{-}\, \ent{1}{+1}{1}{0}{+}{-}}{\omega_0^{-}-\omega_1^+} \, . \notag
\end{align}
    Note that, in  the formulas
    \eqref{A}, \eqref{C}, resp. \eqref{B},   the entanglement coefficients
are defined for all $(\alpha, \mu) \in \R^2 $
(also at 
$ \{0\}\times \Z $ according to   \Cref{lem:entexp}) and, by \eqref{0905:1850}, the denominators never vanish 
for $\tp\geq 3$
inside $ K^{(\tp)}$, resp. 
$ K^{(2)}\setminus \{(0,0)\}$.

The next lemmata use the Mathematica 
notebook at 
\url{https://git.sissa.it/amaspero/benjamin-feir-3d}.
We first prove \Cref{prop:values}. 

\begin{lemma}\label{expbT}
For any $\tp \geq 3 $ odd, \eqref{apcpatextremes} holds. 
\end{lemma}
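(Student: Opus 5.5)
We evaluate the explicit formulas \eqref{fapappendix}--\eqref{A} and \eqref{fcpappendix}--\eqref{C} at the two points where $\cM^{(\tp)}$ meets $\{\al=0\}$. By the symmetry \eqref{symmetryapcpbp}, $\fa_\tp(\al,1-\mu)=\fc_\tp(\al,\mu)$, and $\mu_*^-(\tp)=1-\mu_*^+(\tp)$ by \eqref{sing.ML}. It therefore suffices to compute $\fa_\tp$ and $\fc_\tp$ at $(0,\mu_*^+(\tp))$, with $\mu_*^+=1+\tm+\tm^2$ and $\tm=\frac{\tp-1}{2}$. The two values at $(0,\mu_*^-(\tp))$ then follow by exchanging roles.

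The point $(0,\mu_*^+)$ lies in $\{0\}\times\Z$, where the formulas are only defined by continuous extension (\Cref{lem:entexp}). Since $\fa_\tp,\fc_\tp\in\cA$ are continuous, we may compute them as limits. At $\al=0$ every $\cG_\ell$ vanishes, because $c^\pm_{\ell,n},c^0_{2,n}=\cO(\al^2)$ by \eqref{actioncG12}. The entanglement coefficients \eqref{entexp} then reduce to the elementary expression
\[
\tfrac{\sigma\sigma'}{4}\sqrt{\Omega\,\Omega'}\Big(a_\ell^{[\kappa]}-p_\ell^{[\kappa]}\big(\sigma\,\mathrm{sgn}(\sigma j+\mu)\,\Omega+\sigma'\,\mathrm{sgn}(\sigma'j'+\mu)\,\Omega'\big)\Big),
\]
where $\Omega=|\sigma j+\mu|^{1/2}$ and $\Omega'=|\sigma'j'+\mu|^{1/2}$. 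The constants $a_\ell^{[\kappa]},p_\ell^{[\kappa]}$ are taken from \eqref{pklakl}.

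At $\mu=\mu_*^+$ the relevant quantities $\tm+\mu$ and $\pm(\tm+1)+\mu$ are all positive, so their square roots are perfect squares: $\tm+\mu_*^+=(\tm+1)^2$ and $-(\tm+1)+\mu_*^+=\tm^2$. The eigenvalues $\omega^\sigma_j=\sigma j+\mu-\sigma\Omega$ in the denominators are then explicit rationals.

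We compute the three pieces $\mathrm{Aa},\mathrm{Ab},\mathrm{Ac}$ of \eqref{A} for $\fa_\tp$, which involve the modes $v_\tm^+$, $v_{\tm\pm1}^+$ and $v^-_{1-\tm}$, $v^-_{-\tm-1}$. We do the same for $\mathrm{Ca},\mathrm{Cb},\mathrm{Cc}$ of \eqref{C} for $\fc_\tp$, with modes around $v^-_{\tm+1}$. Each piece is a rational function of $\tm$, and we simplify the sums symbolically (as in the Mathematica notebook) to obtain $\tfrac12(1+\tm)^2$ and $-\tfrac{\tm^2}{2}$.

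The main obstacle is any mode with $\sigma j+\mu=0$ exactly at the singular point, that is $v_{\tm+1}^-$ here, whose $\Omega$ vanishes. For such a mode the vector $v_j^\sigma$ blows up, and the associated denominators and coefficients need care. We handle this by working with the continuous extension from \Cref{lem:entexp}: the factor $\sqrt{\Omega\Omega'}$ tends to $0$, while $c/\sqrt{\Omega\Omega'}\to0$. Concretely, we take the limit along $\al\to0$ term by term and check that each quotient in \eqref{C} has a finite limit. Where a coefficient tends to $0$, its square over a nonvanishing gap contributes nothing. We also check that no denominator $\omega_{j_1}^{\sigma_1}-\omega^\sigma_j$ vanishes there, which is guaranteed by \eqref{0905:1850}. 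The remaining work is algebraic simplification.
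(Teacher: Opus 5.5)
Your route is the paper's: evaluate \eqref{fapappendix}--\eqref{A} and \eqref{fcpappendix}--\eqref{C} at $\al=0$, using the continuous extension of \eqref{entexp} (there the $c$-terms vanish). Your reduction to the single point $(0,\mu_*^+(\tp))$, via \eqref{symmetryapcpbp} and $\mu_*^-(\tp)=1-\mu_*^+(\tp)$, is valid and spares the second evaluation that the paper performs. However, two of your supporting claims are false.

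First, $v^-_{\tm+1}$ is not singular at $(0,\mu_*^+)$. It is the McLean vector $v^{(\tp)}_-$, and $-(\tm+1)+\mu_*^+=\tm^2\neq0$, as you computed yourself; the McLean eigenvectors are never singular on $\cM^{(\tp)}$. If you applied your prescription to it (coefficients with a singular index drop out), every term of \eqref{C} would vanish, which contradicts $-\tm^2/2$. The genuinely singular modes are intermediate ones with $\sigma_1 j_1=-(1+\tm+\tm^2)$. Among the modes in \eqref{A} and \eqref{C} this occurs only for $\tp=3$, namely $v^+_{-3}$ and $v^-_{3}$ in $\mathrm{Cb}$. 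There your limiting argument is the right one: both coefficients tend to $0$ and both gaps equal $-2$, so $\mathrm{Cb}=0$. For $\tp\geq5$ no singular mode occurs at all.

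Second, only the two McLean modes have perfect-square $\sigma j+\mu_*^+$. The intermediate modes have $\sigma_1 j_1+\mu_*^+\in\{(\tm+1)^2\pm1,\ \tm^2\pm1\}$. So the individual terms and denominators in \eqref{A} and \eqref{C} contain irrational roots; for $\tp=3$ these are $\sqrt3$, $\sqrt5$ and $\sqrt2$. They are not "explicit rationals". Each of $\mathrm{Ab},\mathrm{Ac},\mathrm{Cb},\mathrm{Cc}$ becomes rational only after you add its $\sigma_1=+$ and $\sigma_1=-$ contributions, which gives an even function of the intermediate $\Omega_1$. For instance, with $u:=\sqrt{\tm^2+2\tm}$,
\[
\mathrm{Ab}=\frac{(\tm+1)\,u}{4}\Big(\frac{(\tm+u)^2}{\tm-u}-\frac{(\tm-u)^2}{\tm+u}\Big)=-\tfrac12\,\tm(\tm+1)(\tm+2)(2\tm+1).
\]
In the same way, with your $\tm=\frac{\tp-1}{2}$:
\begin{itemize}
\item[] $\mathrm{Aa}=(\tm+1)-\tfrac32(\tm+1)^2$ and $\mathrm{Ac}=\tfrac12(\tm+1)(2\tm+1)\big((\tm+1)^2+1\big)$, so that $\fa_\tp=\tfrac12(\tm+1)^2$;
\item[] $\mathrm{Ca}=\tm+\tfrac32\tm^2$, $\mathrm{Cb}=\tfrac12\tm(\tm^2-1)(2\tm+1)$ and $\mathrm{Cc}=-\tfrac12\tm(\tm^2+1)(2\tm+1)$, so that $\fc_\tp=-\tfrac12\tm^2$.
\end{itemize}
With these two corrections your argument is complete.
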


\begin{proof}
     Evaluate   $ \fa_\tp (\alpha,\mu)$
     in \eqref{fapappendix}-\eqref{A} with $\tm = \frac{\tp-1}{2}$, inserting the  entanglement coefficients \eqref{entexp} with 
     $ \ell = 1 $ and $a_1^{[\kappa]},p_1^{[\kappa]}$ in \eqref{pklakl}, $\omega_k^\sigma$ in \eqref{eig.0}, at $\al=0$, $\mu=\mu_*^+ (\tp) = 1+\tm+\tm^2 $ and $\mu =  \mu_*^- (\tp) = -\tm-\tm^2 $ (at $\al = 0$ the coefficients $c_{\ell, n}^{\pm}(\al, \mu)$  in \eqref{actioncG12} vanish).  
      Similarly  we evaluate   $ \fc_\tp (\alpha,\mu)$   in \eqref{fcpappendix}-\eqref{C} with $\tm = \frac{\tp+1}{2} $. 
\end{proof}

We now prove \Cref{lem:coeff.b2}.

\begin{lemma}\label{lem:b2.alongMcLean}
   \eqref{b2.alongMcLean} holds.
\end{lemma}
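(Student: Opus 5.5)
The plan is to make $\fb_2(\al,\mu)$ along $\cM^{(2)}$ explicit through \eqref{fb2appendix}--\eqref{B}, inserting the entanglement coefficients \eqref{entexp} with $\ell=1,2$, the values \eqref{pklakl}, the Dirichlet--Neumann jets \eqref{actioncG12} and the frequencies $\omega_k^\sigma$ in \eqref{eig.0}. Everything then becomes an explicit algebraic expression in $\al^2$, $\mu$ and the radicals $\Omega_\al(\mu+k)$, $k\in\{-2,\dots,2\}$, and the restriction to $\cM^{(2)}$ is a function of one parameter. By the symmetries \eqref{symmetryapcpbp} ($\fb^{(2)}$ is even in $\al$ and in $\mu$) I would only treat the quarter $\al\ge0$, $\mu\ge 0$, which by \Cref{lem:description} is the graph $\al=\al_2(\mu)$, $\mu\in(0,\tfrac54]$.

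The first step is to parametrize $\cM^{(2)}$ so that the radicals become rational. I would set $A:=[(\mu+1)^2+\al^2]^{1/4}$ and $B:=[(\mu-1)^2+\al^2]^{1/4}$, so that $A+B=2$ by \eqref{McLean1}. Writing $A=1+t$, $B=1-t$ with $t\in[0,1)$, one gets $4\mu=A^4-B^4=8t(1+t^2)$, hence $\mu=2t(1+t^2)$ and $\al^2=A^4-(\mu+1)^2$, a polynomial in $t$. The point $(0,\tfrac54)$ corresponds to $t=\tfrac12$ and the origin to $t=0$. The remaining radicals $\Omega_\al(\mu)$ and $\Omega_\al(\mu\pm2)$ stay genuinely irrational in $t$. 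I would then reduce $\fb_2$ on $\cM^{(2)}$, using the Mathematica notebook, to an expression $F(t)$ built from these radicals and rational functions of $t$.

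The second step is to prove $F(t)<0$ for $t\in(0,\tfrac12)$ together with $F(\tfrac12)=0$. The value at $t=\tfrac12$ is a direct evaluation at $\al=0$, where the $c$ coefficients vanish. At the other end, \Cref{b00} already gives $\fb_2\to-\tfrac12$ as $t\to0$. For the interior I would first try to factor off the vanishing at $t=\tfrac12$ and then bound the cofactor rigorously. The fallback is interval arithmetic on a fine partition of $[0,\tfrac12]$, combined with derivative bounds, or a Sturm-sequence argument applied after eliminating the radicals through resultants. I expect this sign certification to be the main obstacle, because the radicals prevent a purely polynomial argument and $F$ approaches zero at $t=\tfrac12$. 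Near that endpoint I would use the Taylor expansion instead of interval bounds.

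The third step computes $\pa_\mu\fb_2(0,\tfrac54)$ by differentiating the explicit formula in $\mu$ at $\al=0$; there every $c$ term is $\cO(\al^2)$ and drops out, and the target value is $\tfrac1{2\sqrt3}$. Its positivity also gives strict negativity of $\fb_2$ just below $t=\tfrac12$ along the curve, since near $(0,\tfrac54)$ the curve is vertical and is traversed in the decreasing-$\mu$ direction. The quantities in \eqref{b2onmclean} are then evaluations at this same point:
\begin{itemize}
\item $T_1=|\nabla m_2(0,\tfrac54)|=\tfrac43$ follows directly from \eqref{mcleanmanifoldsp0}.
\item $T_2=\fa_2+\fc_2$ comes from \eqref{fapappendix}--\eqref{C} with $\tm=1$.
\item $\beta_2$ requires the fourth-order jet $\kB^{(2)}_4$, whose expansion I would assemble in the same way as \eqref{ordini012} (cf.\ \cite{BMV_ed}), from $\cB_1,\dots,\cB_4$ and $P_1,P_2,P_3$.
\end{itemize}
Each of these is evaluated symbolically at $\al=0$.
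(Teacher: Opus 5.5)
Your parametrization is the paper's in disguise: $t=1-\sqrt r$, since $\Omega_\al(\mu-1)=\sqrt r$ in the paper's polar coordinates around $(0,1)$. You also obtain $\fb_2(0,\tfrac54)=0$ and $\pa_\mu\fb_2(0,\tfrac54)=\tfrac1{2\sqrt3}$ by direct evaluation, as the paper does.

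The gap is the strict sign on the open arc, and the one concrete argument you give for it near $(0,\tfrac54)$ is wrong. On the arc, $\al^2=t^2(1-4t^2)(2+t^2)$ and $\mu=2t(1+t^2)$, so $\mu-\tfrac54=-\tfrac{14}{9}\al^2+O(\al^4)$. The arc is therefore tangent to the level line $\{\mu=\tfrac54\}$. Since $\fb_2$ is analytic in $(\al^2,\mu)$ there, along the arc
$\fb_2=\al^2\big(\pa_{\al^2}\fb_2-\tfrac{14}{9}\pa_\mu\fb_2\big)(0,\tfrac54)+O(\al^4)$.
The $\al^2$-derivative enters at the same order, so $\pa_\mu\fb_2>0$ decides nothing.

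In fact the bracket is exactly zero. Differentiate the explicit expression of $\fb_2$ along the arc given in the paper's proof, in $x=1-t$, at $x=\tfrac12$: the two summands contribute $-\tfrac{4}{\sqrt3}$ and $+\tfrac{4}{\sqrt3}$. So the zero at $t=\tfrac12$ is double, consistent with the factor $(2x-1)^2$ in \eqref{systemapF}. Consequently a first-order Taylor expansion at that endpoint is inconclusive, and dividing out $(\tfrac12-t)$ once leaves a cofactor that still vanishes there. Interval bounds would have to resolve a quadratic degeneracy. At the other end the formula has vanishing denominators ($\Omega_\al(\mu)\to0$, $\omega_0^\sigma-\omega_1^\pm\to0$), and \Cref{b00} only gives an unquantified neighbourhood of the origin.

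The missing idea is that no pointwise sign certification is needed. Only the harmonics $0,\pm1$ enter \eqref{fb2appendix}--\eqref{B}, so $\Omega_\al(\mu\pm2)$ never appear. In $\mathrm{Bb}$ and $\mathrm{Bc}$, the product of the two coefficients carries a factor $\Omega_\al(\mu)^{-1}$. Combined with the structure $\sum_\sigma\sigma(\cdots)/(\omega_0^\sigma-\omega_1^\pm)$ and $\omega_0^\sigma=\mu-\sigma\Omega_\al(\mu)$, this makes $\fb_2$ even in $\Omega_\al(\mu)$. Hence $\fb_2$ depends on it only through $\Omega_\al(\mu)^2=t\sqrt{6+t^2}$, because $\mu^2+\al^2=t^2(6+t^2)$.

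After removing the common factor $\sqrt{(1-t)(1+t)}$, $\fb_2$ on the arc is rational in $t$ and the single radical $y=\sqrt{6+t^2}$. One resultant in $y$ then yields a univariate polynomial whose only root in $[0,\tfrac12]$ is $t=\tfrac12$. So $\fb_2$ has no zero on the open quarter arc. That arc is connected, and $\fb_2\to-\tfrac12$ at the origin by \Cref{b00}, so continuity gives $\fb_2<0$ on it; \eqref{symmetryapcpbp} covers the other quadrants. Neither endpoint needs any local analysis.

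This is the paper's argument. Your resultant/Sturm fallback points in this direction, but the plan as written rests on bounding $F$ and on the invalid endpoint step.
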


\begin{proof}
The terms $  \fb_2(0,\tfrac54) = 0 $ and $ \pa_\mu \fb_2(0,\tfrac54) = \tfrac{1}{2\sqrt 3} $ are directly computed  by \eqref{fb2appendix}-\eqref{B}, 
\eqref{entexp} and \eqref{pklakl}.

We now prove that 
$\fb_2(\al,\mu)<0$ for all $(\al, \mu) \in \cM^{(2)}\setminus \{(0, \pm \tfrac54)\} $. By the symmetries \eqref{symmetryapcpbp}, it is sufficient to check this on the first quadrant $\{\al>0,\mu>0\}.$ To do this, introduce variables $(r,\theta) \in (0, +\infty) \times \T$ by 
 $\mu = 1 + r \cos \theta,\
        \al = r\sin \theta$. 
        Then, using the first of \eqref{McLean1}, the  portion  in the first quadrant of the unperturbed McLean curve $\cM^{(2)}$  is described by the parametrization   
        
    \begin{minipage}{0.45\linewidth}
        $$
        \theta(r) = \cos ^{-1}\left(-\frac{2 (r+4)}{\sqrt{r}}+\frac{3}{r}+6\right)\, , \ \ r\in [1/4,1] \,  
    $$
    \end{minipage}
    \hfill
    \begin{minipage}{0.45\linewidth}
    \includegraphics[width=\linewidth]{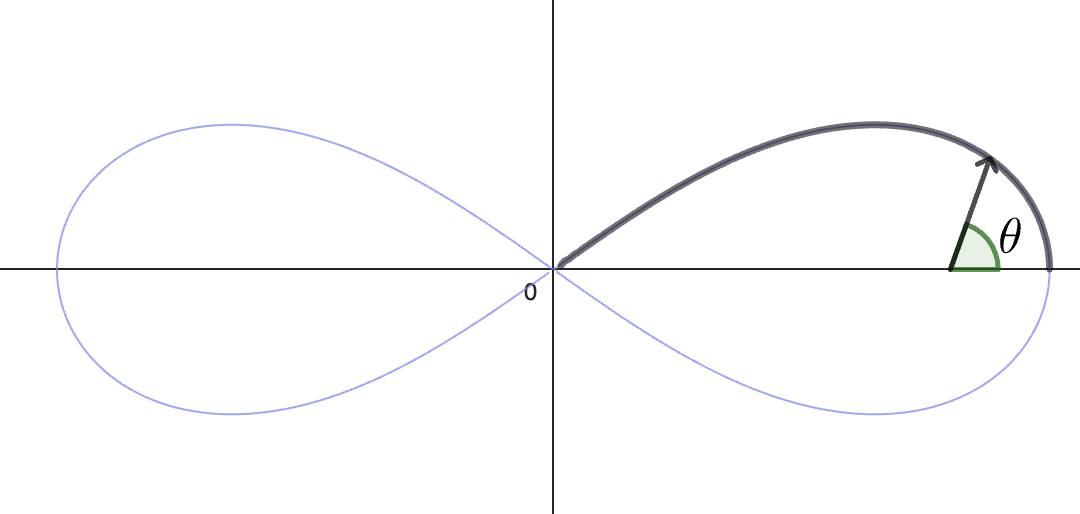}
    \end{minipage}
    
   \noindent The function  $\fb_2$  in \eqref{fb2appendix}-\eqref{B} with 
   the entanglement coefficients \eqref{entexp} 
   $ \ell = 1 $ and $a_1^{[\kappa]},p_1^{[\kappa]}$ in \eqref{pklakl}, $\omega_k^\sigma$ in \eqref{eig.0},
   evaluated at the points $(\al,\mu) = (r\sin (\theta(r)),1+r\cos(\theta(r))) \in \cM^{(2) }$ has the form
   \small
    \begin{equation*}
        \begin{aligned}
            \fb_2 &= \frac{\left(4 r-8 \sqrt{r}+3\right) \left(76 r^{3/2}+24 r^{5/2}-4 r^3-59 r^2-50 r+12 \sqrt{r}+4 \sqrt{r-2 \sqrt{r}+7}-4 \sqrt{r \left(r-2 \sqrt{r}+7\right)}+5\right)}{8 \sqrt[4]{r}\sqrt{2 -\sqrt{r}} \left(r-2 \sqrt{r}+\sqrt{r-2 \sqrt{r}+7}-\sqrt{r \left(r-2 \sqrt{r}+7\right)}+2\right)}\\
            &+\frac{\left(\sqrt{r}-1\right) }{4 \sqrt[4]{r}\sqrt{2-\sqrt{r}} \left(r-2 \sqrt{r}+3\right) \left(-5 \sqrt{r}+\sqrt{r-2 \sqrt{r}+7}+5\right)} \bigg(-6596 r^{3/2}-7558 r^{5/2}-2368 r^{7/2}-160 r^{9/2}\\&\qquad -32 \sqrt{r^7 \left(r-2 \sqrt{r}+7\right)}+16 r^5-752 \sqrt{r^5 \left(r-2 \sqrt{r}+7\right)}+776 r^4+\left(224 \sqrt{r-2 \sqrt{r}+7}+4993\right) r^3\\&\qquad-1946 \sqrt{r^3 \left(r-2 \sqrt{r}+7\right)}+5 \left(304 \sqrt{r-2 \sqrt{r}+7}+1663\right) r^2+\left(1550 \sqrt{r-2 \sqrt{r}+7}+3667\right) r\\&\qquad-1326 \sqrt{r}+126 \sqrt{r-2 \sqrt{r}+7}-690 \sqrt{r \left(r-2 \sqrt{r}+7\right)}+225\bigg) \, . 
        \end{aligned}
    \end{equation*}
    \normalsize
Apart from the common factor $\sqrt[4]{r}\sqrt{2-\sqrt{r}}$ in the denominator, such expression involves only powers of $ x := \sqrt{r}$ and 
$ y := \sqrt{r-2\sqrt{r}+7}$. In terms of $(x,y)$ any solution of  $\fb_2 = 0$ solves the algebraic system 
\begin{equation}\label{systemapF}
           (2 x-3)^2 (2 x-1)^2 P(x,y)   = 0 \, , \quad
            x^2 - 2 x + 7 - y^2 = 0 \, , 
    \end{equation}
    where 
$ P(x,y) := 2 x^7-2 x^6 y-14 x^6+12 x^5 y+53 x^5-39 x^4 y-125 x^4+4 x^3 y^2 +76 x^3 y+188 x^3-12 x^2 y^2
        -96 x^2 y-176 x^2+16 x y^2+72 x y+97 x-8 y^2-27 y-25 $.
System  \eqref{systemapF}  has solutions with 
$x=\frac12$ and $x = \frac32 $.
We look for the 
other possible solutions of
$ P(x,y) = 0 $, $ Q(x,y) := x^2-2x+7-y^2 = 0 
$, by searching for their resultant polynomial $ R(x) $, which is a polynomial 
in $x$ whose zeros are the $x$-coordinates of the common zeros of $P $ and $ Q $.
     Mathematica finds out that the resultant polynomial
    between $ P(x,y) $ and $ Q (x,y ) $  is 
    $
    R(x) = 
    6 (-3 + 2 x) (-1 + 2 x) (3 - 2 x + x^2)^4 $.  
   Therefore {\it all} real solutions of  \eqref{systemapF} correspond to $x = \frac12$ or $x = \frac32$.
    Since  $r\in [1/4,1]$ then  $x = \sqrt{r} \in [1/2,1]$, and thus the only solution of 
    $ \fb_2 (\al,\mu)  = 0 $ has $ x = \frac12 $,  that corresponds to  $(r, \theta) = (\frac14, 0)$ and $(\al,\mu)=(0, \frac54)$. 
 Since, 
 by  \Cref{b00},  $\fb_2(0,0) = -\frac12$,
 the function $\fb_2$, restricted to the McLean curve $\cM^{(2)}$ in the first quadrant is strictly negative except at  $(\al,\mu)=(0,\frac54)$ where it vanishes.
\end{proof}

\begin{lemma}\label{lem:a2b2c2}
 \eqref{b2onmclean} holds.
\end{lemma}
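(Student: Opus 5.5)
The plan is to verify the three identities in \eqref{b2onmclean} one at a time, each by evaluating explicit formulas from Appendix \ref{sec:5} at the point $(\al,\mu)=(0,\tfrac54)$, which lies on $\cM^{(2)}$ and is a regular point of it. I would use the Mathematica notebook, exactly as in \Cref{expbT} and \Cref{lem:b2.alongMcLean}.

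\textbf{The gradient $T_1$.} By \eqref{exp:tracetp}, $T^{(2)}(\al,\mu,0)=-m_2(\al,\mu)$, so $\pa_\mu T^{(2)}(0,\tfrac54,0)=-\pa_\mu m_2(0,\tfrac54)$. Here $m_2=\omega_1^+-\omega_1^-$ is computed from \eqref{eig.0}:
\[
m_2(0,\mu)=2-|1+\mu|^{1/2}-|\mu-1|^{1/2}.
\]
Differentiating at $\mu=\tfrac54$ gives
\[
\pa_\mu m_2(0,\tfrac54) = -\tfrac12\big((\tfrac94)^{-1/2}+(\tfrac14)^{-1/2}\big) = -\tfrac12\big(\tfrac23+2\big) = -\tfrac43 .
\]
Hence $T_1=\tfrac43$. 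By the symmetry $\al\mapsto-\al$, $\pa_\al m_2(0,\tfrac54)=0$, so this value also equals $|\nabla_{\al,\mu}m_2(0,\tfrac54)|$.

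\textbf{The sum $T_2$.} I would evaluate $\fa_2(0,\tfrac54)$ from \eqref{fapappendix}--\eqref{A} with $\tm=1$, and $\fc_2(0,\tfrac54)$ from \eqref{fcpappendix}--\eqref{C} with $\tm=1$. The entanglement coefficients are given by \eqref{entexp}, with \eqref{pklakl} supplying $a_\ell^{[\kappa]}$ and $p_\ell^{[\kappa]}$ for $\ell=1,2$. At $\al=0$ every $c^{\pm}_{\ell,n}$ and $c^0_{2,n}$ in \eqref{actioncG12} vanishes, so only the $a$- and $p$-terms of \eqref{entexp} survive, and $\Omega_0(\varphi)=|\varphi|^{1/2}$. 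Each term is then an explicit rational expression in square roots of the quantities $\tfrac14,\tfrac94,\tfrac54,\tfrac{13}4,\dots$. Summing them should give $\tfrac{19}{16}$.

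\textbf{The coefficient $\beta_2(0,\tfrac54)$.} This is the main obstacle, since $\beta_2$ is the fourth-order jet $({\frak B}^{(2)}_4 v_1^-,v_1^+)$ from \eqref{abceta}, and the appendix only gives formulas up to order two. I would extend the expansion \eqref{ordini012} to order four, following \cite[Lemma 3.6]{BMV_ed}. This means expanding $P^{(2)}_{\al,\mu,\e}$ and $U^{(2)}_{\al,\mu,\e}$ in \eqref{Umclean} to order $\e^3$, using the resolvent expansion from \eqref{hP}.

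Within that expansion, the finite-range property (\Cref{TFjprop}) keeps only the terms whose bands sum to $+2$, since $v_1^-$ and $v_1^+$ are separated by two harmonics. These are products of $\cB_1^{[\pm1]}$, $\cB_2^{[0,\pm2]}$, $\cB_3^{[\pm1,\pm3]}$ and $\cB_4^{[2]}$, interlaced with residue evaluations of the type in \eqref{hppar}--\eqref{mirabilis}. There are also normalization corrections from $U$, which is where terms involving $P_0\fa_2$ and $P_0\fc_2$ appear.

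This route requires the jets $p_3,a_3,p_4,a_4$ and $\cG_3,\cG_4$. However, $\cG_\ell$ vanishes at $\al=0$, because $\cG^\sharp=\cO(\al^2\e)$ by \eqref{est:DNpert}, so only the Stokes coefficients $p_\ell,a_\ell$ from \cite{BMV1,BMV4} are needed. As an alternative that avoids redoing all the bookkeeping, I would take the restriction $\al=0$, where the matrix coincides with the longitudinal first-isola computation of \cite{BMV4}. I would then read $\beta_2(0,\tfrac54)$ off that paper's fourth-order off-diagonal coefficient, after matching the basis normalization (our $v^\pm$ against theirs, up to a factor as in \Cref{b00_3}).

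Either way the final arithmetic is symbolic and done in the notebook, and it should return $\tfrac{39\sqrt3}{512}$. The step I expect to fail first is getting the combinatorics and signs of the fourth-order jet right, so I would cross-check the computed $\beta_2$ against the purely longitudinal result before trusting it.
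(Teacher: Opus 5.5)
This is essentially the paper's proof: $T_1$ comes from differentiating $-m_2(0,\mu)=|1+\mu|^{1/2}+|\mu-1|^{1/2}-2$ at $\mu=\tfrac54$, $T_2$ from evaluating \eqref{fapappendix}--\eqref{C} with $\tm=1$ at $(\al,\mu)=(0,\tfrac54)$ (this does give $\tfrac{19}{16}$), and $\beta_2$ from your fallback of importing the fourth-order coefficient of \cite{BMV4}, so the brute-force fourth-order expansion is not needed. The one step to sharpen is the matching, which is not a rescaling as in \Cref{b00_3}: \cite{BMV4} works at $\mu=\tfrac14$ with the pair $\lambda_2^+,\lambda_0^-$, so you must use the covariance \eqref{cBLmu+k} to get $\mathfrak{B}^{(2)}_4(0,\tfrac54)=e^{-\im x}\mathfrak{B}_{0,4}e^{\im x}$ (with $\mathfrak{B}_{0,4}$ their fourth-order jet), the relabeling $v_j^\sigma(0,\tfrac14)=v_{j-\sigma}^\sigma(0,\tfrac54)e^{\im x}$, and the phases $f_j^\sigma=\sqrt{-\sigma}\,v_{\sigma j}^\sigma$ of their basis; together these turn their coefficient $(\mathfrak{B}_{0,4}f_0^-,f_2^+)=-\im\tfrac{39\sqrt3}{512}$ into $\beta_2(0,\tfrac54)=+\tfrac{39\sqrt3}{512}$, with the sign the later argument relies on.
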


\begin{proof} 
The coefficients $\beta_2(0,\tfrac54)$, $T_1$ and $T_2$ have been computed in \cite{BMV4}. 
We first report 
the notation in \cite{BMV4}.

\noindent{\sc Notation in \cite{BMV4}.} 
The unperturbed eigenvectors
\be\label{basisBMV4}
f_j^\sigma(\mu):=\frac{1}{\sqrt{2\Omega_0(j+\mu)}}\vet{-\sqrt{\sigma}\Omega_0(j+\mu)}{\sqrt{-\sigma}}e^{\im j x}\, , \qquad \text{cf. \cite[equations (1.17), (1.18)]{BMV4}} \, , 
\ee
are equal, 
by comparison with the  $v_j^\sigma(\al,\mu)$ in \eqref{unperturbed.eigv}, to 
\be\label{fjvj}
f_j^\sigma(\mu)=\sqrt{-\sigma} \, v_{\sigma j}^\sigma (0,\mu) \, . 
\ee
Next 
\be\label{bgotBMV}
\kB(\tfrac14,\e) := P^*(\tfrac14,0)\,  U^*(\tfrac14,\e)\, \cB(0,\tfrac14,\e) \, U(\tfrac14,\e) \, P(\tfrac14,0)\, , 
\quad \text{cf.  \cite[equation (2.5)]{BMV4}} \, , 
\ee
where 
\be\label{defPBMV}
\begin{aligned}
& P(\tfrac14,\e) := \oint_\Gamma (\lambda-\cL(0,\tfrac14,\e))^{-1}\frac{\de\lambda}{2\pi\im} \, , \\
&  U(\tfrac14,\e) :=(\uno - (P(\tfrac14,\e)-P(\tfrac14,0))^2)^{-1/2}\Big(P(\tfrac14,\e) P(\tfrac14,0)+(\uno-P(\tfrac14,\e))(\uno-P(\tfrac14,0))\Big)
\end{aligned}
\ee
and 
$\Gamma\subset \C$ is a path, counterclockwise oriented, separating the double eigenvalue $ \lambda_2^+(0,\tfrac14)
= \lambda_0^-(0,\tfrac14) = \im \tfrac34 $  from the rest of the spectrum (cf. \cite[Lemma 5]{BMV4}), for instance $\Gamma = \pa B_{r_2}(\im \tfrac34 ) = \Gamma^{(2)}(0,\tfrac54)$, according to  \eqref{Gpincl}.

It is proved in  
 \cite[equation (5.2b)]{BMV4}  that
\be\label{beta3BMV4}
\im\beta_3 :=\left( \kB_{0,4}\, f_0^-(\tfrac14),f_2^+(\tfrac14)\right) = - \im\frac{39\sqrt{3}}{512} 
\ee
where $\kB_{0,4}$ the fourth-order jet in $\e$ of the operator \eqref{bgotBMV}, i.e.  
\be\label{gotB04BMV}
\kB_{0,4} := \frac{1}{4!}\pa_\e^4\kB(\tfrac14,\e)\vert_{\e=0}  \ , \qquad 
\text{cf. \cite[equations (3.8c), (3.1a)]{BMV4}} \, .
\ee
{\sc Computation of $ \beta_2(0,\tfrac54) $.}
In view of   \eqref{abceta} and \eqref{ivMC}
we have to compute 
\be\label{beta2054}
\beta_2(0,\tfrac54) = \left(\kB_4^{(2)}(0,\tfrac54) \, v_1^-(0,\tfrac54), \, v_1^+(0,\tfrac54)\right) \quad 
\text{where} \quad 
\kB_4^{(2)}(0,\tfrac54) 
\stackrel{\eqref{Bgotpex}} = \frac1{4!}\pa_\e^4\kB^{(2)}(0,\tfrac54,\e)\vert_{\e=0}\, . 
\ee
We claim  that 
\be\label{2002:1240}
 \beta_2(0,\tfrac54)   = - \beta_3  \stackrel{\eqref{beta3BMV4}}{=} \frac{39\sqrt{3}}{512} \, . 
\ee
By the  covariance property  \eqref{cBLmu+k} with $ k = 1 $ we have 
\begin{equation}\label{cLsymmetry6}
        \cL(\al, \mu + 1 , \e)  = e^{-\im x} \, \cL(\al, \mu, \e) \, e^{ \im x} 
    \quad 
    \Rightarrow 
    \quad 
    \big(\lambda - \cL(\al,\mu+1, \e) \big)^{-1}= e^{-\im x} \, \big(\lambda -  \cL(\al,\mu, \e) \big)^{-1} \, e^{\im x}\, .
    \end{equation}   
    Integrating \eqref{cLsymmetry6} along
    the circuit $ \Gamma $ 
    around the unperturbed double eigenvalue 
$\lambda_2^+(0,\tfrac14) = \lambda_0^-(0,\tfrac14) = \lambda_1^+(0,\tfrac54)=\lambda_1^-(0,\tfrac54)$
(use the  covariance property \eqref{labeling.eig}),     
    we get  
    \begin{equation}\label{transfPp}
        e^{-\im x}P(\tfrac14,\e)e^{\im x}  \stackrel{\eqref{defPBMV}} = \oint_{\Gamma}e^{-\im x}(\lambda-\cL(0,\tfrac14,\e))^{-1}e^{\im x} \frac{\de\lambda}{2\pi \im}  
        \stackrel{\eqref{cLsymmetry6}} =  \oint_{\Gamma^{(2)}(0,\frac{5}{4})}(\lambda-\cL(0,\tfrac54,\e))^{-1} \frac{\de\lambda}{2\pi \im} \stackrel{\eqref{P2}}= P^{(2)}_{0,\tfrac54,\e} \, .
    \end{equation}
Thus,  recalling \eqref{Umclean}, \eqref{Bgotico}, \eqref{Bgotpex}, 
\eqref{cBLmu+k} with $ k = 1 $, 
\eqref{defPBMV}, \eqref{bgotBMV},
\eqref{gotB04BMV}, \eqref{beta2054} we have 
\be\label{2002:1238}
 U^{(2)}_{0,\tfrac54,\e} = e^{-\im x} U(\tfrac14,\e)e^{\im x} \, \qquad  \quad \kB_4^{(2)}(0,\tfrac54)= e^{-\im x}\kB_{0,4} e^{\im x}\ . 
\ee
Moreover,    by direct inspection of \eqref{unperturbed.eigv}, 
\be\label{2002:12:36}
 v_j^\sigma(0,\tfrac14) = v_{j - \sigma}^\sigma (0,\tfrac54) e^{\im x} \ . 
 \ee
Therefore we get the chain of identities
\begin{align*}
     \im \beta_3 \stackrel{\eqref{beta3BMV4}, \eqref{fjvj}}{=}  -\im \left(\kB_{0,4}\,  v_0^-(0,\tfrac14), v_2^+(0,\tfrac14) \right) 
     \stackrel{\eqref{2002:12:36}, \eqref{2002:1238}}{=}
      - \im \left(\kB_4^{(2)}(0,\tfrac54)   v_1^-(0,\tfrac54), v_1^+(0,\tfrac54)\right) \stackrel{\eqref{beta2054}}{=} -\im \beta_2(0,\tfrac54)
     \, ,
\end{align*}
 proving \eqref{2002:1240}.
\\[1mm]
{\sc Computation of $ T_1,  T_2 $.}
Finally, in view of  \eqref{tracciaBep}
and \eqref{matrixentries}, we get 
$ T_1 = \pa_\mu ( \omega_-^{(2)}(\al, \mu) - \omega^{(2)}_+(\al, \mu) )\vert_{(0,\frac54)} = \frac43 $.   
  The term $T_2 = \fa_2(0,\tfrac54) + \fc_2(0,\tfrac54) = 19/16 $ as  in \cite{BMV4} is computed by \eqref{fapappendix} and \eqref{fcpappendix}, with entanglement coefficients in \eqref{entexp}, both evaluated at $\tm = 1$ and $(\al,\mu)=(0,\tfrac54)$.
\end{proof}

\footnotesize{
\noindent{\bf Acknowledgments:}
We are grateful to Bernard Deconinck, Ryan Creedon and Walter Strauss for helpful and inspiring discussions on this problem, and Levent Batakci for showing us beautiful numerical computations of the spectrum.
 A. Maspero and A. Radakovic are supported by the European Union  ERC CONSOLIDATOR GRANT 2023 GUnDHam, Project Number: 101124921.
 Views and opinions expressed are however those of the authors only and do not necessarily reflect those of the European Union or the European Research Council. Neither the European Union nor the granting authority can be held responsible for them.
}

\end{document}